\documentclass[letterpaper,reqno,10pt,twoside]{amsart}
\usepackage{amsmath,amsthm,amsfonts,amssymb,euscript,mathrsfs,graphics,color,latexsym,marginnote}
\usepackage{stmaryrd}
\usepackage[dvips]{graphicx}
\usepackage[left=1 in, right=1 in,top=1 in, bottom=1 in]{geometry}
\usepackage{hyperref}
\usepackage[toc,page]{appendix}
\usepackage{relsize}
\usepackage[shortlabels]{enumitem}
\usepackage[all]{xy}
\usepackage{float}

\usepackage{xargs}
\usepackage[dvipsnames]{xcolor}

\usepackage{hyperref}
\hypersetup{colorlinks=true, pdfstartview=FitV,linkcolor=blue!70!black,citecolor=red!70!black, urlcolor=green!60!black}
\definecolor{labelkey}{rgb}{0.6,0,0}

\usepackage[colorinlistoftodos,prependcaption,textsize=small]{todonotes}
\newcommandx{\change}[2][1=]{\todo[#1]{#2}}
\newcommandx{\unsure}[2][1=]{\todo[linecolor=red,backgroundcolor=red!25,bordercolor=red,#1]{#2}}
\newcommandx{\rmk}[2][1=]{\todo[linecolor=blue,backgroundcolor=blue!25,bordercolor=blue,#1]{#2}}
\newcommandx{\info}[2][1=]{\todo[linecolor=OliveGreen,backgroundcolor=OliveGreen!25,bordercolor=OliveGreen,#1]{#2}}
\newcommandx{\improvement}[2][1=]{\todo[linecolor=Plum,backgroundcolor=Plum!25,bordercolor=Plum,#1]{#2}}
\newcommandx{\thiswillnotshow}[2][1=]{\todo[disable,#1]{#2}}

\makeatletter
\renewcommand \theequation {%
\ifnum \c@section>\z@ \@arabic\c@section.%
\fi\@arabic\c@equation} \@addtoreset{equation}{section}
\@namedef{subjclassname@2020}{2020 Mathematics Subject Classification}
\makeatother

\newtheorem{theorem}{Theorem}[section]

\newtheorem{lemma}[theorem]{Lemma}

\theoremstyle{definition}

\theoremstyle{remark}

\def\XXint#1#2#3{{\setbox0=\hbox{$#1{#2#3}{\int}$ }
\vcenter{\hbox{$#2#3$ }}\kern-.6\wd0}}

\def\p{\partial}

\def\Om{\Omega}

\def\dive{\mathop{\rm div}\nolimits}

\def\d{\partial}

\def\b{\mathcal{B}}

\def\b{\mathcal{B}}

\title{GLOBAL DYNAMIC STABILITY OF CONTACT LINES IN FLUIDS: 2-D DROPLET PROBLEM}
\author{Xiaoding Yang}

\begin{document}
\maketitle
\begin{abstract}
    In this paper, we investigate the dynamics of an incompressible viscous Navier–Stokes fluid evolving above a one-dimensional flat surface. The fluid is subject to a uniform gravitational field and capillary forces acting along the free boundary. The interface between the fluid and the surrounding air is a free surface whose motion is driven by gravity, surface tension, and the fluid velocity field. The triple-phase intersections where the fluid, the air above the vessel, and the solid vessel wall meet are referred to as contact points, and the angles formed there are called contact angles.
The model under consideration incorporates boundary conditions that allow for full motion of the contact points and dynamic contact angles. Under these conditions, \cite{Yang} established the existence of equilibrium configurations for the model. These equilibria consist of a quiescent fluid occupying a domain whose upper boundary can be represented as the graph of a function in polar coordinates, minimizing a gravity–capillary energy functional subject to a fixed mass constraint. The equilibrium contact angles may take any value in $(0,\pi)$ depending on the choice of capillary parameters.
In the present work, we develop a framework of a priori estimates for this model. We prove that, for initial data sufficiently close to equilibrium, the system admits global solutions that converge exponentially fast to a (horizontally) shifted equilibrium state.
\end{abstract}
\tableofcontents
\setcounter{tocdepth}{1}
\section{Introduction}

\begin{figure}{\label{figure1}}
\centering
\scalebox{1.0}{\begin{tikzpicture}
  \draw  (-4, 0)--(3, 0);
  \filldraw [blue!5, draw=black] (1, 0) arc(330: 570: 2);
  \node at(-0.6,1) {$\Omega$};
\end{tikzpicture}}
\caption{A droplet.}
\end{figure}
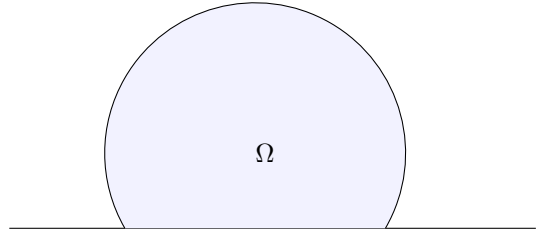
\subsection{Problem Formulation }
Consider a two-dimensional droplet of viscous incompressible fluid evolving above a one-dimensional flat surface. The interface between the droplet and the surrounding vapor cannot, in general, be represented as a graph over the horizontal Cartesian coordinate $x$ (see Figure~\ref{figure1}). To overcome this difficulty, we introduce polar coordinates and denote the spatial variable by $z = (\theta, r) \in \mathbb{R}^2$. We then assume that, for each time $t \geq 0$, the fluid occupies a time-dependent droplet domain.
\begin{align}
\Omega(t) := \{z \in  \mathbb{R}^{2}
: 0 < r < \rho(t, \theta)\},
\end{align}
where the free surface of the droplet is given by the unknown function $\rho(t,\cdot):[0,\pi]\rightarrow \mathbb{R}^{+}$. We write down the free surface at the top of the droplet as:
\begin{align}
    \Sigma(t):=\{(\theta,r):\theta\in [0,\pi], r=\rho(t,\theta)\},
\end{align}
\noindent and at the bottom as
\begin{align}
    \Sigma_{s}:=\{(\theta,r):\theta=0~\operatorname{or}~\theta=\pi\}.
\end{align}
See Figure \ref{figure1} for an example of such a fluid droplet domain. The state of the fluid at each time is determined by its velocity and pressure $(u(t,\cdot), P(t,\cdot)):\Omega(t)\rightarrow \mathbb{R}^{2}\times\mathbb{R}$, for which the associated viscous stress tensor is given by $S(P,u):\Omega(t)\rightarrow \mathbb{R}^{2\times 2}$ via
\begin{align}
    S(P,u):=PI-\mu\mathbb{D}u
\end{align}
\noindent where I is the $2\times 2$ identity, $\mu>0$ is the fluid viscosity, and the symmetrized gradient is $\mathbb{D}u=Du+(Du)^{T}$. We note that a simple computation reveals that if $\dive_{z} u=0$, then $\dive_{z}S(P,u)=-\mu\Delta_{z}u+\nabla_{z}P$.

To formulate the equations of motion, we first introduce the physical parameters governing the dynamics. The fluid is assumed to have unit density and to be subject to a uniform gravitational field acting vertically downward with strength $g>0$. The constant $\sigma>0$ denotes the surface tension coefficient along the fluid–vapor interface. Finally, $\beta>0$ represents the inverse slip length and arises in Navier’s slip boundary condition imposed on the vessel side walls.

The energetic parameters $\gamma_{sv}, \gamma_{sf} \in \mathbb{R}$ denote the free energy per unit length associated with the solid--vapor and solid--fluid interfaces, respectively, and serve as analogs of $\sigma$ for the remaining interfaces. We define
\begin{align}
    [\![\gamma]\!] := \gamma_{sv} - \gamma_{sf},
\end{align}
and assume that $[\![\gamma]\!]$ and $\sigma$ satisfy the classical Young relation
\begin{align}
    \frac{|[\![\gamma]\!]|}{\sigma} < 1.
\end{align}

Finally, we introduce the contact-point velocity response function
$\mathcal{W} : \mathbb{R} \to \mathbb{R}$ as a $C^{2}$, strictly increasing diffeomorphism satisfying $\mathcal{W}(0) = 0$.

We now present the equations of motion for the unknown triple $(u,P,\rho)$, which determine the dynamics for $t>0$:

\begin{equation}{\label{equ:fix_1}}
    \begin{cases}
        \partial_{t}u+u\cdot \nabla u+\dive S(P,u)=0~~~&\operatorname{in}~\Omega(t) \\
        \dive u=0&\operatorname{in}~\Omega(t)\\
        S(P,u)\nu=(g\rho\sin\theta-\sigma\mathcal{H}(\rho)))\nu~~&\operatorname{on}~\Sigma(t)\\
        \partial_{t}\rho=\frac{1}{\rho}u\cdot \mathcal{N}~~&\operatorname{on}~\Sigma(t)\\
        (S(P,u)\cdot \nu-\beta u)\cdot \tau=0~~&\operatorname{on}~\Sigma_{s}\\
        u\cdot \nu=0~&\operatorname{on}~\Sigma_{s}\\
       {\mathcal{W}}(\p_{t}\rho(\pi,t))=\sigma\frac{\rho'}{(\rho^{2}+\rho'^{2})^{\frac{1}{2}}}(\pi,t)+[\![\gamma]\!]\\
        {\mathcal{W}}(\partial_{t}\rho(0,t))=-(\sigma\frac{\rho'}{(\rho^{2}+\rho'^{2})^{\frac{3}{2}}}(0,t))+[\![\gamma]\!]
    \end{cases}
\end{equation}
\noindent Here, $\nu$ denotes the outward-pointing unit normal, $\tau$ the associated unit tangent, and
\begin{align}
\mathcal{H}(\rho)
= -\frac{\rho}{\sqrt{\rho^{2}+\rho'^{2}}}
+ \partial_{\theta}\!\left(\frac{\rho'}{\sqrt{\rho^{2}+\rho'^{2}}}\right)
\end{align}
is the mean curvature operator.

In \cite{Guo}, the system is formulated in Cartesian coordinates. To derive \eqref{equ:fix_1}, it is therefore necessary to reformulate the equations on the free surface $\Sigma(t)$ in polar coordinates and to establish several fundamental properties of the transformed system, including the conservation of total mass in polar coordinates. These derivations are carried out in Section~2.

Finally, we shift the gravitational potential to eliminate the atmospheric pressure by redefining the physical pressure $\bar{P}$ as
\begin{align}
P = \bar{P} + g\rho(\theta)\sin\theta - P_{\mathrm{atm}} .
\end{align}

\begin{figure}
\centering
\label{figure2}
\includegraphics[width=0.9\linewidth]{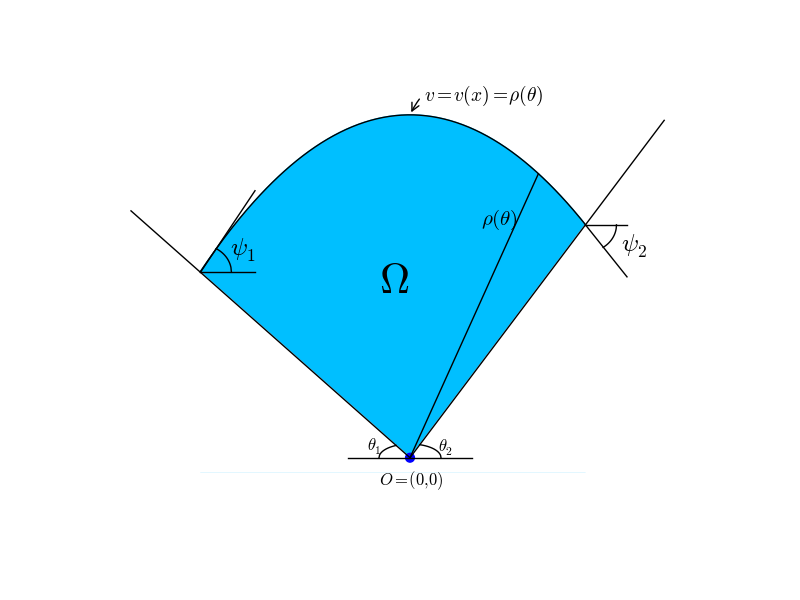}
\caption{}
\end{figure}

\subsection{Equilibrium and Minimizer} To establish a generalized linear stability result, we allow the inclination angles $\theta_{1}$ and $\theta_{2}$ to be arbitrary positive acute angles. The special case $\theta_{1}=\theta_{2}=0$ corresponds to a sessile drop; see Figure~\ref{figure2}. The existence of an equilibrium configuration under these assumptions was proved in \cite{Yang}. Moreover, the equilibrium is uniquely determined once the two contact points are prescribed. In particular, for any given constant $V>0$, there exists a smooth equilibrium profile $\rho_{0}$ and a constant pressure level $P_{0}$ such that $u=0$ and
\begin{equation}\label{equ:equi}
\begin{cases}
g\rho_{0}\sin\theta-\sigma \mathcal{H}(\rho_{0})=P_{0},\\[0.4em]
\dfrac{\rho_{0}'}{\sqrt{\rho_{0}^{2}+\rho_{0}'^{2}}}\!\left(\frac{\pi}{2}\pm\frac{\pi}{2}\right)
=\mp\dfrac{[\![\gamma]\!]}{\sigma},\\[0.8em]
\displaystyle \int_{0}^{\pi}\rho_{0}^{2}\, d\theta=V,
\qquad
P_{0}=C_{1}\bigl(V,g,[\![\gamma]\!],\sigma\bigr).
\end{cases}
\end{equation}

It is worth noting that \eqref{equ:equi} is precisely the Euler--Lagrange system associated with the energy functional $E$ defined by
\begin{align}\label{equ:E}
E(\rho)
&=\frac{1}{3}g\int_{\theta_2}^{\pi-\theta_1}\rho^{3}\sin \theta \,d\theta
+\int_{\theta_2}^{\pi-\theta_1}\sigma \sqrt{\rho^{2}+\rho'^{2}}\,d\theta
-[\![\gamma]\!]\bigl(\rho(\theta_1)+\rho(\theta_2)\bigr),
\end{align}
subject to the volume constraint
\begin{align}\label{equ:C}
\int_{\theta_{2}}^{\pi-\theta_{1}} \rho^{2}\,d\theta=V,
\end{align}
for any prescribed constant $V>0$.

A natural question arises: does the energy functional $E$ admit a minimizer under the constraint \eqref{equ:C}, and is the equilibrium constructed in \cite{Yang} such a minimizer? This issue remains open. To make progress, we state the following theorem, whose proof is presented in Sections~3 and~5.

\begin{theorem}{\label{thm:sta}}

 The following statements hold

    $\bullet$ \textbf{Existence of a $\epsilon$-regularized minimizer}
    
    For any $\epsilon>0$, there exists a minimizer $\rho_{0}^{\epsilon}$ of the following energy functional $E^{\epsilon}$
    \begin{align}
        E^{\epsilon}(\rho)=\frac{1}{3}g\int_{\theta_2}^{\pi-\theta_1}\rho^{3}\sin \theta d\theta+\int_{\theta_2}^{\pi-\theta_1}\sigma \sqrt{\rho^{2}+\rho'^{2}}d\theta-[\![\gamma]\!](\rho(\theta_1)+\rho(\theta_2))+\epsilon\int_{\theta_{2}}^{\pi-\theta_{1}}\rho'^2d\theta
    \end{align}
    \noindent subject to the constraint
    \begin{align}
        \int_{\theta_{2}}^{\pi-\theta_{1}}\rho^{2}d\theta=V
    \end{align}
    \noindent where $V$ is the same constant as in \eqref{equ:C}.

    $\bullet$ \textbf{Uniform boundedness.}
    
    When $M$ is fixed, $\rho_{0}^{\epsilon}(\theta)'$ is uniformly bounded for all $\epsilon>0$ and $\theta\in (\theta_{2},\pi-\theta_{1})$.

    $\bullet$ \textbf{Convergence and existence of the minimizer.}
   As $\epsilon \to 0$, the sequence ${\rho_{0}^{\epsilon}}$ converges to $\rho_{0}$, which is a minimizer of the functional $E$ under the constraint \eqref{equ:C}. This convergence establishes the existence of a minimizer.
\end{theorem}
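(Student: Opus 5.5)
The plan is the direct method of the calculus of variations on the admissible class
\[
\mathcal{A}=\Bigl\{\rho\in H^{1}(\theta_{2},\pi-\theta_{1}):\ \rho\ge 0,\ \int_{\theta_{2}}^{\pi-\theta_{1}}\rho^{2}\,d\theta=V\Bigr\}.
\]
This is followed by a uniform Lipschitz bound and passage to the limit $\epsilon\to 0$. Throughout, $I=(\theta_{2},\pi-\theta_{1})$.

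\textbf{Existence of $\rho_{0}^{\epsilon}$.} First we show that $E^{\epsilon}$ is bounded below and coercive on $\mathcal{A}$.
\begin{itemize}
\item In one dimension we have $\|\rho\|_{L^{\infty}}\le C(\|\rho\|_{L^{2}}+\|\rho'\|_{L^{1}})$.
\item The boundary term is therefore controlled by $|[\![\gamma]\!]|(\rho(\theta_{1})+\rho(\theta_{2}))\le C\sqrt{V}+|[\![\gamma]\!]|\int_{I}|\rho'|$.
\item The Young condition $|[\![\gamma]\!]|<\sigma$ lets $\sigma\int\sqrt{\rho^{2}+\rho'^{2}}\ge\sigma\int|\rho'|$ absorb this term, up to a suitable splitting of the boundary trace between the two endpoints.
\item Since $\sin\theta\ge\sin\theta_{2}>0$ on $I$, the gravity term is nonnegative.
\end{itemize}
Together with the $\epsilon\|\rho'\|_{L^{2}}^{2}$ term, this gives $E^{\epsilon}(\rho)\ge \epsilon\|\rho'\|_{L^{2}}^{2}+c\|\rho'\|_{L^{1}}-C$.

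A minimizing sequence is then bounded in $H^{1}$. After extracting a subsequence it converges weakly in $H^{1}$, and uniformly by the compact embedding $H^{1}\subset C^{0}$, so the constraint and the boundary term pass to the limit. The integrand $\sqrt{\rho^{2}+p^{2}}+\epsilon p^{2}$ is convex in $p$, which gives weak lower semicontinuity. This produces $\rho_{0}^{\epsilon}$.

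A first variation with a Lagrange multiplier then shows that $\rho_{0}^{\epsilon}$ is a smooth solution of the regularized Euler--Lagrange ODE
\[
g\rho^{2}\sin\theta-\sigma\mathcal{H}(\rho)-2\epsilon\rho''=\lambda_{\epsilon}\rho,
\]
with natural boundary conditions
\[
\sigma\frac{\rho'}{\sqrt{\rho^{2}+\rho'^{2}}}+2\epsilon\rho'=\mp[\![\gamma]\!]
\]
at the two endpoints.

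\textbf{Uniform bound on $(\rho_{0}^{\epsilon})'$ (the main obstacle).} Here we use the Euler--Lagrange ODE directly.
\begin{itemize}
\item Set $\phi(p)=\sigma p/\sqrt{\rho^{2}+p^{2}}+2\epsilon p$, which is strictly increasing in $p$.
\item At the endpoints, $|\phi(\rho')|=|[\![\gamma]\!]|<\sigma$ holds uniformly in $\epsilon$.
\item The interior equation takes the form $\frac{d}{d\theta}\phi(\rho')=F(\theta,\rho,\rho')$. The right side involves $\rho$, $\lambda_{\epsilon}$, and the lower-order polar term $\rho/\sqrt{\rho^{2}+\rho'^{2}}$, which is bounded by $1$.
\end{itemize}
From the energy bound, $E^{\epsilon}(\rho_{0}^{\epsilon})\le E^{\epsilon}(\bar\rho)$ for a fixed smooth competitor $\bar\rho$, which is uniformly bounded for $\epsilon\le 1$. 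This gives uniform bounds on $\|\rho\|_{L^{\infty}}$ and $\|\rho'\|_{L^{1}}$. Testing the equation against $\rho$ bounds $\lambda_{\epsilon}$ uniformly.

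Integrating $\frac{d}{d\theta}\phi$ from an endpoint then bounds $\sigma\rho'/\sqrt{\rho^{2}+\rho'^{2}}+2\epsilon\rho'$ uniformly. The delicate point is keeping the first piece strictly below $\sigma$ in absolute value, i.e.\ avoiding a vertical tangent. I would try to obtain this from a lower bound $\rho\ge c>0$, which follows since the sessile geometry keeps the drop away from the origin (using the equilibrium from \cite{Yang} as a comparison). I would also use a maximum-principle argument at interior extrema of $\rho'$, where the $2\epsilon\rho''$ term drops out and the equation bounds the curvature. This is the step where a gap most likely remains.

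\textbf{Convergence as $\epsilon\to 0$.} With $\|\rho_{0}^{\epsilon}\|_{W^{1,\infty}}$ bounded uniformly, Arzel\`a--Ascoli gives a subsequence converging uniformly, and weakly-$*$ in $W^{1,\infty}$, to some $\rho_{0}\in\mathcal{A}$. Lower semicontinuity of the convex part gives $E(\rho_{0})\le\liminf E^{\epsilon}(\rho_{0}^{\epsilon})$, using $\epsilon\int\rho'^{2}\ge0$. For any competitor $\rho\in\mathcal{A}\cap H^{1}$, minimality gives
\[
E^{\epsilon}(\rho_{0}^{\epsilon})\le E(\rho)+\epsilon\|\rho'\|_{L^{2}}^{2}\to E(\rho).
\]
Hence $E(\rho_{0})\le E(\rho)$, so $\rho_{0}$ is a minimizer of $E$ under \eqref{equ:C}. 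By density, the same conclusion holds over the natural $W^{1,1}$ or $BV$ class.
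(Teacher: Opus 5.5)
You have located the weak point correctly, and it is a genuine gap in the bullet the theorem rests on. Without a uniform Lipschitz bound you only get a $BV$ limit. There the endpoint term $-[\![\gamma]\!](\rho(\theta_2)+\rho(\pi-\theta_1))$ is not weak-$*$ continuous, so the limit would minimize a relaxed functional rather than $E$.

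Your existence and $\epsilon\to0$ steps are otherwise sound and match the paper's. Two small points: in the sessile case only $\sin\theta\ge0$ is available, though that suffices; and you must check $\rho_0^\epsilon>0$ before writing the Euler--Lagrange equation, since $\mathcal{A}$ carries $\rho\ge0$. Bounding $\lambda_\epsilon$ by testing the equation against $\rho$ is cleaner than the paper's separate sign arguments for $P_0^\epsilon$.

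None of your suggested fixes closes the gap.
\begin{itemize}
\item Integrating $\frac{d}{d\theta}\phi(\rho')=F$ across all of $I$, with $|F|\le K:=g\|\rho\|_{\infty}^{2}+\sigma+|\lambda_\epsilon|\|\rho\|_{\infty}$, gives $|\phi(\rho')|\le|[\![\gamma]\!]|+K|I|$. Nothing forces this below $\sigma$, so you only get $|\rho'|\lesssim\epsilon^{-1}$.
\item A lower bound $\rho\ge c$ is beside the point: a radial tangent means $|\rho'|/\rho\to\infty$, which is compatible with $\rho\ge c$. Comparing with the equilibrium of \cite{Yang} would also need a comparison principle you do not have.
\item At an interior extremum of $\rho'$, the undifferentiated equation only says $\lambda_\epsilon=g\rho\sin\theta+\sigma(\rho^{2}+2\rho'^{2})(\rho^{2}+\rho'^{2})^{-3/2}$. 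This is perfectly consistent with $|\rho'|\to\infty$.
\end{itemize}

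The missing idea is the paper's maximum principle for the \emph{differentiated} equation (its Theorem 3.5). Divide the Euler--Lagrange equation by $\rho$, so that $\lambda_\epsilon$ drops out under differentiation. Differentiate once and evaluate at an interior maximum of $\rho'$, where $\rho''=0$:
\[
B\,\rho'''=g\rho'\sin\theta+g\rho\cos\theta-\sigma\frac{\rho\rho'(\rho^{2}+4\rho'^{2})}{(\rho^{2}+\rho'^{2})^{5/2}},\qquad B=\frac{\sigma\rho}{(\rho^{2}+\rho'^{2})^{3/2}}+\frac{2\epsilon}{\rho}>0.
\]
The last term is $O(\rho/\rho'^{2})$. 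So if $\rho'$ is large at a point where $\sin\theta\ge s_0>0$, gravity forces $\rho'''>0$, contradicting maximality; minima are handled symmetrically. When $\theta_1,\theta_2>0$, this together with the contact condition at the endpoints finishes the proof.

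In the sessile case $\sin\theta$ degenerates at the endpoints. The paper handles this by first proving $g\rho_\epsilon\sin\theta\le P_0^\epsilon$ via a maximum principle at the top of the drop. It then runs a Gronwall argument on the Riccati inequality $(\rho'/\rho)'\le1+(\rho'/\rho)^{2}$.

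Your own computation actually suffices near the endpoints if you use it only on short intervals. Since $\sigma\rho'/\sqrt{\rho^{2}+\rho'^{2}}$ and $2\epsilon\rho'$ have the same sign, take $\delta=(\sigma-|[\![\gamma]\!]|)/(2K)$. On $[\theta_2,\theta_2+\delta]$ and $[\pi-\theta_1-\delta,\pi-\theta_1]$ you get $\sigma|\rho'|/\sqrt{\rho^{2}+\rho'^{2}}\le|\phi(\rho')|\le\tfrac12(\sigma+|[\![\gamma]\!]|)$, i.e.\ $|\rho'|\le C\rho$ uniformly in $\epsilon$.

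A global extremum of $\rho'$ then lies either in these strips or at an interior critical point of $[\theta_2+\delta,\pi-\theta_1-\delta]$. At such a point the argument above applies with $s_0=\sin\delta$, and the second bullet follows.
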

The proof of the theorem will be given in Section 3.

\subsection{The Positivity Argument}

 We now restrict our attention to the sessile drop case by assuming $\theta_{1}=\theta_{2}=0$, and we maintain this assumption throughout the remainder of this section. Since $\rho_{0}$ minimizes the energy functional $E$, it satisfies
\begin{align}
E'(\rho_{0}) = 0,
\end{align}
and
\begin{align}
E''(\rho_{0}) \ge 0,
\end{align}
where $E'$ and $E''$ denote the first and second variations of $E$, respectively.

It remains unclear, however, whether the second variation is strictly positive, a property that is crucial for the analysis of dynamical stability. When $\theta_{1}=\theta_{2}=0$, the energy functional $E$ is invariant under horizontal translations, which implies that the second variation admits a nontrivial kernel. Consequently, $E''$ is not positive definite with respect to arbitrary perturbations in polar coordinates. It is therefore essential to identify this kernel and develop a mechanism to remove its influence. We summarize this observation in the following theorem, whose proof is deferred to Section~5.

 \begin{theorem}
     The following statements hold:

     $\bullet$ The horizontal shifting function of $\rho_{0}$, denoted by $\xi_{s}$, is given explicitly by
     \begin{align}
         \xi_{s}=\cos\theta+\frac{\rho_{0}'}{\rho_{0}}\sin\theta
     \end{align}

     $\bullet$ Suppose that $\xi$ is a function satisfying the orthogonality condition $\int_{0}^{\pi}\xi\rho_{0}=0$. Then
    \begin{align}
        E^{\prime \prime}(\rho_{0})\big(\xi,\xi\big)=0
    \end{align}
    \noindent if and only if $\rho=C\xi_{s}$ for some constant $C$. In other words
    \begin{align}
        \ker\{E^{\prime\prime}(\rho_{0})\}=\operatorname{span}\{\xi_{s}\}.
    \end{align}
 \end{theorem}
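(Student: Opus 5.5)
We split the argument into an explicit computation of the translation mode and a Sturm--Liouville analysis of the constrained second variation.

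\textbf{Step 1: the shifting function.} Translate the droplet horizontally by $s$. The new boundary is $\{(\rho_0(\theta)\cos\theta+s,\ \rho_0(\theta)\sin\theta)\}$, and we write it in polar form as $r=\rho_s(\theta)$. The point at angle $\theta$ on the shifted curve satisfies
\begin{align*}
\rho_s(\theta)\cos\theta=\rho_0(\phi)\cos\phi+s,\qquad \rho_s(\theta)\sin\theta=\rho_0(\phi)\sin\phi ,
\end{align*}
where $\phi=\phi(\theta,s)$ and $\phi(\theta,0)=\theta$. We differentiate both relations in $s$ at $s=0$ and eliminate $\partial_s\phi$. Multiplying the first by $\cos\theta$, the second by $\sin\theta$ and adding gives $\partial_s\rho_s|_{s=0}=\cos\theta+\rho_0\,\partial_s\phi\cdot 0+\dots$. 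Carrying this out carefully yields $\partial_s\rho_s\cdot\rho_0=\rho_0\cos\theta+\rho_0'\sin\theta$.

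The natural variable for the variation is $\xi=\partial_s\rho_s/\rho_0$, because the volume is $\int\rho^2$. In this normalization $\delta\rho=\rho_0\xi$, and $\xi_s=\cos\theta+\frac{\rho_0'}{\rho_0}\sin\theta$. We then check $\int_0^\pi\xi_s\rho_0^2\,d\theta=\frac{d}{ds}V=0$, so $\xi_s$ satisfies the orthogonality condition in the statement's normalization. Since the equilibrium equations \eqref{equ:equi} with $\theta_1=\theta_2=0$ are translation invariant, $\rho_s$ solves them with the same $P_0$. Differentiating in $s$ shows that $\xi_s$ solves the linearized Euler--Lagrange system $L\xi_s=\text{const}$ together with the linearized contact-angle conditions. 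Hence $\xi_s\in\ker E''(\rho_0)$ on the constrained space.

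\textbf{Step 2: structure of $E''$.} We compute $E''(\rho_0)(\xi,\xi)$ with the Lagrange multiplier term $-P_0\int\rho^2$ included. The result is a quadratic form $\int_0^\pi\bigl(a(\theta)\xi'^2+b(\theta)\xi^2\bigr)\,d\theta$ plus boundary terms, where $a=\sigma\rho_0^4/(\rho_0^2+\rho_0'^2)^{3/2}>0$ after the substitution $\delta\rho=\rho_0\xi$. It is cleaner, and we will do this, to parametrize perturbations by normal displacement along the arc length of the equilibrium curve. Since $\rho_0$ is a minimizer, it solves the constant-curvature-plus-gravity ODE, and $E''$ then becomes the standard Jacobi form
\begin{align*}
Q(w)=\int_\Sigma\Bigl(\sigma|w_{\ell}|^2-\sigma\kappa^2w^2+g\,\nu_2\,w^2\Bigr)\,d\ell+\text{boundary terms at the contact points},
\end{align*}
restricted to $\int_\Sigma w\,d\ell=0$. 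The condition $Q(w)=0$ together with $Q\ge0$ (which holds because $\rho_0$ is a minimizer) forces $w$ to be a minimizer of $Q$ on the constraint space. It is therefore a weak solution of $Jw=\lambda$ with the Robin-type boundary conditions coming from the contact-angle terms, for a Lagrange constant $\lambda$.

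\textbf{Step 3: dimension of the kernel.} This step is the main obstacle. The solution space of $Jw=\lambda$ is a second-order ODE problem, so the solutions $w$ form an affine space of dimension $2$ plus the one-parameter $\lambda$ direction. We impose the two boundary conditions and the mean-zero constraint, and we must show the remaining solution space is one-dimensional. We would use the known Jacobi fields of the family of equilibria: the translation field $\nu_1$, which corresponds to $\xi_s$, and the field $\partial_V$ obtained by varying the volume in the existence result of \cite{Yang}. The latter solves $Jw=\text{const}\neq0$ with the boundary conditions but has nonzero mean.

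Any kernel element $w$ is then a combination $w=c_1w_1+c_2w_2+c_3\partial_V$, where $w_1,w_2$ span the homogeneous ODE solutions. The boundary conditions cut out a subspace, and the mean-zero condition removes the $\partial_V$ component. Uniqueness of equilibria for prescribed contact points (quoted from \cite{Yang}) implies nondegeneracy: a kernel element vanishing at both contact points must be trivial. This leaves only $C\xi_s$.

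The delicate points are two. First, we must verify the nondegeneracy from the uniqueness statement, which needs an implicit-function or monotonicity argument in the shooting parameter. Second, we must show $\partial_V$ genuinely has nonzero mean, which follows from $\frac{d}{dV}\int\rho^2=1$.
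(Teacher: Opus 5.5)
The gap is in Step 3. It is not the point you flag as delicate; it is the one you dismiss in a line.

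\textbf{The nondegeneracy step is elementary.} You try to extract nondegeneracy from uniqueness of equilibria, but you do not need that input. It would not work anyway: uniqueness for a nonlinear problem does not give a nondegenerate linearization, and vanishing at the contact points is not the boundary condition kernel elements satisfy. Write $H(f,g)=\int_0^\pi (Lf)\,g\,d\theta$ plus boundary terms. In polar form $L$ is a regular second-order operator on $[0,\pi]$, with leading coefficient a positive multiple of $\rho_0/(\rho_0^2+\rho_0'^2)^{3/2}$. The natural boundary conditions are $\rho_0\xi'-\rho_0'\xi=0$ at $\theta=0$ and at $\theta=\pi$. The condition at $\theta=0$ alone cuts the solutions of $L\xi=0$ down to one line, which must be $\mathrm{span}\{\xi_s\}$.

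\textbf{Reduction to a single scalar condition.} Let $z$ solve $Lz=\rho_0$ (\eqref{equ:3.2.7} with $C=1$) together with the condition at $\theta=0$. Green's identity against $\xi_s$, using $\int_0^\pi\rho_0\xi_s\,d\theta=0$ and $\xi_s(\pi)=-1$, shows that $z$ satisfies the condition at $\theta=\pi$ automatically. Hence every kernel candidate has the form $a\xi_s+bz$, and the theorem is \emph{equivalent} to $\int_0^\pi\rho_0 z\,d\theta\neq0$. Minimality cannot decide this. Indeed $F_0(z)=H(z,z)=\int_0^\pi\rho_0z\,d\theta$, so if this integral vanished, $z$ would be a second element of $B$ with $F_0(z)=0$. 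That is perfectly compatible with $F_0\ge0$ on $B$.

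\textbf{Why your treatment of this point is circular.} You settle it by differentiating a family of equilibria in $V$ and invoking $\frac{d}{dV}\int\rho^2=1$. The existence result of \cite{Yang} gives an equilibrium for each $V$, not a $C^1$ dependence on $V$. Building such a family by the implicit function theorem requires exactly the nondegeneracy you are proving. The family that is smooth for free comes from shooting the Euler--Lagrange ODE from the apex in the apex curvature $b$. Differentiating it in $b$ produces $z$, up to normalization and a multiple of $\xi_s$, and then $\int_0^\pi\rho_0z\,d\theta$ is a nonzero multiple of $dV/db$.

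\textbf{What is missing.} You need an actual proof that the volume varies nondegenerately along the family of sessile equilibria, i.e.\ $dV/db\neq0$ (equivalently $dV/dP_0\neq0$), or some other argument that $\int_0^\pi\rho_0z\,d\theta\neq0$. This is the alternative that the paper's reduction of order $\xi=\chi\xi_s$, with its analysis at the zero $\theta=\pi/2$ of $\xi_s$, is meant to exclude. Your argument leaves it open.

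\textbf{A smaller slip in Step 1.} Your computation gives $\partial_s\rho_s|_{s=0}=\cos\theta+\frac{\rho_0'}{\rho_0}\sin\theta$, which is $\xi_s$ as a perturbation of $\rho$ itself. Dividing by $\rho_0$ would produce $\xi_s/\rho_0$ instead. The correct volume check is $\frac{d}{ds}\int_0^\pi\rho_s^2\,d\theta=2\int_0^\pi\rho_0\xi_s\,d\theta=2\int_0^\pi(\rho_0\sin\theta)'\,d\theta=0$. This is exactly the constraint $\int_0^\pi\xi\rho_0\,d\theta=0$ in the statement.
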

 
 \noindent This theorem shows that the translation mode $\xi_{s}$ is the unique element spanning the kernel of the second-order functional derivative. To remove the influence of this degeneracy, we employ a moving polar coordinate framework. This is an essential and original aspect of this work.

\subsection{Geometric reformulation} To analyze the PDE system \eqref{equ:fix_1}, we reformulate the equations on the equilibrium domain $\Omega_{0}$. The main idea is to introduce a time-dependent diffeomorphism that maps the moving domain $\Omega(t)$ onto the fixed equilibrium domain $\Omega_{0}$. We denote this diffeomorphism by $\Phi(t)$ and define $\mathcal{A}(t)=((\nabla \Phi(t))^{-1})^{T}$. The detailed construction of this diffeomorphism will be presented in Section~4. At this stage, we state only the reformulated system obtained by applying this transformation to \eqref{equ:fix_1}.
\begin{equation}{\label{equ:fix_2}}
    \begin{cases}
        \partial_{t}u-(\cos\theta W\partial_{t}\bar{\rho},\sin\theta W\partial_{t}\bar{\rho})\mathcal{A}(\partial_{x}u,\partial_{y}u)^{T}+u\cdot \nabla_{\mathcal{A}} u+\dive_{\mathcal{A}}S_{\mathcal{A}}(P,u)=0~~~&\operatorname{in}~\Omega \\
        \dive_{\mathcal{A}} u=0&\operatorname{in}~\Omega\\
        S_{\mathcal{A}}(P,u)\mathcal{N}=(g\rho\sin\theta-\sigma\mathcal{H}(\rho)))\mathcal{N}~~&\operatorname{on}~\Sigma\\
        \partial_{t}\rho=\frac{1}{\rho}u\cdot \mathcal{N}~~&\operatorname{on}~\Sigma\\
        (S_{\mathcal{A}}(P,u)\cdot \nu-\beta u)\cdot \tau=0~~&\operatorname{on}~\Sigma_{s}\\
        u\cdot \nu=0~&\operatorname{on}~\Sigma_{s}\\
       {\mathcal{W}}(\p_{t}\rho(\pi,t))=\sigma\frac{\rho'}{(\rho^{2}+\rho'^{2})^{\frac{1}{2}}}(\pi,t)+[\![\gamma]\!]\\
        {\mathcal{W}}(\partial_{t}\rho(0,t))=-(\sigma\frac{\rho'}{(\rho^{2}+\rho'^{2})^{\frac{3}{2}}}(0,t))+[\![\gamma]\!].
    \end{cases}
\end{equation}
\noindent Here, $W=\frac{\bar{\phi}}{\rho_{0}}$ for a smooth cutoff function $\phi$ to be defined in Section 4,  $\mathcal{N}=-\rho'(\theta)\hat{e}_{\phi}+\rho(\theta)\hat{e}_{r}$, and ${\rho_{0}}$ and $\hat{e}_{r}$, $\hat{e}_{\phi}$ denote unit vectors in the angular direction and the radical direction, respectively. Moreover, in equation \eqref{equ:fix_2},  the definition of differential operator $\nabla_{\mathcal{A}}$, $\dive_{\mathcal{A}}$ and $\Delta_{\mathcal{A}}f=\nabla_{\mathcal{A}}\cdot \nabla_{\mathcal{A}}f$ and $\mathbb{D}_{\mathcal{A}}$ are given as follows:
\begin{align}
    (\nabla_{\mathcal{A}}f)_{i}=\mathcal{A}_{ij}\p_{j}f, ~\nabla_{\mathcal{A}}\cdot g=\mathcal{A}_{ij}\p_{j}g_{i},~\Delta_{\mathcal{A}}f=\nabla_{\mathcal{A}}\cdot\nabla_{\mathcal{A}}f,~\operatorname{and} ~(\mathbb{D}u)_{ij}=\mathcal{A}_{ik}\p_{k}u_{j}+\mathcal{A}_{jk}\p_{k}u_{i}
\end{align}

\subsection{Moving polar coordinates} Our major innovation of this paper is introducing the moving polar coordinates. For the sessile drop on the flat plane, it can move horizontally, leading to configurations in which the new free surface can no longer be represented within the original polar coordinate frame. Because of this difficulty, we  introduce a moving polar to solve this problem. Suppose that at time $t=0$, the pole of the coordinate system is located at $(0,0)$. Let $\rho(\theta,t)$ denote the solution function to the equation system \eqref{equ:fix_2}. Then at later time $t$, we construct a new polar coordinates centered at the polar point $(\mathfrak{n}(t),0)$, where $\mathfrak{n}$ satisfies the following \textbf{orthogonality condition with respect to the kernel $\xi_{s}$}
\begin{align}{\label{equ:m}}
    \int_{0}^{\pi}(\rho_{0,\mathfrak{n}(t)}(\theta)-\rho(t,\theta))^{2}d\theta=\min_{c\in(x_{1}(t),x_{2}(t))}\int_{0}^{\pi}(\rho_{0,c}(\theta)-\rho(t,\theta))^{2}d\theta
\end{align}
\noindent In equation \eqref{equ:m}, $x_{1}(t)$, $x_{2}(t)$ are two contact points at time $t$ and $\rho_{0,c}(\theta)$ is the equilibrium centered at the pole $(c,0)$. 

Having identified the new polar center $\mathfrak{n}(t)$, we describe the free surface of the sessile drop in this moving coordinate system. For simplicity, we continue to denote the free surface by $\rho(t,\theta)$ in the moving polar coordinates, slightly abusing notation.

Moreover, the choice of $\mathfrak{n}(t)$ in \eqref{equ:m} is made so that the perturbation from equilibrium, defined by $\xi = \rho - \rho_{0}$, is orthogonal to the kernel of the second variation of the energy functional $E$, namely the translational mode $\xi_s$. This orthogonality ensures the positivity of the quadratic form associated with the second variation; that is,
\begin{align}{\label{equ:pos_0}}
    (\xi,\xi)_{1,\Sigma}\gtrsim \|\xi\|_{H^{1}}^{2}~~~\operatorname{if}~\int_{0}^{\pi}\xi \rho_{0}d\theta=0~~\operatorname{and}~~\int_{0}^{\pi}\xi\xi_{s}d\theta=0
\end{align}
\noindent where $(\xi,\xi)_{1,\Sigma}$ denotes the bilinear form of of the second order functional derivative of $E$ at point $\rho=\rho_{0}$. It can be expressed as follows
\begin{align}
    (\rho_1,\rho_2)_{1,\Sigma}=E^{\prime \prime}(\rho_{0})(\rho_{1},\rho_{2})=&g\int_{0}^{\pi}\rho_{0}\rho_1\rho_2\sin\theta d\theta+\sigma\int_{0}^{\pi}\frac{\rho_{0}\rho_1'\rho_2'}{(\rho_{0}^{2}+\rho_{0}'^{2})^{\frac{3}{2}}}d\theta-\sigma\int_{0}^{\pi}\frac{\rho_{0}'\rho_1'\rho_2}{(\rho_{0}^{2}+\rho_{0}'^{2})^{\frac{3}{2}}}d\theta \notag\\
    &-\sigma \int_{0}^{\pi}\frac{\rho_{0}'\rho_1\rho_2'}{(\rho_{0}^{2}+\rho_{0}'^{2})^{\frac{3}{2}}}d\theta+\sigma\int_{0}^{\pi}\frac{\rho_{0}''\rho_{0}-\rho_{0}'^{2}-\rho_{0}^{2}}{(\rho_{0}^{2}+\rho_{0}'^{2})^{\frac{3}{2}}}\rho_1\rho_2d\theta
\end{align}
\noindent The positivity of this bilinear form is far from evident at first glance, and the evolution law for $\mathfrak{n}(t)$ cannot be derived directly from the definition \eqref{equ:m}. We shall carefully design a dynamic scheme for the evolution of the polar point and rigorously prove the positivity of $(\xi,\xi)_{1,\Sigma}$. The details of these discussion will be given in Section 5.


The positivity argument and the introduction of the moving polar coordinate framework are two of the major technical challenges of this work, and they constitute the main distinction between the present analysis and that of Guo-Tice \cite{Guo}. These ingredients are essential for establishing the a priori estimates. After applying this new moving polar coordinate, we transform the equation system \eqref{equ:fix_2} to the following system

\begin{equation}{\label{equ:mov}}
     \begin{cases}
        \partial_{t}u-(\cos\theta W\partial_{t}\bar{\xi},\sin\theta W\partial_{t}\bar{\xi})\mathcal{A}(\partial_{x}u,\partial_{y}u)^{T}-\mathfrak{n}'(t)\p_{x}u+u\cdot \nabla_{\mathcal{A}}u+\dive_{\mathcal{A}}S_{\mathcal{A}}(p,u)=0~~~&in~\Omega \\
        \dive_{\mathcal{A}}u=0&in~\Omega\\
        S_{\mathcal{A}}(P,u)\mathcal{N}=(g\rho\sin\theta-\sigma \mathcal{H}(\rho))\mathcal{N}~~&on~\Sigma\\
        \partial_{t}\rho+\mathfrak{{n}}^{\prime}(t)\xi_s=\frac{1}{\rho_{0}}u\cdot \mathcal{N}+G~~&on~\Sigma\\
        (S_{\mathcal{A}}(P,u)\cdot \nu-\beta u)\cdot \tau=0~~&on~\Sigma_{s}\\
        u\cdot \nu=0~&on~\Sigma_{s}\\
        {\mathcal{W}}(\p_{t}\rho(\pi,t)+\mathfrak{n}'(t))=\sigma\frac{\rho'}{(\rho^{2}+\rho'^{2})^{\frac{1}{2}}}(\pi,t)+[\![\gamma]\!]\\
        {\mathcal{W}}(\partial_{t}\rho(0,t)-\mathfrak{n}'(t))=-\sigma\frac{\rho'}{(\rho^{2}+\rho'^{2})^{\frac{3}{2}}}(0,t))+[\![\gamma]\!],
    \end{cases}
\end{equation}
\noindent coupled with dynamic for $\mathfrak{n}(t)$
\begin{equation}
    \mathfrak{n}^{\prime}(t)=\lambda \int_{0}^{\pi} \frac{1}{\rho}u\cdot \mathcal{N}\xi_{s}d\theta,
\end{equation}
\noindent where $G$ and $\lambda$ are functions determined by $\rho$ and $u$. Their explicit expressions will be provided in Section~5.

\subsection{Perturbation form} We now study solutions to the full problem as perturbations of the equilibrium state $(0, P_{0}, \rho_{0})$ obtained in the previous section. To this end, we introduce the perturbation variables
\begin{align}
p(t) := P(t) - P_{0}, \qquad \xi(t) := \rho(t) - \rho_{0}, \qquad u := u - 0,
\end{align}
and rewrite the system \eqref{equ:mov} in terms of the variables $(u, p, \xi)$ as follows:
\begin{equation}{\label{equ:mov_1}}
     \begin{cases}
        \partial_{t}u-(\cos\theta W\partial_{t}\bar{\xi},\sin\theta W\partial_{t}\bar{\xi})\mathcal{A}(\partial_{x}u,\partial_{y}u)^{T}-\mathfrak{n}'(t)\p_{x}u+u\cdot \nabla_{\mathcal{A}}u+\dive_{\mathcal{A}}S_{\mathcal{A}}(p,u)=0~~~&in~\Omega \\
        \dive_{\mathcal{A}}u=0&in~\Omega\\
        S_{\mathcal{A}}(p,u)\mathcal{N}=(g\xi\sin\theta+\sigma(\mathcal{P}_1(\rho_0,\rho_0')\xi+\mathcal{P}_{2}(\rho_{0},\rho_{0}')\xi'-\frac{1}{\rho_{0}}\partial_{\theta}(\frac{\rho_{0}^{2}\xi'}{(\rho_{0}^{2}+\rho_{0}'^{2})^{\frac{3}{2}}}-\frac{\rho_{0}'\rho_{0}\xi}{(\rho_{0}^{2}+\rho_{0}'^{2})^{\frac{3}{2}}}+\mathcal{R}_{1})+\mathcal{R}_{2})\mathcal{N}~~&on~\Sigma\\
        \partial_{t}\xi+\mathfrak{n}^{\prime}(t)\xi_s=\frac{1}{\rho_{0}}u\cdot \mathcal{N}+G~~&on~\Sigma\\
        (S_{\mathcal{A}}(p,u)\cdot \nu-\beta u)\cdot \tau=0~~&on~\Sigma_{s}\\
        u\cdot \nu=0~&on~\Sigma_{s}\\
        \kappa\partial_{t}\xi(\pi,t)+\kappa\mathfrak{n}^{\prime}(t)=\sigma\frac{\rho_{0}^{2}\xi'}{(\rho_{0}^{2}+\rho_{0}'^{2})^{\frac{3}{2}}}(\pi,t)-\sigma\frac{\rho_0'\rho_{0}\xi}{(\rho_0^{2}+\rho_{0}^{2})^{\frac{3}{2}}}(\pi,t)\\
        \kappa\partial_{t}\xi(0,t)-\kappa\mathfrak{n}^{\prime}(t)=-(\sigma\frac{\rho_{0}^{2}\xi'}{(\rho_{0}^{2}+\rho_{0}'^{2})^{\frac{3}{2}}}(0,t)-\sigma\frac{\rho_0'\rho_{0}\xi}{(\rho_0^{2}+\rho_{0}^{2})^{\frac{3}{2}}}(0,t)),
    \end{cases}
\end{equation}
\noindent coupled with dynamic for $\mathfrak{n}(t)$:
\begin{equation}
    \mathfrak{n}^{\prime}(t)=\lambda \int_{0}^{\pi} \frac{1}{\rho}u\cdot .\mathcal{N}\xi_{s}d\theta
\end{equation}
The explicit forms of $\mathcal{P}_{1}$, $\mathcal{P}_{2}$, $\mathcal{R}_{1}$ and $\mathcal{R}_{2}$ will be given in Section 4. Moreover, to derive perturbed system \eqref{equ:mov_1}, we expand all nonlinear terms using Taylor series around the equilibrium state. We will give all of the detailed computation in Section 4. 

\subsection{Main results and the structure of this paper}
In order to state our main results, we must first define a number of energy
and dissipation functionals. Following the framework developed in \cite{Guo} and \cite{Guo2024}, the apriori estimate and stability result can be developed based on the second-order energy-dissipation structure. Therefore, we define the corresponding energy and dissipation separately. 

Suppose that $\theta_{eq}\in (0,\pi)$ and $\omega=\pi-\theta_{eq}$. We define the basic or parallel energy by
\begin{align}
        \mathcal{E}_{||}=\sum_{k=0}^{2}\vert \vert \partial_{t}^{k}u\vert \vert^{2}_{L^{2}}+\vert \vert \partial_{t}^{k}\xi\vert \vert_{H^{1}}^{2}.
    \end{align}

    \noindent Then improved energy is defined by

    \begin{align}{\label{equ:energy_0}}
        \mathcal{E}=\mathcal{E}_{||}+\vert \vert u\vert \vert_{W^{2,q_{+}}}^{2}+\vert \vert \partial_{t}u\vert \vert^{2}_{H^{1+\frac{\epsilon_{-}}{2}}}+\vert \vert \partial_{t}^{2}u\vert \vert^{2}_{H^{0}}+\vert \vert p\vert \vert^{2}_{W^{1,q_{+}}}+\vert \vert \partial_{t}p\vert \vert_{L^{2}}^{2}\notag\\
        +|\mathfrak{n}^{\prime}(t)|^{2}+|\mathfrak{n}^{\prime\prime}(t)|^{2}+\vert \vert \xi\vert \vert^{2}_{W^{3-\frac{1}{q_{+}}}}+\vert \vert \partial_{t}\xi\vert \vert_{H^{\frac{3}{2}+\frac{\epsilon_{-}-\alpha}{2}}}^{2}+\vert \vert \partial_{t}^{2}\xi\vert \vert^{2}_{H^{1}}
    \end{align}

    \noindent Next, we define the parallel dissipation functional as follows

    \begin{align}
        \mathcal{D}_{||}=\sum_{k=0}^{2}\vert \vert \partial_{t}^{k}u\vert \vert^{2}_{H^{1}}+\vert \vert \partial_{t}^{k}u\vert\vert^{2}_{L^{2}}+[\p_{t}^{k}u\cdot \mathcal{N}]_{\theta}^{2},
    \end{align}

    \noindent and the improved dissipation by
    
    \begin{align}{\label{equ:dis_0}}
        \mathcal{D}=\mathcal{D}_{||}+\vert \vert u\vert \vert^{2}_{W^{2,q_{+}}}+\vert \vert \partial_{t}u\vert \vert^{2}_{W^{2,q_{-}}}+\sum_{k=0}^{2}[\partial_{t}^{k+1}\xi]^{2}_{\theta}+\sum_{k=0}^{2}[\partial_{t}^{k}\partial_{\theta}\xi]_{\theta}^{2}+\vert \vert p\vert \vert^{2}_{W^{1,q_{+}}}+\vert \vert \partial_{t}p\vert \vert^{2}_{W^{1,q_{-}}}\notag\\
        +\sum_{k=0}^{2}|\mathfrak{n}^{(i+1)}|^{2}+\sum_{k=0}^{2}\vert \vert \partial_{t}^{k}\xi\vert \vert^{2}_{H^{\frac{3}{2}-\alpha}}+\vert \vert \xi\vert \vert_{W^{3-\frac{1}{q_{+}},q_{+}}}^{2}+\vert \vert \partial_{t}\xi\vert \vert^{2}_{W^{3-\frac{1}{q_{-}},q_{-}}}+\vert \vert \partial_{t}^{3}\xi\vert \vert^{2}_{H^{\frac{1}{2}-\alpha}}.
    \end{align}
    \noindent where $[f,g]_{\theta}=f(\pi)g(\pi)+f(0)g(0)$ and $[f]_{\theta}=\sqrt{[f,f]_{\theta}}$.  The exponents $q_{+},q_{-}$ are defined as follows
    \begin{align}
        q_{\pm}=\frac{2}{2-\varepsilon_{\pm}},
    \end{align}
    for two parameters $0<\varepsilon_{-}<\varepsilon_{+}\leq \varepsilon_{\operatorname{max}}$ where
    \begin{align}
        \varepsilon_{\operatorname{max}}=\varepsilon_{\max}(\omega_{eq})=\min\{1,-1+\frac{\pi}{\omega_{eq}}\}\in (0,1].
    \end{align}
    Moreover, 
    we assume that the constant $\alpha$ satisfies the following constraint
    \begin{align}{\label{equ:alpha}}
        0<\alpha<\varepsilon_{-}~~\operatorname{and}~~\alpha<\min\{\frac{1-\varepsilon_{+}}{2},\frac{\varepsilon_{+}-\varepsilon_{-}}{2}\}, ~and~\varepsilon_{+}\leq \frac{\varepsilon_{-}+1}{2}.
    \end{align}
    
    \textbf{Main Theorem} Our main result establishes an apriori estimate for solutions to \eqref{equ:mov}, which shows that if solutions exist in a time horizon $[0,T)$, then the energy decays exponentially. Moreover, we have quantitative estimates in terms of the initial data.

    \begin{theorem}{\label{thm:base_0}}
         Let $\omega_{eq}\in(0,\pi)$, $\epsilon_{-}$ and $\epsilon_{+}$ be defined as above. Suppose that $\alpha$ satisfies the constraint \eqref{equ:alpha}, and that $\mathcal{E}$ and $\mathcal{D}$ are energy and dissipation defined via \eqref{equ:energy_0} and \eqref{equ:dis_0}, respectively. Then there exists a universal constant $0<\delta_{0}<1$ such that if a solution to \eqref{equ:mov_1} exists on the time horizon $[0,T)$ for $0<T\leq \infty$ and obeys the estimate
    \begin{align}
        \sup_{0\leq t<T} \mathcal{E}(t)\leq \delta_{0},
    \end{align}
    \noindent {and the conservation of mass subject to the prescribed constant $V$}, then there exist universal constants $C$ and $\lambda>0$ such that
    \begin{align}
        \sup_{0\leq t<T}e^{\lambda t}\mathcal{E}(t)+\int_{0}^{T}\mathcal{D}dt\leq C\mathcal{E}(0).
    \end{align}
    
    \end{theorem}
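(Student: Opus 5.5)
The proof follows the Guo--Tice scheme of \cite{Guo,Guo2024}. Its aim is a closed differential inequality of the form
\begin{align*}
\frac{d}{dt}\bar{\mathcal{E}}+\mathcal{D}\lesssim 0, \qquad \bar{\mathcal{E}}\simeq\mathcal{E},
\end{align*}
valid as long as $\sup_t\mathcal{E}\le\delta_0$, together with a coercivity bound $\mathcal{E}\lesssim\mathcal{D}$. Exponential decay then follows by Gronwall's inequality, and integrating the inequality gives the bound on $\int_0^T\mathcal{D}$. The argument has four steps, in the order below.

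\textbf{Step 1: basic energy--dissipation identity.} Apply $\partial_t^k$ for $k=0,1,2$ to \eqref{equ:mov_1}. Test the momentum equation with $\partial_t^k u$ and integrate by parts in $\Omega$ using $\dive_{\mathcal{A}}$. The stress boundary condition on $\Sigma$, combined with the kinematic equation $\partial_t\xi+\mathfrak{n}'\xi_s=\rho_0^{-1}u\cdot\mathcal{N}+G$, turns the surface terms into
\begin{align*}
\frac{d}{dt}\tfrac12(\partial_t^k\xi,\partial_t^k\xi)_{1,\Sigma}.
\end{align*}
The contact-point equations contribute $\kappa[\partial_t^{k+1}\xi]_\theta^2$, and the Navier-slip condition gives $\beta\|\partial_t^k u\|_{L^2(\Sigma_s)}^2$. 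The $\mathfrak{n}'$ terms should cancel or become controllable. Here two facts are used: $(\xi_s,\cdot)_{1,\Sigma}=0$, because $\xi_s$ spans the kernel of $E''(\rho_0)$ by the theorem in the section on the positivity argument; and the evolution law for $\mathfrak{n}'$ is designed to preserve orthogonality to $\xi_s$. The outcome is
\begin{align*}
\frac{d}{dt}\Big(\|\partial_t^ku\|^2_{L^2}+(\partial_t^k\xi,\partial_t^k\xi)_{1,\Sigma}\Big)+\mathcal{D}_{\|}\text{-terms}=\text{nonlinear interactions}.
\end{align*}
Coercivity of $(\cdot,\cdot)_{1,\Sigma}$ on $H^1$ comes from \eqref{equ:pos_0}. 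This needs $\int\xi\rho_0=0$ up to quadratic errors, which follows from mass conservation $\int\rho^2=V$, and orthogonality to $\xi_s$, which holds by the choice of $\mathfrak{n}(t)$. Then $\mathcal{E}_{\|}$ is equivalent to the time-differentiated energy, up to $O(\mathcal{E}^{3/2})$.

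\textbf{Step 2: enhancement to the full $\mathcal{E}$ and $\mathcal{D}$.} Regard \eqref{equ:mov_1} at fixed time as a Stokes problem with a capillary boundary operator on the corner domain $\Omega$ with opening angle $\omega_{eq}$. Elliptic theory in weighted and $W^{s,q}$ spaces, valid for $q_\pm<2/(2-\varepsilon_{\max})$ as in \cite{Guo}, gives
\begin{align*}
\|u\|_{W^{2,q_+}}+\|p\|_{W^{1,q_+}}+\|\xi\|_{W^{3-1/q_+,q_+}}
\end{align*}
in terms of $\partial_t u$, $\partial_t\xi$ and the nonlinearities, and similarly one time derivative higher with $q_-$. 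The fractional norms $H^{3/2-\alpha}$ of $\partial_t^k\xi$ come from the trace of $u\cdot\mathcal{N}$ via the kinematic equation. Then $\mathfrak{n}^{(k+1)}$ is bounded by $|\lambda|\,\|\partial_t^ku\|_{H^1}$ using the dynamic law. The constraints \eqref{equ:alpha} on $\alpha$ and $\varepsilon_\pm$ are exactly what make these embeddings close.

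\textbf{Step 3: nonlinear estimates.} Show that every interaction term is bounded by $\mathcal{E}^{\theta}\mathcal{D}$ for some $\theta>0$, and that $\bar{\mathcal{E}}-\mathcal{E}_{\|}$-type corrections are bounded by $\mathcal{E}^{3/2}$. This covers the terms $G$, $\mathcal{R}_1$, $\mathcal{R}_2$, the $\mathcal{A}$-perturbations, the transport terms with $W\partial_t\bar\xi$ and $\mathfrak{n}'\partial_x u$, and the $\mathcal{W}$-Taylor remainders. The same bounds are needed for $\lambda$ and its derivatives, where $\lambda$ must be shown smooth and bounded near equilibrium.

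\textbf{Step 4: closing.} Combine Steps 1--3 and absorb for $\delta_0$ small. The coercivity $\mathcal{E}\lesssim\mathcal{D}$ holds because $\mathcal{D}$ controls $\|\partial_t^k\xi\|_{H^{3/2-\alpha}}\ge\|\partial_t^k\xi\|_{H^1}$ and $\|\partial_t^ku\|_{H^1}$ controls the $L^2$ parts, with the $\partial_t^2u$ piece handled by Korn and Poincaré on $\Sigma_s$.

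The main obstacle is Step 1 at the level of the $\mathfrak{n}$-coupling. It requires showing that the terms generated by $\mathfrak{n}'\xi_s$ in the kinematic equation, and by $\pm\kappa\mathfrak{n}'$ at the contact points, either cancel exactly against the kernel structure or are quadratic. It also requires that the dynamically defined $\mathfrak{n}$ keeps $\int\xi\xi_s$ small enough, meaning quadratically small, for \eqref{equ:pos_0} to apply with a perturbative error. The first attempt would be to differentiate $\int\xi\xi_s\,d\theta$ in time, substitute the kinematic equation, and verify that the choice of $\lambda$ makes the linear part vanish.
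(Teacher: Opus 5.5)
Your architecture (energy identities at orders $0,1,2$, coercivity of $(\cdot,\cdot)_{1,\Sigma}$ after removing the $\rho_0$ and $\xi_s$ directions, elliptic upgrade, closing with $\mathcal{E}\lesssim\mathcal{D}$) matches the paper. However, Step 2 has a genuine gap: the dissipation norms $\|\partial_t^k\xi\|_{H^{3/2-\alpha}}$, $k=0,1,2$, do not ``come from the trace of $u\cdot\mathcal{N}$ via the kinematic equation.''

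The kinematic equation expresses only $\partial_t^{k+1}\xi$, never $\xi$ itself, through the trace of $\partial_t^k u$. With $\partial_t^k u\in H^1$, which is all $\mathcal{D}_{\|}$ provides, that trace lies in $H^{1/2}$. This is exactly how the paper obtains $\|\partial_t^3\xi\|_{H^{1/2-\alpha}}$ in \eqref{equ:5.1.307}, and it gives nothing better. Even the elliptic bound $u\in W^{2,q_\pm}$ only has trace in $H^{1/2+\varepsilon_\pm}$.

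The elliptic estimates cannot supply these norms either. There the kinematic condition enters as the data $G^3_+$, $G^{3,1}_+$, which must be bounded in $W^{2-1/q_\pm,q_\pm}$. The paper does this precisely through $H^{3/2-\alpha}\hookrightarrow W^{2-1/q_\pm,q_\pm}$ applied to $\partial_t\xi$ and $\partial_t^2\xi$. So these norms are inputs to Step 2, not outputs, and for $\partial_t^2\xi$ there is no elliptic estimate at all.

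The $\frac32$-derivative gain actually lives in the dynamic condition, where $\mathcal{K}(\partial_t^k\xi)$ balances the normal stress. Since $p$ and $\nabla u$ have no usable trace, it must be extracted by duality. This is the separate Guo--Tice ``$\frac32-\alpha$'' estimate, which the paper proves right after the elliptic theorems:
\begin{itemize}
\item Test the order-$k$ weak form with $\phi=M\nabla\psi$, where $\psi$ is harmonic with Neumann data $\rho_0D_j^s(\partial_t^k\xi-a_k\rho_0)/|\mathcal{N}_0|$ on $\Sigma$ as in \eqref{equ:5.1.287}, with $s=1-2\alpha$.
\item The operator $D_j^s$ comes from the functional calculus of $\mathcal{K}$ on $\tilde B$, which rests on the positivity you invoke.
\item The capillary term then becomes $\|D_j^{s/2}(\partial_t^k\xi-a_k\rho_0)\|_{1,\Sigma}^2\gtrsim\|\partial_t^k\xi\|_{H^{1+s/2}}^2-\mathcal{E}^2$.
\item The term $(\partial_t^{k+1}u,J\phi)$ is integrated by parts in time, and the contact-point and nonlinear terms are absorbed.
\end{itemize}
This bounds $\int_0^T\|\partial_t^k\xi\|_{H^{3/2-\alpha}}^2$ by endpoint values of $\mathcal{E}_{\|}$ plus $\int_0^T(\mathcal{D}_{\|}+\sqrt{\mathcal{E}}\mathcal{D})$. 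Only after this can your Step 2 close.

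A second, smaller omission is in Step 1 at $k=2$. Testing with $\partial_t^2u$ leaves $\int_\Omega J\,\partial_t^2p\,\dive_{\mathcal{A}}\partial_t^2u=\int_\Omega J\,\partial_t^2p\,b^{2,2}$, and $\partial_t^2p$ appears in neither $\mathcal{E}$ nor $\mathcal{D}$. The paper instead tests with $\partial_t^2u-\omega$, where $\omega\in H^1_0$ solves $J\dive_{\mathcal{A}}\omega=Jb^{2,2}-\langle Jb^{2,2}\rangle_\Omega$ (Lemma \ref{thm:B}). It then removes the remaining mean term by integrating by parts in time (Lemma \ref{lem:F_2}).
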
 
    
    The a priori estimates of this theorem may be coupled with a local existence theory to ensure that the small
energy condition is satisfied, provided the data are sufficiently small and all necessary compatibility conditions are satisfied. Such a local well-posedness theorem will not be given here to keep the present paper of reasonable length.  

    \textbf{Technical overview and layout of paper} Our strategy for proving theorems \ref{thm:base_0} is approximately following that employed for the vessel problem in \cite{Guo}. We establish a nonlinear energy estimate in Section 7.1 and Section 8 based on our energy-dissipation structure defined by equations \eqref{equ:energy_0} and \eqref{equ:dis_0}.  Then we use elliptic estimate and $\frac{3}{2}-\alpha$ estimate developed in Section 7.2 and Section 8 to close the energy-dissipation estimate. The theoretical foundations of this elliptic estimate and $\frac{3}{2}-\alpha$ estimate can be found in \cite{Guo}, and we refer to the introduction of \cite{Guo} for a summary of these underlying methods and techniques.

   There are, however, several new difficulties arising from the droplet geometry and the adoption of polar coordinates. 
   \begin{itemize}
       \item The first concerns the examination of $\ker\{E''(\rho_{0})\}$ and the positivity of the $(1,\Sigma)$ inner product introduced in Section 1.5. {In particular, in order to prove Theorem 1.2, we need to establish an ODE which the function in $\ker\{E''(\rho_{0})\}$ must satisfy. After deriving the math expression for $E''(\rho_{0})$, we derive the ODE \eqref{equ:3.2.7} combined with boundary conditions \eqref{equ:3.2.38},\eqref{equ:3.2.39} and \eqref{equ:3.2.40}. This derivation utilizes integration by parts and the non-negative property of $E''(\rho_{0})$. The resulting equation is a second-order ODE with an unknown parameter $C$. 
       
       Since $\xi_{s}\in \ker\{E''(\rho_{0})\}$, it is a solution to ODE \eqref{equ:3.2.7} and subject to the required boundary condition. Consequently, any solution $\xi$ to \eqref{equ:3.2.7} can be expressed as the following form
       \begin{align}
           \xi(\theta)=\chi(\theta)\xi_{s}(\theta),
       \end{align}
       \noindent where $\chi(\theta)$ is the solution function to the following ODE
       \begin{align}
           \chi''(\theta)+Q(\theta)\chi'(\theta)=C.
       \end{align}
       In the equation above, $Q(\theta)$ is a function depending on $\rho_{0}$ and $\xi_{s}$. This can be interpreted as a first-order ODE for $\chi'(\theta)$. Thus, we obtain the explicit form of $\chi$ and $\xi$ by solving this ODE.
       
       Finally, by requiring the solution function $\xi$ satisfying the boundary condition and the principle of conservation of mass, we demonstrate that $\xi=D\xi_{s}$ for some constant $D$. The detailed computation will be given in Section 5. }
       \item The second difficulty is that all estimates must be carried out in polar coordinates, where expressions for both the $(1,\Sigma)$ inner product and the horizontal shifting function $\xi_{s}$ become considerably more intricate. Moreover, energy estimates in polar coordinates require particular care to ensure consistency with the underlying geometry. We shall use the whole section 2 to establish the dynamics in polar coordinates and derive the following zero-order energy-dissipation relation.
       \begin{equation*}
    \frac{d}{dt}(\int_{\Omega(t)}\frac{1}{2}\vert u\vert^{2}+\mathcal{F}(\rho))+\int_{\Omega(t)}\frac{\mu}{2}\vert \mathbb{D}u(t)\vert^{2}+\int_{\Sigma_{s}(t)}\frac{\beta}{2}\vert u\vert^{2}+\partial_{t}LW(\partial_{t}L)+\partial_{t}RW(\partial_{t}R)=0
     \end{equation*}

\noindent where

\begin{align*}
\mathcal{F}(\rho)=\frac{g}{3}\int_{\theta_2}^{\pi-\theta_1}\rho^{3}+\sigma\sqrt{\rho^{2}+\rho'^{2}}d\theta-[\![\gamma]\!](\rho(\pi-\theta_1)+\rho(\theta_2))
\end{align*}

       \item The final difficulty concerns the new function $\mathfrak{n}(t)$ introduced in Section 1.5, which represents the position of the polar point. Both $\mathfrak{n}(t)$ and its time derivatives are hard to determine. {We will solve this problem by first selecting a new reference frame that moves horizontally at a speed of $\mathfrak{n}'(t)$. Subsequently, we will transform the kinematic boundary condition from the static system to this moving reference frame, which can be expressed as follows
       \begin{align}{\label{equ:difficulty_1}}
           \p_{t}\rho=\frac{1}{\rho}(u-(\mathfrak{n}'(t),0))\cdot \mathcal{N}
       \end{align}
       \noindent This equation establishes a relation among $\mathfrak{n}^{\prime}$, $u$, $\rho$, and $\p_{t}\rho$. However, this relation is not ideal since it contains a first-order term $\p_{t}\rho$. To eliminate this term, we shall test equation \eqref{equ:difficulty_1} by the test function $\xi_{s}$ and integrate the equation from $0$ to $\pi$.
       Using the orthogonality relation, the term
       \begin{align}
           \int_{0}^{\pi}\p_{t}\rho\xi_{s}d\theta
       \end{align} 
       vanishes. Therefore, through more detailed computations presented in Section 5, we derive the ODE \eqref{equ:4.1.18} for $\mathfrak{n}(t)$.}
   \end{itemize}

    Except for the Section 7 and Section 8, which contain the major a priori estimate,  the rest of the paper is organized as follows. In Section 2 we perform some preliminary computations to reformulate the free-boundary Navier-Stokes equation system from Cartesian coordinates into polar coordinates. In Section 3, we show that the steady state $\rho_{0}$ derived in \cite{Yang} is the minimizer of functional energy $E$ defined by \eqref{equ:E}. To our best knowledge, there is no existing systematic result showing the existence of minimizer of this energy functional for arbitrary inclined angles $\theta_{1}$ and $\theta_{2}$, as we developed in Section 3. In Section 4, we introduce the geometric transformation and derive the perturbed form.  In Section 5, we develop a moving polar coordinates and a scheme to determine $\mathfrak{n}(t)$. Moreover, we show that the horizontal shifting function $\xi_{s}$ spans the kernel of $1,\Sigma$ inner product. This is the section that contains the core conceptual innovations and preparatory results essential for the a priori estimate. In Section 6, we briefly introduce the theory about $\frac{3}{2}-\alpha$ for the boundary curve function $\xi$, where we need to borrow what we could from \cite{Guo}.
\subsection{Previous work}
The contact lines (or contact points in two
dimension) that form at triple junctions between three distinct phases (fluid, solid, and vapor phases in
the present paper) have been a subject of intense study since the pioneering work of Young \cite{Young}. For an exhaustive overview, we refer to de Gennes \cite{Gennes} . Here, we restrict ourselves to a brief summary of the classical results.

The initial work of Young, Laplace, and Gauss showed that equilibrium configurations not only solve a
particular equation, commonly known as the gravity-capillary equation, but also satisfy fixed contact angle conditions
determined via:
\begin{align}
    \cos(\theta_{eq})=-\frac{[\![\gamma]\!]}{\sigma}
\end{align}
where $\theta_{\mathrm{eq}}$ denotes the equilibrium contact angle, $\sigma$ is the surface tension, and $[\![\gamma]\!]$ represents the difference in surface energies between the solid–fluid and solid–vapor interfaces.

The dynamics at a contact point are a much more complicated issue. The first problem to deal with in the
context of viscous fluids is that the usual no-slip condition (u = 0 at the fluid-solid interface) and the free boundary kinematic equation (the fourth equation in \eqref{equ:mov}) are incompatible. When both conditions are imposed simultaneously, the motion of the contact point is completely inhibited, implying that the fluid would remain fixed at its initial position on the solid boundary. This is an unphysical outcome. Then the no-slip condition must be replaced by the more general Navier-slip
conditions, the sixth and fifth equations in \eqref{equ:mov}. 

The surveys of the dynamic of contacts points come from Dussan \cite{dussan} and Blake \cite{blake}. The general picture is that the dynamic contact
angle, $\theta_{dyn}$, and the equilibrium angle, $\theta_{eq}$, are related via
\begin{align}{\label{equ:theta}}
    V_{cl}=\mathscr{V}(\sigma \cos(\theta_{dyn})-\sigma \cos(\theta_{eq}))
\end{align}
where $V_{cl}$ is the contact point normal velocity and $\mathscr{V}$ is the increasing diffeomorphism such that $\mathscr{V}(0)=0$. This equation with different form of function $\mathscr{V}$ can be derived in a number of ways. Blake-Haynes \cite{blake1969} obtained \eqref{equ:theta} through thermodynamic and
molecular kinetics arguments. Cox \cite{cox} used matched asymptotic analysis and hydrodynamic arguments.
Ren-E \cite{ren2011} derived this equation from thermodynamic principles applied to constitutive equations. Ren-E \cite{ren2007} also
used molecular dynamics simulations and found an equation of the form \eqref{equ:theta}. These simulations
also indicated that the slip of the fluid along the solid obeys the well-known Navier-slip condition. The last two equations in \ref{equ:mov} implement \eqref{equ:theta} in the drop let problem.

 When the Navier-Stokes equations in \eqref{equ:mov} were replaced by the
Stokes equations, which yields a sort of quasi-static evolution, there are a lot of works done recently. In Guo-Tice \cite{guo2018}, they developed the stability result for stokes flow within the context of the fluid evolving inside a vessel. Then Tice-Wu \cite{tice2021} proved
corresponding results for the Stokes droplet problem in which the vessel configuration is replaced with a
droplet sitting atop a flat plane. Moreover,  Tice-Zheng \cite{zheng2017} established local existence results. When it comes to the Navier-Stokes flow, we refer to Guo-Tice \cite{Guo} which established the stability result for fluid evolving in the vessel. Up to our best knowledge, there is no such a theorem for Navier-Stokes flow on a flat plane.

There has also been much prior work devoted to studying contact lines and points in simplified thin-film
models; we will not attempt to list these results here and instead refer to the survey by Bertozzi
\cite{bertozzi1998}. However, there are relatively few results in the literature concerning models that take into account the full fluid mechanics, and, to the best of our knowledge, none that accommodate both a dynamic contact point and a dynamic contact angle. Schweizer \cite{schweizer2001} studied a 2D Navier-Stokes problem with a fixed contact angle
of $\frac{\pi}{2}$. Bodea \cite{bodea2006} studied a similar problem with fixed $\pi/2$ contact angle in 3D channels with periodicity
in one direction. Knupfer-Masmoudi \cite{knupfer2015} studied the dynamics of a 2D drop with fixed contact angle when
the dynamic of the fluid is assumed to be governed by Darcy’s law. Related analysis of the fully stationary Navier-Stokes
system with free, but unmoving boundary, was shown in 2D by Solonnikov \cite{solonnikov1995} with contact angle
fixed at $\pi$, by Jin \cite{ja2005} in 3D with angle $\pi/2$, and by Socolowsky \cite{socolowsky1993} for 2D coating problems with fixed
contact angles.

\section{Basic computation}

In this section, we aim to reform the contact line Navier-Stokes equation system into  polar coordinates and derive the form of the equation system in polar coordinates. To be specific, we aim to derive the kinematic boundary condition in polar coordinates. Recall the equation system in Cartesian coordinates
\begin{equation}\label{eq:navier-stokes}
  \left\{
  \begin{aligned}
&\d_t u+u\cdot\nabla u+\nabla P-\mu\Delta u=0 \quad & \text{in} &\ \Om(t),\\
&\dive u=0 \quad & \text{in} &\ \Om(t),\\
&S(P,u)\nu=g\xi\nu-\sigma \mathcal{H}(\xi)\nu \quad & \text{on} &\ \Sigma(t),\\
&\big(S(P,u)\nu-\beta u\big)\cdot\tau=0 \quad & \text{on} &\ \Sigma_s(t),\\
&u\cdot\nu=0 \quad & \text{on} &\ \Sigma_s(t),\\
&\d_t\xi=u\cdot\nu=u_2-u_1\partial_1\xi \quad & \text{on} &\ \Sigma(t),\\\rule{0ex}{-1.5em}
&\d_t\xi(\pm\ell,t)=\mathscr{V}\left([\![\gamma]\!]\mp\sigma\frac{\partial_1\xi}{(1+|\partial_1\xi|^2)^{1/2}}(\pm\ell,t)\right).
  \end{aligned}
  \right.
\end{equation}

To prove the stability result in the next Section, we assume $\theta_{1}$ and $\theta_{2}$ are arbitrary acute angles as shown in Figure \ref{figure2}. To begin with, let the boundary curve in polar coordinates be denoted by $\rho(\theta,t)=\rho_{0}(\theta)+\eta(\theta,t)$, where $\rho_{0}$ denotes the equilibrium profile and $\eta$ represents the perturbation. The expression of the boundary curve in Cartesian coordinates is denoted by $\xi(x,t)$. Also, we denote $u(x,y)=u(r,\theta)$ as the velocity field in the polar coordinates. 

Before proceeding with the reformulation, we first compute the basic derivative relations in polar coordinates. Using the chain rule, it holds that

\begin{equation*}
    \partial_x u=-\rho \sin\theta\partial_{\theta}u+\cos\theta \partial_{r} u.
\end{equation*}

\noindent Similarly, we have the following result

\begin{equation*}
    \partial_{y}u=\rho\cos\theta \partial_{\theta} u+\cos\theta\partial_{r}u.
\end{equation*}

\noindent Then for the second derivatives especially the Laplacian, we can show the following result

\begin{equation*}
    \Delta u=\frac{1}{r}\partial_{r}(r\frac{\partial u}{\partial r})+\frac{1}{r^{2}}\frac{\partial^{2}u}{\partial^{2}\theta}
\end{equation*}

 We next consider the derivative for the boundary curve in the polar coordinates. By using variation method to the functional $E(\rho)=\int_{\theta_1}^{\theta_2}\sqrt{\rho^{2}(\theta)+\rho'^{2}(\theta)}$, we obtain that

\begin{equation*}
    \mathcal{H}(\theta)=\frac{2(\rho')^{2}-\rho\rho''+\rho^{2}}{(\rho'+\rho^{2})^{\frac{3}{2}}},
\end{equation*}

\noindent where $\mathcal{H}$ denotes the mean curvature of the free surface in polar coordinates.

We also derive the relation between the first derivative in polar coordinates and the first derivative in Cartesian coordinates. We have the following computation: 

\begin{equation*}
    \frac{dy}{dx}=\frac{d(\rho\sin\theta)}{d(\rho\cos\theta)}=\frac{\sin\theta d\rho+\rho\cos\theta d\theta}{\cos\theta d\rho-\rho\sin\theta d\theta}
\end{equation*}

\noindent which leads to the result:

\begin{equation}{\label{equ:1.1.3}}
    \frac{dy}{dx}=\frac{\sin\theta \rho'+\rho\cos\theta}{\cos\theta\rho'-\rho\sin\theta}
\end{equation}

 Having obtained the representation of the first derivative of $u$ and $\rho$, we now  derive an important physical derivative-the conservation of total mass. Before this main theorem, we establish the Navier-Stokes equation system in polar coordinates via the following lemma.

 \begin{lemma}
     Suppose that $(u, p, \rho)$ denote the velocity field, pressure, and surface function in polar coordinates. Locally, when expressed in Cartesian coordinates, these functions satisfy the Navier–Stokes equations \eqref{eq:navier-stokes}. Then their evolution is globally governed by the following system of equations:
     \begin{equation}{\label{equ:navier_stokes_0}}
    \begin{cases}
        \partial_{t}u+u\cdot \nabla u+\dive S(P,u)=0~~~&\operatorname{in}~\Omega(t) \\
        \dive u=0&\operatorname{in}~\Omega(t)\\
        S(P,u)\nu=(g\rho\sin\theta-\sigma\mathcal{H}(\rho)))\nu~~&\operatorname{on}~\Sigma(t)\\
        \partial_{t}\rho=\frac{1}{\rho}u\cdot \mathcal{N}~~&\operatorname{on}~\Sigma(t)\\
        (S(P,u)\cdot \nu-\beta u)\cdot \tau=0~~&\operatorname{on}~\Sigma_{s}\\
        u\cdot \nu=0~&\operatorname{on}~\Sigma_{s}\\
       {\mathcal{W}}(\p_{t}\rho(\pi,t))=\sigma\frac{\rho'}{(\rho^{2}+\rho'^{2})^{\frac{1}{2}}}(\pi,t)+[\![\gamma]\!]\\
        {\mathcal{W}}(\partial_{t}\rho(0,t))=-(\sigma\frac{\rho'}{(\rho^{2}+\rho'^{2})^{\frac{3}{2}}}(0,t))+[\![\gamma]\!]
    \end{cases}
\end{equation}

 \end{lemma}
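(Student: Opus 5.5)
The plan is to take the Cartesian system \eqref{eq:navier-stokes} and rewrite it equation by equation in polar coordinates.

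The bulk equations need no real work. The momentum equation and $\dive u=0$ are coordinate-free vector identities. Expressing $\nabla$, $\dive$ and $\Delta$ through the chain-rule formulas above gives the first two lines of \eqref{equ:navier_stokes_0}. The identity $\dive S(P,u)=-\mu\Delta u+\nabla P$ for divergence-free $u$ is the one already noted in the introduction.

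The slip conditions on $\Sigma_s$ also carry over directly. The rays $\theta=0$ and $\theta=\pi$ are exactly the flat substrate. So $u\cdot\nu=0$ and the Navier-slip condition are unchanged, with $\nu=\mp e_y$ and $\tau=\pm e_x$.

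The dynamic boundary condition on $\Sigma(t)$ requires two checks:
\begin{itemize}
\item the gravitational term $g\xi$ is the height of the surface point, which is $g\rho\sin\theta$;
\item the Cartesian curvature $\partial_1(\partial_1\xi/\sqrt{1+|\partial_1\xi|^2})$ equals, up to sign convention, the polar curvature $\mathcal{H}(\rho)$.
\end{itemize}
For the second check, I would use that both are the first variation of the arc-length functional. The arc length is $\int\sqrt{1+\xi_x^2}\,dx=\int\sqrt{\rho^2+\rho'^2}\,d\theta$. The Euler--Lagrange expressions agree because both compute the geometric curvature of the same curve, up to the Jacobian factor coming from the normal variation. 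Alternatively, one can substitute \eqref{equ:1.1.3} directly.

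The key step is the kinematic condition. Parametrize $\Sigma(t)$ by $X(\theta,t)=\rho(\theta,t)(\cos\theta,\sin\theta)$. The kinematic condition says that the normal velocity of the surface equals $u\cdot\nu$. This gives $\partial_t X\cdot \mathcal N=u\cdot\mathcal N$, where $\mathcal N=\rho\hat e_r-\rho'\hat e_\theta$ is the (unnormalized) normal to the curve $\theta\mapsto X$. Since $\partial_t X=\partial_t\rho\,\hat e_r$, we get $\partial_tX\cdot\mathcal N=\rho\,\partial_t\rho$, and hence $\partial_t\rho=\frac1\rho u\cdot\mathcal N$. To confirm this matches $\partial_t\xi=u_2-u_1\partial_1\xi$ where both descriptions apply, I would differentiate $\xi(\rho\cos\theta,t)=\rho\sin\theta$ in $t$ at fixed $\theta$. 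Using \eqref{equ:1.1.3}, the two expressions should agree after multiplying through by $\cos\theta\rho'-\rho\sin\theta$.

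The contact point conditions come next. At $\theta=0,\pi$ the contact points sit at $x=\pm\rho$, so the contact velocity is $\pm\partial_t\rho$. The contact angle satisfies $\partial_1\xi/\sqrt{1+|\partial_1\xi|^2}=\pm\rho'/\sqrt{\rho^2+\rho'^2}$ there, which follows by evaluating \eqref{equ:1.1.3} at $\sin\theta=0$. Inverting $\mathscr V$ into $\mathcal W$ then yields the last two lines.

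The word ``globally'' is handled by a patching argument. The Cartesian graph description holds only locally, where $\cos\theta\rho'-\rho\sin\theta\neq0$. Where the surface is vertical, one uses the graph over $y$ instead. Each polar equation is derived from geometric, parametrization-independent quantities: normal velocity, curvature, and stress. Each is therefore valid on every local chart, and hence on all of $[0,\pi]$.

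I expect the main obstacle to be this gluing together with sign bookkeeping: orientations of $\nu$ and $\tau$, the sign of $\mathcal H$, and the endpoint sign conventions. In particular, the stated endpoint exponent $3/2$ at $\theta=0$ looks inconsistent with the exponent $1/2$ at $\pi$. I would normalize it to $1/2$, matching the geometric sine of the contact angle.
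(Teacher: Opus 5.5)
Your treatment of the kinematic condition is essentially the paper's proof. The paper differentiates $\xi(\rho\cos\theta,t)=\rho\sin\theta$ in $t$ at fixed $\theta$ and inserts \eqref{equ:1.1.3} and $\partial_t\xi=u_2-u_1\partial_1\xi$. It arrives at $\partial_t\rho=u_r-\frac{\rho'}{\rho}u_\theta=\frac1\rho u\cdot\mathcal N$ and declares the remaining lines unchanged.

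Your normal-velocity form, $\partial_tX\cdot\mathcal N=\rho\,\partial_t\rho=u\cdot\mathcal N$, gives the same identity without dividing by $\cos\theta\rho'-\rho\sin\theta$. It is therefore what actually justifies ``globally'', including vertical tangents and $\theta=0,\pi$.

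Your Jacobian caveat on the curvature is also warranted. The introduction's $\mathcal H(\rho)$ is the first variation of $-\int\sqrt{\rho^2+\rho'^2}\,d\theta$, i.e.\ $\rho$ times the geometric curvature up to sign. It must be divided by $\rho$, as in \eqref{equ:1.1.26}, before it matches the Cartesian condition. (Minor: the outward normal is $\nu=-e_y$ on both rays. Taking $+e_y$ on one of them would reverse the relative sign of the friction term there.)

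The step that fails is at the contact points. Evaluating \eqref{equ:1.1.3} at $\sin\theta=0$ gives $\partial_1\xi=\rho/\rho'$, hence $\partial_1\xi/\sqrt{1+|\partial_1\xi|^2}=\pm\rho/\sqrt{\rho^2+\rho'^2}$, not $\pm\rho'/\sqrt{\rho^2+\rho'^2}$. The latter equals $\pm(1+|\partial_1\xi|^2)^{-1/2}$, the cosine of the angle with the substrate.

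The reason is structural. The last line of \eqref{eq:navier-stokes} is the vessel law: vertical walls, vertical contact velocity $\partial_t\xi(\pm\ell,t)$, angle measured from the wall. You cannot keep its angle term while replacing the velocity by the horizontal $\pm\partial_t\rho$. You need the flat-substrate law, i.e.\ the vessel picture rotated by $90^\circ$:
\begin{itemize}
\item near each contact point write $\Sigma(t)$ as $x=h(y,t)$;
\item the contact velocity is $\partial_th(0,t)$, which equals $\partial_t\rho(0,t)$, resp.\ $-\partial_t\rho(\pi,t)$;
\item the angle term is $\partial_yh/\sqrt{1+|\partial_yh|^2}=\rho'/\sqrt{\rho^2+\rho'^2}$, because $\partial_yh=\rho'/\rho$ at both endpoints.
\end{itemize}
Equivalently, set $\mathcal W(\partial_t\rho)$ equal to the unbalanced Young force $[\![\gamma]\!]-\sigma\cos\alpha$, where $\cos\alpha=\mp\rho'/\sqrt{\rho^2+\rho'^2}$ at $\theta=0,\pi$. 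Either way one obtains
\[
\mathcal W(\partial_t\rho(\pi,t))=[\![\gamma]\!]-\sigma\frac{\rho'}{\sqrt{\rho^2+\rho'^2}}(\pi,t),\qquad \mathcal W(\partial_t\rho(0,t))=[\![\gamma]\!]+\sigma\frac{\rho'}{\sqrt{\rho^2+\rho'^2}}(0,t).
\]
These are exactly the signs for which the endpoint terms of $\frac{d}{dt}E$ become $-\partial_t\rho\,\mathcal W(\partial_t\rho)\le 0$. At rest they reduce to the $\epsilon=0$ boundary conditions of \eqref{equ:2.1.5}.

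The $\sigma$-terms above have the opposite signs to those displayed in the statement. So ``inverting $\mathscr V$ yields the last two lines'' cannot be carried out as you claim. The paper's proof never derives these two lines either. Besides normalizing the exponent $3/2$ to $1/2$, you would have to correct these signs.
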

\begin{proof}
    We prove this result by transforming equation \eqref{eq:navier-stokes} from Cartesian coordinates to polar coordinates. By comparing equation \eqref{equ:navier_stokes_0} with \eqref{eq:navier-stokes}, it suffices to transform the kinematic boundary condition—namely, the sixth equation in the system—into polar coordinates.

    To apply the transformation, the key point is to construct the relation between $\p_{t}\rho$ and $\p_{t}\xi$. By definition, we have

\begin{equation}{\label{equ:1.1.1}}
    y(\theta,t)=\rho(\theta,t)\sin\theta,
\end{equation}
and
\begin{equation}{\label{equ:1.1.2}}
    x(\theta,t)=\rho(\theta,t)\cos\theta.
\end{equation}

\noindent From equations \eqref{equ:1.1.1} and \eqref{equ:1.1.2}, we obtain that:

\begin{equation}{\label{equ:1.1.4}}
    \frac{\partial y}{\partial t}=\frac{\partial \rho}{\partial t} \sin\theta,
\end{equation}
and
\begin{equation}{\label{equ:1.1.5}}
    \frac{\partial x}{\partial t}=\frac{\partial \rho}{\partial t}\cos\theta.
\end{equation}

\noindent Let $\xi$ be the representation of the boundary curve in Cartesian coordinates.(Even though this representation is not global, we only need it for the local computation). Then we have the following identities from its definition.

\begin{equation*}
    \xi(x,t)=\xi(x(\theta,t),t)=y(\theta,t)
\end{equation*}

\noindent Taking time derivative on the equation above, we have

\begin{equation*}
    \partial_{t}y(\theta,t)=\partial_{x}\xi(x,t)\frac{\partial x}{\partial t}+\partial_{t}\xi,
\end{equation*}

\noindent which is equivalent to the following fact

\begin{equation}{\label{equ:1.1.6}}
    \partial_t\xi=\frac{\partial y}{\partial t}-\partial_x \xi(x,t)\frac{\partial x}{\partial t}.
\end{equation}

\noindent Then plugging \eqref{equ:1.1.4} and \eqref{equ:1.1.5} into equation \eqref{equ:1.1.6}, we have

\begin{equation*}
    \partial_t \xi=\frac{\partial \rho}{\partial t}\sin\theta-\partial_x\xi\frac{\partial \rho}{\partial t}\cos\theta,
\end{equation*}

\noindent which implies that:

\begin{equation}{\label{equ:1.1.7}}
    \frac{\partial \rho}{\partial t}=\frac{\partial_t \xi}{\sin\theta-\partial_x\xi \cos\theta}.
\end{equation}

\noindent Substituting equation \eqref{equ:1.1.3} into equation \eqref{equ:1.1.7},  equation \eqref{equ:1.1.7} can be transformed to the following relation.

\begin{equation*}
    \partial_t \rho=\frac{\partial_t \xi}{\sin\theta-\frac{\sin\theta \rho'+\rho\cos\theta}{\cos\theta\rho'-\rho\sin\theta}\cos\theta} =-\frac{\partial_{t}\xi}{\frac{\rho}{\cos\theta\rho'-\rho\sin\theta}}=-\frac{(\cos\theta\rho'-\rho\sin\theta)\partial_{t}\xi}{\rho}.
\end{equation*}

\noindent Hence,

\begin{equation}{\label{equ:1.1.8}}
    \partial_{t}\rho=-\frac{(\cos\theta\rho'-\rho\sin\theta)\partial_{t}\xi}{\rho}.
\end{equation}

     Using the kinematic boundary condition of the contact line equation system(The sixth equation in \eqref{eq:navier-stokes}), we have

 \begin{equation}{\label{equ:1.1.9}}
     \partial_{t}\xi=u_2-u_1\partial_{x}\xi
 \end{equation}

\noindent Applying equation \eqref{equ:1.1.3} to equation \eqref{equ:1.1.9}, we obtain

\begin{equation}{\label{equ:1.1.10}}
    \partial_{t}\xi=u_2-u_1\frac{\sin\theta \rho'+\rho\cos\theta}{\cos\theta \rho'-\rho\sin\theta}
\end{equation}

\noindent We then use \eqref{equ:1.1.10} in \eqref{equ:1.1.8} to substitute the term $\partial_{t}\xi$, which shows the following relation:

\begin{equation}{\label{equ:1.1.11}}
    \partial_{t}\rho=-\frac{(\cos\theta\rho'-\rho\sin\theta)u_2-u_1(\sin\theta\rho'+\rho\cos\theta)}{\rho}
\end{equation}

 Note that the velocity field in polar coordinates is expressed as

\begin{equation}{\label{equ:1.1.12}}
    u=u_{r}\hat{e}_{r}+u_{\theta}\hat{e}_{\phi}=(u_{r}\cos\theta-u_{\theta}\sin\theta,u_{r}\sin\theta+u_{\theta}\cos\theta)=(u_1,u_2)
\end{equation}

\noindent Substituting \eqref{equ:1.1.12} into equation \eqref{equ:1.1.11}, it holds that

\begin{equation}{\label{equ:1.1.13}}
    \partial_{t}\rho=u_{r}-\frac{\rho'}{\rho}u_{\theta}.
\end{equation}

 Moreover, by geometry relation,

\begin{equation}{\label{equ:1.1.14}}
    \mathcal{N}=-\rho'(\theta)\hat{e}_{\phi}+\rho(\theta)\hat{e_{r}},
\end{equation}

\noindent where $\mathcal{N}$ is the normal vector with $\vert \mathcal{N}\vert =\sqrt{\rho^{2}+\rho'^{2}}$. Combining \eqref{equ:1.1.13} and \eqref{equ:1.1.14}, we arrive at the new kinematic boundary condition:

\begin{equation}{\label{equ:1.1.15}}
    \partial_{t}\rho=\frac{1}{\rho}u\cdot \mathcal{N}
\end{equation}

This finishes the proof.
\end{proof}

\begin{theorem}
    Suppose that ($\rho$,u,P) is the solution to equation system \eqref{equ:fix_1}. The total mass enclosed by the free-boundaries and two inclined walls is conserved. This property can be expressed by the following equation

    \begin{align}
        \partial_{t}\int_{\theta_{2}}^{\pi-\theta_{1}}\rho^{2}d\theta=0.
    \end{align}
    
\end{theorem}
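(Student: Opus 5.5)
The plan is to differentiate the area functional in time and convert the result into a flux integral that vanishes by incompressibility and the wall conditions. In polar coordinates the area of $\Omega(t)$ is $\frac12\int\rho^2\,d\theta$, so it suffices to show
\begin{align*}
\partial_t\int_{\theta_2}^{\pi-\theta_1}\rho^2\,d\theta=2\int_{\theta_2}^{\pi-\theta_1}\rho\,\partial_t\rho\,d\theta=0 .
\end{align*}
Here we assume the angular interval is fixed in time: in the sessile case the walls $\theta=0,\pi$ are fixed rays, and for inclined walls the endpoints $\theta_2,\pi-\theta_1$ are fixed. Differentiating under the integral sign therefore produces no boundary terms from moving limits. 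We assume enough regularity of $\rho$ and $u$ to justify this.

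Next we substitute the polar kinematic condition from the previous Lemma, $\partial_t\rho=\frac1\rho u\cdot\mathcal N$. The factor $\rho$ cancels, giving
\begin{align*}
2\int_{\theta_2}^{\pi-\theta_1}\rho\,\partial_t\rho\,d\theta=2\int_{\theta_2}^{\pi-\theta_1}u\cdot\mathcal N\,d\theta .
\end{align*}
The key geometric step is to identify $u\cdot\mathcal N\,d\theta$ as $u\cdot\nu\,ds$ on $\Sigma(t)$. Parametrize $\Sigma(t)$ by $\theta\mapsto\rho(\theta)\hat e_r$. The tangent vector is $\rho'\hat e_r+\rho\hat e_\phi$, so $ds=\sqrt{\rho^2+\rho'^2}\,d\theta$. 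The outward unit normal is $\nu=\mathcal N/|\mathcal N|$ with $\mathcal N=\rho\hat e_r-\rho'\hat e_\phi$ as in \eqref{equ:1.1.14}. Hence $u\cdot\mathcal N\,d\theta=u\cdot\nu\,ds$, and the integral above equals $2\int_{\Sigma(t)}u\cdot\nu\,ds$.

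Finally we apply the divergence theorem on $\Omega(t)$, whose boundary is $\Sigma(t)\cup\Sigma_s(t)$:
\begin{align*}
0=\int_{\Omega(t)}\dive u=\int_{\Sigma(t)}u\cdot\nu\,ds+\int_{\Sigma_s(t)}u\cdot\nu\,ds .
\end{align*}
The wall integral vanishes by the no-penetration condition $u\cdot\nu=0$ on $\Sigma_s$. Therefore $\int_{\Sigma(t)}u\cdot\nu\,ds=0$, which proves the claim.

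The main obstacle is the sign and orientation bookkeeping: one must check that $\mathcal N$ points outward rather than inward, which holds because its radial component is $\rho>0$. One must also confirm that the Lemma's kinematic law holds globally, including up to the contact points. The $\theta$-parametrization is global, so the identity passes to the endpoints by continuity. There the corners of $\Omega(t)$, and the pole at the origin, are only Lipschitz points of $\partial\Omega(t)$. This is harmless for the divergence theorem, which holds on Lipschitz domains.
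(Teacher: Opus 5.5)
Your proposal is correct and follows the same route as the paper: differentiate $\frac12\int\rho^2\,d\theta$, substitute the polar kinematic condition $\partial_t\rho=\frac{1}{\rho}u\cdot\mathcal{N}$, identify $u\cdot\mathcal{N}\,d\theta$ with the flux $u\cdot\nu\,ds$ on $\Sigma(t)$, and conclude by the divergence theorem using $\dive u=0$ and $u\cdot\nu=0$ on $\Sigma_s$. Your additional bookkeeping on the fixed angular endpoints, the outward orientation of $\mathcal{N}$, and the Lipschitz corners spells out what the paper leaves implicit.
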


\begin{proof}
    
First, the representation of the total mass expressed is given by

\begin{equation*}
    M=\frac{1}{2}\int_{\theta_2}^{\pi-\theta_1} \rho^{2}(\theta)d\theta
\end{equation*}

\noindent Taking time derivative on both sides of the equation, we obtain

\begin{equation}{\label{equ:mass_d}}
    \partial_{t}M=\int_{\theta_2}^{\pi-\theta_1} \rho\partial_{t} \rho d\theta.
\end{equation}

 Applying \eqref{equ:1.1.15} to equation \eqref{equ:mass_d}, we derive the following expression for time derivative of the total mass $M$

\begin{align}
     \partial_{t}M=&\int_{\theta_2}^{\pi-\theta_1} \rho\partial_t\rho d\theta=\int_{\pi-\theta_2}^{\theta_1} \rho \frac{u\cdot \mathcal{N}}{\rho}d\theta\notag\\
     =&\int_{\theta_2}^{\pi-\theta_1} u\cdot \nu \sqrt{\rho^{2}+\rho'^{2}}d\theta \notag\\
      =&\int_{\Sigma(t)} u\cdot \nu=\int_{\partial \Omega(t)} u\cdot \nu=\int_{\Omega(t)} \operatorname{div}~ u=0\label{equ:1.1.16},
\end{align}

\noindent where we used the fact that $u\cdot \nu=0$ on $\Sigma_{s}$. Hence, we conclude that $\partial_t M=0$.

\end{proof}

Now we have the conservation law of the total mass or the total volume. We then derive another important physical law — the energy–dissipation relation.

\begin{theorem}
    Suppose that ($\rho$,u,P) is the solution to equation system \eqref{equ:fix_1}. The energy-dissipation relation holds for this solution:

    \begin{equation*}
    \frac{d}{dt}(\int_{\Omega(t)}\frac{1}{2}\vert u\vert^{2}+\mathcal{F}(\rho))+\int_{\Omega(t)}\frac{\mu}{2}\vert \mathbb{D}u(t)\vert^{2}+\int_{\Sigma_{s}(t)}\frac{\beta}{2}\vert u\vert^{2}+\partial_{t}LW(\partial_{t}L)+\partial_{t}RW(\partial_{t}R)=0
\end{equation*}

\noindent where: 

\begin{align}
\mathcal{F}(\rho)=\frac{g}{3}\int_{\theta_2}^{\pi-\theta_1}\rho^{3}+\sigma\sqrt{\rho^{2}+\rho'^{2}}d\theta-[\![\gamma]\!](\rho(\pi-\theta_1)+\rho(\theta_2)),
\end{align}
and $L$ and $R$ denote the x-coordinate of the left and right contact points, respectively.
\end{theorem}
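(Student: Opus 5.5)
We derive the energy--dissipation relation by the standard procedure: test the momentum equation with $u$, integrate over $\Omega(t)$, and turn every boundary term into either a time derivative of $\mathcal{F}(\rho)$ or a dissipative contact-point term. We use the polar kinematic condition \eqref{equ:1.1.15} established in the preceding Lemma.

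\textbf{Step 1 (kinetic energy).} Since $\dive u=0$, the Reynolds transport theorem together with the kinematic condition (the boundary $\Sigma(t)$ moves with normal velocity $u\cdot\nu$, and $u\cdot\nu=0$ on $\Sigma_s$) gives
\begin{align*}
\frac{d}{dt}\int_{\Omega(t)}\frac12|u|^2=\int_{\Omega(t)}(\partial_t u+u\cdot\nabla u)\cdot u .
\end{align*}
We then substitute $\partial_t u+u\cdot\nabla u=-\dive S(P,u)$ and integrate by parts. Because $S$ is symmetric, this yields
\begin{align*}
\int_{\Omega(t)}\frac{\mu}{2}|\mathbb{D}u|^2-\int_{\partial\Omega(t)}S(P,u)\nu\cdot u .
\end{align*}
On $\Sigma_s$ we have $u=(u\cdot\tau)\tau$. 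The Navier slip condition then gives $S\nu\cdot u=\beta|u|^2$, which produces the $\beta$ wall dissipation. We keep the normalization as written in the statement, adjusting constants to match the paper's convention for $\mathbb{D}$.

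\textbf{Step 2 (free-surface term).} On $\Sigma(t)$ the dynamic boundary condition gives
\begin{align*}
S\nu\cdot u=(g\rho\sin\theta-\sigma\mathcal{H}(\rho))\,u\cdot\nu .
\end{align*}
In polar parametrization $d s=\sqrt{\rho^2+\rho'^2}\,d\theta$ and $u\cdot\nu\,ds=u\cdot\mathcal{N}\,d\theta=\rho\,\partial_t\rho\,d\theta$ by \eqref{equ:1.1.15}, exactly as in the mass-conservation computation \eqref{equ:1.1.16}. The surface integral therefore becomes
\begin{align*}
\int(g\rho\sin\theta-\sigma\mathcal{H}(\rho))\,\rho\,\partial_t\rho\,d\theta .
\end{align*}
The gravity part equals $\frac{d}{dt}\frac g3\int\rho^3\sin\theta$; the statement's $\mathcal{F}$ should carry the factor $\sin\theta$, matching $E$ in \eqref{equ:E}.

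For the capillary part we use that $\mathcal{H}$, suitably weighted, is the first variation of $\int\sqrt{\rho^2+\rho'^2}$. Differentiating in time and integrating by parts in $\theta$ gives
\begin{align*}
\frac{d}{dt}\int\sigma\sqrt{\rho^2+\rho'^2}\,d\theta=\int\sigma\Big(\frac{\rho}{\sqrt{\rho^2+\rho'^2}}-\partial_\theta\frac{\rho'}{\sqrt{\rho^2+\rho'^2}}\Big)\partial_t\rho\,d\theta+\Big[\sigma\frac{\rho'}{\sqrt{\rho^2+\rho'^2}}\,\partial_t\rho\Big]_{\theta_2}^{\pi-\theta_1}.
\end{align*}
The interior integrand matches $-\sigma\mathcal{H}\,\rho\,\partial_t\rho$ once the curvature normalization (the factor $\rho$ from the arclength and normal scaling) is tracked. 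Checking this normalization is the main bookkeeping obstacle. I would redo it with the explicit formula $\mathcal{H}=\frac{2\rho'^2-\rho\rho''+\rho^2}{(\rho^2+\rho'^2)^{3/2}}$ and the measure $ds$, rather than trust the operator displayed in the introduction.

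\textbf{Step 3 (contact points).} The remaining boundary terms are $\sigma\frac{\rho'}{\sqrt{\rho^2+\rho'^2}}\partial_t\rho$ at the two endpoints, plus $-[\![\gamma]\!]\frac{d}{dt}(\rho(\pi-\theta_1)+\rho(\theta_2))$ coming from $\mathcal{F}$. Together they form, at each endpoint, $\partial_t\rho$ multiplied by the right-hand side of the contact-point law in \eqref{equ:fix_1}, with the appropriate sign; that right-hand side equals $\mathcal{W}(\partial_t\rho)$. On the wall rays, $\partial_t\rho$ at the endpoint is precisely the contact point velocity, $\partial_t L$ or $\partial_t R$ up to orientation. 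Hence these terms become $\partial_t L\,\mathcal{W}(\partial_t L)+\partial_t R\,\mathcal{W}(\partial_t R)\ge 0$.

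Summing Steps 1--3 gives the stated identity. The delicate points are the endpoint sign conventions and the exponent ($\tfrac12$ versus $\tfrac32$) in the boundary law at $\theta=0$. That exponent must be $\tfrac12$ for the terms to cancel, so I would read the $\tfrac32$ in the statement as a typo.
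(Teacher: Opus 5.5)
Your route is the paper's. You test the momentum equation with $u$, absorb the transport terms into $\frac{d}{dt}\int_{\Omega(t)}\frac12|u|^2$, integrate the stress by parts, and use Navier slip on $\Sigma_s$. You then pass to $\theta$ on $\Sigma(t)$ via $u\cdot\nu\,ds=\rho\,\partial_t\rho\,d\theta$, integrate the capillary term by parts in $\theta$, and close with the contact-point law. Your remarks on the missing $\sin\theta$ in $\mathcal{F}$ and on the exponent $\tfrac{3}{2}$ are correct.

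Step 3, however, does not close as you assert, and no choice of ``appropriate sign'' repairs it. Write $\varphi=\rho'/\sqrt{\rho^2+\rho'^2}$ and take $\theta_1=\theta_2=0$. Carrying out Step 2 with the physically correct capillary sign (see the last paragraph) gives
\begin{align*}
\int_{\Sigma(t)}S\nu\cdot u=\frac{d}{dt}\mathcal{F}(\rho)+\partial_t\rho(\pi)\big([\![\gamma]\!]-\sigma\varphi(\pi)\big)+\partial_t\rho(0)\big([\![\gamma]\!]+\sigma\varphi(0)\big).
\end{align*}
By contrast, \eqref{equ:fix_1} (exponent corrected) prescribes $\mathcal{W}(\partial_t\rho(\pi))=[\![\gamma]\!]+\sigma\varphi(\pi)$ and $\mathcal{W}(\partial_t\rho(0))=[\![\gamma]\!]-\sigma\varphi(0)$. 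The relative sign between the $\sigma$-term and $[\![\gamma]\!]$ is reversed at both endpoints. So the brackets are neither $\mathcal{W}(\partial_t\rho)$ nor $-\mathcal{W}(\partial_t\rho)$. Flipping the $[\![\gamma]\!]$-term of $\mathcal{F}$ instead turns the contact contribution into $-\partial_t\rho\,\mathcal{W}(\partial_t\rho)$, which is anti-dissipative.

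The identity holds only with the laws $\mathcal{W}(\partial_t\rho(\pi))=[\![\gamma]\!]-\sigma\varphi(\pi)$ and $\mathcal{W}(\partial_t\rho(0))=[\![\gamma]\!]+\sigma\varphi(0)$. These vanish exactly under the Euler--Lagrange boundary conditions \eqref{equ:2.1.5} of $E$ at $\epsilon=0$. The laws in \eqref{equ:fix_1} instead vanish under \eqref{equ:equi}, which has the opposite sign. You must make this correction explicitly; the paper's own proof glosses over the same point through its $\sin\gamma_i$ notation.

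There are three smaller sign slips.
\begin{enumerate}
\item In Step 1, $-\int_{\Omega(t)}\dive S\cdot u=-\frac{\mu}{2}\int_{\Omega(t)}|\mathbb{D}u|^2-\int_{\partial\Omega(t)}S\nu\cdot u$. The viscous term therefore enters with a minus sign, not the plus you wrote. The wall term comes out as $\beta|u|^2$ rather than $\frac{\beta}{2}|u|^2$.
\item In Step 2, the formula $\frac{2\rho'^2-\rho\rho''+\rho^2}{(\rho^2+\rho'^2)^{3/2}}$ you intend to use is the curvature $\kappa$, and $\rho\kappa=\frac{\rho}{\sqrt{\rho^2+\rho'^2}}-\partial_\theta\frac{\rho'}{\sqrt{\rho^2+\rho'^2}}$. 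It must be paired with $S\nu=(g\rho\sin\theta+\sigma\kappa)\nu$, as in the paper's $I_7$ and in \eqref{equ:2.0.11}. Inserted into $-\sigma\mathcal{H}$ it would produce $-\sigma\frac{d}{dt}\int\sqrt{\rho^2+\rho'^2}\,d\theta$. In the $-\sigma\mathcal{H}$ convention one needs $\mathcal{H}=-\kappa$, whereas the operator displayed in the introduction equals $-\rho\kappa$.
\item Since $\partial_t\rho(\pi)=-\partial_t L$, the left contact term is $(-\partial_t L)\,\mathcal{W}(-\partial_t L)\ge0$. This coincides with $\partial_t L\,\mathcal{W}(\partial_t L)$ only when $\mathcal{W}$ is odd; the paper itself later writes $\mathcal{W}(-\partial_t x_1)$.
\end{enumerate}
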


\begin{proof}
    
   Multiplying $u$ on both sides of the Navier-Stokes equation (the first equation in system \eqref{equ:fix_1}), and integrating it in the domain $\Omega$, we obtain

\begin{equation}{\label{equ:1.1.17}}
    \int_{\Omega(t)} \partial_{t}u\cdot u+\int_{\Omega(t)} (u\cdot \nabla u)\cdot u+\int_{\Omega(t)} \nabla P\cdot u-\mu\int_{\Omega(t)} \Delta u\cdot u=0.
\end{equation}

\noindent For the left hand side of the equation \eqref{equ:1.1.17}, we divide it into several parts:

\begin{equation}{\label{equ:1.1.18}}
    \int_{\Omega(t)} \partial_{t}u\cdot u+\int_{\Omega(t)} (u\cdot \nabla u)\cdot u+\int_{\Omega(t)} \nabla P\cdot u-\mu\int_{\Omega(t)} \Delta u\cdot u=I_1+I_2+I_3+I_4
\end{equation}

\noindent Then we estimate each term separately.

\textbf{Term $I_{1}$}

Taking time derivative and applying the product rule, we have
\begin{equation*}
    I_1=\frac{1}{2}\frac{d}{dt}\int_{\Omega(t)} \vert u\vert^{2}-\frac{1}{2}\int_{\partial \Omega(t)}\vert u\vert^{2} u\cdot \nu
\end{equation*}

 \textbf{Term $I_{2}$} 
 
 Integrating by parts yields

\begin{equation*}
    I_2=\frac{1}{2}\int_{\Omega(t)} u\cdot \nabla \vert u\vert^{2}=\frac{1}{2}\int_{\partial \Omega(t) }\vert u\vert^{2} u\cdot \nu-\frac{1}{2}\int_{\Omega(t)} \nabla \cdot u \vert u\vert^{2}=\frac{1}{2}\int_{\partial \Omega(t) }\vert u\vert^{2} u\cdot \nu
\end{equation*}

Hence,
\begin{equation*}
  I_1+I_{2}=\frac{1}{2}\frac{d}{dt}\int_{\Omega(t)} \vert u\vert^{2}
\end{equation*}

\textbf{Term $I_{3}+I_{4}$} 

Using integration by part, we have

\begin{align}
    I_3+I_4&=\int_{\Omega(t)} \operatorname{div}(S(P,u))\cdot u=\int_{\Omega(t)} \frac{\mu}{2}\vert \mathbb{D}u\vert^{2}-\int_{\Omega(t)} P(\nabla \cdot u)+\int_{\Sigma_{s}}(S(P,u)\nu)\cdot u+\int_{\Sigma(t)}(S(P,u)\nu)\cdot u \notag\\
     &=\int_{\Omega(t)} \frac{\mu}{2}\vert \mathbb{D}u\vert^{2}+\int_{\Sigma_{s}}(S(P,u)\nu)\cdot u+\int_{\Sigma(t)}(S(P,u)\nu)\cdot u\notag\\
     &=I_5+I_6+I_7 \label{equ:1.1.19}
\end{align}

 \textbf{Term $I_{6}$}: 
 
 The term $I_{5}$ has the form we want. Therefore, it remains to estimate $I_6$ and $I_7$. For $I_6$, we have the following computation for it

\begin{align}
     I_6=&\int_{\Sigma_{s}}(S(P,u)\nu)\cdot u \notag \\
     =&\int_{\Sigma_{s}}((S(P,u)\nu)\cdot \nu)(u\cdot \nu)+\int_{\Sigma_{s}}((S(P,u)\nu)\cdot \tau)(u\cdot \tau) \label{equ:1.1.20}.
\end{align}

\noindent Since $u\cdot \nu=0$ on the boundary $\Sigma_s$, we have

\begin{equation}{\label{equ:1.1.21}}
    I_6=\int_{\Sigma_{s}}((S(P,u)\nu)\cdot \tau) (u\cdot \tau)=\int_{\Sigma_{s}}\beta (u\cdot \tau)^{2}.
\end{equation}

\textbf{Term $I_{7}$}

Using kinematic boundary condition on $\Sigma$ (the third equation in system \eqref{equ:fix_1}), we have

\begin{align}
     I_{7}=&\int_{\Sigma}(S(P,u)\nu)\cdot u=\int_{\Sigma} (g\xi+\sigma H(\theta))(u\cdot \nu) \notag\\
      =&\int_{\Sigma} (g\xi)(u\cdot \nu)+\sigma\int_{\Sigma}H(\theta)(u\cdot \nu) \notag\\
      =&I_8+I_9 \label{equ:1.1.22}
\end{align}

\noindent  Using \eqref{equ:1.1.15}, we have

\begin{equation}{\label{equ:1.1.23}}
    \frac{1}{\rho}u\cdot \mathcal{N}=\partial_{t}\rho
\end{equation}

\noindent Moreover, since $\nu$ is the unit normal vector, by the definition of $\mathcal{N}$(the equation \eqref{equ:1.1.14}), we have

\begin{equation*}
    \nu=\frac{\mathcal{N}}{\sqrt{\rho^{2}+\rho'^{2}}}
\end{equation*}

\noindent For $I_8$, using equation \eqref{equ:1.1.23} and converting the integral to an integral in polar coordinates, we obtain

\begin{equation}{\label{equ:1.1.24}}
    I_8=\int_{\theta_2}^{\pi-\theta_2} \rho(g\rho\sin\theta)\partial_{t}\rho d\theta=\frac{1}{3}g\partial_{t}\int_{\theta_2}^{\pi-\theta_1}\rho^{3}\sin\theta d\theta
\end{equation}

\noindent We notice that $\partial_{t}\frac{1}{3}g\int_{\theta_1}^{\theta_2} \rho^{3}\sin\theta d\theta$ is exactly the temporal derivative of the total gravitational energy, which is what we want. Hence, it remains to compute $I_9$. Using a similar argument as $I_{8}$ and transforming the integral into polar coordinates, we have

\begin{equation}{\label{equ:1.1.25}}
    I_9=\sigma \int_{\theta_2}^{\pi-\theta_1} H(\theta) \rho \partial_{t}\rho d\theta
\end{equation}

\noindent Recall that

\begin{align}
     H(\theta)=&\frac{2(\rho')^{2}-\rho\rho''+\rho^{2}}{(\rho'+\rho^{2})^{\frac{3}{2}}}\notag\\
     =&\frac{1}{\rho}(\frac{\rho}{\sqrt{\rho^{2}+\rho'^{2}}}-\partial_{\theta}\frac{\rho'}{\sqrt{\rho^{2}+\rho'^{2}}}) \label{equ:1.1.26}.
\end{align}

\noindent Substituting \eqref{equ:1.1.26} into \eqref{equ:1.1.25}, and then integrating it by part, we have

\begin{equation*}
    I_9=\sigma\int_{\theta_2}^{\pi-\theta_1}(\frac{\rho\partial_t\rho+\rho'\partial_t\rho'}{\sqrt{\rho^{2}+\rho'^{2}}})d\theta-\sigma\partial_{t}\rho(\theta_2)\frac{\rho'}{\sqrt{\rho^{2}+\rho'^{2}}}(\theta_2)+\sigma\partial_{t}\rho(\theta_1)\frac{\rho'}{\sqrt{\rho^{2}+\rho'^{2}}}(\theta_1)=I_{9,1}+I_{9,2}+I_{9,3}
\end{equation*}

\noindent For $I_{9,1}$, we have:

\begin{equation}{\label{equ:1.1.27}}
    I_{9,1}=\sigma\int_{\theta_2}^{\pi-\theta_1}(\frac{\rho\partial_t\rho+\rho'\partial_t\rho'}{\sqrt{\rho^{2}+\rho'^{2}}})d\theta=\sigma \partial_{t}\int_{\theta_2}^{\pi-\theta_1}\sqrt{\rho^{2}+\rho'^{2}}d\theta
\end{equation}

\noindent Then for the boundary terms $I_{9,2}$ and $I_{9,3}$,  note that

\begin{equation*}
    \frac{\rho'}{\sqrt{\rho^{2}+\rho'^{2}}}(\theta_1)=\sin\gamma_1,
\end{equation*}

\noindent and

\begin{equation*}
    \frac{\rho'}{\sqrt{\rho^{2}+\rho'^{2}}}(\theta_2)=\sin\gamma_2,
\end{equation*}

\noindent where $\gamma_1$ and $\gamma_2$ are two angle between the tangent direction of the curve and the normal direction of the wall at $\theta_1$ and $\theta_2$. By the definitions of $\gamma_1$, $\gamma_2$ and the contact line condition of \eqref{equ:navier_stokes_0}, we have

\begin{equation*}
    W(\partial_tL)=\sigma\sin\gamma_1+[\![\gamma]\!],
\end{equation*}

\noindent and

\begin{equation*}
    W(\partial_tR)=-\sigma\sin\gamma_2+[\![\gamma]\!],
\end{equation*}

\noindent which implies that

\begin{equation}{\label{equ:1.1.28}}
   \sigma\sin\gamma_1= W(\partial_tL)-[\![\gamma]\!],
\end{equation}

\noindent and

\begin{equation}{\label{equ:1.1.29}}
    \sigma \sin\gamma_{2}=[\![\gamma]\!]-W(\partial_{t}R).
\end{equation}

\noindent Therefore,  combining \eqref{equ:1.1.27},\eqref{equ:1.1.28} and \eqref{equ:1.1.29}, we finally obtain

\begin{equation}{\label{equ:1.1.30}}
    I_9=\sigma \partial_{t}\int_{\theta_2}^{\pi-\theta_1}\sqrt{\rho^{2}+\rho'^{2}}d\theta-\partial_{t}([\![\gamma]\!](\rho(\theta_1)+\rho(\theta_2)))+\partial_{t}LW(\partial_{t}L)+\partial_{t}RW(\partial_{t}R)
\end{equation}

Finally, combining the computations for $I_1$ through $I_9$, we derive the energy–dissipation relation.

\end{proof}

\section{The stability of the steady state}

In our previous paper (\cite{Yang}), we have already proved the existence of the steady state, which is the solution function of the Euler-Lagrange equation. Alternatively, the existence of the steady state can also be demonstrated by studying the minimization problem associated with the energy functional, since any minimizer necessarily satisfies the Euler–Lagrange equation. From the previous paper \cite{Yang}, we know that the steady state might not be represented as a graph of function in Cartesian coordinates, it is more convenient to first use polar coordinates to study the problem.  The energy functional as be written in polar coordinates as follows:

    \begin{equation}{\label{equ:2.1.1}}
        E(\rho(\theta))=\frac{1}{3}g\int_{\theta_2}^{\pi-\theta_1}\rho^{3}\sin \theta d\theta+\int_{\theta_2}^{\pi-\theta_1}\sigma \sqrt{\rho^{2}+\rho'^{2}}d\theta-[\![\gamma]\!](\rho(\pi-\theta_1)+\rho(\theta_2))
    \end{equation}

    \noindent With the constraint

    \begin{equation}{\label{equ:2.1.2}}
    \int_{\theta_2}^{\pi-\theta_1} \rho^{2}(\theta)d\theta=\mathcal{V}(\rho(\theta))=V
    \end{equation}
    
    \noindent where $V$ is a constant.

    From a straightforward observation of equation \eqref{equ:2.1.1}, we notice that $E$ is positive and, therefore, bounded from below when $[\![\gamma]\!]<0$. However, when $[\![\gamma]\!]>0$, it is less straightforward to determine whether the energy functional is bounded from below or not.  To address this, we divide this chapter into two cases. In both cases, we assume that $\vert [\![\gamma]\!]\vert<\sigma $.

    \subsection{The case when $[\![\gamma]\!]<0$}
    
    In this subsection, we discuss the case where $[\![\gamma]\!]<0$.
    
    Since the functional energy is bounded from below, we can use Helly's selection principle to show that $E$ has a minimizer $\rho_{0}\in BV$. However, the regularity of this minimizer $\rho_{0}$ is not enough to be improved and to derive the Euler-Lagrange equation. To obtain a minimizer with higher regularity,  we need to introduce a new $\epsilon$ regularized energy functional:

    \begin{align}{\label{equ:2.1.3}}
        E^{\epsilon}=&\frac{1}{3}g\int_{\theta_2}^{\pi-\theta_1} \rho^{3}\sin\theta d\theta +\sigma\int_{\theta_2}^{\pi-\theta_1} \sqrt{\rho^{2}+\rho'^{2}}d\theta
        \notag\\
        &-[\![\gamma]\!](\rho(\theta_2)+\rho(\pi-\theta_1))+\frac{\epsilon}{2} \int_{\theta_2}^{\pi-\theta_1}\rho'^{2} d\theta
    \end{align}
    
    \noindent with the restriction:

    \begin{equation}{\label{equ:2.1.4}}
       V=\mathcal{V}(\rho(\theta))=\int_{\theta_2}^{\pi-\theta_1} \rho^{2}d\theta
    \end{equation}
    
    Using the Banach-Alaoglu theorem and lower semi-continuity, we know  that there exists a $H^{1}$ minimizer $\rho_{\epsilon}$ for this problem( By the elliptic theory, it is actually a smooth function), solving the following Euler-Lagrange equation system

    \begin{equation}{\label{equ:2.1.5}}
    \begin{cases}
        g\rho^{2}\sin\theta+\sigma \frac{\rho}{\sqrt{\rho^{2}+\rho'^{2}}}-\sigma \partial_{\theta}\frac{\rho'}{\sqrt{\rho^{2}+\rho'^{2}}}-\epsilon \rho''=P_{0}^{\epsilon}\rho\\
        (\sigma \frac{\rho'}{\sqrt{\rho^{2}+\rho'^{2}}}+\epsilon \rho')(\pi-\theta_1)=[\![\gamma]\!]\\
        (\sigma \frac{\rho'}{\sqrt{\rho^{2}+\rho'^{2}}}+\epsilon \rho')(\theta_2)=-[\![\gamma]\!],
    \end{cases}
    \end{equation}

    \noindent where $P_{0}^{\epsilon}$ is the Lagrangian multiplier. To move on to the next step, we first show that $P_{0}^{\epsilon}$ is positive when $[\![\gamma]\!]<0$. This is stated in the following theorem.

    \begin{theorem}

        For any $[\![\gamma]\!]<0$ and $V>0$, $P_{0}^{\epsilon}$ is positive for any $\epsilon$
    \end{theorem}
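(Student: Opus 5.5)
The idea is to extract $P_{0}^{\epsilon}$ from the Euler--Lagrange system \eqref{equ:2.1.5} by testing the first equation against $\rho$ and integrating over $(\theta_2,\pi-\theta_1)$. This turns the Lagrange multiplier into a ratio $P_{0}^{\epsilon}=\frac{1}{V}\cdot(\text{sum of terms})$. We then show that every term in the sum is positive when $[\![\gamma]\!]<0$.

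We may assume $\rho=\rho_\epsilon>0$ is a smooth solution of \eqref{equ:2.1.5}. Multiplying the first equation by $\rho$ and integrating gives
\begin{align*}
P_0^\epsilon V=g\int \rho^3\sin\theta\,d\theta+\sigma\int\frac{\rho^2}{\sqrt{\rho^2+\rho'^2}}\,d\theta-\int \rho\,\partial_\theta\Big(\sigma\frac{\rho'}{\sqrt{\rho^2+\rho'^2}}+\epsilon\rho'\Big)d\theta .
\end{align*}
The last integral is integrated by parts, with the boundary values supplied by the natural boundary conditions in \eqref{equ:2.1.5}. This yields
\begin{align*}
P_0^\epsilon V=g\int \rho^3\sin\theta+\sigma\int\frac{\rho^2+\rho'^2}{\sqrt{\rho^2+\rho'^2}}+\epsilon\int\rho'^2-[\![\gamma]\!]\big(\rho(\pi-\theta_1)+\rho(\theta_2)\big).
\end{align*}
The boundary contribution is $-[\![\gamma]\!]\rho(\pi-\theta_1)$ at the right endpoint and $-[\![\gamma]\!]\rho(\theta_2)$ at the left endpoint, since the flux there equals $-[\![\gamma]\!]$ and enters with the opposite sign.

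The first three terms are nonnegative. The surface-tension term $\sigma\int\sqrt{\rho^2+\rho'^2}$ is strictly positive because $V>0$ forces $\rho\not\equiv0$. With $[\![\gamma]\!]<0$ and $\rho\ge0$ at the endpoints, the boundary term $-[\![\gamma]\!](\rho(\pi-\theta_1)+\rho(\theta_2))$ is also nonnegative. Hence $P_0^\epsilon\ge \frac{\sigma}{V}\int\sqrt{\rho^2+\rho'^2}>0$ for every $\epsilon$. This bound is in fact uniform in $\epsilon$ once a lower bound on the arc length is available, for example $\int\sqrt{\rho^2+\rho'^2}\ge\int\rho\ge V/\max\rho$.

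The main point to check is the sign bookkeeping in the integration by parts at the two endpoints, since the left and right conditions in \eqref{equ:2.1.5} carry opposite signs. We also need $\rho>0$ (or at least $\rho\ge0$) up to the boundary, which holds for the $H^1$ minimizer: replacing $\rho$ by $|\rho|$ does not increase the energy when $[\![\gamma]\!]<0$, so a nonnegative minimizer can be chosen. An alternative check is a scaling argument. Set $\rho_s=s\rho$ and compare $E^\epsilon$ along $s\mapsto \sqrt{\cdot}$-normalized families; differentiating at $s=1$ reproduces the same identity. This identity is just the Pohozaev/Euler relation obtained from the variation $\delta\rho=\rho$.
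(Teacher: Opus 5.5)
Your proof is correct, and it takes a different route from the paper's.

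The paper argues by contradiction. If $P_0^\epsilon<0$, then for $\rho\ge0$ and $[\![\gamma]\!]<0$ every term of the Lagrangian $E^\epsilon_p=E^\epsilon-P_0^\epsilon\int\rho^2$ is nonnegative. Hence $\rho=0$ minimizes $E^\epsilon_p$, which the paper declares incompatible with $\int\rho_\epsilon^2=V>0$. That comparison of global values tacitly treats $\rho_\epsilon$ as an unconstrained minimizer of $E^\epsilon_p$, whereas the multiplier rule only makes it a critical point.

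You instead use the first variation of $E^\epsilon_p$ in the direction $\delta\rho=\rho_\epsilon$, i.e.\ the derivative along $s\mapsto s\rho_\epsilon$ at $s=1$, which must vanish at any critical point. This is exactly the identity the paper derives in \eqref{equ:2.1.10}--\eqref{equ:2.1.11} for Theorem~\ref{thm:uni_pre}, and your endpoint signs are right. Your argument is therefore the rigorous form of the paper's idea:
\begin{itemize}
\item it uses only \eqref{equ:2.1.5};
\item it also rules out $P_0^\epsilon=0$;
\item it gives $P_0^\epsilon\ge\sigma/\max\rho_\epsilon$, hence, with the uniform BV bound on $\rho_\epsilon$, a positive lower bound uniform in $\epsilon$, which mere positivity does not provide.
\end{itemize}
Your ``normalized family'' variant is vacuous as stated, since normalizing $s\rho$ just returns $\rho$; it is the unnormalized Lagrangian that you differentiate.

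One side justification is wrong: replacing $\rho$ by $|\rho|$ does not lower the energy when $[\![\gamma]\!]<0$. Where $\rho<0$, the gravity term $\frac{g}{3}\int\rho^3\sin\theta$ goes up. At a negative endpoint, the contact term $-[\![\gamma]\!]\big(\rho(\theta_2)+\rho(\pi-\theta_1)\big)=|[\![\gamma]\!]|\big(\rho(\theta_2)+\rho(\pi-\theta_1)\big)$ also goes up. It is $\rho\mapsto-|\rho|$ that lowers the energy.

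Nonnegativity must come from the admissible class instead. Since $\rho$ is a polar radius, $\rho>0$ is part of the setup, and it is needed anyway for \eqref{equ:2.1.5} to make sense; the paper's own argument relies on the same assumption without saying so. You already assume $\rho_\epsilon>0$ at the outset, so just drop the $|\rho|$ remark; nothing else in your proof changes.
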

    
    \begin{proof}

         Using definition of $\rho_{\epsilon}$, $\rho_{\epsilon}$ is the minimizer of the following energy functional

         \begin{align}
             {E}^{\epsilon}_{p}(\rho)=&\frac{1}{3}g\int_{\theta_2}^{\pi-\theta_1} \rho^{3}\sin\theta d\theta +\sigma\int_{\theta_2}^{\pi-\theta_1} \sqrt{\rho^{2}+\rho'^{2}}d\theta-[\![\gamma]\!](\rho(\theta_2)+\rho(\pi-\theta_1))\\
              &+\frac{\epsilon}{2} \int_{\theta_2}^{\pi-\theta_1}\rho'^{2} d\theta-P_{0}^{\epsilon}\int_{\theta_2}^{\pi-\theta_1} \rho^{2}d\theta \label{equ:2.1.6}.
         \end{align}

    Suppose $P^{\epsilon}_{0}$ is negative when $[\![\gamma]\!]$ is negative.  It is straightforward to see that $\rho_{\epsilon}=0$ is the minimizer for this energy functional ${E}_{p}^{\epsilon}$ defined by \eqref{equ:2.1.6}.  Plugging $\rho_{\epsilon}=0$ into  equation \eqref{equ:2.1.4}, we obtain:

    \begin{equation*}
        \mathcal{V}(\rho_{\epsilon})=\int_{\theta_2}^{\pi-\theta_1}\rho_{\epsilon}^{2}d\theta=0\neq V
    \end{equation*}

    \noindent However, this contradicts our assumption that $V>0$. Hence, $P_{0}^{\epsilon}$ should be positive.
    
    \end{proof}    

    Besides the uniform lower estimate for $P_{0}^{\epsilon}$, we also want to derive a uniform upper bound for $P_{0}^{\epsilon}$. Before showing this upper bound, we need to first establish the following lemma

    \begin{lemma}

    $\rho_{\epsilon}$ is defined to be the minimizer of the energy functional $E^{\epsilon}$ defined by equation \eqref{equ:2.1.3}. Then we have the following uniform bound property for $\rho_{\epsilon}$.
    
    \begin{equation*}
        \vert \vert \rho_{\epsilon}\vert \vert_{BV}\leq C~\operatorname{for}~\operatorname{any}~\epsilon\leq \epsilon_{0},
     \end{equation*}

     \noindent and
     
    \begin{equation*}
        \epsilon\int_{\theta_2}^{\pi-\theta_1}\rho_{\epsilon}'^{2}d\theta \leq C~\operatorname{for}~\operatorname{any}~\epsilon\leq \epsilon_{0}
    \end{equation*}
    
     \noindent for some constant $C$ independent of $\epsilon$. 
     
    \end{lemma}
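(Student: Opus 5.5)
We will prove both bounds with one comparison argument. The idea is to compare the minimal value $E^{\epsilon}(\rho_\epsilon)$ with the value of $E^{\epsilon}$ at a fixed, $\epsilon$-independent competitor. Fix once and for all a smooth admissible profile $\bar\rho>0$ on $[\theta_2,\pi-\theta_1]$ with $\int_{\theta_2}^{\pi-\theta_1}\bar\rho^{2}d\theta=V$; a suitable constant works. Minimality of $\rho_\epsilon$ gives, for all $\epsilon\le\epsilon_0$,
\begin{align*}
E^{\epsilon}(\rho_\epsilon)\le E^{\epsilon}(\bar\rho)\le E(\bar\rho)+\frac{\epsilon_0}{2}\int_{\theta_2}^{\pi-\theta_1}\bar\rho'^{2}d\theta=:C_0 .
\end{align*}
The constant $C_0$ does not depend on $\epsilon$. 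Everything then reduces to showing that each term of $E^{\epsilon}(\rho_\epsilon)$ is nonnegative, or at least controlled from below.

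In the present case $[\![\gamma]\!]<0$, and the admissible $\rho$ are positive. The boundary term $-[\![\gamma]\!](\rho(\theta_2)+\rho(\pi-\theta_1))$ is therefore nonnegative. The gravity term $\frac{g}{3}\int\rho^{3}\sin\theta$ is also nonnegative, since $\sin\theta\ge0$ on the interval. Both the capillary term and the $\epsilon$-term are nonnegative as well. Dropping the other terms, we get
\begin{align*}
\sigma\int_{\theta_2}^{\pi-\theta_1}\sqrt{\rho_\epsilon^{2}+\rho_\epsilon'^{2}}\,d\theta\le C_0,\qquad \frac{\epsilon}{2}\int_{\theta_2}^{\pi-\theta_1}\rho_\epsilon'^{2}d\theta\le C_0 .
\end{align*}
The second inequality is exactly the claimed bound on $\epsilon\int\rho_\epsilon'^{2}$, with $C=2C_0$.

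For the $BV$ bound, $\sqrt{\rho^{2}+\rho'^{2}}\ge|\rho'|$ gives $\int|\rho_\epsilon'|\le C_0/\sigma$, which controls the total variation. For the $L^{1}$ part, Cauchy--Schwarz with the constraint gives $\int\rho_\epsilon\le(\pi-\theta_1-\theta_2)^{1/2}V^{1/2}$. Alternatively, $\sqrt{\rho^2+\rho'^2}\ge\rho$ already bounds it. The sup norm is then also controlled, via $\|\rho_\epsilon\|_{L^\infty}\le \frac{1}{|I|}\int\rho_\epsilon+\int|\rho_\epsilon'|$, should that be needed later.

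There is no real obstacle here. The one point requiring care is the sign bookkeeping: nonnegativity of the contact term uses $[\![\gamma]\!]<0$ together with $\rho\ge0$. For the $\rho\ge0$ part we may assume the minimizer is nonnegative, replacing $\rho$ by $|\rho|$, which does not increase $E^{\epsilon}$ and preserves the constraint. This is precisely why the lemma is placed in the $[\![\gamma]\!]<0$ subsection. If one wanted the same argument in the case $[\![\gamma]\!]>0$, the contact term would have to be absorbed by the capillary term. That can be done using $\rho(\theta_i)\le\frac1{|I|}\int\rho+\int|\rho'|$ and $|[\![\gamma]\!]|<\sigma$, but it is not needed for the present statement.
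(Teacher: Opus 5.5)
Your argument is correct and is essentially the paper's. The paper bounds the minimal energy through the chain $E^{\epsilon}(\rho_\epsilon)\le E^{\epsilon}(\rho_{\epsilon_0})\le E^{\epsilon_0}(\rho_{\epsilon_0})$, so it uses $\rho_{\epsilon_0}$ as the fixed competitor where you use an arbitrary fixed admissible profile. It then drops the nonnegative gravity, contact and $\epsilon$-terms exactly as you do. Your explicit treatment of the $L^{1}$ part of the $BV$ norm is slightly more careful than the paper's ending.

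One side remark needs correcting: in this case replacing $\rho$ by $|\rho|$ does \emph{not} lower $E^{\epsilon}$. Since $[\![\gamma]\!]<0$ and $\sin\theta\ge 0$, one has $|\rho|^{3}\sin\theta\ge\rho^{3}\sin\theta$ and $-[\![\gamma]\!]\,|\rho(\theta_2)|\ge-[\![\gamma]\!]\,\rho(\theta_2)$, and likewise at $\pi-\theta_1$. So the reflection can only raise the energy.

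Nonnegativity therefore has to be built into the admissible class, as the paper does implicitly since $\rho$ is a polar radius with values in $\mathbb{R}^{+}$. It cannot be recovered by reflection, and it is genuinely needed. Over sign-changing profiles, a tall narrow negative bump makes the gravity term arbitrarily negative. For $gV$ large compared with $\sigma$, the infimum of $E^{\epsilon}$ then tends to $-\infty$ as $\epsilon\to 0$, and the uniform bound fails.
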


    \begin{proof}
        Noticing that if $\epsilon_1<\epsilon_2$, we have:

        \begin{equation}{\label{equ:2.1.7}}
         E^{\epsilon_1}(\rho_{\epsilon_2})<E^{\epsilon_2}(\rho_{\epsilon_2})
        \end{equation}

        \noindent Then using the fact that $\rho_{\epsilon_1}$ is the minimizer of the energy functional \eqref{equ:2.1.3} with $\epsilon=\epsilon_{1}$, we obtain the following inequality

        \begin{equation}{\label{equ:2.1.8}}
            E^{\epsilon_1}(\rho_{\epsilon_1})\leq E^{\epsilon_1}(\rho_{\epsilon_2}).
        \end{equation}

        \noindent Combining equations \eqref{equ:2.1.7} and \eqref{equ:2.1.8} together, we obtain

        \begin{equation}{\label{equ:2.1.9}}
            0<E^{\epsilon_1}(\rho_{\epsilon_1})<E^{\epsilon_2}(\rho_{\epsilon_2})<E^{\epsilon_0}(\rho_{\epsilon_0}).
        \end{equation}
        
        \noindent Equation \eqref{equ:2.1.9} yields

        \begin{equation*}
            \int_{\theta_2}^{\pi-\theta_1} \sqrt{\rho_{\epsilon}^{2}+\rho_{\epsilon}'^{2}}d\theta+\frac{1}{2}\epsilon\int_{\theta_2}^{\pi-\theta_1}\rho^{\prime 2}_{\epsilon}d\theta\leq E^{\epsilon}(\rho_{\epsilon}(\theta))\leq E^{\epsilon_{0}}(\rho_{\epsilon_0}),
        \end{equation*}

        \noindent which implies that

        \begin{align}
            \vert \vert\rho_{\epsilon}\vert \vert_{BV} ~~\operatorname{and} ~~~\epsilon\vert \vert \rho_{\epsilon}\vert \vert_{L^{2}}
        \end{align}
         are both uniformly bounded.
    \end{proof}
    
    Having established the uniform lower bound for $P_{0}^{\epsilon}$, we now prove the following main theorem:

     \begin{theorem}{\label{thm:uni_pre}}
        Suppose that $P_{0}^{\epsilon}$ is the Euler-Lagrange equation corresponding to energy functional $E^{\epsilon}$. Then $P_{0}^{\epsilon}$ is uniformly bounded for any $0<\epsilon\leq\epsilon_{0}$ where $\epsilon_{0}$ is any given positive constant.
    \end{theorem}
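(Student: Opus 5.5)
The plan is to obtain an identity for $P_{0}^{\epsilon}$ by testing the Euler--Lagrange system \eqref{equ:2.1.5} against $\rho_{\epsilon}$ itself, which corresponds to the dilation variation $\rho\mapsto(1+s)\rho$. Multiplying the first equation of \eqref{equ:2.1.5} by $\rho_{\epsilon}$ and integrating over $(\theta_2,\pi-\theta_1)$, we integrate the two divergence terms by parts. The boundary contributions are then replaced using the second and third equations of \eqref{equ:2.1.5}, which are natural boundary conditions whose combined flux $\sigma\rho'/\sqrt{\rho^2+\rho'^2}+\epsilon\rho'$ equals $\pm[\![\gamma]\!]$. With the constraint $\int\rho_\epsilon^2=V$, this should give
\begin{align*}
P_{0}^{\epsilon}V=g\int_{\theta_2}^{\pi-\theta_1}\rho_\epsilon^{3}\sin\theta\,d\theta+\sigma\int_{\theta_2}^{\pi-\theta_1}\sqrt{\rho_\epsilon^{2}+\rho_\epsilon'^{2}}\,d\theta+\epsilon\int_{\theta_2}^{\pi-\theta_1}\rho_\epsilon'^{2}\,d\theta-[\![\gamma]\!]\bigl(\rho_\epsilon(\theta_2)+\rho_\epsilon(\pi-\theta_1)\bigr),
\end{align*}
up to the harmless factor of $2$ conventions in the multiplier. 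Equivalently, it is $\frac{d}{ds}\big|_{s=0}$ of $E^\epsilon((1+s)\rho_\epsilon)-P_0^\epsilon\mathcal V((1+s)\rho_\epsilon)$, which should vanish by minimality. I would present it in this variational form, since that avoids keeping track of the signs of the boundary terms.

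Every term on the right is then bounded uniformly in $\epsilon\le\epsilon_0$ by the preceding lemma. The surface term and the $\epsilon$-term are controlled by $E^{\epsilon_0}(\rho_{\epsilon_0})$, because that lemma gives both the BV bound and $\epsilon\int\rho_\epsilon'^2\le C$. The boundary values $|\rho_\epsilon(\theta_2)|$ and $|\rho_\epsilon(\pi-\theta_1)|$, as well as $\sup|\rho_\epsilon|$, are bounded by $\|\rho_\epsilon\|_{BV}$ via the one-dimensional embedding $BV\subset L^\infty$. This in turn controls the gravity term $g\int\rho_\epsilon^3\sin\theta\le C\|\rho_\epsilon\|_{L^\infty}^3$. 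Since $V>0$ is fixed, dividing by $V$ yields $0<P_0^\epsilon\le C$, and the lower bound is the positivity theorem already proved.

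The main obstacle is making the BV bound genuinely imply an $L^\infty$ bound. The lemma bounds $\int\sqrt{\rho^2+\rho'^2}$, which controls $\int|\rho'|$ and $\int|\rho|$ and hence the total variation plus the $L^1$ norm. In one dimension this gives $\sup|\rho|\le \frac{1}{|I|}\int|\rho|+\int|\rho'|$, so it suffices, but I would state it explicitly.

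A second subtlety is that the lemma's monotonicity argument uses $E^{\epsilon}>0$, which holds when $[\![\gamma]\!]<0$, the case under consideration. I would also check that the boundary term sign $-[\![\gamma]\!](\cdots)$ is nonnegative there, so the identity even gives $P_0^\epsilon V\ge \sigma\int\sqrt{\rho^2+\rho'^2}>0$, which is consistent with the earlier theorem. If the direct energy bound is unavailable at $\epsilon_0$ itself, one can simply use $E^\epsilon(\rho_\epsilon)\le E^{\epsilon}(\tilde\rho)\le E^{\epsilon_0}(\tilde\rho)$ for any fixed smooth admissible competitor $\tilde\rho$.
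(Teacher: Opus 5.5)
Your proposal is correct and follows the paper's proof. Both test the Euler--Lagrange equation \eqref{equ:2.1.5} with $\rho_\epsilon$ and integrate by parts using the natural boundary conditions; the paper's $\sigma\int\rho_\epsilon^2/\sqrt{\rho_\epsilon^2+\rho_\epsilon'^2}+\sigma\int\rho_\epsilon'^2/\sqrt{\rho_\epsilon^2+\rho_\epsilon'^2}$ is exactly your $\sigma\int\sqrt{\rho_\epsilon^2+\rho_\epsilon'^2}$. Both then bound each term by the uniform BV and $\epsilon\int\rho_\epsilon'^2$ estimates of the preceding lemma, and take the lower bound from the positivity theorem; your explicit $BV\subset L^\infty$ step and your observation that the identity itself forces $P_0^\epsilon>0$ only make explicit what the paper leaves implicit.
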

    
    \begin{proof}
    
   Multiplying both sides of the Euler-Lagrange equation (the first equation in the system \eqref{equ:2.1.5}) by $\rho_{\epsilon}$ and integrating it from $\theta_2$ to $\pi-\theta_1$, we have

    \begin{align}
         0=&\int_{\theta_2}^{\pi-\theta_1} \rho_{\epsilon}^{3}\sin\theta d\theta+\sigma\int_{\theta_2}^{\pi-\theta_1} \frac{\rho_{\epsilon}^{2}}{\sqrt{\rho_{\epsilon}^{2}+\rho_{\epsilon}'^{2}}}-\int_{\theta_2}^{\pi-\theta_1}\rho_{\epsilon}\partial_{\theta}\frac{\rho_{\epsilon}'}{\sqrt{\rho_{\epsilon}^{2}+\rho_{\epsilon}'^{2}}}\notag\\
         &-\epsilon\int_{\theta_2}^{\pi-\theta_1}\rho_{\epsilon}''\rho_{\epsilon}d\theta- P_{0}^{\epsilon}\int_{\theta_2}^{\pi-\theta_1} \rho_{\epsilon}^{2} d\theta\label{equ:2.1.10}
    \end{align}

    \noindent Then using integration by part to deal with the third and fourth terms on the right hand side of equation\eqref{equ:2.1.10} and applying the boundary condition(the second and third equations in the system \eqref{equ:2.1.5}), we obtain

    \begin{align*}
        &\int_{\theta_2}^{\pi-\theta_1} \rho_{\epsilon}^{3}\sin\theta d\theta+\sigma\int_{\theta_2}^{\pi-\theta_1} \frac{\rho_{\epsilon}^{2}}{\sqrt{\rho_{\epsilon}^{2}+\rho_{\epsilon}'^{2}}}+\sigma\int_{\theta_2}^{\pi-\theta_1} \frac{\rho_{\epsilon}'^{2}}{\sqrt{\rho_{\epsilon}^{2}+\rho_{\epsilon}'^{2}}}\\
        &-{[\![\gamma]\!]}(\rho(\theta_2)+\rho(\pi-\theta_1))-P_{0}^{\epsilon}\int_{\theta_2}^{\pi-\theta_1} \rho_{\epsilon}^{2} d\theta=0
    \end{align*}

    \noindent Plugging equation \eqref{equ:2.1.2} into the equation above to substitute the integral $\int_{\theta_2}^{\pi-\theta_1}\rho_{\epsilon}^{2} d\theta$, we derive the following inequality

    \begin{align}
         P_{0}^{\epsilon} V\leq & \int_{\theta_2}^{\pi-\theta_1} \rho_{\epsilon}^{3}\sin\theta d\theta+\sigma\int_{\theta_2}^{\pi-\theta_1} \frac{\rho_{\epsilon}^{2}}{\sqrt{\rho_{\epsilon}^{2}+\rho_{\epsilon}'^{2}}}d\theta \notag\\
         &+\epsilon\int_{\theta_2}^{\pi-\theta_1}\rho_{\epsilon}'^{2}d\theta+\int_{\theta_2}^{\pi-\theta_1} \frac{\rho_{\epsilon}'^{2}}{\sqrt{\rho_{\epsilon}^{2}+\rho_{\epsilon}'^{2}}}d\theta -{[\![\gamma]\!]}(\rho(\theta_2)+\rho(\pi-\theta_1)) \label{equ:2.1.11}
    \end{align}

    \noindent  We now compute each term on the right hand side of the equation \eqref{equ:2.1.11} individually. For the first term, we have

    \begin{equation}{\label{equ:102}}
        \int_{\theta_2}^{\pi-\theta_1} \rho_{\epsilon}^{3}\sin\theta d\theta\leq (\pi-\theta_1-\theta_2) (\operatorname{BV}(\rho))^{3}
    \end{equation}

    \noindent which is uniformly bounded by using Lemma 2.2.

    For the second term, we have

    \begin{equation}{\label{equ:103}}
        \sigma \int_{\theta_2}^{\pi-\theta_1}\frac{\rho_{\epsilon}^{2}}{\sqrt{\rho_{\epsilon}^{2}+\rho_{\epsilon}'^{2}}} d\theta\leq \sigma \int_{\theta_2}^{\pi-\theta_1} \vert \rho_{\epsilon}\vert d\theta \leq \sigma (\pi-\theta_1-\theta_2) \operatorname{BV}(\rho)
    \end{equation}

    \noindent which is also bounded by using Lemma 2.2.

    For the third term, it is uniformly bounded using Lemma 2.2.

    For the fourth term, we have

     \begin{equation}{\label{equ:104}}
        \sigma \int_{\theta_2}^{\pi-\theta_1} \frac{\rho_{\epsilon}'^{2}}{\sqrt{\rho_{\epsilon}^{2}+\rho_{\epsilon}'^{2}}}d \theta \leq \sigma \int_{\theta_2}^{\pi-\theta_1} \vert \rho_{\epsilon}'\vert d\theta \leq \sigma (\pi-\theta_1-\theta_2) \operatorname{BV}(\rho_{\epsilon})
    \end{equation}

    \noindent which is again uniformly bounded by using Lemma 2.2.

    For the last term, we have:

    \begin{equation}{\label{equ:105}}
        -[\![\gamma]\!](\rho(\theta_2))+\rho(\pi-\theta_1))\leq 2|[\![\gamma]\!]|\operatorname{BV}(\rho)
    \end{equation}

    \noindent which is uniformly bounded by using Lemma 2.2.

     Combining the results in equations \eqref{equ:102}-\eqref{equ:105} and plugging them into the equation \eqref{equ:2.1.11},  we then derive the upper bound for $P_{0}^{\epsilon}$. Moreover, $P_{0}^{\epsilon}$ is positive from Theorem 3.1, which means that it is bounded from below. Hence, $P_{0}^{\epsilon}$ is uniformly bounded.

    \end{proof}
    
    Lemma 3.2 provides us a uniform BV bound for the minimizer $\rho_{\epsilon}$. This regularity is still not good enough for us to let $\epsilon\rightarrow 0$. To overcome this difficulty,  we then derive Theorem to show the uniform bound for  $\rho_{\epsilon}'$. Before stating the main theorem, we should first establish the following lemma:

    \begin{lemma}

        The following relation holds for all $\theta\in (\theta_{2},\pi-\theta_{1})$ when $\epsilon$ is small enough.

        \begin{equation*}
        g\rho_{\epsilon}(\theta)\sin\theta-P_{0}^{\epsilon}\leq0
        \end{equation*}
    \end{lemma}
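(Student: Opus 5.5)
Our plan is to argue by contradiction, using the minimality of $\rho_\epsilon$ against a competitor built by cutting the profile wherever the inequality fails. Set
\begin{align*}
U=\{\theta\in(\theta_2,\pi-\theta_1): g\rho_\epsilon(\theta)\sin\theta>P_0^\epsilon\}.
\end{align*}
By smoothness of $\rho_\epsilon$, $U$ is open, and we assume it is nonempty. The pointwise meaning is that on $U$ the gravitational energy density $\frac{g}{3}\rho^3\sin\theta$ grows faster in $\rho$ than the Lagrange-multiplier reward $P_0^\epsilon\rho^2$ (recall from Theorem 3.1 that $P_0^\epsilon>0$, and from Theorem~\ref{thm:uni_pre} that it is bounded uniformly). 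Since $\rho\mapsto \frac{g}{3}\rho^3\sin\theta-\frac{P_0^\epsilon}{2}\rho^2$ is increasing exactly when $g\rho\sin\theta>P_0^\epsilon$, lowering $\rho$ on $U$ should lower the penalized energy $E^\epsilon_p$ of \eqref{equ:2.1.6}.

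\textbf{Competitor.} We take the truncation
\begin{align*}
\tilde\rho=\min\{\rho_\epsilon, P_0^\epsilon/(g\sin\theta)\}
\end{align*}
on $U$, and $\tilde\rho=\rho_\epsilon$ elsewhere. We then compare $E^\epsilon_p(\tilde\rho)$ with $E^\epsilon_p(\rho_\epsilon)$ term by term.
\begin{itemize}
\item The bulk term $\frac{g}{3}\rho^3\sin\theta-P_0^\epsilon\rho^2$ strictly decreases on $U$.
\item The arclength and $\frac{\epsilon}{2}\rho'^2$ terms need care: the truncation level $P_0^\epsilon/(g\sin\theta)$ has derivative $-P_0^\epsilon\cos\theta/(g\sin^2\theta)$, which is bounded on compact subsets of the interior. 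So we must control the new derivative against the old one. Here the uniform bounds of Lemma 3.2 and Theorem~\ref{thm:uni_pre} are used.
\item The boundary term $-[\![\gamma]\!](\rho(\theta_2)+\rho(\pi-\theta_1))$ decreases, or is unchanged, since $[\![\gamma]\!]<0$ and $\tilde\rho\le\rho_\epsilon$.
\end{itemize}

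An alternative that avoids the derivative issue uses the Euler--Lagrange equation \eqref{equ:2.1.5} directly. Consider a maximum point $\theta^*$ of $h=g\rho_\epsilon\sin\theta-P_0^\epsilon$ with $h(\theta^*)>0$. In the interior, $h'(\theta^*)=0$ and $h''(\theta^*)\le0$. Rewriting \eqref{equ:2.1.5} as
\begin{align*}
\Big(\frac{\sigma\rho^2}{(\rho^2+\rho'^2)^{3/2}}+\epsilon\Big)\rho''=\rho\,h+\sigma\frac{\rho\rho'^2(\ldots)}{(\rho^2+\rho'^2)^{3/2}}+\ldots,
\end{align*}
a positive $h$ forces $\rho''$ to have a definite sign. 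We then try to contradict $h''\le 0$, using
\begin{align*}
h''=g(\rho''\sin\theta+2\rho'\cos\theta-\rho\sin\theta),
\end{align*}
together with $h'=0$, i.e.\ $\rho'\sin\theta=-\rho\cos\theta$. If the maximum is at an endpoint, we use the boundary conditions in \eqref{equ:2.1.5} together with $|[\![\gamma]\!]|<\sigma$. These force $\rho'$ to have a sign making $h$ increase inward once $\epsilon$ is small, so an endpoint maximum is impossible. The "$\epsilon$ small enough" hypothesis enters exactly here, since the effective boundary slope is $\rho'(\sigma/\sqrt{\rho^2+\rho'^2}+\epsilon)=\mp[\![\gamma]\!]$.

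\textbf{Main obstacle.} We expect the main difficulty to be the interior sign computation: substituting $h'=0$ into the curvature terms of \eqref{equ:2.1.5} must produce a sign-definite quantity, and the $\rho'\cos\theta$ terms might not cooperate. If this fails, we fall back on the truncation competitor and handle the gradient cost by truncating only at the level set of $h$, where $\rho_\epsilon$ crosses the barrier transversally.
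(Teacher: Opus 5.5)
Your truncation route has a genuine gap. $\rho_\epsilon$ minimizes $E^\epsilon$ only on the constraint set $\{\int\rho^2=V\}$. The multiplier $P_0^\epsilon$ makes $\rho_\epsilon$ a critical point of the Lagrangian $E^\epsilon_p$, not a minimizer of it over unconstrained competitors. Since $\tilde\rho<\rho_\epsilon$ on the nonempty open set $U$, your competitor has strictly less mass. So even a strict decrease $E^\epsilon_p(\tilde\rho)<E^\epsilon_p(\rho_\epsilon)$ contradicts nothing: you would have to put the removed mass back somewhere and pay for it, and the argument does not account for that. In addition, the $\frac{\epsilon}{2}\int\rho'^2$ term is not controlled under truncation. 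Truncating at a transversal level set of $h$ fixes neither problem.

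Your second route is exactly the paper's proof: maximize $\rho_\epsilon\sin\theta$, rule out the endpoints with the boundary conditions, and evaluate the Euler--Lagrange equation at the maximum. The step you flag as the main obstacle does go through. With $S=\sqrt{\rho^2+\rho'^2}$ and $h=g\rho\sin\theta-P_0^\epsilon$, equation \eqref{equ:2.1.5} reads $(\sigma\rho^2S^{-3}+\epsilon)\rho''=\rho h+\sigma\rho(\rho^2+2\rho'^2)S^{-3}$. Hence
\begin{align*}
\Big(\frac{\sigma\rho^2}{S^3}+\epsilon\Big)(\rho\sin\theta)''=\rho h\sin\theta+\frac{2\sigma\rho\rho'}{S^3}(\rho'\sin\theta+\rho\cos\theta)+\epsilon(2\rho'\cos\theta-\rho\sin\theta).
\end{align*}
At a critical point $\theta^*$ of $\rho\sin\theta$ the capillary terms therefore cancel exactly, and $(\rho\sin\theta)''(\theta^*)\le 0$ gives $h(\theta^*)\le\epsilon(1+\cos^2\theta^*)/\sin^2\theta^*$.

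This is where ``$\epsilon$ small'' enters, not at the endpoints. What the argument actually delivers is $\max h\le C\epsilon$ with $C=2/\min(\sin^2\theta_1,\sin^2\theta_2)$; the paper's proof has the same form, ruling out $h(\theta_\epsilon)>C$ for a fixed $C$ once $\epsilon$ is small. At the endpoints, $\sigma/S+\epsilon>0$ for every $\epsilon$. So the sign $[\![\gamma]\!]<0$ (not $|[\![\gamma]\!]|<\sigma$), together with acute $\theta_1,\theta_2$, gives $h'(\theta_2)>0>h'(\pi-\theta_1)$ unconditionally.

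You also need $\sin\theta^*$ bounded below uniformly in $\epsilon$. This is automatic when $\theta_1,\theta_2>0$. When $\theta_1=0$ or $\theta_2=0$, the paper first shows $h<0$ on $\epsilon$-independent neighborhoods of the endpoints, using the uniform bound on $\rho_\epsilon$ and the positivity of $P_0^\epsilon$. Your proposal does not address this case.
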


    \begin{proof}

        Using the boundary condition

        \begin{equation}{\label{equ:1.1.100}}
            (\sigma\frac{1}{\sqrt{\rho_{\epsilon}^{2}+\rho_{\epsilon}'^{2}}}+\epsilon)\rho_{\epsilon}'(\pi-\theta_1)=[\![\gamma]\!],
        \end{equation}

        \noindent it holds that at the angle $\rho_{\epsilon}$ is decreasing at $\theta=\pi-\theta_1$. Moreover, 

        \begin{equation}{\label{equ:1.1.200}}
            (\sigma\frac{1}{\sqrt{\rho_{\epsilon}^{2}+\rho_{\epsilon}'^{2}}}+\epsilon)\rho
            _{\epsilon}'(\theta_2)=-[\![\gamma]\!]
        \end{equation}

        \noindent implies that $\rho_{\epsilon}'(\theta_2)>0$ and $\rho_{\epsilon}$ is increasing at the point $\theta_2$. Combining \eqref{equ:1.1.100} and \eqref{equ:1.1.200} together, we show that the maximum point of $\rho_{\epsilon}(\theta)\sin\theta$ can only be achieved in the interior. Suppose that $\theta_{\epsilon}$ is a point such that
        \begin{equation*}
            g\rho_{\epsilon}(\theta_{\epsilon})\sin\theta_{\epsilon}-P_{0}^{\epsilon}=\max_{\theta\in (\theta_{2},\pi-\theta_{1})} (g\rho_{\epsilon}(\theta)\sin\theta-P_{0}^{\epsilon}),
        \end{equation*}
        which is equivalent to that $\theta_{\epsilon}$ is the maximum point for $\rho_{\epsilon}\sin\theta$. This implies the following relation:
        \begin{align}{\label{equ:max}}
            \rho_{\epsilon}(\theta_{\epsilon})\cos\theta_{\epsilon}=-\rho'_{\epsilon}(\theta_{\epsilon})\sin\theta_{\epsilon}
        \end{align}
        Then it remains to show that:
        \begin{equation*}
            g\rho_{\epsilon}(\theta_{\epsilon})\sin\theta_{\epsilon}-P_{0}^{\epsilon}\leq 0 
        \end{equation*}
        
        \noindent We use a contradiction argument to prove this.
        
        Suppose that
        
        \begin{equation*}
            g\rho_{\epsilon}(\theta_{\epsilon})\sin\theta_{\epsilon}-P_{0}^{\epsilon}>C.
        \end{equation*}

        \noindent for some constant $C$. For simplification, in the following computation, we use $\rho$ instead of $\rho_{\epsilon}$ to denote the minimizer of $E^{\epsilon}$. From the Euler-Lagrange equation(Equation \eqref{equ:2.1.5}), we have

        \begin{equation*}
        \begin{aligned}
        (\sigma\frac{\rho}{(\rho^{2}+\rho'^{2})^{\frac{3}{2}}}+\epsilon)(\rho\sin\theta)''=&(\sigma\frac{2\rho'^{2}+\rho^{2}}{\sqrt{\rho_{\epsilon}^{2}+\rho_{\epsilon}'^{2}}^{\frac{3}{2}}}+g\rho\sin\theta-P_{0}^{\epsilon})\sin\theta+2(\sigma\frac{\rho}{(\rho^{2}+\rho'^{2})^{\frac{3}{2}}}+\epsilon)(\rho'\cos\theta)\\
        &-(\sigma\frac{\rho}{(\rho^{2}+\rho'^{2})^{\frac{3}{2}}}+\epsilon)(\rho\sin\theta).
        \end{aligned}
        \end{equation*}
        
        \noindent When $\theta=\theta_{\epsilon}$, applying equation \eqref{equ:max} to the equation above we have:

        \begin{equation*}
           \begin{aligned}
        (\sigma\frac{\rho}{(\rho^{2}+\rho'^{2})^{\frac{3}{2}}}+\epsilon)(\rho\sin\theta)''|_{\theta=\theta_{\epsilon}}=&\big(\sigma\frac{2\rho'^{2}+\rho^{2}}{\sqrt{\rho_{\epsilon}^{2}+\rho_{\epsilon}'^{2}}^{\frac{3}{2}}}+g\rho\sin\theta-P_{0}^{\epsilon})\sin\theta-2(\sigma\frac{\rho'^{2}\sin\theta}{(\rho^{2}+\rho'^{2})^{\frac{3}{2}}})-\epsilon(\sigma{\rho\cot\theta\cos\theta})\big)|_{\theta=\theta_{\epsilon}}\\
        &-\big((\sigma\frac{\rho}{(\rho^{2}+\rho'^{2})^{\frac{3}{2}}})(\rho\sin\theta)+\epsilon \rho\sin\theta\big)|_{\theta=\theta_{\epsilon}},
        \end{aligned}
        \end{equation*}
        \noindent which implies that
        \begin{equation*}
            \begin{aligned}
        (\sigma\frac{\rho}{(\rho^{2}+\rho'^{2})^{\frac{3}{2}}}+\epsilon)(\rho\sin\theta)''|_{\theta=\theta_{\epsilon}}=&\big(g\rho\sin\theta-P_{0}^{\epsilon})\sin\theta-\epsilon(\sigma{\rho\cot\theta\cos\theta})\big)|_{\theta=\theta_{\epsilon}}-\big(\epsilon \rho\sin\theta\big)|_{\theta=\theta_{\epsilon}},
        \end{aligned}
        \end{equation*}
        \noindent Since $\rho_{\epsilon}(\theta)$ is uniformly bounded, and suppose $\theta_{1},\theta_{2}\in(0,\pi)$, we can choose $\epsilon$ to be small enough such that
        \begin{align}
            |\epsilon(\sigma{\rho\cot\theta\cos\theta})|_{\theta=\theta_{\epsilon}}-\big(\epsilon \rho\sin\theta\big)|_{\theta=\theta_{\epsilon}}|<C\min(\sin\theta_{1},\sin\theta_{2})
        \end{align}
        \noindent Therefore, we have

        \begin{equation*}
            (\rho_{\epsilon}\sin\theta)''(\theta_{\epsilon})>0.
        \end{equation*}

        \noindent This contradicts to the assumption that $\theta_{\epsilon}$ is the maximum point for $\rho_{\epsilon}\sin\theta$.

        When $\theta_{1}=0$ or $\theta_{2}=0$, using the uniform BV bound and the positivity of $P_{0}$, we obtain
        \begin{align}
            g\rho_{\epsilon}(\theta)\sin\theta-P_{0}\leq g\rho_{\epsilon}(0)\sin0-P_{0}  +\frac{P_{0}}{2}\leq -\frac{P_{0}}{2}<0
        \end{align}
        \noindent for any $\theta\in (\pi-\theta_{1}-\delta,\pi-\theta_{1})\cup (\theta_{2},\theta_{2}+\delta)$ for some positive $\delta$ independent of $\epsilon$. Then we use the discussion for the case when $\theta_{1},\theta_{2}\neq 0$ to prove the statement in this special case. 
    \end{proof}

    Then we state the key lemma establishing the uniform boundedness of $\rho'$
    
    \begin{theorem}
    When $\theta_1\neq 0$ and $\theta_2\neq 0$, $\rho_{\epsilon}$ is the minimizer for $E^{\epsilon}$. Then $\rho^{\prime}(\theta)$ is uniformly bounded in $L^{\infty}$.
    \end{theorem}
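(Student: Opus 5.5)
Our plan is to obtain the uniform $L^\infty$ bound on $\rho_\epsilon'$ from a first integral of the Euler--Lagrange system \eqref{equ:2.1.5}, via a maximum-principle-type argument on the angle function
\begin{align*}
\psi_\epsilon(\theta):=\sigma\frac{\rho_\epsilon'}{\sqrt{\rho_\epsilon^{2}+\rho_\epsilon'^{2}}}+\epsilon\rho_\epsilon'.
\end{align*}
The first equation of \eqref{equ:2.1.5} says precisely that
\begin{align*}
\psi_\epsilon'=g\rho_\epsilon^{2}\sin\theta+\sigma\frac{\rho_\epsilon}{\sqrt{\rho_\epsilon^{2}+\rho_\epsilon'^{2}}}-P_0^\epsilon\rho_\epsilon=\rho_\epsilon\bigl(g\rho_\epsilon\sin\theta-P_0^\epsilon\bigr)+\sigma\frac{\rho_\epsilon}{\sqrt{\rho_\epsilon^{2}+\rho_\epsilon'^{2}}}.
\end{align*}
The right-hand side is bounded uniformly in $\epsilon$ for three reasons: the uniform BV bound of Lemma 3.2 controls $\|\rho_\epsilon\|_{L^\infty}$ (in one dimension), Theorem \ref{thm:uni_pre} bounds $P_0^\epsilon$, and the last term is at most $\sigma$. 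Integrating from $\theta_2$, where $\psi_\epsilon(\theta_2)=-[\![\gamma]\!]$, therefore gives $|\psi_\epsilon|\le C$ uniformly. This alone does not bound $\rho_\epsilon'$, however, since the $\sigma$-part saturates at $\pm\sigma$ and $\epsilon\rho_\epsilon'$ can be large only when $\rho_\epsilon'\sim 1/\epsilon$. What we really need is the \emph{strict} bound $|\sigma\rho_\epsilon'/\sqrt{\rho_\epsilon^2+\rho_\epsilon'^2}|\le\sigma-\delta$ with $\delta$ independent of $\epsilon$, together with a positive lower bound on $\rho_\epsilon$.

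The key step is to use Lemma 3.3, namely $g\rho_\epsilon\sin\theta-P_0^\epsilon\le 0$, to obtain monotonicity information. With it, $\psi_\epsilon'\le\sigma\rho_\epsilon/\sqrt{\rho_\epsilon^2+\rho_\epsilon'^2}$. In the region where $|\rho_\epsilon'|\gg\rho_\epsilon$ this is small, so $\psi_\epsilon$ is essentially nonincreasing there. Starting from the boundary values $\psi_\epsilon(\theta_2)=-[\![\gamma]\!]$ and $\psi_\epsilon(\pi-\theta_1)=[\![\gamma]\!]$, with $|[\![\gamma]\!]|<\sigma$ by Young's relation, we argue by contradiction. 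Suppose $|\rho_\epsilon'(\theta^*)|/\rho_\epsilon(\theta^*)\to\infty$ along a sequence $\epsilon\to0$. Consider the maximal interval around $\theta^*$ on which $|\rho_\epsilon'|\ge K\rho_\epsilon$. On this interval $\psi_\epsilon$ varies by at most $\int\sigma/K+\int\rho_\epsilon|g\rho_\epsilon\sin\theta-P_0^\epsilon|$. At the endpoint of the interval, either we are at the boundary, where $|\psi_\epsilon|=|[\![\gamma]\!]|$, or $|\rho_\epsilon'|=K\rho_\epsilon$, where $|\psi_\epsilon|\le\sigma K/\sqrt{1+K^2}+\epsilon K\rho_\epsilon$. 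Either way the endpoint value of $|\psi_\epsilon|$ is at most $\sigma-2\delta$.

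The main obstacle is showing that the variation of $\psi_\epsilon$ across this interval is smaller than $\delta$. Our first attempt is to use the sign from Lemma 3.3, so that the drift of $\psi_\epsilon$ has a definite sign, combined with the fact that the maximum of $\rho_\epsilon\sin\theta$ is interior. On the increasing branch near $\theta_2$ this rules out $\psi_\epsilon$ climbing up to $\sigma$, and on the decreasing branch near $\pi-\theta_1$ it rules out $\psi_\epsilon$ descending to $-\sigma$. This yields $\sigma-\delta\ge|\psi_\epsilon|\ge|\rho_\epsilon'|\bigl(\sigma/\sqrt{\rho_\epsilon^2+\rho_\epsilon'^2}+\epsilon\bigr)$, hence $|\rho_\epsilon'|\le C\rho_\epsilon$. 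A positive lower bound on $\rho_\epsilon$ is still needed. Here we use that $\theta_1,\theta_2\neq0$, so that $\sin\theta\ge c>0$, together with the volume constraint $\int\rho_\epsilon^2=V$ and the bound $|(\log\rho_\epsilon)'|\le C$ just obtained. This Harnack-type bound gives $\rho_\epsilon\ge c\|\rho_\epsilon\|_{L^\infty}\ge c'\sqrt{V}$, and hence $|\rho_\epsilon'|\le C$ uniformly.
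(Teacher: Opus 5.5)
The step you flag as the main obstacle is a genuine gap, and your proposed way around it does not close. Let $I$ be your maximal interval, of length $\ell$, on which $|\rho_\epsilon'|\ge K\rho_\epsilon$. At an interior endpoint of $I$ you have $|\psi_\epsilon|\ge\sigma K/\sqrt{1+K^2}$, so the margin below $\sigma$ is less than $\sigma/(2K^2)$. The only control on the upward drift of $\psi_\epsilon$ across $I$, after discarding the nonpositive term $\rho_\epsilon(g\rho_\epsilon\sin\theta-P_0^\epsilon)$, is $\int_I\sigma\rho_\epsilon/\sqrt{\rho_\epsilon^2+\rho_\epsilon'^2}\le\sigma\ell/\sqrt{1+K^2}$. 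Endpoint value plus drift therefore stays below $\sigma$ only if $\ell<\sqrt{1+K^2}-K<1/(2K)$, and nothing in your argument provides that.

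Structurally, write $\tan\varphi=\rho_\epsilon'/\rho_\epsilon$. Uniformly in $\epsilon$, the sign lemma $g\rho_\epsilon\sin\theta\le P_0^\epsilon$ yields only $\varphi'\le 1$; this is convexity of the free surface, i.e.\ \eqref{equ:2.1.21}. Together with the contact-angle values, it keeps $\varphi$ away from $\pm\pi/2$ only near the walls. The interior maximum $\theta_m$ of $\rho_\epsilon\sin\theta$ satisfies $\varphi(\theta_m)=\theta_m-\pi/2$. It adds $\varphi\ge\theta-\pi/2$ on $[\theta_2,\theta_m]$ and $\varphi\le\theta-\pi/2$ on $[\theta_m,\pi-\theta_1]$. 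These are the wrong sides. The two cases you claim to rule out, $\psi_\epsilon$ creeping up to $\sigma$ on $[\theta_2,\theta_m]$ and down to $-\sigma$ on $[\theta_m,\pi-\theta_1]$, remain uncontrolled away from the walls. Convexity and the contact angle alone permit this: a nearly straight arc leaving a contact point very close to the origin becomes nearly radial.

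Tellingly, the hypothesis $\theta_1,\theta_2\neq0$ enters only your Harnack step, and that step is superfluous. Once $|\rho_\epsilon'|\le C\rho_\epsilon$ holds, the BV bound on $\|\rho_\epsilon\|_{L^\infty}$ already gives $|\rho_\epsilon'|\le C$.

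What is missing is an estimate in which $\sin\theta\ge\min(\sin\theta_1,\sin\theta_2)>0$ does real work. The paper gets it from a maximum principle for $\rho_\epsilon'$ itself rather than for the angle. Divide the Euler--Lagrange equation by $\rho_\epsilon$, so that $P_0^\epsilon$ becomes an additive constant, and differentiate. At an interior maximum $\bar\theta$ of $\rho_\epsilon'$ one has $\rho_\epsilon''=0$ and $\rho_\epsilon'''\le0$. The differentiated equation, however, expresses a positive multiple of $\rho_\epsilon'''(\bar\theta)$ as $g\rho_\epsilon'\sin\bar\theta+g\rho_\epsilon\cos\bar\theta+O(1/\rho_\epsilon')$. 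This is positive once $\rho_\epsilon'(\bar\theta)$ is large, using the uniform bound on $\|\rho_\epsilon\|_{L^\infty}$. A maximum at an endpoint is controlled by the contact-angle condition, which gives $\rho_\epsilon'\le C\rho_\epsilon\le C$, and the minimum is treated symmetrically. This yields $|\rho_\epsilon'|\le C$ directly. It never needs your stronger claim $|\rho_\epsilon'|\le C\rho_\epsilon$, which would additionally require excluding degeneration of $\rho_\epsilon$ near the walls and is not needed for the theorem.
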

    
    \begin{proof}

    We prove it via a contradiction argument.
    
     Suppose that there is a sequence $\epsilon_{n}$ and $\theta_n$ such that $\rho_{\epsilon_{n}}^{\prime}(\theta_n)\rightarrow +\infty$.  Taking derivative with respect to $\theta$ on both sides of the Euler-Lagrange equation (equation \eqref{equ:2.1.5}), we obtain the following equation
     
     \begin{align}
      &(\frac{\rho_{\epsilon}}{(\rho_{\epsilon}^{2}+\rho_{\epsilon}'^{2})^{\frac{3}{2}}}+\epsilon)\rho_{\epsilon}'''+(\frac{\rho_{\epsilon}'(\rho_{\epsilon}^{2}+\rho_{\epsilon}'^{2})-3\rho_{\epsilon}(\rho_{\epsilon}+\rho_{\epsilon}'')\rho_{\epsilon}'}{(\rho_{\epsilon}^{2}+\rho_{\epsilon}'^{2})^{\frac{5}{2}}})\rho_{\epsilon}''\notag\\=&\frac{(4\rho_{\epsilon}''+2\rho_{\epsilon})\rho_{\epsilon}'(\rho_{\epsilon}^{2}+\rho_{\epsilon}'^{2})-3(2\rho_{\epsilon}'^{2}+\rho_{\epsilon}^{2})\rho_{\epsilon}'(\rho_{\epsilon}+\rho_{\epsilon}'')}{{(\rho_{\epsilon}^{2}+\rho_{\epsilon}'^{2})}^{\frac{5}{2}}}+g\rho_{\epsilon}'\sin\theta+g\rho_{\epsilon}\cos\theta \label{equ:2.1.13}
     \end{align}

     We suppose that ${\theta}^{m}_{n}$ is the point such that:

     \begin{equation*}
         \rho_{\epsilon_{n}}'({\theta}^{m}_{n})=\max_{\theta\in(\theta_2,\pi-\theta_1)} \rho_{\epsilon_{n}}'(\theta)
     \end{equation*}
     
     \noindent Since $\lim_{n\rightarrow +\infty}\rho'_{\epsilon_{n}}(\theta_n)=\infty$, we have $\lim_{n\rightarrow +\infty}\rho'_{\epsilon_{n}}({\theta}_{n}^{m})= +\infty$.  Let $\epsilon=\epsilon_{n}$ and $\theta={\theta}^{m}_{n}$ in equation \eqref{equ:2.1.13}. Since $\theta={\theta}_{n}^{m}$ is the maximum point of $\rho_{\epsilon_{n}}'$, we have $\rho_{\epsilon_{n}}''(\theta_{\epsilon_{n}}^{m})=0$. Using this fact in equation \eqref{equ:2.1.13}, we have:

     \begin{align}
         (\frac{\rho_{\epsilon_{n}}}{(\rho_{\epsilon_{n}}^{2}+\rho_{\epsilon_{n}}'^{2})}+\epsilon_{n})\rho_{\epsilon_{n}}'''({\theta}^{m}_{n})=&\frac{2\rho_{\epsilon_{n}}\rho'_{\epsilon_{n}}(\rho_{\epsilon_{n}}^{2}+\rho_{\epsilon_{n}}'^{2})-3\rho_{\epsilon_{n}}\rho_{\epsilon_n}'(2\rho_{\epsilon_{n}}'^{2}+\rho_{\epsilon_n}^{2})}{(\rho_{\epsilon_{n}}^{2}+\rho_{\epsilon_{n}}'^{2})^{\frac{5}{2}}}({\theta}_n^{m})\notag\\ &+g\rho_{\epsilon_{n}}'({\theta}^{m}_n)\sin{\theta}^{m}_{n}+g\rho_{\epsilon_{n}}({\theta}^{m}_{n})\cos{\theta}^{m}_{n} \label{equ:2.1.14}
     \end{align}

     \noindent For the right-hand side of the equation \eqref{equ:2.1.14}, when $\rho_{\epsilon_{n}}'({\theta}^{m}_{n})\rightarrow +\infty$, the second term dominates the asymptotic behavior and diverges to $+\infty$ given that $\sin{\theta}^{m}_n$ is uniformly bounded from below. Using conditions $\theta_1>0$ and $\theta_2>0$, we have

     \begin{equation*}
         \sin{\theta}^{m}_{n}\geq \min(\sin\theta_1,\sin\theta_2)>0,
     \end{equation*}

     \noindent which shows the uniform lower bound for $\sin\theta_{n}^{m}$. Therefore, applying this fact in \eqref{equ:2.1.14}, we derive the following inequality

     \begin{equation*}
         (\frac{\rho_{\epsilon_{n}}}{(\rho_{\epsilon_{n}}^{2}+\rho_{\epsilon_{n}}'^{2})}+\epsilon_{n})\rho_{\epsilon_{n}}'''({\theta}^{m}_{n})> 0,
     \end{equation*}

     \noindent which implies that:

     \begin{equation*}
         \rho_{\epsilon_{n}}({\theta}^{m}_n)'''>0.
     \end{equation*}

     \noindent This contradicts to the fact that ${\theta}^{m}_{n}$ is the maximum point for $\rho_{\epsilon_{n}}'$ if ${\theta}^{m}_{n}$ is not on the boundary. However, on the boundary, the contact angle condition shows that

     \begin{align}
        \vert (\sigma \frac{1}{\sqrt{\rho_{\epsilon_{n}}^{2}+\rho_{\epsilon_{n}}'^{2}}}+\epsilon_{n})(\rho_{\epsilon_{n}}')\vert=-[\![\gamma]\!]\in(0,\sigma),
     \end{align}

     \noindent which implies that

     \begin{align}
          \vert ( \frac{1}{\sqrt{\rho_{\epsilon_{n}}^{2}+\rho_{\epsilon_{n}}'^{2}}})(\rho_{\epsilon_{n}}')\vert<1.
     \end{align}

     \noindent This shows that $\rho_{\epsilon_{n}}'$ is bounded on the boundary. In conclusion, $\rho_{\epsilon_{n}}'$ is bounded from above.

     We can use the same method to show that $\rho_{\epsilon}'$ is also bounded from below and then we have the theorem proved.
    \end{proof}

    Now we have a uniform bound result for $\rho_{\epsilon}'$ when $\theta_1$ and $\theta_2$ are both positive. We then want to examine the case where $\theta_1=0$ or $\theta_2=0$.
    
    \begin{theorem}
    
    When $\theta_1=0$ or $\theta_2=0$, $\rho_{\epsilon}'$ is uniformly bounded for any $\epsilon>0$.

    \end{theorem}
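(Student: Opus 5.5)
We will follow the maximum-principle argument of the previous theorem for $\theta_1,\theta_2>0$, with one change. That proof used the bound $\sin\theta^{m}_{n}\ge\min(\sin\theta_1,\sin\theta_2)$, which fails when, say, $\theta_2=0$: the maximum point $\theta^m_n$ of $\rho_{\epsilon_n}'$ could drift toward the endpoint $\theta=0$, where $\sin\theta\to 0$. The plan is to split the interval into a neighbourhood of the degenerate endpoint(s) and a compact interior part.

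\textbf{Step 1: the interior.} Fix $\delta>0$ independent of $\epsilon$. On $[\theta_2+\delta,\pi-\theta_1-\delta]$ we have $\sin\theta\ge\sin\delta$. Suppose a sequence $\rho'_{\epsilon_n}(\theta^m_n)\to+\infty$ with $\theta^m_n$ a maximum point lying in this region. Then the argument via \eqref{equ:2.1.14} goes through verbatim:
\begin{itemize}
\item the term $g\rho'_{\epsilon_n}\sin\theta^m_n\ge g\sin\delta\,\rho'_{\epsilon_n}$ dominates;
\item the curvature terms are $O(\rho_{\epsilon_n})$ as $\rho'\to\infty$;
\item $g\rho_{\epsilon_n}\cos\theta$ is bounded by the uniform BV bound of Lemma 3.2.
\end{itemize}
This forces $\rho'''_{\epsilon_n}(\theta^m_n)>0$, which contradicts maximality.

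\textbf{Step 2: near the degenerate endpoint.} Take $\theta\in(0,\delta)$. Here we instead use the sign information from Lemma 3.4 in its $\theta_2=0$ form. There is a $\delta$ independent of $\epsilon$ such that $g\rho_\epsilon\sin\theta-P_0^\epsilon\le -P_0^\epsilon/2<0$ on $(\theta_2,\theta_2+\delta)$. By Theorem \ref{thm:uni_pre} and Theorem 3.1, $P_0^\epsilon$ is bounded above, and it is positive. We will also need it bounded below away from zero uniformly in $\epsilon$; we would try to prove this by passing to a limit and using $V>0$ again.

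Write the Euler--Lagrange equation \eqref{equ:2.1.5} as a first-order equation for the angle variable $\psi=\rho'/\sqrt{\rho^2+\rho'^2}$ plus the $\epsilon\rho'$ correction. Set $\Psi:=\sigma\psi+\epsilon\rho'$. Then $\partial_\theta\Psi = g\rho^2\sin\theta+\sigma\rho/\sqrt{\rho^2+\rho'^2}-P_0^\epsilon\rho$, so
\begin{align*}
\partial_\theta\Psi=\rho\Bigl(g\rho\sin\theta-P_0^\epsilon+\tfrac{\sigma\rho}{\sqrt{\rho^2+\rho'^2}}\Bigr).
\end{align*}
The boundary condition gives $|\Psi(0)|=|[\![\gamma]\!]|<\sigma$. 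Integrating over $(0,\theta)$ with $\theta<\delta$ bounds $|\Psi(\theta)-\Psi(0)|\le C\int_0^\delta\rho\,d\theta\le C\delta\sup\rho$. Choosing $\delta$ small, uniformly in $\epsilon$ since $\sup\rho_\epsilon$ is bounded by BV, gives $|\Psi|\le \sigma-\eta$ for some $\eta>0$.

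The conclusion is that $\rho'$ is bounded. If $\rho'$ were large, either $\epsilon|\rho'|$ would be large, so $|\Psi|\ge\epsilon|\rho'|-\sigma$, or $|\psi|$ would approach $1$ with the same sign as $\epsilon\rho'$, so $|\Psi|\ge\sigma|\psi|>\sigma-\eta$. Either way this is a contradiction. A quantitative form reads
\begin{align*}
\frac{|\rho'|}{\sqrt{\rho^2+\rho'^2}}\le 1-\eta/\sigma,
\end{align*}
which yields $|\rho'|\le C\rho\le C'$.

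In fact this $\Psi$-integration argument seems to work on the whole interval wherever $\int\rho\,d\theta$ is small. Combined with Step 1 on the complement, it gives the full uniform $L^\infty$ bound. The lower bound for $\rho'$ is symmetric.

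\textbf{Main obstacle.} The delicate point is Step 2. Its threshold $\delta$ must be independent of $\epsilon$, which requires uniform $L^\infty$ control of $\rho_\epsilon$ from the BV bound. One also needs $\rho_\epsilon$ to stay positive, so that the division by $\sqrt{\rho^2+\rho'^2}$ is harmless. If $\rho_\epsilon$ could approach $0$ near the endpoint, the inequality $|\rho'|\le C\rho$ still gives boundedness, so we expect this to be manageable.
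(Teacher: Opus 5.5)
Your proof is correct. Your Step 1 is the paper's Step 3: the maximum-principle argument, with $\sin\theta^m_n\ge\sin\delta$ away from the degenerate endpoint. The real difference is how you control $\rho_\epsilon'$ on an $\epsilon$-independent neighbourhood of $\theta_2=0$.

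\textbf{The paper's route.} It first bounds $\rho_\epsilon'(\theta_2)$ from the contact-angle condition. It then feeds the sign from Lemma 3.4, $g\rho_\epsilon\sin\theta-P_0^\epsilon\le 0$, into the Euler--Lagrange equation to get $\rho_\epsilon\rho_\epsilon''\le 2\rho_\epsilon'^2+\rho_\epsilon^2$. This is the Riccati inequality $(\rho_\epsilon'/\rho_\epsilon)'\le 1+(\rho_\epsilon'/\rho_\epsilon)^2$, which, integrated from the endpoint, gives only the one-sided bound $\rho_\epsilon'\le C\rho_\epsilon$. The reverse inequality requires rerunning the argument on the reflected function $\tilde\rho_\epsilon$.

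\textbf{Your route.} You integrate the divergence form of the equation instead. The quantity $\Psi=\sigma\psi+\epsilon\rho_\epsilon'$ starts at $|\Psi(\theta_2)|=|[\![\gamma]\!]|<\sigma$ and satisfies $|\partial_\theta\Psi|\le C\rho_\epsilon$. Hence $|\Psi|\le\sigma-\eta$ on $(0,\delta)$. Since $\sigma\psi$ and $\epsilon\rho_\epsilon'$ have the same sign, this gives $|\psi|\le 1-\eta/\sigma$, and so the two-sided bound $|\rho_\epsilon'|\le C\rho_\epsilon$ follows in one step.

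\textbf{What your route buys.} It uses only $\sup_\epsilon\bigl(\|\rho_\epsilon\|_{L^\infty}+|P_0^\epsilon|\bigr)<\infty$ and the boundary condition. It needs neither Lemma 3.4 nor the reflection. It also applies verbatim when $[\![\gamma]\!]>0$, where the paper must fall back on a cruder Riccati-type inequality because the sign condition is unavailable.

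\textbf{Trim the opening of your Step 2.} The part invoking Lemma 3.4 and claiming you need $P_0^\epsilon$ bounded away from zero is superfluous. Your $\Psi$ argument uses neither; it needs $P_0^\epsilon$ bounded only in absolute value, which Theorem 3.1 and Theorem 3.3 supply. Drop that unproved lower bound rather than try to establish it. Notably, a uniform bound $P_0^\epsilon\ge c>0$ is exactly what the paper's proof of Lemma 3.4 tacitly needs in the case $\theta_2=0$ to make its $\delta$ independent of $\epsilon$, so your route also sidesteps a weak point of the paper's argument.
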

        
    \begin{proof}

    From the discussion in the last theorem, it suffices to show the boundedness of $\rho_{\epsilon}'$ near the boundary.
    
        \textbf{step 1} Uniformly boundedness at the boundary. 
        
        Using the boundary condition of system \eqref{equ:2.1.5}), at the point $\theta=\theta_2$, we have
        
        \begin{equation}{\label{equ:2.1.15}}
            (\epsilon+\sigma \frac{1}{\sqrt{\rho_{\epsilon}^{2}(\theta_2)+\rho_{\epsilon}'^{2}(\theta_2)}})\rho_{\epsilon}'(\theta_2)=- [\![\gamma]\!]>0,
        \end{equation}

        \noindent which implies that

        \begin{equation}{\label{equ:2.1.16}}
        \rho_{\epsilon}'(\theta_2)> 0.
        \end{equation}

        \noindent Using \eqref{equ:2.1.16} in equation \eqref{equ:2.1.15}, we obtain that

        \begin{equation}{\label{equ:2.1.17}}
            0<\frac{\rho_{\epsilon}'}{\sqrt{\rho_{\epsilon}^{2}+\rho_{\epsilon}'^{2}}}(\theta_2)<-\frac{[\![\gamma]\!]}{\sigma}<C<1
        \end{equation}
        
        \noindent The last inequality holds because of our assumption that $0<[\![\gamma]\!]<\sigma$. Inequality \eqref{equ:2.1.17} yields that:

        \begin{equation*}
            0<\frac{\rho_{\epsilon}'}{\rho_{\epsilon}}(\theta_2)\leq \sqrt{\frac{C^{2}}{1-C^{2}}}.
        \end{equation*}

        \noindent This implies that $\rho_{\epsilon}'(\theta_2)$ is uniformly bounded, since $\rho(\theta_2)$ is uniformly bounded by Lemma 3.2.

        Using the similar discussion, we can show that $\rho_{\epsilon}'(\pi-\theta_1)$ is uniformly bounded.

        \textbf{step 2} Uniform boundedness in a neighborhood of two boundary points.
        
        Now we have a uniform bound at two boundary points. We then compute the derivative of $\frac{\rho_{\epsilon}'}{\rho_{\epsilon}}$ as follows:

        \begin{equation}{\label{equ:2.1.18}}
            (\frac{\rho_{\epsilon}'}{\rho_{\epsilon}})'=\frac{\rho_{\epsilon}''\rho_{\epsilon}-\rho_{\epsilon}'^{2}}{\rho_{\epsilon}^{2}}
        \end{equation}

         In order to derive an inequality for $\rho_{\epsilon}^{\prime}$ and $\rho_{\epsilon}$, we need to eliminate the term with second order spatial derivative in equation \eqref{equ:2.1.18}.  We use the first equation in the system \eqref{equ:2.1.5} to show that

        \begin{equation}{\label{equ:2.1.19}}
            (\frac{\rho}{(\rho^{2}+\rho'^{2})^{\frac{3}{2}}}+\epsilon)\rho''=\frac{2\rho'^{2}+\rho^{2}}{\sqrt{\rho_{\epsilon}^{2}+\rho_{\epsilon}'^{2}}^{3}}+g\rho\sin\theta-P_{0}^{\epsilon}
        \end{equation}
        
        \noindent  Using Lemma 3.4, we have

        \begin{equation*}
        g\rho_{\epsilon}\sin\theta-P_{0}^{\epsilon}\leq 0
        \end{equation*}
        Therefore, substituting this relation in equation \eqref{equ:2.1.19}, we obtain an inequality for the second order derivative of $\rho_{\epsilon}$

        \begin{align}
            \rho_{\epsilon}'' <\frac{\frac{2\rho_{\epsilon}'^{2}+\rho_\epsilon{}^{2}}{\sqrt{\rho_{\epsilon}^{2}+\rho_{\epsilon}'^{2}}^{3}}}{\frac{\rho_{\epsilon}}{\sqrt{\rho_{\epsilon}^{2}+\rho_{\epsilon}'^{2}}^{3}}+\epsilon}<\frac{\frac{2\rho_{\epsilon}'^{2}+\rho_{\epsilon}^{2}}{\sqrt{\rho_{\epsilon}^{2}+\rho_{\epsilon}'^{2}}^{3}}}{\frac{\rho_{\epsilon}}{\sqrt{\rho_{\epsilon}^{2}+\rho_{\epsilon}'^{2}}^{3}}}=\frac{2\rho_{\epsilon}'^{2}+\rho_{\epsilon}^{2}}{\rho_{\epsilon}},
        \end{align}

        \noindent which is equivalent to the following relation

        \begin{equation}{\label{equ:2.1.20}}
            \rho_{\epsilon}\rho_{\epsilon}''\leq 2\rho_{\epsilon}'^{2}+\rho_{\epsilon}^{2}.
        \end{equation}

         Plugging the inequality \eqref{equ:2.1.20} back into the equation \eqref{equ:2.1.18} to eliminate the second derivative, we obtain that

        \begin{equation}{\label{equ:2.1.21}}
            (\frac{\rho_{\epsilon}'}{\rho_{\epsilon}})'\leq 1+\frac{\rho_{\epsilon}'^{2}}{\rho_{\epsilon}^{2}}
        \end{equation}

        \noindent Combining inequality \eqref{equ:2.1.21} and the boundedness of $\rho_{\epsilon}'$ on the boundary, we derive the following result by Gronwall type argument:

        \begin{equation}{\label{equ:2.1.22}}
            \rho_{\epsilon}'(\theta)\leq C\rho_{\epsilon}(\theta)
        \end{equation}

        \noindent for some constant $C$ and all $\theta\in (\theta_2,\theta_2+\delta)\cup (\pi-\theta_1-\delta,\pi-\theta_1)$. The two constants $C$ and $\delta$ are both independent of $\epsilon$.

        Moreover, it is also important to show the following inequality

        \begin{equation*}
            \rho_{\epsilon}'\geq -C\rho_{\epsilon}
        \end{equation*}

        \noindent To prove this we construct a new function $\tilde{\rho}$ such that $\tilde{\rho}(\theta)=\rho(\pi+\theta_2-\theta_1-\theta)$. It is easy to show that this function satisfies the same equation \eqref{equ:2.1.19}. Hence, we obtain the Gronwall type estimate for this new constructed function

        \begin{equation*}
            \tilde{\rho}_{\epsilon}'(\theta)\leq C\tilde{\rho}_{\epsilon}(\theta)
        \end{equation*}

        \noindent for all $\theta\in(\theta_2,\theta_2+\delta)\cup (\pi-\theta_1-\delta,\pi-\theta_1)$. This means that

        \begin{equation}{\label{equ:2.1.23}}
            \rho_{\epsilon}'(\theta)\geq -C\rho_{\epsilon}(\theta)
        \end{equation}

        \noindent for all  $\theta\in(\theta_2,\theta_2+\delta)\cup (\pi-\theta_1-\delta,\pi-\theta_1)$. Combining \eqref{equ:2.1.22} and \eqref{equ:2.1.23} together and using Lemma 2.2 again, we derive that $\rho'(\theta)$ is bounded for any  $\theta\in(\theta_2,\theta_2+\delta)\cup (\pi-\theta_1-\delta,\pi-\theta_1)$. 

        \textbf{Step 3} The uniform boundedness for $\rho_{\epsilon}'(\theta)$. 

        We use the same discussion as in Theorem 3.5. $\epsilon_n$ and ${\theta}_{n}^{m}$ are as defined in Theorem 3.5. From the proof in Theorem 3.5, it suffices to derive a uniform positive lower bound for $\sin\tilde\theta_n$. By the definition of $\tilde{\theta}_n$, we know that $\lim_{n\rightarrow}\rho_{\epsilon_{n}}'(\tilde\theta_{n})=+\infty$.  From Step 1 and Step 2,  $\rho_{\epsilon_{n}}'(\theta)$ is uniformly bounded for all  $\theta\in(\theta_2,\theta_2+\delta)\cup (\pi-\theta_1-\delta,\pi-\theta_1)$. Therefore, we have

        \begin{equation*}
            {\theta}_{n}^{m}\in(\theta_2+\delta,\pi-\theta_1-\delta)
        \end{equation*}

        \noindent where $\delta>0$ is independent of $\epsilon$. Hence, we have

        \begin{equation*}
            \sin({\theta}_{n}^{m})>\min(\sin(\theta_2+\delta),\sin(\pi-\theta_1-\delta))>0.
        \end{equation*}

        \noindent Therefore,  $\rho_{\epsilon}'(\theta)$ is uniformly bounded for all $\theta\in(\theta_2,\pi-\theta_1)$ and $\epsilon>0$.
    \end{proof}
    
    From Theorem 3.5, $P_{0}^{\epsilon}$ is bounded from above by some constant independent of $\epsilon$. Also, $\rho_{\epsilon}$ is uniformly bounded in BV. If we want to pass the limit, we need to show at least the uniform $H^{1}$ bound. This is established in the following Lemma.

    \begin{lemma}

    $\rho_{\epsilon}$ is the minimizer of $E^{\epsilon}$ subject to the conservation of total mass $V$. Then it is uniformly bounded in $H^{1}$.
    
    \end{lemma}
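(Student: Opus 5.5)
The plan is to combine the uniform $L^\infty$ bound on $\rho_\epsilon'$ (Theorems 3.5 and 3.6) with the uniform $BV$ bound of Lemma 3.2. This gives a uniform $H^1$ bound directly, since the interval $(\theta_2,\pi-\theta_1)$ has finite length.

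First, the $L^2$ part. Lemma 3.2 gives $\|\rho_\epsilon\|_{BV}\le C$ uniformly for $\epsilon\le\epsilon_0$. In one dimension $BV$ embeds into $L^\infty$, so $\sup_\theta|\rho_\epsilon(\theta)|\le C$ uniformly. Alternatively, one can skip this and use the constraint \eqref{equ:2.1.4} directly: $\int_{\theta_2}^{\pi-\theta_1}\rho_\epsilon^2\,d\theta=V$, so $\|\rho_\epsilon\|_{L^2}^2=V$ exactly.

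Second, the derivative part. Theorem 3.5 (for $\theta_1,\theta_2>0$) and Theorem 3.6 (when $\theta_1=0$ or $\theta_2=0$) give $\|\rho_\epsilon'\|_{L^\infty}\le C$ with $C$ independent of $\epsilon$. Hence
\begin{align*}
\int_{\theta_2}^{\pi-\theta_1}\rho_\epsilon'^2\,d\theta\le(\pi-\theta_1-\theta_2)\,\|\rho_\epsilon'\|_{L^\infty}^2\le C.
\end{align*}
Adding the two bounds gives $\|\rho_\epsilon\|_{H^1}^2\le V+C$, uniformly in $\epsilon\in(0,\epsilon_0]$.

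The main obstacle is not this final assembly but the inputs it relies on, above all the uniform $L^\infty$ control of $\rho_\epsilon'$ near the contact points. That control in turn depends on Lemma 3.4, $g\rho_\epsilon\sin\theta\le P_0^\epsilon$, and on the Gronwall step in Theorem 3.6. I will take these as established, as the paper allows.

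If one wanted to avoid the $L^\infty$ derivative bound, a fallback would be to test the Euler--Lagrange equation \eqref{equ:2.1.5} against $\rho_\epsilon$, as in Theorem \ref{thm:uni_pre}. This controls $\int\rho_\epsilon'^2/\sqrt{\rho_\epsilon^2+\rho_\epsilon'^2}$ and $\epsilon\int\rho_\epsilon'^2$, but it only yields $\|\rho_\epsilon'\|_{L^1}$-type control. So I will rely on Theorems 3.5 and 3.6.

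Finally, the uniform $H^1$ bound yields, by Banach--Alaoglu and compact embedding into $C^0$, a subsequence converging weakly in $H^1$ and uniformly. This is what the subsequent passage to the limit $\epsilon\to0$ needs.
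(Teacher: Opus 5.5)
Your proposal is correct and follows essentially the same route as the paper: bound $\int\rho_\epsilon'^2$ using the uniform $L^\infty$ bound on $\rho_\epsilon'$ from the preceding theorems, bound $\int\rho_\epsilon^2$ using a uniform bound on $\rho_\epsilon$, and use that the interval $(\theta_2,\pi-\theta_1)$ is bounded. For the $L^2$ part, noting that the mass constraint gives $\|\rho_\epsilon\|_{L^2}^2=V$ exactly is a slightly cleaner step than the paper's appeal to the $BV$/$L^\infty$ bound.
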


    \begin{proof}

    From Theorem 2.6 and Lemma 2.2 both $\vert \rho_{\epsilon}'(\theta)\vert$ and $\rho_{\epsilon}(\theta)$ are bounded by a constant $C$ independent of $\epsilon$. Hence, we have:

    \begin{equation*}
        \int_{\theta_2}^{\pi-\theta_1}\rho_{\epsilon}(\theta)^{2}d\theta \leq C^{2}(\pi-\theta_1-\theta_2),
    \end{equation*}

    \noindent and

    \begin{equation*}
        \int_{\theta_2}^{\pi-\theta_1}\rho_{\epsilon}(\theta)'^{2}d\theta \leq C^{2}(\pi-\theta_1-\theta_2).
    \end{equation*}

    \noindent Therefore, $\rho_{\epsilon}$ is uniformly bounded in $H^{1}$.
    
    \end{proof}

     From Theorem 3.7, $\rho_{\epsilon}$ is uniformly bounded in $H^{1}$. Let $\epsilon\rightarrow 0$. Then $\rho_{\epsilon}\rightarrow \rho_{0}$ weakly in $H^{1}$ and strongly in $L^{p}$ for any $1<p<\infty$ by Banach-Alaoglu theorem. Then we can easily show from the weakly lower semicontinuity of the functional $E$ that

    \begin{equation}{\label{equ:1.1.300}}
        E(\rho_{0})\leq \liminf_{\epsilon\rightarrow 0} E^{\epsilon}(\rho_{\epsilon})
    \end{equation}

    \noindent Finally, it remains to show that $\rho_{0}$ is exactly the minimizer of energy $E$. 

    \begin{theorem}{\label{thm:mini}}
    $\rho_{0}$ is the function defined above. It is the minimizer in the function set $H^{1}$ of energy functional $E$
    \end{theorem}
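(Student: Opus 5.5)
The plan is the standard $\Gamma$-convergence-type comparison argument. Let $\tilde\rho\in H^{1}$ be any competitor satisfying the volume constraint \eqref{equ:2.1.2}. Since $\rho_{\epsilon}$ minimizes $E^{\epsilon}$ under the same constraint, we have
\begin{align*}
E^{\epsilon}(\rho_{\epsilon})\leq E^{\epsilon}(\tilde\rho)=E(\tilde\rho)+\frac{\epsilon}{2}\int_{\theta_2}^{\pi-\theta_1}\tilde\rho'^{2}\,d\theta .
\end{align*}
The last term tends to $0$ as $\epsilon\to0$, because $\tilde\rho\in H^{1}$ is fixed. Hence $\limsup_{\epsilon\to0}E^{\epsilon}(\rho_{\epsilon})\leq E(\tilde\rho)$. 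Combining this with \eqref{equ:1.1.300}, we get $E(\rho_{0})\leq E(\tilde\rho)$. It then remains to check that $\rho_{0}$ is itself admissible.

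\textbf{Step 1: admissibility of $\rho_{0}$.} By the preceding lemma, $\rho_{\epsilon}$ is uniformly bounded in $H^{1}$, and we take a subsequence with $\rho_{\epsilon}\rightharpoonup\rho_{0}$ in $H^{1}$. On the bounded interval, the compact embedding $H^{1}\hookrightarrow C^{0}$ gives $\rho_{\epsilon}\to\rho_{0}$ uniformly. Therefore $\int\rho_{\epsilon}^{2}=V$ passes to the limit, and $\rho_{0}$ satisfies the constraint. Uniform convergence also gives $\rho_{\epsilon}(\theta_2)\to\rho_0(\theta_2)$ and $\rho_{\epsilon}(\pi-\theta_1)\to\rho_0(\pi-\theta_1)$, so the boundary term $-[\![\gamma]\!](\rho(\theta_2)+\rho(\pi-\theta_1))$ is continuous along the sequence. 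The same holds for the gravity term $\frac{g}{3}\int\rho^{3}\sin\theta$.

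\textbf{Step 2: lower semicontinuity \eqref{equ:1.1.300}, made precise.} The only nontrivial term is $\sigma\int\sqrt{\rho^{2}+\rho'^{2}}$. The integrand is convex in $\rho'$ and continuous in $\rho$, and $\rho_\epsilon\to\rho_0$ strongly while $\rho_\epsilon'\rightharpoonup\rho_0'$ weakly in $L^{2}$. The standard Tonelli/De Giorgi lower semicontinuity theorem therefore gives $\int\sqrt{\rho_0^{2}+\rho_0'^{2}}\leq\liminf\int\sqrt{\rho_\epsilon^{2}+\rho_\epsilon'^{2}}$. Alternatively, one can use convexity directly: the difference between $\sqrt{\rho_\epsilon^2+\rho_\epsilon'^2}$ and $\sqrt{\rho_0^2+\rho_\epsilon'^2}$ tends to $0$ uniformly, and $p\mapsto\int\sqrt{\rho_0^2+p^2}$ is convex and strongly continuous, hence weakly lsc. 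Dropping the nonnegative term $\frac{\epsilon}{2}\int\rho_\epsilon'^2$ then yields $E(\rho_0)\leq\liminf E^{\epsilon}(\rho_\epsilon)$.

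\textbf{Step 3: conclusion and main obstacle.} The chain above gives
\begin{align*}
E(\rho_0)\leq\liminf_{\epsilon\to0} E^{\epsilon}(\rho_{\epsilon})\leq\limsup_{\epsilon\to0} E^{\epsilon}(\rho_{\epsilon})\leq E(\tilde\rho)
\end{align*}
for every admissible $\tilde\rho\in H^{1}$. So $\rho_0$ is a minimizer of $E$ in $H^{1}$ under \eqref{equ:C}. As a byproduct, taking $\tilde\rho=\rho_0$ gives $E^{\epsilon}(\rho_\epsilon)\to E(\rho_0)$.

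I expect the main obstacle to be the admissible class rather than the comparison itself. One needs $\rho_0>0$ so that the problem stays in the polar-graph setting, and the constraint must be preserved in the limit. The constraint is handled by uniform convergence. For positivity, I would argue as follows: the uniform bound on $\rho_\epsilon'/\rho_\epsilon$ near the endpoints (from the Gronwall step) and the uniform $W^{1,\infty}$ bound prevent $\rho_\epsilon$ from degenerating. If $\rho_0$ vanished somewhere, one would compare with a competitor of strictly smaller energy. If positivity is not needed for the statement as posed (minimization over $H^1$), this step can be omitted.
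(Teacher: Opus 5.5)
Your argument is correct and is essentially the paper's proof. The paper argues by contradiction: a competitor $\rho_1$ with $E(\rho_1)<E(\rho_0)-2\delta$ would give $E^{\epsilon}(\rho_1)<E^{\epsilon}(\rho_\epsilon)$ for small $\epsilon$ via \eqref{equ:1.1.300}, which is just the contrapositive of your $\liminf/\limsup$ chain. Your version also supplies the lower semicontinuity, the passage of the volume constraint to the limit, and the restriction of competitors to the constrained class, all of which the paper only asserts or leaves implicit. Your positivity digression is not needed: if admissible functions are required to satisfy $\rho\ge 0$, then $\rho_0\ge 0$ follows directly from uniform convergence, and strict positivity is not part of the statement.
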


    \begin{proof}

        Suppose that $\rho_{0}$ is not the minimizer of the functional energy $E$. Then there exits another function $\rho_{1}\in H^{1}$ such that

        \begin{equation*}
            E(\rho_{1})<E(\rho_{0})-2\delta
        \end{equation*}

        \noindent for some small $\delta>0$. Then using \eqref{equ:1.1.300}, we choose a small $\epsilon>0$ such that

        \begin{equation*}
             E^{\epsilon}(\rho_{1})<E(\rho_{0})-\delta\leq E^{\epsilon}(\rho_{\epsilon})
        \end{equation*}

        \noindent which contradicts to the fact that $\rho_{\epsilon}$ is the minimizer of functional energy $E^{\epsilon}$. Hence $\rho_{0}$ is the minimizer of functional $E$ in the set of functions $H^{1}$.
        
    \end{proof}

    Now we already have a $H^{1}$ minimizer for the functional $E$. We want to improve the regularity so that it is the strong solution to the Euler-Lagrange equation. We enhance the regularity for $\rho_{0}$ in the following lemma

    \begin{theorem}

        The minimizer  $\rho_{0}$ derived in Theorem \ref{thm:mini} is smooth.
    \end{theorem}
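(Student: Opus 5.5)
The plan is to bootstrap regularity from the Euler--Lagrange equation satisfied weakly by $\rho_0$. First we derive that equation. Since $\rho_0\in H^1$ is a minimizer of $E$ under the constraint \eqref{equ:2.1.2}, and the constraint functional $\mathcal{V}$ has nonvanishing derivative at $\rho_0$ (because $\mathcal{V}'(\rho_0)\rho_0=2V\neq 0$), the Lagrange multiplier rule applies. This gives a constant $P_0$ such that for all $\varphi\in H^1(\theta_2,\pi-\theta_1)$
\begin{align*}
\int g\rho_0^2\sin\theta\,\varphi+\sigma\frac{\rho_0\varphi+\rho_0'\varphi'}{\sqrt{\rho_0^2+\rho_0'^2}}\,d\theta-[\![\gamma]\!]\bigl(\varphi(\theta_2)+\varphi(\pi-\theta_1)\bigr)=P_0\int\rho_0\varphi\,d\theta .
\end{align*}
Alternatively, $P_0$ is obtained as a limit of the uniformly bounded multipliers $P_0^\epsilon$ from Theorem~\ref{thm:uni_pre}. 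To justify differentiating the square root we need $\rho_0$ bounded away from zero. I would get this from the uniform bound on $\rho_\epsilon'$ (so $\rho_\epsilon\to\rho_0$ uniformly, and $\rho_0$ is Lipschitz), combined with the boundary relation \eqref{equ:2.1.17}, which bounds $\rho'/\rho$ near the endpoints, and a Gronwall argument like \eqref{equ:2.1.22}--\eqref{equ:2.1.23}. This gives a lower bound $\rho_\epsilon\ge c>0$ uniform in $\epsilon$, which passes to the limit.

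Next we rewrite the weak equation in divergence form. Set $w=\sigma\rho_0'/\sqrt{\rho_0^2+\rho_0'^2}$. Taking compactly supported $\varphi$, we get $w'=g\rho_0^2\sin\theta+\sigma\rho_0/\sqrt{\rho_0^2+\rho_0'^2}-P_0\rho_0$ in the distributional sense. The right-hand side lies in $L^\infty$, so $w\in W^{1,\infty}$. Because $|w|<\sigma$, uniformly on compact sets thanks to the $L^\infty$ bound on $\rho_0'$, we may invert $w$: $\rho_0'=\rho_0\,(w/\sigma)/\sqrt{1-(w/\sigma)^2}$. This expresses $\rho_0'$ as a smooth function of $(\rho_0,w)$, and therefore $\rho_0'\in W^{1,\infty}$, i.e.\ $\rho_0\in W^{2,\infty}$.

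Then we bootstrap. If $\rho_0\in C^{k}$, the right-hand side for $w'$ is $C^{k-1}$, so $w\in C^{k}$, and the inversion formula gives $\rho_0'\in C^{k}$, i.e.\ $\rho_0\in C^{k+1}$. Induction yields $\rho_0\in C^\infty([\theta_2,\pi-\theta_1])$, up to the endpoints, since the ODE is regular there. The natural boundary conditions $w(\theta_2)=-[\![\gamma]\!]$ and $w(\pi-\theta_1)=[\![\gamma]\!]$ follow by testing with general $\varphi$ and integrating by parts. We then check that $\rho_0$ solves \eqref{equ:equi} strongly.

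The main obstacle is the first step: keeping $\rho_0$ positive and $|w|<\sigma$ up to the endpoints, so that the Lagrangian is uniformly elliptic and the inversion is legitimate. This is especially delicate when $\theta_1=0$ or $\theta_2=0$, where Lemma~3.4 and the Gronwall estimate must be used uniformly in $\epsilon$. If the direct lower bound fails, I would argue by contradiction: if $\rho_0$ vanished somewhere, the Lipschitz bound would let us construct a competitor of lower energy by redistributing mass, contradicting minimality.
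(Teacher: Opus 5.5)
Your route differs from the paper's. The paper tests the weak Euler--Lagrange equation with $\phi D_{h}^{\theta}D_{h}^{\theta}(\phi\rho_0)$, where $\phi$ is a cutoff vanishing at the endpoints. It then uses coercivity of the linearized flux (see \eqref{equ:325}) to get interior $H^2$ regularity, and appeals to ``standard elliptic estimates'' for the rest. You instead use the fact that in one dimension the equation is a first-order equation for the flux $w=\sigma\rho_0'/\sqrt{\rho_0^2+\rho_0'^2}$, with bounded right-hand side. For $\rho>0$ the map $p\mapsto\sigma p/\sqrt{\rho^2+p^2}$ is a diffeomorphism of $\mathbb{R}$ onto $(-\sigma,\sigma)$, so regularity of $w$ passes algebraically to $\rho_0'$. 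This gives $W^{2,\infty}$ at once, $C^\infty$ up to the endpoints by a short induction, and the natural boundary conditions. The paper's cutoff argument, by contrast, is purely interior.

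Your route also makes the uniform Lipschitz bound from the $\epsilon$-problem unnecessary. The right-hand side of $w'$ is bounded for any continuous $\rho_0$. If $|w(\theta^*)|=\sigma$ at some point, then $\sigma^2-w^2\lesssim|\theta-\theta^*|$ near $\theta^*$, hence $\rho_0'^2=\rho_0^2w^2/(\sigma^2-w^2)\gtrsim|\theta-\theta^*|^{-1}$, contradicting $\rho_0\in H^1$. So the only real input is a positive lower bound for $\rho_0$.

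That input is where the proposal has a gap: the estimates you cite do not give it. Together, \eqref{equ:2.1.17} and \eqref{equ:2.1.22}--\eqref{equ:2.1.23} give $|\rho_\epsilon'|\le C\rho_\epsilon$ only on $\delta$-strips at the two endpoints.
\begin{itemize}
\item On those strips this controls $\log\rho_\epsilon$ only up to an additive constant. You would still need a lower bound for $\rho_\epsilon(\theta_2)$, and nothing provides one.
\item On $(\theta_2+\delta,\pi-\theta_1-\delta)$ it says nothing; only $|\rho_\epsilon'|\le C$ is available there.
\item The Riccati-type inequality \eqref{equ:2.1.21} is one-sided. It does not prevent $\rho_\epsilon'/\rho_\epsilon\to-\infty$ at an interior point.
\end{itemize}
Positivity therefore has to come from minimality together with Young's condition $|[\![\gamma]\!]|<\sigma$. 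In other words, your fallback competitor argument is the real argument and must be carried out.

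Here is a sketch. Suppose $\rho_0(\theta_2)=0$, and replace $\rho_0$ on $[\theta_2,\theta_2+\delta]$ by the constant $R=\rho_0(\theta_2+\delta)$.
\begin{itemize}
\item The capillary term drops by at least $\sigma R(1-\delta)$, since the original arc runs from the pole out to radius $R$.
\item The wetting term changes by $-[\![\gamma]\!]R$.
\item For suitably chosen small $\delta$, restoring the volume by a dilation costs only $o(R)$.
\end{itemize}
So $E$ strictly decreases, because $\sigma+[\![\gamma]\!]>0$. Interior zeros are removed the same way, by replacing the excursion through the pole with a chord. The chord is strictly shorter, since the angular gap is less than $\pi$.

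The paper's own proof has the same hole. It asserts the weak equation without derivation, and its coercivity constant in \eqref{equ:325} degenerates as $\rho_0\to0$. So this is not a defect of your route relative to the paper. But as written, your proposal does not establish the positivity it rests on.
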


    \begin{proof}

    Since $\rho_{0}$ is a $H^{1}$ function, we know that it solves the equation in the weak sense. Let $D_{\theta}^{h}$ be an operator such that $D_{\theta}^{h}(\rho)(\theta)=\frac{\rho(\theta+h)-\rho(\theta)}{h}$. Then we choose the test function $\phi D_{h}^{\theta}D_{h}^{\theta}(\phi \rho_{0})$ where $\phi$ is a cut off function such that it equals zero on the boundary. Applying this cut off function, we obtain:

        \begin{align}
        \begin{aligned}
            &\int_{\theta_2}^{\pi-\theta_1}\rho_{0}^{2}\sin\theta \phi D_{h}^{\theta}D_{h}^{\theta} (\phi\rho_{0}) d\theta+\int_{\theta_2}^{\pi-\theta_1} \frac{\rho_{0}}{\sqrt{\rho_{0}^{2}+\rho_{0}'^{2}}} \phi D_{h}^{\theta}D_{h}^{\theta} (\phi\rho_{0})d\theta\\
            &+\int_{\theta_2}^{\pi-\theta_1} \frac{\rho_{0}'}{\sqrt{\rho_{0}^{2}+\rho_{0}'^{2}}}\phi D_{h}^{\theta}D_{h}^{\theta}D_{\theta} (\phi\rho_{0})d\theta -\int_{\theta_2}^{\pi-\theta_1} P_{0} \rho_{0}\phi D_{h}^{\theta}D_{h}^{\theta}(\phi\rho_{0})=0
            \end{aligned}
        \end{align}

        \noindent Then by integration by part, we obtain:

        \begin{align}{\label{equ:324}}
        \begin{aligned}
            &\int_{\theta_2}^{\pi-\theta_1} D_{h}^{\theta}(\phi\rho_{0}^{2}\sin\theta)D_{h}^{\theta}(\phi\rho_{0})d\theta+\int_{\theta_2}^{\pi-\theta_1} D_{h}^{\theta}\frac{\phi\rho_{0}}{\sqrt{\rho_{0}^{2}+\rho_{0}'^{2}}} D_{h}^{\theta}(\phi\rho_{0})d\theta\\
            &+\int_{\theta_2}^{\pi-\theta_1}D_{\theta}^{h}(\frac{\phi\rho_{0}'}{\sqrt{\rho_{0}^{2}+\rho_{0}'^{2}}})D_{h}^{\theta}D_{\theta}(\phi\rho_{0})d\theta -\int_{\theta_2}^{\pi-\theta_1}P_{0}D_{h}^{\theta}(\rho_{0}\phi) D_{h}^{\theta}(\phi\rho_{0})=0
            \end{aligned}
        \end{align}

       \noindent In \eqref{equ:324} above, we need to compute the term $D_{\theta}^{h}(\frac{\phi \rho_{0}'}{\sqrt{\rho_{0}^{2}+\rho_{0}'^{2}}})$ as follows:

        \begin{equation}\label{equ:dis_dif}
        \begin{aligned}
         D_{\theta}^{h}(\frac{\phi \rho_{0}'}{\sqrt{\rho_{0}^{2}+\rho_{0}'^{2}}})&=\frac{1}{h}(\frac{\phi \rho_{0}'}{\sqrt{\rho_{0}^{2}+\rho_{0}'^{2}}}(\theta+h)-\frac{\phi \rho_{0}'}{\sqrt{\rho_{0}^{2}+\rho_{0}'^{2}}}(\theta))\\
            &=\frac{1}{h}\frac{\phi\rho'(\theta+h)\sqrt{\rho_{0}^{2}+\rho_{0}'^{2}}(\theta)-\phi\rho'(\theta)\sqrt{\rho_{0}'^{2}+\rho_{0}^{2}}(\theta+h)}{(\sqrt{\rho_{0}^{2}+\rho_{0}'^{2}}(\theta+h)\sqrt{\rho_{0}^{2}+\rho_{0}'^{2}}(\theta))}\\
            &=\frac{1}{h}\frac{\phi\rho'(\theta+h)\sqrt{\rho_{0}^{2}+\rho_{0}'^{2}}(\theta)-\phi\rho'(\theta)\sqrt{\rho_{0}^{2}+\rho_{0}'^{2}}(\theta)}{\sqrt{\rho_{0}^{2}+\rho_{0}'^{2}}(\theta+h)\sqrt{\rho_{0}^{2}+\rho_{0}'^{2}}(\theta)} \\
            &\quad+\frac{1}{h}\frac{\phi\rho'(\theta)\sqrt{\rho_{0}^{2}+\rho_{0}'^{2}}(\theta)-\phi\rho'(\theta)\sqrt{\rho_{0}^{2}+\rho_{0}'^{2}}(\theta+h)}{\sqrt{\rho_{0}^{2}+\rho_{0}'^{2}}(\theta+h)\sqrt{\rho_{0}^{2}+\rho_{0}'^{2}}(\theta)}\\
            &=\frac{\sqrt{\rho_{0}^{2}+\rho_{0}'^{2}}(\theta)D_{h}^{\theta}(\phi\rho_{0}')}{\sqrt{\rho_{0}^{2}+\rho_{0}'^{2}}(\theta+h)\sqrt{\rho_{0}^{2}+\rho_{0}'^{2}}(\theta)} \\
             &\quad+\frac{1}{h}\frac{(\rho_{0}'^{2}(\theta)-\rho_{0}'^{2}(\theta+h)+\rho_{0}^{2}(\theta)-\rho_{0}^{2}(\theta+h))(\phi\rho_{0}')(\theta)}{\sqrt{\rho_{0}^{2}+\rho_{0}'^{2}}(\theta+h)\sqrt{\rho_{0}^{2}+\rho_{0}'^{2}}(\theta)(\sqrt{\rho_{0}^{2}+\rho_{0}'^{2}}(\theta)+\sqrt{\rho_{0}^{2}+\rho_{0}'^{2}}(\theta+h))}\\
             &=\frac{\phi D_{h}^{\theta}\rho_{0}'}{\sqrt{\rho_{0}^{2}+\rho_{0}'^{2}}(\theta+h)}+\frac{\rho_{0}' D_{h}^{\theta}\phi}{\sqrt{\rho_{0}^{2}+\rho_{0}'^{2}}(\theta+h)}\\
             &\quad - \frac{(\rho_{0}'(\theta)+\rho_{0}'(\theta+h))(D_{h}^{\theta}\rho_{0}')\phi\rho_{0}'(\theta)+(\rho_{0}(\theta)+\rho_{0}(\theta+h))(D_{h}^{\theta}\rho_{0})\phi\rho_{0}'(\theta)}{\sqrt{\rho_{0}^{2}+\rho_{0}'^{2}}(\theta+h)\sqrt{\rho_{0}^{2}+\rho_{0}'^{2}}(\theta)(\sqrt{\rho_{0}^{2}+\rho_{0}'^{2}}(\theta)+\sqrt{\rho_{0}^{2}+\rho_{0}'^{2}}(\theta+h))}
            \end{aligned}
        \end{equation}

         Using H\"older inequality, we have:
        \begin{align}
            \int_{\theta_{2}}^{\pi-\theta_{1}}|D_{h}^{\theta}(\phi\rho_{0})'|^{\alpha}=&\int_{\theta_{2}}^{\pi-\theta_{1}}|D_{h}^{\theta}(\phi\rho_{0})'|^{\alpha}\frac{1}{(\rho_{0}^{2}+\rho_{0}'^{2})^{\frac{\alpha}{4}}}(\rho_{0}^{2}+\rho_{0}'^{2})^{\frac{\alpha}{4}}\notag\\
            &\leq (\int_{\theta_{2}}^{\pi-\theta_{1}}|D_{h}^{\theta}(\phi\rho_{0})'|^{2}\frac{1}{(\rho_{0}^{2}+\rho_{0}'^{2})^{\frac{1}{2}}})^{\frac{\alpha}{2}}(\int_{\theta_{2}}^{\pi-\theta_{1}}({\rho_{0}^{2}+\rho_{0}'^{2}})^{\frac{2}{2-\alpha}})^{\frac{2-\alpha}{2}}
        \end{align}

        \noindent for any $\alpha<2$. Combining this equation and equation \eqref{equ:dis_dif}, we have
        \begin{equation}{\label{equ:prelim}}
        \begin{aligned}
            &(\int_{\theta_{2}}^{\pi-\theta_{1}}({\rho_{0}^{2}+\rho_{0}'^{2}})^{\frac{2}{2-\alpha}})^{-\frac{2-\alpha}{2}}\int_{\theta_2}^{\pi-\theta_1}(D_{\theta}^{h}D_{\theta}(\rho_{0}\phi))^{\alpha}\notag\\
            &\leq \frac{1}{2}\delta\int_{\theta_2}^{\pi-\theta_1}(D_{\theta}^{h}D_{\theta}(\rho_{0}\phi))^{\alpha}+\frac{C''}{\epsilon}(\vert \vert \rho_{0}\vert \vert_{W^{1,\frac{2}{2-\alpha}}}+\vert \vert \phi\vert \vert_{H^{2}})
            \end{aligned}
        \end{equation}
        
        \noindent where $C''$ is some number depending on $\|\rho_{0}\|_{W^{1,k}}$ for some large number $k$. Choosing 
        \begin{align}
            \delta<\frac{1}{2}(\int_{\theta_{2}}^{\pi-\theta_{1}}({\rho_{0}^{2}+\rho_{0}'^{2}})^{\frac{2}{2-\alpha}})^{-\frac{2-\alpha}{2}},
        \end{align}
         we derive from \eqref{equ:prelim} that $\rho_{0}''$ is uniformly bounded in $W^{2,\alpha}$ for all $0<\alpha<2$. Hence, from Sobolev embedding, $\rho_{0}\in W^{1,+\infty}$.
        
        We now use the fact that $\rho_{0}\in W^{1,+\infty}$ to discuss the main term in equation \eqref{equ:324}.  From computation \eqref{equ:dis_dif}, we have
        
        \begin{equation}{\label{equ:325}}
            \int_{\theta_2}^{\pi-\theta_1}D_{\theta}^{h}(\frac{\phi\rho_{0}'}{\sqrt{\rho_{0}^{2}+\rho_{0}'^{2}}})D_{\theta}^{h}D_{\theta}(\phi\rho_{0})\geq C'\int_{\theta_2}^{\pi-\theta_1} (D_{h}^{\theta}(\phi\rho_{0})')^{2}+\mathcal{R}(\rho_{0},\rho_{0}',\phi)
        \end{equation}

        \noindent where $C'$ is some constant only depends on $\vert \vert \rho_{0}\vert \vert_{L^{\infty}}$ and $\vert \vert \rho_{0}'\vert \vert_{L^{\infty}}$, and $\mathcal{R}$ is the combination of all of the lower order terms. Plugging \eqref{equ:325} back into the \eqref{equ:324}, and using H\"older's inequality and Cauchy inequality, we obtain

        \begin{equation}
            C'\int_{\theta_2}^{\pi-\theta_1}(D_{\theta}^{h}D_{\theta}(\rho_{0}\phi))^{2}
            \leq \frac{1}{2}\epsilon\int_{\theta_2}^{\pi-\theta_1}(D_{\theta}^{h}D_{\theta}(\rho_{0}\phi))^{2}+\frac{C''}{\epsilon}(\vert \vert \rho_{0}\vert \vert_{H^{1}}+\vert \vert \phi\vert \vert_{H^{1}})
        \end{equation}

        \noindent where $C''$ depends only on $\vert \vert \rho_{0}\vert \vert_{W^{1,\infty}}$. Choosing $\epsilon\leq C'$, we obtain
        
        \begin{equation*}
            \frac{1}{2}C'\int_{\theta_2}^{\pi-\theta_1}(D_{\theta}^{h}D_{\theta}(\rho_{0}\phi))^{2}\leq \frac{C''}{\epsilon}(\vert \vert \rho_{0}\vert \vert_{H^{1}}+\vert \vert \phi\vert \vert_{H^{1}})
        \end{equation*}

        \noindent Since $h$ can be arbitrarily chosen, we have $\rho_{0}\in H^{2}$.

        After showing the $H^{2}$ regularity for $\rho_{0}$, we apply a standard elliptic estimate to show that $\rho_{0}$ is smooth.
        
    \end{proof}

    \subsection{ The case when$[\![\gamma]\!]>0$} In this subsection, we discuss the case when $[\![\gamma]\!]>0$
    
   In this case, we have $[![\gamma]!] > 0$. Unlike the previous situation, it is not immediately evident that the energy functional $E$, defined in \eqref{equ:1.1.1}, is bounded from below. Instead, we establish this important property through the following theorem.

    \begin{theorem}
        When $[\![\gamma]\!]>0$, the energy functional \eqref{equ:2.1.1} is bounded from below.
   \end{theorem}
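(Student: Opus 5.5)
The plan is to show that the only negative contribution, the wetting term $-[\![\gamma]\!](\rho(\theta_2)+\rho(\pi-\theta_1))$, is controlled by the surface-tension term. The key tool is the strict Young inequality $[\![\gamma]\!]<\sigma$. The idea is the elementary geometric fact that the length of the free boundary joining the two contact points, measured in polar coordinates as $\int\sqrt{\rho^2+\rho'^2}\,d\theta$, dominates the distance from the pole to each contact point. Concretely, for any $\theta\in[\theta_2,\pi-\theta_1]$ we have $|\rho(\theta)-\rho(\theta_2)|\le\int_{\theta_2}^{\theta}|\rho'|\le\int_{\theta_2}^{\pi-\theta_1}\sqrt{\rho^2+\rho'^2}\,d\theta=:L(\rho)$, and the same bound holds from the other endpoint.

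First I will bound the endpoint values. Integrating $\rho(\theta_2)\le\rho(\theta)+\int_{\theta_2}^{\theta}|\rho'|$ is too crude, since it yields $2L$ and not $L$. Instead I will use the chord–arc inequality in the plane. The two contact points $z_1=\rho(\theta_2)e^{i\theta_2}$ and $z_2=\rho(\pi-\theta_1)e^{i(\pi-\theta_1)}$ are joined by the curve, so $L\ge|z_1-z_2|$. In addition, the curve together with the two wall rays bounds a region of area $V/2$. For the sessile case $\theta_1=\theta_2=0$ this gives $|z_1-z_2|=\rho(0)+\rho(\pi)$, hence $L\ge\rho(0)+\rho(\pi)$. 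Then
\[
E(\rho)\ge \tfrac13 g\int\rho^3\sin\theta+(\sigma-[\![\gamma]\!])\,L(\rho)\ge 0 .
\]
For general acute $\theta_1,\theta_2$, the chord satisfies only $|z_1-z_2|\ge c(\theta_1,\theta_2)(\rho(\theta_2)+\rho(\pi-\theta_1))$ with $c=\sin\!\big(\tfrac{\pi-\theta_1-\theta_2}{2}\big)<1$. This is not enough alone when $[\![\gamma]\!]/\sigma>c$, so I will supplement it with the gravity term and the mass constraint.

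The supplementary step works as follows. Let $m=\max\rho$. Then $L\ge 2m-\rho(\theta_2)-\rho(\pi-\theta_1)$ (the curve goes up to $m$ and back), so $L\ge\max\{2m-S,\;cS\}$ with $S$ the endpoint sum. Since $S\le 2m$, a convex combination gives $L\ge \frac{2c}{1+c}\,m$, which again need not exceed $[\![\gamma]\!]S/\sigma$. So the decisive ingredient is gravity. On the set where $\sin\theta\ge s_0>0$, the term $\frac g3\int\rho^3\sin\theta$ grows cubically in $\rho$. Near the walls, $\rho$ is close to the endpoint values by the estimate $|\rho(\theta)-\rho(\theta_2)|\le L$. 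Splitting into the cases $L\ge\kappa S$ (done, after choosing $\kappa\sigma>[\![\gamma]\!]$, e.g.\ $\kappa=\tfrac12(1+[\![\gamma]\!]/\sigma)$) and $L<\kappa S$, I conclude in the second case that $\rho\ge \rho(\theta_2)-L$ on the whole interval. If, say, $\rho(\theta_2)\ge S/2$, this gives $\rho\ge (1/2-\kappa)S$. That bound is useless if $\kappa\ge1/2$, so I will refine it: on an interval of fixed length near $\theta_2$ where $\int|\rho'|$ is small compared with $\rho(\theta_2)$, $\rho\gtrsim S$, giving gravitational energy $\gtrsim S^3$. This dominates $[\![\gamma]\!]S$ outside a bounded range of $S$, and on that bounded range $E$ is trivially bounded below.

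I expect the main obstacle to be this second case. There the chord–arc argument fails for tilted walls and one must quantify that a short curve cannot drop away from a large endpoint value, so that gravity sees a cubic amount of mass. The case $\theta_1=\theta_2=0$ needed later is the clean one handled by the chord argument alone, and I would present that first.
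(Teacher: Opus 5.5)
\paragraph{The sessile case is fine.} Your chord--arc argument for $\theta_1=\theta_2=0$ is correct and differs from the paper's. The contact points $\rho(0)$ and $-\rho(\pi)$ lie on the $x$-axis, so $L(\rho)\ge\rho(0)+\rho(\pi)$ and $E(\rho)\ge(\sigma-[\![\gamma]\!])L(\rho)\ge0$. This bound does not even use the mass constraint.

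\paragraph{The gap for general angles.} The statement concerns \eqref{equ:2.1.1} for arbitrary acute $\theta_1,\theta_2$, and there your second case ($L<\kappa S$) is not proved. Since $\kappa\ge\tfrac12$, the bound $|\rho(\theta)-\rho(\theta_2)|\le L<\kappa S$ gives no positive lower bound on $\rho$. Your ``refinement'' asserts an interval of fixed length near $\theta_2$ on which $\int|\rho'|$ is small compared with $\rho(\theta_2)$. Nothing you have established produces such an interval, because $\rho$ may drop immediately after $\theta_2$. So the cubic gravity lower bound is not available, and the argument is incomplete at exactly the step you flagged as the obstacle.

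\paragraph{The missing idea.} It is the one-dimensional estimate you dismissed as ``too crude''. Do not estimate each endpoint over the whole interval. Instead, split at a single interior point $\theta_3$ and use complementary subintervals:
\[
\rho(\theta_2)-\rho(\theta_3)\le\int_{\theta_2}^{\theta_3}|\rho'|\,d\theta,\qquad \rho(\pi-\theta_1)-\rho(\theta_3)\le\int_{\theta_3}^{\pi-\theta_1}|\rho'|\,d\theta .
\]
Adding these gives $S\le2\rho(\theta_3)+\int|\rho'|\le2\rho(\theta_3)+L$, so the cost is $L$, not $2L$. Take $\theta_3$ to be the minimum point of $\rho$. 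The constraint $\int\rho^2=V$ then gives $\rho(\theta_3)\le\sqrt{V/(\pi-\theta_1-\theta_2)}$. Hence
\[
E(\rho)\ge(\sigma-[\![\gamma]\!])L-2[\![\gamma]\!]\rho(\theta_3)\ge-2[\![\gamma]\!]\sqrt{V/(\pi-\theta_1-\theta_2)}
\]
for all admissible angles, with no chord--arc or gravity argument needed. This is essentially the paper's proof.

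The same splitting would also repair your case analysis directly. If $L<\kappa S$, then $2\min\rho\ge S-\int|\rho'|>(1-\kappa)S$, so $\rho>\frac{1-\kappa}{2}S$ everywhere. Your gravity argument then goes through.
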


    \begin{proof}

     For any $H^{1}$ function $\rho$, we have:
        \begin{align}{\label{equ:1.2.1}}
        \begin{aligned}
             E(\rho)=&\frac{1}{3}g\int_{\theta_2}^{\pi-\theta_1}\rho^{3}\sin\theta d\theta +\sigma\int_{\theta_2}^{\pi-\theta_1}\sqrt{\rho^{2}+\rho'^{2}}-[\![\gamma]\!](\rho(\theta_2)+\rho(\pi-\theta_1))\\
             \geq& \sigma \int_{\theta_2 }^{\pi-\theta_1}\vert \rho'\vert d\theta-[\![\gamma]\!](\rho(\theta_2)+\rho(\pi-\theta_1))\\
              \geq& \sigma (\rho(\theta_2)-\rho(\theta_{3}))+\sigma(\rho(\pi-\theta_1)-\rho(\theta_{3}))-[\![\gamma]\!](\rho(\theta_2)+\rho(\pi-\theta_1))\\
              =&-2\sigma \rho(\theta_3)+\sigma (\rho(\pi-\theta_1)+\rho(\theta_2))-[\![\gamma]\!](\rho(\theta_2)+\rho(\pi-\theta_1))
            \end{aligned}
        \end{align}

     \begin{equation}
         =-2\sigma \rho(\theta_3)+\sigma (\rho(\pi-\theta_1)+\rho(\theta_2))-[\![\gamma]\!](\rho(\theta_2)+\rho(\pi-\theta_1))
     \end{equation}

    \noindent  where $\theta_{3}$ is an angle between $\theta_2$ and $\pi-\theta_1$ to be determined .
     
     Applying the condition $[\![\gamma]\!]<\sigma$ to the inequality \eqref{equ:1.2.1}, we show

     \begin{equation}{\label{equ:1.2.2}}
         E(\rho)\geq -2\sigma\rho(\theta_3)\geq -2\sigma \vert\rho(\theta_3)\vert
     \end{equation}

     \noindent We then choose $\theta_3$ such that

     \begin{equation*}
         \vert \rho(\theta_3)\vert=\min_{\theta\in(\theta_2,\pi-\theta_1)} \vert \rho(\theta)\vert
     \end{equation*}

     \noindent By using the equation \eqref{equ:2.1.2} and the definition of $\theta_{3}$ above, we have

     \begin{equation*}
         V=\int_{\theta_2}^{\pi-\theta_1} \rho^{2}d\theta\geq (\pi-\theta_1-\theta_2) \rho(\theta_3)^{2}
     \end{equation*}

     \noindent which implies that

     \begin{equation}{\label{equ:1.2.3}}
         \vert \rho(\theta_{3})\vert\leq \sqrt{\frac{V}{\pi-\theta_1-\theta_2}}.
     \end{equation}

     \noindent Plugging \eqref{equ:1.2.3} into the inequality \eqref{equ:1.2.2}, we obtain 

     \begin{equation}{\label{equ:1.2.4}}
         E(\rho)\geq -2\sigma\sqrt{\frac{V}{\pi-\theta_1-\theta_2}}
     \end{equation}

     \noindent Hence, the energy functional is bounded from below.

     \end{proof}

     As a consequence of this theorem, we immediately deduce that $E^{\epsilon}$ is uniformly bounded from below.

As in Theorem~\ref{thm:uni_pre}, we next derive certain boundedness properties of the pressure $P_{0}^{\epsilon}$. We proceed with the following discussion. From a physical perspective, $P_{0}$ represents the pressure along the free surface and should therefore be positive. Since $P_{0}^{\epsilon}$ is a small perturbation of $P_{0}$, it is natural to expect that it remains bounded from below. Guided by this intuition, we establish the following theorem.

     \begin{theorem}{\label{thm:uni_pre1}}
         $P_{0}^{\epsilon}$ is uniformly bounded from below. 
     \end{theorem}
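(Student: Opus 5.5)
Our plan is to extract a lower bound for $P_{0}^{\epsilon}$ from the identity obtained by testing the Euler--Lagrange equation \eqref{equ:2.1.5} against $\rho_{\epsilon}$, exactly as in the proof of Theorem~\ref{thm:uni_pre}. That computation gives
\begin{align*}
P_{0}^{\epsilon}V=&\, g\int_{\theta_2}^{\pi-\theta_1}\rho_{\epsilon}^{3}\sin\theta\, d\theta+\sigma\int_{\theta_2}^{\pi-\theta_1}\sqrt{\rho_{\epsilon}^{2}+\rho_{\epsilon}'^{2}}\,d\theta+\epsilon\int_{\theta_2}^{\pi-\theta_1}\rho_{\epsilon}'^{2}d\theta\\
&-[\![\gamma]\!]\bigl(\rho_{\epsilon}(\theta_2)+\rho_{\epsilon}(\pi-\theta_1)\bigr).
\end{align*}
We will use the two surface integrals $\frac{\rho_{\epsilon}^{2}}{\sqrt{\cdot}}$ and $\frac{\rho_{\epsilon}'^{2}}{\sqrt{\cdot}}$ combined into $\sqrt{\rho_{\epsilon}^{2}+\rho_{\epsilon}'^{2}}$. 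When $[\![\gamma]\!]>0$ the only negative term is the boundary term, so the task is to absorb it into the arc-length term with something strictly positive left over.

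\textbf{Step 1 (absorbing the boundary term).} Let $\theta_3$ be a point where $\rho_{\epsilon}$ attains its minimum. As in the proof of the preceding theorem (the one giving the lower bound of $E$), we have
\[
\int\sqrt{\rho_{\epsilon}^{2}+\rho_{\epsilon}'^{2}}\geq \int|\rho_{\epsilon}'|\geq \rho_{\epsilon}(\theta_2)+\rho_{\epsilon}(\pi-\theta_1)-2\rho_{\epsilon}(\theta_3).
\]
Therefore
\[
P_{0}^{\epsilon}V\geq (\sigma-[\![\gamma]\!])\bigl(\rho_{\epsilon}(\theta_2)+\rho_{\epsilon}(\pi-\theta_1)\bigr)-2\sigma\rho_{\epsilon}(\theta_3)+g\int\rho_{\epsilon}^{3}\sin\theta .
\]
The quantity $-2\sigma\rho_\epsilon(\theta_3)$ is the danger. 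To control it, we will instead split the arc-length as
\[
\sigma\int\sqrt{\rho^2+\rho'^2}\ \geq\ \tfrac{[\![\gamma]\!]}{\sigma}\,\sigma\int|\rho'| + \Bigl(1-\tfrac{[\![\gamma]\!]}{\sigma}\Bigr)\sigma\int\rho .
\]
This uses $\sqrt{a^2+b^2}\ge \lambda|b|+(1-\lambda)|a|$ for $\lambda\in[0,1]$, valid since the $\ell^2$ norm dominates both $|a|$ and $|b|$.

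\textbf{Step 2 (closing the bound).} Applying Step 1 to the first piece gives
\[
[\![\gamma]\!]\int|\rho'|\geq [\![\gamma]\!]\bigl(\rho(\theta_2)+\rho(\pi-\theta_1)\bigr)-2[\![\gamma]\!]\min\rho .
\]
This cancels the boundary term, and we arrive at
\[
P_0^\epsilon V\ \geq\ (\sigma-[\![\gamma]\!])\int\rho_\epsilon\,d\theta-2[\![\gamma]\!]\min\rho_\epsilon .
\]
Since $\min\rho_\epsilon\le \frac{1}{\pi-\theta_1-\theta_2}\int\rho_\epsilon$, the right side is at least
\[
\Bigl(\sigma-[\![\gamma]\!]-\tfrac{2[\![\gamma]\!]}{\pi-\theta_1-\theta_2}\Bigr)\int\rho_\epsilon .
\]
This is not obviously positive, so Step 2 is the main obstacle. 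We will first try to refine the estimate by using $\int|\rho'|$ on each of the two subintervals $[\theta_2,\theta_3]$ and $[\theta_3,\pi-\theta_1]$ separately, together with the full $\sqrt{\rho^2+\rho'^2}$ (not only $|\rho'|$) near $\theta_3$. If that fails, we fall back on a nondegeneracy argument by contradiction: suppose $P_0^{\epsilon_n}\to -\infty$ or $\le 0$ along a sequence. Then, by the comparison used in the proof of Theorem 3.1, we compare $E^{\epsilon}_p(\rho_\epsilon)$ with $E^\epsilon_p(t\rho_\epsilon)$, $t\to 1$. The first variation in $t$ at $t=1$ equals the identity above, which must vanish. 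Writing $\frac{d^2}{dt^2}$ at $t=1$, minimality forces the cubic gravity term $2g\int\rho^3\sin\theta$ to dominate $2P_0^\epsilon V$ up to the positive $\epsilon$-term. We combine this with the uniform lower bound of $E$ (the preceding theorem) and Helly compactness from Lemma 3.2, which keeps $\int\rho_\epsilon$ and $\rho_\epsilon$ bounded. This should yield $P_0^{\epsilon}\ge -C$ with $C$ independent of $\epsilon$.

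\textbf{Step 3.} Finally, we check that all constants depend only on $V,g,\sigma,[\![\gamma]\!],\theta_1,\theta_2$ and $\epsilon_0$, using the uniform BV bound and the bound $\epsilon\int\rho_\epsilon'^2\le C$ of Lemma 3.2. Lemma 3.2 transfers verbatim to $[\![\gamma]\!]>0$ because $E^\epsilon$ is bounded below by the preceding theorem.
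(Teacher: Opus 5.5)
The theorem only asks for $P_0^{\epsilon}\ge -C$ with $C$ independent of $\epsilon$, and your Step~1 inequality already gives this. The gap is that you read the target as positivity: you call $-2\sigma\rho_\epsilon(\theta_3)$ ``the danger'' and Step~2 ``the main obstacle'', and then let the conclusion rest on a fallback that does not work.

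The missing line is the one the paper uses in its lower bound for $E$. Since $\theta_3$ is a minimum point of $\rho_\epsilon$ and $\int_{\theta_2}^{\pi-\theta_1}\rho_\epsilon^2\,d\theta=V$, one has $(\pi-\theta_1-\theta_2)\,\rho_\epsilon(\theta_3)^2\le V$. Dropping the nonnegative gravity and $(\sigma-[\![\gamma]\!])$-terms in your Step~1 bound therefore gives
\[
P_0^{\epsilon}\ \ge\ -\frac{2\sigma}{\sqrt{V(\pi-\theta_1-\theta_2)}}\qquad\text{for all }\epsilon>0 .
\]
Your Step~2 bound closes the same way, to $P_0^{\epsilon}V\ge -2[\![\gamma]\!]\sqrt{V/(\pi-\theta_1-\theta_2)}$.

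No BV bound is needed here, which is just as well. Lemma~3.2 does not transfer verbatim to $[\![\gamma]\!]>0$: its proof uses $-[\![\gamma]\!]\ge 0$ to bound $\int\sqrt{\rho_\epsilon^2+\rho_\epsilon'^2}\,d\theta$ by $E^{\epsilon}(\rho_\epsilon)$. A lower bound on $E$ alone does not replace that; one needs the $(\sigma-[\![\gamma]\!])$-splitting of the theorem that follows.

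The fallback should be discarded, for two reasons.
\begin{itemize}
\item The path $t\mapsto t\rho_\epsilon$ leaves the constraint set $\{\int\rho^2=V\}$, so constrained minimality says nothing at second order along it. The multiplier only makes $t=1$ critical, which is exactly your identity. Second-order conditions hold only for variations $\xi$ with $\int\rho_\epsilon\xi\,d\theta=0$, and $\rho_\epsilon$ is the normal direction.
\item Even granting it, the inequality points the wrong way. Take the Lagrangian $E^{\epsilon}-\frac{P_0^\epsilon}{2}\int\rho^2$, with $E^\epsilon$ as in \eqref{equ:2.1.3}; this normalization makes its $t$-derivative at $t=1$ equal to your identity. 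Nonnegativity of the second $t$-derivative at $t=1$ then reads $P_0^{\epsilon}V\le 2g\int\rho_\epsilon^3\sin\theta\,d\theta+\epsilon\int\rho_\epsilon'^2\,d\theta$, which is an upper bound on $P_0^\epsilon$.
\end{itemize}

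For comparison, the paper argues by contradiction. If $P_0^{\epsilon_n}\le -n$, the lower bound on $E$ gives $E_p^{\epsilon_n}(\rho_{\epsilon_n})\ge -2\sigma\sqrt{V/(\pi-\theta_1-\theta_2)}+nV>0=E_p^{\epsilon_n}(0)$, contradicting minimality of $\rho_{\epsilon_n}$. That comparison with $\rho=0$ also assumes $\rho_{\epsilon_n}$ minimizes the Lagrangian off the constraint set, which a constrained minimizer need not do. Your identity-based route, closed as above, avoids this and gives an explicit constant.
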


     \begin{proof}

         Suppose, for contradiction that there is a sequence $\epsilon_n$ such that: $P_{0}^{\epsilon_n}\leq -n$. Then from the definition of $\rho_{\epsilon}$ we know that it is the minimizer of the following energy functional

         \begin{equation*}
         \begin{aligned}
             {E}_{p}^{\epsilon}(\rho_{\epsilon})=&\frac{1}{3}g\int_{\theta_2}^{\pi-\theta_1}\rho_{\epsilon}^{3}\sin\theta d\theta +\sigma\int_{\theta_2}^{\pi-\theta_1}\sqrt{\rho_{\epsilon}^{2}+\rho_{\epsilon}'^{2}} d\theta\\
              &\quad-[\![\gamma]\!](\rho_{\epsilon}(\theta_2)+\rho_{\epsilon}(\pi-\theta_1))-P^{\epsilon}_{0}\int_{\theta_2}^{\pi-\theta_1}\rho_{\epsilon}^{2}d\theta
         \end{aligned}
     \end{equation*}
    Using the equation \eqref{equ:1.2.4} in Theorem 2.9, we obtain the following boundedness estimate:

    \begin{equation*}
    \begin{aligned}
         \tilde{E}_{p}^{\epsilon_{n}}(\rho_{\epsilon_{n}})\geq& -2\sigma \sqrt{\frac{V}{\pi-\theta_1-\theta_2}}-P_{0}^{\epsilon_n}\int_{\theta_2}^{\pi-\theta_1}\rho_{\epsilon_{n}}^{2}d\theta\\
          \geq& -2\sigma \sqrt{\frac{V}{\pi-\theta_1-\theta_2}}+n\int_{\theta_2}^{\pi-\theta_1}\rho_{\epsilon_{n}}^{2}d\theta 
    \end{aligned}
    \end{equation*}

     \noindent Choosing $n$ such that

     \begin{equation*}
         n\int_{\theta_2}^{\pi-\theta_1}\rho_{\epsilon_{n}}^{2}d\theta=nV> 2\sigma \sqrt{\frac{V}{\pi-\theta_1-\theta_2}},
     \end{equation*}

     \noindent we then have the following relation

     \begin{equation*}
         \tilde{E}^{\epsilon_{n}}(\rho_{\epsilon_{n}})>0=\tilde{E}^{\epsilon_{n}}(0).
     \end{equation*}

     \noindent which contradicts to the fact that $\rho_{\epsilon_{n}}$ is the non-trivial minimizer of the energy functional $\tilde{E}^{\epsilon_{n}}$.  This finishes the proof.
     
     \end{proof}

     Furthermore, we want to prove the uniform BV bound for $\rho_{\epsilon}$. This is stated in the following theorem.

     \begin{theorem}
         Suppose that $\rho_{\epsilon}$ is the minimizer of energy functional $E^{\epsilon}$ subject to the conservation of total mass. Then it is uniformly bounded in BV norm. And the following norm with respect to $\rho_{\epsilon}$ is also uniformly bounded:

         \begin{equation}{\label{equ:1.2.100}}
              \int_{\theta_2}^{\pi-\theta_1}\sqrt{\rho_{\epsilon}^{2}+\rho_{\epsilon}'^{2}}d\theta+\epsilon\int_{\theta_2}^{\pi-\theta_1}\rho_{\epsilon}'^{2}d\theta
        \end{equation}
     \end{theorem}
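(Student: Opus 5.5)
We obtain the uniform bounds by comparing the minimal energy with the energy of a fixed admissible competitor, and then absorbing the negative contact-point term with the bound already proved in Theorem~3.9. The argument runs parallel to Lemma~3.2, except that now $E$ is not positive, so the term $-[\![\gamma]\!](\rho(\theta_2)+\rho(\pi-\theta_1))$ has to be controlled by the length term.

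\textbf{Step 1: uniform upper bound on the minimal energy.} We fix the constant competitor $\bar\rho\equiv\sqrt{V/(\pi-\theta_1-\theta_2)}$. It satisfies the constraint \eqref{equ:2.1.2}, and since $\bar\rho'=0$ the regularizing term vanishes, so $E^{\epsilon}(\bar\rho)=E(\bar\rho)=:K$ is independent of $\epsilon$. Minimality of $\rho_\epsilon$ then gives
\begin{align*}
E^{\epsilon}(\rho_\epsilon)\leq K \quad\text{for every } \epsilon>0 .
\end{align*}
This avoids the monotonicity argument \eqref{equ:2.1.9}, which used positivity of $E$.

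\textbf{Step 2: splitting off part of the length term.} Write $\kappa=[\![\gamma]\!]/\sigma\in(0,1)$ and split
\begin{align*}
\sigma\int\sqrt{\rho^2+\rho'^2}=\kappa\sigma\int\sqrt{\rho^2+\rho'^2}+(1-\kappa)\sigma\int\sqrt{\rho^2+\rho'^2}.
\end{align*}
We apply the argument of Theorem~3.9 to the first piece. With $\theta_3$ the point where $|\rho_\epsilon|$ is minimal, and using $\int|\rho'|\ge|\rho(\theta_2)-\rho(\theta_3)|+|\rho(\pi-\theta_1)-\rho(\theta_3)|$,
\begin{align*}
\kappa\sigma\int|\rho_\epsilon'|-[\![\gamma]\!](\rho_\epsilon(\theta_2)+\rho_\epsilon(\pi-\theta_1))\ge -2[\![\gamma]\!]|\rho_\epsilon(\theta_3)|\ge -2[\![\gamma]\!]\sqrt{V/(\pi-\theta_1-\theta_2)}.
\end{align*}
The last inequality is \eqref{equ:1.2.3}. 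The gravity term is nonnegative, because $\sin\theta\ge0$ on the interval and $\rho_\epsilon\ge0$. If we want to avoid a sign assumption on $\rho_\epsilon$, we replace $\rho_\epsilon$ by $|\rho_\epsilon|$: this leaves the constraint and the length term unchanged and does not increase the energy.

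\textbf{Step 3: conclusion.} Combining Steps 1 and 2,
\begin{align*}
(1-\kappa)\sigma\int_{\theta_2}^{\pi-\theta_1}\sqrt{\rho_\epsilon^2+\rho_\epsilon'^2}\,d\theta+\frac{\epsilon}{2}\int_{\theta_2}^{\pi-\theta_1}\rho_\epsilon'^2\,d\theta\le K+2[\![\gamma]\!]\sqrt{\frac{V}{\pi-\theta_1-\theta_2}}.
\end{align*}
Since $1-\kappa>0$, this bounds the quantity \eqref{equ:1.2.100} uniformly in $\epsilon$. In particular $\int|\rho_\epsilon'|$ is uniformly bounded. Together with $|\rho_\epsilon(\theta_3)|\le\sqrt{V/(\pi-\theta_1-\theta_2)}$, this bounds $\|\rho_\epsilon\|_{L^\infty}$ and hence $\|\rho_\epsilon\|_{BV}$.

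The main obstacle is Step 2. Theorem~3.9 spends the full length term, leaving nothing from which to extract a bound on $\int\sqrt{\rho^2+\rho'^2}$. The strict Young inequality $[\![\gamma]\!]<\sigma$ is what makes the splitting with $\kappa<1$ work, so the proof must use this strictness and not merely $[\![\gamma]\!]\le\sigma$.
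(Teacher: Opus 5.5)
Your argument is correct and is essentially the paper's proof. You use the same splitting of $\sigma\int\sqrt{\rho_\epsilon^2+\rho_\epsilon'^2}$ into a $[\![\gamma]\!]$-part, absorbed against the contact term by the computation \eqref{equ:1.2.1}--\eqref{equ:1.2.4}, and a $(\sigma-[\![\gamma]\!])$-part that yields the bound. The only difference is that you bound $E^{\epsilon}(\rho_\epsilon)$ by a fixed constant competitor, whereas the paper uses monotonicity of $\epsilon\mapsto E^{\epsilon}(\rho_\epsilon)$; that monotonicity does not actually require positivity of $E$. Your version gives the bound for every $\epsilon>0$ rather than only for $\epsilon\le\epsilon_0$.

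Remove the parenthetical about replacing $\rho_\epsilon$ by $|\rho_\epsilon|$: it is false, because $|\rho|^3\sin\theta\ge\rho^3\sin\theta$ raises the gravity term wherever $\rho<0$. Nonnegativity of $\rho_\epsilon$ must instead be part of the admissible class, since $\rho$ is a radius. The paper assumes this tacitly when it drops the gravity term in \eqref{equ:1.2.1}.
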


     \begin{proof}

     From the definition of energy functional $E^{\epsilon}$, we can transform it to:

     \begin{align*}
         E^{\epsilon}(\rho_{\epsilon})=&\frac{1}{3}g\int_{\theta_2}^{\pi-\theta_1}\rho_{\epsilon}^{3}\sin\theta d\theta+(\sigma-[\![\gamma]\!])\int_{\theta_2}^{\pi-\theta_1} \sqrt{\rho_{\epsilon}^{2}+\rho_{\epsilon}'^{2}}d\theta\\
         &+[\![\gamma]\!]\int_{\theta_2}^{\pi-\theta_1}\sqrt{\rho_{\epsilon}^{2}+\rho_{\epsilon}'^{2}}d\theta-[\![\gamma]\!](\rho_{\epsilon}(\theta_2)+\rho_{\epsilon}(\pi-\theta_1))+\epsilon\int_{\theta_2}^{\pi-\theta_1}\rho_{\epsilon}'^{2}d\theta\label{equ:1.2.11}
     \end{align*}
    
     \noindent Using the computation as in Theorem 3.10, we have

     \begin{equation*}
         E^{\epsilon}(\rho_{\epsilon})\geq \frac{1}{3}g\int_{\theta_2}^{\pi-\theta_1}\rho_{\epsilon}^{3}\sin\theta d\theta+(\sigma-[\![\gamma]\!])\int_{\theta_2}^{\pi-\theta_1}\sqrt{\rho_{\epsilon}^{2}+\rho_{\epsilon}'^{2}}d\theta+\epsilon\int_{\theta_2}^{\pi-\theta_1}\rho_{\epsilon}'^{2}-C
     \end{equation*}

    \noindent  for some constant $C$ depending on the total volume $V$ and $[\![\gamma]\!]$. Combining this with the fact that $E^{\epsilon_1}(\rho_{\epsilon_1})\leq E^{\epsilon_2}(\rho_{\epsilon_2})$ when $\epsilon_1\leq \epsilon_2$, we have the uniform bound for the integral:

    \begin{equation*}
        \int_{\theta_2}^{\pi-\theta_1}\sqrt{\rho_{\epsilon}^{2}+\rho_{\epsilon}'^{2}}d\theta+\epsilon\int_{\theta_2}^{\pi-\theta_1}\rho_{\epsilon}'^{2}d\theta
    \end{equation*}

    \noindent  which implies that $\operatorname{BV}(\rho_{\epsilon})$ and \eqref{equ:1.2.100} are uniformly bounded.
     \end{proof}

     Now we have the uniform lower bound for $P_{0}^{\epsilon}$. We then derive the uniform upper bound for $P_{0}^{\epsilon}$. This is stated in the following Theorem

     \begin{theorem}
         $P_{0}^{\epsilon}$ is uniformly bounded from above for any $\epsilon>0$.
     \end{theorem}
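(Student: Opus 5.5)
We plan to follow the argument of Theorem~\ref{thm:uni_pre}. Multiply the first equation of \eqref{equ:2.1.5} by $\rho_{\epsilon}$ and integrate over $(\theta_2,\pi-\theta_1)$. Then integrate the curvature term and the $\epsilon\rho_{\epsilon}''$ term by parts, and use the boundary conditions (the second and third equations of \eqref{equ:2.1.5}) to evaluate the boundary contributions. Together with the constraint \eqref{equ:2.1.4}, this gives the identity
\begin{align*}
P_{0}^{\epsilon}V=&\,g\int_{\theta_2}^{\pi-\theta_1}\rho_{\epsilon}^{3}\sin\theta\, d\theta+\sigma\int_{\theta_2}^{\pi-\theta_1}\frac{\rho_{\epsilon}^{2}+\rho_{\epsilon}'^{2}}{\sqrt{\rho_{\epsilon}^{2}+\rho_{\epsilon}'^{2}}}\,d\theta+\epsilon\int_{\theta_2}^{\pi-\theta_1}\rho_{\epsilon}'^{2}\,d\theta-[\![\gamma]\!]\bigl(\rho_{\epsilon}(\theta_2)+\rho_{\epsilon}(\pi-\theta_1)\bigr).
\end{align*}
When $[\![\gamma]\!]>0$, the last term is nonpositive as long as $\rho_{\epsilon}\ge 0$ at the endpoints. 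We may assume this, since replacing $\rho$ by $|\rho|$ does not increase $E^{\epsilon}$ and preserves the constraint. So we may either drop the last term or bound it by $2[\![\gamma]\!]\|\rho_{\epsilon}\|_{L^\infty}$. Either way it suffices to bound the first three terms uniformly in $\epsilon$.

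The second term is exactly $\sigma\int\sqrt{\rho_{\epsilon}^{2}+\rho_{\epsilon}'^{2}}$. This term and the third term are uniformly bounded by the preceding theorem, through \eqref{equ:1.2.100}. For the gravity term, the same preceding theorem gives a uniform BV bound on $\rho_{\epsilon}$, hence a uniform $L^{\infty}$ bound. Therefore $\int\rho_{\epsilon}^{3}\sin\theta\le(\pi-\theta_1-\theta_2)\|\rho_{\epsilon}\|_{L^\infty}^{3}$ is uniformly bounded. An alternative route for this term is monotonicity: $E^{\epsilon}(\rho_{\epsilon})\le E^{\epsilon_0}(\rho_{\epsilon_0})$, together with the lower bound used in the preceding proof, already controls $\frac{g}{3}\int\rho_{\epsilon}^3\sin\theta$.

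Dividing by $V>0$ then gives $P_{0}^{\epsilon}\le C$ with $C$ depending only on $g,\sigma,[\![\gamma]\!],V,\theta_1,\theta_2$ and $\epsilon_0$. Combined with Theorem~\ref{thm:uni_pre1}, this shows that $P_{0}^{\epsilon}$ is uniformly bounded.

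The main obstacle is the passage from the BV bound to an $L^\infty$ bound, and the sign of the endpoint term. The $L^\infty$ bound is a one-dimensional fact: $\|\rho\|_{L^\infty}\le \frac{1}{|I|}\int_I|\rho|+\int_I|\rho'|$, and here $\int_I|\rho|\le (|I|V)^{1/2}$. The sign issue is handled by the nonnegativity reduction described above. Both points should be stated explicitly.
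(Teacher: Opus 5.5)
Your proposal is correct and is essentially the paper's proof. The paper just says to repeat the argument of Theorem~\ref{thm:uni_pre}: test \eqref{equ:2.1.5} with $\rho_\epsilon$, integrate by parts using the contact-angle conditions, and bound each resulting term by the uniform BV/energy bounds, which is exactly your identity combined with \eqref{equ:1.2.100}.

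Two remarks.

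\emph{Scope in $\epsilon$.} Like the paper's, your constant depends on $\epsilon_0$, because it comes from $E^{\epsilon}(\rho_\epsilon)\le E^{\epsilon_0}(\rho_{\epsilon_0})$, so it only covers $0<\epsilon\le\epsilon_0$. Comparing instead with the admissible constant $c=\sqrt{V/(\pi-\theta_1-\theta_2)}$, for which $E^{\epsilon}(c)=E(c)$ does not depend on $\epsilon$, gives the bound for every $\epsilon>0$ as stated.

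\emph{The $|\rho|$ reduction.} The claim that replacing $\rho$ by $|\rho|$ does not increase $E^{\epsilon}$ is false: the gravity term can increase, since $\int|\rho|^{3}\sin\theta\ge\int\rho^{3}\sin\theta$. You do not need it, however. First, $\rho\ge 0$ is built into the setting, because $\rho$ is a polar radius, and the lower bound behind \eqref{equ:1.2.100} already uses this. Second, your alternative estimate $2[\![\gamma]\!]\|\rho_\epsilon\|_{L^\infty}$ of the endpoint term does not depend on the sign of $\rho_\epsilon$.
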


     \begin{proof}

        The proof follows the same argument as that of Theorem 3.3.
     \end{proof}

    Just as the previous case where $[\![\gamma]\!]<0$, we establish the following theorem proving that $\rho_{\epsilon}^{\prime}(\theta)$ is uniformly bounded for any $\theta\in(\pi-\theta_{1},\theta_{2})$ and $\epsilon>0$. 
    
    \begin{theorem}

         When $[\![\gamma]\!]>0$, $\rho_{\epsilon}'(\theta)$ is uniformly bounded for any $\theta\in (\theta_2,\pi-\theta_1)$ , and any $\epsilon>0$. 
         
     \end{theorem}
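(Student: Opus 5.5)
The plan is to repeat the argument used for $[\![\gamma]\!]<0$ (Theorems 3.5 and 3.6), checking that each ingredient survives the sign change of $[\![\gamma]\!]$. The ingredients available for $[\![\gamma]\!]>0$ are: the uniform BV bound and the bound on \eqref{equ:1.2.100} from the preceding theorem, the two-sided bounds on $P_{0}^{\epsilon}$ (Theorem \ref{thm:uni_pre1} and the subsequent upper bound), and the Euler--Lagrange system \eqref{equ:2.1.5}. I will proceed in three steps: boundary values, boundary layers, and interior.

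\textbf{Step 1: boundary values.} The boundary conditions in \eqref{equ:2.1.5} give
\begin{align*}
\Big(\frac{\sigma}{\sqrt{\rho_{\epsilon}^{2}+\rho_{\epsilon}'^{2}}}+\epsilon\Big)\rho_{\epsilon}'(\theta_{2})=-[\![\gamma]\!]<0 .
\end{align*}
Hence $\rho_{\epsilon}'(\theta_2)<0$ and $\rho_{\epsilon}'(\pi-\theta_1)>0$, the signs now being reversed. In both cases $|\rho_{\epsilon}'|/\sqrt{\rho_{\epsilon}^2+\rho_{\epsilon}'^2}<[\![\gamma]\!]/\sigma<1$. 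Therefore $|\rho_{\epsilon}'|\le C\rho_{\epsilon}$ at the endpoints, and these values are uniformly bounded by the BV bound.

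\textbf{Step 2: boundary layers.} I will run the Gronwall argument of Theorem 3.6, Step 2, on $\rho_\epsilon'/\rho_\epsilon$. The key input is the inequality $g\rho_\epsilon\sin\theta-P_0^\epsilon\le0$ of Lemma 3.4, which yields $\rho_\epsilon\rho_\epsilon''\le2\rho_\epsilon'^2+\rho_\epsilon^2$ and hence $(\rho_\epsilon'/\rho_\epsilon)'\le1+(\rho_\epsilon'/\rho_\epsilon)^2$. Integrating this Riccati inequality from the endpoint over an interval of length $\delta<\pi/2-\arctan C$ bounds $\rho_\epsilon'/\rho_\epsilon$ by $\tan(\arctan C+\delta)$. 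The reflection $\tilde\rho(\theta)=\rho(\pi+\theta_2-\theta_1-\theta)$ gives the matching lower bound.

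The main obstacle is Lemma 3.4 itself. Its proof relied on the endpoint monotonicity of $\rho_\epsilon$, which fails now that the signs are reversed, and on $P_0^\epsilon>0$, which is only established as a lower bound here. I would argue as follows. Take the maximum point of $\rho_\epsilon\sin\theta$. If it lies at an endpoint, then $\sin\theta_2$ or $\sin(\pi-\theta_1)$ is small (in the sessile case it is zero), so the quantity there is small. If it lies in the interior, the second-derivative computation of Lemma 3.4 goes through unchanged, since it uses only the Euler--Lagrange equation. To obtain positivity of $P_0^\epsilon$, I would use the identity obtained by testing \eqref{equ:2.1.5} with $\rho_\epsilon$:
\begin{align*}
P_0^\epsilon V=g\!\int\!\rho_\epsilon^3\sin\theta+\sigma\!\int\!\sqrt{\rho_\epsilon^2+\rho_\epsilon'^2}+\epsilon\!\int\!\rho_\epsilon'^2-[\![\gamma]\!]\big(\rho_\epsilon(\theta_2)+\rho_\epsilon(\pi-\theta_1)\big).
\end{align*}
The boundary term is absorbed using $\sigma\int|\rho_\epsilon'|\ge\sigma(\rho_\epsilon(\theta_2)+\rho_\epsilon(\pi-\theta_1)-2\min\rho_\epsilon)$ and $[\![\gamma]\!]<\sigma$. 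This gives $P_0^\epsilon V\ge(\sigma-[\![\gamma]\!])(\rho_\epsilon(\theta_2)+\rho_\epsilon(\pi-\theta_1))+\sigma\int\rho_\epsilon\cdot(\text{positive})>0$, and even a uniform positive lower bound, since $\int\rho_\epsilon\ge V/\|\rho_\epsilon\|_\infty$.

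\textbf{Step 3: interior.} I will repeat the maximum-principle argument of Theorem 3.5. At an interior maximum $\theta^m_n$ of $\rho_{\epsilon_n}'$ we have $\rho''=0$, and differentiating the Euler--Lagrange equation as in \eqref{equ:2.1.14} shows that the term $g\rho'\sin\theta^m_n$ dominates. This forces $\rho'''(\theta^m_n)>0$, which is a contradiction provided $\sin\theta^m_n$ is bounded below. That lower bound holds because, by Step 2, $\theta^m_n\in(\theta_2+\delta,\pi-\theta_1-\delta)$ once $\rho_{\epsilon_n}'(\theta^m_n)\to\infty$. Since Step 1 bounds the endpoint values, the maximum of $\rho_\epsilon'$ is bounded. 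The minimum is handled symmetrically, either by reflection or by applying the same argument to $-\rho'$.
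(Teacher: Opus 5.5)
Your Steps 1 and 3 coincide with the paper's. The gap is in Step 2, which rests on the inequality $g\rho_\epsilon\sin\theta-P_0^\epsilon\le 0$ of Lemma 3.4. For $[\![\gamma]\!]>0$ and general acute $\theta_1,\theta_2$ this inequality is false, not merely unproved.

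Dividing the first equation of \eqref{equ:2.1.5} by $\rho_\epsilon$ gives $g\rho_\epsilon\sin\theta-P_0^\epsilon=-\sigma\kappa_\epsilon+\epsilon\rho_\epsilon''/\rho_\epsilon$, where $\kappa_\epsilon=(\rho_\epsilon^2+2\rho_\epsilon'^2-\rho_\epsilon\rho_\epsilon'')/(\rho_\epsilon^2+\rho_\epsilon'^2)^{3/2}$ is the curvature of the free surface. So, up to the $\epsilon$-term, the lemma says the free surface is convex. Now take a symmetric wedge with $\theta_1=\theta_2>\arccos([\![\gamma]\!]/\sigma)$, i.e. walls steeper than the contact angle imposed by the boundary conditions. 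There the minimizer (for small $\epsilon$) is a concave meniscus rising along the walls. Then $\kappa<0$, the maximum of $\rho_\epsilon\sin\theta$ sits at the endpoints, and for small $g$ even $P_0^\epsilon<0$.

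Your endpoint argument (``$\sin\theta_2$ is small'') covers only the sessile case. Your positivity argument for $P_0^\epsilon$ also has a hole: it silently drops the term $-2\sigma\min\rho_\epsilon$ produced by $\sigma\int|\rho_\epsilon'|\ge\sigma(\rho_\epsilon(\theta_2)+\rho_\epsilon(\pi-\theta_1)-2\min\rho_\epsilon)$. For $\theta_1=\theta_2=0$ you should instead bound the length $\int\sqrt{\rho_\epsilon^2+\rho_\epsilon'^2}$ below by the chord $\rho_\epsilon(0)+\rho_\epsilon(\pi)$.

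The paper uses no sign information at all. It uses only $-P_0^\epsilon\le C$ (Theorem \ref{thm:uni_pre1}) and $\|\rho_\epsilon\|_{L^\infty}\le C$. The right side of \eqref{equ:1.2.9} is then nonnegative, so $\epsilon$ can be dropped when dividing. With $w=\rho_\epsilon'/\rho_\epsilon$ this gives $w'\le 1+w^2+(g\rho_\epsilon^2+C\rho_\epsilon)(1+w^2)^{3/2}\le C'(1+w^2)^{3/2}$. The cubic growth is harmless: comparison with $W'=C'(1+W^2)^{3/2}$ still bounds $w$ on an interval whose length depends only on $C'$ and the Step 1 bound.

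There is a second, smaller gap, which the paper's Step 2 shares as written. A one-sided inequality $w'\le F(w)$ with $F$ even gives an upper bound for $w$ only to the right of $\theta_2$, and a lower bound only to the left of $\pi-\theta_1$. The reflection adds nothing. With $\tilde\rho(\theta)=\rho_\epsilon(\bar\theta)$ and $\bar\theta=\pi+\theta_2-\theta_1-\theta$, one has $\tilde w(\theta)=-w(\bar\theta)$ and $\tilde w'(\theta)=w'(\bar\theta)$. Hence $\tilde w'\le F(\tilde w)$ is literally the original inequality at $\bar\theta$ and returns the same two bounds.

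The missing sides are a lower bound near $\theta_2$ and an upper bound near $\pi-\theta_1$. They come from the upper bound on $P_0^\epsilon$, which you list among your ingredients but never use. Indeed $\bigl(\frac{\rho_\epsilon}{(\rho_\epsilon^2+\rho_\epsilon'^2)^{3/2}}+\epsilon\bigr)\rho_\epsilon''\ge -P_0^\epsilon\ge -C$, which gives $w'\ge -C'(1+w^2)^{3/2}$. The two-sided inequality together with Step 1 yields $|\rho_\epsilon'|\le C$ on $(\theta_2,\theta_2+\delta)\cup(\pi-\theta_1-\delta,\pi-\theta_1)$ with $\delta$ independent of $\epsilon$. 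Your Step 3 then goes through unchanged.
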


     \begin{proof}

     We follow the same idea as in the proof of Theorem 3.6.

     \textbf{step 1} The uniform bound at the boundary.

         Using the boundary condition (the second and the third equation of system \eqref{equ:1.1.5}), we have

         \begin{equation}{\label{equ:1.2.5}}
             \sigma (\epsilon+\frac{1}{\sqrt{\rho_{\epsilon}^{2}+\rho_{\epsilon}'^{2}}})(\rho_{\epsilon}')(\theta_2)=-[\![\gamma]\!],
         \end{equation}

         \begin{equation}{\label{equ:1.2.6}}
            \sigma (\epsilon+\frac{1}{\sqrt{\rho_{\epsilon}^{2}+\rho_{\epsilon}'^{2}}})(\rho_{\epsilon}')(\pi-\theta_1)=[\![\gamma]\!],
         \end{equation}

         \noindent from which we derive that $\rho_{\epsilon}'(\theta_2)<0$ and $\rho_{\epsilon}'(\pi-\theta_1)>0$. Therefore, at the point $\theta_2$, we have

         \begin{equation*}
             0>\frac{\rho_{\epsilon}'}{\sqrt{\rho_{\epsilon}'^{2}+\rho_{\epsilon}^{2}}}(\theta_2)>-\frac{[\![\gamma]\!]}{\sigma}>C>-1,
         \end{equation*}

         \noindent which implies that:
         
         \begin{equation*}
             0>\rho_{\epsilon}'(\theta_2)>-\sqrt{\frac{C^{2}}{1-C^{2}}}.
         \end{equation*}

         \noindent Hence, $\rho'(\theta_2)$ is uniformly bounded. Moreover, we use the same discussion to show that $\rho'(\pi-\theta_1)$ is also uniformly bounded.

         \textbf{step 2} The uniform local bound for $\rho_{\epsilon}'$ near the boundary.
         
         We use the same idea as shown in the step 2 of Theorem 3.6. We take the derivative of function $\frac{\rho'}{\rho}$ with respect to $\theta$. And then we use the Euler-Lagrange equation to eliminate the second derivative in the derivative of $\frac{\rho'}{\rho}$. Following this idea, we have:

         \begin{equation}{\label{equ:1.2.7}}
            (\frac{\rho_{\epsilon}'}{\rho_{\epsilon}})'=\frac{\rho_{\epsilon}''\rho_{\epsilon}-\rho_{\epsilon}'^{2}}{\rho_{\epsilon}^{2}}
        \end{equation}

        \noindent  Applying the first equation to the system \eqref{equ:2.1.5} yields

        \begin{equation}{\label{equ:1.2.8}}
            (\frac{\rho_{\epsilon}}{(\rho_{\epsilon}^{2}+\rho_{\epsilon}'^{2})^{\frac{3}{2}}}+\epsilon)\rho_{\epsilon}''=\frac{2\rho_{\epsilon}'^{2}+\rho_{\epsilon}^{2}}{\sqrt{\rho_{\epsilon}^{2}+\rho_{\epsilon}'^{2}}^{3}}+g\rho_{\epsilon}\sin\theta-P_{0}^{\epsilon}
        \end{equation}

         \noindent Using Theorem \ref{thm:uni_pre1} in the inequality \eqref{equ:1.2.8}, we show the following inequality
         \begin{equation}{\label{equ:1.2.9}}
         (\frac{\rho_{\epsilon}}{(\rho_{\epsilon}^{2}+\rho_{\epsilon}'^{2})^{\frac{3}{2}}}+\epsilon)\rho_{\epsilon}''\leq \frac{2\rho_{\epsilon}'^{2}+\rho_{\epsilon}^{2}}{\sqrt{\rho_{\epsilon}^{2}+\rho_{\epsilon}'^{2}}^{3}}+g\rho_{\epsilon}\sin\theta+C,
         \end{equation}

         \noindent from which we obtain

         \begin{equation}{\label{equ:1.2.10}}
             \rho_{\epsilon}''\leq g(\rho_{\epsilon}^{2}+\rho_{\epsilon}'^{2})^{\frac{3}{2}}\sin\theta +\frac{\rho_{\epsilon}^{2}+\rho_{\epsilon}'^{2}}{\rho_{\epsilon}}+\frac{\rho_{\epsilon}'^{2}}{\rho_{\epsilon}}+C\frac{(\sqrt{\rho_{\epsilon}^{2}+\rho_{\epsilon}'^{2}})^{3}}{\rho_{\epsilon}}
         \end{equation}
         \noindent where $C$ is a positive constant independent of $\epsilon$

         Plugging \eqref{equ:1.2.10} back into the equation \eqref{equ:1.2.7}, we obtain:

         \begin{align}
              (\frac{\rho_{\epsilon}'}{\rho_{\epsilon}})'\leq &C+\frac{\rho_{\epsilon}'^{2}}{\rho_{\epsilon}^{2}}+\frac{g(\rho_{\epsilon}^{2}+\rho_{\epsilon}'^{2})^{\frac{3}{2}}}{\rho_{\epsilon}} +C\frac{(\rho_{\epsilon}^{2}+\rho_{\epsilon}'^{2})^{\frac{3}{2}}}{\rho_{\epsilon}^{2}}\notag\\
              =&1+C+(\frac{\rho_{\epsilon}'}{\rho_{\epsilon}^{2}})^{2}+g\rho_{\epsilon}^{2}(1+\frac{\rho_{\epsilon}'^{2}}{\rho_{\epsilon}^{2}})^{\frac{3}{2}} +C\rho_{\epsilon}(1+\frac{\rho_{\epsilon}'^{2}}{\rho_{\epsilon}^{2}})^{\frac{3}{2}} \label{equ:1.2.11}.
         \end{align}

         \noindent  Using Theorem 3.12, $\rho_{\epsilon}$ is uniformly BV bounded.  Hence, applying the Gronwall inequality to the \eqref{equ:1.2.10}, we obtain the uniform local upper bound for $\rho_{\epsilon}'$ near two boundary points. We express it in math as follows:

         \begin{equation*}
             \rho_{\epsilon}'(\theta)\leq C\rho_{\epsilon}(\theta)
         \end{equation*}

         \noindent for some constant $C$ independent of $\epsilon$, and $\theta\in (\theta_2,\theta_2+\delta)\cup(\pi-\theta_1-\delta,\pi-\theta_1)$. Again, we construct a new function $\tilde{\rho}_{\epsilon}(\theta)=\rho_{\epsilon}(\pi-\theta_1+\theta_2-\theta)$ and use the same idea to obtain that:

         \begin{equation*}
             \tilde{\rho}_{\epsilon}'(\theta)\leq C\tilde{\rho_{\epsilon}}(\theta),
         \end{equation*}

         \noindent Therefore,

         \begin{equation*}
             -C\rho_{\epsilon}(\theta) \leq \rho_{\epsilon}'(\theta)\leq C\rho_{\epsilon}(\theta)
         \end{equation*}

         \noindent for all $\theta\in(\theta_2,\theta_2+\delta)\cup(\pi-\theta_1-\delta,\pi-\theta_1)$ with $\delta$ independent of $\epsilon$.
         
         \textbf{step 3} Uniform global bound.

         The proof is the same as the proof of Theorem 3.6.
     \end{proof}

     Now we have the uniform boundedness for the first derivative. The rest of the proof is the same as in case one. Let $\epsilon\rightarrow 0$,  $\rho_{\epsilon}\rightarrow \rho_{0}$ weakly in $H^{1}$. This function $\rho_{0}$ is exactly the smooth minimizer we want.
     
\section{The transformation of the domain(Without moving polar)}

\subsection{The construction of the transformation}
In this section, we discuss a transformation map for any angle $\theta_1$, $\theta_2$. This map maps the changing domain $\Omega(t)$ to the static domain $\Omega(0)$. We suppose that $\xi$ is a function from the interval $(\theta_2,\pi-\theta_2)$ to $\mathbb{R}$. Then we can choose $\mathscr{E}$ to be a bounded linear extension operator that maps $C^{m}(\theta_2,\pi-\theta_1)$ to $C^{m}(\mathbb{R})$ for all $0\leq m\leq 5$ and $W^{s,p}(\theta_2,\pi-\theta_1)$ to
$W^{s,p}(\mathbb{R})$ for all $0\leq s\leq 5$ and $1<p<+\infty$ (such a map is readily constructed with the help of higher-order reflections, Vandermonde matrices, and a cutoff function.

Then we define the extension of $\xi$ as a function $\bar{\xi}:\{(r,\theta)\in \mathbb{R}^{+}\times \mathbb{R}| r\leq E\rho_{0}(\theta)\}\times \mathbb{R}^{+}\rightarrow \mathbb{R}$:

\begin{equation*}
    \bar{\xi}(\theta,r,t)=\mathcal{P}\mathscr{E}\xi(\theta,r-E\rho_{0}(\theta),t)
\end{equation*}

\noindent where $\mathcal{P}$ is the lower Poisson extension of which the definition and properties are given in Appendix B in \cite{Guo}. We then choose $\phi\in C^{\infty}$ such that $\phi(r)=0$ for $r\leq \frac{1}{4}\min\rho_0$ and $\phi(r)=r$ for $r\geq \frac{1}{2}\min\xi_{0}$. Suppose that $\xi$ is the surface perturbation function, we then combine $\phi$ and the extension map $\bar{\xi}$ to define a map from the equilibrium domain $\Omega_{0}$ to the moving domain $\Omega(t)$ by the following definition:

\begin{equation*}
    \Omega \ni (\theta,r)\rightarrow (\theta,r+\frac{\phi(r)}{\rho_{0}(\theta)}\bar{\xi}(\theta,r,t))
\end{equation*}

\noindent We call this map $\Phi$. It will map two boundary walls to boundary walls and also maps the boundary curve of the domain $\Omega_{0}$ to the boundary curve of the domain $\Omega(t)$. Also when the value of $\xi$ is small enough, $\Phi$ is a diffeomorphism and maps $\Omega_0$ to $\Omega(t)$.

Now we have a transformation map. We then want to compute $\nabla \Phi$. We compute it in Cartesian coordinates

\begin{equation*}
    \nabla \Phi=
    \begin{pmatrix}
        \frac{\partial x'}{\partial x} & \frac{\partial x'}{\partial y}\\
        \frac{\partial y'}{\partial x} & \frac{\partial y'}{\partial y}
    \end{pmatrix}
\end{equation*}

We first compute $\frac{\partial x'}{\partial x}$ (notice that $\theta'=\theta$)

\begin{align}
\frac{\partial x'}{\partial x}=&\frac{\partial \rho'\cos\theta'}{\partial x }=\frac{\partial \rho'\cos\theta'}{\partial \rho}\frac{\partial \rho}{\partial x}+\frac{\partial \rho'\cos\theta'}{\partial \theta}\frac{\partial \theta}{\partial x}    \notag\\
=&\cos\theta \frac{\partial \rho'}{\partial \rho} \frac{\partial \rho}{\partial x}+\cos\theta\frac{\partial \rho'}{\partial \theta}\frac{\partial \theta}{\partial x}-\rho'\sin\theta\frac{\partial \theta}{\partial x}\notag\\
=&\frac{\partial \rho'}{\partial \rho}-\cos\theta\frac{y}{x^{2}+y^{2}}\frac{\partial \rho'}{\partial \theta}+\rho'\sin\theta\frac{y}{x^{2}+y^{2}} \label{equ:2.0.1}
\end{align}

\noindent From the definition of the map $\Phi$, we have the following relation

\begin{equation*}
    \frac{\partial \rho'}{\partial \rho}=1+W\partial_{\rho}\bar{\xi}+\frac{\phi'\bar{\xi} }{\rho_{0}},
\end{equation*}

\noindent where $W=\frac{\phi}{\rho_{0}}$. Moreover, we have the following computation

\begin{equation}{\label{equ:2.0.2}}
    \frac{\partial \rho'}{\partial \theta}=W\partial_{t} \bar{\xi}-\frac{W}{\rho_{0}}\partial_{\theta}\rho_{0}\bar{\xi}
\end{equation}

\noindent Then plugging \eqref{equ:2.0.2} back into the representation of $\frac{\partial x'}{\partial x}$ (Equation \eqref{equ:2.0.1}), we have

\begin{equation}{\label{equ:2.0.3}}
    \frac{\partial x'}{\partial x}=(1+W\partial_{\rho}\bar{\xi}+\frac{\phi'\bar{\xi}}{\rho_{0}})-\cos\theta\frac{y}{x^{2}+y^{2}}(W\partial_{\theta}\bar{\xi}-\frac{W}{\rho_{0}}\partial_{\theta}\rho_{0}\bar{\xi})+\rho'\sin\theta\frac{y}{x^{2}+y^{2}}
\end{equation}

\noindent By similar computation to the derivation of \eqref{equ:2.0.3}, we obtain the following computation for $\frac{\p x'}{\p y}$

\begin{equation}{\label{equ:2.0.4}}
    \frac{\partial x'}{\partial y}=\cos\theta\sin\theta(1+W\partial_{\rho}\bar{\xi}+\frac{\phi'\bar{\xi}}{\rho_{0}})+\cos\theta\frac{x}{x^{2}+y^{2}}(W\partial_{\theta} \bar{\xi}-\frac{W}{\rho_{0}}\partial_{\theta}\rho_{0}\bar{\xi})-\rho'\sin\theta\frac{x}{x^{2}+y^{2}},
\end{equation}

\noindent and

\begin{equation}{\label{equ:2.0.5}}
    \frac{\partial y'}{\partial x}=\sin\theta\cos\theta(1+W\partial_{\rho}\bar{\xi}+\frac{\phi'\bar{\xi}}{\rho_{0}})-\sin\theta \frac{y}{x^{2}+y^{2}}(W\partial_{\theta} \bar{\xi}-\frac{W}{\rho_{0}}\partial_{\theta}\rho_{0}\bar{\xi})-\rho'\cos\theta\frac{y}{x^{2}+y^{2}},
\end{equation}

\noindent and
\begin{equation}{\label{equ:2.0.6}}
    \frac{\partial y'}{\partial y}=\sin^{2}\theta(1+W\partial_{\rho}\bar{\xi}+\frac{\phi'\bar{\xi}}{\rho_{0}})+\sin\theta \frac{x}{x^{2}+y^{2}}(W\partial_{\theta} \bar{\xi}-\frac{W}{\rho_{0}}\partial_{\theta}\rho_{0}\bar{\xi})+\rho'\sin\theta\frac{x}{x^{2}+y^{2}}
\end{equation}

Although the representation formulas seem to be complicated, they have the similar form. Or in other words, the highest derivative involved is the first derivative of $\bar{\xi}$, which means that $\nabla \Phi$ can be controlled by $1+\vert \vert \nabla \bar{\eta}\vert \vert_{H^{1}}$. The same type of bound also applies to $(\nabla \Phi)^{-1}$. 

For the inverse matrix, we have

\begin{equation*}
    (\nabla \Phi)^{-1}=
    \begin{pmatrix}
    \frac{\partial x}{\partial x'} & \frac{\partial x}{\partial y'}\\
    \frac{\partial y}{\partial x'} & \frac{\partial y}{\partial y'}
    \end{pmatrix},
\end{equation*}

\noindent We then replace $x$ and $y$ in above equations (\eqref{equ:2.0.3} with \eqref{equ:2.0.6}) to $x'$ and $y'$ to obtain the following four relations

\begin{equation*}
    \frac{\partial x}{\partial x'}=\cos^{2}\theta\frac{\partial \rho}{\partial \rho'}-\cos\theta \frac{y'}{x'^{2}+y'^{2}}\frac{\partial \rho}{\partial \theta'}+\rho\sin\theta \frac{y'}{x'^{2}+y'^{2}}
\end{equation*}

\begin{equation*}
     \frac{\partial x}{\partial y'}=\cos\theta\sin\theta\frac{\partial \rho}{\partial \rho'}+\cos\theta\frac{x'}{x'^{2}+y'^{2}}\frac{\partial \rho}{\partial \theta'}-\rho\sin\theta\frac{x'}{x'^{2}+y'^{2}}
\end{equation*}

\begin{equation*}
    \frac{\partial y}{\partial x'}=\sin\theta\cos\theta\frac{\partial \rho}{\partial \rho'}-\sin\theta \frac{y'}{x'^{2}+y'^{2}}\frac{\partial \rho}{\partial \theta'}-\rho\cos\theta\frac{y'}{x'^{2}+y'^{2}}
\end{equation*}

\begin{equation*}
    \frac{\partial y}{\partial y'}=\sin^{2}\theta\frac{\partial \rho}{\partial \rho'}+\sin\theta \frac{x'}{x'^{2}+y'^{2}}\frac{\partial \rho}{\partial \theta'}+\rho\sin\theta\frac{x'}{x'^{2}+y'^{2}}
\end{equation*}

\noindent Then we make the following decomposition for $(\nabla \Phi)^{-1}$

\begin{equation*}
    (\nabla \Phi)^{-1}=DM+B,
\end{equation*}

\noindent where

\begin{equation*}
    B=
    \begin{pmatrix}
    \rho\sin\theta\frac{y'}{x'^{2}+y'^{2}} & -\rho\sin\theta\frac{x'}{x'^{2}+y'^{2}}\\
    -\rho \cos\theta \frac{y'}{x'^{2}+y'^{2}} & \rho \sin\theta \frac{x'}{x'^{2}+y'^{2}}
    \end{pmatrix},
\end{equation*}
and
\begin{equation*}
    M=
    \begin{pmatrix}
        \frac{\partial \rho'}{\partial x'} & \frac{\partial\rho'}{\partial y'}\\
         \frac{\partial \theta}{\partial x'} & \frac{\partial \theta}{\partial y'}
    \end{pmatrix}
    =
    \begin{pmatrix}
        \cos\theta  & \sin\theta\\
        -\frac{y'}{x'^{2}+y'^{2}} & \frac{x'}{x'^{2}+y'^{2}}
    \end{pmatrix}
    ,
\end{equation*}
and
\begin{equation*}
    D=
    \begin{pmatrix}
        \cos\theta\frac{\partial \rho}{\partial \rho'} & \cos\theta\frac{\partial \rho}{\partial \theta'}\\
        \cos\theta\frac{\partial \rho}{\partial \rho'} & \cos\theta\frac{\partial \rho}{\partial \theta'}.
    \end{pmatrix}
\end{equation*}

\noindent Then it remains to compute $\frac{\partial \rho}{\partial \rho'}$ and $\frac{\partial \rho}{\partial \theta'}$. In fact,  from the definition of the transformation $\Phi$, we have

\begin{equation*}
    \begin{pmatrix}
        \frac{\partial \theta'}{\partial\theta} & \frac{\partial \theta'}{\partial \rho}\\
        \frac{\partial \rho'}{\partial\theta} & \frac{\partial \rho'}{\partial \rho}
    \end{pmatrix}
    =
    \begin{pmatrix}
        1 & 0\\
         W\partial_{\theta} \bar{\xi}-\frac{W}{\rho_{0}}\partial_{\theta}\rho_{0}\bar{\xi} & 1+W\partial_{\rho}\bar{\xi}+\frac{\phi'\bar{\xi} }{\rho_{0}}
    \end{pmatrix}
    .
\end{equation*}

   \noindent Hence,

    \begin{equation*}
         \begin{pmatrix}
        \frac{\partial \theta}{\partial\theta'} & \frac{\partial \theta}{\partial \rho'}\\
        \frac{\partial \rho}{\partial\theta'} & \frac{\partial \rho}{\partial \rho'}
    \end{pmatrix}
    =\frac{1}{1+W\partial_{\rho}\bar{\xi}+\frac{\phi'\bar{\xi} }{\rho_{0}}}
    \begin{pmatrix}
        1+W\partial_{\rho}\bar{\xi}+\frac{\phi'\bar{\xi} }{\rho_{0}} & 0\\
         -W\partial_{\theta} \bar{\xi}-\frac{W}{\rho_{0}}\partial_{\theta}\rho_{0}\bar{\xi} & 1
    \end{pmatrix}
    ,
    \end{equation*}

    \noindent which implies that

    \begin{equation*}
        \frac{\partial \rho}{\partial \rho'}=\frac{1}{1+W\partial_{\rho}\bar{\xi}+\frac{\phi'\bar{\xi} }{\rho_{0}}},
    \end{equation*}

    \noindent and 

    \begin{equation*}
        \frac{\partial \rho}{\partial \theta'}=\frac{-W\partial_{\theta} \bar{\xi}-\frac{W}{\rho_{0}}\partial_{\theta}\rho_{0}\bar{\xi}}{1+W\partial_{\rho}\bar{\xi}+\frac{\phi'\bar{\xi} }{\rho_{0}}}.
    \end{equation*}

    \noindent Combining all the computations above, we find out that the norm of $(\nabla \Phi)^{-1}$ can also be bounded by $1+\vert \vert \bar{\xi}\vert \vert_{H^{1}}$
    
Having derived all of the transformation matrices,  we then show  the transformed equation system(mostly the same as the special case ). For the first equation in system \eqref{equ:fix_1}, we first deal with the term $\partial_{t}u$. We have the following computation by setting $v(t,x,y)=u(t,x'(x,y,t),y'(x,y,t))$

\begin{equation*}
   \partial_{t}v(t,x,y)=\frac{d}{dt}u(t,x',y')=\frac{d}{dt}u(t,x'(t,x,y),y'(t,x,y))=\partial_{t}u+\partial_{x'}u\frac{\partial x'}{\partial t}+\partial_{y'}u\frac{\partial y'}{\partial t}
\end{equation*}

\noindent  For $\frac{\partial x_1'}{\partial t}$, from the definition of $\rho^{\prime}$, we have the following computation 

\begin{equation}{\label{equ:2.0.7}}
    \frac{\partial x'}{\partial t}=\frac{\partial \rho'\cos\theta}{\partial t}=\cos\theta \frac{\partial \rho'}{\partial t}=\cos\theta W\partial_{t}\bar{\xi}.
\end{equation}

\noindent Similarly, we  have

\begin{equation}{\label{equ:2.0.8}}
    \frac{\partial y'}{\partial t}=\frac{\partial \rho'\sin\theta}{\partial t}=\sin\theta W\partial_{t}\bar{\xi}.
\end{equation}

\noindent Besides the transformation formula for temporal derivative, we have the following formulas for spatial derivatives of velocity field after the transformation

\begin{equation}{\label{equ:2.0.9}}
    \partial_{x'}u=\partial_{x}u\frac{\partial x}{\partial x'}+\partial_{y}u\frac{\partial y}{\partial x'},
\end{equation}

\noindent and

\begin{equation}{\label{equ:2.0.10}}
    \partial_{y'}u=\partial_{x}u \frac{\partial {x}}{\partial y'}+\partial_{y}u\frac{\partial y}{\partial y'}.
\end{equation}

\noindent To make those representations simpler, we set

\begin{equation*}
    \mathcal{A}=((\nabla \Phi)^{-1})^{T}.
\end{equation*}

\noindent Using the notation, we have the simplified version of transformation formulas as follows:

\begin{equation*}
    (\partial_{x'}u,\partial_{y'}u)^{T}=\mathcal{A}(\partial_{x}u,\partial_{y}u)^{T}.
\end{equation*}

\noindent  Combining \eqref{equ:2.0.7} to \eqref{equ:2.0.10} together, we obtain

\begin{equation}
\begin{aligned}
    \partial_{t}v(t,x,y)&=\partial_{t}u+(\frac{\partial x'}{\partial t},\frac{\partial y'}{\partial t})(\partial_{x'}u,\partial_{y'}u)^{T}\\
     &=\partial_{t}u+(\cos\theta W\partial_{t}\bar{\xi},\sin\theta W\partial_{t}\bar{\xi})\mathcal{A}(\partial_{x}u,\partial_{y}u)^{T}\\
     &=\partial_{t}u+(\cos\theta W\partial_{t}\bar{\xi},\sin\theta W\partial_{t}\bar{\xi})\mathcal{A}(\partial_{x}v,\partial_{y}v)^{T}.
\end{aligned}
\end{equation}

Define $\gamma_{1}\in(-\frac{\pi}{2},\frac{\pi}{2})$ and $\gamma_{2}\in(-\frac{\pi}{2},\frac{\pi}{2})$ as follows
\begin{align}
    \sin\gamma_{1}=-\frac{\rho'}{(\rho^{2}+\rho'^{2})^{\frac{1}{2}}}(\pi-\theta_{1}),
\end{align}
\noindent and
\begin{align}
    \sin\gamma_{2}=-\frac{\rho'}{(\rho^{2}+\rho'^{2})^{\frac{1}{2}}}(\theta_{2}).
\end{align}
Using all of the computation for the transformation above, and the definition for $\gamma_{1}$ and $\gamma_{2}$, we then have the following equation system in fixed domain by applying this transformation to system \eqref{equ:navier_stokes_0}. (For simplification, we still use $u$ to represent the velocity function in the fixed domain):

\begin{equation}{\label{equ:2.0.11}}
    \begin{cases}
        \partial_{t}u-(\cos\theta W\partial_{t}\bar{\xi},\sin\theta W\partial_{t}\bar{\xi})\mathcal{A}(\partial_{x}u,\partial_{y}u)^{T}+u\cdot \nabla_{\mathcal{A}} u +\operatorname{div}_{\mathcal{A}}(S_{\mathcal{A}}(P,u))=0~~~&in~\Omega \\
        \dive_{\mathcal{A}}u=0&in~\Omega\\
        S_{\mathcal{A}}(P,u)\mathcal{N}=(g\rho+\sigma\mathcal{H}(\rho))\mathcal{N}~~&on~\Sigma\\
        \partial_{t}\rho=\frac{1}{\rho}u\cdot \mathcal{N}~~&on~\Sigma\\
        (S_{\mathcal{A}}(P,u)\cdot \nu-\beta u)\cdot \tau=0~~&on~\Sigma_{s}\\
        u\cdot \nu=0~&on~\Sigma_{s}\\
        \mathcal{W}(\partial_{t}\rho(\pi-\theta_1,t))=-\sigma\sin\gamma_1+[\![\gamma]\!]\\
        \mathcal{W}(\partial_{t}\rho(\theta_2,t))=\sigma\sin\gamma_2+[\![\gamma]\!]
    \end{cases}
\end{equation}

\noindent where $\mathcal{N}$ is the outer normal vector of the new surface curve expressed by 
\begin{align}
    \mathcal{N}=-\rho'(\theta)\hat{e}_{\phi}+\rho(\theta)\hat{e}_{r}.
\end{align}

\section{The Navier Stokes Equation for Sessile Drop case $\theta_1=\theta_2=0$}

\subsection{New polar coordinate (A moving polar)}. 

From the previous subsection, after the transformation, $(\rho,u,P)$ solves the following equation system

\begin{equation}{\label{equ:fix_3}}
    \begin{cases}
        \partial_{t}u-(\cos\theta W\partial_{t}\bar{\xi},\sin\theta W\partial_{t}\bar{\xi})\mathcal{A}(\partial_{x}u,\partial_{y}u)^{T}+u\cdot \nabla_{\mathcal{A}} u +\dive_{\mathcal{A}}(S_{\mathcal{A}}(P,u))=0~~~&in~\Omega \\
        \dive_{\mathcal{A}}u=0&in~\Omega\\
        S_{\mathcal{A}}(P,u)\mathcal{N}=(g\rho+\sigma\mathcal{H}(\rho))\mathcal{N}~~&on~\Sigma\\
        \partial_{t}\xi=\frac{1}{\rho}u\cdot \mathcal{N}~~&on~\Sigma\\
        (S_{\mathcal{A}}(P,u)\cdot \nu-\beta u)\cdot \tau=0~~&on~\Sigma_{s}\\
        u\cdot \nu=0~&on~\Sigma_{s}\\
        \mathcal{W}(\partial_{t}\rho(\pi,t))=-\sigma\sin\gamma_1+[\![\gamma]\!]\\
        \mathcal{W}(\partial_{t}\rho(0,t))=\sigma\sin\gamma_2+[\![\gamma]\!]
    \end{cases}
\end{equation}

\noindent where $\mathcal{A}$ is defined to be $((\nabla \Phi)^{-1})^{T}$ and $\mathcal{N}$ is defined by equation \eqref{equ:1.1.14}. However, it is not a good idea to use the original polar coordinate to discuss this problem since the droplet can move on the horizontal plane and may move to a place where the new boundary function of the droplet can not be represented in the original polar coordinates. Based on this observation, we introduce $\mathfrak{n}(t)$. This is defined as the $x$ component of the coordinate of the new polar point. We construct a new polar coordinate based on this polar point. Suppose that at time $t$, the $x$ components of two contact points are $x_1(t)$ and $x_2(t)$. $\mathfrak{n}(t)$ is chosen to satisfy the following two properties:

\begin{itemize}
    \item $\mathfrak{n}(t)\in(x_1(t),x_2(t))$.
    \item For any fixed time t, we have\begin{equation}{\label{equ:4.1.1}}
        \int_{0}^{\pi} (\rho_{0,\mathfrak{n}(t)}(\theta)-\rho(t,\theta))^{2}d\theta=\min_{c\in(x_1,x_2)}\int_{0}^{\pi}(\rho_{0,c}(\theta)-\rho(t))^{2}d\theta
    \end{equation}
\end{itemize}

\noindent where $\rho_{0,c}(\theta)$ is the equilibrium function centering at the point $(c,0)$.

\textbf{Remark} To compute the integral
\begin{align}
     \int_{0}^{\pi} (\rho_{0,c}(\theta)-\rho(t,\theta))^{2}d\theta
\end{align}
for any $c\in (x_{1}(t),x_{2}(t))$, we need to first fix a polar point such that both $\rho(t,\theta)$ and $\rho_{0,\mathfrak{n}(t)}(t,\theta)$ can be represented in polar coordinates centering at this pole. Since the perturbation is small corresponding to the energy norms, there exists such a point. After we have already derived $\mathfrak{n}(t)$, we choose this $(\mathfrak{n}(t),0)$ as the new polar point.

Now, suppose that the value of $\mathfrak{n}(t)$ is given. We construct the polar point at this $\mathfrak{n}(t)$. For simplification, {we still use $\rho(t,\theta)$ to denote the free surface of the droplet in the new polar coordinates centering at $(\mathfrak{n}(t),0)$ and use $\rho_{0,\mathfrak{n}(t)}(\theta)$ to denote the steady state of the droplet centering at $(\mathfrak{n}(t),0)$. Since in polar coordinates centered at $(\mathfrak{n}(t),0)$, the representation for $\rho_{0,\mathfrak{n}(t)}(\theta)$ stays the same, we use $\rho_{0}(\theta)$ to denote its representation}. Furthermore, we use $\xi$ to denote the perturbation of boundary curve given as follows

\begin{align}
\xi(t,\theta)=\rho(t,\theta)-\rho_{0,\mathfrak{n}(t)}(\theta).
\end{align}

{In the polar coordinates centering at $\mathfrak{n}(t)$, we define a multi-variable function $\rho_{0}^{f}(c,t)$ as follows
\begin{align}
    \rho_{0}^{f}(c,t):=\rho_{0,c}(\theta).
\end{align}
 From the right-hand side of the \eqref{equ:4.1.1}, we have

\begin{equation}{\label{equ:4.1.2}}
   \frac{d}{dc}|_{c=\mathfrak{n}(t)} \int_{0}^{\pi}(\rho_{0,c}(\theta)-\rho(t))^{2}d\theta=2\int_{0}^{\pi}(\rho_{0}(c,t)-\rho(t))\frac{d}{dc}|_{c=\mathfrak{n}(t)}\rho_{0}^{f}(c,\theta)d\theta
\end{equation}

\noindent We then introduce the following notation for the shifting function as follows

\begin{equation}{\label{equ:4.1.3}}
    \xi_s:=\frac{d}{dc}\rho_{0}^{f}(c,\theta)|_{c=\mathfrak{n}(t)}
\end{equation}

\noindent First, we note that in Cartesian coordinates, any point $(\rho_{0}(\theta),\theta)$ on the boundary function $\rho_0(\theta)$ after this horizontal shift can be easily represented as:

\begin{equation}{\label{equ:4.1.4}}
    (\rho_{0}(\theta)\cos\theta+c,\rho_{0}(\theta)\sin\theta)
\end{equation}

\noindent From equation \eqref{equ:4.1.4}, we show that for any fixed $\theta$, the new angle of the point after shifting defined by \eqref{equ:4.1.4} can be represented as follows

\begin{equation}{\label{equ:4.1.5}}
    \tan\theta'=\frac{\rho_{0}\sin\theta}{\rho_{0}\cos\theta+c}
\end{equation}

\noindent Moreover, the radius of the point can be computed as follows:

\begin{equation}{\label{equ:4.1.6}}
    r(\theta')=\sqrt{\rho_{0}^{2}(\theta)\sin^{2}\theta+(\rho_{0}\cos\theta+c)^{2}}=\sqrt{\rho_{0}^{2}(\theta)+2\rho_{0}c\cos\theta+c^{2}}
\end{equation}

\noindent Using this, we then express $\rho_{0}^{f}(c,\theta)$ by $c$ and $\rho_{0}(\theta)$. Then we derive the representation for $\frac{d}{dc}\rho_{0}(c,\theta)$. This representation is complicated and will be derived by a theorem in the subsection 4.3. We first express it as follows

\begin{equation}{\label{equ:4.1.7}}
    \xi_s=\cos\theta+\frac{\rho_{0}'}{\rho_{0}}\sin\theta
\end{equation}

\noindent Using \eqref{equ:4.1.7} and \eqref{equ:4.1.2}, we obtain

\begin{equation}{\label{equ:4.1.8}}
    \int_{0}^{\pi} \xi_s\xi d\theta=0
\end{equation}

\noindent Moreover, we have the following relation by the symmetry of $\rho_{0}$ and $\xi_{s}$:

\begin{equation}{\label{equ:4.2.8}}
     \int_{0}^{\pi} \xi_s\rho_{0} d\theta=0
\end{equation}

{\textbf{Remark} At any time $t$, $\xi_{s}$ is the representation of the shift function in polar coordinates centering at $\mathfrak{n}(t)$}

Now we have the definition of $\mathfrak{n}(t)$. However, it is difficult to show the representation for $\mathfrak{n}(t)$ directly from its definition. We then establish the following theorem to derive the representation of $\mathfrak{n}(t)$ and the new kinematic boundary condition.

\begin{theorem}
    $\mathfrak{n}(t)$ is defined as above. We have the new kinematic boundary condition as follows:

    \begin{align}
         \partial_{t}\xi=\partial_{t}(\rho-\rho_{0})=\frac{1}{\rho_{0}} u\cdot \mathcal{N}-\mathfrak{n}'(t)\xi_s+G,
    \end{align}

    \noindent where

    \begin{align}
       G=(\frac{1}{\rho}-\frac{1}{\rho_{0}})u\cdot \mathcal{N}+\mathfrak{n}'(t)\sin\theta (\frac{\rho'}{\rho}-\frac{\rho_{0}'}{\rho_{0}}).
    \end{align}

    \noindent Moreover, $\mathfrak{n}'(t)$ can be expressed as follows

    \begin{equation}{\label{equ:4.1.18}}
    \mathfrak{n}'(t)= \lambda\int_{0}^{\pi}\frac{1}{\rho} u\cdot \mathcal{N} \xi_sd\theta,
   \end{equation}

\noindent where

\begin{equation}{\label{equ:4.1.19}}
\begin{aligned}
    \lambda(t) =&\frac{1}{\int_{0}^{\pi}\xi_s\xi_{3}(t) d\theta}\\
    \xi_{3}(\theta,t)=&(\cos\theta+\frac{\rho'(\theta,t)}{\rho(\theta,t)}\sin\theta)
    \end{aligned}
\end{equation}
\end{theorem}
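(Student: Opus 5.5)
We will derive both claims from the polar-coordinate kinematic condition \eqref{equ:1.1.15} of Lemma 2.1, written in the original (fixed) frame, and then transport it into the moving frame centred at $(\mathfrak{n}(t),0)$.

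\textbf{Step 1: kinematic condition in the moving frame.} Let $\tilde\rho(t,\tilde\theta)$ be the free surface in polar coordinates centred at a fixed pole $(c,0)$. By \eqref{equ:1.1.15}, $\partial_t\tilde\rho=\frac{1}{\tilde\rho}u\cdot\mathcal{N}$. In the frame moving with velocity $(\mathfrak{n}'(t),0)$ the fluid velocity is $u-(\mathfrak{n}'(t),0)$. Repeating the computation \eqref{equ:1.1.6}--\eqref{equ:1.1.15} with this relative velocity gives \eqref{equ:difficulty_1}:
\begin{align*}
\partial_t\rho=\frac{1}{\rho}\bigl(u-(\mathfrak{n}',0)\bigr)\cdot\mathcal{N}.
\end{align*}
With $\mathcal{N}=-\rho'\hat e_\phi+\rho\hat e_r$ and $(1,0)=\cos\theta\,\hat e_r-\sin\theta\,\hat e_\phi$, we have $(1,0)\cdot\mathcal{N}=\rho\cos\theta+\rho'\sin\theta$. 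Hence
\begin{align*}
\partial_t\rho=\frac{1}{\rho}u\cdot\mathcal{N}-\mathfrak{n}'(t)\,\xi_3.
\end{align*}
Since $\rho_0$ has the same representation in every moving frame, $\partial_t\xi=\partial_t\rho$.

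\textbf{Step 2: the formula for $G$.} Write $\xi_3=\xi_s+\sin\theta\bigl(\frac{\rho'}{\rho}-\frac{\rho_0'}{\rho_0}\bigr)$, using \eqref{equ:4.1.7}, and write $\frac1\rho=\frac1{\rho_0}+\bigl(\frac1\rho-\frac1{\rho_0}\bigr)$. Collecting the remainder terms gives the stated expression for $\partial_t\xi$ with this $G$, up to the sign convention on the $\mathfrak{n}'\sin\theta(\cdots)$ term, which I will fix during the computation. This step is pure algebra.

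\textbf{Step 3: the ODE for $\mathfrak{n}$.} The first-order condition of the minimisation \eqref{equ:4.1.1}, namely \eqref{equ:4.1.2}--\eqref{equ:4.1.8}, gives $\int_0^\pi\xi\,\xi_s\,d\theta=0$ for all $t$. I will differentiate this identity in $t$. The key point is that in the moving frame $\xi_s$ is the fixed function \eqref{equ:4.1.7} built from $\rho_0$, so it is time independent. This is the assertion of the Remark after \eqref{equ:4.2.8}, and I would justify it by noting that $\rho_0$ has an unchanged representation about $(\mathfrak{n}(t),0)$. It follows that
\begin{align*}
0=\int_0^\pi\partial_t\xi\,\xi_s\,d\theta=\int_0^\pi\frac1\rho u\cdot\mathcal{N}\,\xi_s\,d\theta-\mathfrak{n}'(t)\int_0^\pi\xi_3\xi_s\,d\theta.
\end{align*}
Solving for $\mathfrak{n}'$ gives \eqref{equ:4.1.18} with $\lambda$ as in \eqref{equ:4.1.19}.

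\textbf{Main obstacle.} Two points need care. The first is justifying that differentiation in time commutes with the change of pole. Both $\xi$ and $\xi_s$ are expressed about a moving pole, so one must check that the extra terms produced by $\mathfrak{n}'$ are exactly those in Step 1, with no additional contribution from $\partial_t$ of the frame. I would handle this by performing all computations in Cartesian coordinates first and converting afterwards. The second is that $\lambda$ must be well defined, which requires $\int_0^\pi\xi_s\xi_3\neq0$. For small perturbations $\xi_3\approx\xi_s$, so this integral is close to $\int_0^\pi\xi_s^2>0$. This follows from smallness of $\|\xi\|_{H^1}$ together with $\rho_0\ge c>0$, so the nondegeneracy should be stated under the small-energy hypothesis.
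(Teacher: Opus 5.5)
Your proposal follows the paper's proof essentially step for step. It uses the kinematic condition in the frame translating with $(\mathfrak{n}'(t),0)$, splits the result algebraically into $\frac{1}{\rho_0}u\cdot\mathcal{N}-\mathfrak{n}'(t)\xi_s+G$, and then tests against the time-independent $\xi_s$ using $\int_0^\pi\partial_t\xi\,\xi_s\,d\theta=0$; the paper reaches this last identity by a short detour through $\int_0^\pi\rho\,\xi_s\,d\theta=0$ and $\int_0^\pi\rho_0\xi_s\,d\theta=0$.

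On the sign you left open: with $\xi_3$ and $\xi_s$ as stated, $\xi_3-\xi_s=\sin\theta\bigl(\frac{\rho'}{\rho}-\frac{\rho_0'}{\rho_0}\bigr)$ forces $G=\bigl(\frac{1}{\rho}-\frac{1}{\rho_0}\bigr)u\cdot\mathcal{N}-\mathfrak{n}'(t)\sin\theta\bigl(\frac{\rho'}{\rho}-\frac{\rho_0'}{\rho_0}\bigr)$. So the $+$ in the printed statement is a sign slip, which is harmless for the later estimates since they only use size bounds on $G$.
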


\begin{proof}

 In the following calculation and the definition in the statement of the theorem, $\xi_s$, $\rho_{0}$, and $\rho$ are their representations in polar coordinate centering at $\mathfrak{n}(t)$. Hence, $\xi_s$ and $\rho_{0}$ are functions independent of time $t$.  Using the kinematic boundary condition of the equation system, we have the following formula

\begin{equation}
    \partial_{t}\rho=\frac{1}{\rho}u\cdot \mathcal{N}
\end{equation}

\noindent This is only true in fixed polar coordinates.  Then we compute the time derivative of $\rho$ in the moving polar coordinates. We choose a new reference system which moves at a speed of $\mathfrak{n}'(t)$. In the new system, the polar point is static, and the velocity field is $u-\mathfrak{n}'(t)$. Using this fact, we have:

\begin{equation}{\label{equ:kin_1}}
   \partial_{t}\rho=\frac{1}{\rho}(u-(\mathfrak{n}'(t),0))\cdot \mathcal{N}
\end{equation}

\noindent Recall the definition of $\mathcal{N}$ from equation \eqref{equ:1.1.14}
\begin{align}
    \mathcal{N}=-\rho'(\theta)\hat{e_{\phi}}+\rho(\theta)\hat{e}_{r},
\end{align}
 equation \eqref{equ:kin_1} above is equivalent to:

\begin{equation}
    \partial_{t}(\rho)=\frac{1}{\rho}u\cdot \mathcal{N}-(\mathfrak{n}'(t)\cos\theta-\mathfrak{n}'(t)\frac{\rho'}{\rho}\sin\theta)
\end{equation}

\noindent Substituting the definition of $\xi_s$ (equation \eqref{equ:4.1.7}) in the equation above, we have

\begin{equation}{\label{equ:4.1.9}}
   \partial_{t}\rho=\frac{1}{\rho_{0}}u\cdot \mathcal{N}-\mathfrak{n}'(t)\xi_2+G
\end{equation}

\noindent where $G$ contains nonlinear terms defined as follows:

\begin{equation}{\label{equ:4.1.10}}
    G=(\frac{1}{\rho}-\frac{1}{\rho_{0}})u\cdot \mathcal{N}+\mathfrak{n}'(t)\sin\theta (\frac{\rho'}{\rho}-\frac{\rho_{0}'}{\rho_{0}})
\end{equation}

\noindent  We then define $\xi_{3}(t,\theta)$ as follows

\begin{align}
    \xi_{3}(t,\theta)=(\cos \theta-\frac{\rho'}{\rho}\sin\theta).
\end{align}

\noindent Using this definition, the equation \eqref{equ:4.1.9} can be transformed into:

\begin{align}
    \partial_{t}\rho=\frac{1}{\rho}u\cdot \mathcal{N}-\mathfrak{n}'(t)\xi_{3}(t) \label{equ:4.0.9}
\end{align}

\noindent  Using the same idea as in the proof of equation \eqref{equ:4.2.8}, we show that:

\begin{align}
    \int_{0}^{\pi}\rho(t,\theta)\xi_{3}(t,\theta)d\theta=0\label{equ:4.0.8}
\end{align}

\noindent Furthermore, from the definition of $\xi_{s}$, we have $\xi_{3}=\xi_{s}-G$. Since in the moving polar coordinates the representation of $\rho_{0}$ is independent of $t$, we have:

\begin{equation*}
    \partial_{t}\xi=\partial_{t}(\rho-\rho_{0})=\frac{1}{\rho_{0}} u\cdot \mathcal{N}-\mathfrak{n}'(t)\xi_3=\frac{1}{\rho_{0}} u\cdot \mathcal{N}-\mathfrak{n}'(t)\xi_s+G,
\end{equation*}
\noindent which is the new kinematic boundary condition we want.

Now we have the new kinematic boundary condition, it remains to find the representation for $\mathfrak{n}'(t)$. It is hard to derive the representation for the function $\mathfrak{n}(t)$ from its definition \eqref{equ:4.1.1}. Instead, we use the new kinematic boundary condition to derive an ODE for $\mathfrak{n}(t)$.  Multiplying both sides of the equation \eqref{equ:4.0.9} by the function $\xi_s$ and then integrating it from $0$ to $\pi$, we obtain

\begin{equation}{\label{equ:4.1.12}}
   \int_{0}^{\pi}\xi_s\partial_{t}\xi=\int_{0}^{\pi}\frac{1}{\rho}u\cdot \mathcal{N}\xi_s-\int_{0}^{\pi}\mathfrak{n}'(t)\xi_s\xi_{3}d\theta.
\end{equation}

\noindent After adjusting the order of terms in equation above, we obtain

\begin{equation}{\label{equ:4.1.13}}
    \mathfrak{n}'(t)\int_{0}^{\pi}\xi_s\xi_{3}(t) d\theta=\int_{0}^{\pi}\frac{1}{\rho}u\cdot \mathcal{N}\xi_sd\theta-\int_{0}^{\pi}\partial_{t}\xi \xi_sd\theta
\end{equation}

\noindent Observing that there is a temporal derivative term on the right-hand side of the equation \eqref{equ:4.1.13} which cannot be determine at this point, we use the following computation to eliminate it. Using integration by part and chain rule, we have

\begin{equation}{\label{equ:4.1.14}}
    \int_{0}^{\pi}\partial_{t}\xi \xi_s d\theta=\partial_{t}\int_{0}^{\pi} \xi \xi_s d\theta-\int_{0}^{\pi}\xi\partial_{t}\xi_s.
\end{equation}

\noindent Since $\xi_s$ is a function independent of $t$, equation \eqref{equ:4.1.14} indicates that

\begin{equation}{\label{equ:4.1.15}}
    \int_{0}^{\pi}\partial_{t}\xi \xi_s d\theta=\partial_{t}\int_{0}^{\pi} \xi \xi_s d\theta=\partial_{t}(\int_{0}^{\pi}\rho\xi_s-\int_{0}^{\pi}\rho_{0}\xi_s)=\partial_{t}(\int_{0}^{\pi}\rho\xi_s),
\end{equation}

\noindent where we have used the vertical condition \eqref{equ:4.2.8}. Moreover, we have the following property by \eqref{equ:4.1.8} and \eqref{equ:4.2.8}

\begin{equation}{\label{equ:4.1.16}}
    \int_{0}^{\pi}\rho\xi_s d\theta=\int_{0}^{\pi} (\rho_{0}+\xi)\xi_{s}d\theta=0
\end{equation}

\noindent Combining equation \eqref{equ:4.1.15} and equation \eqref{equ:4.1.16}, we obtain

\begin{equation}{\label{equ:4.1.17}}
    \int_{0}^{\pi} \partial_{t}\xi \xi_s d\theta=0
\end{equation}

\noindent Therefore,  applying equation \eqref{equ:4.1.17} to equation \eqref{equ:4.1.14} yields that

\begin{equation}{\label{equ:ODE}}
    \mathfrak{n}'(t)= \lambda\int_{0}^{\pi}\frac{1}{\rho} u\cdot \mathcal{N} \xi_sd\theta,
\end{equation}

\noindent where

\begin{equation}{\label{equ:lambda}}
    \lambda(t) =\frac{1}{\int_{0}^{\pi}\xi_s\xi_{3}(t) d\theta}
\end{equation}

\noindent is a function of $t$. This completes the proof of the theorem.

\end{proof}

Having derived the kinematic boundary condition in the moving polar coordinates, we now establish the remaining equations of \eqref{equ:fix_1} in this coordinates. Since the moving polar coordinates only affect the equations on the free boundary, it suffices to examine the capillary equation and the contact-point condition. 

\textbf{Capillary equation} In Cartesian coordinates, after applying the horizontal shift $(x,y)\rightarrow (x',y')=(x+\mathfrak{n}(t),y)$ to Cartesian coordinates, we have the following relation for any function $f(x,y)=g(x-\mathfrak{n}(t),y)$
\begin{align}
    \p_{x}(f(x,y))|_{x'=a}=\p_{x}g(x',y)|_{x'=a-\mathfrak{n}(t)},
\end{align}
which shows that the capillary equation in Cartesian coordinates
\begin{align}
    S(u,p)\mathcal{N}=gh(x)-\sigma\mathcal{H}(h)
\end{align}
preserves its form under this horizontal translation. For the capillary equation in polar coordinates, we recall that the formula obtained in Section~2 was derived by transforming the Cartesian capillary equation into polar coordinates. Therefore, under the same horizontal transformation, the polar-coordinate formulation of capillary equation retains the same structure either:
\begin{align}
    S(P,u)\mathcal{N}=(g\rho \sin\theta+\sigma\mathcal{H}(\rho))\mathcal{N}
\end{align}

\textbf{Contact points condition}: In the static polar coordinates, the contact points condition are given as follows
\begin{align}{\label{trans_1}}
    \mathcal{W}(\p_{t}\rho(\pi,t))=\sigma \frac{\rho'}{(\rho^{2}+\rho'^{2})^{\frac{1}{2}}}(\pi,t)+[\![\gamma]\!],
\end{align}
and
\begin{align}{\label{trans_2}}
    \mathcal{W}(\p_{t}\rho(0,t))=-\sigma \frac{\rho'}{(\rho^{2}+\rho'^{2})^{\frac{1}{2}}}(\pi,t)+[\![\gamma]\!].
\end{align}
Transforming these into Cartesian coordinates yields
\begin{align}{\label{trans_3}}
     \mathcal{W}(-\p_{t}x_{1}(t))=\sigma \frac{\rho'}{(\rho^{2}+\rho'^{2})^{\frac{1}{2}}}(\pi,t)-[\![\gamma]\!],\notag\\
     \mathcal{W}(\p_{t}x_{2}(t))=\sigma \frac{\rho'}{(\rho^{2}+\rho'^{2})^{\frac{1}{2}}}(\pi,t)+[\![\gamma]\!].
\end{align}
\noindent where $x_{1}(t)$ and $x_{2}(t)$ denote the left and right contact points, respectively. The right-hand sides remain unchanged because they are invariant under the horizontal translation from the discussion for Capillary equation. At time $t$, after applying the horizontal shift $(x,y)\rightarrow (x',y')=(x+\mathfrak{n(t)},y)$, we have
\begin{align}
    x_{1}'(t)=x_{1}(t)-\mathfrak{n}(t)\notag\\
    x_{2}'(t)=x_{2}(t)-\mathfrak{n}(t)
\end{align}
\noindent Therefore, substituting these two relations to \eqref{trans_3}, we obtain
\begin{align}
    \mathcal{W}(-\p_{t}x_{1}'(t)+\mathfrak{n}'(t))=\sigma \frac{\rho'}{(\rho^{2}+\rho'^{2})^{\frac{1}{2}}}(\pi,t)-[\![\gamma]\!]\notag\\
     \mathcal{W}(\p_{t}x_{2}'(t)-\mathfrak{n}'(t))=\sigma \frac{\rho'}{(\rho^{2}+\rho'^{2})^{\frac{1}{2}}}(\pi,t)+[\![\gamma]\!]
\end{align}
\noindent Transforming it back to the polar coordinates, we have
\begin{align}
    \mathcal{W}(\p_{t}\rho(t)+\mathfrak{n}'(t))(\pi)=\sigma \frac{\rho'}{(\rho^{2}+\rho'^{2})^{\frac{1}{2}}}(\pi,t)-[\![\gamma]\!]\notag\\
     \mathcal{W}(\p_{t}\rho(t)-\mathfrak{n}'(t))(0)=\sigma \frac{\rho'}{(\rho^{2}+\rho'^{2})^{\frac{1}{2}}}(\pi,t)+[\![\gamma]\!]
\end{align}

In conclusion, we have the following system in moving polar coordinates centering at $\mathfrak{n}(t)$

\begin{equation}{\label{equ:origin}}
    \begin{cases}
        \partial_{t}u+u\cdot \nabla u +\operatorname{div}(S(P,u))=0~~~&in~\Omega(t) \\
        \operatorname{div} u=0&in~\Omega(t)\\
        S(P,u)\mathcal{N}=(g\rho \sin\theta+\sigma\mathcal{H}(\rho))\mathcal{N}~~&on~\Sigma(t)\\
        \partial_{t}\xi=\frac{1}{\rho_{0}}u\cdot \mathcal{N}-\mathfrak{n}'(t)\xi_s+G~~&on~\Sigma(t)\\
        (S(P,u)\cdot \nu-\beta u)\cdot \tau=0~~&on~\Sigma_{s}\\
        u\cdot \nu=0~&on~\Sigma_{s}\\
        \mathcal{W}(\partial_{t}\xi(\pi,t)+\mathfrak{n}'(t))=-\sigma\sin\gamma_1+[\![\gamma]\!]\\
        \mathcal{W}(\partial_{t}\xi(0,t)-\mathfrak{n}'(t))=\sigma\sin\gamma_2+[\![\gamma]\!]
    \end{cases}
\end{equation}

\textbf{Remark}: All functions in the two equations on the boundary $\Sigma(t)$ are their representation in polar coordinates centering at $\mathfrak{n}(t)$, which means that the function $\rho$,$\rho_{0}$, $\xi$ and $\xi_s$ are all functions depending only on $\theta$. They do not depend on $\mathfrak{n}(t)$.

 Then applying the transformation $\Phi$ established in Section 4 to system \eqref{equ:origin}, we obtain the following equation system

\begin{equation*}
    \begin{cases}
        \partial_{t}u-(\cos\theta W\partial_{t}\bar{\xi},\sin\theta W\partial_{t}\bar{\xi})\mathcal{A}(\partial_{x}u,\partial_{y}u)^{T}+u\cdot \nabla_{\mathcal{A}} u +\dive_{\mathcal{A}}(S_{\mathcal{A}}(P,u))=0~~~&in~\tilde{\Omega}(t) \\
        \dive_{\mathcal{A}}u=0&in~\tilde{\Omega}(t)\\
        S_{\mathcal{A}}(P,u)\mathcal{N}=(g\rho+\sigma\mathcal{H}(\rho))\mathcal{N}~~&on~\tilde{\Sigma}(t)\\
       \partial_{t}\xi=\frac{1}{\rho_{0}}u\cdot \mathcal{N}-\mathfrak{n}'(t)\xi_s+G~~&on~\tilde{\Sigma}(t)\\
        (S_{\mathcal{A}}(P,u)\cdot \nu-\beta u)\cdot \tau=0~~&on~\Sigma_{s}\\
        u\cdot \nu=0~&on~\Sigma_{s}\\
        \mathcal{W}(\partial_{t}\xi(\pi,t)+\mathfrak{n}'(t))=-\sigma\sin\gamma_1+[\![\gamma]\!]\\
        \mathcal{W}(\partial_{t}\xi(0,t)-\mathfrak{n}'(t))=\sigma\sin\gamma_2+[\![\gamma]\!].
    \end{cases}
\end{equation*}
\noindent {where $\tilde{\Omega}(t)$ denotes the domain congruent to $\Omega_{0}$ but centering at $(\mathfrak{n}(t),0)$}. 

 Finally, by applying the shift transformation $(x,y)\mapsto (x-\mathfrak{n}(t),y)$, we map the time-dependent domain $\tilde{\Omega}(t)$ onto the fixed reference domain $\Omega_{0}$. Under this change of variables, the system of equations above can be rewritten in the fixed domain as follows:

\begin{equation}{\label{equ:4.1.11}}
    \begin{cases}
         \partial_{t}u-\mathfrak{n}'(t)\partial_{x}u-(\cos\theta W\partial_{t}\bar{\xi},\sin\theta W\partial_{t}\bar{\xi})\mathcal{A}(\partial_{x}u,\partial_{y}u)^{T}+u\cdot \nabla_{\mathcal{A}} u +\dive_{\mathcal{A}}(S_{\mathcal{A}}(P,u))=0~~~&in~{\Omega}_{0} \\
        \dive_{\mathcal{A}}u=0&in~{\Omega}_{0}\\
        S_{\mathcal{A}}(P,u)\mathcal{N}=(g\rho+\sigma\mathcal{H}(\rho))\mathcal{N}~~&on~{\Sigma}\\
        \partial_{t}\xi=\frac{1}{\rho_{0}}u\cdot \mathcal{N}-\mathfrak{n}'(t)\xi_s+G&on~\Sigma\\
        (S_{\mathcal{A}}(P,u)\cdot \nu-\beta u)\cdot \tau=0~~&on~\Sigma_{s}\\
        u\cdot \nu=0~&on~\Sigma_{s}\\
        \mathcal{W}((\mathfrak{n}'(t)+\partial_{t}\xi)(\pi,t))=-\sigma\sin\gamma_1+[\![\gamma]\!]\\
        \mathcal{W}((\mathfrak{n}'(t)-\partial_{t}\xi)(0,t))=\sigma\sin\gamma_2+[\![\gamma]\!]
    \end{cases}
\end{equation}

\noindent where $\mathcal{A}=(\nabla \Phi^{-1})^{T}$. We focus on discussing this equation system in the next subsection.

\subsection{Equation for the perturbation function(Taylor Expansion)}
In this subsection, we write $u=0+u$, $P=P_0+p$, $\rho=\rho_0+\xi$ to study the equation for the perturbed functions $(u,p,\xi)$ from equation \eqref{equ:4.1.11}. To this end, we perform a Taylor expansion of all the nonlinear terms in system \eqref{equ:4.1.11}.

\textbf{Contact angle Terms}

For $\sin\gamma_1$,by definition of the contact angles, we have the following computation 

\begin{equation*}
    \sin\gamma_1=-\frac{\rho'}{\sqrt{\rho^{2}+\rho'^{2}}}(\pi-\theta_1)
\end{equation*}

\noindent Using Taylor expansion, we have

\begin{equation*}
    \frac{\rho'}{\sqrt{\rho^{2}+\rho'^{2}}}=\frac{\rho_0'+\xi'}{\sqrt{(\rho_0+\xi)^{2}+(\rho_0'+\xi ')^{2}}}=\frac{\rho_0'}{\sqrt{\rho_0^{2}+\rho_0'^{2}}}+\frac{\rho_{0}^{2}\xi'}{(\rho_{0}^{2}+\rho_{0}'^{2})^{\frac{3}{2}}}-\frac{\rho_0'\rho_{0}\xi}{(\rho_0^{2}+\rho_{0}'^{2})^{\frac{3}{2}}}+\mathcal{R}_1(\partial_{\theta}\xi,\xi,\partial_{\theta}\rho_{0},\rho_{0})
\end{equation*}

For the residue term $\mathcal{R}$, there are three smooth functions $\mathcal{R}_i,i=1,2,3$ depending on $\partial_{t}\rho_0$ and $\rho_{0}$ such that:

\begin{equation}
\begin{aligned}
     \frac{\rho'}{\sqrt{\rho^{2}+\rho'^{2}}}=\frac{\rho_0'+\xi'}{\sqrt{(\rho_0+\xi)^{2}+(\rho_0'+\xi ')^{2}}}=&\frac{\rho_0'}{\sqrt{\rho_0^{2}+\rho_0'^{2}}}+\frac{\rho_{0}^{2}\xi'}{(\rho_{0}^{2}+\rho_{0}'^{2})^{\frac{3}{2}}}-\frac{\rho_0'\rho_{0}\xi}{(\rho_0^{2}+\rho_{0}^{2})^{\frac{3}{2}}}\\
&+\mathcal{R}_{1,1}\xi^{2}+\mathcal{R}_{1,2}\xi\xi'+\mathcal{R}_{1,3}(\xi')^{2},
\end{aligned}
\end{equation}

\noindent where the three functions have the following forms:

\begin{equation*}
    \mathcal{R}_{1,1}=\int_{0}^{1}(1-t)\frac{((\rho_0+t\xi)^{2}(\rho_0'+t\xi')-(\rho_{0}'+t\xi')^{3}}{((\rho_{0}+t\xi)^{2}+(\rho_0'+t\xi')^{2})^{\frac{5}{2}}}dt,
\end{equation*}

\noindent and

\begin{equation*}
    \mathcal{R}_{1,2}=2\int_{0}^{1}(1-t)\frac{(\rho_0+t\xi)(\rho_{0}'+t\xi')^{2}-(\rho_{0}+t\xi)^{3}}{((\rho_{0}+t\xi)^{2}+(\rho_{0}'+t\xi')^{2})^{\frac{5}{2}}}dt,
\end{equation*}
and
\begin{equation*}
    \mathcal{R}_{1,3}=-3\int_{0}^{1}(1-t)\frac{(\rho_0+t\xi)^{2}(\rho_0'+t\xi')}{((\rho_0+t\xi)^{2}+
    (\rho_{0}'+t\xi')^{2})^{\frac{5}{2}}}dt.
\end{equation*}

\textbf{The normal vector}

We now decompose another highly non-linear term is the normal vector $\mathcal{N}$. We have the following decomposition in Cartesian coordinates. 

\begin{equation}
\begin{aligned}
    {\mathcal{N}}=&-\rho'(\theta)(\cos(\theta+\frac{\pi}{2}),\sin(\theta+\frac{\pi}{2}))+\rho(\theta)(\cos\theta,\sin\theta)\\
    =&-\rho'(\theta)(-\sin\theta,\cos\theta)+\rho(\theta)(\cos\theta,\sin\theta)\\
    =&(\rho'(\theta)+\rho(\theta)\cos\theta,-\rho'(\theta)\cos\theta+\rho(\theta)\sin\theta)
    \end{aligned}
\end{equation}

\textbf{Curvature term}

Finally, we decompose the curvature term. We have the math expression of curvature $\mathcal{H}(\rho)$ as follows

\begin{equation*}
    H(\rho)=H(\rho_{0}+\xi)=\frac{1}{\rho}\frac{\rho_{0}+\xi}{\sqrt{(\rho_{0}+\xi)^{2}+(\rho_{0}'+\xi')^{2}}}-\frac{1}{\rho}\partial_{\theta}\frac{\rho_{0}'+\xi'}{\sqrt{(\rho_{0}+\xi)^{2}+(\rho_{0}'+\xi')^{2}}}
\end{equation*}

\noindent  Applying the previous computation for contact angles to the second term, we obtain

\begin{equation}
\begin{aligned}
    \partial_{\theta}\frac{\rho_{0}'+\xi'}{\sqrt{(\rho_{0}+\xi)^{2}+(\rho_{0}'+\xi')^{2}}}
    =\partial_{\theta}(\frac{\rho_0'}{\sqrt{\rho_0^{2}+\rho_0'^{2}}}+\frac{\rho_{0}^{2}\xi'}{(\rho_{0}^{2}+\rho_{0}'^{2})^{\frac{3}{2}}}-\frac{\rho_0'\rho_{0}\xi}{(\rho_0^{2}+\rho_{0}^{2})^{\frac{3}{2}}}+\mathcal{R}_1)
\end{aligned}
\end{equation}

\noindent Then we have:

\begin{equation}
\begin{aligned}
    \frac{1}{\rho_{0}+\xi}\partial_{\theta}\frac{\rho_{0}'+\xi'}{\sqrt{(\rho)^{2}+(\rho')^{2}}}
    =&\frac{1}{\rho_{0}}\partial_{\theta}\frac{\rho_{0}'}{\sqrt{\rho_{0}^{2}+\rho_{0}'^{2}}}-\frac{\xi}{\rho_{0}^{2}}\partial_{\theta}\frac{\rho_{0}'}{\sqrt{\rho_{0}^{2}+\rho_{0}'^{2}}}\\&+\frac{1}{\rho_{0}}\partial_{\theta}(\frac{\rho_{0}^{2}\xi'}{(\rho_{0}^{2}+\rho_{0}'^{2})^{\frac{3}{2}}}-\frac{\rho_0'\rho_{0}\xi}{(\rho_0^{2}+\rho_{0}^{2})^{\frac{3}{2}}}+\mathcal{R}_{1})+{\mathcal{R}_{4}}
    \end{aligned}
\end{equation}

\noindent Moreover, for the first term of the curvature, we decompose it to

\begin{equation}
\begin{aligned}
    \frac{1}{\rho}\frac{\rho_{0}+\xi}{\sqrt{(\rho_{0}+\xi)^{2}+(\rho_{0}'+\xi')^{2}}}=&\frac{1}{\sqrt{(\rho_{0}+\xi)^{2}+(\rho_{0}'+\xi')^{2}}}\\
    &=\frac{1}{\sqrt{\rho^{2}+\rho_{0}'^{2}}}-\frac{\rho_{0}\xi}{(\rho_{0}^{2}+\rho_0'^{2})^{\frac{3}{2}}}-\frac{\rho_0'\xi'}{(\rho_{0}^{2}+\rho'^{2})^{\frac{3}{2}}}+\mathcal{R}_{5}
    \end{aligned}
\end{equation}

\noindent where $\mathcal{R}_{5}$ is the second order term which has the similar form compared to $\mathcal{R}_1$ and $\mathcal{R}_{4}$. Let $\mathcal{R}_{2}=\mathcal{R}_{4}+\mathcal{R}_{5}$, we have
\begin{align}
    \mathcal{H}(\rho)=&\frac{1}{\sqrt{\rho^{2}+\rho_{0}'^{2}}}-\frac{\rho_{0}\xi}{(\rho_{0}^{2}+\rho_0'^{2})^{\frac{3}{2}}}-\frac{\rho_0'\xi'}{(\rho_{0}^{2}+\rho'^{2})^{\frac{3}{2}}}+\frac{1}{\rho_{0}}\partial_{\theta}\frac{\rho_{0}'}{\sqrt{\rho_{0}^{2}+\rho_{0}'^{2}}}-\frac{\xi}{\rho_{0}^{2}}\partial_{\theta}\frac{\rho_{0}'}{\sqrt{\rho_{0}^{2}+\rho_{0}'^{2}}}\notag\\&+\frac{1}{\rho_{0}}\partial_{\theta}(\frac{\rho_{0}^{2}\xi'}{(\rho_{0}^{2}+\rho_{0}'^{2})^{\frac{3}{2}}}-\frac{\rho_0'\rho_{0}\xi}{(\rho_0^{2}+\rho_{0}^{2})^{\frac{3}{2}}}+\mathcal{R}_{1})+\mathcal{R}_{2}
\end{align}

Now we have the Taylor expansion for all of nonlinear terms. Then we establish the equation system for these perturbation functions. First, we have the equation for the equilibrium

\begin{equation*}
    P_{0}=g\rho_{0}\sin\theta-\sigma \mathcal{H}(\rho_{0}),
\end{equation*}

We then analyze the contact line condition. Since $\mathcal{W}$ s monotone increasing, we set:

\begin{equation*}
    \kappa= \mathcal{W}'(0)>0
\end{equation*}

\noindent Then we define $\hat{\mathcal{W}}(z)=\frac{1}{\kappa}\mathcal{W}(z)-z$.

To keep the mass conserved, we have:

\begin{equation*}
    \int_{\theta_1}^{\theta_2} (\rho_{0}+\xi)^{2}=\int_{\theta_1}^{\theta_2}\rho_{0}^{2}
\end{equation*}

\noindent which is equivalent to the fact that:

\begin{equation*}
     \int_{\theta_1}^{\theta_2} 2\rho_0\xi d\theta=-\int_{\theta_1}^{\theta_2}\xi^{2} d\theta. 
\end{equation*}

\noindent Therefore, we have:

\begin{equation*}
   \int_{\theta_1}^{\theta_{2}} \rho_{0}\xi=-\frac{1}{2}\int_{\theta_{1}}^{\theta_{2}} \xi^{2}d\theta
\end{equation*}

Finally, combining all of the computation and decomposition above, we derive the following equation system from equation \eqref{equ:4.1.11} by using the equation for the steady surface curve to cancel the zero order term out).

\begin{equation}\label{equ:4.1.20}
    \begin{cases}
        \partial_{t}u-(\cos\theta W\partial_{t}\bar{\xi},\sin\theta W\partial_{t}\bar{\xi})\mathcal{A}(\partial_{x}u,\partial_{y}u)^{T}+u\cdot \nabla_{\mathcal{A}}u-\mathfrak{n}'(t)\p_{x}u+\dive_{\mathcal{A}}S_{\mathcal{A}}(p,u)=0~~~&in~\Omega \\
        \dive_{\mathcal{A}}u=0&in~\Omega\\
        S_{\mathcal{A}}(p,u)\mathcal{N}=(g\xi\sin\theta+\sigma(\mathcal{P}_1(\rho_0,\rho_0')\xi+\mathcal{P}_{2}(\rho_{0},\rho_{0}')\xi'-\frac{1}{\rho_{0}}\partial_{\theta}(\frac{\rho_{0}^{2}\xi'}{(\rho_{0}^{2}+\rho_{0}'^{2})^{\frac{3}{2}}}-\frac{\rho_{0}\rho_{0}'\xi}{(\rho_{0}^{2}+\rho_{0}'^{2})^{\frac{3}{2}}}+\mathcal{R}_{1}))\mathcal{N}+\mathcal{R}_{2}\mathcal{N}~~&on~\Sigma\\
        \partial_{t}\xi+\mathfrak{n}^{\prime}(t)\xi_{s}=\frac{1}{\rho_{0}}u\cdot \mathcal{N}+G~~&on~\Sigma\\
        (S_{\mathcal{A}}(p,u)\cdot \nu-\beta u)\cdot \tau=0~~&on~\Sigma_{s}\\
        u\cdot \nu=0~&on~\Sigma_{s}\\
        \kappa\partial_{t}\xi(\theta_1,t)+\kappa\mathfrak{n}^{\prime}(t)+\kappa\hat{\mathcal{W}}(\partial_{t}\xi(\theta_1,t))=\sigma\frac{\rho_{0}^{2}\xi'}{(\rho_{0}^{2}+\rho_{0}'^{2})^{\frac{3}{2}}}(\theta_1,t)-\sigma\frac{\rho_0'\rho_{0}\xi}{(\rho_0^{2}+\rho_{0}^{2})^{\frac{3}{2}}}(\theta_1,t)+\sigma\mathcal{R}_{1}(\theta_1,t)\\
        \kappa\partial_{t}\xi(\theta_1,t)-\kappa\mathfrak{n}^{\prime}(t)+\kappa\hat{\mathcal{W}}(\partial_{t}\xi(\theta_1,t))=-(\sigma\frac{\rho_{0}^{2}\xi'}{(\rho_{0}^{2}+\rho_{0}'^{2})^{\frac{3}{2}}}(\theta_2,t)-\sigma\frac{\rho_0'\rho_{0}\xi}{(\rho_0^{2}+\rho_{0}^{2})^{\frac{3}{2}}}(\theta_2,t)+\sigma\mathcal{R}_{1}(\theta_2,t))
    \end{cases}
\end{equation}

\noindent where $(\xi,u,p)$ are perturbation functions defined above and $\mathcal{P}_{1}$ and $\mathcal{P}_{2}$ are defined as:

\begin{equation}{\label{equ:4.1.21}}
    \mathcal{P}_{1}(\rho_0.\rho_0')=\frac{\rho_{0}''-\rho_{0}-\frac{\rho_{0}'^{2}}{\rho_{0}}}{(\rho_{0}^{2}+\rho_{0}'^{2})^{\frac{3}{2}}}
\end{equation}

\begin{equation}{\label{equ:4.1.22}}
    \mathcal{P}_{2}=-\frac{\rho_{0}'}{(\rho_{0}^{2}+\rho_{0}'^{2})}.
\end{equation}

\noindent They are both smooth functions in the polar coordinates derived by Taylor expansion results.

Furthermore,  taking temporal derivative in system \eqref{equ:4.1.20}, we show the following system

\begin{equation*}
     \begin{cases}
        \partial_{t}v-(\cos\theta W\partial_{t}\bar{\zeta},\sin\theta W\partial_{t}\bar{\zeta})\mathcal{A}(\partial_{x}v,\partial_{y}v)^{T}-\mathfrak{n}'(t)\p_{x}v+u\cdot \nabla_{\mathcal{A}}v+\dive_{\mathcal{A}}S_{\mathcal{A}}(q,v)=F^{1}~~~&in~\Omega \\
        \dive_{\mathcal{A}}v=F^{2}&in~\Omega\\
        S_{\mathcal{A}}(q,v)\mathcal{N}=(g\zeta\sin\theta+\sigma(\mathcal{P}_1(\rho_0,\rho_0')\zeta+\mathcal{P}_{2}(\rho_{0},\rho_{0}')\zeta'-\frac{1}{\rho_{0}}\partial_{\theta}(\frac{\rho_{0}^{2}\zeta'}{(\rho_{0}^{2}+\rho_{0}'^{2})^{\frac{3}{2}}}-\frac{\rho_{0}'\rho_{0}\zeta}{(\rho_{0}^{2}+\rho_{0}'^{2})^{\frac{3}{2}}}+F^{3}))\mathcal{N}+F^{4}~~&on~\Sigma\\
        \partial_{t}\zeta+\mathfrak{n}^{(i)}(t)\xi_s=\frac{1}{\rho_{0}}v\cdot \mathcal{N}+F^6~~&on~\Sigma\\
        (S_{\mathcal{A}}(q,v)\cdot \nu-\beta v)\cdot \tau=F^{5}~~&on~\Sigma_{s}\\
        v\cdot \nu=0~&on~\Sigma_{s}\\
        \kappa\partial_{t}\zeta(\pi,t)+\kappa\mathfrak{n}^{(i)}(t)+\kappa F^{7}=\sigma\frac{\rho_{0}^{2}\zeta'}{(\rho_{0}^{2}+\rho_{0}'^{2})^{\frac{3}{2}}}(\pi,t)-\sigma\frac{\rho_0'\rho_{0}\zeta}{(\rho_0^{2}+\rho_{0}^{2})^{\frac{3}{2}}}(\pi,t)+F^{3}\\
        \kappa\partial_{t}\zeta(\pi,t)-\kappa\mathfrak{n}^{(i)}(t)+\kappa F^{7}=-(\sigma\frac{\rho_{0}^{2}\zeta'}{(\rho_{0}^{2}+\rho_{0}'^{2})^{\frac{3}{2}}}(0,t)-\sigma\frac{\rho_0'\rho_{0}\zeta}{(\rho_0^{2}+\rho_{0}^{2})^{\frac{3}{2}}}(0,t)+F^{3})
    \end{cases}
\end{equation*}

\noindent where $v=\p_{t}^{i}u$, $\zeta=\partial_{t}^{i}\xi$ and $q=\p_{t}^{i}p$ with the form of $\mathcal{P}_1$ and $\mathcal{P}_{2}$ given by equations \eqref{equ:4.1.21} and \eqref{equ:4.1.22}, respectively (i=1 or 2):
 
We now consider the following simplified system:

\begin{equation}{\label{equ:4.1.23}}
    \begin{cases}
        \partial_{t}v+\operatorname{div}_{\mathcal{A}}S_{\mathcal{A}}(q,v)=b^{1}~~&\operatorname{in}~~\Omega\\
        \operatorname{div}_{\mathcal{A}}v=b^{2}~~&\operatorname{in}~~\Omega\\
        S_{\mathcal{A}}(q,v)\mathcal{N}=(\mathcal{K}(\zeta)-\sigma\partial_{\theta}{F}^{3})\mathcal{N}+{b}^{4}~~&\operatorname{on}~~\Sigma\\
        \partial_{t}\zeta=\frac{1}{\rho_{0}}v\cdot \mathcal{N}-\mathfrak{n}^{i}(t)\xi_s+b^{6}~~&\operatorname{on}~~\Sigma\\
        (S_{\mathcal{A}}(q,v)\nu-\beta v)\cdot \tau=b^{5}~~&\operatorname{on}~~\Sigma_{s}\\
        v\cdot \nu=0~~&\operatorname{on}~~\Sigma_{s}\\
        (\mp\sigma \frac{\rho_{0}^{2}\partial_{\theta}(\zeta)}{(\rho_{0}^{2}+\rho_{0}'^{2})^{\frac{3}{2}}}\pm\sigma \frac{\rho_{0}'\rho_{0}(\zeta)}{(\rho_{0}^{2}+\rho_{0}'^2)^{\frac{3}{2}}})(\frac{\pi}{2}\pm \frac{\pi}{2})=\kappa ((\frac{1}{\rho_{0}}v\cdot \mathcal{N})\pm b^{3}+b^{7}-b^{6})(\frac{\pi}{2}\pm\frac{\pi}{2})
    \end{cases}
\end{equation}

\noindent where the function $\mathcal{K}$ is defined to be:

\begin{align*}
     \mathcal{K}(\xi)=&g(\xi)\sin\theta\mathcal+\sigma {P}_1(\rho_{0},\rho_{0}')(\xi)\\
     &+\sigma(\mathcal{P}_{2}(\rho_{0},\rho_{0}')(\xi)'-\frac{1}{\rho_{0}}\partial_{\theta}(\frac{\rho_{0}^{2}(\xi)'}{(\rho_{0}^{2}+\rho_{0}'^{2})^{\frac{3}{2}}}-\frac{\rho_{0}\rho_{0}'(\xi)}{(\rho_{0}^{2}+\rho_{0}'^{2})^{\frac{3}{2}}})),
\end{align*}
    
\noindent and $F^{6}=G$ defined by equation \eqref{equ:4.1.10}.

To establish the estimate for the solution function, we write down the weak form of the equation system:

\begin{align}
    &(\partial_{t}v,\omega)_{\mathcal{H}^{0}}+((v,\omega))+(\zeta,\frac{1}{\rho_{0}}\omega \cdot \mathcal{N})_{1,\Sigma}+[\frac{1}{\rho_{0}}v\cdot \mathcal{N},\frac{1}{\rho_{0}}\omega \cdot \mathcal{N}]_{\theta}\notag\\
    &=\int_{\Omega} b^{1}\cdot \omega J-\int_{0}^{\pi}b^{3}\partial_{\theta}(\omega\cdot \mathcal{N})+b^{4}\cdot \omega-\int_{\Sigma_{s}}b^{5}(\omega \cdot \tau)J-[b^{7}+b^{8},\frac{1}{\rho_{0}}\omega\cdot \mathcal{N}]_{\theta} \label{equ:4.1.24},
\end{align}

\noindent combined with the kinematic boundary condition

\begin{equation}
    \partial_{t}\zeta=\frac{1}{\rho_{0}}v\cdot \mathcal{N}-\gamma^{i}(t)\xi_s+F^{6},
\end{equation}

\noindent and the velocity condition for the pole

\begin{equation}{\label{equ:4.1.100}}
    \mathfrak{n}'(t)= \lambda\int_{0}^{\pi}\frac{1}{\rho} u\cdot \mathcal{N} \xi_sd\theta,
\end{equation}

\noindent where $\lambda$ is defined by equation \eqref{equ:4.1.18}. The $(1,\Sigma)$ inner product is defined by

\begin{align}
    (\rho_1,\rho_2)_{1,\Sigma}=&g\int_{0}^{\pi}\rho_{0}\rho_1\rho_2\sin\theta d\theta+\sigma\int_{0}^{\pi}\frac{\rho_{0}\rho_1'\rho_2'}{(\rho_{0}^{2}+\rho_{0}'^{2})^{\frac{3}{2}}}d\theta-\sigma\int_{0}^{\pi}\frac{\rho_{0}'\rho_1'\rho_2}{(\rho_{0}^{2}+\rho_{0}'^{2})^{\frac{3}{2}}}d\theta \notag\\
    &-\sigma \int_{0}^{\pi}\frac{\rho_{0}'\rho_1\rho_2'}{(\rho_{0}^{2}+\rho_{0}'^{2})^{\frac{3}{2}}}d\theta+\sigma\int_{0}^{\pi}\frac{\rho_{0}''\rho_{0}-\rho_{0}'^{2}-\rho_{0}^{2}}{(\rho_{0}^{2}+\rho_{0}'^{2})^{\frac{3}{2}}}\rho_1\rho_2d\theta ,\label{equ:4.1.25}
\end{align}

\noindent which is a symmetric scalar product. To show the apriori estimate, it is important to show the positivity of this inner product $1,\Sigma$. We will use the next subsection to show this crucial property. 

\subsection{The property of $\xi_{s}$ and the positivity of the inner product}

In this subsection, we study the property of $\xi_{s}$ by showing that it is the unique kernel of the $1,\Sigma$ inner product, within a certain functional space, and subsequently establish the positivity of this inner product. We begin by computing the second variation of the following functional energy.

     \begin{equation*}
         E(\rho(\theta))=\frac{1}{3}g\int_{\theta_1}^{\theta_2}\rho^{3}\sin \theta d\theta+\int_{\theta_1}^{\theta_2}\sigma \sqrt{\rho^{2}+\rho'^{2}}d\theta-[\![\gamma]\!](\rho(\theta_1)+\rho(\theta_2))
     \end{equation*}

       We consider any perturbation $\xi(\theta)$ near the minimizer we derived in Section 2. From the conservation of total mass, the perturbation satisfies

      \begin{equation}{\label{equ:4.3.1}}
          \int_{\theta_2}^{\pi-\theta_1} \rho_{0}(\theta)\xi(\theta) d\theta=0
      \end{equation}

      \noindent To make the setting precise, we define two function spaces:

      \begin{equation*}
          A=\{\xi:\xi\in C^{\infty}(\theta_2,\pi-\theta_1)\}
      \end{equation*}

      \begin{equation*}
          B=\{\xi:\xi\in A~and~\int_{\theta_2}^{\pi-\theta_1}\rho_{0}(\theta)\xi(\theta)d\theta=0\}
      \end{equation*}

     \noindent  From the definition, it follows that 

     \begin{equation*}
         \operatorname{codim}(B) = 1
     \end{equation*}

     \noindent that is, $B$ is a codimension-one subspace of $A$.
     
      Having identified the minimizer $\rho_{0}$,  we then want to compute the second-order derivative of the energy.

     \begin{theorem}
         Suppose that $\rho_{0}$ is the minimizer of energy functional $E$ derived in Section 2. For any perturbation function $\xi$,  the representation of the second derivative of the functional energy $E$ is expressed as follows:

           \begin{align}
          F_{0}(\xi)=&g\int_{\theta_2}^{\pi-\theta_1} \rho_{0}\xi^{2}\sin\theta d\theta \notag\\
            &+\sigma \int_{\theta_2}^{\pi-\theta_1}(-\frac{\rho_0^{2}+2\rho_0'^{2}-\rho_0''\rho_0}{(\rho_0^{2}+\rho_0'^{2})^{\frac{3}{2}}}\xi^{2}+\frac{\xi^{2}+\xi'^{2}}{(\rho_0^{2}+\rho_0'^{2})^{\frac{1}{2}}}-\frac{(\xi\rho_0+\rho_0'\xi')^{2}}{(\rho_0^{2}+\rho_0'^{2})^{\frac{3}{2}}})d\theta \label{equ:4.3.2}
     \end{align}

     \noindent Especially, when $\theta_1=\theta_2=0$, we have:

     \begin{align}
         F_{0}(\xi)=&g\int_{0}^{\pi} \rho_0\xi^{2}\sin\theta d\theta \notag\\
            &+\sigma \int_{0}^{\pi}(-\frac{\rho_0^{2}+2\rho_0'^{2}-\rho_0''\rho_0}{(\rho_0^{2}+\rho_0'^{2})^{\frac{3}{2}}}\xi^{2}+\frac{\xi^{2}+\xi'^{2}}{(\rho_0^{2}+\rho_0'^{2})^{\frac{1}{2}}}-\frac{(\xi\rho_0+\rho_0'\xi')^{2}}{(\rho_0^{2}+\rho_0'^{2})^{\frac{3}{2}}})d\theta \label{equ:4.3.3}
     \end{align}
     \end{theorem}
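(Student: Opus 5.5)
The plan is to compute the second variation directly, as $F_0(\xi)=\frac{d^2}{ds^2}\big|_{s=0}E(\rho_0+s\xi)$. The boundary term $-[\![\gamma]\!](\rho(\theta_1)+\rho(\theta_2))$ is linear in $\rho$, so it contributes nothing to the second derivative. The same holds for the Lagrange multiplier term $P_0\int\rho^2$, which only produces $P_0\int\xi^2$. I will check below whether this term should appear; the stated formula suggests it has been eliminated using the Euler--Lagrange equation \eqref{equ:equi}. So the work splits into the gravity term and the arc-length term.

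For the gravity term, $\frac{d^2}{ds^2}\frac{g}{3}\int(\rho_0+s\xi)^3\sin\theta\,d\theta=2g\int\rho_0\xi^2\sin\theta\,d\theta$. Up to the normalization $\frac12$ of the second derivative, this gives the first term $g\int\rho_0\xi^2\sin\theta$.

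For the arc-length term, set $L(a,b)=\sqrt{a^2+b^2}$. Its Hessian is $\frac{1}{L}I-\frac{(a,b)^T(a,b)}{L^3}$. Evaluating at $(a,b)=(\rho_0,\rho_0')$ on $(\xi,\xi')$ gives exactly
\[
\frac{\xi^2+\xi'^2}{(\rho_0^2+\rho_0'^2)^{1/2}}-\frac{(\rho_0\xi+\rho_0'\xi')^2}{(\rho_0^2+\rho_0'^2)^{3/2}},
\]
which reproduces the second and third pieces of the $\sigma$-integral.

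The remaining piece $-\frac{\rho_0^2+2\rho_0'^2-\rho_0''\rho_0}{(\rho_0^2+\rho_0'^2)^{3/2}}\xi^2$ must come from the volume constraint. Because we vary on the constraint manifold $\int\rho^2=V$, the correct second variation is that of the Lagrangian $E-\frac{P_0}{2}\int\rho^2$, or equivalently the variation along a curve $\rho_s$ with $\int\rho_s^2=V$, whose second-order correction is $-\frac{P_0}{?}\int\xi^2$ (the precise normalization to be fixed). The main step is then to eliminate $P_0$ pointwise using the Euler--Lagrange equation \eqref{equ:equi}: multiplying by $\rho_0$ and using
\[
\mathcal H(\rho_0)=\frac{2\rho_0'^2-\rho_0\rho_0''+\rho_0^2}{(\rho_0^2+\rho_0'^2)^{3/2}}
\]
from Section 2 gives $P_0\rho_0=g\rho_0^2\sin\theta+\sigma\rho_0\mathcal H(\rho_0)$, up to the sign convention. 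Substituting this for $P_0\xi^2$ produces $-\sigma\mathcal H(\rho_0)\xi^2$, which is exactly the curvature term. It also cancels part of the gravity contribution, leaving the coefficient $g\rho_0\sin\theta$.

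I expect this bookkeeping to be the main obstacle. The normalizations (factors of $2$ and $\frac12$ in $E''$ versus $F_0$) and the sign conventions for $\mathcal H$ and $P_0$ must be tracked consistently so that the gravity coefficient comes out as $g\rho_0\sin\theta$ rather than $2g$. I would fix the convention $F_0=\frac{d^2}{ds^2}\big[E-\frac{P_0}{2}\int\rho^2\big](\rho_0+s\xi)\big|_{s=0}$ and verify each coefficient. The boundary terms need no integration by parts here, since the formula is stated in non-integrated form. The case $\theta_1=\theta_2=0$ is simply the specialization of the limits to $[0,\pi]$.
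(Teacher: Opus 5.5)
Your proposal is correct, and it reaches the formula by a slightly different route than the paper.

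\textbf{How the two routes compare.} The paper expands $E$ along a volume-preserving curve $\rho_0+t\xi+t^2\eta+O(t^3)$. It reads off $\int\rho_0\xi=0$ and $-2\int\rho_0\eta=\int\xi^2$ from the constraint. It then invokes the weak Euler--Lagrange equation, including the contact-angle terms $-[\![\gamma]\!](\eta(\theta_1)+\eta(\theta_2))$, to convert every $t^2\eta$-term into $-\tfrac12 t^2P_0\int\xi^2$.

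You instead differentiate the Lagrangian $E-\tfrac{P_0}{2}\int\rho^2$ twice along the straight line $\rho_0+s\xi$. The boundary term drops out by linearity, no $\eta$ is needed, and $-P_0\int\xi^2$ appears at once. After that the two arguments coincide, since both eliminate $P_0$ pointwise through the Euler--Lagrange equation.

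Your route is shorter and exhibits $F_0$ as a quadratic form for arbitrary $\xi$. The paper's route makes explicit what your word ``equivalently'' leaves implicit: the Lagrangian Hessian equals the constrained second variation of $E$ only for $\int\rho_0\xi=0$. It does so because the first variation of the Lagrangian vanishes in the weak sense, boundary conditions included.

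\textbf{The bookkeeping you left open.} With your convention $F_0=\frac{d^2}{ds^2}\big[E-\tfrac{P_0}{2}\int\rho^2\big](\rho_0+s\xi)\big|_{s=0}$, nothing is left over:
\begin{itemize}
\item gravity gives $2g\rho_0\sin\theta\,\xi^2$;
\item the arc-length Hessian enters with coefficient one;
\item $-P_0\xi^2=-\big(g\rho_0\sin\theta+\sigma\mathcal H(\rho_0)\big)\xi^2$ removes half of the gravity term and supplies the curvature term.
\end{itemize}
So drop your earlier remark that a factor $\tfrac12$ alone turns $2g$ into $g$.

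This requires the Euler--Lagrange equation in the form $P_0\rho_0=g\rho_0^2\sin\theta+\sigma\rho_0\mathcal H(\rho_0)$, with $\mathcal H(\rho_0)=(\rho_0^2+2\rho_0'^2-\rho_0\rho_0'')/(\rho_0^2+\rho_0'^2)^{3/2}$. This is what $E'(\rho_0)=P_0\rho_0$ gives, and it is what the paper uses in this proof. Do not take it from the first line of the equilibrium system in the introduction. Read with the introduction's definition of $\mathcal H$, that line carries an extra factor $\rho_0$ on the curvature term and would produce the wrong coefficient.
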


     \begin{proof}

         We first give the definition of the nonlinear perturbation function $f(t,\theta)$ as follows:

         \begin{equation*}
       f(t,\theta)=t\xi(\theta)+t^{2}\eta(\theta)+o(t^{3})
   \end{equation*}
   
   \noindent We then compute the energy corresponding to this perturbation and subtract the original energy associated with the minimizer $\rho_{0}$.

   \begin{align}
       &E(\rho_0(\theta)+f(t,\theta))-E(\rho_{0}(\theta))\notag\\
        =&tg\int_{\theta_2}^{\pi-\theta_1}\rho_0^{2}(\theta)\xi(\theta)\sin\theta d\theta+t\sigma\int_{\theta_2}^{\pi-\theta_1} \frac{\rho_0\xi}{\sqrt{\rho_0^{2}+\rho_0'^{2}}}d\theta+t\sigma\int_{\theta_2}^{\pi-\theta_1}\frac{\rho_0'\xi'}{\sqrt{\rho_0^{2}+\rho_0'^{2}}}\notag\\
        &-t[\![\gamma]\!](\xi(\theta_1)+\xi(\theta_2))\notag\\
        & +t^{2}g\int_{\theta_2}^{\pi-\theta_1}\rho_0(\theta)\xi^{2}(\theta)\sin\theta d\theta+\frac{1}{2}t^{2}\sigma\int_{\theta_2}^{\pi-\theta_1}\frac{\xi^{2}+\xi'^{2}}{(\rho_0^{2}+\rho_0'^{2})^{\frac{1}{2}}}d\theta-\frac{1}{8}\sigma\int_{\theta_2}^{\pi-\theta_1}\frac{(2t\xi\rho_0+2t\rho_0'\xi')^{2}}{(\rho_0^{2}+\rho_0'^{2})^{\frac{3}{2}}}\notag\\
        & +t^{2}g\int_{\theta_2}^{\pi-\theta_1}\rho_0^{2}(\theta)\eta(\theta)\sin\theta d\theta+t^{2}\sigma\int_{\theta_2}^{\pi-\theta_1} \frac{\rho_0\eta}{\sqrt{\rho_0^{2}+\rho_0'^{2}}}d\theta+t^{2}\sigma\int_{\theta_2}^{\pi-\theta_1}\frac{\rho_0'\eta'}{\sqrt{\rho_0^{2}+\rho_0'^{2}}} \notag\\
        &-t^{2}[\![\gamma]\!](\eta(\theta_1)+\eta(\theta_2)) \notag\\
         &+O(t^{3}) \label{equ:3.4.2}
   \end{align}

    Using the conservation law of the total mass, we have

   \begin{equation*}
       \int_{\theta_{2}}^{\pi-\theta_1} \rho_{0}^{2}d\theta=\int_{\theta_{2}}^{\pi-\theta_1} (\rho_{0}+t\xi+t^{2}\eta+O(t^{3}))^{2}d\theta
   \end{equation*}

   \noindent By comparing the first and second-order terms with respect to $t$ on both sides of the equation, we obtain the following two relations for $\xi$ and $\eta$

   \begin{equation}{\label{equ:3.4.3}}
       \int_{\theta_2}^{\pi-\theta_1} 2\rho_0\xi=0,
   \end{equation}

   \noindent and

   \begin{equation}{\label{equ:3.4.4}}
        -\int_{\theta_2}^{\pi-\theta_1} 2\rho_0\eta= \int_{\theta_2}^{\pi-\theta_1} \xi^{2}.
   \end{equation}

  \noindent Using Euler-Lagrange equation satisfied by the steady state $\rho_{0}$ , we obtain the following relations

  \begin{align}
      P_{0}t\int_{\theta_2}^{\pi-\theta_1}\rho_0\xi d\theta= &tg\int_{\theta_2}^{\pi-\theta_1}\rho_0^{2}(\theta)\xi(\theta)\sin\theta d\theta+t\sigma\int_{\theta_2}^{\pi-\theta_1}\frac{\rho_0\xi}{\sqrt{\rho_0^{2}+\rho_0'^{2}}}d\theta\notag\\
     & +t\sigma\int_{\theta_2}^{\pi-\theta_1}\frac{\rho_0'\xi'}{\sqrt{\rho_0^{2}+\rho_0'^{2}}}d\theta-t[\![\gamma]\!](\xi(\theta_1)+\xi(\theta_2))\label{equ:3.4.5},
  \end{align}

  \noindent and

  \begin{align}
      P_{0}t^{2}\int_{\theta_2}^{\pi-\theta_1}\rho_{0}\eta d\theta= &t^{2}g\int_{\theta_2}^{\pi-\theta_1}\rho_{0}^{2}(\theta)\eta(\theta)\sin\theta d\theta+t^{2}\sigma\int_{\theta_2}^{\pi-\theta_1}\frac{\rho_0{}\eta}{\sqrt{\rho_{0}^{2}+\rho_{0}'^{2}}}\notag\\
     & +t^{2}\sigma\int_{\theta_2}^{\pi-\theta_1}\frac{\rho_{0}'\eta'}{\sqrt{\rho_{0}^{2}+\rho_{0}'^{2}}}-t^{2}[\![\gamma]\!](\eta(\theta_1)+\eta(\theta_2))\label{equ:3.4.6}.
  \end{align}
  
  \noindent Applying equation \eqref{equ:3.4.3} to \eqref{equ:3.4.5}, we have

  \begin{align}
       0=&tg\int_{\theta_2}^{\pi-\theta_1}\rho_{0}^{2}(\theta)\xi(\theta)d\theta+t\sigma\int_{\theta_2}^{\pi-\theta_1}\frac{\rho_{0}\xi}{\sqrt{\rho_{0}^{2}+\rho_{0}'^{2}}}\notag\\
     & +t\sigma\int_{\theta_2}^{\pi-\theta_1}\frac{\rho_{0}'\xi'}{\sqrt{\rho_{0}^{2}+\rho_{0}'^{2}}}-t[\![\gamma]\!](\xi(\theta_1)+\xi(\theta_2))\label{equ:3.4.7}
  \end{align}

  \noindent Then plugging \eqref{equ:3.4.6} and \eqref{equ:3.4.7} into the equation \eqref{equ:3.4.2}, we obtain 
  
   \begin{align*}
       &E(\rho_{0}(\theta)+f(t,\theta))-E(\rho_{0}(\theta))\\
       =&t^{2}g\int_{\theta_2}^{\pi-\theta_1}\rho_{0}(\theta)\xi^{2}(\theta)\sin\theta d\theta+\frac{1}{2}t^{2}\sigma\int_{\theta_2}^{\pi-\theta_1}\frac{\xi^{2}+\xi'^{2}}{(\rho_{0}^{2}+\rho_{0}'^{2})^{\frac{1}{2}}}d\theta\\
       &-\frac{1}{8}\sigma\int_{\theta_2}^{\pi-\theta_1}\frac{(2t\xi\rho_{0}+2t\rho_{0}'\xi')^{2}}{(\rho_{0}^{2}+\rho_{0}'^{2})^{\frac{3}{2}}}+P_{0}\int_{\theta_{2}}^{\pi-\theta}\rho_{0}\eta+O(t^{3}).
   \end{align*}

   \noindent Applying \eqref{equ:3.4.4} to the equation above, it follows that

   \begin{align}
       &E(\rho_{0}(\theta)+f(t,\theta))-E(\rho_{0}(\theta))\notag\\
       =&t^{2}g\int_{\theta_2}^{\pi-\theta_1}\rho_{0}(\theta)\xi^{2}(\theta)\sin\theta d\theta+\frac{1}{2}t^{2}\sigma\int_{\theta_2}^{\pi-\theta_1}\frac{\xi^{2}+\xi'^{2}}{(\rho_{0}^{2}+\rho_{0}'^{2})^{\frac{1}{2}}}d\theta \notag\\
       &-\frac{1}{8}\sigma\int_{\theta_2}^{\pi-\theta_1}\frac{(2t\xi\rho_{0}+2t\rho_{0}'\xi')^{2}}{(\rho_{0}^{2}+\rho_{0}'^{2})^{\frac{3}{2}}}-\frac{1}{2}t^{2}P_{0}\int_{\theta_{2}}^{\pi-\theta}\xi^{2}+O(t^{3}) . \label{equ:3.4.8}
   \end{align}

   \noindent In equation \eqref{equ:3.4.8}, the relation between the pressure $P_{0}$ above and the minimizer $\rho_{0}$ is implicit. Therefore,  we use Euler-Lagrange equation 
   \begin{equation*}
       g\rho_{0}^{2}\sin\theta+\sigma\frac{\rho_{0}}{\sqrt{\rho_{0}^{2}+\rho_{0}'^{2}}}-\sigma\frac{\rho_{0}''\rho_{0}^{2}+\rho_{0}''\rho_{0}'^{2}-\rho_{0}\rho_{0}'^{2}-\rho_{0}''\rho_{0}'^{2}}{(\rho_{0}^{2}+\rho_{0}'^{2})^{\frac{3}{2}}}-P_{0}\rho_{0}=0
   \end{equation*}
   for the minimizer $\rho_{0}$ again to eliminate this term. We rewrite the equation for $P_{0}$ as follows

   \begin{equation}{\label{equ:3.4.9}}
       g\rho_{0}^{2}\sin\theta-P_{0}\rho_{0}=-\sigma\frac{\rho_{0}^{3}+2\rho_{0}\rho_{0}'^{2}-\rho_{0}''\rho_{0}^{2}}{(\rho_{0}^{2}+\rho_{0}'^{2})^{\frac{3}{2}}}
   \end{equation}

   \noindent  Plugging this equation \eqref{equ:3.4.9} back into equation \eqref{equ:3.4.8},  we have (where we only consider the second order term):

   \begin{align}
       &E(\rho_{0}(\theta)+f(t,\theta))-E(\rho_{0}(\theta))\notag\\
        =&\frac{1}{2}t^{2}g\int_{\theta_2}^{\pi-\theta_1} \rho_{0}\xi^{2}\sin\theta d\theta\notag\\
         &+\frac{1}{2}t^{2}\sigma\int_{\theta_2}^{\pi-\theta_1}(-\frac{\rho_{0}^{2}+2\rho_{0}'^{2}-\rho_{0}''\rho_{0}}{(\rho_{0}^{2}+\rho_{0}'^{2})^{\frac{3}{2}}}\xi^{2}+\frac{\xi^{2}+\xi'^{2}}{(\rho_{0}^{2}+\rho_{0}'^{2})^{\frac{1}{2}}}-\frac{(\xi\rho_{0}+\rho_{0}'\xi')^{2}}{(\rho_{0}^{2}+\rho_{0}'^{2})^{\frac{3}{2}}})d\theta\notag\\
         =&\frac{1}{2}t^{2}g\int_{\theta_2}^{\pi-\theta_1}\rho_{0}\xi^{2}\sin\theta d\theta \notag\\
         &+\frac{1}{2}t^{2}\sigma\int_{\theta_2}^{\pi-\theta_1} (\frac{-\rho_{0}'^{2}+\rho_{0}''\rho_{0}}{(\rho_{0}^{2}+\rho_{0}'^{2})^{\frac{3}{2}}}\xi^{2}+\frac{\rho_{0}'^{2}\xi'^{2}+\rho_{0}^{2}\xi'^{2}}{(\rho_{0}^{2}+\rho_{0}'^{2})^{\frac{3}{2}}}-\frac{\xi^{2}\rho_{0}^{2}+2\xi\rho_{0}\rho_{0}'\xi'+\rho_{0}'^{2}\xi'^{2}}{(\rho_{0}^{2}+\rho_{0}'^{2})^{\frac{3}{2}}})d\theta \notag\\
          =&\frac{1}{2}t^{2}g\int_{\theta_2}^{\pi-\theta_1}\rho_{0}\xi^{2}\sin\theta d\theta \notag\\
          &+\frac{1}{2}t^{2}\sigma \int_{\theta_2}^{\pi-\theta_1} (\frac{-\rho_{0}'^{2}-\rho_{0}^{2}+\rho_{0}''\rho_{0}}{(\rho_{0}^{2}+\rho_{0}'^{2})^{\frac{3}{2}}}\xi^{2}+\frac{\rho_{0}^{2}\xi'^{2}}{(\rho_{0}^{2}+\rho_{0}'^{2})^{\frac{3}{2}}}-\frac{2\xi\xi'\rho_{0}\rho_{0}'}{(\rho_{0}^{2}+\rho_{0}'^{2})^{\frac{3}{2}}})d\theta\notag\\
           =&\frac{1}{2}t^{2}g\int_{\theta_2}^{\pi-\theta_1}\rho_{0}\xi^{2}\sin\theta d\theta \notag\\
           &+\frac{1}{2}t^{2}\sigma \int_{\theta_2}^{\pi-\theta_1} (\frac{-\rho_{0}'^{2}-\rho_{0}^{2}+\rho_{0}''\rho_{0}}{(\rho_{0}^{2}+\rho_{0}'^{2})^{\frac{3}{2}}}\xi^{2}+\frac{\rho_{0}^{2}\xi'^{2}}{(\rho_{0}^{2}+\rho_{0}'^{2})^{\frac{3}{2}}}-\frac{\frac{d}{d\theta}(\xi^{2})\rho_{0}\rho_{0}'}{(\rho_{0}^{2}+\rho_{0}'^{2})^{\frac{3}{2}}})d\theta \label{equ:3.4.10}
   \end{align}
   This formula is exactly what we want.
     \end{proof}

       The stability result in Section 3 implies that $F_{0}(\xi)$ is nonnegative for any $\xi\in B$. For the remaining part of this section, our goal is to determine the kernel of this functional. Throughout this part, we also restrict our attention to the case where the two inclined angles vanish, i.e $\theta_{1}=\theta_{2}=0$.
     
     It is known that the functional energy $E$ is invariant under the horizontal transformation of the form $(x,y)\rightarrow (x-a,y)$ for any real constant $a$. We denote the disturbing function $\xi$ induced by this shift the function $\xi_s$. Then we have the following result

     \begin{equation}{\label{equ:3.3.3}}
         F_{0}(\xi_s)=0
     \end{equation}

     For any $\xi,\tilde{\xi}\in B$, the following identity holds

     \begin{equation}{\label{equ:3.3.4}}
         F_{0}(t\xi+\tilde\xi)=t^{2}F_{0}(\xi)+F_{0}(\tilde\xi)+2tH(\xi,\tilde\xi),
     \end{equation}

     \noindent where:

     \begin{equation*}
     \begin{aligned}
         H(\xi,\tilde\xi)=&g\int_{0}^{\pi} \rho \xi\tilde\xi\sin\theta d\theta\\
         &+\sigma \int_{0}^{\pi} (-\frac{\rho_{0}^{2}+2\rho_{0}'^{2}-\rho_{0}''\rho_{0}}{(\rho_{0}^{2}+\rho_{0}'^{2})^{\frac{3}{2}}}\xi\tilde\xi+\frac{\xi\tilde\xi+\xi'\tilde\xi'}{(\rho_{0}^{2}+\rho_{0}'^{2})^{\frac{1}{2}}}-\frac{(\xi\rho_{0}+\rho_{0}'\xi')(\tilde\xi\rho_{0}+\rho_{0}'\tilde\xi)}{(\rho_{0}^{2}+\rho_{0}'^{2})^{\frac{3}{2}}})d\theta
        \end{aligned}
     \end{equation*}

     \noindent We notice that $H(\xi,\xi)=F_{0}(\xi)$ and $H$ can be viewed as a scalar product.  Using the fact that $F_{0}(\xi_{s})=0$ in equation \eqref{equ:3.3.4}, we can show the following result by setting $\xi=\xi_{s}$:

     \begin{equation}{\label{equ:3.2.5}}
         F_{0}(t\xi_s+\tilde\xi)=F_{0}(\tilde\xi)+2tH(\xi_s,\tilde \xi)
     \end{equation}

     \noindent We are now ready to establish the following theorem.

     \begin{theorem}{\label{thm:kernel}}

     $\xi_1$ is a function in the set B. Suppose $F_{0}(\xi_1)=0$. Then $H(\xi_1,\xi)=0$ for any $\xi\in B$.
     
     \end{theorem}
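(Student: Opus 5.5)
This is the standard fact that a nonnegative quadratic form vanishes in its null directions. The plan uses the nonnegativity of $F_{0}$ on $B$, stated just before \eqref{equ:3.3.3} as a consequence of the minimality of $\rho_{0}$ (Section 3), together with the polarization identity \eqref{equ:3.3.4}. Fix $\xi_{1}\in B$ with $F_{0}(\xi_{1})=0$ and an arbitrary $\xi\in B$. Since $B$ is a linear subspace (the constraint $\int_{0}^{\pi}\rho_{0}\xi\,d\theta=0$ is linear), $t\xi_{1}+\xi\in B$ and $\xi_{1}+t\xi\in B$ for every real $t$. Hence $F_{0}$ is nonnegative on these elements.

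Apply \eqref{equ:3.3.4} with the roles arranged so that the degenerate direction carries the free parameter. This gives
\begin{align*}
0\le F_{0}(t\xi_{1}+\xi)=t^{2}F_{0}(\xi_{1})+F_{0}(\xi)+2tH(\xi_{1},\xi)=F_{0}(\xi)+2tH(\xi_{1},\xi)
\end{align*}
for all $t\in\mathbb{R}$, exactly as in \eqref{equ:3.2.5}. The right-hand side is affine in $t$ and bounded below on all of $\mathbb{R}$, so its slope must vanish, i.e. $H(\xi_{1},\xi)=0$. Alternatively, take the quadratic $t\mapsto F_{0}(\xi_{1}+t\xi)=F_{0}(\xi_{1})+2tH(\xi_{1},\xi)+t^{2}F_{0}(\xi)$. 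It is nonnegative and vanishes at $t=0$, so its derivative at $t=0$, namely $2H(\xi_{1},\xi)$, must be zero.

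The remaining points are bookkeeping. First, $H$ is symmetric and bilinear, so $H(\xi_{1},\xi)=H(\xi,\xi_{1})$ and $H(\xi,\xi)=F_{0}(\xi)$; this makes \eqref{equ:3.3.4} an exact algebraic identity rather than an expansion. To confirm this, I will verify that the last term in $H$ should read $(\xi\rho_{0}+\rho_{0}'\xi')(\tilde\xi\rho_{0}+\rho_{0}'\tilde\xi')$. Second, $F_{0}\ge0$ on $B$ must hold for all of $B$ and not only for small perturbations. This follows from homogeneity: $F_{0}$ is a quadratic form, and $F_{0}(\xi)$ is twice the $t^{2}$ coefficient in the expansion of $E(\rho_{0}+t\xi+t^{2}\eta)-E(\rho_{0})$ along a mass-preserving curve, which is nonnegative by minimality.

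The main obstacle is justifying that every $\xi\in B$ (smooth, but with no boundary constraints) admits an admissible second-order correction $\eta$ satisfying \eqref{equ:3.4.4}. Only then does the computation of the preceding theorem show $F_{0}(\xi)\ge 0$. I will construct $\eta=c\rho_{0}$ with $c=-\int\xi^{2}/(2\int\rho_{0}^{2})$, so that the mass constraint holds to second order. I will then correct the $O(t^{3})$ mass defect by rescaling $\rho_{0}+t\xi+t^{2}\eta$ by a factor $1+O(t^{3})$. This rescaling keeps the curve in $H^{1}$ with exact volume $V$, so minimality yields $E(\rho_{0}+f)-E(\rho_{0})=\tfrac12 t^{2}F_{0}(\xi)+O(t^{3})\ge0$, and hence $F_{0}(\xi)\ge 0$.
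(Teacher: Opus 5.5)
Your proposal is correct and is essentially the paper's argument. The paper likewise combines nonnegativity of $F_{0}$ on $B$ with the identity $F_{0}(t\xi_{1}+\xi_{2})=F_{0}(\xi_{2})+2tH(\xi_{1},\xi_{2})$; it just phrases this as a contradiction (assume $H(\xi_{1},\xi_{2})<-c<0$ and let $t\to\infty$), where you observe directly that an affine function bounded below has zero slope. Your extra bookkeeping supplies steps the paper only asserts: reading the last factor of $H$ as $\tilde\xi\rho_{0}+\rho_{0}'\tilde\xi'$, and building an exactly mass-preserving curve with $\eta=c\rho_{0}$ to get $F_{0}\ge 0$ on all of $B$ (the mass defect there is in fact $O(t^{4})$, which is more than enough).
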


     \begin{proof}

         We prove this theorem by a contradiction argument. Suppose result is not true. There exists a function $\xi_{2}\in B$ such that:

         \begin{equation*}
             H(\xi_1,\xi_2)\neq 0
         \end{equation*}

         \noindent Without loss of generality, we suppose that:

         \begin{equation*}
             H(\xi_1,\xi_2)<-c<0,
         \end{equation*}

         \noindent where c is a positive constant. Otherwise, we can choose $-\xi_2$ to be test function. Then applying this relation to equation \eqref{equ:3.2.5}, we have

         \begin{equation}{\label{equ:3.2.6}}
             F_{0}(t\xi_1+\xi_2)<F_{0}(\xi_2)-2tc
         \end{equation}

         \noindent In equation \eqref{equ:3.2.6}, choosing $t$ to be large enough such that:

         \begin{equation*}
             F_{0}(\xi_2)-2tc<0,
         \end{equation*}

         \noindent we have

         \begin{equation*}
             F_{0}(t\xi+\xi_2)<0.
         \end{equation*}

         \noindent This contradicts to the fact that $F_{0}$ is nonnegative. Hence, we have the theorem proved.
     \end{proof}

     In Theorem 5.3, we have shown a weak form of PDE that any kernel function $\xi_1$ should satisfy. We want to proceed to find the strong form of the equation that $\xi_1$ satisfy.  Now we have equation $H(\xi_1,\xi)=0$ for any $\xi\in B$. Using the equation \eqref{equ:4.3.1}, there exists a function $v\in C_{0}^{+\infty}(0,\pi)$ such that:

     \begin{equation*}
         \nabla \cdot v=\rho_{0}(\theta)\xi(\theta)
     \end{equation*}

     \noindent In one-dimension case, $\nabla \cdot v=\partial_{\theta}v$. This intrigues us to establish following theorem

     \begin{theorem}{\label{thm:ode}}

        $\xi_{1}$ is a function in function set $B$. If $H(\xi_1,\xi)=0$ for any $\xi\in B$, $\xi_1$ satisfies the following equation:

         \begin{equation}{\label{equ:3.2.7}}
             g\xi_1\sin\theta-\frac{\sigma}{\rho_0}\frac{\rho_0^{2}+2\rho_0'^{2}-\rho_0''\rho_0}{(\rho_0^{2}+\rho_0'^{2})^{\frac{3}{2}}}\xi_1+\sigma\frac{2\frac{\rho_0'^{2}}{\rho_0}\xi_1-2\rho_0'\xi_1'}{(\rho_0^{2}+\rho_0'^{2})^{\frac{3}{2}}}-\sigma\partial_{\theta}(\frac{\rho_0\xi_1'-\rho_0'\xi_1}{(\rho_0^{2}+\rho_0'^{2})^{\frac{3}{2}}})=C
         \end{equation}

         \noindent for some constant $C$ to be determined.
     \end{theorem}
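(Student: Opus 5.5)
The plan is to derive the strong form by a du Bois-Reymond type argument. The integral constraint defining $B$ is handled by writing admissible test functions as derivatives, as suggested by the remark before the statement.

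\textbf{Step 1: integrate by parts.} Since $\xi_1\in B\subset A$ is smooth on $(0,\pi)$, I would restrict to test functions $\xi\in C_c^\infty(0,\pi)$ with $\int_0^\pi\rho_0\xi\,d\theta=0$; these lie in $B$. In
$$H(\xi_1,\xi)=g\int_0^\pi\rho_0\xi_1\xi\sin\theta\,d\theta+\sigma\int_0^\pi\Big(-\tfrac{\rho_0^2+2\rho_0'^2-\rho_0''\rho_0}{(\rho_0^2+\rho_0'^2)^{3/2}}\xi_1\xi+\tfrac{\xi_1\xi+\xi_1'\xi'}{(\rho_0^2+\rho_0'^2)^{1/2}}-\tfrac{(\xi_1\rho_0+\rho_0'\xi_1')(\xi\rho_0+\rho_0'\xi')}{(\rho_0^2+\rho_0'^2)^{3/2}}\Big)d\theta,$$
I would expand the last product and combine it with the $(\rho_0^2+\rho_0'^2)^{-1/2}$ term, as in the simplification leading to \eqref{equ:3.4.10}. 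Every term containing $\xi'$ is then moved onto $\xi_1$ by integrating by parts. The boundary terms vanish because $\xi$ has compact support. The result is
$$H(\xi_1,\xi)=\int_0^\pi \mathcal{M}(\xi_1)\,\xi\,d\theta,$$
where $\mathcal{M}(\xi_1)$ is an explicit second-order linear expression in $\xi_1$ with coefficients smooth in $\rho_0$.

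\textbf{Step 2: remove the constraint.} For any $v\in C_c^\infty(0,\pi)$, the function $\xi:=v'/\rho_0$ is compactly supported and satisfies $\int_0^\pi\rho_0\xi=\int_0^\pi v'=0$, so it is admissible. Here I use $\rho_0>0$ on $(0,\pi)$, and since $\xi$ has compact support, smoothness is only needed in the interior. Then
$$0=\int_0^\pi \frac{\mathcal{M}(\xi_1)}{\rho_0}\,v'\,d\theta\qquad\text{for all } v\in C_c^\infty(0,\pi).$$
By the du Bois-Reymond lemma, $\mathcal{M}(\xi_1)/\rho_0\equiv C$ is constant on $(0,\pi)$. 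The constant $C$ plays the role of the Lagrange multiplier for the volume constraint.

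\textbf{Step 3: match the form \eqref{equ:3.2.7}.} It remains to check that $\mathcal{M}(\xi_1)/\rho_0$ is exactly the left-hand side of \eqref{equ:3.2.7}. The zeroth-order terms divided by $\rho_0$ give $g\xi_1\sin\theta$ and the term with coefficient $\frac{1}{\rho_0}\frac{\rho_0^2+2\rho_0'^2-\rho_0''\rho_0}{(\rho_0^2+\rho_0'^2)^{3/2}}$. The integrated-by-parts derivative terms assemble into a divergence $-\sigma\partial_\theta\big(\frac{\rho_0\xi_1'-\rho_0'\xi_1}{(\rho_0^2+\rho_0'^2)^{3/2}}\big)$. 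The leftover commutator terms, produced when $1/\rho_0$ is pulled inside that derivative, supply $\sigma\frac{2\rho_0'^2\xi_1/\rho_0-2\rho_0'\xi_1'}{(\rho_0^2+\rho_0'^2)^{3/2}}$. If the bookkeeping yields a different but equivalent arrangement, I would rewrite it using $\partial_\theta\big(\rho_0^{-1}\big)=-\rho_0'/\rho_0^2$ so that it reproduces \eqref{equ:3.2.7} exactly.

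\textbf{Expected obstacle.} The main difficulty is this coefficient matching. The cross term $-\frac{(\xi_1\rho_0+\rho_0'\xi_1')(\xi\rho_0+\rho_0'\xi')}{(\rho_0^2+\rho_0'^2)^{3/2}}$ produces mixed terms $\xi_1\xi'$ and $\xi_1'\xi$. These must be symmetrized and integrated by parts carefully, and the derivative falling on the weight $(\rho_0^2+\rho_0'^2)^{-3/2}$ must be tracked without error. The functional-analytic part, Steps 1 and 2, is routine once $\rho_0>0$ and $\rho_0$ is smooth, which the paper established in Section 3.
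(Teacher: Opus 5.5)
Your proposal is correct and is essentially the paper's argument: the paper also tests with $\xi=v'/\rho_0$ for $v\in C_0^\infty(0,\pi)$, integrates by parts, and concludes that the bracket in \eqref{equ:3.2.7} has zero derivative. You instead integrate by parts first and then apply du Bois-Reymond, which is the same idea stated more carefully about which test functions lie in $B$, and your Step 3 bookkeeping does close. Outside the divergence term both sides have $\xi_1$-coefficient $\sigma(\rho_0''-\rho_0)(\rho_0^2+\rho_0'^2)^{-3/2}$ and $\xi_1'$-coefficient $-2\sigma\rho_0'(\rho_0^2+\rho_0'^2)^{-3/2}$. One correction to your attribution: the commutator supplies only one copy each of $\rho_0'^2\xi_1/\rho_0$ and $-\rho_0'\xi_1'$; the second copies come, respectively, from the expanded $(\rho_0^2+\rho_0'^2)^{-1/2}$ term and from the cross term $\xi_1'\xi$ that is not integrated by parts.
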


     \begin{proof}
     
      We write down the weak form equation $H(\xi_1,\xi)=0$ as follows

      \begin{align}
          H(\xi_1,\xi_2)=&g\int_{0}^{\pi} \rho_0 \xi_1\xi\sin\theta d\theta \notag\\
          &+\sigma \int_{0}^{\pi} (-\frac{\rho_0^{2}+2\rho_0'^{2}-\rho_0''\rho_0}{(\rho_0^{2}+\rho_0'^{2})^{\frac{3}{2}}}\xi_1\xi+\frac{\xi_1\xi+\xi_1'\xi'}{(\rho_0^{2}+\rho_0'^{2})^{\frac{1}{2}}}-\frac{(\xi_1\rho_0+\rho_0'\xi_1')(\xi\rho_0+\rho_0'\xi')}{(\rho_0^{2}+\rho_0'^{2})^{\frac{3}{2}}})d\theta \notag\\
          =&g\int_{0}^{\pi} \xi_1\nabla \cdot v\sin\theta d\theta \notag\\
          &+\sigma \int_{0}^{\pi} (-\frac{1}{\rho_0}\frac{\rho_0^{2}+2\rho_0'^{2}-\rho_0''\rho_0}{(\rho_0^{2}+\rho_0'^{2})^{\frac{3}{2}}}\xi_1\nabla \cdot v+\frac{\frac{1}{\rho_0}\xi_1(\nabla \cdot v)+\frac{1}{\rho_0}\xi_1'(\nabla \cdot v)'-\frac{\rho_0'}{\rho_0^{2}}\xi_1'(\nabla \cdot v)}{(\rho_0^{2}+\rho_0'^{2})^{\frac{1}{2}}}d\theta\notag\\
          &-\frac{(\xi_1\rho_0+\rho_0'\xi_1')((\nabla \cdot v)+\frac{\rho_0'}{\rho_0}(\nabla \cdot v)'-\frac{\rho_0'{2}}{\rho_0^{2}}(\nabla \cdot v))}{(\rho_0^{2}+\rho_0'^{2})^{\frac{3}{2}}})d\theta\label{equ:3.5.4},
      \end{align}
      
     \noindent where we use $\nabla\cdot v=\rho_{0}(\theta)\xi(\theta)$ to substitute $\xi$. Since $v\in C_{0}^{\infty}$, using integration by parts in equation \eqref{equ:3.5.4}, we obtain

     \begin{align}
         \int_{0}^{\pi}\p_{\theta} (g\xi_1\sin\theta-\frac{1}{\rho}\frac{\rho_0^{2}+2\rho_0'^{2}-\rho_0''\rho_0}{(\rho_0^{2}+\rho_0'^{2})^{\frac{3}{2}}}\xi_1+\frac{2\frac{\rho_0'^{2}}{\rho_0}\xi_1-2\rho_0'\xi_1'}{(\rho_0^{2}+\rho_0'^{2})^{\frac{3}{2}}}-\partial_{\theta}(\frac{\rho_0\xi_1'-\rho_0'\xi_1}{(\rho_0^{2}+\rho_0'^{2})^{\frac{3}{2}}})) vd\theta=0
     \end{align}
     \noindent for any $v\in C_{0}^{+\infty}(0,\pi)$. This implies
     \begin{equation*}
         \p_{\theta} (g\xi_1\sin\theta-\frac{1}{\rho}\frac{\rho_0^{2}+2\rho_0'^{2}-\rho_0''\rho_0}{(\rho_0^{2}+\rho_0'^{2})^{\frac{3}{2}}}\xi_1+\frac{2\frac{\rho_0'^{2}}{\rho_0}\xi_1-2\rho_0'\xi_1'}{(\rho_0^{2}+\rho_0'^{2})^{\frac{3}{2}}}-\partial_{\theta}(\frac{\rho_0\xi_1'-\rho_0'\xi_1}{(\rho_0^{2}+\rho_0'^{2})^{\frac{3}{2}}}))=0,
     \end{equation*}

     \noindent which means that the term inside the gradient is a constant. Therefore,  $\xi_1$ satisfies the equation \eqref{equ:3.2.7}. 

     \end{proof}
     
     Let $\xi_s$ be the shift function(linearized). We have the following theorem for $\xi_s$:

     \begin{theorem}
         $\xi_s$ is a solution function to \eqref{equ:3.2.7} with $C=0$
     \end{theorem}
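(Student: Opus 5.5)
The plan is to exploit the horizontal translation invariance of the sessile-drop problem rather than to substitute $\xi_s$ into \eqref{equ:3.2.7} by brute force. For every small $c$, the translated profile $\rho_{0,c}$ (the equilibrium centered at $(c,0)$, written in polar coordinates about the fixed pole $(0,0)$) describes the same physical drop. It therefore solves the same Euler–Lagrange equation
\[
g\rho_{0,c}\sin\theta-\sigma\mathcal{H}(\rho_{0,c})=P_0
\]
at every interior angle $\theta$ where it is defined, and with the \emph{same} Lagrange multiplier $P_0$. This is because $y=\rho\sin\theta$, the curvature and the enclosed volume are all invariant under $x\mapsto x+c$; this is the same observation used in the discussion of the capillary equation in the moving polar coordinates. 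Since the family $c\mapsto\rho_{0,c}(\theta)$ is smooth for $\theta$ in any compact subinterval of $(0,\pi)$, I will differentiate the identity in $c$ at $c=0$. The right-hand side is independent of $c$, so its derivative vanishes, and this is exactly where $C=0$ comes from.

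Step 1: compute $\xi_s=\frac{d}{dc}\rho_{0,c}(\theta)\big|_{c=0}$ and confirm \eqref{equ:4.1.7}. I will use the implicit relations \eqref{equ:4.1.5}–\eqref{equ:4.1.6}: a point $(\rho_0(\varphi)\cos\varphi+c,\rho_0(\varphi)\sin\varphi)$ has polar angle $\theta(\varphi,c)$ and radius $r(\varphi,c)$. Then $\rho_{0,c}(\theta(\varphi,c))=r(\varphi,c)$, and differentiating in $c$ at $c=0$ gives
\[
\xi_s(\theta)=\partial_c r-\rho_0'(\theta)\,\partial_c\theta .
\]
The two partial derivatives are $\partial_c r=\cos\theta$ and $\partial_c\theta=-\sin\theta/\rho_0$. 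Substituting them yields $\cos\theta+\frac{\rho_0'}{\rho_0}\sin\theta$. This fixes the sign convention of the shift, and the same computation applied to \eqref{equ:1.1.15} is consistent with the kinematic condition in the moving frame.

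Step 2: identify the left-hand side of \eqref{equ:3.2.7} with the linearization of $\theta\mapsto g\rho\sin\theta-\sigma\mathcal{H}(\rho)$ at $\rho_0$. In other words, I will show that it equals $\frac{d}{dt}\big|_{t=0}\big[g(\rho_0+t\xi)\sin\theta-\sigma\mathcal{H}(\rho_0+t\xi)\big]$, i.e.\ the operator $\mathcal{K}(\xi)$ of Section 4 (up to the factor conventions there). The route is to undo the derivation of Theorem~\ref{thm:ode}. The form $H(\xi_1,\xi)$ is the second variation of $E$ with the multiplier term built in, and setting $\xi=v'/\rho_0$ and integrating by parts produces $\int (\mathcal{L}\xi_1)\,v'$. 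Here $\mathcal{L}\xi_1$ must be $\frac{1}{\rho_0}$ times the linearization of the first-variation density $g\rho^2\sin\theta+\sigma\frac{\rho}{\sqrt{\rho^2+\rho'^2}}-\sigma\partial_\theta\frac{\rho'}{\sqrt{\rho^2+\rho'^2}}-P_0\rho$. Dividing by $\rho$ before linearizing is harmless: the undivided expression vanishes at $\rho_0$, so the extra term $-(\xi/\rho_0^2)\cdot(\text{density at }\rho_0)$ is zero. With this identification, differentiating the family identity from the first paragraph gives $\mathcal{L}\xi_s=\partial_c P_0=0$ pointwise on $(0,\pi)$.

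I expect Step 2 to be the main obstacle, because the displayed formulas for $H$, $\mathcal{H}$ and \eqref{equ:3.2.7} carry inconsistent factors such as $\frac{1}{\rho}$ versus $\frac{1}{\rho_0}$. I will first redo the linearization cleanly, using $\mathcal{H}(\rho)=\frac{1}{\rho}\big(\frac{\rho}{\sqrt{\rho^2+\rho'^2}}-\partial_\theta\frac{\rho'}{\sqrt{\rho^2+\rho'^2}}\big)$ and the Taylor expansion of $\frac{\rho'}{\sqrt{\rho^2+\rho'^2}}$ from Section 4, and then match it term by term with \eqref{equ:3.2.7}. If the matching resists, the fallback is a direct verification. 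Writing $\xi_s=\frac{(\rho_0\sin\theta)'}{\rho_0}$, I will substitute it into \eqref{equ:3.2.7} and eliminate $\rho_0''$ and $\rho_0'''$ using the Euler–Lagrange equation and its $\theta$-derivative (as in \eqref{equ:2.1.13} with $\epsilon=0$). The constant $P_0$ then drops out after differentiation, which again shows the residual constant is zero.
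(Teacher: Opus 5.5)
Your proposal is correct and is essentially the paper's argument. You differentiate the Euler--Lagrange equation along the translated family $\rho_{0,c}$ at $c=0$ and recognize the result as \eqref{equ:3.2.7} with $C=0$; the family keeps the same $P_0$ because height and curvature are translation invariant. The paper simply asserts that \eqref{equ:3.2.13} matches \eqref{equ:3.2.7}, whereas your Step 2 derives this from the second-variation structure of $H$.

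That identification is valid: the left side of \eqref{equ:3.2.7} is exactly $\frac{1}{\rho_0}$ times the linearization at $\rho_0$ of $g\rho^2\sin\theta+\sigma\frac{\rho}{\sqrt{\rho^2+\rho'^2}}-\sigma\partial_\theta\frac{\rho'}{\sqrt{\rho^2+\rho'^2}}-P_0\rho$. The only thing to tidy is the sign in your displayed identity: the polar curvature $\frac{\rho^2+2\rho'^2-\rho\rho''}{(\rho^2+\rho'^2)^{3/2}}$ should enter with $+\sigma$ in front of it (the paper's own conventions for $\mathcal{H}$ are inconsistent on this point).
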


     \begin{proof}

          Suppose that $\rho_{0}$ is the solution to the Euler-Lagrange equation. After the shift, this equilibrium becomes

         \begin{equation*}
             \tilde{\rho_{0}}(\theta,\epsilon)=\rho_{0}+\epsilon\xi_s+O(\epsilon)^{2}.
         \end{equation*}
         
         \noindent This new function $\tilde{\rho}_{0}$ is also a solution to equation \eqref{equ:2.1.1} since it is also a minimizer of energy functional $E$. Then  taking the derivative with respect to $\epsilon$ on both sides of equation \eqref{equ:2.1.1}, we obtain the following equation

         \begin{equation}{\label{equ:3.2.8}}
             \frac{d}{d\epsilon}(g\tilde{\rho}_0\sin\theta+\sigma\frac{2\tilde{\rho}_{0}'^{2}+\tilde{\rho}_{0}^{2}-\tilde{\rho}_{0}\tilde{\rho}_{0}''}{(\tilde{\rho}_{0}^{2}+\tilde{\rho}_{0}'^{2})^{\frac{3}{2}}}-P_{0})=0
         \end{equation}

         \noindent Based on a direct computation, we have

         \begin{align}
               \frac{d}{d\epsilon}\tilde{\rho}_{0}|_{\epsilon=0}=&\xi_s \label{equ:3.2.9}\\
               \sigma \frac{d}{d\epsilon}\frac{1}{(\tilde{\rho}_{0}^{2}+\tilde{\rho}_{0}'^{2})^{\frac{3}{2}}}|_{\epsilon=0}=&-3\sigma \frac{\rho_{0}\xi_s+\rho_{0}'\xi_s'}{\sqrt{\rho_{0}^{2}+\rho_{0}'^{2}}^{5}} \label{equ:3.2.10}\\
               \frac{d}{d\epsilon}(2\tilde{\rho}_{0}'^{2}+\tilde{\rho}_{0}^{2}-\tilde{\rho}_{0}\tilde{\rho}_{0}'')|_{\epsilon=0}=&4{\rho}_{0}'\xi_s'+2{\rho}_{0}\xi_s-\rho_{0}''\xi_s-\rho_{0}\xi_s''\label{equ:3.2.11}.
         \end{align}

         \noindent Plugging \eqref{equ:3.2.9}-\eqref{equ:3.2.11} into \eqref{equ:3.2.8}, we obtain that

         \begin{equation}{\label{equ:3.2.12}}
             g\xi_s\sin\theta+\sigma\frac{4{\rho}_{0}'\xi_s'+2{\rho}_{0}\xi_s-\rho_{0}''\xi_s-\rho_{0}\xi_s''}{(\rho_{0}^{2}+\rho_{0}'^{2})^{\frac{3}{2}}}-3\sigma \frac{\rho_{0}\xi_s+\rho_{0}'\xi_s'}{\sqrt{\rho_{0}^{2}+\rho_{0}'^{2}}^{5}}(2{\rho}_{0}'^{2}+{\rho}_{0}^{2}-{\rho}_{0}{\rho}_{0}'')=0
         \end{equation}

         \noindent which is equivalent to:

         \begin{equation}{\label{equ:3.2.13}}
             g\xi_s\sin\theta-\sigma \frac{\rho_{0}\xi_s''}{(\rho_{0}^{2}+\rho_{0}'^{2})^{\frac{3}{2}}}+\sigma\frac{\rho'\rho^{2}-2\rho'^{3}+3\rho\rho'\rho''}{(\rho_{0}^{2}+\rho_{0}'^{2})^{\frac{5}{2}}}\xi_s'+\sigma\frac{2\rho_{0}^{2}\rho_{0}''-\rho_{0}^{3}-4\rho_{0}\rho_{0}'^{2}-\rho_{0}''\rho_{0}'^{2}}{(\rho_{0}^{2}+\rho_{0}'^{2})^{\frac{5}{2}}}\xi_s=0
         \end{equation}

        \noindent By a simple computation, we verify that \eqref{equ:3.2.13} is equivalent to equation \eqref{equ:3.2.7} with $C=0$.
     \end{proof}
     
     From Theorem 4.4, we have obtained a special solution function $\xi_s$  which serves as a kernel of the functional $F_{0}$. In what follows, we analyze the properties of function $\xi_s$ in detail. These properties are summarized in the following theorem.

     \begin{theorem}
         $\xi_s$ is the shift function defined above. It has the following properties:

         \begin{align}
             \xi_s(0)&=-\xi_s(\pi) \label{equ:3.2.14}\\
             \xi_s'(0)&=\xi_s'(\pi)\label{equ:3.2.15}\\
             \xi_s(\frac{\pi}{2})&=0\label{equ:3.2.16}\\
             \xi_s(\frac{\pi}{2})'&\neq 0 \label{equ:3.2.17}\\
         \end{align}

     \end{theorem}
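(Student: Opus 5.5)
The plan is to read off all four properties from the explicit formula \eqref{equ:4.1.7}, $\xi_s=\cos\theta+\frac{\rho_0'}{\rho_0}\sin\theta$, together with the reflection symmetry of the sessile equilibrium. The first step is to establish
\begin{align*}
\rho_0(\pi-\theta)=\rho_0(\theta),\qquad \theta\in[0,\pi].
\end{align*}
For $\theta_1=\theta_2=0$, the functional $E$, the constraint \eqref{equ:C} and the two contact-angle conditions in \eqref{equ:equi} are all invariant under $\theta\mapsto\pi-\theta$. Hence $\tilde\rho_0(\theta):=\rho_0(\pi-\theta)$ is again an equilibrium with the same volume. It also has the same pole relative to its contact points, since we work in the moving frame centred at $\mathfrak{n}(t)$. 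By the uniqueness of the equilibrium once the contact points are prescribed (\cite{Yang}), $\tilde\rho_0=\rho_0$. Differentiating the symmetry gives $\rho_0'(\pi-\theta)=-\rho_0'(\theta)$ and $\rho_0''(\pi-\theta)=\rho_0''(\theta)$; in particular $\rho_0'(\pi/2)=0$.

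With the symmetry in hand, \eqref{equ:3.2.14} and \eqref{equ:3.2.16} follow from the oddness identity
\begin{align*}
\xi_s(\pi-\theta)=-\cos\theta-\frac{\rho_0'(\theta)}{\rho_0(\theta)}\sin\theta=-\xi_s(\theta).
\end{align*}
Evaluating at the endpoints directly also gives $\xi_s(0)=1$ and $\xi_s(\pi)=-1$, because $\sin\theta$ vanishes there. Differentiating the oddness identity shows that $\xi_s'$ is even about $\pi/2$, which yields \eqref{equ:3.2.15}. As a consistency check,
\begin{align*}
\xi_s'(\theta)=-\sin\theta+\Big(\frac{\rho_0'}{\rho_0}\Big)'\sin\theta+\frac{\rho_0'}{\rho_0}\cos\theta,
\end{align*}
so $\xi_s'(0)=\rho_0'(0)/\rho_0(0)$ and $\xi_s'(\pi)=-\rho_0'(\pi)/\rho_0(\pi)$. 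These two values agree by the contact-angle conditions in \eqref{equ:equi}.

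The main obstacle is \eqref{equ:3.2.17}. At $\theta=\pi/2$ we have $\rho_0'=0$, so
\begin{align*}
\xi_s'(\pi/2)=-1+\frac{\rho_0''(\pi/2)}{\rho_0(\pi/2)}=\frac{(\rho_0''-\rho_0)(\pi/2)}{\rho_0(\pi/2)}.
\end{align*}
This is exactly $y''(\pi/2)/\rho_0(\pi/2)$ for the height $y=\rho_0\sin\theta$. Inserting $\rho_0'=0$ into the first equation of \eqref{equ:equi}, where $\mathcal H(\rho_0)=(\rho_0-\rho_0'')/\rho_0^2$ at that point, gives
\begin{align*}
\rho_0''-\rho_0=\frac{\rho_0^2}{\sigma}\big(P_0-g\rho_0\big)\Big|_{\theta=\pi/2}.
\end{align*}
So it suffices to prove $g\rho_0(\pi/2)\neq P_0$. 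The weak inequality $g\rho_0\sin\theta\le P_0$ comes from Lemma 3.4 in the limit $\epsilon\to0$. I expect the hard part to be ruling out equality, and I would argue by contradiction via ODE uniqueness. Near the top, the free surface is a Cartesian graph $y=h(x)$ satisfying the regular second-order gravity--capillary ODE
\begin{align*}
\sigma\frac{h''}{(1+h'^2)^{3/2}}=gh-P_0 .
\end{align*}
Equality at the top would mean $h=P_0/g$ and $h'=0$ there. Since the constant $h\equiv P_0/g$ also solves this ODE with the same data, uniqueness would force the surface to be a horizontal segment near the top. Continuing along the curve in the polar description, which is a smooth regular ODE, the whole surface would then coincide with the line $y=P_0/g$. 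That line never meets $\{y=0\}$, contradicting the existence of the two contact points. Hence $\xi_s'(\pi/2)\neq0$; in fact $\xi_s'(\pi/2)<0$.
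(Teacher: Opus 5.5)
Your proof is correct, and it is organized differently from the paper's.

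\textbf{How the paper argues.} It works from the shifted-profile parametrization \eqref{equ:3.2.18}--\eqref{equ:3.2.21}. It gets $\xi_s(0)=1$ and $\xi_s(\pi)=-1$ from $\theta'(0)=0$ and $\theta'(\pi)=\pi$. It obtains $\xi_s(\pi/2)=0$ and $\xi_s'(\pi/2)=\rho_0''(\pi/2)/\rho_0(\pi/2)-1$ by differentiating in $\epsilon$, simply asserting that $\pi/2$ is the maximum of $\rho_0$. It then cites Theorem 2.1 of \cite{Yang} for $(g\rho_0-P_0)(\pi/2)<0$. It does not actually address \eqref{equ:3.2.15} in that proof; that property only comes out later, from Theorem \ref{thm:bdd_s} and Lemma \ref{thm:xi_2sym}.

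\textbf{How your route differs.} You read everything off the explicit formula \eqref{equ:4.1.7}. This shows that \eqref{equ:3.2.14} and \eqref{equ:3.2.15} need no reflection symmetry at all. The endpoint values are immediate, and $\xi_s'(0)=\xi_s'(\pi)$ follows from the two contact-angle conditions, since $x\mapsto x/\sqrt{1+x^2}$ is odd and injective. You also replace the external citation by a self-contained uniqueness argument. The profile $\rho=P_0/(g\sin\theta)$ solves the regular polar Euler--Lagrange ODE on $(0,\pi)$ with the same data at $\pi/2$. So $g\rho_0(\pi/2)=P_0$ would force $\rho_0$ to blow up at the endpoints. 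This gives exactly the stated $\xi_s'(\pi/2)\neq 0$ and needs no input from Lemma 3.4.

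What the paper's route buys in addition is the sign $\xi_s'(\pi/2)<0$. That sign is used later, for example $\xi_s'<0$ just left of $\pi/2$ in the proof that $\xi_6\notin H^1$. On your route, the sign requires the weak inequality $g\rho_0(\pi/2)\le P_0$ combined with your strictness result.

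\textbf{Two small corrections.}

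First, your identity $\rho_0''-\rho_0=\frac{\rho_0^2}{\sigma}(P_0-g\rho_0)$ at $\pi/2$ has the wrong sign. It combines the printed sign in \eqref{equ:equi} with the curvature formula of Section 2, and these conventions clash. The Euler--Lagrange equation actually used, \eqref{equ:2.1.5} with $\epsilon=0$, gives $\rho_0''-\rho_0=\frac{\rho_0^2}{\sigma}(g\rho_0-P_0)$. This is consistent with your own Cartesian ODE and with the paper. With your version, $g\rho_0<P_0$ would give $\xi_s'(\pi/2)>0$, contradicting your final remark. The nonvanishing conclusion is unaffected.

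Second, reflecting about the pole preserves the contact points only if $\rho_0(0)=\rho_0(\pi)$, which is part of what you want to prove. Reflect instead about the vertical line through the midpoint of the contact points, and get symmetry about that line from the uniqueness in \cite{Yang}. Then use that the pole of the centred equilibrium lies on that axis. The paper assumes this symmetry outright, so this is a clarification, not a gap.
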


     \begin{proof}
         To prove these properties, it is better to derive the representation for $\xi_s$. After performing the horizontal shift $(x,y)\rightarrow (x-\epsilon,y)$, we have the following relation which gives the coordinates of point $(x-\epsilon,y)$ in polar coordinates:

         \begin{equation}{\label{equ:3.2.18}}
             \tilde{\rho}_{0}(\theta')=\sqrt{(x-\epsilon)^{2}+y^{2}}=\sqrt{\rho_{0}^{2}(\theta)-2\epsilon\rho_{0}(\theta)\cos\theta+\epsilon^{2}},
         \end{equation}

         \noindent where

         \begin{align}
             \sin\theta'=\frac{\rho_{0}(\theta 
             )\sin\theta}{\sqrt{\rho_{0}^{2}(\theta)-2\epsilon\rho_{0}(\theta)\cos\theta+\epsilon^{2}}} \label{equ:3.2.19}\\
             \cos\theta'=\frac{\rho_{0}\cos\theta-\epsilon}{\sqrt{\rho_{0}^{2}(\theta)-2\epsilon\rho_{0}(\theta)\cos\theta+\epsilon^{2}}} \label{equ:3.2.20}.
         \end{align}

         \noindent Therefore, using the definition of shifting function $\xi_{s}$ and equation \eqref{equ:3.2.18}, we have the following computation:

         \begin{equation}{\label{equ:3.2.21}}
             \xi_s(\theta')=-\frac{d}{d\epsilon}(\tilde{\rho}_{0}(\theta')-\rho(\theta'))_{\epsilon=0}=-\frac{d}{d\epsilon}(\rho_{0}(\theta)-\epsilon\cos\theta-\rho(\theta'))|_{\epsilon}=-\rho_{0}'(\theta')\frac{d}{d\epsilon}(\theta-\theta')|_{\epsilon=0}+\cos\theta
         \end{equation}

         \noindent  Using equations \eqref{equ:3.2.19} and \eqref{equ:3.2.20}, we have the following relations if we view $\theta'$ as a function of $\theta$:

         \begin{align}
             \theta'(\pi)=\pi \label{equ:3.2.22}\\
             \theta'(0)=0 \label{equ:3.2.23}
         \end{align}

         \noindent Therefore, $\theta'=\theta$ when $\theta={\pi}$ or $\theta=0$, which means that $\frac{d}{d\epsilon}(\theta-\theta')|_{\epsilon=0}=0$ when $\theta'=0$ or $\theta'={\pi}$. Use this result in equation \eqref{equ:3.2.21}, we obtain

         \begin{equation*}
             \xi_s(0)=1
         \end{equation*}

         \noindent and:

         \begin{equation*}
             \xi_{s}(\pi)=-1=-\xi_s(0)
         \end{equation*}

         \noindent which is \eqref{equ:3.2.14}.

         At the point $\theta'=\frac{\pi}{2}$, we have $\cos\theta=\frac{\epsilon}{\rho_{0}(\theta)}$ from equation \eqref{equ:3.2.21}, which implies that:
         
         \begin{align}
             \frac{d}{d\epsilon}(\theta-\theta')|_{\epsilon=0}=-\frac{1}{\rho_{0}}~\operatorname{when}~\theta=\frac{\pi}{2}
         \end{align}
         
        \noindent Applying this relation to \eqref{equ:3.2.21}, we obtain
        
         \begin{equation}{\label{equ:3.2.24}}
             \xi_s(\frac{\pi}{2})=\rho_{0}'(\frac{\pi}{2})\frac{1}{\rho_{0}}+\cos(\frac{\pi}{2})
         \end{equation}

         \noindent Since $\theta=\frac{\pi}{2}$ is the maximum point for $\rho_{0}$, we have $\rho'(\frac{\pi}{2})=0$. Using this fact in equation \eqref{equ:3.2.24}, we obtain:

         \begin{equation*}
             \xi_s(\frac{\pi}{2})=0
         \end{equation*}
         
          Now taking derivative with respect to $\theta'$ on both sides of equation \eqref{equ:3.2.21}, we obtain

         \begin{equation}{\label{equ:3.2.25}}
             \xi_s'(\theta)=-\rho_{0}'(\theta)\frac{d}{d\epsilon}(\frac{d\theta}{d\theta'}-1)|_{\epsilon=0}-\rho_{0}''(\theta)\frac{d}{d\epsilon}(\theta-\theta')|_{\epsilon=0}-\sin\theta
         \end{equation}

         \noindent  Let $\theta=\frac{\pi}{2}$ in equation \eqref{equ:3.2.25}, and note that $\rho_{0}'(\frac{\pi}{2})=0$. Equation \eqref{equ:3.2.25} can be rewritten as follows:
         
         \begin{equation}{\label{equ:3.2.26}}
             \xi_s'(\frac{\pi}{2})=-\rho''(\frac{\pi}{2})\frac{d}{d\epsilon}(\theta-\theta')-1=\frac{\rho_{0}''(\frac{\pi}{2})}{\rho_{0}(\frac{\pi}{2})}-1.
         \end{equation}

         \noindent Using the Euler-Lagrange equation\eqref{equ:2.1.1} for equilibrium $\rho_{0}$, we have(This is the maximum point for $\rho_{0}$):

         \begin{equation*}
             \sigma \rho_{0}''(\frac{\pi}{2})\frac{1}{\rho_{0}^{2}(\frac{\pi}{2})}=g\rho_{0}(\frac{\pi}{2})-P_{0}+\sigma\frac{1}{\rho_{0}(\frac{\pi}{2})}
         \end{equation*}

         \noindent which implies that:

         \begin{equation*}
             \frac{\rho_{0}''(\frac{\pi}{2})}{\rho_{0}(\frac{\pi}{2})}=\frac{(g\rho_{0}-P_{0})\rho_{0}}{\sigma }+1
         \end{equation*}

         \noindent Using Theorem 2.1 in \cite{Yang} , $(g\rho_{0}-P_{0})(\frac{\pi}{2})<0$, which implies that:

         \begin{equation}{\label{equ:3.2.27}}
             \frac{\rho_{0}''(\frac{\pi}{2})}{\rho_{0}(\frac{\pi}{2})}<C<1
         \end{equation}

         \noindent  Then using equation \eqref{equ:3.2.27} in equation \eqref{equ:3.2.26}, we obtain 

         \begin{equation*}
             \xi_s'(\frac{\pi}{2})=-(1-\frac{\rho_{0}(\frac{\pi}{2})''}{\rho_{0}(\frac{\pi}{2})})<0
         \end{equation*}

         \noindent which finishes the proof of the theorem.
     \end{proof}

     Furthermore, We establish a theorem concerning the quantity $\frac{\xi_s'}{\xi_s}$ on the boundary and the explicit representation for shift function $\xi_{s}$.

     \begin{theorem}{\label{thm:bdd_s}}
        The following relation holds for $\xi_{s}$

         \begin{equation*}
             \frac{\xi_s'(\pi)}{\xi_{s}(\pi)}=-\frac{\xi_{s}'(0)}{\xi_s(0)}=\frac{\rho_{0}'(\pi)}{\rho_{0}(\pi)}=-\frac{\rho_{0}'(0)}{\rho_{0}(0)}.
         \end{equation*}

         \noindent Moreover, the function $\xi_{s}$ is expressed explicitly as follows:

         \begin{align}{\label{equ:rep_s}}
             \xi_{s}=\cos\theta+\frac{\rho_{0}'}{\rho_{0}}\sin\theta
         \end{align}
         
     \end{theorem}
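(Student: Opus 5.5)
The plan is to prove the explicit formula \eqref{equ:rep_s} first, directly from the definition of $\xi_s$ as the $\epsilon$-derivative of the horizontally shifted equilibrium. The boundary identities then follow by evaluating $\xi_s$ and $\xi_s'$ at $\theta=0,\pi$ and using the equilibrium contact angle conditions in \eqref{equ:equi}.

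\textbf{Step 1: the formula.} Rather than parametrize by the old angle $\theta$ as in \eqref{equ:3.2.18}--\eqref{equ:3.2.21}, I will work at a fixed polar angle $\varphi$. Let $\tilde\rho_0(\cdot,\epsilon)$ be the polar representation of the curve obtained by translating $\rho_0$ horizontally by $\epsilon$; the sign of $\epsilon$ is chosen so that $\xi_s(0)=1$, matching \eqref{equ:4.1.7}. Put
\begin{align*}
P(\varphi,\epsilon)=\tilde\rho_0(\varphi,\epsilon)\,\hat e_r(\varphi)-\epsilon\,\hat e_1,
\end{align*}
where $\hat e_1=(1,0)$. By construction $P(\varphi,\epsilon)$ lies on the original curve, so
\begin{align*}
|P(\varphi,\epsilon)|=\rho_0\big(\arg P(\varphi,\epsilon)\big).
\end{align*}
Differentiating at $\epsilon=0$ and writing $\xi_s=\partial_\epsilon\tilde\rho_0|_{\epsilon=0}$ gives
\begin{align*}
\partial_\epsilon P=\xi_s\hat e_r-\hat e_1=(\xi_s-\cos\varphi)\,\hat e_r+\sin\varphi\,\hat e_\phi .
\end{align*}
The radial component is the first-order change of $|P|$. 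The angular component divided by $\rho_0$, namely $\sin\varphi/\rho_0$, is the first-order change of $\arg P$. The constraint then yields
\begin{align*}
\xi_s-\cos\varphi=\rho_0'(\varphi)\,\frac{\sin\varphi}{\rho_0(\varphi)},
\end{align*}
which is \eqref{equ:rep_s}. This is the step where care is needed. One must justify that $\tilde\rho_0$ is differentiable in $\epsilon$ up to the endpoints, which follows from the implicit function theorem because $\rho_0$ is smooth and positive. One must also fix the orientation so that the result agrees with the convention used for $\xi_s$ in \eqref{equ:4.1.7} and in Theorem 5.5, where $\xi_s(0)=1$.

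\textbf{Step 2: boundary values.} From \eqref{equ:rep_s},
\begin{align*}
\xi_s(0)=1,\qquad \xi_s(\pi)=-1,
\end{align*}
and
\begin{align*}
\xi_s'=-\sin\theta+\Big(\frac{\rho_0'}{\rho_0}\Big)'\sin\theta+\frac{\rho_0'}{\rho_0}\cos\theta .
\end{align*}
Hence
\begin{align*}
\xi_s'(0)=\frac{\rho_0'(0)}{\rho_0(0)},\qquad \xi_s'(\pi)=-\frac{\rho_0'(\pi)}{\rho_0(\pi)}.
\end{align*}
Dividing gives
\begin{align*}
\frac{\xi_s'(\pi)}{\xi_s(\pi)}=\frac{\rho_0'(\pi)}{\rho_0(\pi)},\qquad \frac{\xi_s'(0)}{\xi_s(0)}=\frac{\rho_0'(0)}{\rho_0(0)} .
\end{align*}

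\textbf{Step 3: relating the two endpoints.} By \eqref{equ:equi},
\begin{align*}
\frac{\rho_0'}{\sqrt{\rho_0^2+\rho_0'^2}}(\pi)=-\frac{[\![\gamma]\!]}{\sigma}=-\frac{\rho_0'}{\sqrt{\rho_0^2+\rho_0'^2}}(0).
\end{align*}
Write $s=\rho_0'/\sqrt{\rho_0^2+\rho_0'^2}$. Then $\rho_0'/\rho_0=s/\sqrt{1-s^2}$, which is an odd function of $s\in(-1,1)$ since $|[\![\gamma]\!]|<\sigma$. It follows that $\rho_0'(\pi)/\rho_0(\pi)=-\rho_0'(0)/\rho_0(0)$, and this gives all the stated equalities. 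As a consistency check, the symmetry of $\rho_0$ about $\theta=\pi/2$ used in \eqref{equ:4.2.8} gives the same relation.
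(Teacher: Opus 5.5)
Your proof is correct. It reaches the result in the reverse order from the paper and with different ingredients.

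\textbf{The paper's route.} It tracks material points under the shift, $\theta\mapsto\theta'$, as in \eqref{equ:3.2.18}--\eqref{equ:3.2.21}. It first extracts $\xi_s'(0)$ from \eqref{equ:3.2.37} together with the $\epsilon$-expansion \eqref{equ:3.2.36} of $d\theta/d\theta'$. It obtains \eqref{equ:rep_s} only at the end, by computing $d\theta'/d\epsilon=\sin\theta/\rho_0$ at $\epsilon=0$ from the formula for $\sin\theta'$. That step divides by $\cos\theta$, so the value at $\theta=\pi/2$ has to be recovered by continuity. To relate the two endpoints, the paper invokes the symmetry of $\rho_0$ about $\theta=\pi/2$, which it takes as known rather than proves.

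\textbf{Your route.} You fix the polar angle and split $\partial_\epsilon P$ into radial and angular components. This linearizes $|P|=\rho_0(\arg P)$ uniformly on $[0,\pi]$, including the endpoints and $\pi/2$. The boundary ratios then follow from a single differentiation of the explicit formula. Your translation to the right matches the definition \eqref{equ:4.1.3} and gives $\xi_s(0)=1$ as required.

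Your endpoint relation uses only the contact-angle data and the oddness of $s\mapsto s/\sqrt{1-s^2}$, so it does not rely on any global symmetry of $\rho_0$. It is also unaffected by the sign discrepancy between \eqref{equ:equi} and the boundary conditions in \eqref{equ:2.1.5} at $\theta_1=\theta_2=0$, since both give $s(\pi)=-s(0)$.

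\textbf{What the paper's route buys.} Its machinery is shared with the computation of $\xi_s(\pi/2)$ and $\xi_s'(\pi/2)$ in \eqref{equ:3.2.16}--\eqref{equ:3.2.17}. The symmetry of $\rho_0$ it uses is needed later anyway, in Lemma \ref{thm:xi_2sym} and Lemma \ref{thm:Q_sym}.
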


     \begin{proof}

         Using the symmetry of the steady state $\rho_{0}$, it holds that

         \begin{equation*}
             \frac{\rho_{0}'(\pi)}{\rho_{0}(\pi)}=-\frac{\rho_{0}'(0)}{\rho_{0}(0)}
         \end{equation*}

         \noindent Then we compute the derivative of $\xi_s$ at point 0. Using equation \eqref{equ:3.2.22} and \eqref{equ:3.2.23}, we have $\frac{d}{d\epsilon}(\theta-\theta')|_{\epsilon=0}=0$ when $0$. Substituting these into equation \eqref{equ:3.2.25}, we obtain the following result

         \begin{equation}{\label{equ:3.2.37}}
             \xi_s'(0)=-\rho_{0}'(0)\frac{d}{d\epsilon}(\frac{d\theta}{d\theta'}-1)
         \end{equation}

         \noindent Combining equation \eqref{equ:3.2.19} and equation \eqref{equ:3.2.20}, we obtain the following result for $\frac{d\theta^{\prime}}{d\theta}$

         \begin{equation*}
             \frac{d\theta'}{d\theta}=\frac{1}{1+(\frac{(\rho_{0}\sin\theta)^{2}}{(\rho_{0}\cos\theta-\epsilon)^{2}})}\frac{(\rho_{0}'\sin\theta+\rho_{0}\cos\theta)(\rho_{0}\cos\theta-\epsilon)-(\rho_{0}'\cos\theta-\rho_{0}\sin\theta)\rho_{0}\sin\theta}{(\rho_{0}\cos\theta-\epsilon)^{2}},
         \end{equation*}

         \noindent which implies that

         \begin{equation}{\label{equ:3.2.36}}
             \frac{d\theta}{d\theta'}=\frac{\rho_{0}^{2}-2\epsilon\rho_{0}\cos\theta}{\rho_{0}^{2}-\epsilon\rho_{0}\cos\theta-\epsilon\rho_{0}'\sin\theta}+O(\epsilon^{2}).
         \end{equation}

         \noindent Applying equation\eqref{equ:3.2.36} to equation \eqref{equ:3.2.37} and computing the derivative with respect to $\epsilon$, we obtain the following result

         \begin{equation*}
             \xi_s'(0)=\frac{\rho_{0}'(0)}{\rho_{0}(0)},
         \end{equation*}

         \noindent which implies that

         \begin{equation*}
             \frac{\xi_s'(0)}{\xi_s(0)}=\frac{\rho_{0}'(0)}{\rho_{0}(0)}.
         \end{equation*}

         \noindent Here, we used the fact that $\xi_s(0)=1$. Similarly, we use the same computation as for $\theta=\pi$ to derive
         \begin{align}
              \frac{\xi_s'(\pi)}{\xi_s(\pi)}=\frac{\rho_{0}'(\pi)}{\rho_{0}(\pi)}.
         \end{align}

         Then it remains to derive the representation fo $\xi_{s}$. From equation \eqref{equ:3.2.21}, it suffices to compute $\frac{d}{d\epsilon}(\theta-\theta')|_{\epsilon=0}$. By equation \eqref{equ:3.2.19} and equation \eqref{equ:3.2.20}, we have

         \begin{align}
             \frac{d}{d\epsilon}(\sin\theta')=\cos\theta'\frac{d\theta'}{d\epsilon}\label{equ:3.4.37},
         \end{align}

         \noindent and,

         \begin{align}
             \frac{d}{d\epsilon}(\sin\theta')|_{\epsilon=0}=\frac{\rho_{0}^{2}\sin\theta\cos\theta}{\rho_{0}^{3}}=\frac{\sin\theta\cos\theta}{\rho_{0}}\label{equ:3.4.38}.
         \end{align}

         \noindent  Then applying equation \eqref{equ:3.4.38} to the equation \eqref{equ:3.4.37}, we obtain

         \begin{align}
             \frac{d\theta'}{d\epsilon}|_{\epsilon=0}=\frac{\sin\theta}{\rho_{0}}\label{equ:3.4.39}
         \end{align}

         \noindent Finally, applying equation \eqref{equ:3.4.39} to equation \eqref{equ:3.2.21}, we obtain

         \begin{align}
             \xi_{s}=\cos\theta+\frac{\rho_{0}'}{\rho_{0}}\sin\theta
         \end{align}
     \end{proof}

     In addition to the properties of $\xi_s$ at some special points, it is also important to derive a general symmetry property for $\xi_s$. We establish this property in the following theorem

     \begin{lemma} \label{thm:xi_2sym}
         $\xi_s$ is the linearized shift function defined above. It satisfies the following properties:

         \begin{align}
             \xi_{s}(\frac{\pi}{2}-\gamma)=&-\xi_s(\frac{\pi}{2}+\gamma)\label{equ:3.4.40}\\
             \xi_{s}'(\frac{\pi}{2}-\gamma)=&\xi_{s}'(\frac{\pi}{2}+\gamma)\label{equ:3.4.41}
         \end{align}
         
         \noindent For any $\gamma\in (0,\frac{\pi}{2})$
    \end{lemma}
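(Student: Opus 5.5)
The plan is to work directly from the explicit representation \eqref{equ:rep_s} of Theorem \ref{thm:bdd_s},
\begin{align*}
\xi_s(\theta)=\cos\theta+\frac{\rho_0'(\theta)}{\rho_0(\theta)}\sin\theta ,
\end{align*}
combined with the reflection symmetry of the sessile equilibrium about the vertical line through the pole, namely $\rho_0(\frac{\pi}{2}-\gamma)=\rho_0(\frac{\pi}{2}+\gamma)$ for $\gamma\in(0,\frac{\pi}{2})$. This symmetry has already been used in the paper (in \eqref{equ:4.2.8}, in the proof of Theorem \ref{thm:bdd_s}, and through $\rho_0'(\frac{\pi}{2})=0$). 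If one wants to justify it, the argument is uniqueness of the equilibrium with prescribed contact points from \cite{Yang}. The reflected profile $\tilde\rho(\theta)=\rho_0(\pi-\theta)$ satisfies the same Euler--Lagrange system \eqref{equ:equi}: the term $\sin\theta$ is invariant under $\theta\mapsto\pi-\theta$, $\mathcal{H}$ is invariant because it involves $\rho'$ only through $\rho'^2$ and $\partial_\theta(\rho'/\sqrt{\rho^2+\rho'^2})$, and the two contact-angle conditions are exchanged. The mass constraint is preserved as well. Hence $\tilde\rho=\rho_0$ once the pole is placed at the midpoint of the contact points.

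Differentiating the identity $\rho_0(\pi-\theta)=\rho_0(\theta)$ gives $\rho_0'(\pi-\theta)=-\rho_0'(\theta)$, so $\frac{\rho_0'}{\rho_0}$ is odd about $\frac{\pi}{2}$. Under $\theta\mapsto\pi-\theta$ we also have $\cos\theta\mapsto-\cos\theta$ and $\sin\theta\mapsto\sin\theta$. Substituting $\theta=\frac{\pi}{2}+\gamma$ and $\pi-\theta=\frac{\pi}{2}-\gamma$ into the formula then yields
\begin{align*}
\xi_s(\tfrac{\pi}{2}-\gamma)=-\cos(\tfrac{\pi}{2}+\gamma)-\frac{\rho_0'}{\rho_0}(\tfrac{\pi}{2}+\gamma)\sin(\tfrac{\pi}{2}+\gamma)=-\xi_s(\tfrac{\pi}{2}+\gamma),
\end{align*}
which is \eqref{equ:3.4.40}.

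For \eqref{equ:3.4.41}, differentiate the identity $\xi_s(\pi-\theta)=-\xi_s(\theta)$, which holds for all $\theta\in(0,\pi)$, with respect to $\theta$. This gives $-\xi_s'(\pi-\theta)=-\xi_s'(\theta)$, that is, $\xi_s'$ is even about $\frac{\pi}{2}$. Smoothness of $\rho_0$ (established in Section 3) and positivity of $\rho_0$ justify the differentiation. As a consistency check, the same relations recover \eqref{equ:3.2.14}--\eqref{equ:3.2.16}.

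The only genuine obstacle is the symmetry of $\rho_0$ itself. If uniqueness from \cite{Yang} is not available in the needed form, I would instead use that both $\rho_0$ and $\tilde\rho$ minimize $E$ under \eqref{equ:C}. I would then argue via the ODE initial data at $\theta=\frac{\pi}{2}$: the equilibrium equation is a second-order ODE, so if $\rho_0'(\frac{\pi}{2})=0$, then $\rho_0$ and $\tilde\rho$ have identical Cauchy data at $\frac{\pi}{2}$ and coincide. This reduces the symmetry to the fact, used in the proof of the preceding theorem, that $\frac{\pi}{2}$ is the maximum point of $\rho_0$.
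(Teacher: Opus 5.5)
Your argument is correct and is essentially the paper's. Both use the explicit formula $\xi_s=\cos\theta+\frac{\rho_0'}{\rho_0}\sin\theta$ together with the reflection symmetry $\rho_0(\frac{\pi}{2}-\gamma)=\rho_0(\frac{\pi}{2}+\gamma)$, which the paper simply asserts without the uniqueness justification you give. The paper also obtains \eqref{equ:3.4.41} by differentiating the formula for $\xi_s$ and applying the symmetry term by term, rather than differentiating $\xi_s(\pi-\theta)=-\xi_s(\theta)$ as you do.

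One caution on your fallback. The identity $\rho_0'(\frac{\pi}{2})=0$ is normally derived from the symmetry itself, and $\frac{\pi}{2}$ need not be a maximum of $\rho_0$ (for a spherical cap meeting the plane at an acute angle it is a minimum), so that route is circular. Compare Cauchy data at $\theta=0$ instead: there $\rho_0(0)=\rho_0(\pi)$ and the contact-angle conditions in \eqref{equ:equi} already give $\tilde\rho(0)=\rho_0(0)$ and $\tilde\rho'(0)=\rho_0'(0)$.
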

     
     \begin{proof}

     We first prove equation \eqref{equ:3.4.40}. Using equations \eqref{equ:rep_s}, we have the following equation

     \begin{equation}{\label{equ:3.4.42}}
        \xi_{s}=\cos\theta+\frac{\rho_{0}'}{\rho_{0}}\sin\theta
     \end{equation}

     \noindent Using the symmetry for $\rho$, we have $\rho(\frac{\pi}{2}+\theta)=\rho(\frac{\pi}{2}-\theta)$. Therefore, we obtain the following property:

     \begin{equation}{\label{equ:3.4.43}}
          \xi_{s}(\frac{\pi}{2}-\gamma)=\cos(\frac{\pi}{2}-\gamma)+\frac{\rho_{0}'(\frac{\pi}{2}-\gamma)}{\rho_{0}(\frac{\pi}{2}-\gamma)}\sin(\frac{\pi}{2}-\gamma)=-\cos(\frac{\pi}{2}+\gamma)-\frac{\rho_{0}'(\frac{\pi}{2}+\gamma)}{\rho_{0}(\frac{\pi}{2}+\gamma)}\sin(\frac{\pi}{2}+\gamma)=-\xi_{s}(\frac{\pi}{2}+\gamma)
     \end{equation}

     \noindent which is exactly the equation \eqref{equ:3.4.40}. 

    We then prove the equation \eqref{equ:3.4.41}. Taking derivative with respect to $\theta$ on both sides of equation \eqref{equ:rep_s}, we obtain

   \begin{equation*}
       \xi_{s}'(\theta)=-\sin\theta+\frac{\rho_{0}''\rho_{0}-\rho_{0}'^{2}}{\rho_{0}^{2}}\sin\theta+\frac{\rho_{0}'}{\rho_{0}}\cos\theta
   \end{equation*}

   \noindent Using the symmetry property for $\rho_{0}$: $\rho_{0}(\frac{\pi}{2}-\gamma)=\rho_{0}(\frac{\pi}{2}+\gamma)$ and the properties for trigonometric functions we have

   \begin{align*}
       \xi_{s}'(\frac{\pi}{2}-\gamma)=\xi_{s}(\frac{\pi}{2}+\gamma),
   \end{align*}
    \noindent which finishes the proof.
     \end{proof}
     
     Then we want to derive the boundary condition for the solution function of \eqref{equ:3.2.7} in function set $B$.

     \begin{lemma}{\label{lem:bdd}}
        For any function $\xi_k$ satisfying $F_{0}(\xi_k)=0$, it must satisfy one of the following boundary conditions:

         \begin{align}
             \xi_{k}(\pi)=&-\xi_k(0)\label{equ:3.2.38}\\
             -(\rho_{0}(0)\xi_{k}'(0)-\rho_0'(0)\xi_{k}(0))=&\rho_{0}(\pi)\xi_{k}'(\pi)-\rho_0'({\pi})\xi_{k}(\pi)\label{equ:3.2.39},
         \end{align}

         \noindent or

         \begin{equation}{\label{equ:3.2.40}}
            \frac{\xi_k'(\pi)}{\xi_{k}(\pi)}=-\frac{\xi_{k}'(0)}{\xi_k(0)}=\frac{\rho_{0}'(\pi)}{\rho_{0}(\pi)}=-\frac{\rho_{0}'(0)}{\rho_{0}(0)}
         \end{equation}
         
     \end{lemma}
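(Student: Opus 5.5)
The plan is to take the natural boundary conditions that come out of the weak identity $H(\xi_k,\xi)=0$ for all $\xi\in B$. Since $F_0\ge 0$ on $B$ and $F_0(\xi_k)=0$, Theorem \ref{thm:kernel} gives $H(\xi_k,\xi)=0$ for every $\xi\in B$. Theorem \ref{thm:ode} shows that $\xi_k$ satisfies the ODE \eqref{equ:3.2.7} with some constant $C$. That theorem used only test functions $\xi=\rho_0^{-1}\partial_\theta v$ with $v\in C_0^\infty$, so it gives no information at the endpoints. To recover the boundary information I will instead test with general $\xi\in B$, with $\xi(0),\xi(\pi)$ free subject only to $\int_0^\pi\rho_0\xi=0$, and integrate by parts in $H(\xi_k,\xi)$.

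\textbf{Step 1.} After integrating by parts, the interior terms combine as $\int_0^\pi \rho_0\xi\,(\text{LHS of }\eqref{equ:3.2.7})\,d\theta = C\int_0^\pi\rho_0\xi=0$. What remains is the boundary contribution $\sigma\big[\xi\,\frac{\rho_0\xi_k'-\rho_0'\xi_k}{(\rho_0^2+\rho_0'^2)^{3/2}}\,\rho_0\big]_0^\pi$, up to the weights coming from the $(\xi\rho_0+\rho_0'\xi')$ structure. I will recompute this carefully from \eqref{equ:3.5.4}. Writing $Q(\theta)=\rho_0\xi_k'-\rho_0'\xi_k$, the identity becomes $a(\pi)\xi(\pi)Q(\pi)-a(0)\xi(0)Q(0)=0$ for all admissible $\xi$, with a positive weight $a=\rho_0/(\rho_0^2+\rho_0'^2)^{3/2}$. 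By the symmetry $\rho_0(\theta)=\rho_0(\pi-\theta)$ we have $a(0)=a(\pi)$.

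\textbf{Step 2.} The constraint $\int\rho_0\xi=0$ does not restrict boundary values, since we can add a compactly supported interior correction. Hence $\xi(0)$ and $\xi(\pi)$ are independent, and we get $Q(0)=Q(\pi)=0$. This gives $\xi_k'/\xi_k=\rho_0'/\rho_0$ at both ends wherever $\xi_k\neq0$. Combined with the symmetry $\rho_0'(\pi)/\rho_0(\pi)=-\rho_0'(0)/\rho_0(0)$ (as in Theorem \ref{thm:bdd_s}), this is \eqref{equ:3.2.40}.

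\textbf{Step 3.} If the independence of endpoint values is only available in the symmetric combination (for instance if one insists, as in the dynamical setting, that perturbations move both contact points compatibly, $\xi(\pi)=-\xi(0)$), then only the sum $Q(0)+Q(\pi)$ is forced to vanish. This is the alternative \eqref{equ:3.2.38}–\eqref{equ:3.2.39}. I will present the dichotomy by splitting according to whether the admissible class pins the endpoint relation $\xi_k(\pi)=-\xi_k(0)$: in that case the boundary term reduces to \eqref{equ:3.2.39}, and otherwise to \eqref{equ:3.2.40}.

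The main obstacle is the bookkeeping in Step 1. The integration by parts of the term $\frac{(\xi_k\rho_0+\rho_0'\xi_k')(\xi\rho_0+\rho_0'\xi')}{(\cdot)^{3/2}}$ together with $\frac{\xi_k'\xi'}{(\cdot)^{1/2}}$ must produce exactly the flux $\rho_0\xi Q/(\rho_0^2+\rho_0'^2)^{3/2}$. I expect to simplify the combination $\frac{\rho_0^2\xi_k'-\rho_0\rho_0'\xi_k}{(\rho_0^2+\rho_0'^2)^{3/2}}$ using $\frac{1}{(\cdot)^{1/2}}-\frac{\rho_0'^2}{(\cdot)^{3/2}}=\frac{\rho_0^2}{(\cdot)^{3/2}}$, and to check that the interior remainder matches \eqref{equ:3.2.7} so that it is killed by the mass constraint.
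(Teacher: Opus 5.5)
Your Steps 1--2 are correct, and they follow a genuinely different and stronger route than the paper. The paper never tests $H(\xi_k,\cdot)=0$ against a general element of $B$. It uses only the two specific test functions $\xi_s$ and $\xi_k$. With $q_k:=\rho_0\xi_k'-\rho_0'\xi_k$, the same integration by parts then gives $q_k(\pi)\xi_s(\pi)=q_k(0)\xi_s(0)$ and $q_k(\pi)\xi_k(\pi)=q_k(0)\xi_k(0)$. Using $\xi_s(\pi)=-\xi_s(0)$, the first identity yields $q_k(\pi)=-q_k(0)$, i.e.\ \eqref{equ:3.2.39}. The second then forces either $q_k(0)=q_k(\pi)=0$ (i.e.\ \eqref{equ:3.2.40}) or $\xi_k(\pi)=-\xi_k(0)$; this is where the dichotomy in the statement comes from.

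You instead use that endpoint values of elements of $B$ are independent: an interior bump restores $\int_0^\pi\rho_0\xi=0$ without changing them. Hence the flux $\sigma\bigl[\rho_0 q_k\xi/(\rho_0^2+\rho_0'^2)^{3/2}\bigr]_0^\pi$ must vanish term by term, which gives the full linearized contact-angle condition $q_k(0)=q_k(\pi)=0$. Your bookkeeping is right: the interior part is exactly $\rho_0\xi$ times the left side of \eqref{equ:3.2.7}, so it equals $C\int_0^\pi\rho_0\xi=0$.

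What your route buys:
\begin{itemize}
\item a stronger conclusion with no case split;
\item no use of the properties \eqref{equ:3.2.14} of $\xi_s$;
\item the symmetry of $\rho_0$ is needed only for the last equality $\rho_0'(\pi)/\rho_0(\pi)=-\rho_0'(0)/\rho_0(0)$, since positivity of your weight $a$ at each endpoint separately suffices for $q_k=0$.
\end{itemize}
The paper's route is shorter but only produces the disjunction. Its step ``if \eqref{equ:3.2.40} fails then $q_k\neq 0$ at both ends'' also overlooks the case where $\xi_k$ vanishes at an endpoint.

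Two clean-ups. First, drop Step 3. The test class is $B$, whose endpoint traces are independent, so the restricted scenario never arises. In any case, \eqref{equ:3.2.38} is a property of $\xi_k$ and would not follow from shrinking the test class.

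Second, record the conclusion in cross-multiplied form, $\rho_0\xi_k'=\rho_0'\xi_k$ at $\theta=0,\pi$. If, say, $\xi_k(0)=0$, then $q_k(0)=0$ forces $\xi_k'(0)=0$, and the ratios in \eqref{equ:3.2.40} become $0/0$. If both endpoint values vanish, \eqref{equ:3.2.38}--\eqref{equ:3.2.39} hold trivially. If exactly one vanishes, neither alternative holds literally as stated; the paper's proof shares this limitation.
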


     \begin{proof}

         From theorem \ref{thm:ode}, $F_{0}(\xi_k)=0$ implies that $H(\xi_k, \xi)=0$ for any $\xi\in B$. Especially, we have $H(\xi_k,\xi_s)=H(\xi_s,\xi_k)=H(\xi_{s},\xi_{s})=0$.   Using integration by part, the condition $H(\xi_{k},\xi_{s})=0$ can be rewritten as the following equation

         \begin{align}
             &\int_{0}^{\pi}( g\xi_k\sin\theta-\frac{1}{\rho_0}\frac{\rho_0^{2}+2\rho_0'^{2}-\rho_0''\rho_0}{(\rho_0^{2}+\rho_0'^{2})^{\frac{3}{2}}}\xi_k+\frac{2\frac{\rho_0'^{2}}{\rho_0}\xi_k-2\rho_0'\xi_k'}{(\rho_0^{2}+\rho_0'^{2})^{\frac{3}{2}}}-\partial_{\theta}(\frac{\rho_0\xi_k'-\rho_0'\xi_k}{(\rho_0^{2}+\rho_0'^{2})^{\frac{3}{2}}}))\rho_{0}\xi_{s}\\
             &\quad+  ((\frac{\rho_{0}'\xi_k-\rho_{0}\xi_k'}{(\rho_{0}^{2}+\rho_{0}'^{2})^{\frac{3}{2}}})\rho_{0}\xi_s)(\pi)-((\frac{\rho_{0}'\xi_k-\rho_{0}\xi_k'}{(\rho_{0}^{2}+\rho_{0}'^{2})^{\frac{3}{2}}})\rho_{0}\xi_s)(0)=0
         \end{align}
        
         \noindent Using the fact that $\xi_k$ is the solution to \eqref{equ:3.2.7}, we have:

         \begin{equation}{\label{equ:3.2.41}}
             ((\frac{\rho_{0}'\xi_k-\rho_{0}\xi_k'}{(\rho_{0}^{2}+\rho_{0}'^{2})^{\frac{3}{2}}})\rho_{0}\xi_s)(\pi)=((\frac{\rho_{0}'\xi_k-\rho_{0}\xi_k'}{(\rho_{0}^{2}+\rho_{0}'^{2})^{\frac{3}{2}}})\rho_{0}\xi_s)(0)
         \end{equation}

         \noindent Moreover, using the same computation for $H(\xi_{s},\xi_{k})$, $H(\xi_{k},\xi_{k})$ and $\xi_{s},\xi_{s}$, we obtain the following result

         \begin{equation}{\label{est:bdd}}
             ((\frac{\rho_{0}'\xi_i-\rho_{0}\xi_i'}{(\rho_{0}^{2}+\rho_{0}'^{2})^{\frac{3}{2}}})\xi_j)(\pi)=((\frac{\rho_{0}'\xi_i-\rho_{0}\xi_i'}{(\rho_{0}^{2}+\rho_{0}'^{2})^{\frac{3}{2}}})\xi_j)(0)
         \end{equation}
         
         \noindent for $i,j\in\{3,s\}$, where we used the fact that $\rho_{0}(0)=\rho_{0}(\pi)$ to cancel the $\rho_{0}$ on each side of the equation. 
         
        Let $j=s$ and $i=k$ in \eqref{est:bdd}, we obtain

         \begin{equation*}
             ((\frac{\rho_{0}'\xi_k-\rho_{0}\xi_k'}{(\rho_{0}^{2}+\rho_{0}'^{2})^{\frac{3}{2}}})\xi_s)(\pi)=((\frac{\rho_{0}'\xi_k-\rho_{0}\xi_k'}{(\rho_{0}^{2}+\rho_{0}'^{2})^{\frac{3}{2}}})\xi_s)(0)
         \end{equation*}

        \noindent Using \eqref{equ:3.2.14} in the equation above, we have the following relation:

        \begin{equation}{\label{equ:3.2.42}}
            (\frac{\rho_{0}'\xi_k-\rho_{0}\xi_k'}{(\rho_{0}^{2}+\rho_{0}'^{2})^{\frac{3}{2}}})(\pi)=-(\frac{\rho_{0}'\xi_k-\rho_{0}\xi_k'}{(\rho_{0}^{2}+\rho_{0}'^{2})^{\frac{3}{2}}})(0)
        \end{equation}

        \noindent Using the symmetry of the function $\rho_{0}$, \eqref{equ:3.2.42} is equivalent to the relation \eqref{equ:3.2.39}. We then set $i=j=k$ in equation \eqref{equ:3.2.41} to obtain that:

        \begin{equation}{\label{equ:3.2.43}}
            ((\frac{\rho_{0}'\xi_k-\rho_{0}\xi_k'}{(\rho_{0}^{2}+\rho_{0}'^{2})^{\frac{3}{2}}})\xi_k)(\pi)=((\frac{\rho_{0}'\xi_k-\rho_{0}\xi_k'}{(\rho_{0}^{2}+\rho_{0}'^{2})^{\frac{3}{2}}})\xi_k)(0)
        \end{equation}

        \noindent If the relation \eqref{equ:3.2.40} is true, then the theorem is proved. If it is not true, the term:

        \begin{align}
        (\frac{\rho_{0}'\xi_k-\rho_{0}\xi_k'}{(\rho_{0}^{2}+\rho_{0}'^{2})^{\frac{3}{2}}})
        \end{align}
        
        \noindent is non-zero at points $0$ and $\pi$. Then we can apply equation \eqref{equ:3.2.42} in equation \eqref{equ:3.2.43} to obtain the following equation 

        \begin{equation*}
            \xi_k(0)=-\xi_k(\pi),
        \end{equation*}

        \noindent which is the relation \eqref{equ:3.2.38}. 
     \end{proof}

     Now we have a function $\xi_{s}$ which is a solution function to the second variation equation \eqref{equ:3.2.7}. We then use $\xi_s$ to construct $\xi_k$ which is the other solution explicitly. Using Theorem 5.5, we know that $\xi_s$ is a solution to equation \eqref{equ:3.2.13}, which is equivalent to \eqref{equ:3.2.7} with $C=0$. Since equation \eqref{equ:3.2.13} is a second-order equation, we construct another basis of the solution space.  Let $\xi_5$ be the function defined as follows:
     
          \begin{equation}
              \xi_5(\theta)=\chi(\theta)\xi_{2}(\theta)
          \end{equation}

          \noindent Suppose that $\xi_{5}$ is a solution function to \eqref{equ:3.2.7} with $C=0$. Then we use this assumption to show that the multiplier $\chi$ satisfies the following equation by plugging $\xi_{5}$ into equation \eqref{equ:3.2.7}

           \begin{align}
          0=&\chi(g\xi_s\sin\theta-\sigma \frac{\rho_{0}\xi_s''}{(\rho_{0}^{2}+\rho_{0}'^{2})^{\frac{3}{2}}}+\sigma\frac{\rho_{0}'\rho_{0}^{2}-2\rho_{0}'^{3}+3\rho_{0}\rho_{0}'\rho_{0}''}{(\rho_{0}^{2}+\rho_{0}'^{2})^{\frac{5}{2}}}\xi_s'+\sigma\frac{2\rho_{0}^{2}\rho_{0}''-\rho_{0}^{3}-4\rho_{0}\rho_{0}'^{2}-\rho_{0}''\rho_{0}'^{2}}{(\rho_{0}^{2}+\rho_{0}'^{2})^{\frac{5}{2}}}\xi_s)\notag\\
          &\quad+(-\sigma\frac{\rho_{0}\xi_{s}\chi''}{(\rho_{0}^{2}+\rho_{0}'^{2})^{\frac{3}{2}}}-2\sigma\frac{\rho_{0}\xi_{s}'\chi'}{(\rho_{0}^{2}+\rho_{0}'^{2})^{\frac{3}{2}}}+\sigma\frac{\rho_{0}'\rho_{0}^{2}-2\rho_{0}'^{3}+3\rho_{0}\rho_{0}'\rho_{0}''}{(\rho_{0}^{2}+\rho_{0}'^{2})^{\frac{5}{2}}}\xi_s\chi')
     \end{align}
        
     \noindent Using the fact that $\xi_{s}$ is a solution to \eqref{equ:3.2.7} with $C=0$. The terms with $\chi$ can be canceled out. Then we have 

          \begin{equation}{\label{equ:3.2.28}}
              \chi''(\theta)+Q(\theta)\chi'(\theta)=0,
          \end{equation}

         \noindent where

         \begin{equation}{\label{equ:3.2.29}}
             Q(\theta)=\frac{2\rho_{0}\xi_s'-\frac{\rho_{0}^{2}\rho_{0}'+3\rho_{0}\rho_{0}'\rho_{0}''-2\rho_{0}
             '^{2}}{\rho_{0}^{2}+\rho_{0}'^{3}}\xi_s}{\rho_{0}\xi_s}
         \end{equation}

         \noindent  By solving equation \eqref{equ:3.2.28}, we obtain the following expression for $\chi^{\prime}(\theta)$

         \begin{equation}{\label{equ:3.2.30}}
             \chi'(\theta)=Ce^{-\int_{0}^{\theta}Q(s)ds},
         \end{equation}

         \noindent which implies that

         \begin{equation}{\label{equ:3.2.31}}
             \chi(\theta)=\int_{0}^{\theta}Ce^{-\int_{0}^{s}Q(t)dt}ds+D 
         \end{equation}

         \noindent Note that $\xi_{s}$ vanishes at the point $\frac{\pi}{2}$. We divide the function $\xi_{5}$ into two parts as follows

         \begin{equation}{\label{equ:3.2.32}}
             \xi_5(\theta)=\left\{
             \begin{aligned}
             (\int_{0}^{\theta}C_{1}e^{-\int_{0}^{s}Q(t)dt}ds+D_{1} )\xi_s~~\operatorname{when}~~\theta\in (0,\frac{\pi}{2})\\
             (\int_{\pi}^{\theta}C_{2}e^{-\int_{\pi}^{s}Q(t)dt}ds+D_{2} )\xi_s~~\operatorname{when}~~\theta\in (\frac{\pi}{2},\pi)
             \end{aligned}
             \right.
         \end{equation}
         
          Suppose that $\xi_4$ is the solution function of equation \eqref{equ:3.2.7} with $C=1$. We use the same computation as for $\xi_{5}$ to show that

         \begin{equation}{\label{equ:3.2.33}}
             \xi_4(\theta)=\left\{
             \begin{aligned}
             (\int_{0}^{\theta}(C_{1}+\int_{0}^{\gamma}e^{\int_{0}^{s}Q(t)dt}ds)e^{-\int_{0}^{\gamma}Q(s)ds}d\gamma)\xi_s+D_{1}\xi_s~~\operatorname{when}~~\theta\in (0,\frac{\pi}{2})\\
             (\int_{\pi}^{\theta}(C_{2}+\int_{\pi}^{\gamma}e^{\int_{\pi}^{s}Q(t)dt}ds)e^{-\int_{\pi}^{\gamma}Q(s)ds}d\gamma)\xi_s+D_{2}\xi_s~~\operatorname{when}~~\theta\in (\frac{\pi}{2},\pi)
             \end{aligned}
             \right.
         \end{equation}
         \noindent  Moreover, we introduce the following notation(formal):
         \begin{align}
             \xi_{4}=\xi_{5}+\xi_{6}
         \end{align}
         \noindent where
         \begin{equation}{\label{equ:xi_6}}
             \xi_6(\theta)=\left\{
             \begin{aligned}
             (\int_{0}^{\theta}(\int_{0}^{\gamma}e^{\int_{0}^{s}Q(t)dt}ds)e^{-\int_{0}^{\gamma}Q(s)ds}d\gamma)\xi_s~~\operatorname{when}~~\theta\in (0,\frac{\pi}{2})\\
             (\int_{\pi}^{\theta}(\int_{\pi}^{\gamma}e^{\int_{\pi}^{s}Q(t)dt}ds)e^{-\int_{\pi}^{\gamma}Q(s)ds}d\gamma)\xi_s~~\operatorname{when}~~\theta\in (\frac{\pi}{2},\pi)
             \end{aligned}
             \right..
         \end{equation}
         
         From \eqref{equ:3.2.32} and \eqref{equ:3.2.33}, the following integral takes an important role in the expression of solution functions

         \begin{equation}{\label{equ:3.2.34}}
            e^{-\int_{0}^{\theta}Q(s)ds}
         \end{equation}

         \noindent Especially, when $\theta=\frac{\pi}{2}$, It is crucial to analyze the function

         \begin{equation}{\label{equ:3.2.35}}
             e^{-\int_{0}^{\frac{\pi}{2}}Q(s)ds}.
         \end{equation}

         \noindent  We establish the following lemma to show the properties of the function $Q(\theta)$.

         \begin{lemma}{\label{thm:Q_sym}}

             the function $Q(\theta)$ defined by equation \eqref{equ:3.2.29} has the following two properties

             \begin{align}
                Q(\frac{\pi}{2}-\gamma)=&-Q(\frac{\pi}{2}+\gamma)\label{equ:3.1.36} \\
                \int_{0}^{\frac{\pi}{2}-\theta}Q(s)ds=&\int_{\pi}^{\frac{\pi}{2}+\theta }Q(s)ds \label{equ:3.1.37}
             \end{align}

             \noindent for all $\theta\in(0,\frac{\pi}{2})$
        \end{lemma}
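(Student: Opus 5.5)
The plan is to use the reflection symmetry $\theta\mapsto\pi-\theta$ of the sessile equilibrium together with the parity properties of $\xi_s$ from Lemma~\ref{thm:xi_2sym}. In the sessile case the profile satisfies $\rho_0(\tfrac{\pi}{2}-\gamma)=\rho_0(\tfrac{\pi}{2}+\gamma)$. Hence $\rho_0$ is even about $\tfrac{\pi}{2}$, $\rho_0'$ is odd, and $\rho_0''$ is even. By Lemma~\ref{thm:xi_2sym}, $\xi_s$ is odd about $\tfrac{\pi}{2}$ and $\xi_s'$ is even. (We take the derivative statement in the form $\xi_s'(\tfrac{\pi}{2}-\gamma)=\xi_s'(\tfrac{\pi}{2}+\gamma)$, which is what the differentiation in that proof actually gives.)

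\textbf{Step 1: the odd symmetry of $Q$.} Write $Q$ from \eqref{equ:3.2.29} as
\begin{align*}
Q=\frac{2\xi_s'}{\xi_s}-\frac{1}{\rho_0}\,\frac{\rho_0^{2}\rho_0'+3\rho_0\rho_0'\rho_0''-2\rho_0'^{3}}{\rho_0^{2}+\rho_0'^{2}}.
\end{align*}
Here I read the typographically garbled numerator and denominator in \eqref{equ:3.2.29} as the quantities coming from the coefficient of $\xi_s'$ in \eqref{equ:3.2.13}. Every term in that numerator carries an odd power of $\rho_0'$ multiplied by even quantities, so the numerator is odd about $\tfrac{\pi}{2}$. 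The denominator $\rho_0^2+\rho_0'^2$ and the factor $1/\rho_0$ are even. So the second summand is odd. In the first summand, $\xi_s'$ is even and $\xi_s$ is odd, so the quotient $\xi_s'/\xi_s$ is odd. Together these give \eqref{equ:3.1.36} for every $\gamma\in(0,\tfrac{\pi}{2})$.

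\textbf{Step 2: the integral identity.} In $\int_{\pi}^{\frac{\pi}{2}+\theta}Q(s)\,ds$, substitute $s=\pi-\tau$, which gives $-\int_{0}^{\frac{\pi}{2}-\theta}Q(\pi-\tau)\,d\tau$. By \eqref{equ:3.1.36} with $\gamma=\tfrac{\pi}{2}-\tau$, we have $Q(\pi-\tau)=-Q(\tau)$. The two minus signs cancel, and this yields \eqref{equ:3.1.37}.

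\textbf{Main obstacle.} The delicate point is that $\xi_s(\tfrac{\pi}{2})=0$ by \eqref{equ:3.2.16}, so $Q$ has a simple pole there. Both integrals in \eqref{equ:3.1.37} therefore have to stay on one side of $\tfrac{\pi}{2}$; the restriction $\theta\in(0,\tfrac{\pi}{2})$ guarantees this, since the upper limits $\tfrac{\pi}{2}\mp\theta$ are never $\tfrac{\pi}{2}$. I also need $\xi_s\neq0$ on $(0,\tfrac{\pi}{2})$. If $\xi_s$ had another zero there, I would check that the substitution still pairs the singularities symmetrically, so that the identity holds in the principal-value sense. Finally, I need to verify that the formula for $Q$ really has parity-odd structure term by term; this amounts to counting powers of $\rho_0'$ after correcting the typos in \eqref{equ:3.2.29}.
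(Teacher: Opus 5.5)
Your proposal is correct and follows essentially the same route as the paper. The paper also writes $Q=2\xi_s'/\xi_s$ minus the $\rho_0$-dependent quotient. The first term is odd about $\tfrac{\pi}{2}$ by Lemma~\ref{thm:xi_2sym}. The second is odd because every numerator monomial carries an odd power of $\rho_0'$, while $\rho_0$, $\rho_0''$ and the denominator are even. The paper then obtains \eqref{equ:3.1.37} from \eqref{equ:3.1.36} through the reflection $s\mapsto\pi-s$, which the paper merely asserts and you spell out. Your corrections of the typos in \eqref{equ:3.2.29} and your remark that both integrals stay away from the zero of $\xi_s$ at $\tfrac{\pi}{2}$ are consistent with how the paper uses the lemma. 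The paper's assumption that $\xi_s\neq 0$ elsewhere on $(0,\tfrac{\pi}{2})$ is only made explicit later, in the proof of Theorem~\ref{H^1}.
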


         \begin{proof}

             Using \eqref{equ:3.2.29}, we have:

             \begin{align}
                  Q(\theta)=&\frac{2\rho_{0}\xi_s'-\frac{\rho_{0}^{2}\rho_{0}'+3\rho_{0}\rho_{0}'\rho_{0}''-2\rho_{0}
             '^{3}}{\rho_{0}^{2}+\rho_{0}'^{2}}\xi_s}{\rho_{0}\xi_s}\notag\\
             =& 2\frac{\xi_s'}{\xi_s}-\frac{\rho_{0}^{2}\rho_{0}'+3\rho_{0}\rho_{0}'\rho_{0}''-2\rho_{0}'^{3}}{\rho_{0}(\rho_{0}^{2}+\rho_{0}'^{2})}\label{equ:3.1.38}
             \end{align}
             
            \noindent Using Lemma 5.6, we have $\frac{\xi_2'}{\xi_2}(\frac{\pi}{2}-\theta)=-\frac{\xi_2'}{\xi_2}(\frac{\pi}{2}+\theta)$. For the second term in \eqref{equ:3.1.38}, from the symmetry of $\rho_{0}$, we have the following properties

            \begin{align}
                \rho_{0}(\frac{\pi}{2}-\theta)=&\rho_{0}(\frac{\pi}{2}+\theta)\label{equ:3.1.60}\\
                \rho_{0}'(\frac{\pi}{2}-\theta)=&-\rho_{0}^{\prime}(\frac{\pi}{2}+\theta)\label{equ:3.1.61}\\
                \rho_{0}''(\frac{\pi}{2}-\theta)=&\rho_{0}(\frac{\pi}{2}+\theta)\label{equ:3.1.62}.
            \end{align}

            \noindent Applying \eqref{equ:3.1.60}-\eqref{equ:3.1.62} to the second term of \eqref{equ:3.1.38}, we obtain

            \begin{equation*}
        \frac{\rho_{0}^{2}\rho_{0}'+3\rho_{0}\rho_{0}'\rho_{0}''-2\rho_{0}'^{3}}{\rho_{0}(\rho_{0}^{2}+\rho_{0}'^{2})}(\frac{\pi}{2}+\theta)=-\frac{\rho_{0}^{2}\rho_{0}'+3\rho_{0}\rho_{0}'\rho_{0}''-2\rho_{0}'^{3}}{\rho_{0}(\rho_{0}^{2}+\rho_{0}'^{2})}(\frac{\pi}{2}-\theta),
            \end{equation*}

            \noindent which yields the following symmetry property
            \begin{equation*}
                Q(\frac{\pi}{2}-\gamma)=-Q(\frac{\pi}{2}+\gamma).
            \end{equation*}

            \noindent Finally, equation \eqref{equ:3.1.37} follows directly from equation \eqref{equ:3.1.36}.
         \end{proof}
         
         Examining the definition of function $Q(\theta)$ (equation \eqref{equ:3.2.29}), we observe that it has a critical point at $\theta=\frac{\pi}{2}$ since $\xi_s(\frac{\pi}{2})=0$. From equation \eqref{equ:3.1.37}, it suffices to figure out whether $\theta=\frac{\pi}{2}$ is a critical point for $\xi_4$ and $\xi_5$. We have the following theorem.

         \begin{theorem}{\label{H^1}}
             $\xi_6$ defined by equations  \eqref{equ:xi_6} is not a $H^{1}$ function in the interval $(0,\pi)$.
         \end{theorem}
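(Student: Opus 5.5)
The plan is to show that $\xi_6$ has a genuine jump discontinuity at $\theta=\frac{\pi}{2}$. Since every $H^{1}$ function on an interval has a continuous representative, a jump rules out $\xi_6\in H^{1}(0,\pi)$. On each half-interval $\xi_6=\chi\,\xi_s$, where $\chi'(\gamma)=I(\gamma)\,e^{-\int_{0}^{\gamma}Q}$ and $I(\gamma)=\int_0^\gamma e^{\int_0^sQ}\,ds$ (with base point $\pi$ on the right half). The whole argument is a local asymptotic analysis of $\chi$ and $\xi_s$ near $\frac{\pi}{2}$, followed by a symmetry argument that compares the two sides.

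\textbf{Step 1: local structure of $Q$.} By \eqref{equ:3.1.38}, $Q=2\xi_s'/\xi_s-R$, where $R=\frac{\rho_0^2\rho_0'+3\rho_0\rho_0'\rho_0''-2\rho_0'^3}{\rho_0(\rho_0^2+\rho_0'^2)}$ is smooth on $[0,\pi]$. Hence, for $\theta<\frac{\pi}{2}$,
\begin{align*}
e^{\int_0^\theta Q}=\frac{\xi_s(\theta)^2}{\xi_s(0)^2}\,e^{-\int_0^\theta R},\qquad e^{-\int_0^\theta Q}=\frac{\xi_s(0)^2}{\xi_s(\theta)^2}\,e^{\int_0^\theta R}.
\end{align*}
This requires $\xi_s\neq 0$ on $[0,\frac{\pi}{2})$, which I would check from $\xi_s(0)=1$ and the explicit formula \eqref{equ:rep_s}; if it fails, the same argument applies at the first zero. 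By the earlier theorem on the properties of $\xi_s$, we have $\xi_s(\frac{\pi}{2})=0$ and $\xi_s'(\frac{\pi}{2})=:a\neq0$, so $\xi_s(\theta)=a(\theta-\frac{\pi}{2})+O((\theta-\frac{\pi}{2})^2)$.

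\textbf{Step 2: one-sided limit.} The function $I$ is increasing, with positive, bounded integrand, so $I(\gamma)\to A:=I(\frac{\pi}{2})\in(0,\infty)$. Setting $K:=\xi_s(0)^2e^{\int_0^{\pi/2}R}>0$, Step 1 gives
\begin{align*}
\chi'(\gamma)=\frac{AK}{a^2(\gamma-\frac{\pi}{2})^2}\bigl(1+O(|\gamma-\tfrac{\pi}{2}|)\bigr).
\end{align*}
Integrating, $\chi(\theta)=\frac{AK}{a^2(\frac{\pi}{2}-\theta)}+O(|\log(\frac{\pi}{2}-\theta)|)$. Multiplying by $\xi_s$ then yields
\begin{align*}
\lim_{\theta\to\frac{\pi}{2}^-}\xi_6(\theta)=-\frac{AK}{a}=:L\neq0.
\end{align*}
This is the step I expect to be the main obstacle. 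One must check carefully that the logarithmic correction, multiplied by $\xi_s=O(\frac{\pi}{2}-\theta)$, really vanishes in the limit. One must also confirm that $A$ is finite and strictly positive, so that the leading coefficient does not degenerate.

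\textbf{Step 3: symmetry and the jump.} Using Lemma \ref{thm:Q_sym}, namely $\int_\pi^{\pi-u}Q=\int_0^uQ$, together with the substitution $\gamma=\pi-\gamma'$ in the right-hand formula of \eqref{equ:xi_6}, I would show that the right inner integral satisfies $I_R(\pi-\gamma')=-I(\gamma')$. It would then follow that $\chi(\frac{\pi}{2}+t)=\chi(\frac{\pi}{2}-t)$, so $\chi$ is even about $\frac{\pi}{2}$. Since Lemma \ref{thm:xi_2sym} says $\xi_s$ is odd about $\frac{\pi}{2}$, the product $\xi_6$ is odd, and therefore $\lim_{\theta\to\frac{\pi}{2}^+}\xi_6=-L$. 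Because $L\neq 0$, the one-sided limits differ, so $\xi_6$ has a jump at $\frac{\pi}{2}$. No function in $H^1(0,\pi)$ has such a jump, by the embedding $H^1\subset C^{0,1/2}$ in one dimension, and hence $\xi_6\notin H^1(0,\pi)$.
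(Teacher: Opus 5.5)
Your proposal is correct and follows the paper's own route. The simple zero of $\xi_s$ at $\frac{\pi}{2}$ forces $e^{-\int_0^\theta Q}\sim(\frac{\pi}{2}-\theta)^{-2}$ and hence a nonzero one-sided limit of $\chi\,\xi_s$; the symmetry lemmas then make $\xi_6$ odd about $\frac{\pi}{2}$, which produces a jump incompatible with $H^1$. Your exact identity $e^{\int_0^\theta Q}=\xi_s(\theta)^2\xi_s(0)^{-2}e^{-\int_0^\theta R}$, the explicit limit $L=-AK/a$, and your direct check that $\chi$ is even make rigorous what the paper only asserts, namely the lower bound $|\xi_6(\frac{\pi}{2}-\epsilon)|\geq M$ and the existence of the one-sided limits.
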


         \begin{proof}

          From Theorem \ref{thm:Q_sym}, it suffices to discuss the value of function $\xi_{6}$ when $\theta \in (0,\frac{\pi}{2})$. 

            Using the definition of $\xi_s$, we have

         \begin{align}{\label{est:xi_2}}
         \begin{aligned}
             \xi_{s}(\theta)>&0~\operatorname{when}~\theta\in(0,\frac{\pi}{2})\\
             \xi_{s}'(\theta)<&0~\operatorname{when}~\theta\in(\frac{\pi}{2}-\delta,\frac{\pi}{2})
             \end{aligned}
         \end{align}

         \noindent for some $\delta$ arbitrarily small. By Theorem 4.5, we have the following property

         \begin{align}{\label{est:xi_2pi}}
         \begin{aligned}
             \xi_{s}(\frac{\pi}{2})=0\\
             \xi_{s}'(\frac{\pi}{2})\neq 0
             \end{aligned}
         \end{align}
         
         \noindent Therefore, combining \eqref{est:xi_2} and \eqref{est:xi_2pi}, we obtain the following estimate near the point $\frac{\pi}{2}$:

         \begin{equation}{\label{equ:Q_asy}}
             Q(\theta)= \frac{2}{(\theta-\frac{\pi}{2})}+K(\theta)~\operatorname{when}~\theta\rightarrow \frac{\pi}{2}-
         \end{equation}

         \noindent where $K(\theta)$ is a smooth function of $\theta$. This equation implies that:

         \begin{equation}
              e^{-\int_{0}^{\frac{\pi}{2}}Q(\theta)d\theta}\sim \frac{1}{(\frac{\pi}{2}-\theta)^{2}}~\operatorname{when}~\theta \rightarrow \frac{\pi}{2}-\label{equ:asym1}
         \end{equation}
         \noindent Using the similar estimate, we have:
         \begin{align}
              e^{-\int_{\pi}^{{\theta}}Q(\theta)d\theta}\sim \frac{1}{(\frac{\pi}{2}-\theta)^{2}}~\operatorname{when}~\theta \rightarrow \frac{\pi}{2}+\label{equ:asym2},
         \end{align}
         \noindent which implies that:
         \begin{align}
              e^{-\int_{\pi}^{{\theta}}Q(\theta)d\theta}\sim \frac{1}{(\frac{\pi}{2}-\theta)^{2}}~\operatorname{when}~\theta \rightarrow \frac{\pi}{2}\label{equ:asym}.
         \end{align}

         \noindent Then using the previous discussion, we only need to focus on the value of $\xi_6$ when $\theta=\frac{\pi}{2}$.

         We write down the definition of $\xi_6$
             \begin{equation}{\label{equ:3.1.67}}
             \xi_6(\theta)=\left\{
             \begin{aligned}
             (\int_{0}^{\theta}(\int_{0}^{\gamma}e^{\int_{0}^{s}Q(t)dt}ds)e^{-\int_{0}^{\gamma}Q(s)ds}d\gamma)\xi_s~~\operatorname{when}~~\theta\in (0,\frac{\pi}{2})\\
             (\int_{\pi}^{\theta}(\int_{\pi}^{\gamma}e^{\int_{\pi}^{s}Q(t)dt}ds)e^{-\int_{\pi}^{\gamma}Q(s)ds}d\gamma)\xi_s~~\operatorname{when}~~\theta\in (\frac{\pi}{2},\pi)
             \end{aligned}
             \right.
         \end{equation}

          \noindent Using Theorem 5.4, we have the following asymptotic behavior of $\xi_{s}$ near point $\frac{\pi}{2}$:

         \begin{equation*}
             \xi_s(\theta)\sim \theta-\frac{\pi}{2} ~\operatorname{when}~\theta \rightarrow \frac{\pi}{2}-,
         \end{equation*}

         \noindent which implies that

         \begin{equation}{\label{equ:3.1.68}}
            L'|\theta-\frac{\pi}{2}|\leq \xi_s(\theta)\leq L''\vert (\theta-\frac{\pi}{2})\vert.
         \end{equation}
         for some constant $L''>0$. Therefore, using \eqref{equ:asym} and \eqref{equ:3.1.68} in \eqref{equ:3.1.67}, we obtain the following inequality

         \begin{equation}{\label{equ:3.1.69}}
             \vert\xi_6(\frac{\pi}{2}-\epsilon)\vert \geq |(C_1L'''\int_{0}^{\frac{\pi}{2}-\epsilon}\frac{1}{(\frac{\pi}{2}-t)^{2}}dt(\frac{\pi}{2}-\epsilon))|\geq M
         \end{equation}

         \noindent for some positive constant $M$ independent of $\epsilon$.  

         Also, form the definition \eqref{equ:3.1.67}, Theorem \ref{thm:Q_sym} and Theorem \ref{thm:xi_2sym}, we derive the following symmetry relation for $\xi_{6}$

         \begin{align}
             \xi_{6}(\frac{\pi}{2}-\theta)=-\xi_{6}(\frac{\pi}{2}+\theta)
         \end{align}
         \noindent for any $\theta\in (0,\frac{\pi}{2})$. Combining this with \eqref{equ:3.1.69}, we have

         \begin{align}
              \lim_{\theta\rightarrow \frac{\pi}{2}-}\xi_{6}(\theta)=-\lim_{\theta \rightarrow\frac{\pi}{2}+}\xi_{6}(\theta)\neq 0,
         \end{align}

         \noindent which implies that $\xi_{6}$ is not a continuous function. Therefore, $\xi_{6}$ is not a $H^{1}$ function either.
         \end{proof}

         Furthermore, we establish the following theorem for $\xi_{5}$:

         \begin{theorem}{\label{thm:H^1_1}}
             When $C_{1}=C_{2}$, $\xi_{5}$ defined by \eqref{equ:3.2.32} is a $H^{1}$ function in the interval $(0,\pi)$.
         \end{theorem}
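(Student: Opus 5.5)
The plan is to study $\xi_5=\chi\,\xi_s$ near the only singular point $\theta=\frac{\pi}{2}$. Away from that point, $Q$ is smooth on $[0,\frac{\pi}{2}-\delta]$ and $[\frac{\pi}{2}+\delta,\pi]$, so $\chi$ and hence $\xi_5$ are smooth there. Thus everything reduces to the local behaviour as $\theta\to\frac{\pi}{2}^{\pm}$, together with matching of the two branches in \eqref{equ:3.2.32}.

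First I would use the asymptotics \eqref{equ:Q_asy}, $Q(\theta)=\frac{2}{\theta-\frac{\pi}{2}}+K(\theta)$, together with \eqref{equ:asym1}–\eqref{equ:asym2}. These give
\[
\chi'(\theta)=C_1e^{-\int_0^\theta Q}=\frac{C_1 a(\theta)}{(\theta-\frac{\pi}{2})^{2}}
\]
on the left, and the analogous formula with $C_2$ on the right, where $a$ is smooth and positive up to $\frac{\pi}{2}$. Integrating, $\chi(\theta)=-\frac{C_1a(\frac{\pi}{2})}{\theta-\frac{\pi}{2}}+b\log|\theta-\frac{\pi}{2}|+(\text{bounded})$. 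Multiplying by $\xi_s=\xi_s'(\frac{\pi}{2})(\theta-\frac{\pi}{2})+O((\theta-\frac{\pi}{2})^2)$, which is available because $\xi_s'(\frac{\pi}{2})\neq0$ by \eqref{equ:3.2.17}, gives the one-sided limit
\[
\xi_5(\tfrac{\pi}{2}^-)=-C_1a(\tfrac{\pi}{2})\xi_s'(\tfrac{\pi}{2}).
\]
The $\log$ and $D_1$ contributions vanish in the limit. The same computation on the right gives $-C_2a(\frac{\pi}{2})\xi_s'(\frac{\pi}{2})$, where by the symmetry in Lemma \ref{thm:Q_sym} the coefficient $a(\frac{\pi}{2})$ is the same from both sides. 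Hence $C_1=C_2$ yields continuity at $\frac{\pi}{2}$. This is in contrast with $\xi_6$ in Theorem \ref{H^1}, whose one-sided limits are antisymmetric.

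Next I would bound $\xi_5'=\chi'\xi_s+\chi\xi_s'$ near $\frac{\pi}{2}$. The term $\chi'\xi_s$ equals $\frac{C_1a\,\xi_s'(\frac{\pi}{2})}{\theta-\frac{\pi}{2}}+O(1)$. The term $\chi\xi_s'$ equals $-\frac{C_1a(\frac{\pi}{2})\xi_s'(\frac{\pi}{2})}{\theta-\frac{\pi}{2}}+O(\log|\theta-\frac{\pi}{2}|)$. The $\frac{1}{\theta-\frac{\pi}{2}}$ singularities cancel, so $\xi_5'=O(|\log|\theta-\frac{\pi}{2}||)$ on each side, which is square integrable. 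Together with continuity across $\frac{\pi}{2}$, the two one-sided pieces glue into a function whose distributional derivative is this $L^2$ function with no Dirac mass. Therefore $\xi_5\in H^1(0,\pi)$.

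The main obstacle is making the cancellation rigorous. This requires a precise expansion of $e^{-\int Q}$ up to the next order: the $K(\theta)$ term produces the $\log$ term, and one must check that $a'(\frac{\pi}{2})$ enters only at $O(1)$ in $\chi\xi_s$ and at $O(\log)$ in the derivative. I would obtain this by writing $a(\theta)=\exp(-\int K)\cdot c$ and Taylor-expanding $a$ to first order at $\frac{\pi}{2}$. I would also verify the one-sided matching of the constants $a(\frac{\pi}{2}^\pm)$ via \eqref{equ:3.1.37}.
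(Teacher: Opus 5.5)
Your proposal is correct and follows essentially the same route as the paper's proof: the splitting $Q=\frac{2}{\theta-\frac{\pi}{2}}+K(\theta)$ gives a double pole of $e^{-\int Q}$, combined with the simple zero of $\xi_s$ at $\frac{\pi}{2}$ this gives continuity of $\xi_5$ across $\frac{\pi}{2}$, and the $\frac{1}{\theta-\frac{\pi}{2}}$ singularities cancel in $\chi'\xi_s+\chi\xi_s'$. Your version is slightly more careful than the paper's: computing the one-sided limits directly shows that the $D_1,D_2$ terms drop out, so $C_1=C_2$ (together with the matching of $a(\frac{\pi}{2}^{\pm})$ from \eqref{equ:3.1.37}) gives continuity, whereas the paper's symmetry claim $\xi_5(\frac{\pi}{2}-\gamma)=\xi_5(\frac{\pi}{2}+\gamma)$ would also require $D_2=-D_1$; and by tracking the possible $\log|\theta-\frac{\pi}{2}|$ term you only need $\xi_5'\in L^2$, not the $L^\infty$ bound the paper asserts.
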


         \begin{proof}
             We write down the definition of $\xi_{5}$ from equation \eqref{equ:3.2.32}:

              \begin{equation}
             \xi_5(\theta)=\left\{
             \begin{aligned}
             (\int_{0}^{\theta}C_{1}e^{-\int_{0}^{s}Q(t)dt}ds+D_{1} )\xi_s~~\operatorname{when}~~\theta\in (0,\frac{\pi}{2})\\
             (\int_{\pi}^{\theta}C_{2}e^{-\int_{\pi}^{s}Q(t)dt}ds+D_{2} )\xi_s~~\operatorname{when}~~\theta\in (\frac{\pi}{2},\pi)
             \end{aligned}
             \right.
         \end{equation}
         From the proof in \ref{H^1}, we have:

         \begin{align}{\label{equ:xi_5}}
              (\int_{0}^{\theta}C_{1}e^{-\int_{0}^{s}Q(t)dt}ds)=C_{3}\frac{1}{\frac{\pi}{2}-\theta}+K_{1}(\theta)
         \end{align}
         \noindent for some smooth function $K_{1}(\theta)$. Moreover, we have the symmetry property of $\xi_{5}$
         \begin{align}{\label{equ:xi_5_sym}}
             \xi_{5}(\frac{\pi}{2}-\gamma)=\xi_{5}(\frac{\pi}{2}+\gamma)
         \end{align}
         \noindent for any $\gamma\in (0,\frac{\pi}{2})$. Combining equation \eqref{equ:xi_5} and \eqref{equ:3.1.68}, we have:
         \begin{align}
             |\xi_{5}|\lesssim C_{3}L''
         \end{align}
         \noindent which implies that $\xi_5$ is a bounded function. Then combining this bounded result with \eqref{equ:xi_5_sym}, it follows that $\xi_{5}$ is a continuous function. 

         Taking derivative on function $\xi_5$ with respect to $\theta$, we have
         \begin{align}
             \xi_5'=C_{1}e^{-\int_{0}^{\theta}Q(t)dt}\xi_{s}+(\int_{0}^{\theta}C_{1}e^{-\int_{0}^{s}Q(t)dt}+D_{1})\xi_{s}'
         \end{align}
         \noindent Using equation \eqref{equ:Q_asy}, and $\xi_{s}=\xi_{s}'(\frac{\pi}{2})(\theta-\frac{\pi}{2})+O((\theta-\frac{\pi}{2})^{2})$, we have
         \begin{align}
         \begin{aligned}
             \xi_5'=&\tilde{C_{1}}\frac{1}{(\theta-\frac{\pi}{2})^{2}}\xi_{s}'(\frac{\pi}{2})(\theta-\frac{\pi}{2})+(\int_{0}^{\theta}\tilde{C}_{1}\frac{ds}{(s-\frac{\pi}{2})^{2}})\xi_{s}'+K_{1}(\theta)\\
             =& \tilde{C_{1}}\frac{1}{(\theta-\frac{\pi}{2})}\xi_{s}'(\frac{\pi}{2})-\tilde{C}_{1}\frac{1}{\theta-\frac{\pi}{2}}\xi_{s}'(\frac{\pi}{2})-\tilde{C}_{1}\frac{2}{\pi}\xi_{s}'(\frac{\pi}{2})+K_{1}(\theta)\\
             =&-\tilde{C}_{1}\frac{2}{\pi}\xi_{s}'(\frac{\pi}{2})+K_{1}(\theta)
             \end{aligned}
         \end{align}
        \noindent for some bounded function $K_{1}(\theta)$. This implies that $\xi_{5}'\in L^{\infty}$. Combine this with the fact that $\xi_{5}$ is continuous and smooth for any $\theta\neq \frac{\pi}{2}$, we obtain the fact that $\xi_{5}\in H^{1}$ 
         \end{proof}
         
         Now, we have well-defined functions $\xi_4$ and $\xi_{5}$. We proceed to examine whether they satisfy the prescribed boundary conditions. We first examine the function $\xi_5$, for which we have the following theorem.

        \begin{theorem}{\label{thm:xi_5}}
            If $\xi_{5}$ is {a function in} the kernel of $F_{0}$, then it does not satisfy the conservation law:
            \begin{align}
                \int_{0}^{\pi}\rho_{0}\xi_{5} d\theta=0
            \end{align}
        \end{theorem}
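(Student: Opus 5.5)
The plan is to split $\xi_5$ into a part proportional to $\xi_s$, which is automatically orthogonal to $\rho_0$, and a genuinely new part, and then show that the new part has a strictly positive pairing with $\rho_0$ whenever it is present. The statement is read in its nontrivial sense: if $C_1=C_2=0$ in \eqref{equ:3.2.32}, then $\xi_5$ is a multiple of $\xi_s$, which is not a new kernel element. So I would prove that, for $\xi_5\in H^1$ (hence $C_1=C_2$ by Theorem \ref{thm:H^1_1}) with $C_1\neq 0$, one has $\int_0^\pi\rho_0\xi_5\,d\theta\neq 0$.

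\textbf{Step 1: decomposition.} Write $\xi_5=\chi_0\,\xi_s+D(\theta)\,\xi_s$, where $D=D_1$ on $(0,\frac{\pi}{2})$, $D=D_2$ on $(\frac{\pi}{2},\pi)$, and
\begin{align*}
\chi_0(\theta)=C_1\int_0^\theta e^{-\int_0^s Q}\,ds \ \text{ on } (0,\tfrac{\pi}{2}),\qquad \chi_0(\theta)=C_1\int_\pi^\theta e^{-\int_\pi^s Q}\,ds \ \text{ on } (\tfrac{\pi}{2},\pi).
\end{align*}
From \eqref{equ:xi_5}, $\chi_0\sim C_3(\frac{\pi}{2}-\theta)^{-1}$ near $\frac{\pi}{2}$, while \eqref{equ:3.1.68} gives $|\xi_s|\sim|\theta-\frac{\pi}{2}|$. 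Hence $\chi_0\xi_s$ has finite one-sided limits at $\frac{\pi}{2}$. By Lemma \ref{thm:Q_sym} and Lemma \ref{thm:xi_2sym}, $\chi_0$ is odd about $\frac{\pi}{2}$ and $\xi_s$ is odd, so $\chi_0\xi_s$ is even about $\frac{\pi}{2}$ and continuous there. Consequently the continuity (indeed $H^1$) requirement on $\xi_5$ at $\frac{\pi}{2}$ places no obstruction on the constants $D_1,D_2$: the term $D\xi_s$ vanishes at $\frac{\pi}{2}$ in any case. I will also impose the boundary conditions of Lemma \ref{lem:bdd}. Evaluating \eqref{equ:3.2.38} or \eqref{equ:3.2.40} at the endpoints, where $\chi_0(0)=\chi_0(\pi)=0$ and $\xi_s(0)=-\xi_s(\pi)$, forces $D_1=D_2=:D$.

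\textbf{Step 2: reduction of the mass integral.} With $D$ constant, \eqref{equ:4.2.8} gives $\int_0^\pi\rho_0 D\xi_s\,d\theta=0$. Therefore
\begin{align*}
\int_0^\pi\rho_0\xi_5\,d\theta=\int_0^\pi\rho_0\,\chi_0\,\xi_s\,d\theta .
\end{align*}

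\textbf{Step 3: sign of the integrand.} Take $C_1>0$; the case $C_1<0$ follows by a global sign change. On $(0,\frac{\pi}{2})$ the integrand $e^{-\int Q}$ is positive, so $\chi_0>0$ there, and $\xi_s>0$ by \eqref{est:xi_2}. On $(\frac{\pi}{2},\pi)$ we have $\chi_0<0$, since it is an integral from $\pi$ down to $\theta<\pi$ of a positive function, and $\xi_s<0$. Also $\rho_0>0$ throughout. So the integrand $\rho_0\chi_0\xi_s$ is strictly positive away from $\frac{\pi}{2}$ and bounded near $\frac{\pi}{2}$ by Step 1. It follows that
\begin{align*}
\int_0^\pi\rho_0\xi_5\,d\theta=2\int_0^{\pi/2}\rho_0\chi_0\xi_s\,d\theta>0,
\end{align*}
which contradicts the conservation law \eqref{equ:4.3.1}.

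\textbf{Main obstacle.} The delicate point is Step 1, namely justifying $D_1=D_2$. It has to come from the boundary conditions in Lemma \ref{lem:bdd} together with the symmetry relations \eqref{equ:3.4.40}--\eqref{equ:3.4.41}, since continuity at $\frac{\pi}{2}$ alone does not force it. I would first try \eqref{equ:3.2.38} directly. In the alternative case \eqref{equ:3.2.40}, I would use $\xi_s'(0)/\xi_s(0)=\rho_0'(0)/\rho_0(0)$ from Theorem \ref{thm:bdd_s} together with $\chi_0'(0)=C_1\neq0$. This should show that \eqref{equ:3.2.40} can only hold with compatible constants $D_1$, $D_2$, or else forces $C_1=0$; I expect this case distinction to require a short explicit endpoint computation.
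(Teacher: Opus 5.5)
Your outline matches the paper's proof. You split off the $D\xi_s$ part, obtain $D_1=D_2$ from \eqref{equ:3.2.38}, discard $\int_0^\pi\rho_0D\xi_s\,d\theta$ by \eqref{equ:4.2.8}, and show that the $C_1$-part carries nonzero mass. Your Step 3 is an improvement on the paper. The paper only asserts that this term ``vanishes if and only if $C_1=0$''. Its displayed integrand, $2\int_0^{\pi/2}C_1e^{-\int_0^\gamma Q}\rho_0\,d\gamma$, also omits both the factor $\xi_s$ and the outer integration defining $\chi$. Your observation that $\rho_0\chi_0\xi_s$ is positive on both halves and bounded near $\frac{\pi}{2}$ is the missing justification.

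Two steps need repair.

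\emph{First gap: $C_1=C_2$.} Theorem~\ref{thm:H^1_1} states only that $C_1=C_2$ implies $\xi_5\in H^1$; you use the converse. The equality is essential, because if $C_1$ and $C_2$ have opposite signs the two halves in Step 3 can cancel. Your claim in Step 1 that $\chi_0$ is odd about $\frac{\pi}{2}$ already presupposes it. You can close this in either of two ways.
\begin{itemize}
\item Take it from \eqref{equ:3.2.39}, as the paper does. Since $\rho_0\xi_s'-\rho_0'\xi_s=0$ at both endpoints (Theorem~\ref{thm:bdd_s}), \eqref{equ:3.2.39} reduces to $-\rho_0(0)C_1\xi_s(0)=\rho_0(\pi)C_2\xi_s(\pi)$. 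With $\rho_0(0)=\rho_0(\pi)$ and $\xi_s(\pi)=-\xi_s(0)$, this gives $C_1=C_2$.
\item Prove the converse from continuity. By \eqref{equ:3.1.37}, the two one-sided weights $e^{-\int_0^sQ}$ and $e^{-\int_\pi^sQ}$ coincide under $s\mapsto\pi-s$. Together with \eqref{equ:3.4.40} this gives $(\chi_0\xi_s)(\pi-\theta)=\frac{C_2}{C_1}(\chi_0\xi_s)(\theta)$. For $C_1\neq0$ the left limit at $\frac{\pi}{2}$ is nonzero, by the asymptotics of $\chi_0$ and \eqref{equ:3.2.17}. Since $D\xi_s$ vanishes at $\frac{\pi}{2}$ for any $D_1,D_2$, continuity of the $H^1$ function $\xi_5$ then forces $C_1=C_2$.
\end{itemize}

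\emph{Second gap: the branch \eqref{equ:3.2.40}.} This condition does not yield $D_1=D_2$. At $\theta=0$ you have $\xi_5(0)=D_1\xi_s(0)$ and $\xi_5'(0)=C_1\xi_s(0)+D_1\xi_s'(0)$. So \eqref{equ:3.2.40} reads $\frac{C_1}{D_1}+\frac{\xi_s'(0)}{\xi_s(0)}=\frac{\rho_0'(0)}{\rho_0(0)}$. By Theorem~\ref{thm:bdd_s} this forces $C_1=0$, which contradicts your standing assumption $C_1\neq0$. That is how the paper disposes of this branch, and it is what you should write in place of the $D_1=D_2$ claim.
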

        \begin{proof}
           From Lemma \ref{lem:bdd}, it follows that $\xi_{5}$ satisfies given boundary conditions if $\xi_{5}$ is the kernel of $F_{0}$. Suppose that $\xi_{5}$ satisfy \eqref{equ:3.2.38} and \eqref{equ:3.2.39}. Then $D_{1}=D_{2}$ from \eqref{equ:3.2.38} and $\xi_{s}(0)=-\xi_{s}(\pi)$. Moreover, equation \eqref{equ:3.2.39} implies that
           \begin{align}
               C_{1}\xi_{s}(0)\rho_{0}(0)=-C_{2}\rho_{0}(\pi)\xi_{s}(\pi)
           \end{align}
           \noindent This shows that
           \begin{align}
               C_{1}=C_{2},
           \end{align}
           since $\rho_{0}(0)=\rho_{0}(\pi)$ and $\xi_{s}(0)=-\xi_{s}(\pi)$. Using Theorem \ref{thm:H^1_1} , $\xi_{5}$ is a $H^{1}$ function. Then we have the following computation using the symmetry properties

           \begin{align}
               \int_{0}^{\pi}\xi_{5}\rho_{0}=\int_{0}^{\pi}D_{1}\xi_{s}\rho_{0} d\gamma+2\int_{0}^{\frac{\pi}{2}}C_{1}e^{-\int_{0}^{\gamma}Q(s)ds}\rho_{0}d\gamma 
           \end{align}
           \noindent The first term on the right hand side vanishes since $\xi_{s}$ is perpendicular to $\rho_{0}$. However, the second term vanishes if and only if $C_{1}=0$. Therefore, $\xi_{5}=D_{1}\xi_{s}$ which contradicts to our assumption.

           Suppose $\xi_{5}$ satisfies \eqref{equ:3.2.40}. We have the following computation
           \begin{align}
               \frac{\xi_{5}'(0)}{\xi_{5}(0)}=\frac{C_{1}\xi_{s}(0)+D_{1}\xi_{s}'(0)}{D_{1}\xi_{s}(0)}=\frac{\rho_{0}'(0)}{\rho_{0}(0)}
           \end{align}
           Using the fact that $\frac{\xi_{s}'(0)}{\xi_{s}(0)}=\frac{\rho_{0}'(0)}{\rho_{0}(0)}$ from Theorem \ref{thm:bdd_s}, we have:
           \begin{align}
               \frac{C_{1}\xi_{s}(0)}{D_{1}\xi_{s}(0)}=\frac{\rho_{0}'(0)}{\rho_{0}(0)}-\frac{\xi_{s}'(0)}{\xi_{s}(0)}=0
           \end{align}
           \noindent which implies that $C_{1}=0$. Hence, we have $\xi_{5}(0)=D_{1}\xi_{s}$, which contradicts to our assumption that $\xi_{5}$ is linearly independent of $\xi_{s}$.

           In conclusion, $\xi_{5}$ does not satisfy the conservation law if it serves as another kernel of $F_{0}$.
        \end{proof}
        
        \begin{theorem}{\label{thm:xi_4}}
            If $\xi_{4}$ {is a function} in the kernel of $F_{0}$, and it  satisfies the conservation law
            \begin{align}
                \int_{0}^{\pi}\rho_{0}\xi_{4} d\theta=0,
            \end{align}
            \noindent then it is not a $H^{1}$ function.
        \end{theorem}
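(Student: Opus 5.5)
The plan mirrors the proofs of Theorem \ref{H^1} and Theorem \ref{thm:xi_5}. We argue by contradiction: assume $\xi_4=\xi_5+\xi_6$ lies in the kernel of $F_{0}$, satisfies $\int_0^\pi\rho_0\xi_4\,d\theta=0$, and is in $H^1(0,\pi)$. The aim is to show that these assumptions force constraints on the four constants $C_1,C_2,D_1,D_2$ in \eqref{equ:3.2.33} that are incompatible with continuity of $\xi_4$ at $\theta=\frac{\pi}{2}$.

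First, I would use Lemma \ref{lem:bdd}: $\xi_4$ satisfies either the pair \eqref{equ:3.2.38}--\eqref{equ:3.2.39} or the Robin-type condition \eqref{equ:3.2.40}.
\begin{itemize}
\item In the first case, the argument in Theorem \ref{thm:xi_5} applies. We evaluate \eqref{equ:3.2.33} at $0$ and $\pi$, noting that the $\xi_6$-part and its derivative vanish at the endpoints because the inner integrals start there. Combined with $\xi_s(0)=-\xi_s(\pi)$, $\rho_0(0)=\rho_0(\pi)$ and Theorem \ref{thm:bdd_s}, this gives $D_1=D_2$ and $C_1=C_2$.
\item In the second case, the computation at $\theta=0$ and $\theta=\pi$ gives $C_1=C_2=0$.
\end{itemize}
Either way, $\xi_5$ is then symmetric in the sense of \eqref{equ:xi_5_sym}, and by Theorem \ref{thm:H^1_1} it is in $H^1$.

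Second, since $\xi_4\in H^1$ and $\xi_5\in H^1$, the difference $\xi_6=\xi_4-\xi_5$ must be in $H^1$, hence continuous at $\frac{\pi}{2}$. This contradicts Theorem \ref{H^1}. The only subtlety is that the constants $C_1,C_2$ multiply the homogeneous part. The inhomogeneous part $\xi_6$ is fixed and carries the jump
\[
\lim_{\theta\to\frac{\pi}{2}^-}\xi_6=-\lim_{\theta\to\frac{\pi}{2}^+}\xi_6\neq0,
\]
obtained from the antisymmetry inherited from Lemma \ref{thm:Q_sym} and Lemma \ref{thm:xi_2sym} together with the lower bound \eqref{equ:3.1.69}. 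The symmetric $\xi_5$ cannot cancel an antisymmetric jump, because $\xi_5$ has equal one-sided limits.

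Third, I would check that the mass condition does not rescue the situation. The jump of $\xi_6$ does not depend on $C_i,D_i$, so the contradiction holds irrespective of $\int_0^\pi\rho_0\xi_4=0$; the mass condition only fixes $C_1$ in terms of $\int_0^\pi\rho_0\xi_6$.

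I expect the main obstacle to be making the one-sided limits of $\xi_6$ rigorous. The asymptotics \eqref{equ:asym} give $e^{-\int Q}\sim(\frac{\pi}{2}-\theta)^{-2}$. The expression for $\xi_6$ also contains the factor $e^{+\int Q}\sim(\frac{\pi}{2}-\theta)^{2}$ inside the inner integral, so its behaviour requires a careful two-term expansion. I would first try to compute
\[
\xi_s(\theta)\int_0^\theta e^{-\int_0^\gamma Q}\Bigl(\int_0^\gamma e^{\int_0^s Q}\,ds\Bigr)d\gamma
\]
explicitly near $\frac{\pi}{2}$ using \eqref{equ:Q_asy} and \eqref{equ:3.1.68}. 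Should this limit be zero, the plan would switch to showing $\xi_6'\notin L^2$ through a $(\frac{\pi}{2}-\theta)^{-1}$ term instead.
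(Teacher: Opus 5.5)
Your argument is valid given the results already established in the paper, but it disposes of the homogeneous part $\xi_5$ differently.

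The paper uses the mass condition to kill $C_1$. Since $\xi_6$ is odd about $\frac{\pi}{2}$, one has $\int_0^\pi\rho_0\xi_6\,d\theta=0$. Once Lemma~\ref{lem:bdd} gives $C_1=C_2$ and $D_1=D_2$, the hypothesis reduces to $\int_0^\pi\rho_0\xi_5\,d\theta=0$, which by the computation in Theorem~\ref{thm:xi_5} forces $C_1=0$. Then $\xi_4=\xi_6+D_1\xi_s$, and Theorem~\ref{H^1} finishes.

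You instead leave $C_1=C_2$ free and use Theorem~\ref{thm:H^1_1} to get $\xi_5\in H^1$, so that $\xi_4\in H^1$ would force $\xi_6\in H^1$. The paper's route never needs the cancellation behind Theorem~\ref{thm:H^1_1}, because the only homogeneous piece left is $D_1\xi_s$, which is smooth and vanishes at $\frac{\pi}{2}$. The price is that it uses the oddness of $\xi_6$ and the claim that $\int_0^\pi\rho_0\xi_5\,d\theta=0$ only when $C_1=0$. Your route shows that, once \eqref{equ:3.2.39} yields $C_1=C_2$, the explicit mass condition in the statement plays no further role.

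Two smaller points. First, with $D_1=D_2\neq 0$, $\xi_5$ is not symmetric in the sense of \eqref{equ:xi_5_sym}, because $D_1\xi_s$ is odd about $\frac{\pi}{2}$. This is harmless: you only use continuity of $\xi_5$ at $\frac{\pi}{2}$, and $\xi_s(\frac{\pi}{2})=0$.

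Second, your closing worry is both unnecessary and unfounded. It is unnecessary because Theorem~\ref{H^1} already supplies the jump. It is unfounded because the inner integral $\int_0^\gamma e^{\int_0^s Q}\,ds$ has a positive bounded integrand, so it increases to a positive limit as $\gamma\to\frac{\pi}{2}^-$. Hence $\xi_6/\xi_s\sim c\,(\frac{\pi}{2}-\theta)^{-1}$ as $\theta\to\frac{\pi}{2}^-$ with $c>0$, and $\xi_6$ has a nonzero one-sided limit. There is no delicate cancellation and no need for an $L^2$ fallback.

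Be aware, finally, that both routes rest entirely on Theorem~\ref{H^1}. The multiplier $\xi_6/\xi_s$ of \eqref{equ:xi_6} solves $\chi''+Q\chi'=1$. Inserting $\chi\xi_s$ into \eqref{equ:3.2.7} with $C=1$ instead gives $\chi''+Q\chi'=-\sigma^{-1}(\rho_0^2+\rho_0'^2)^{3/2}(\rho_0\xi_s)^{-1}$. Since \eqref{equ:3.2.7} has nonvanishing leading coefficient $-\sigma\rho_0(\rho_0^2+\rho_0'^2)^{-3/2}$, its genuine solutions are smooth. So the jump is an artifact of that formal $\xi_6$. This is inherited from the paper, not a defect of your proposal relative to it.
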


        \begin{proof}
        Noticing that:
        \begin{align}
            \xi_{4}=\xi_{5}+\xi_{6},
        \end{align}
        \noindent where
        \begin{equation}
             \xi_6(\theta)=\left\{
             \begin{aligned}
             (\int_{0}^{\theta}(\int_{0}^{\gamma}e^{\int_{0}^{s}Q(t)dt}ds)e^{-\int_{0}^{\gamma}Q(s)ds}d\gamma)\xi_s~~\operatorname{when}~~\theta\in (0,\frac{\pi}{2})\\
             (\int_{\pi}^{\theta}(\int_{\pi}^{\gamma}e^{\int_{\pi}^{s}Q(t)dt}ds)e^{-\int_{\pi}^{\gamma}Q(s)ds}d\gamma)\xi_s~~\operatorname{when}~~\theta\in (\frac{\pi}{2},\pi)
             \end{aligned}
             \right.
         \end{equation}
         From a simple observation, we know that $\xi_{6}(0)=\xi_{6}'(0)=\xi_{6}(\pi)=\xi_{6}'(\pi)=0$ and $\xi_{6}(\frac{\pi}{2}+\theta)=-\xi_{6}(\frac{\pi}{2}-\theta)$ for any $\theta\in (0,
         \frac{\pi}{2})$. Therefore,

         \begin{align}
             \int_{0}^{\pi}\xi_{6}\rho_{0}d\theta=0
         \end{align}
         
            If $\xi_{4}$ satisfies the boundary conditions \eqref{equ:3.2.38} and \eqref{equ:3.2.39} or satisfies \eqref{equ:3.2.40}, we use the same computation as in Theorem \ref{thm:xi_5} to show that $C_{1}=C_{2}$ and $D_{1}=D_{2}$. Then using the same discussion as in Theorem \ref{thm:xi_5}, we have:
            \begin{align}
                \int_{0}^{\pi}\xi_{4}\rho_{0}d\theta=\int_{0}^{\pi}\xi_{5}\rho_{0}d\theta+\int_{0}^{\pi}\xi_{6}\rho_{0}d\theta=\int_{0}^{\pi}\xi_{5}\rho_{0}d\theta
            \end{align}
            \noindent which vanishes if and only if $C_1=0$. Then $\xi_{4}=\xi_{6}+D_{1}\xi_{s}$. Using Theorem \ref{H^1}, $\xi_{4}$ is not a $H^{1}$ function. This finishes the proof.
        \end{proof}

     \begin{theorem}{\label{thm:uni_ker}}
         If $\xi\in D:=\{f\in H^{1}|\int_{0}^{\pi}f\rho_{0}=0\}$ such that $F_{0}(\xi)=0$ then $\xi=C\xi_s$ for some constant C,
     \end{theorem}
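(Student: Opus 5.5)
The plan is to assemble the kernel characterization from the preceding results. Let $\xi\in D$ with $F_{0}(\xi)=0$. Since $F_0\ge 0$ on the mass-preserving class, Theorem \ref{thm:kernel} (extended by density from $B$ to $D$, using that $H$ is continuous on $H^1$) gives $H(\xi,\eta)=0$ for every $\eta\in B$. Theorem \ref{thm:ode} then shows that $\xi$ solves the second-order ODE \eqref{equ:3.2.7} weakly with some constant $C$. Away from $\theta=\pi/2$ the coefficient of $\xi''$ is $\sigma\rho_0/(\rho_0^2+\rho_0'^2)^{3/2}>0$, so $\xi$ is smooth there and solves the ODE classically. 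Finally, Lemma \ref{lem:bdd} shows that $\xi$ obeys either the pair \eqref{equ:3.2.38}--\eqref{equ:3.2.39} or the condition \eqref{equ:3.2.40}.

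Next I would describe the solution space explicitly. On each of $(0,\pi/2)$ and $(\pi/2,\pi)$, where $\xi_s\neq 0$, reduction of order with $\xi=\chi\xi_s$ turns the ODE into $\chi''+Q\chi'=C$ (up to a positive factor). Its general solution is $\chi\xi_s=\xi_5+C\xi_6$, where $\xi_5$ is given by \eqref{equ:3.2.32} with free constants $C_1,D_1,C_2,D_2$, and $\xi_6$ is given by \eqref{equ:xi_6}. Thus every kernel element has the form of $\xi_5$ when $C=0$, or of $\xi_4$ after scaling when $C\neq0$.

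The argument then splits into two cases. If $C\neq 0$, Theorem \ref{thm:xi_4} shows that the boundary conditions of Lemma \ref{lem:bdd} together with the mass constraint $\int\rho_0\xi=0$ force $\xi-D_1\xi_s$ to be a nonzero multiple of $\xi_6$, which is not in $H^1$ by Theorem \ref{H^1}. This contradicts $\xi\in D\subset H^1$. If $C=0$, then $\xi=\xi_5$. The same boundary computation as in Theorem \ref{thm:xi_5} gives $D_1=D_2$ and $C_1=C_2$. Symmetry of $\rho_0$ and Lemma \ref{thm:Q_sym} reduce the mass integral to $2C_1\int_0^{\pi/2}e^{-\int_0^\gamma Q}\rho_0\,d\gamma$. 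The integrand is positive, so this integral vanishes only if $C_1=0$. Hence $\xi=D_1\xi_s$, and $\xi_s\in D$ by \eqref{equ:4.2.8}, which gives the claim.

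The main obstacle is matching the two half-intervals across the singular point $\pi/2$. One has to justify that an $H^1$ weak solution of \eqref{equ:3.2.7} on all of $(0,\pi)$ is given on each side by the reduction-of-order formulas with independent constants. One also has to justify that $H^1$ regularity, that is, continuity at $\pi/2$ together with the weak form tested across $\pi/2$, is what kills the $\xi_6$ component. The first thing I would try is to use the asymptotics \eqref{equ:Q_asy}, namely $e^{-\int Q}\sim(\pi/2-\theta)^{-2}$, to compute the one-sided limits of $\xi$ and $\xi'$ at $\pi/2$ for each basis element, and then read off the constraints from continuity of $\xi$ and from the weak formulation near $\pi/2$.
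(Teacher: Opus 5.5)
Your outline is the paper's own route, but the step that disposes of $C\neq0$ fails. The trouble is not the matching at $\pi/2$ that you flag.

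Equation \eqref{equ:3.2.7} is a regular linear ODE on all of $[0,\pi]$. The coefficient of $\xi''$ is $-\sigma\rho_0/(\rho_0^2+\rho_0'^2)^{3/2}$, which does not vanish at $\pi/2$ either, so every $H^1$ weak solution is smooth on $[0,\pi]$. Only the substitution $\xi=\chi\xi_s$ is singular there. That substitution also does not give $\chi''+Q\chi'=C$ up to a positive factor. It gives $\chi''+Q\chi'=-C(\rho_0^2+\rho_0'^2)^{3/2}/(\sigma\rho_0\xi_s)$, whose right side changes sign and blows up at $\pi/2$. Hence $\xi_6$ in \eqref{equ:xi_6}, which solves $\chi''+Q\chi'=1$, is not a solution of \eqref{equ:3.2.7} with $C=1$, and its discontinuity says nothing about kernel elements. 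With the correct right side, the same symmetry computation makes the particular solution even about $\pi/2$, hence continuous.

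In fact no regularity argument can exclude $C\neq0$. Test $H(\xi,\eta)=0$ against all $\eta\in D$, not only $\eta\in\{\xi_s,\xi\}$ as in Lemma \ref{lem:bdd}. This gives the natural conditions $\rho_0\xi'-\rho_0'\xi=0$ at $\theta=0$ and $\theta=\pi$. The associated self-adjoint Sturm--Liouville operator has kernel $\operatorname{span}\{\xi_s\}$, and $\int_0^\pi\rho_0\xi_s=0$ by \eqref{equ:4.2.8}. By the Fredholm alternative there is therefore always a smooth $\xi_p$ solving \eqref{equ:3.2.7} with $C=1$ and both boundary conditions. For every $\eta\in H^1$, $H(\xi_p,\eta)$ is then a fixed multiple of $\int_0^\pi\rho_0\eta$. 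So if $\int_0^\pi\rho_0\xi_p=0$, then $\xi_p\in D$ is a kernel element not proportional to $\xi_s$.

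The missing ingredient is a proof that $\int_0^\pi\rho_0\xi_p\neq0$. This is a genuine nondegeneracy property of the equilibrium, not something symmetry or $H^1$ considerations can give. If the equilibria form a smooth family in $P_0$, a multiple of $\partial_{P_0}\rho_0$ is such a $\xi_p$, and $\int_0^\pi\rho_0\,\partial_{P_0}\rho_0=\frac12\,dV/dP_0$, so one needs $dV/dP_0\neq0$. Nothing in your outline or in the cited lemmas supplies this.

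By contrast, the case $C=0$ is immediate from these boundary conditions. Since $\xi_s(0)=1$ and $\rho_0(0)\xi_s'(0)=\rho_0'(0)$, uniqueness for the initial value problem at $\theta=0$ gives $\xi=\xi(0)\xi_s$. Your mass-integral step there is not correct as written anyway, since $e^{-\int_0^\gamma Q}\sim(\pi/2-\gamma)^{-2}$ is not integrable up to $\pi/2$.
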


     \begin{proof}
         Using Theorem \ref{thm:kernel}, we know that $\xi\in H^{2}$ if $F_{0}(\xi)=0$. Therefore, it satisfies equation \eqref{equ:3.2.7}. Using Theorem \ref{thm:xi_5} and \ref{thm:xi_4}, it holds that $\xi=C\xi_{s}$ if $\xi$ is subject to the conservation law and boundary condition at the same time. Therefore, we have the theorem proved.
     \end{proof}
     
Now, we have already established the fact that $\xi_{s}$ serves as the unique basis of the kernel of the second variation of the energy functional. We then want to show the positivity of the inner product (1,$\Sigma$) in some functional space here. We begin with defining three functions.  Assume that $\xi$ is the perturbation function we defined in Section 5.1. We have the following relation by using conservation of total mass.

\begin{equation*}
    \int_{0}^{\pi} \xi\rho=0
\end{equation*}
\noindent which implies that
\begin{equation*}
    \int_{0}^{\pi} \xi\rho_0=-\frac{1}{2}\int_{0}^{\pi} \xi^{2}d\theta
\end{equation*}
\noindent Taking time derivative on both sides of the equation, we obtain that

\begin{equation}
   \int_{0}^{\pi}\rho\partial_{t}\xi=0.
\end{equation}

\noindent After taking another temporal derivative, we have

\begin{equation}
    \int_{0}^{\pi}\partial_{tt}\xi\rho_{0}=-\int_{0}^{\pi}(\partial_{t}\xi)^{2}-\int_{0}^{\pi}\partial_{t}^{2}\xi \xi.
\end{equation}

\noindent From the equations above, we define three functions with respect to time $t$ which are expressed as follows

\begin{align}{\label{equ:a}}
\begin{aligned}
    a_{0}(t)=&\frac{-\frac{1}{2}\int_{0}^{\pi}\xi^{2}d\theta}{\int_{0}^{t}\rho_{0}^{2}d\theta}\\
     a_{1}(t)=&\frac{-\int_{0}^{\pi}\xi \partial_{t}\xi d\theta}{\int_{0}^{t}\rho_{0}^{2}d\theta}\\
       a_{2}(t)=&\frac{-\int_{0}^{\pi} (\partial_{t}\xi)^{2} d\theta-\int_{0}^{\pi}\partial_{t}^{2}\xi \xi}{\int_{0}^{t}\rho_{0}^{2}d\theta}.
\end{aligned}
\end{align}

\noindent These functions $a_{i}(t)$ are nicely chosen such that

\begin{align}
    \int_{0}^{\pi}\rho_{0}(\partial_{t}^{i}\xi-a_{i}(t)\rho_{0})d\theta=0{\label{equ:vanish}}
\end{align}

In addition, it holds that $\partial_{t}^{i}\xi$ is perpendicular to $\xi_s$ which is the shift function by equation \eqref{equ:4.1.8}. This is expressed as follows

\begin{equation}{\label{equ:3.1.26}}
    \int_{0}^{\pi}\partial_{t}^{i}\xi\xi_sd\theta=0
\end{equation}

\noindent Using the definition of $a_{i}$, equation\eqref{equ:3.1.26}, and equation \eqref{equ:3.2.8}. We have

\begin{align}
    \int_{0}^{\pi}(\partial_{t}^{i}\xi-a_{i}(t)\rho_{0})\rho_{0}d\theta=0\label{equ:3.1.27}
\end{align}
\noindent Therefore, equations \eqref{equ:vanish} and \eqref{equ:3.1.27} imply that $\p_{t}^{i}\xi-a_{i}(t)\rho_{0}\in \tilde{B}$ defined as follows
     \begin{align}
        \tilde{B}:=\{\rho\in H^{1}|\int_{0}^{\pi}\rho\rho_{0}d\theta=0,\int_{0}^{\pi}\rho\xi_2d\theta=0\}
    \end{align}

Having proven the property that $\xi_{s}$ is the unique kernel of the $1,\Sigma$ inner product from Theorem \ref{thm:uni_ker}, we then show the following lemma to establish the positivity of the inner product. 

\begin{theorem}{\label{thm:pos}}
    Suppose that $\xi_{s}$ is the horizontal shift function and $\xi$ is any perturbation function defined above. Moreover, suppose that $\mathcal{E}(u,p,\xi)(t)\leq \delta\ll1$ when $t\in(0,T)$ for some $T>0$. $a_{i}(t)$ are functions defined above. We have the following vanishing and positive estimate for $a_{i}(t)$ and $\xi_{s}$

    \begin{equation}
        (\partial_{t}^{i}\xi-a_{i}(t)\rho_{0},\xi_s)_{1,\Sigma}=0,
    \end{equation}

    \noindent and 

    \begin{equation}{\label{equ:p_0}}
        (\partial_{t}^{i}\xi-a_{i}(t)\rho_{0},\partial_{t}^{i}\xi-a_{i}(t)\rho_{0})_{1,\Sigma}\geq \delta \vert \vert \partial_{t}^{i}\xi-a_{i}(t)\rho_{0}\vert \vert^{2}_{H^{1}}~~\operatorname{for~i=\{0,1,2\}}.
    \end{equation}

    \noindent where $\delta$ is a constant independent of time $t$ and the function $\xi$. Moreover, we have the following estimates

    \begin{align}{\label{equ:p_1}}
        (\xi-a_{0}(t)\rho_{0},\xi-a_{0}(t)\rho_{0})_{1,\Sigma}\geq \delta \vert \vert \xi\vert \vert^{2}_{H^{1}}-\|\xi\|^{4}_{L^{2}},
    \end{align}

    \begin{equation}{\label{equ:p_2}}
          (\partial_{t}\xi-a_{1}(t)\rho_{0},\partial_{t}\xi-a_{1}(t)\rho_{0})_{1,\Sigma}\geq \delta_1 \vert \vert \partial_{t}\xi\vert \vert^{2}_{H^{1}}-\delta_{2}\|\xi\|^{4}_{L^{2}},
    \end{equation}

    \noindent and

    \begin{equation}{\label{equ:p_3}}
          (\partial_{t}^{2}\xi-a_{2}(t)\rho_{0},\partial_{t}^{2}\xi-a_{2}(t)\rho_{0})_{1,\Sigma}\geq \delta_1\vert \vert \partial_{t}^{2}\xi\vert \vert^{2}_{H^{1}}-\delta_2\vert \vert \partial_{t}\xi\vert \vert^{4}_{L^{2}}-\delta_3\vert \vert \partial_{t}^{2}\xi\vert \vert^{2}_{L^{2}}\|\xi\|_{L^{2}}^{2}.
        \end{equation}
\end{theorem}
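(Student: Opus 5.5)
The plan has three stages: the vanishing identity first, then coercivity on $\tilde B$ by a compactness argument, then the lower-order corrections from $a_i(t)$.

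\textbf{Vanishing identity.} Write $\zeta_i:=\partial_t^i\xi-a_i(t)\rho_0$. By \eqref{equ:vanish} and \eqref{equ:3.1.26}, together with $\int_0^\pi\rho_0\xi_s\,d\theta=0$ from \eqref{equ:4.2.8}, we have $\zeta_i\in\tilde B$; in particular $\zeta_i\in B$ (in its $H^1$ closure). By the Theorem preceding \eqref{equ:3.2.5}, $F_0(\xi_s)=0$ with $\xi_s\in B$. Theorem \ref{thm:kernel} then gives $H(\xi_s,\zeta)=0$ for every $\zeta\in B$, and by density for every $\zeta$ in the $H^1$ closure of $B$. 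The bilinear form $(\cdot,\cdot)_{1,\Sigma}$ is the polarization of $F_0$, i.e.\ it coincides with $H$ after the integration by parts in \eqref{equ:3.4.10}. Hence $(\zeta_i,\xi_s)_{1,\Sigma}=0$.

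\textbf{Coercivity on $\tilde B$.} I would argue by contradiction and compactness. Suppose there are $\zeta_n\in\tilde B$ with $\|\zeta_n\|_{H^1}=1$ and $(\zeta_n,\zeta_n)_{1,\Sigma}\to0$. The form splits as
\begin{align*}
(\zeta,\zeta)_{1,\Sigma}=\sigma\int_0^\pi\frac{\rho_0\zeta'^2}{(\rho_0^2+\rho_0'^2)^{3/2}}\,d\theta+\mathcal{L}(\zeta),
\end{align*}
where the principal part has a weight bounded below, since $\rho_0$ is smooth and positive, and $\mathcal{L}(\zeta)$ is bounded by $C\|\zeta\|_{L^2}\|\zeta\|_{H^1}$. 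Passing to a subsequence, $\zeta_n\rightharpoonup\zeta$ weakly in $H^1$ and strongly in $L^2$, with $\zeta\in\tilde B$. Lower semicontinuity gives $F_0(\zeta)\le0$, and $F_0\ge0$ on $B$ by minimality (Section 3), so $F_0(\zeta)=0$. Theorem \ref{thm:uni_ker} then yields $\zeta=C\xi_s$, and orthogonality to $\xi_s$ forces $\zeta=0$. Strong $L^2$ convergence gives $\mathcal{L}(\zeta_n)\to0$, so the principal part tends to $0$, i.e.\ $\|\zeta_n'\|_{L^2}\to0$. Combined with $\|\zeta_n\|_{L^2}\to0$ this contradicts $\|\zeta_n\|_{H^1}=1$. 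This proves \eqref{equ:p_0} with a constant independent of $t$ and of $\xi$.

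\textbf{Corrections.} To pass from $\zeta_i$ back to $\partial_t^i\xi$, use $\|\partial_t^i\xi\|_{H^1}^2\le2\|\zeta_i\|_{H^1}^2+2a_i^2\|\rho_0\|_{H^1}^2$ and the formulas \eqref{equ:a}:
\begin{align*}
|a_0|&\lesssim\|\xi\|_{L^2}^2,\\
|a_1|&\lesssim\|\xi\|_{L^2}\|\partial_t\xi\|_{L^2},\\
|a_2|&\lesssim\|\partial_t\xi\|_{L^2}^2+\|\partial_t^2\xi\|_{L^2}\|\xi\|_{L^2}.
\end{align*}
Squaring gives \eqref{equ:p_1} and \eqref{equ:p_3} directly, after adjusting constants. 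For \eqref{equ:p_2}, bound $a_1^2\lesssim\|\xi\|_{L^2}^2\|\partial_t\xi\|_{L^2}^2\lesssim\delta\|\partial_t\xi\|_{H^1}^2$ using the smallness $\mathcal{E}\le\delta$, and absorb this term into the left-hand side. If one insists on the exact form $-\delta_2\|\xi\|_{L^2}^4$, apply Young's inequality to $a_1^2$ instead.

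\textbf{Main obstacle.} The delicate step is the compactness argument, specifically identifying the weak limit as a kernel element. The paper's kernel characterization, Theorem \ref{thm:uni_ker}, is stated for $H^1$ functions satisfying only the mass constraint. So the limit must be shown to stay in $\tilde B$, which holds because both constraints are continuous under $L^2$ convergence. Nonnegativity of $F_0$ must also extend from smooth $B$ to $H^1$, which follows by density since $F_0$ is continuous on $H^1$.
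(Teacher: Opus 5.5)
Your proposal is correct and follows essentially the same route as the paper. The vanishing identity comes from Theorem~\ref{thm:kernel} applied to $\xi_s$, coercivity on $\tilde B$ comes from a contradiction--compactness argument closed by Theorem~\ref{thm:uni_ker} and orthogonality to $\xi_s$, and the corrections come from the bounds on $a_i(t)$ in \eqref{equ:a}. The one difference is cosmetic: you normalize $\|\zeta_n\|_{H^1}=1$ and use the principal-part/lower-order splitting inside the contradiction, whereas the paper normalizes in $L^2$, obtains $L^2$-coercivity, and then upgrades it with a G\aa rding-type inequality; your correction step, with the factor $2$ in $\|\partial_t^i\xi\|_{H^1}^2\le 2\|\zeta_i\|_{H^1}^2+2a_i^2\|\rho_0\|_{H^1}^2$, is actually stated more carefully than the paper's.
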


\begin{proof}
    For the first relation, it follows from the fact that $\xi_s$ is the kernel of second variation and that $\p_{t}^{i}\xi-a_{i}(t)\rho_{0}\in B$. Using Theorem 5.2, we have this formula proved. Then we prove the second relation. Actually, we want to show that for any function $w(\theta)$ such that

    \begin{equation}{\label{equ:v_0}}
        \int_{0}^{\pi}w(\theta)\rho_{0}d\theta=0,
    \end{equation}

    \noindent and that

    \begin{equation}{\label{equ:v_1}}
        \int_{0}^{\pi}w(\theta)\xi_sd\theta=0,
    \end{equation}

    \noindent we have:

    \begin{equation*}
        (w,w)_{1,\Sigma}\geq \delta \vert \vert w\vert \vert_{H^{1}}
    \end{equation*}

    We proceed by a contradiction argument. Suppose that there is a sequence $w_{n}$  such that

    \begin{equation*}
        (w_{n},w_{n})_{1,\Sigma}\leq \frac{1}{n},
    \end{equation*}

    \noindent and

    \begin{equation*}
        \vert \vert w_{n}\vert \vert_{L^{2}}=1.
    \end{equation*}

    \noindent From the definition of the $(1,\Sigma)$ inner product, we obtain that the $H^{1}$ norm of $w_{n}$ is uniformly bounded. Then $w_{n}$ converges weakly to $w_0$ in $H^{1}$ and strongly in $L^{2}$, which implies that:

    \begin{equation*}
        (w_{0},w_{0})_{1,\Sigma}\leq \liminf_{n\rightarrow +\infty}(w_{n},w_{n})_{1,\Sigma}=0
    \end{equation*}

    \noindent Therefore, $w_{n}=c\xi_s$ for some positive constant c since $\xi_{s}$ serves as the unique basis of kernel of $(1,\Sigma)$ inner product. However, we also have the following relation from equation \eqref{equ:v_1}:

    \begin{equation}
        c\int_{0}^{\pi}\xi_s^{2}d\theta=\int_{0}^{\pi}\xi_sw_0 d\theta=0
    \end{equation}

    \noindent which implies that $c=0$. This contradicts to the fact that $\vert \vert w_0\vert \vert_{L^{2}}=1$. Therefore, we have

    \begin{equation}
        (w,w)_{1,\Sigma}\geq m\vert \vert w\vert \vert_{L^{2}}
    \end{equation}

    \noindent for some constant $m>0$. The definition of inner product yileds the following result

    \begin{equation}
        (w,w)_{1,\Sigma}\geq \delta_1\vert \vert w\vert \vert_{H^{1}}-\delta_2\vert \vert w\vert \vert_{L^{2}}
    \end{equation}

    \noindent for some positive constants $\delta_{1}$ and $\delta_{2}$. Then combining the two equations above, we obtain the following positivity result

    \begin{equation*}
         (w,w)_{1,\Sigma}\geq \delta\vert \vert w\vert \vert_{H^{1}}
    \end{equation*}

    \noindent for some $\delta$. Since $\p_{t}^{i}\xi-a_{i}(t)\rho_{0}$ satisfies equation \eqref{equ:v_0} and \eqref{equ:v_1}, we have the equation \eqref{equ:p_1} proved.

    Finally, we prove inequalities \eqref{equ:p_1} to \eqref{equ:p_3}. In general, we show the following inequality

    \begin{align}
        (\partial_{t}^{i}\xi-a_{i}(t)\rho_{0},\partial_{t}^{i}\xi-a_{i}(t)\rho_{0})_{1,\Sigma}\geq \delta \vert \vert \partial_{t}^{i}\xi-a(t)\rho_{0}\vert \vert^{2}_{H^{1}}\geq \delta \vert \vert \partial_{t}^{i}\xi\vert\vert^{2}_{H^{1}}-\delta a^{2}_{i}(t)\vert \vert \rho_{0}\vert \vert^{2}_{H^{1}} \label{equ:3.1.30}
    \end{align}

    \noindent for $i=0,1,2$. For simplicity, we only show the case where $i=2$. The proof of cases $i=0,1$ follow the same manner. We have the following estimate for $a_{2}(t)$

    \begin{align}
        a_{2}(t)=\frac{-\int_{0}^{\pi} (\partial_{t}\xi)^{2} d\theta-\int_{0}^{\pi}\partial_{t}^{2}\xi \xi}{\int_{0}^{t}\rho_{0}^{2}d\theta}\lesssim  \vert \vert \partial_{t}\xi\vert \vert^{2}_{L^{2}}+\vert \vert \partial^{2}_{t}\xi\vert \vert_{L^{2}}\vert \vert \xi\vert \vert_{L^{2}} \label{equ:3.1.31}
    \end{align}

    \noindent Then plugging \eqref{equ:3.1.31} into the equation \eqref{equ:3.1.30}, we obtain

    \begin{align}
         (\partial_{t}^{2}\xi-a_{2}(t)\rho_{0},\partial_{t}^{2}\xi-a_{2}(t)\rho_{0})_{1,\sigma}\geq \delta \vert \vert \partial_{t}^{2}\xi-a_{2}(t)\rho_{0}\vert \vert_{H^{1}}\geq \delta \vert \vert \partial_{t}^{2}\xi\vert\vert_{H^{1}}-\delta a^{2}_{2}(t)\vert \vert \rho_{0}\vert \vert_{H^{1}}\notag\\
         \geq \delta_1\vert \vert \partial_{t}^{2}\xi\vert \vert^{2}_{H^{1}}-\delta_2\vert \vert \partial_{t}\xi\vert \vert^{4}_{L^{2}}-\delta_3\vert \vert \partial_{t}^{2}\xi\vert \vert^{2}_{L^{2}}\|\xi\|_{L^{2}}^{2} \label{equ:3.1.32}
    \end{align}

    \noindent for some positive constant $\delta_1$, $\delta_2$ and $\delta_{3}$. Using the fact that $\mathcal{E}(u,\xi,p)\leq \delta\ll1$, inequality \eqref{equ:p_3} is proved.
\end{proof}

Now we have the positivity and a weak form of the equation system. We then want to show an estimate for the solution function of the equation system \eqref{equ:4.1.23}. Before this, we show estimates for the position of the pole and its temporal derivatives $\mathfrak{n}^{(i)}(t)$ first. We have the following theorem.

\begin{theorem}{\label{thm:gam}}

\noindent Suppose that $\mathcal{E}(u,p,\xi)(t)\leq \delta\ll1$ when $t\in(0,T)$ for some $T>0$. Then the following properties for $\mathfrak{n}^{i}(t)$ hold for $i=1,2,3$ and $t\in(0,T)$.
    \begin{equation}{\label{equ:3.1.33}}
        \vert \mathfrak{n}'(t)\vert\lesssim \vert \vert u\vert \vert_{H^{1}}
    \end{equation}

    \begin{equation}{\label{equ:3.1.34}}
         \vert \mathfrak{n}''(t)\vert\lesssim \vert \vert \p_{t}u\vert \vert_{H^{1}}+\mathcal{E}_{-}
    \end{equation}

    \begin{equation}{\label{equ:3.1.35}}
         \vert \mathfrak{n}'''(t)\vert\lesssim  \vert \vert \p_{t}^{2}u\vert \vert_{H^{1}}+\mathcal{E}_{-}
    \end{equation}
    \noindent where:
    \begin{align}
        \mathcal{E}_{-}=&\vert \vert u\vert \vert_{W^{2,q_{+}}}^{2}+\vert \vert \partial_{t}u\vert \vert_{H^{1+\frac{\epsilon_{-}}{2}}}+\vert \vert \partial_{t}^{2}u\vert \vert^{2}_{H^{0}}+\vert \vert p\vert \vert^{2}_{W^{1,q_{+}}}+\vert \vert \partial_{t}p\vert \vert_{L^{2}}^{2}\notag\\
       &+\sum_{k=0}^{2}(\vert \vert \partial_{t}^{k}u\vert \vert_{L^{2}}+\vert \vert \partial_{t}^{k}\xi\vert \vert_{H^{1}}^{2})+\vert \vert \xi\vert \vert_{W^{3-\frac{1}{q_{+}}}}+\vert \vert \partial_{t}\xi\vert \vert_{H^{\frac{3}{2}+\frac{\epsilon_{-}-\alpha}{2}}}^{2}+\vert \vert \partial_{t}^{2}\xi\vert \vert^{2}_{H^{1}}
    \end{align}
\end{theorem}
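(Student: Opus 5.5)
The plan is to work directly from the ODE \eqref{equ:4.1.18} for the pole, $\mathfrak{n}'(t)=\lambda(t)\int_{0}^{\pi}\frac{1}{\rho}u\cdot\mathcal{N}\,\xi_s\,d\theta$, and differentiate it in time. First I control $\lambda$. Since $\xi_3-\xi_s=\sin\theta(\frac{\rho'}{\rho}-\frac{\rho_0'}{\rho_0})$ is of size $\|\xi\|_{W^{1,\infty}}\lesssim\mathcal{E}^{1/2}\le\delta^{1/2}$ (using $\|\xi\|_{W^{3-1/q_+,q_+}}$ in $\mathcal{E}$ and Sobolev embedding), we get $\int_0^\pi\xi_s\xi_3\,d\theta=\int_0^\pi\xi_s^2\,d\theta+O(\delta^{1/2})$. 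Here $\int_0^\pi\xi_s^2>0$ because $\xi_s(0)=1$ by \eqref{equ:3.2.14}. So for $\delta$ small, $\lambda$ is bounded above and $\int\xi_s\xi_3$ is bounded away from zero. Likewise $\rho\ge\frac12\min\rho_0$ and $|\mathcal{N}|\lesssim1$. The trace inequality $\|u\|_{L^2(\Sigma)}\lesssim\|u\|_{H^1(\Omega)}$ then gives \eqref{equ:3.1.33}. Here $\xi_s$ is smooth and time independent in the moving frame (Theorem \ref{thm:bdd_s}).

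For \eqref{equ:3.1.34} I differentiate once:
\begin{align*}
\mathfrak{n}''=\lambda'\int_0^\pi\tfrac1\rho u\cdot\mathcal{N}\xi_s+\lambda\int_0^\pi\tfrac1\rho\partial_tu\cdot\mathcal{N}\xi_s+\lambda\int_0^\pi\Big(\partial_t\big(\tfrac1\rho\big)u\cdot\mathcal{N}+\tfrac1\rho u\cdot\partial_t\mathcal{N}\Big)\xi_s .
\end{align*}
The second term is $\lesssim\|\partial_tu\|_{H^1}$ by the trace argument above. The other terms are quadratic. The quantity $\lambda'=-\lambda^2\int\xi_s\partial_t\xi_3$ involves $\partial_t\xi$ and $\partial_t\xi'$, and $\partial_t\mathcal{N}$ involves $\partial_t\xi,\partial_t\xi'$. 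These are bounded in $L^\infty$ (or $L^2$) by $\|\partial_t\xi\|_{H^{3/2+(\epsilon_--\alpha)/2}}$, which appears in $\mathcal{E}_-$. Each product is then a product of two energy-level norms, which I absorb into $\mathcal{E}_-$ using that all these quantities are $\le1$.

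The third derivative is handled the same way. Differentiating twice produces the main term $\lambda\int\frac1\rho\partial_t^2u\cdot\mathcal{N}\xi_s\lesssim\|\partial_t^2u\|_{H^1}$ plus products of $(u,\partial_tu)$ with $(\partial_t\xi,\partial_t^2\xi,\lambda',\lambda'')$. The term $\lambda''$ contains $\int\xi_s\partial_t^2\xi_3$, which involves $\partial_t^2\xi'$ and $\partial_t^2\xi$ and $(\partial_t\xi')^2$. The step I expect to be the main obstacle is that $\partial_t^2\xi'$ is only in $L^2$ through $\|\partial_t^2\xi\|_{H^1}$, not in $L^\infty$. I would deal with this by keeping that factor inside the integral against the smooth weight $\xi_s\sin\theta/\rho$, so it needs only $L^2$ control. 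Alternatively, I could integrate by parts in $\theta$ to move the derivative onto $\xi_s\sin\theta/\rho$, where the boundary terms vanish because $\sin\theta=0$ at $0,\pi$. The terms multiplying $u$ require $u|_\Sigma\in L^\infty$, which follows from $\|u\|_{W^{2,q_+}}$. With these choices every remainder is a product of $\mathcal{E}_-$-type quantities, which yields \eqref{equ:3.1.35}.
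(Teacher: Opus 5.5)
Your proposal is correct and follows essentially the same route as the paper. You bound $\int_0^\pi \xi_s\xi_3\,d\theta$ from below using the smallness of $\xi$, then differentiate the formula for $\mathfrak{n}'$ once and twice. The leading term $\lambda\int_0^\pi \frac{1}{\rho}\partial_t^k u\cdot\mathcal{N}\,\xi_s\,d\theta$ is controlled by the trace theorem, and every remaining term is a product of energy-level norms (the paper's $I_1$--$I_4$ and $I_1$--$I_{12}$). Your treatment of $\partial_t^2\xi'$, which needs only $L^2$ control against a smooth weight, is also how the paper handles the $\lambda''$ and $\partial_t^2\mathcal{N}$ terms.
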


\begin{proof}

    \textbf{Step 1} We first show equation \eqref{equ:3.1.33}, From the definition of $\mathfrak{n}'(t)$, it suffices to show that

    \begin{align}
        \int_{0}^{\pi}\xi_{s}\xi_{3}d\theta\geq c
    \end{align}

    \noindent for some positive constant $c$. By definition of $\xi_{3}$, we have:

    \begin{align}
        \xi_{3}=\xi_{s}+(\frac{\rho'}{\rho}-\frac{\rho_{0}'}{\rho_{0}})\sin\theta
    \end{align}

    \noindent Therefore,

    \begin{align}
         \int_{0}^{\pi}\xi_{s}\xi_{3}d\theta\geq \int_{0}^{\pi}\xi_{s}^{2}d\theta-\int_{0}^{\pi}\xi_{s}(\frac{\rho'}{\rho}-\frac{\rho_{0}'}{\rho_{0}})\sin\theta\gtrsim \int_{0}^{\pi}\xi_{s}^{2}-\vert \vert \xi\vert \vert_{H^{1}}\vert \vert \xi_{s}\vert \vert_{L^{2}}\geq c,
    \end{align}

    \noindent where we used the smallness of $\vert \vert \xi(t)\vert \vert_{H^{1}}\leq \mathcal{E}(u,p,\xi)(t)$ and Cauchy inequality.

    \textbf{Step 2} We now show the estimate for the second derivative of $\mathfrak{n}(t)$. By definition,

    \begin{align}
        \vert \mathfrak{n}''(t)\vert=\vert \partial_{t}(\mathfrak{n}'(t))\vert\lesssim& \frac{\vert \partial_{t}\lambda\vert}{\lambda^{2}}\int_{0}^{\pi}\frac{1}{\rho}\vert u\vert\vert \mathcal{N}\vert\vert \xi_{s}\vert d\theta+\frac{1}{\vert \lambda\vert}\int_{0}^{\pi}\frac{\vert \partial_{t}\xi\vert}{\rho^{2}}\vert u\vert\vert \mathcal{N}\vert \vert \xi_{s}\vert d\theta\notag\\
        &\quad+\frac{1}{\vert \lambda\vert}\int_{0}^{\pi}\frac{1}{\rho}\vert \p_{t}u\vert\vert \mathcal{N}\vert \vert \xi_{s}\vert d\theta+\frac{1}{\vert \lambda\vert}\int_{0}^{\pi}\frac{1}{\rho}\vert \p_{t}u\vert\vert \p_{t}\mathcal{N}\vert \vert \xi_{s}\vert d\theta\notag\\
        =&I_{1}+I_{2}+I_{3}+I_{4}.
    \end{align}
    We then estimate $I_{1}$ to $I_{4}$ individually.
    
    \textbf{Term $I_{1}$} Using the definition of $\lambda$ and trace theorem, we have

    \begin{align}
        I_{1}\lesssim \frac{\int_{0}^{\pi}\vert \xi_{s}\vert(\vert \partial_{t}\xi'\vert+\vert \partial_{t}\xi\vert)}{\lambda^{2}}\vert \vert u\vert \vert_{L^{2}(\Sigma)}\lesssim \vert \vert \partial_{t}\xi\vert \vert_{H^{1}}\vert \vert u\vert \vert_{H^{1}}\lesssim \mathcal{E}_{-}
    \end{align}

    \textbf{Term $I_{2}$} Using the definition of $I_{2}$, H\"older's inequality and trace theorem, we have

    \begin{align}
        I_{2}=\frac{1}{\vert \lambda\vert}\int_{0}^{\pi}\frac{\vert \partial_{t}\xi\vert}{\rho^{2}}\vert u\vert\vert \mathcal{N}\vert \vert \xi_{s}\vert d\theta\lesssim \|u\|_{L^{2}(\Sigma)}\|\p_{t}\xi\|_{L^{2}}\lesssim \vert \vert u\vert \vert_{H^{1}}\vert \vert \partial_{t}\xi\vert \vert_{L^{2}}\lesssim \mathcal{E}_{-}
    \end{align}

    \textbf{Term $I_{3}$} Using trace theorem, we have

    \begin{align}
        I_{3}=\frac{1}{\vert \lambda\vert}\int_{0}^{\pi}\frac{1}{\rho}\vert \p_{t}u\vert\vert \mathcal{N}\vert \vert \xi_{s}\vert d\theta\lesssim\|\p_{t}u\|_{L^{2}(\Sigma)}\lesssim \vert \vert \p_{t}u\vert \vert_{H^{1}}
    \end{align}

    \textbf{Term $I_{4}$} Using trace theorem, H\"older's inequality and the definition of $I_{4}$, we have:

    \begin{align}
        I_{4}=\frac{1}{\vert \lambda\vert}\int_{0}^{\pi}\frac{1}{\rho}\vert \p_{t}u\vert\vert \p_{t}\mathcal{N}\vert \vert \xi_{s}\vert\lesssim \|\p_{t}u\|_{H^{1}}\|\p_{t}\xi\|_{H^{1}}
    \end{align}
    
    \noindent Therefore, combining all of the estimates above, we obtain

    \begin{align}
       \vert \mathfrak{n}''(t)\vert\lesssim \vert \vert \partial_{t}u\vert \vert_{H^{1}}+\mathcal{E}_{-}
    \end{align}

    \textbf{Step 3} In this step, we Show the estimate for the third derivative of $\mathfrak{N}$. We have the following decomposition for $\mathfrak{n}^{(3)}(t)$ by definition:

    \begin{align}
    \vert \mathfrak{n}'''(t)\vert\lesssim &2\frac{(\partial_{t}\lambda)^{2}}{\vert \lambda\vert^{3}}\int_{0}^{\pi}\frac{1}{\rho}\vert u\vert\vert \mathcal{N}\vert+\frac{\vert \partial_{t}^{2}\lambda\vert}{\vert \lambda\vert^{2}}\int_{0}^{\pi}\frac{1}{\rho}\vert u\vert\vert \mathcal{N}\vert d \theta+2\frac{\vert \partial_{t}\lambda\vert}{\lambda^{2}}\int_{0}^{\pi}\frac{\vert\partial_{t}\xi\vert}{\rho^{2}}\vert u\vert\vert \mathcal{N}\vert d\theta\notag\\
    &+2\frac{\vert \partial_{t}\lambda\vert}{\lambda^{2}}\int_{0}^{\pi}\frac{1}{\rho}\vert \p_{t}u\vert\vert \mathcal{N}\vert d\theta+\frac{\vert \partial_{t}\lambda\vert}{\lambda^{2}}\int_{0}^{\pi}\frac{1}{\rho}\vert u\vert\vert \p_t\mathcal{N}\vert d\theta+\frac{1}{\vert \lambda\vert}\int_{0}^{\pi}\frac{\vert\partial_{t}\xi\vert^{2}}{\rho^{3}}\vert u\vert\vert \mathcal{N}\vert d\theta\notag\\
    &+\frac{1}{\vert\lambda\vert}\int_{0}^{\pi}\frac{\vert\partial_{t}^{2}\xi\vert}{\rho^{2}}\vert u\vert\vert \mathcal{N}\vert d\theta
    +\frac{1}{\vert\lambda\vert}\int_{0}^{\pi}\frac{\vert\partial_{t}\xi\vert}{\rho^{2}}\vert \p_{t}u\vert\vert \mathcal{N}\vert d\theta+\frac{1}{\vert\lambda\vert}\int_{0}^{\pi}\frac{1}{\rho}\vert \p_{t}^{2}u\vert\vert \mathcal{N}\vert d\theta\\
    &+\frac{1}{\vert\lambda\vert}\int_{0}^{\pi}\frac{|\p_{t}\xi|}{|\rho|^{2}}\vert u\vert\vert \p_{t}\mathcal{N}\vert d\theta+\frac{1}{\vert\lambda\vert}\int_{0}^{\pi}\frac{1}{\rho}\vert \p_{t}u\vert\vert \p_{t}\mathcal{N}\vert d\theta+\frac{1}{\vert\lambda\vert}\int_{0}^{\pi}\frac{1}{\rho}\vert u\vert\vert \p_{t}^{2}\mathcal{N}\vert d\theta=\Sigma_{i=1}^{12} I_{i}
    \end{align}
We then estimate $I_{1}$ to $I_{12}$ individually.

    \textbf{Term $I_{1}$}  By the definition of $I_{1}$ and $\lambda$ (Equation \eqref{equ:lambda}), and trace theorem, we have

    \begin{align}
        I_{1}=2\frac{(\partial_{t}\lambda)^{2}}{\vert \lambda\vert^{3}}\int_{0}^{\pi}\frac{1}{\rho}\vert u\vert\vert \mathcal{N}\vert\lesssim \vert \vert \partial_{t}\xi\vert \vert^{2}_{H^{1}}\vert \vert u\vert \vert_{H^{1}}\lesssim \mathcal{E_{-}}^{\frac{3}{2}}
    \end{align}

    \textbf{Term $I_{2}$} By the definition of $I_{2}$ and $\lambda$ (Equation \eqref{equ:lambda}), and trace theorem, we have

    \begin{align}
        I_{2}=\frac{\vert \partial_{t}^{2}\lambda\vert}{\vert \lambda\vert^{2}}\int_{0}^{\pi}\frac{1}{\rho}\vert u\vert\vert \mathcal{N}\vert d \theta\lesssim \vert \vert \partial_{t}^{2}\xi\vert \vert_{H^{1}}\vert \vert u\vert \vert_{L^{2}}+\vert \vert \partial_{t}\xi\vert \vert^{2}_{H^{1}}\vert \vert u\vert \vert_{L^{2}}\lesssim (\mathcal{E}_{-})
    \end{align}

    \textbf{Term $I_{3}$} By the definition of $I_{3}$ and $\lambda$ (Equation \eqref{equ:lambda}), and trace theorem, we have

    \begin{align}
        I_{3}=2\frac{\vert \partial_{t}\lambda\vert}{\lambda^{2}}\int_{0}^{\pi}\frac{\vert\partial_{t}\xi\vert}{\rho^{2}}\vert u\vert\vert \mathcal{N}\vert d\theta\lesssim \vert \vert \partial_{t}\xi\vert \vert_{H^{1}}\vert \vert \partial_{t}\xi\vert \vert_{L^{2}}\vert \vert u\vert \vert_{H^{1}}\lesssim (\mathcal{E}_{-})^{\frac{3}{2}}
    \end{align}

    \textbf{Term $I_{4}$} By the definition of $I_{4}$ and $\lambda$ (Equation \eqref{equ:lambda}), and trace theorem, we have

    \begin{align}
        I_{4}=2\frac{\vert \partial_{t}\lambda\vert}{\lambda^{2}}\int_{0}^{\pi}\frac{1}{\rho}\vert \p_{t}u\vert\vert \mathcal{N}\vert d\theta\lesssim \vert \vert \partial_{t}\xi\vert\vert_{H^{1}}\vert \vert \p_{t}u\vert \vert_{H^{1}}\lesssim \mathcal{E}_{-}.
    \end{align}
    
    \textbf{Term $I_{5}$} By the definition of $I_{5}$ and $\lambda$ (Equation \eqref{equ:lambda}), and trace theorem, we have

    \begin{align}
        I_{5}=\frac{\vert \partial_{t}\lambda\vert}{\lambda^{2}}\int_{0}^{\pi}\frac{1}{\rho}\vert u\vert\vert \p_t\mathcal{N}\vert d\theta\lesssim \int_{0}^{\pi}|\p_{t}\xi'|\int_{0}^{\pi}|u||\p_{t}\xi'|d\theta\lesssim \|\p_{t}\xi\|^{2}_{H^{1}}\|u\|_{W^{2,q_{+}}}\lesssim\mathcal{E}_{-}^{\frac{3}{2}}
    \end{align}
    
    \textbf{Term $I_{6}$}By the definition of $I_{6}$ and $\lambda$ (Equation \eqref{equ:lambda}), and trace theorem, we have

    \begin{align}
        I_{6}=\frac{1}{\vert \lambda\vert}\int_{0}^{\pi}\frac{\vert\partial_{t}\xi\vert^{2}}{\rho^{3}}\vert u\vert\vert \mathcal{N}\vert d\theta\lesssim \vert\vert \xi\vert \vert^{2}_{L^{\infty}}\vert \vert u\vert \vert_{H^{1}}\lesssim(\mathcal{E}_{-})^{\frac{3}{2}}
    \end{align}

    \textbf{Term $I_{7}$} By the definition of $I_{7}$ and $\lambda$ (Equation \eqref{equ:lambda}), and trace theorem, we have

    \begin{align}
        I_{7}=\frac{1}{\vert\lambda\vert}\int_{0}^{\pi}\frac{\vert\partial_{t}^{2}\xi\vert}{\rho^{2}}\vert u\vert\vert \mathcal{N}\vert d\theta\lesssim \vert \vert \partial_{t}^{2}\xi\vert\vert_{L^{2}}\vert \vert u\vert \vert_{L^{2}}\lesssim \mathcal{E}_{-}^{\frac{3}{2}}
    \end{align}

    \textbf{Term $I_{8}$} We have the following estimate using H\"older's inequality and trace theorem

    \begin{align}
        I_{7}=\frac{1}{\vert\lambda\vert}\int_{0}^{\pi}\frac{\vert\partial_{t}\xi\vert}{\rho^{2}}\vert \p_{t}u\vert\vert \mathcal{N}\vert d\theta\lesssim\vert \vert \partial_{t}\xi\vert \vert_{L^{2}}\vert \vert \p_{t}u\vert \vert_{L^{2}(\Sigma)}\lesssim \vert \vert \partial_{t}\xi\vert \vert_{L^{2}}\vert \vert \p_{t}u\vert \vert_{H^{1}}\lesssim \mathcal{E}_{-}
    \end{align}

    \textbf{Term $I_{9}$}: We have the following estimate using trace theorem

    \begin{align}
        I_{9}=\frac{1}{\vert\lambda\vert}\int_{0}^{\pi}\frac{1}{\rho}\vert \p_{t}^{2}u\vert\vert \mathcal{N}\vert d\theta\lesssim \vert \vert \p_{t}^{2}u\vert \vert_{H^{1}}
    \end{align}

     \textbf{Term $I_{10}$}:  We have the following estimate using H\"older's inequality and trace theorem

    \begin{align}
        I_{10}=\frac{1}{\vert\lambda\vert}\int_{0}^{\pi}\frac{|\p_{t}\xi|}{|\rho|^{2}}\vert u\vert\vert \p_{t}\mathcal{N}\vert d\theta\lesssim \int_{0}^{\pi}|\p_{t}\xi||u||\p_{t}\p_{\theta}\xi|\lesssim \|\p_{t}\xi\|_{L^{\infty}}\|u\|_{L^{\infty}(\Sigma)}\|\p_{t}\xi\|_{H^{1}}\lesssim \|u\|_{W^{2,q_{+}}}\|\p_{t}\xi\|^{2}_{H^{1}}\lesssim \mathcal{E}_{-}^{\frac{3}{2}}
    \end{align}

    \textbf{Term $I_{11}$}: We have the following estimate using H\"older's inequality and trace theorem

    \begin{align}
        I_{11}=\frac{1}{\vert\lambda\vert}\int_{0}^{\pi}\frac{1}{\rho}\vert \p_{t}u\vert\vert \p_{t}\mathcal{N}\vert d\theta\lesssim \int_{0}^{\pi}|\p_{t}u||\p_{t}\xi'|\lesssim \|\p_{t}u\|_{L^{\infty}(\Sigma)}\|\p_{t}\xi\|_{H^{1}}\lesssim \|\p_{t}u\|_{W^{2,q_{+}}(\Sigma)}\|\p_{t}\xi\|_{H^{1}}\lesssim\mathcal{E}_{-}
    \end{align}

    \textbf{Term $I_{12}$}: We have the following estimate using H\"older's inequality and trace theorem

    \begin{align}
        I_{12}=\frac{1}{\vert\lambda\vert}\int_{0}^{\pi}\frac{1}{\rho}\vert u\vert\vert \p_{t}^{2}\mathcal{N}\vert d\theta\lesssim \int_{0}^{\pi}|u||\p_{t}^{2}\xi'|\lesssim \|u\|_{L^{\infty}(\Sigma)}\|\p_{t}^{2}\xi\|_{H^{1}}\lesssim \|u\|_{W^{2,q_{+}}(\Sigma)}\|\p_{t}\xi\|_{H^{1}}\lesssim\mathcal{E}_{-}
    \end{align}

    Combining all of the estimates for $I_{i}$ above, we obtain the following result

    \begin{align}
        \vert \mathfrak{n}'''(t)\vert\lesssim \vert \vert \partial_{t}^{2}u\vert \vert_{H^{1}}+\mathcal{E}_{-}
    \end{align}

    \noindent This completes the proof.
\end{proof}

\section{The functional calculus for the capillary operator $\mathcal{K}$}

In this Section, we define a capillary operator with respect to the $1,\Sigma$ inner product defined by \eqref{equ:4.1.25}. For the detailed construction, we  refer to the construction in the paper of Guo-Tice \cite{Guo}.

       Let $\mathcal{K}$ denote the gravity capillary operator in polar coordinates. The definition of this operator is given as follows

      \textbf{Definition of $\mathcal{K}$}: Consider weak solution $\rho\in\{ \psi\in H^{1}(0,\pi)|\int_{0}^{\pi}\psi\rho_{0}=0, \int_{0}^{\pi}\psi\xi_sd\theta=0\}$ to the following equation system.

      \begin{equation}{\label{equ:5.1.62}}
          \begin{cases}
              \mathcal{K}\rho=f \\
              B_{\pm}\rho=h_{\pm}
          \end{cases}
      \end{equation}
      
      \noindent for any $f\in [H^{1}(0,\pi)]^{*}$ and $h:\{0,\pi\}\rightarrow \mathbb{R}$. The weak form of the equation system \eqref{equ:5.1.62} is defined as follows

      \begin{equation}{\label{equ:5.1.63}}
          (\rho,\psi)_{1,\Sigma}=(f,\psi)_{H^{0}}+[h,\psi]_{\theta}
      \end{equation}

      \noindent for any $\psi\in\{ \psi\in H^{1}(0,\pi)|\int_{0}^{\pi}\psi\rho_{0}=0, \int_{0}^{\pi}\psi\xi_sd\theta=0\}$. 
      
      The result of existence and uniqueness can be derived from the positivity of the inner product $1,\Sigma$ and Lax-Milgram theorem. Therefore, for any $f$ and $h_{\pm}$ in the function spaces defined above, there exists a unique $\rho\in\{ \psi\in H^{1}(0,\pi)|\int_{0}^{\pi}\psi\rho_{0}=0, \int_{0}^{\pi}\psi\xi_sd\theta=0\}$ such that equation \eqref{equ:5.1.63} is satisfied. Then we define $\mathcal{K}^{-1}$ to be
      \begin{align}
          \mathcal{K}^{-1}:  [H^1(0,\pi)]^{*} \rightarrow \{ \psi\in H^{1}(0,\pi)|\int_{0}^{\pi}\psi\rho_{0}=0, \int_{0}^{\pi}\psi\xi_sd\theta=0\}
      \end{align}
      \noindent such that
      \begin{align}
         \mathcal{K}^{-1}(f)=\rho
      \end{align}
      where $\rho$ is the unique solution to \eqref{equ:5.1.63} when $h=0$. Thus, $\mathcal{K}$ can be defined as the inverse map of $\mathcal{K}^{-1}$. Moreover, using a standard elliptic estimate, we obtain the following estimate for any solution $\rho$.

     \begin{equation}{\label{equ:5.1.64}}
         \vert \vert \rho\vert \vert_{H^{m+2}}\lesssim\vert \vert f\vert \vert_{H^{m}}+\|\rho_{0}\|_{H^{m+2}}+\|\xi_{s}\|_{H^{m+2}}.
     \end{equation}

     We now define a map $f\rightarrow \rho_{f}$ satisfying the following relation

     \begin{equation}{\label{equ:5.1.65}}
         (\rho_{f},\psi)_{1,\Sigma}=(f,\psi)_{0,\Sigma}
     \end{equation}

     \noindent When restricted on the function space $L^{2}$, this is a compact map. Hence, we construct an orthonormal basis of $L^{2}$ $\{\omega_{k}\}$ where $\omega_{k}$ is the eigenfunction of this map with eigenvalue $\lambda_{k}$. Moreover, we establish the Fourier expansion of $f$ with respect to this basis $\omega_{k}$ and denote $\hat{f}(k)$ to be the corresponding Fourier coefficients of $f$. Then the Fourier decomposition of $f$ with respect to this Fourier is expressed as follows

     \begin{equation}{\label{equ:5.1.66}}
         (f,g)_{0,\Sigma}=\sum_{k=0}^{+\infty}\hat{f}(k)\hat{g}(k)~\operatorname{and}~\vert \vert f\vert \vert^{2}_{0,\Sigma}=\sum_{k=0}^{\infty}\vert \hat{f}(k)\vert^{2},
     \end{equation}

     \noindent and

     \begin{equation}{\label{equ:5.1.67}}
         (f,g)_{1,\Sigma}=\sum_{k} \lambda_{k}\hat{f}(k)\hat{g}(k).
     \end{equation}

     \noindent Now we have the definition for $(1,\Sigma)$ norm and $(0,\Sigma)$ norm in Fourier sense which can be denoted as the inner product of spaces $\mathcal{H}_{\mathcal{K}}^{i}$ for $i=0,1$. We then define a series of spaces $\mathcal{H}_{\mathcal{K}}^{s}$ based on this decomposition. We have the math expression of this construction as follows

     \begin{equation}{\label{equ:5.1.68}}
         (u,v)_{\mathcal{H}_{\mathcal{K}}^{s}}=\Sigma_{k=0}^{\infty}\lambda_{k}^{s}\hat{u}(k)\hat{v}(k)
     \end{equation}

     \noindent Using the definition above, we construct a a series of Hilbert spaces $\mathcal{H}_{\mathcal{K}}^{s}$ with respect to the eigenvalues of $\mathcal{K}$. We then establish the functional calculus corresponding to the operator $\mathcal{K}$ based on this Fourier decomposition. Let $\Sigma_{k=\{\lambda_{k}|k\geq 0\}}\subseteq (0,+\infty)$. For any $r\in \mathbb{R}$, we define the following function space

     \begin{align}{\label{equ:5.1.69}}
         \mathfrak{B}(\Sigma_{\mathcal{K}})=\{f:\Sigma_{\mathcal{K}}\rightarrow \mathcal{R}|~\vert \vert f\vert \vert_{\mathfrak{B}^{r}}<+\infty\}
     \end{align}

     \noindent where:

     \begin{equation}{\label{equ:5.1.70}}
         \vert \vert f\vert \vert_{\mathfrak{B}^{r}}:=\sup_{x\geq \lambda_{0}}\frac{\vert f(x)\vert}{x^{r}}
     \end{equation}

     \noindent Similarly, we define:

     \begin{equation}{\label{equ:5.1.71}}
         \mathfrak{B}_{0}^{r}(\Sigma_{\mathcal{K}})=\{f\in \mathfrak{B}^{r}(\Sigma_{\mathcal{K}})|\lim_{x\rightarrow +\infty}(\frac{\vert f(x)\vert}{x^{r}})=0\}
     \end{equation}

     \noindent Let $s\in\mathbb{R}$ and $r\in\mathbb{R}$. For any functions $f\in \mathfrak{B}^{r}(\Sigma_{\mathcal{K}})$ and $u\in \mathcal{H}_{\mathcal{K}}^{s+2r}((0,\pi))$, we have the following definition for $f(\mathcal{K})$

     \begin{equation}{\label{equ:5.1.72}}
         f(\mathcal{K})u:=\sum_{k=0}^{\infty}f(\lambda_{k})\hat{u}(k)\omega_{k}.
     \end{equation}

     \noindent The operator $f(\mathcal{K})$ defined above is a compact, bounded operator from $\mathcal{H}_{\mathcal{K}}^{s+2r}$ to $\mathcal{H}_{\mathcal{K}}^{s}$. The proof of compactness follows from Guo-Tice's paper \cite{Guo}. Hence, we obtain the compactness embedding from $\mathcal{H}_{\mathcal{K}}^{t}((0,\pi))\subset\subset\mathcal{H}_{\mathcal{K}}^{s}((0,\pi))$ if $s<t$. Using these results, we obtain that the map $\mathcal{K}$ from $\mathcal{H}_{\mathcal{K}}^{s+2}$ to $\mathcal{H}_{\mathcal{K}}^{s}$ is an isometric isomorphism.  Furthermore, we define operator $D_{j}^{s}$ as follows
     \begin{align}{\label{equ:D_j}}
         D_{j}^{s}u=\sum_{k=0}^{j}\lambda_{k}\hat{u}(k)\omega_{k}~\operatorname{for~any~j\in\mathbb{N}^{+}}
     \end{align}
     \noindent This operator is important for our discussion in the next section.
     
    Now we have a well defined pseudo-differential operator in the functional space $\tilde{B}$ defined as follows:

    \begin{align}
        \tilde{B}:=\{\rho\in H^{1}|\int_{0}^{\pi}\rho\rho_{0}d\theta=0,\int_{0}^{\pi}\rho\xi_2d\theta=0\}
    \end{align}

    \noindent From \cite{Guo}, we have the following embedding

    \begin{align}
        \mathcal{H}_{\mathcal{K}}^{s}\cap \tilde{B}=H^{s}\cap \tilde{B}
    \end{align}

    \noindent for any $0\leq s<\frac{3}{2}$. We then extend this map $\mathcal{K}$ to the functional space $H^{1}$ (The definition is given in Section 4.2). The extension map is defined to satisfy the following properties

    \begin{align}
        \tilde{\mathcal{K}}(\rho)=\mathcal{K}(\rho)
    \end{align}

    \noindent for any $\rho\in \tilde{B}$, and

    \begin{align}
         \tilde{\mathcal{K}}(\rho)=0
    \end{align}

    \noindent if

    \begin{align}
    \rho=c_{1}\rho_{0}+c_{2}\xi_{2}
    \end{align}
    
    \noindent for any constant $c_{1}$ and $c_{2}$. For simplicity, we still use $\mathcal{K}$ to denote this capillary map $\mathcal{K}$ and use $D_{j}^{s}$ to denote the extension of the operator defined in \eqref{equ:D_j}. Moreover, using Proposition 3.80 in \cite{Guo} paper, we have

    \begin{align}{\label{est:D_j}}
        D_{j}^{s}:H^{s-r}\cap \tilde{B}\rightarrow H^{-r}\cap \tilde{B}=(H^{r})^{*}\cap\tilde{B}
    \end{align}

    \noindent For any $0\leq r<\frac{1}{2}$ and $0\leq s<\frac{3}{2}$
    
    \section{Apriori estimate}

   Now we have proved all of the preliminary results for the estimate of equation system \eqref{equ:4.1.23} in previous sections. In this section, we proceed to establish the Apriori estimate.
   
    \subsection{Energy estimate}

    We define the energy terms as follows

    \begin{align}
        \mathcal{E}_{||}=\sum_{k=0}^{2}(\vert \vert \partial_{t}^{k}u\vert \vert_{L^{2}}+\vert \vert \partial_{t}^{k}\xi\vert \vert_{H^{1}}^{2}),
    \end{align}

    \noindent and:

    \begin{align}{\label{equ:energy}}
        \mathcal{E}=\mathcal{E}_{||}+\vert \vert u\vert \vert_{W^{2,q_{+}}}^{2}+\vert \vert \partial_{t}u\vert \vert_{H^{1+\frac{\epsilon_{-}}{2}}}+\vert \vert \partial_{t}^{2}u\vert \vert^{2}_{H^{0}}+\vert \vert p\vert \vert^{2}_{W^{1,q_{+}}}+\vert \vert \partial_{t}p\vert \vert_{L^{2}}^{2}\notag\\
        |\mathfrak{n}^{\prime}(t)|^{2}+|\mathfrak{n}^{\prime\prime}(t)|^{2}+\vert \vert \xi\vert \vert_{W^{3-\frac{1}{q_{+}}}}+\vert \vert \partial_{t}\xi\vert \vert_{H^{\frac{3}{2}+\frac{\epsilon_{-}-\alpha}{2}}}^{2}+\vert \vert \partial_{t}^{2}\xi\vert \vert^{2}_{H^{1}}.
    \end{align}

    \noindent We then give the definition of the dissipation terms as follows

    \begin{align}
        \mathcal{D}_{||}=\Sigma_{k=0}^{2}\vert \vert \partial_{t}^{k}u\vert \vert^{2}_{H^{1}}+\vert \vert \partial_{t}^{k}u\vert\vert_{L^{2}}+[\p_{t}^{k}u\cdot \mathcal{N}]_{\theta}^{2},
    \end{align}
    \noindent and
    \begin{align}{\label{equ:dis}}
        \mathcal{D}=\mathcal{D}_{||}+\vert \vert u\vert \vert^{2}_{W^{2,q_{+}}}\vert \vert \partial_{t}u\vert \vert_{W^{2,q_{-}}}+\Sigma_{k=0}^{2}[\partial_{t}^{k+1}\xi]^{2}_{\theta}+\Sigma_{k=0}^{2}[\partial_{t}^{k}\partial_{\theta}\xi]_{\theta}^{2}+\vert \vert p\vert \vert_{W^{1,q_{+}}}+\vert \vert \partial_{t}p\vert \vert_{W^{1,q_{-}}}\notag\\
        |\mathfrak{n}^{\prime}(t)|^{2}+|\mathfrak{n}^{\prime\prime}(t)|^{2}+|\mathfrak{n}^{(3)}(t)|^{2}+\Sigma_{k=0}^{2}\vert \vert \partial_{t}^{k}\xi\vert \vert^{2}_{H^{\frac{3}{2}-\alpha}}+\vert \vert \xi\vert \vert_{W^{3-\frac{1}{q_{+}},q_{+}}}^{2}+\vert \vert \partial_{t}\xi\vert \vert_{W^{3-\frac{1}{q_{-}},q_{-}}}+\vert \vert \partial_{t}^{3}\xi\vert \vert_{H^{\frac{1}{2}-\alpha}}
    \end{align}
    
      To begin with, we establish the energy-dissipation relation estimate for this problem. We first rewrite the equation system \eqref{equ:4.1.23} by separating it according to each order.

      \textbf{Zero Order} In the following system, $v=u, q=p$ and $\zeta=\xi$:
      
      \begin{equation}{\label{equ:0}}
      \begin{cases}
          \partial_{t}v+\operatorname{div}_{\mathcal{A}}S_{\mathcal{A}}(q,v)={b}^{1}~~&\operatorname{in}~~\Omega\\
        \operatorname{div}_{\mathcal{A}}v=0~~&\operatorname{in}~~\Omega\\
        S_{\mathcal{A}}(q,v)\mathcal{N}=(\mathcal{K}(\zeta)-\sigma\partial_{\theta}{b}^{3})\mathcal{N}+{b}^{4}~~&\operatorname{on}~~\Sigma\\
        \partial_{t}\zeta=\frac{1}{\rho_{0}}v\cdot \mathcal{N}-\mathfrak{n}^{\prime}(t)\xi_s+b^{6}~~&\operatorname{on}~~\Sigma\\
        (S_{\mathcal{A}}(q,v)\nu-\beta v)\cdot \tau=b^{5}~~&\operatorname{on}~~\Sigma_{s}\\
        v\cdot \nu=0~~&\operatorname{on}~~\Sigma_{s}\\
        (\mp\sigma \frac{\rho_{0}^{2}\partial_{\theta}(\zeta)}{(\rho_{0}^{2}+\rho_{0}'^{2})^{\frac{3}{2}}}\pm\sigma \frac{\rho_{0}'\rho_{0}(\zeta)}{(\rho_{0}^{2}+\rho_{0}'^2)^{\frac{3}{2}}}\mp \sigma b^{3})(\frac{\pi}{2}\pm\frac{\pi}{2})=(\kappa (\frac{1}{\rho_{0}}v\cdot \mathcal{N})-b^{7}+b^{6})(\frac{\pi}{2}\pm \frac{\pi}{2})
      \end{cases}
      \end{equation}

      \textbf{First Order} In the following system,  $v=\partial_{t}u, q=\p_{t}p$ and $\zeta=\p_{t}\xi$:

      \begin{equation}{\label{equ:1}}
      \begin{cases}
          \partial_{t}v+\operatorname{div}_{\mathcal{A}}S_{\mathcal{A}}(q,v)={b}^{1,1}~~&\operatorname{in}~~\Omega\\
        \operatorname{div}_{\mathcal{A}}v=b^{2,1}~~&\operatorname{in}~~\Omega\\
        S_{\mathcal{A}}(q,v)\mathcal{N}=(\mathcal{K}(\zeta)-\sigma\partial_{\theta}{b}^{3,1})\mathcal{N}+{b}^{4,1}~~&\operatorname{on}~~\Sigma\\
        \partial_{t}\zeta=\frac{1}{\rho_{0}}v\cdot \mathcal{N}-\mathcal{n}^{\prime \prime}(t)\xi_s+b^{6,1}~~&\operatorname{on}~~\Sigma\\
        (S_{\mathcal{A}}(q,v)\nu-\beta v)\cdot \tau=b^{5,1}~~&\operatorname{on}~~\Sigma_{s}\\
        v\cdot \nu=0~~&\operatorname{on}~~\Sigma_{s}\\
        (\mp\sigma \frac{\rho_{0}^{2}\partial_{\theta}(\zeta)}{(\rho_{0}^{2}+\rho_{0}'^{2})^{\frac{3}{2}}}\pm\sigma \frac{\rho_{0}'\rho_{0}(\zeta)}{(\rho_{0}^{2}+\rho_{0}'^2)^{\frac{3}{2}}}\mp \sigma b^{3,1})(\frac{\pi}{2}\pm\frac{\pi}{2})=(\kappa (\frac{1}{\rho_{0}}v\cdot \mathcal{N})-b^{7,1}-b^{6,1})(\frac{\pi}{2}\pm\frac{\pi}{2})
      \end{cases}
      \end{equation}

       \textbf{Second Order} In the following system,  $v=\partial_{t}^{2}u, \zeta=\p_{t}^{2}\xi$ and $q=\p_{t}^{2}p$:

       \begin{equation}{\label{equ:2}}
      \begin{cases}
          \partial_{t}v+\operatorname{div}_{\mathcal{A}}S_{\mathcal{A}}(q,v)={b}^{1,2}~~&\operatorname{in}~~\Omega\\
        \operatorname{div}_{\mathcal{A}}v=b^{2,2}~~&\operatorname{in}~~\Omega\\
        S_{\mathcal{A}}(q,v)\mathcal{N}=(\mathcal{K}(\zeta)-\sigma\partial_{\theta}{b}^{3,2})\mathcal{N}+{b}^{4,2}~~&\operatorname{on}~~\Sigma\\
        \partial_{t}\zeta=\frac{1}{\rho_{0}}v\cdot \mathcal{N}-\mathfrak{n}^{\prime\prime\prime}(t)\xi_s+b^{6,2}~~&\operatorname{on}~~\Sigma\\
        (S_{\mathcal{A}}(q,v)\nu-\beta v)\cdot \tau=b^{5,2}~~&\operatorname{on}~~\Sigma_{s}\\
        v\cdot \nu=0~~&\operatorname{on}~~\Sigma_{s}\\
        (\mp\sigma \frac{\rho_{0}^{2}\partial_{\theta}(\zeta)}{(\rho_{0}^{2}+\rho_{0}'^{2})^{\frac{3}{2}}}\pm\sigma \frac{\rho_{0}'\rho_{0}(\zeta)}{(\rho_{0}^{2}+\rho_{0}'^2)^{\frac{3}{2}}}\mp \sigma b^{3,2})(\frac{\pi}{2}\pm\frac{\pi}{2})=(\kappa (\frac{1}{\rho_{0}}v\cdot \mathcal{N})-b^{7,2}-b^{6,2})(\frac{\pi}{2}\pm\frac{\pi}{2})
      \end{cases}
      \end{equation}
      The expressions of all the non-linear terms $b^{i,j}$ are given in Appendix.
      
      We now establish the energy estimate for the 0-order equation by the following theorem.

      \begin{theorem}{\label{thm:0-order}}
          Suppose that $(u,p,\xi)$ is the solution to system \eqref{equ:0} such that $\mathcal{E}(u,p,\xi)\leq \delta\ll 1$ when $t\in (0,T)$ for some positive constant $T$. It satisfies the following energy-dissipation relation

          \begin{align}
              \mathcal{E}_{||,0}(u,p,\xi)(t)+\int_{0}^{t}\mathcal{D}_{||,0}(u,p,\xi)(s)ds\lesssim \mathcal{E}_{||,0}(0)+ \int_{0}^{t}\sqrt{\mathcal{E}}\mathcal{D}(u,p,\xi)(s)ds
          \end{align}
      \end{theorem}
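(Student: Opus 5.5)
The plan is to test the weak formulation \eqref{equ:4.1.24} (with $v=u$, $q=p$, $\zeta=\xi$) against $\omega = u$ itself, exactly as in the zero-order energy identity of Section 2. This is the standard Guo--Tice energy method transported to the moving polar frame.

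\textbf{Step 1: the raw energy identity.} Testing the weak form with $\omega=u$ produces the time derivative $\frac12\frac{d}{dt}\int_\Omega |u|^2 J$. It also produces the viscous term $((u,u))\gtrsim \|u\|_{H^1}^2$, using Korn's inequality together with the Navier slip term on $\Sigma_s$. The contact-point term $[\frac{1}{\rho_0}u\cdot\mathcal N]_\theta^2$ also appears. The remaining piece is the capillary pairing $(\xi,\frac{1}{\rho_0}u\cdot\mathcal N)_{1,\Sigma}$.

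\textbf{Step 2: closing the capillary pairing.} Into this pairing I substitute the kinematic condition $\frac{1}{\rho_0}u\cdot\mathcal N=\partial_t\xi+\mathfrak n'(t)\xi_s-b^6$. The term $\mathfrak n'(t)(\xi,\xi_s)_{1,\Sigma}$ should be handled through the kernel property. Because $\xi_s\in\ker E''(\rho_0)$ and $\xi-a_0\rho_0\in\tilde B$, Theorem \ref{thm:pos} gives $(\xi-a_0\rho_0,\xi_s)_{1,\Sigma}=0$. Hence
\[
(\xi,\xi_s)_{1,\Sigma}=a_0(t)(\rho_0,\xi_s)_{1,\Sigma}=O(\|\xi\|_{L^2}^2).
\]
Combining this with $|\mathfrak n'|\lesssim\|u\|_{H^1}$ from Theorem \ref{thm:gam} bounds the term by $\sqrt{\mathcal E}\mathcal D$. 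The main part $(\xi,\partial_t\xi)_{1,\Sigma}$ equals $\frac12\frac{d}{dt}(\xi,\xi)_{1,\Sigma}$ up to the cross term with $a_0$. Writing
\[
(\xi,\xi)_{1,\Sigma}=(\xi-a_0\rho_0,\xi-a_0\rho_0)_{1,\Sigma}+O(a_0\|\xi\|_{H^1})
\]
and invoking \eqref{equ:p_1} makes this quantity coercive over $\|\xi\|_{H^1}^2$, modulo quartic errors. Those errors are absorbed because $\mathcal E\le\delta$.

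\textbf{Step 3: nonlinear terms and integration.} All the nonlinear contributions remain: $b^1$, $b^3$, $b^4$, $b^5$, $b^6$, $b^7$, the transport terms $\mathfrak n'\partial_x u$ and $W\partial_t\bar\xi\,\mathcal A\nabla u$, the Jacobian factor $\partial_t J$, and the boundary brackets from $\hat{\mathcal W}$. Each is estimated by H\"older, trace and Sobolev embeddings ($W^{2,q_+}\hookrightarrow W^{1,\infty}$, $H^1(0,\pi)\hookrightarrow L^\infty$) in the form $\lesssim\sqrt{\mathcal E}\,\mathcal D$. The term $-\int b^3\partial_\theta(u\cdot\mathcal N)$ is the delicate one. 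Since $b^3=\mathcal R_1$ is quadratic in $(\xi,\xi')$, I bound it by $\|\xi\|_{W^{1,\infty}}\|\xi\|_{H^1}\|u\|_{H^{1}(\Sigma)}$, borrowing a trace derivative from $\|u\|_{W^{2,q_+}}\subset\mathcal D$. The contact-line term with $\hat{\mathcal W}$ is $O(|\partial_t\xi|^2)$ at the endpoints and is absorbed into $[\partial_t\xi]^2_\theta\le\mathcal D$. I then integrate in time over $(0,t)$ and note that $\mathcal E_{\|,0}$ is equivalent to $\|u\|_{L^2}^2+(\xi,\xi)_{1,\Sigma}$ up to quartic errors. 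This gives the claimed inequality.

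The main obstacle I expect is Step 2: making the orthogonality to $\xi_s$ and the mass correction $a_0\rho_0$ interact correctly with the time derivative. The term $\frac{d}{dt}[a_0(\xi,\rho_0)_{1,\Sigma}]$ must be written as an exact derivative of a cubic quantity, so that it can be moved into the energy and controlled by $\mathcal E^{3/2}$ rather than left as an uncontrolled term.
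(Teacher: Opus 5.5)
Your proposal follows essentially the same route as the paper. You test with $Ju$ (i.e.\ $\omega=u$ in the weak form) and substitute the kinematic condition. You use Theorem~\ref{thm:pos} to reduce $(\xi,\mathfrak{n}'\xi_s)_{1,\Sigma}$ to $a_0\mathfrak{n}'(\rho_0,\xi_s)_{1,\Sigma}$. You take coercivity of $(\xi,\xi)_{1,\Sigma}$ from \eqref{equ:p_1}. You bound the $b^i$ terms by H\"older, trace and Sobolev estimates.

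Two small corrections. First, the obstacle you anticipate does not arise: $(\xi,\partial_t\xi)_{1,\Sigma}=\frac12\frac{d}{dt}(\xi,\xi)_{1,\Sigma}$ holds exactly, with no $a_0$ cross term. The $a_0$ terms enter only the fixed-time lower bound for $(\xi,\xi)_{1,\Sigma}$, where they are at least cubic. Second, for the $b^3$ term, pair $\|\xi\|_{W^{1,\infty}}^2$ with $\|u\cdot\mathcal N\|_{W^{1,1}(\Sigma)}$, as the paper does, rather than using $\|u\|_{H^1(\Sigma)}$. The trace of $W^{2,q_+}$ embeds only into $W^{1,1/(1-\epsilon_+)}(\Sigma)$, which is not $H^1(\Sigma)$ when $\epsilon_+<1/2$.
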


      \begin{proof}

      We first write down the  Navier-Stokes equation (The first equation of \eqref{equ:4.1.23}) in weak form as follows
      
      \begin{align}
         (\partial_{t}u,Ju)+(\operatorname{div}_{\mathcal{A}}S_{\mathcal{A}}(p,u),Ju)=(b^{1},Ju)\label{equ:5.1.93}
      \end{align}

      For the first term in the equation \eqref{equ:5.1.93}, we have:

      \begin{equation}{\label{equ:5.1.94}}
          (\partial_{t}u,Ju)=\frac{d}{dt}\frac{1}{2}\int_{\Omega}J\vert u\vert^{2}-\frac{1}{2}\int_{\Omega}\partial_{t}J\vert u\vert^{2}
      \end{equation}

       For the second term in \eqref{equ:5.1.93}, using integration by part, we obtain

      \begin{align}
          &\int_{\Omega} J\mathcal{A}_{jk}\partial_{k}(S_{\mathcal{A}}(p,u)_{ij})u_{i}=\int_{\Omega}\partial_{k}(J\mathcal{A}_{jk}S_{\mathcal{A}}(p,u)_{ij})u_{i}\notag\\
          &=\int_{\Omega} -J\mathcal{A}_{jk}\partial_{k}u_{i}S_{\mathcal{A}}(p,u)_{ij}+\int_{\partial \Omega} (J\mathcal{A}\nu)\cdot (S_{\mathcal{A}}(p,u)u):=I_{1}+I_{2}\label{equ:5.1.95}
      \end{align}

      \noindent For the term $I_{1}$, we have the following computation from the fact that $\dive_{\mathcal{A}}{u}=0$

      \begin{align}
          I_{1}=\int_{\Omega}\frac{\mu}{2}\mathbb{D}_{\mathcal{A}}u:\mathbb{D}_{\mathcal{A}}uJ-p\operatorname{div}_{\mathcal{A}}u J=\int_{\Omega}\frac{\mu}{2}\mathbb{D}_{\mathcal{A}}u:\mathbb{D}_{\mathcal{A}}uJ\label{equ:5.1.96}
      \end{align}

      \noindent For the term $I_{2}$, we separate it into two parts as follows

      \begin{align}
          \int_{\partial \Omega}(J\mathcal{A}\nu)\cdot (S_{\mathcal{A}}(p,u)u)=\int_{\Sigma_{s}}(J\mathcal{A}\nu)\cdot (S_{\mathcal{A}}(p,u)u)+\int_{\Sigma}(J\mathcal{A}\nu)\cdot (S_{\mathcal{A}}(p,u)u)=I_{3}+I_{4}\label{equ:5.1.97}
      \end{align}

      \noindent For the term $I_{3}$, we have:

      \begin{align}
          \int_{\Sigma_{s}}(J\mathcal{A}\nu)\cdot (S_{\mathcal{A}}(p,u)u)=&\int_{\Sigma_{s}}J\nu\cdot (S_{\mathcal{A}}(p,u)u)=\int_{\Sigma_{s}}Ju\cdot (S_{\mathcal{A}}(p,u)\nu)\notag\\
          =&\int_{\Sigma_{s}}J\beta(u\cdot \tau)^{2} \label{equ:5.1.98},
      \end{align}

      \noindent where we used the fifth and sixth equations in \eqref{equ:4.1.23} to substitute $S_{\mathcal{A}}(p,u)\nu\cdot \tau$. For the term $I_{4}$, we have

      \begin{align}
          \int_{\Sigma}(J\mathcal{A}\nu)\cdot (S_{\mathcal{A}}(p,u)u)=\int_{\Sigma} \frac{\mathcal{N}}{\vert \mathcal{N}_{0}\vert}\cdot  (S_{\mathcal{A}}(p,u)u)=\int_{0}^{\pi} (S_{\mathcal{A}}(p,u)\mathcal{N})\cdot u d\theta=\int_{0}^{\pi}(\mathcal{K}(\xi)-\partial_{\theta}b^{3})u\cdot \mathcal{N}+b^{4}\cdot u d\theta\label{equ:5.1.99}.
      \end{align}

      \noindent For the right hand side of equation \eqref{equ:5.1.99}, using integration by part, we obtain

      \begin{align}
         \int_{0}^{\pi}(\mathcal{K}(\xi)-\frac{1}{\rho_{0}}\partial_{\theta}b^{3})u \cdot \mathcal{N} d\theta
         = (\xi,\frac{1}{\rho_{0}}u\cdot \mathcal{N})_{1,\Sigma}+\int_{0}^{\pi}b^{3}\partial_{\theta}(\frac{1}{\rho_{0}}u\cdot \mathcal{N})d\theta+\kappa[\frac{1}{\rho_{0}}u\cdot \mathcal{N},\frac{1}{\rho_0{}}u\cdot \mathcal{N}]_{\theta}\notag\\-\kappa[b^{7},\frac{1}{\rho_{0}}u\cdot\mathcal{N}]_{\theta}-\kappa[b^{6},\frac{1}{\rho_{0}}u\cdot \mathcal{N}]_{\theta} \label{equ:5.1.100}.
      \end{align}

      \noindent  Applying equations \eqref{equ:5.1.94}, \eqref{equ:5.1.96}-\eqref{equ:5.1.100} to equation \eqref{equ:5.1.93}, and using the kinematic boundary condition, we obtain the following result

      \begin{align}
          \frac{d}{dt}\frac{1}{2}(\int_{0}^{\pi}J\vert u\vert^{2}+(\xi,\xi)_{1,\Sigma})+\int_{\Sigma_{s}}J\beta(u\cdot \tau)^{2}+\frac{\mu}{2}\int_{\Omega} \mathbb{D}_{\mathcal{A}}u:\mathbb{D}_{\mathcal{A}}uJ+\kappa[\frac{1}{\rho_{0}}u\cdot \mathcal{N}]^{2}_{\theta}\notag\\
          =\int_{\Omega} b^{1}\cdot uJ+\frac{1}{2}\int_{\Omega}\partial_{t}J\vert u\vert^{2}+\kappa[b^{7}+b^{6},\frac{1}{\rho_{0}}u\cdot\mathcal{N}]_{\theta}+(\xi,b^{6})_{1,\Sigma}+(\xi,\mathfrak{n}'(t)\xi_s)_{1,\Sigma}\notag\\
          -\int_{0}^{\pi}(b^{3}\partial_{\theta}(\frac{1}{\rho_{0}}u\cdot \mathcal{N}))-\int_{0}^{\pi} b^{4}\cdot u  d\theta \label{equ:5.1.104}
      \end{align}

     \noindent For the term $(\xi,\mathfrak{n}'(t)\xi_{s})$, we recall from the conservation of total mass

     \begin{equation}
         \int_{0}^{\pi}\rho^{2}=\int_{0}^{\pi}\rho_{0}^{2},
     \end{equation}

     \noindent which implies that:

     \begin{equation}{\label{equ:5.1.101}}
         2\int_{0}^{\pi}\xi\rho_{0} d\theta=-\int_{0}^{\pi}\xi^{2}d\theta
     \end{equation}

     \noindent We the define
     \begin{align}
     a_{0}=-\frac{\frac{1}{2}\int_{0}^{\pi}\xi^{2}d\theta}{\int_{0}^{\pi}\rho_{0}^{2}d\theta}
     \end{align}
     With this definition, we have
     \begin{equation}{\label{equ:5.1.102}}
         (\xi,\mathfrak{n}'(t)\xi_s)_{1,\Sigma}=(\xi-a_0\rho_{0},\mathfrak{n}'(t)\xi_s)_{1,\Sigma}+a_0\mathfrak{n}'(t)(\rho_{0},\xi_s)_{1,\Sigma}
     \end{equation}

     \noindent Using Theorem \ref{thm:pos}, we have $(\xi-a_0\rho_{0},\mathfrak{n}'(t)\xi_s)_{1,\Sigma}=0$. Therefore, equation \eqref{equ:5.1.102} can be rewritten as

     \begin{equation}{\label{equ:5.1.105}}
         (\xi,\mathfrak{n}'(t)\xi_s)_{1,\Sigma}=a_{0}\mathfrak{n}'(t)(\rho_{0},\xi_s)_{1,\Sigma}
     \end{equation}

     \noindent Substituting \eqref{equ:5.1.105} into equation \eqref{equ:5.1.104}, we finally obtain that

     \begin{align}
          &\frac{d}{dt}\frac{1}{2}(\int_{0}^{\pi}J\vert u\vert^{2}+(\xi,\xi)_{1,\Sigma})+\int_{\Sigma_{s}}J\beta(u\cdot \tau)^{2}+\frac{\mu}{2}\int_{\Omega} \mathbb{D}_{\mathcal{A}}u:\mathbb{D}_{\mathcal{A}}uJ+\kappa[u\cdot \mathcal{N}]^{2}_{\theta}\notag\\
          &=\int_{\Omega} b^{1}\cdot uJ+\frac{1}{2}\int_{\Omega}\partial_{t}J\vert u\vert^{2}+\kappa[b^{7}+b^{6},\frac{1}{\rho_{0}}u\cdot\mathcal{N}]_{\theta}+(\xi,b^{8})_{1,\Sigma}+a_{0}\mathfrak{n}'(t)(\rho_{0},\xi_s)_{1,\Sigma}\notag\\
           &\quad-\int_{0}^{\pi}(b^{3}\partial_{\theta}(\frac{1}{\rho_{0}}u\cdot \mathcal{N}))-\int_{0}^{\pi} b^{4}\cdot u  d\theta\label{equ:5.1.106}
     \end{align}

      For the right-hand side of the equation \eqref{equ:5.1.106}, we now estimate each term individually.

     \textbf{Terms included in $b^{1}$}

     Using $b^{1}$ in Appendix and the estimate for $\mathfrak{n}'(t)$ in Theorem \ref{thm:gam}, we establish the estimate for terms with $b^{1}$. We have the following estimates

     \begin{equation}{\label{equ:5.1.107}}
         \int_{\Omega}\mathfrak{n}'(t)\partial_{x}u\lesssim \vert \vert u\vert \vert_{W^{1,1}}\vert \vert u\cdot \mathcal{N}\vert \vert_{L^{1}(\Sigma)} \lesssim\|u\|_{W^{2,q_{+}}}^{2}\lesssim \mathcal{E}^{2}
     \end{equation}

     \begin{equation}{\label{equ:5.1.108}}
         \int_{\Omega} (\cos\theta W\partial_{t}\bar{\xi},\sin\theta W\partial_{t}\bar{\xi})\mathcal{A}(\partial_{x}u,\partial_{y}u)^{T}Ju\lesssim \vert \vert \partial_{t}\bar{\xi}\vert \vert_{L^{\infty}}  \vert \vert u\vert \vert_{H^{1}}\vert \vert u\vert \vert_{L^{2}}\lesssim \vert \vert \partial_{t}\xi\vert \vert_{H^{\frac{3}{2}+\frac{\epsilon_{-}-\alpha}{2}}}\vert \vert u\vert \vert_{H^{1}}\vert \vert u\vert \vert_{L^{2}}\lesssim \mathcal{E}^{\frac{3}{2}}
     \end{equation}

     \begin{equation}{\label{equ:5.1.109}}
         \int_{\Omega} u\cdot \nabla_{\mathcal{A}}uu\lesssim \vert \vert u\vert \vert_{L^{\infty}}\vert \vert u\vert \vert_{H^{1}}\vert \vert u\vert \vert_{L^{2}}\lesssim \|u\|_{W^{2,q_{+}}}^{3}\lesssim \mathcal{E}^{\frac{3}{2}}
     \end{equation}

     \textbf{Term including $\p_{t}J$}:

     By definition $J=\operatorname{det}(\mathcal{A})$, we have the following estimate for this term.

    \begin{equation}{\label{equ:5.1.110}}
    \begin{aligned}
        \frac{1}{2}\int_{\Omega} \partial_{t}J\vert u\vert^{2}&\lesssim \vert \vert u\vert \vert_{L^{\infty}}\vert \vert u\vert \vert_{L^{2}}\vert \vert \partial_{t}J\vert \vert_{L^{2}}\lesssim \vert \vert u\vert \vert_{W^{2,q_{+}}}\vert \vert u\vert \vert_{L^{2}}(\vert \vert \partial_{t}\p_{\theta}\bar{\xi}\vert \vert_{L^{2}}+\vert \vert \partial_{t}\p_{\theta}\bar{\xi}\vert \vert_{L^{2}})\\
         &\lesssim \|u\|_{W^{2,q_{+}}}^{2}(\|\p_{t}\xi\|_{H^{1}})\lesssim \mathcal{E}^{\frac{3}{2}}
        \end{aligned}
    \end{equation}

   \textbf{Term $(\xi,b^{6})_{1,\Sigma}$}

   We have the following estimate using the definition of $b^{6}$ in Appendix, and Theorem \ref{thm:gam}:
   
    \begin{equation}{\label{equ:5.1.111}}
    \begin{aligned}
        (\xi,b^{6})_{1,\Sigma}&=((\frac{1}{\rho}-\frac{1}{\rho_{0}})u\cdot \mathcal{N},\xi)_{1,\Sigma}+(\mathfrak{n}^{\prime}(t)\sin\theta(\frac{\rho'}{\rho}-\frac{\rho_{0}'}{\rho_{0}}),\xi)_{1,\Sigma}\\
        &\lesssim \vert \vert \xi\vert \vert^{2}_{W^{1,+\infty}}  \vert \vert u\cdot \mathcal{N}\vert \vert_{W^{1,\frac{1}{1-\epsilon_{+}}}(\Sigma)}+\vert \vert u\cdot \mathcal{N}\vert \vert_{W^{1,\frac{1}{1-\epsilon_{+}}}(\Sigma)}(\vert \vert \xi\vert \vert_{W^{1,\frac{1}{\epsilon_{+}}}}+\|\xi\|_{L^{\frac{1}{\epsilon_{+}}}})\|\xi\|_{H^{1}} \\
        &\quad+\vert \vert u\cdot \mathcal{N}\vert \vert_{L^{\frac{1}{\epsilon_{+}}}(\Sigma)}(\vert \vert \xi\vert \vert_{W^{2,\frac{1}{1-\epsilon_{+}}}}+\|\xi\|_{W^{1,\frac{1}{1-\epsilon_{+}}}})\|\xi\|_{H^{1}}\\&\lesssim \|\xi\|_{W^{3-\frac{1}{q_{+}},q_{+}}}^{2}\|u\|_{W^{2,q_{+}}}
        \lesssim \mathcal{E}^{\frac{3}{2}}
        \end{aligned}
    \end{equation}

 \textbf{Term $a_{0}\mathfrak{n}'(t)(\rho_{0},\xi_{s})_{1,\Sigma}$}

 Using Theorem \ref{thm:pos} and Theorem \ref{thm:gam}, we obtain the following result
    \begin{equation}{\label{equ:5.1.112}}
        a_{0}\mathfrak{n}'(t)(\rho_{0},\xi_s)_{1,\Sigma}\lesssim |\mathfrak{n}^{\prime}(t)|\vert \vert\xi\vert \vert^{2}_{L^{2}}\lesssim\mathcal{E}^{\frac{3}{2}}
    \end{equation}

    \textbf{Terms on the boundary}

    We have the following estimate using the definition of $b^{7}$ and $b^{6}$
    \begin{align}
        [b^{7}+b^{6},\frac{1}{\rho_{0}}u\cdot \mathcal{N}]_{\theta}&\lesssim [u\cdot \mathcal{N}]_{\theta}([\mathfrak{n}'(t)+\partial_{t}\xi]_{\theta}^{2}+[ u\cdot \mathcal{N}]_{\theta}\vert[ \xi]_{\theta}+\vert \mathfrak{n}'(t)\vert[ \partial_{\theta}\xi]_{\theta}+|\mathfrak{n}'(t)|[\xi]_{\theta})\notag\\
        &\lesssim (\vert \vert u\cdot \mathcal{N}\vert \vert^{2}_{L^{2}(\Sigma)}+\vert \vert \partial_{t}\xi\vert \vert^{2}_{H^{1}})\vert [u\cdot \mathcal{N}]_{\theta}\vert + \vert [u\cdot \mathcal{N}]_{\theta}\vert^{2}\vert \vert \xi\vert \vert_{H^{1}}\notag\\
        &\quad+|[u\cdot \mathcal{N}]_{\theta}|\|\xi\|_{W^{3-\frac{1}{q_{+}},q_{+}}}\|u\cdot \mathcal{N}\|_{L^{2}(\Sigma)}\notag\\
        &\lesssim |[u\cdot \mathcal{N}]_{\theta}|(\|\xi\|_{W^{3-\frac{1}{q_{+}},q_{+}}}\|u\|_{W^{2,q_{+}}}+|[u\cdot \mathcal{N}]_{\theta}|\|\xi\|_{H^{1}}+\|u\|^{2}_{W^{2,q_{+}}}+\|\p_{t}\xi\|_{H^{1}}^{2})\notag\\
        &\lesssim \mathcal{E}^{\frac{1}{2}}\mathcal{D}^{\frac{1}{2}}\mathcal{D}^{\frac{1}{2}}\leq \mathcal{E}^{\frac{1}{2}}\mathcal{D}{\label{equ:5.1.113}}
    \end{align}

   \textbf{Terms with $b^{3}$ and $b^{4}$}

   Using the definition of $b^{3}$, we have
    \begin{equation}{\label{equ:5.1.114}}
        \int_{0}^{\pi}b^{3}\partial_{\theta}(u\cdot \mathcal{N})\lesssim \vert \vert \xi\vert \vert_{W^{1,\infty}}^{2}\vert \vert u\cdot \mathcal{N}\vert \vert_{W^{1,1}(\Sigma)}\lesssim \vert \vert \xi\vert \vert^{2}_{W^{3-\frac{1}{q_{+}},q_{+}}}\vert \vert u\vert \vert_{W^{2,q_{+}}}\lesssim \mathcal{E}^{\frac{1}{2}}\mathcal{D}
    \end{equation}

    \noindent The estimate for the term involving $b^{4}$ follows the same argument as that for the term involving $b^{3}$.

    \textbf{Term $(\xi,\xi)_{1,\Sigma}$}

    Using Theorem \ref{thm:pos}, we have:

    \begin{align}
        (\xi,\xi)_{1,\Sigma}&=(\xi-a_{0}(t)\rho_{0},\xi-a_{0}(t)\rho_{0})_{1,\Sigma}-2(\xi,a_{0}(t)\rho_{0})_{1,\Sigma}+(a_{0}(t)\rho_{0},a_{0}(t)\rho_{0})\notag\\
        &\gtrsim \|\xi\|^{2}_{H^{1}}-\|\xi\|^{4}_{L^{2}}-\|\xi\|^{2}_{L^{2}}\|\xi\|_{H^{1}}
    \end{align}

    \noindent Then combining all of the computation above and applying them to equation \eqref{equ:5.1.104}, and subsequently integrating it from $0$ to $T$, we have

    \begin{align}{\label{equ:e-d_0}}
        \mathcal{E}_{||,0}(T)-\mathcal{E}_{||,0}(0)-\mathcal{E}_{||,0}^{\frac{3}{2}}(T)-\mathcal{E}_{||,0}^{\frac{3}{2}}(0)+\int_{0}^{t}\mathcal{D}_{||,0}\lesssim \int_{0}^{t}\mathcal{E}^{\frac{1}{2}}\mathcal{D} 
    \end{align}

    \noindent By invoking the smallness assumption on $\mathcal{E}$ in equation \eqref{equ:e-d_0}, we then deduce the desired result of this theorem from equation \eqref{equ:e-d_0}.
    \end{proof}
    
 Now we have the zero order estimate, we then show the first order energy estimate:
 
          \begin{theorem}
           Suppose that $(\p_{t}u,\p_{t}p,\p_{t}\xi)$ is the solution to system \eqref{equ:1} such that $\mathcal{E}(u,p,\xi)\leq \delta\ll 1$ when $t\in (0,T)$ for some positive constant $T$. It satisfies the following energy-dissipation relation.

          \begin{align}
              \mathcal{E}_{||,1}(t)+\int_{0}^{t}\mathcal{D}_{||,1}\lesssim \mathcal{E}_{||,1}(0)+ \int_{0}^{t}\sqrt{\mathcal{E}}\mathcal{D}
          \end{align}

          \noindent for any $t\in (0,T]$.
      \end{theorem}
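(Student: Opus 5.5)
I will mimic the zero-order argument of Theorem \ref{thm:0-order}, now applied to the first-order system \eqref{equ:1} with $v=\p_t u$, $q=\p_t p$, $\zeta=\p_t\xi$.

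\textbf{Step 1: the basic identity.} Test the momentum equation of \eqref{equ:1} against $Jv$ and integrate by parts exactly as in \eqref{equ:5.1.93}--\eqref{equ:5.1.100}. Since $\operatorname{div}_{\mathcal{A}}v=b^{2,1}$ is no longer zero, the pressure term leaves a contribution $\int_\Omega q\, b^{2,1}J$. On $\Sigma$, the stress condition together with the weak form of $\mathcal{K}$ produces $(\zeta,\frac{1}{\rho_0}v\cdot\mathcal{N})_{1,\Sigma}$. The contact-point conditions produce $\kappa[\frac{1}{\rho_0}v\cdot\mathcal{N}]_\theta^2$. Substituting the kinematic relation $\frac{1}{\rho_0}v\cdot\mathcal{N}=\p_t\zeta+\mathfrak{n}''\xi_s-b^{6,1}$ then gives
\begin{align*}
\frac{d}{dt}\frac12\Big(\int_\Omega J|v|^2+(\zeta,\zeta)_{1,\Sigma}\Big)+\frac{\mu}{2}\int_\Omega|\mathbb{D}_{\mathcal{A}}v|^2J+\int_{\Sigma_s}J\beta(v\cdot\tau)^2+\kappa[\tfrac{1}{\rho_0}v\cdot\mathcal{N}]_\theta^2=\mathcal{R},
\end{align*}
where $\mathcal{R}$ collects the $b^{i,1}$ terms, the $\p_tJ$ term, $\int q\,b^{2,1}J$, and $(\zeta,\mathfrak{n}''\xi_s)_{1,\Sigma}$.

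\textbf{Step 2: the $\xi_s$-term.} Split $\zeta=(\p_t\xi-a_1\rho_0)+a_1\rho_0$. Theorem \ref{thm:pos} kills the first piece against $\xi_s$, leaving
\[
a_1\mathfrak{n}''(\rho_0,\xi_s)_{1,\Sigma}.
\]
By \eqref{equ:a}, $|a_1|\lesssim\|\xi\|_{L^2}\|\p_t\xi\|_{L^2}$, and Theorem \ref{thm:gam} gives $|\mathfrak{n}''|\lesssim\|\p_tu\|_{H^1}+\mathcal{E}_-$. So this term is $\lesssim\sqrt{\mathcal{E}}\mathcal{D}$.

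\textbf{Step 3: coercivity of the energy.} Use \eqref{equ:p_2} to bound $(\zeta,\zeta)_{1,\Sigma}$ from below by $\|\p_t\xi\|_{H^1}^2$, minus quartic errors. Those errors are absorbed through the smallness of $\mathcal{E}$.

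\textbf{Step 4: the remaining error terms.} Estimate each remaining term in $\mathcal{R}$ by $\sqrt{\mathcal{E}}\mathcal{D}$ using Hölder, trace and Sobolev inequalities, relying on the appendix forms of $b^{i,1}$.

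\textbf{Main obstacle: the pressure term.} The hardest term is the pressure pairing $\int_\Omega q\,b^{2,1}J$. Here $b^{2,1}$ involves $\p_t\mathcal{A}$, hence $\p_t\bar\xi$, and $q=\p_tp$ is only controlled in $W^{1,q_-}$ by $\mathcal{D}$ and in $L^2$ by $\mathcal{E}$. I would first try to rewrite this term as
\[
\frac{d}{dt}\int p\,b^{2,1}J
\]
plus lower-order pieces, exactly as in Guo--Tice \cite{Guo}. The boundary term this creates at time $t$ is bounded by $\mathcal{E}^{3/2}$, and the remainder by $\sqrt{\mathcal{E}}\mathcal{D}$.

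\textbf{Secondary difficulty: the $\p_t$ terms in $b^{1,1}$.} The terms in $b^{1,1}$ involving $\p_t^2\bar\xi$ and $\mathfrak{n}''\p_xu$ need care. I would control them with $\|\p_t^2\xi\|_{H^1}$, with Theorem \ref{thm:gam}, and with the $W^{2,q_+}$ control on $u$.

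\textbf{Conclusion.} Integrate the identity in time and absorb the $\mathcal{E}^{3/2}$ terms using $\mathcal{E}\le\delta$. This yields the stated inequality.
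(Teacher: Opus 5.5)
Your plan matches the paper's proof everywhere except the pressure pairing $\int_\Omega \p_t p\, b^{2,1}J$. The shared parts are: testing with $J\p_t u$ to get the identity \eqref{equ:5.1.115}; splitting $\p_t\xi=(\p_t\xi-a_1\rho_0)+a_1\rho_0$ so that Theorem \ref{thm:pos} leaves only $a_1\mathfrak{n}''(\rho_0,\xi_s)_{1,\Sigma}$; coercivity from \eqref{equ:p_2}; and term-by-term H\"older/trace bounds on the $b^{i,1}$. On the pressure pairing, your detour is unnecessary and, as written, does not deliver the stated inequality.

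At first order this term is not an obstacle. Since $\|\p_t p\|_{W^{1,q_-}}$ is part of $\mathcal{D}$ and $W^{1,q_-}(\Omega)\hookrightarrow L^2(\Omega)$, H\"older gives directly
\[
\Bigl|\int_\Omega \p_t p\, b^{2,1}J\Bigr|\lesssim \|\p_t p\|_{L^2}\,\|\p_t\bar\xi\|_{W^{1,\infty}}\,\|u\|_{H^1}\lesssim \sqrt{\mathcal{D}}\,\sqrt{\mathcal{E}}\,\sqrt{\mathcal{D}}.
\]
This is the paper's Step 9. Integration by parts in time is the device needed at \emph{second} order, where $\p_t^2 p$ is not controlled at all. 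That is why the paper introduces the corrector $\omega$ of Lemma \ref{thm:B} and the average $\langle Jb^{2,2}\rangle_\Omega$ in Step 13 of Theorem \ref{thm:2-order}.

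If you integrate by parts in time here anyway, you create endpoint terms $\int_\Omega p\, b^{2,1}J$ at times $0$ and $t$. These can only be bounded through norms like $\|p\|_{L^2}\|\p_t\bar\xi\|_{W^{1,\infty}}\|u\|_{H^1}$, and none of these is controlled by $\mathcal{E}_{||,1}=\|\p_t u\|_{L^2}^2+\|\p_t\xi\|_{H^1}^2$. So your final step, ``absorb the $\mathcal{E}^{3/2}$ terms using $\mathcal{E}\le\delta$,'' fails: a term $\delta^{1/2}\mathcal{E}(t)$ cannot be absorbed into $\mathcal{E}_{||,1}(t)$, and $\mathcal{E}^{3/2}(0)$ is not bounded by $\mathcal{E}_{||,1}(0)$. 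You would only get the weaker inequality with $\mathcal{E}^{3/2}(t)+\mathcal{E}^{3/2}(0)$ added on the right. That weaker form is harmless downstream, since it is the form used in \eqref{equ:5.1.239}, but it is not the theorem as stated.

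The only cubic errors you may legitimately absorb are those from the coercivity step. For example, $|a_1(\rho_0,\p_t\xi)_{1,\Sigma}|\lesssim\|\xi\|_{L^2}\|\p_t\xi\|_{L^2}\|\p_t\xi\|_{H^1}\lesssim\sqrt{\mathcal{E}}\,\mathcal{E}_{||,1}$. Drop the time integration by parts and estimate the pressure pairing directly as above.
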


    \begin{proof}

    We first write down the weak form equation and then apply the test function $J\p_{t}u$. Using the similar computation as in Theorem \ref{thm:0-order}, we derive the following relation for the first-order equation system as follows:
    
    \begin{align}
        \frac{1}{2}\frac{d}{dt}(\vert \vert \partial_{t}u\vert \vert^{2}_{L^{2}}+\vert \vert \partial_{t}\xi\vert \vert_{1,\Sigma}^{2})+\int_{\Sigma_{s}} J\beta(\partial_{t}u\cdot \tau)^{2}+\frac{\mu}{2}\int_{\Omega}\mathbb{D}_{\mathcal{A}}\partial_{t}u:\mathbb{D}_{\mathcal{A}}\partial_{t}uJ+\kappa[\frac{1}{\rho_{0}}\partial_{t}u\cdot \mathcal{N}]^{2}_{\theta}\notag\\
        =\int_{\Omega} b^{1,1}\cdot \partial_{t}uJ+\frac{1}{2}\int_{\Omega}\partial_{t}J\vert \partial_{t}u\vert^{2}+\kappa[b^{7,1}+b^{6,1},\frac{1}{\rho_{0}}\partial_{t}u\cdot\mathcal{N}]_{\theta}+(\p_{t}\eta,b^{6,1})_{1,\Sigma}+a_{1}(t)\mathfrak{n}''(t)(\rho_{0},\xi_s)_{1,\Sigma}\notag\\
           -\int_{0}^{\pi}(b^{3,1}\partial_{\theta}(\frac{1}{\rho_{0}}\partial_{t}u\cdot \mathcal{N}))-\int_{0}^{\pi} b^{4,1}\cdot \partial_{t}u  d\theta -\int_{0}^{\pi}b^{5,1}u-\int_{0}^{\pi}b^{2,1}\partial_{t}p.
           \label{equ:5.1.115}
    \end{align}

    \noindent The function $a_{1}(t)$ appearing above is a function defined by equation \eqref{equ:a}. For completeness, we recall its expression here

    \begin{equation}{\label{equ:5.1.119}}
        a_{1}(t)=-\frac{\int_{0}^{\pi}\xi \partial_{t}\xi}{\int_{0}^{\pi}\rho_{0}^{2}d\theta}.
    \end{equation}
    
     From the definition of $\mathfrak{n}(t)$, we write its second order temporal derivative $\mathfrak{n}^{\prime\prime}(t)$ as follows
    
    \begin{align}
    \mathfrak{n}''(t)=-\frac{\partial_{t}\lambda}{\lambda^{2}}\int_{0}^{\pi}\frac{1}{\rho}u\cdot \mathcal{N}d\theta-\frac{1}{\lambda}\int_{0}^{\pi}\frac{\partial_{t}\xi}{\rho^{2}}u\cdot \mathcal{N}+\frac{1}{\lambda}\int_{0}^{\pi}\p_{t}u\cdot \mathcal{N}+\frac{1}{\lambda}\int_{0}^{\pi}u\cdot \p_{t}\mathcal{N}
    \end{align}

    We now estimate all of the nonlinear terms on the right hand side of the equation \eqref{equ:5.1.115}. 
    
    \textbf{Step 1} We first show the estimate for $\int_{\Omega} b^{1,1}\p_{t}u\cdot J$. Specifically, we aim to prove the following bound

    \begin{align}
        \int_{\Omega} b^{1,1}\cdot vJ\lesssim \mathcal{E}\vert \vert v\vert \vert_{H^{1}}
    \end{align}
    \noindent for any $v\in H^{1}$. Using the definition of $b^{1,1}$, we estimate each component individually.

    \textbf{Term $\int_{\Omega} \partial_{t}b^{1}\cdot vJ$}

    We have the following equation from the definition of $b^{1}$ given in Appendix.
    \begin{align}
        \int_{\Omega}(\partial_{t}b^{1})\cdot uJ&=\int_{\Omega} (\mathfrak{n}''(t)\partial_{x}u+\mathfrak{n}'(t)\partial_{x}\p_{t}u-\p_{t}u\cdot \nabla_{\mathcal{A}}u-u\cdot \nabla_{\p_{t}\mathcal{A}}u-u\cdot \nabla_\mathcal{A}\p_{t}u)\cdot vJ \notag\\
        &\quad+\int_{\Omega} (\cos\theta W \partial_{tt}\bar{\xi},\sin\theta W\partial_{tt}\bar{\xi})\tilde{K}(\partial_{x}u,\partial_{y}u)^{T}\cdot vJ\notag\\
        &\quad+\int_{\Omega}(\cos\theta \p_{t}W\p_{t}\bar{\xi},\sin\theta\p_{t}W\p_{t}\bar{\xi})\tilde{K}(\p_{x}u,\p_{y}u)^{T}\cdot vJ\notag\\
        &\quad+\int_{\Omega} (\cos\theta W\partial_{t}\bar{\xi},\sin\theta W\partial_{t}\bar{\xi})\partial_{t}\tilde{K}(\partial_{x}u,\partial_{y}u)^{T}\cdot vJ \notag\\
        &\quad+\int_{\Omega}(\cos\theta W\partial_{t}\bar{\xi},\sin\theta W\partial_{t}\bar{\xi})\tilde{K}(\partial_{x}\partial_{t}u,\partial_{y}\partial_{t}u)\cdot vJ=\Sigma_{i=1}^{9} I_{i} \label{equ:5.1.120}
    \end{align}
    
    \noindent We then estimate the estimate all of the terms on the right hand side of equation \eqref{equ:5.1.120}. Using Theorem \ref{thm:gam} to bound $\mathfrak{n}''(t)$, we have:

    \begin{equation}{\label{equ:5.1.121}}
        I_{1}=\int_{\Omega} \mathfrak{n}''(t)\partial_{x}u\cdot vJ \lesssim \mathfrak{n}''(t) \vert \vert u\vert \vert_{H^{1}}\vert \vert v\vert \vert_{H^{1}}\lesssim \sqrt{\mathcal{E}}\vert \vert u\vert \vert_{H^{1}}(\vert \vert v\vert \vert_{H^{1}})\lesssim {\mathcal{E}}\|v\|_{H^{1}}
    \end{equation}

    \noindent Then using \ref{thm:gam} again to bound $\mathfrak{n}'(t)$ in $I_{2}$, we obtain the following estimate for $I_{2}$.
    \begin{equation}{\label{equ:5.1.124}}
        I_{2}=\int_{\Omega}\mathfrak{n}'(t)\partial_{x}\p_{t}u \cdot vJ=\int_{\Omega} \mathfrak{n}'(t)\partial_{x}\p_{t}u\cdot vJ \lesssim \vert \vert u\vert \vert_{H^{1}}\vert\vert \p_{t}u\vert \vert_{H^{1}}\vert \vert v\vert \vert_{L^{2}}\lesssim \vert \vert u\vert \vert_{H^{1}}\|\p_{t}u\|_{H^{1}}\vert \vert v\vert \vert_{H^{1}}\lesssim \mathcal{E}\|v\|_{H^{1}}
    \end{equation}
    Then for the term $I_{3}$, we have the following estimate by H\"older's inquality
    \begin{align}
        I_{3}=-\int_{\Omega}\p_{t}u\cdot \nabla_{\mathcal{A}}u\cdot vJ\lesssim \vert\vert u\vert \vert_{W^{1,\frac{2}{2-\epsilon_{+}}}}\vert \vert v\vert \vert_{L^{\frac{2}{\epsilon_{+}}}}\|\p_{t}u\|_{L^{2}}\lesssim\|\p_{t}u\|_{L^{2}} \vert \vert u\vert \vert_{W^{2,q_{+}}}\|v\|_{H^{1}}\lesssim \mathcal{E}\|v\|_{H^{1}} \label{equ:5.1.125}
    \end{align}

    \noindent For the term $I_{4}$, we have the following estimate from H\"older's inequality

    \begin{align}
         I_{4}=-\int_{\Omega} u\cdot \nabla_{\p_{t}\mathcal{A}}u\cdot v\lesssim \|\p_{t}\xi\|_{W^{1,+\infty}}\vert \vert u\vert \vert_{L^{\infty}}\vert \vert \p_{t}u\vert \vert_{H^{1}} \vert \vert v\vert \vert_{L^{2}}\lesssim \mathcal{E}^{\frac{3}{2}}\|v\|_{H^{1}}
    \end{align}

    \noindent For the term $I_{5}$, similarly, we have
    
    \begin{align}
        I_{5}=-\int_{\Omega} u\cdot \nabla_{\mathcal{A}}\p_{t}u\cdot v\lesssim \vert \vert u\vert \vert_{L^{\infty}}\vert \vert \p_{t}u\vert \vert_{H^{1}} \vert \vert v\vert \vert_{L^{2}}\lesssim \mathcal{E}\|v\|_{L^{2}} \label{equ:5.1.126}
    \end{align}

    \noindent For the term $I_{6}$, we have
    
    \begin{align}
        I_{6}=&\int_{\Omega} (\cos\theta W\partial_{tt}\bar{\xi},\sin\theta W\partial_{tt}\bar{\xi})\tilde{K}(\partial_{x}u,\partial_{y}u)^{T} \cdot vJ\lesssim \vert \vert \partial^{2}_{t}\bar{\xi}\vert \vert_{L^{\infty}}\vert \vert u\vert \vert_{H^{1}}\vert \vert v\vert \vert_{L^{2}}\notag\\
        &\lesssim \vert \vert \partial_{t}^{2}\xi\vert \vert_{H^{1}}\vert \vert v\vert \vert_{L^{2}}\vert \vert u\vert \vert_{H^{1}}\lesssim \mathcal{E}\|v\|_{H^{1}}\label{equ:5.1.127}
    \end{align}

    \noindent For the term $I_{7}$, we have

    \begin{align}
        I_{7}=&\int_{\Omega} (\cos\theta \p_{t}W\partial_{t}\bar{\xi},\sin\theta \p_{t}W\partial_{t}\bar{\xi})\tilde{K}(\partial_{x}u,\partial_{y}u)^{T} \cdot vJ\lesssim \vert \vert \partial_{t}\bar{\xi}\vert \vert^{2}_{W^{1,\infty}}\vert \vert u\vert \vert_{H^{1}}\vert \vert v\vert \vert_{L^{2}}\notag\\
        &\lesssim \vert \vert \partial_{t}\xi\vert \vert^{2}_{H^{\frac{3}{2}+\frac{\epsilon_{-}-\alpha}{2}}}\vert \vert v\vert \vert_{L^{2}}\vert \vert u\vert \vert_{H^{1}}\lesssim \mathcal{E}\|v\|_{H^{1}}
    \end{align}

    \noindent For the term $I_{8}$, we have
    
    \begin{align}
        I_{8}=&\int_{\Omega} (\cos\theta W\partial_{t}\bar{\xi},\sin\theta W\partial_{t}\bar{\xi})\partial_{t}\tilde{K}(\partial_{x}u,\partial_{y}u)^{T}\cdot vJ \lesssim \vert \vert \partial_{t}\bar{\xi}\vert \vert_{L^{\infty}} (\vert \vert \partial_{t}\bar{\xi}\vert \vert_{W^{1,+\infty}})\vert \vert u\vert \vert_{W^{1,\frac{2}{2-\epsilon_{+}}}}\vert \vert v\vert \vert_{L^{\frac{2}{\epsilon_{+}}}} \notag\\
        &\lesssim \vert \vert \p_{t}\eta\vert \vert_{H^{1}} \vert \vert \p_{t}\eta\vert \vert_{H^{\frac{3}{2}+\frac{\epsilon_{-}-\alpha}{2}}}\vert \vert u\vert \vert_{W^{2,q_{+}}}\vert \vert v\vert \vert_{H^{1}}\lesssim \mathcal{E}^{\frac{3}{2}}\|v\|_{H^{1}} \label{equ:5.1.128}
    \end{align}
    \noindent For the term $I_{9}$, we have
    \begin{align}
        I_{9}=\int_{\Omega}(\cos\theta W\partial_{t}\bar{\xi},\sin\theta W\partial_{t}\bar{\xi})\tilde{K}(\partial_{x}\partial_{t}u,\partial_{y}\partial_{t}u)\cdot vJ \lesssim \vert \vert \partial_{t}\bar{\xi}\vert \vert_{L^{\infty}}\vert \vert \partial_{t}u\vert \vert_{H^{1}}\vert \vert v\vert \vert_{L^{2}}\lesssim\mathcal{E}\|v\|_{L^{2}}  \label{equ:5.1.129}
    \end{align}

    \noindent Combining all the estimates from equation \eqref{equ:5.1.121} to \eqref{equ:5.1.129}, we obtain the following result

    \begin{align}
        \int_{\Omega} (\partial_{t}b^{1})\cdot vJ\lesssim \mathcal{E}\|v\|_{H^{1}}
    \end{align}

    \textbf{Term $\operatorname{div}_{\partial_{t}\mathcal{A}}S_{\mathcal{A}}(p,u)$} We have the following estimate by H\"older's inequlity

    \begin{align}
        \int_{\Omega} \operatorname{div}_{\partial_{t}\mathcal{A}}S_{\mathcal{A}}(p,u)\cdot vJ&\lesssim \vert \vert \partial_{t}\bar{\xi}\vert \vert_{W^{1,+\infty}} (\vert \vert p\vert \vert_{W^{1,q_{+}}}+\vert \vert u\vert \vert_{W^{2,q_{+}}})\vert \vert v\vert \vert_{L^{\frac{4}{\epsilon_{+}}}}\notag\\
        &\lesssim \mathcal{E} \vert \vert v\vert \vert_{H^{1}} \label{equ:5.1.133}
    \end{align}

    \textbf{Term $\mu\operatorname{div}_{\mathcal{A}}\mathbb{D}_{\partial_{t}\mathcal{A}}u$}. We have the following estimate by H\"older's inequality
    
    \begin{align}
        \int_{\Omega} \mu\operatorname{div}_{\mathcal{A}}\mathbb{D}_{\partial_{t}\mathcal{A}}u \cdot vJ \lesssim \vert \vert \partial_{t}\bar{\xi}\vert \vert_{W^{1,\infty}}\vert \vert u\vert \vert_{W^{2,q_{+}}}\vert \vert v\vert \vert_{L^{\frac{2}{\epsilon_{+}}}}+\vert \vert u\vert \vert_{W^{1,\frac{2}{1-\epsilon_{+}}}}\vert \vert \partial_{t}\bar{\xi}\vert \vert_{H^{2}}\vert \vert v\vert \vert_{L^{\frac{2}{\epsilon_{+}}}}\lesssim \mathcal{E}\|v\|_{H^{1}}
    \end{align}

   \noindent where we used Sobolev Embedding $L^{\frac{2}{\epsilon_{+}}}\hookrightarrow H^{1}$. Hence, we finished the estimate of the first term on the right hand side of the equation \eqref{equ:5.1.115}. 

    \textbf{Step 2} We now estimate the second term $\int_{\Omega}\partial_{t}J\vert \p_{t}u\vert^{2}$. We have the following estimate by the definition of $J$

    \begin{align}
        \int_{\Omega} \partial_{t}J\vert \p_{t}u\vert^{2}\lesssim \vert \vert \partial_{t}\bar{\xi}\vert \vert_{W^{1,\infty}}\vert \vert \p_{t}u\vert \vert^{2}_{L^{2}}\lesssim \mathcal{E}^{\frac{3}{2}} \label{equ:5.1.134}
    \end{align}

   \textbf{Step 3} We estimate the third term on the right hand side of the equation \eqref{equ:5.1.115}. We have:

   \begin{align}
       \kappa[b^{7,1}+b^{6,1},\frac{1}{\rho_{0}}v\cdot \mathcal{N}]_{\theta}\lesssim& \kappa[b^{7,1}]_{\theta}[\frac{1}{\rho_{0}}v\cdot \mathcal{N}]_{\theta}+[b^{6,1}]_{\theta}[\frac{1}{\rho_{0}}v\cdot \mathcal{N}]_{\theta}
   \end{align}

   \noindent We first estimate $[b^{6,1}]_{\theta}$ by its definition. It holds that

   \begin{align}
       [\partial_{t}b^{6}]_{\theta}\lesssim& [\partial_{t}\xi]_{\theta}[u\cdot \mathcal{N}]_{\theta}+[\xi]_{\theta}[\p_{t}u\cdot \mathcal{N}]_{\theta}+[\xi]_{\theta}[u\cdot \p_{t}\mathcal{N}]_{\theta}+\vert \mathfrak{n}''(t)\vert([\partial_{\theta}\xi]_{\theta}+[\xi]_{\theta})+\vert \mathfrak{n}'(t)\vert([\partial_{t}\partial_{\theta }\xi]_{\theta}+[\partial_{\theta}\xi]_{\theta}[\partial_{t}\xi]_{\theta})\notag\\
       \lesssim & \vert \vert \partial_{t}\xi\vert \vert_{H^{1}}\vert \vert u\vert\vert_{W^{2,q_{+}}}+\vert \vert \xi\vert \vert_{H^{1}}\vert \vert \p_{t}u\vert \vert_{H^{1+\frac{\epsilon_{-}}{2}}}+\vert \vert \xi\vert \vert_{H^{1}}\vert \vert u\vert \vert_{H^{1+\frac{\epsilon_{-}}{2}}}\|\p_{t}\xi\|_{H^{\frac{3}{2}+\frac{\epsilon_{-}-\alpha}{2}}}+\vert \vert \xi\vert \vert_{W^{2,1}}\sqrt{\mathcal{E}}\notag+\vert \vert \p_{t}\xi\vert \vert_{H^{\frac{3}{2}+\frac{\epsilon_{-}-\alpha}{2}}}\sqrt{\mathcal{E}}\notag\\
       \lesssim& \mathcal{E}+(\vert\vert \xi\vert \vert_{W^{3-\frac{1}{q_{+}},q_{+}}}+\|\p_{t}\xi\|_{H^{\frac{3}{2}+\frac{\epsilon_{-}-\alpha}{2}}})\mathcal{E}\label{equ:5.2.134}
  \end{align}
   
   \noindent Then for the other terms in $b^{6,1}$, using the similar computation as above, we have

   \begin{align}
       [\frac{1}{\rho_{0}}u\cdot \p_{t}\mathcal{N},\frac{1}{\rho_{0}}v\cdot \mathcal{N}]_{\theta}\lesssim \|\p_{t}\xi\|_{H^{\frac{3}{2}+\frac{\epsilon_{-}-\alpha}{2}}}\|u\|_{W^{2,q_{+}}}[v\cdot \mathcal{N}]_{\theta }\lesssim \mathcal{E}[v\cdot \mathcal{N}]_{\theta}
   \end{align}

   \noindent Therefore, we obtain the following estimate

   \begin{align}
       [b^{6,1}]_{\theta}\lesssim \mathcal{E}\label{equ:b_6}
   \end{align}

    We now estimate the terms including $b^{7,1}$, we have the following computation by its definition

   \begin{align}
       [\hat{W}'(\partial_{t}\xi+\mathfrak{n}'(t))(\partial_{tt}\xi+\mathfrak{n}''(t))]_{\theta}&\lesssim [(\vert \partial_{t}\xi\vert+\vert \mathfrak{n}'(t)\vert)(\vert \partial_{tt}\xi\vert+\vert \mathfrak{n}''(t)\vert)]_{\theta}\notag\\
       &\lesssim (\vert \vert \partial_{t}\xi\vert \vert_{W^{1,1}}+\vert \vert u\vert \vert_{H^{1}})(\vert \vert \partial_{tt}\xi\vert \vert_{W^{1,1}}+\|\p_{t}u\|_{H^{1}}+\mathcal{E}) \notag\\
       &\lesssim (\vert \vert \partial_{t}\xi\vert \vert_{H^{1}}+\vert \vert u\vert \vert_{H^{1}})(\vert \vert \partial_{tt}\xi\vert \vert_{H^{1}}+\vert \vert \p_{t}u\vert\vert_{H^{1}}+\mathcal{E})\lesssim \mathcal{E} \label{equ:5.1.136}
   \end{align}

   \noindent Therefore,by the definition of $b^{7,1}$, we obtain the following result
   
   \begin{align}
       \kappa [b^{7,1},\frac{1}{\rho_{0}}v\cdot \mathcal{N}]_{\theta}\lesssim \mathcal{E}[v\cdot \mathcal{N}]_{\theta} \label{equ:5.1.137}
   \end{align}

   Combining equation \eqref{equ:b_6} and \eqref{equ:5.1.137}, we can finally conclude that:

   \begin{align}
       \kappa[b^{7,1}+b^{6,1},\frac{1}{\rho_{0}}v\cdot \mathcal{N}]_{\theta}\lesssim \mathcal{E}[v\cdot \mathcal{N}]_{\theta}
   \end{align}
   
   \textbf{Step 4} We then estimate the fourth term on the right-hand side of the equation \eqref{equ:5.1.115}.  Decomposing $\p_{t}b^{6}$, we have the following computation:

   \begin{align}
       (\partial_{t}b^{6},\partial_{t}\xi)_{1,\Sigma}=&((-\frac{\partial_{t}\xi}{\rho\rho_{0}}+\frac{(\partial_{t}\xi)^{2}}{\rho^{2}\rho_{0}})u\cdot \mathcal{N}+\partial_{t}(\mathfrak{n}'(t)(\frac{\rho'}{\rho}-\frac{\rho_{0}'}{\rho_{0}}))\cos\theta,\partial_{t}\xi)_{1,\Sigma}+(\frac{1}{\rho}-\frac{1}{\rho_{0}})(\p_{t}u\cdot \mathcal{N}+u\cdot \p_{t}\mathcal{N}),\partial_{t}\xi)_{1,\Sigma}\notag\\
       &\lesssim  \vert \vert \partial_{t}\xi\vert \vert^{2}_{W^{1,4}}\vert \vert u\cdot \mathcal{N}\vert \vert_{L^{2}}+\vert \vert \partial_{t}\xi\vert \vert^{3}_{W^{1,6}}\vert \vert u\cdot \mathcal{N}\vert \vert_{L^{2}}+\vert \vert \partial_{t}\xi\vert \vert_{L^{\infty}}\vert \vert u\cdot \mathcal{N}\vert \vert_{W^{1,1}(\Sigma_{s})}\vert \vert \partial_{t}\xi\vert \vert_{H^{1}}\notag\\
       &\quad+\vert \vert \partial_{t}\xi\vert \vert_{L^{\infty}}^{2}\vert \vert u\cdot \mathcal{N}\vert \vert_{W^{1,1}(\Sigma_{s})}\vert \vert \partial_{t}\xi\vert \vert_{W^{1,+\infty}}+\vert \vert v\vert \vert_{H^{1}}\vert \vert \xi\vert \vert_{W^{2,\frac{1}{1-\epsilon_{+}}}}\vert \vert \partial_{t}\xi\vert \vert_{W^{1,\frac{1}{\epsilon_{+}}}}+\vert \vert u\vert \vert\vert_{H^{1}} \vert \vert\partial_{t}\xi\vert \vert^{2}_{H^{\frac{3}{2}}}\notag\\
       &\quad+\|\xi\|_{W^{1,\infty}}(\|\p_{t}u\|_{W^{1,\frac{1}{1-\epsilon_{-}}}(\Sigma)}+\|\p_{t}u\|_{L^{\infty}(\Sigma)}\|\xi\|_{W^{2,\frac{1}{1-\epsilon_{-}}}})\|\p_{t}\xi\|_{W^{1,\frac{1}{1-\epsilon_{-}}}}\notag\\&\quad+\|\xi\|_{W^{1,\infty}}(\|u\|_{W^{1,\frac{1}{1-\epsilon_{-}}}(\Sigma)}\|\p_{t}\xi\|_{W^{1,+\infty}}+\|u\|_{L^{\infty}(\Sigma)}\|\p_{t}\xi\|_{W^{2,\frac{1}{1-\epsilon_{-}}}})\|\p_{t}\xi\|_{W^{1,\frac{1}{\epsilon_{-}}}} \notag\\
       &\lesssim \vert \vert \partial_{t}\xi\vert \vert^{2}_{H^{\frac{3}{2}}}\vert \vert u\vert \vert_{H^{1}}+\vert \vert \partial_{t}\xi\vert \vert_{H^{\frac{3}{2}}}^{3}\vert \vert u\vert \vert_{H^{1}}+\vert \vert \partial_{t}\xi\vert \vert^{2}_{H^{\frac{3}{2}+\frac{\epsilon_{-}-\alpha}{2}}}\vert \vert u\vert \vert_{W^{2,q_{+}}}\notag\\
       &\quad+\vert \vert \partial_{t}\xi\vert \vert^{3}_{H^{\frac{3}{2}
       +\frac{\epsilon_{-}-\alpha}{2}}}\vert \vert u\vert \vert_{W^{2,q_{+}}}+\vert \vert v\vert \vert_{H^{1}}\vert \vert \xi\vert \vert_{W^{3-\frac{1}{q_{+}},q_{+}}}\vert \vert \partial_{t}\eta\vert \vert_{H^{\frac{3}{2}}+\frac{\epsilon_{-}-\alpha}{2}}\notag\\
       &\quad+\|\p_{t}\xi\|_{W^{3-\frac{1}{q_{-}},q_{-}}}^{2}\|u\|_{W^{2,q_{+}}}+\|\p_{t}\xi\|_{W^{3-\frac{1}{q_{-}},q_{-}}}\|\xi\|_{W^{3-\frac{1}{q_{+}},q_{+}}}\|\p_{t}u\|_{W^{2,q_{-}}}\notag\\
       &\lesssim \mathcal{E}^{\frac{1}{2}}\mathcal{D}\label{equ:5.1.138}
   \end{align}

    \noindent Then for the remaining terms included in $b^{6,1}$, we have the following estimate

   \begin{align}
       &(\frac{1}{\rho_{0}}u\cdot \p_{t}\mathcal{N},\p_{t}\xi)_{1,\Sigma}\notag\\
       &\lesssim \vert \vert \partial_{t}\xi\vert \vert_{W^{2,\frac{1}{1-\epsilon_{-}}}}\vert \vert u\vert \vert_{L^{\infty}(\Sigma)}\vert \vert\partial_{t}\xi \vert \vert_{W^{1,\frac{1}{\epsilon_{-}}}}+\vert \vert u\vert \vert_{W^{1,\frac{1}{1-\epsilon_{-}}}(\Sigma)}\vert \vert \partial_{t}\eta\vert \vert^{2}_{W^{1,+\infty}}\notag\\
       &\lesssim \vert \vert \partial_{t}\xi\vert \vert_{W^{3-\frac{1}{q_{+}},q_{+}}}\vert \vert u\vert \vert_{W^{2,q_{+}}}\vert \vert\partial_{t}\xi \vert \vert_{W^{1,\frac{1}{\epsilon_{-}}}}+\vert \vert u\vert \vert_{W^{2,q_{+}}}\vert \vert \partial_{t}\xi\vert \vert^{2}_{W^{1,+\infty}}\lesssim \sqrt{\mathcal{E}}\mathcal{D}
   \end{align}
   
   \textbf{Step 5} We estimate the fifth term as follows by Theorem \ref{thm:gam} and Theorem \ref{thm:pos}. It holds that

   \begin{align}
       a_{1}\mathfrak{n}''(t)(\rho_{0},\xi_s)\lesssim a_1\mathfrak{n}''(t)\lesssim |\mathfrak{n}^{\prime\prime}(t)|\vert \vert \xi\vert \vert_{L^{2}}\vert \vert \partial_{t}\xi\vert \vert_{L^{2}}\lesssim \mathcal{E}^{\frac{3}{2}} \label{equ:5.1.139}
   \end{align}

   \textbf{Step 6} We estimate the sixth term by the definition of $b^{3,1}$ as follows

   \begin{align}
       \int_{0}^{\pi}b^{3,1}\partial_{\theta}(\p_{t}u\cdot \mathcal{N})&\lesssim \int_{\Omega}\vert \partial_{\theta}\xi\vert\vert \partial_{t}\partial_{\theta}\xi\vert \vert \partial_{\theta}(\p_{t}u\cdot \mathcal{N})\vert +\int_{\Omega}\vert \xi\vert\vert \partial_{t}\xi\vert\vert \partial_{\theta}(\p_{t}u\cdot \mathcal{N})\vert\notag\\
       &\lesssim \vert \vert \xi\vert \vert_{W^{1,+\infty}}\vert \vert \partial_{t}\xi\vert \vert_{W^{1,+\infty}}\vert \vert \p_{t}u\cdot \mathcal{N}\vert \vert_{W^{1,1}(\Sigma)} \label{equ:5.1.140}
   \end{align}

   \noindent By trace theorem, we have

   \begin{align}
       \vert \vert \p_{t}u\cdot \mathcal{N}\vert \vert_{W^{1,1}(\Sigma)}&\lesssim \vert \vert \p_{t}u\vert \vert_{W^{1,\frac{1}{1-\epsilon_{-}}}}\lesssim \|\p_{t}u\|_{W^{2,q_{-}}} \label{equ:5.1.141}
   \end{align}

 Then applying equation \eqref{equ:5.1.141} to equation \eqref{equ:5.1.140}, we have

   \begin{equation}{\label{equ:5.1.142}}
        \int_{0}^{\pi}b^{3,1}\partial_{\theta}(\p_{t}u\cdot \mathcal{N})\lesssim \|\xi\|_{W^{3-\frac{1}{q_{-}},q_{-}}}\|\p_{t}\xi\|_{W^{3-\frac{1}{q_{-}},q_{-}}}\|\p_{t}u\|_{W^{2,q_{-}}}\lesssim\mathcal{E}^{\frac{1}{2}}\mathcal{D}
   \end{equation}

   \textbf{Step 7} In this step, we estimate the seventh term in equation \eqref{equ:5.1.115}. We aim to show the following two results

   \begin{align}
       \int_{0}^{\pi} b^{4,1}\cdot v d\theta\lesssim \mathcal{E}\|v\|_{H^{1}},
   \end{align}
   and
   \begin{align}
       \int_{0}^{\pi} b^{5,1}\cdot v d\theta\lesssim \mathcal{E}\|v\|_{H^{1}}.
   \end{align}
   \noindent for any $v\in H^{1}$. This result will be achieved by estimating each component of $b^{4,1}$ individually.

   \textbf{Term $\sigma \partial_{t}\mathcal{R}_{2}$}:

   The definition of $\mathcal{R}_{2}$ implies the following computation
   \begin{align}
       \int_{0}^{\pi}\partial_{t}\mathcal{R}_{2}\cdot v d\theta&\lesssim \int_{0}^{\pi} \vert \partial_{t}{\xi}'\vert(\vert {\xi}'\vert+|\xi|)\vert v\vert d\theta+\int_{0}^{\pi} \vert \partial_{t}\xi\vert(\vert \xi\vert+|\p_{\theta}\xi|)\vert v\vert d\theta\notag\\
       &\lesssim \vert \vert \partial_{t}\xi\vert \vert_{W^{1,4}}\vert \vert \xi\vert \vert_{W^{1,4}}\vert \vert v\vert \vert_{L^{2}(\Sigma_{s})}+ \vert \vert \partial_{t}\xi\vert \vert_{L^{4}}\vert \vert \xi\vert \vert_{W^{1,4}}\vert \vert v\vert \vert_{L^{2}(\Sigma_{s})}\notag\\
       &\lesssim \vert \vert \partial_{t}\xi\vert \vert_{H^{\frac{3}{2}-\frac{\epsilon_{-}-\alpha}{2}}}\vert \vert \xi\vert \vert_{W^{3-\frac{1}{q_{+}},q_{+}}}\vert \vert v\vert \vert_{H^{1}}\lesssim \mathcal{E}\|v\|_{H^{1}} \label{equ:5.1.143}
   \end{align}
   
   \textbf{Term $\mu \mathbb{D}_{\partial_{t}\mathcal{A}}u\mathcal{N}$}:

   We have the following estimate by trace theorem and H\"older's inequality 
   \begin{align}
       \int_{0}^{\pi} \mu \mathbb{D}_{\partial_{t}\mathcal{A}}u\mathcal{N}\cdot v d\theta\lesssim \vert \vert \partial_{t}\partial_{\theta}{\xi}\vert\vert_{L^{\infty}}\vert \vert u\vert \vert_{W^{1,\frac{1}{1-\epsilon_{+}}}(\Sigma)}\vert \vert v\vert \vert_{L^{\frac{1}{\epsilon_{+}}}(\Sigma )}\lesssim \vert \vert v\vert \vert_{H^{1}}\vert \vert u\vert \vert_{W^{2,q_{+}}}\vert \vert \partial_{t}\xi\vert \vert_{\frac{3}{2}+\frac{\epsilon_{-}-\alpha}{2}}\lesssim \mathcal{E}\|v\|_{H^{1}}\label{equ:5.1.144}
   \end{align}

   \textbf{Term $\mathcal{K}(\xi-\partial_{\theta}b^{3})\partial_{t}\mathcal{N}$} We have the following computation by trace theorem and H\"older's inequality 

   \begin{align}
       \int_{0}^{\pi}(\mathcal{K}(\xi)-\partial_{\theta}b^{3})\partial_{t}\mathcal{N} \cdot v\lesssim \vert \vert \xi\vert \vert_{W^{2,\frac{1}{1-\epsilon_{+}}}}\vert \vert \partial_{t}\xi\vert \vert_{W^{1,\infty}}\vert \vert v\vert \vert_{L^{\frac{1}{\epsilon_{+}}}(\Sigma)}+\int_{0}^{\pi} \vert\partial_{t} \partial_{\theta}\xi\vert^{2}\vert \partial_{\theta}^{2}\xi\vert \vert v\vert d\theta+\int_{0}^{\pi} \vert \partial_{t}\partial_{\theta}\xi\vert \vert \partial_{\theta}\xi\vert\vert \partial_{\theta}^{2}\xi\vert \vert v\vert \notag\\
       \lesssim \vert \vert \xi\vert \vert_{W^{3-\frac{1}{q_{+}},q_{+}}}\vert \vert \partial_{t}\xi\vert \vert_{H^{\frac{3}{2}+\frac{\epsilon_{-}-\alpha}{2}}}\vert \vert v \vert \vert_{H^{1}}+\vert\vert \partial_{t}\xi\vert \vert_{W^{1,+\infty}}^{2}\vert \vert \xi\vert \vert_{W^{2,\frac{1}{1-\epsilon_{+}}}}\vert \vert v\vert \vert_{L^{\frac{1}{\epsilon_{+}}}}+\vert \vert \partial_{t}\xi\vert \vert_{W^{1,\infty}}\vert \vert \xi\vert \vert_{W^{1,\infty}}\vert \vert \xi\vert \vert_{W^{2,\frac{1}{1-\epsilon_{-}}}}\vert \vert v\vert \vert_{L^{\frac{1}{\epsilon_{-}}}} \notag\\
       \lesssim \mathcal{E}^{\frac{3}{2}}+\vert \vert \partial_{t}\xi\vert \vert^{2}_{H^{\frac{3}{2}+\frac{\epsilon_{-}-\alpha}{2}}}\vert \vert \xi\vert \vert_{W^{3-\frac{1}{q_{+}},q_{+}}}\vert \vert v\vert \vert_{H^{1}}+\vert \vert \partial_{t}\xi\vert \vert_{H^{\frac{3}{2}+\frac{\epsilon_{-}-\alpha}{2}}}\vert \vert\xi \vert \vert_{W^{3-\frac{1}{q_{-}},q_{-}}}\vert \vert v\vert \vert_{H^{1}}\vert \vert \xi\vert \vert_{W^{3-\frac{1}{q_{+}},q_{+}}}\lesssim \mathcal{E}^{\frac{3}{2}}+\mathcal{E}^{2}\label{equ:5.1.145}
   \end{align}

   The estimate for $b^{5,1}$ follows from the similar estimate.
   
   \textbf{Term $S_{\mathcal{A}}(p,u)\partial_{t}\mathcal{N}$} We have the following estimate by trace theorem and H\"older's inequality 

   \begin{align}
       \int_{0}^{\pi} S_\mathcal{A}(p,u)\partial_{t}\mathcal{N}\cdot v d\theta\lesssim \vert \vert p\vert \vert_{L^{\frac{1}{1-\epsilon_{+}}}(\Sigma)}\vert \vert \partial_{t}\xi\vert \vert_{W^{1,+\infty}}\vert \vert v\vert \vert_{L^{\frac{1}{\epsilon_{+}}}(\Sigma)}+ \vert \vert u\vert \vert_{W^{1,\frac{1}{1-\epsilon_{+}}}(\Sigma)}\vert \vert \partial_{t}\xi\vert \vert_{W^{1,\infty}}\vert \vert v\vert\vert_{L^{\frac{1}{\epsilon_{+}}}} \notag\\
       \lesssim \vert \vert p\vert \vert_{W^{1,q_{+}}}\vert \vert \partial_{t}\xi\vert\vert_{W^{3-\frac{1}{q_{-}},q_{-}}}\vert \vert v\vert \vert_{H^{1}}+\vert \vert u\vert \vert_{W^{2,q_{+}}}\vert \vert \partial_{t}\xi\vert \vert_{H^{\frac{3}{2}+\frac{\epsilon_{-}-\alpha}{2}}}\vert \vert v\vert \vert_{H^{1}}\lesssim \mathcal{E}\|v\|_{H^{1}}
   \end{align}

   \textbf{Step 8} In this step, we establish the estimate for the eighth term on the right hand side of equation \eqref{equ:5.1.115} as follows

   \begin{align}
       \int_{\Sigma_{s}} \mu \mathbb{D}_{\partial_{t}\mathcal{A}}u\nu\cdot \tau v\lesssim \vert \vert \xi\vert \vert_{W^{1,+\infty}}\vert \vert u\vert \vert_{W^{1,\frac{1}{1-\epsilon_{+}}}(\Sigma)} \vert\vert v\vert \vert_{L^{\frac{1}{\epsilon_{+}}}(\Sigma)}\lesssim \|\xi\|_{W^{3-
       \frac{1}{q_{+}},q_{+}}}\|u\|_{W^{3-\frac{1}{q_{+}},q_{+}}} \|v\|_{H^{1}}\lesssim \mathcal{E}\|v\|_{H^{1}}
   \end{align}

   \textbf{Step 9} In this step, we estimate the estimate for the pressure term on the right hand side of \eqref{equ:5.1.115}

   \noindent Using the definition of $b^{2,1}$, we have:

   \begin{align}
      \int_{\Omega} \operatorname{div}_{\partial_{t}\mathcal{A}}u \partial_{t}p\lesssim \vert \vert \partial_{t}p\vert \vert_{L^{2}}\vert \vert \partial_{t}\xi\vert \vert_{W^{1,+\infty}}\vert \vert u\vert \vert_{H^{1}}\lesssim \mathcal{E}^{\frac{3}{2}}
   \end{align}

   \textbf{Step 10} 

   In this step, we show the estimate for $(\p_{t}\xi,\p_{t}\xi)_{1,\Sigma}$. Using Theorem \ref{thm:pos} and the definition of $a_{1}(t)$ in \eqref{equ:a}, we have:

   \begin{align}
       (\p_{t}\xi,\p_{t}\xi)_{1,\Sigma}&=(\p_{t}\xi-a_{1}(t)\rho_{0},\p_{t}\xi-a_{1}(t)\rho_{0})_{1,\Sigma}-2(a_{1}(t)\rho_{0},\p_{t}\xi)_{1,\Sigma}+a_{1}^{2}(t)(\rho_{0},\rho_{0})_{1,\Sigma}\notag\\
       &\gtrsim \|\p_{t}\xi\|^{2}_{H^{1}}-\|\p_{t}\xi\|_{L^{2}}^{2}\|\xi\|^{2}_{L^{2}}-\|\p_{t}\xi\|^{2}_{H^{1}}\|\xi\|_{L^{2}}\gtrsim \mathcal{E}_{||,1}-\mathcal{E}_{||,1}^{\frac{3}{2}}
   \end{align}
   
  Combining all of the computation in Step 1 to Step 10, and integrating both sides of the equation \eqref{equ:5.1.115} from $0$ to $t$, we obtain the following result

  \begin{align}
      \mathcal{E}_{||,1}(t)+\int_{0}^{t}\mathcal{D}_{||,1}-\mathcal{E}_{||,1}^{\frac{3}{2}}(t)-\mathcal{E}^{\frac{3}{2}}(0)\lesssim \mathcal{E}_{||,1}(0)+\int_{0}^{t}\mathcal{E}^{\frac{1}{2}}\mathcal{D}
  \end{align}
  \noindent which implies the final result of this theorem. 
   \end{proof}

   \textbf{Remark}: Step 1 and Step 7 in Theorem 5.2 yields that:

   \begin{align}
       \int_{\Omega}b^{1,1}\cdot \omega J-\int_{\Sigma_{s}}J(\omega\cdot \tau)b^{5,1}+\int_{0}^{\pi}b^{4,1}\cdot \omega\lesssim \mathcal{E}\vert \vert \omega\vert \vert_{H^{1}}
   \end{align}

   \begin{lemma}{\label{thm:B}}
       Suppose that $0<T\leq+\infty$ and that $\sup_{0\leq t< T}\mathcal{E}(t)\leq \delta\ll1$. Let $b^{2,2}$ be as in the equation \eqref{equ:5.1.92}, i.e.

       \begin{align}
           \langle g\rangle_{\Omega}=\frac{1}{\vert \Omega\vert}\int_{\Omega}g
       \end{align}

       \noindent Then there exists $\omega:\Omega\times [0,T)\rightarrow \mathbb{R}^{2}$ satisfying the following properties

       (1) $\omega(\cdot,t)\in H_{0}^{1}(\Omega;\mathbb{R}^{2})$ for $0\leq t<T$ and

       \begin{align}
           J\operatorname{div}_{\mathcal{A}}\omega=Jb^{2,2}-\langle Jb^{2,2}\rangle 
       \end{align}

       (2)$\omega$ obeys the estimates:

       \begin{align}
           \vert \vert \omega\vert \vert_{W_{0}^{1,\frac{4}{3-2\epsilon_{+}}}}\lesssim\mathcal{E},
       \end{align}

       \noindent and

       \begin{align}
           \vert \vert \omega\vert \vert_{W_{0}^{1,\frac{2}{1-\epsilon_{-}}}}+\vert \vert \partial_{t}\omega\vert\vert_{L^{\frac{2}{1-\epsilon_{-}}}}\lesssim (\sqrt{\mathcal{E}}+\mathcal{E})\sqrt{\mathcal{D}}.
       \end{align}

       (3) The following interaction estimates hold

       \begin{align}
           \vert \int_{\Omega}\partial_{t}^{2}uJ\omega\vert\lesssim \mathcal{E}^{\frac{3}{2}}
       \end{align}

       \noindent Moreover, we have

       \begin{align}
           \vert \int_{\Omega}\partial_{t}^{2}uJ\omega\vert \lesssim \mathcal{E}^{\frac{3}{2}},
       \end{align}

       \noindent and

       \begin{align}{\label{equ:B_1}}
           \vert \int_{\Omega}\partial_{t}^{2}u\partial_{t}(J\omega)\vert+\vert (b^{1,2},\omega)_{0}\vert+\vert ((\partial_{t}^{2}u,\omega))\vert\lesssim (\sqrt{\mathcal{E}}+\mathcal{E})\mathcal{D}
       \end{align}
   \end{lemma}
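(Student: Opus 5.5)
The plan is to follow the corresponding construction in Guo--Tice \cite{Guo} (their pressure-correction lemma for the second-order problem). We build $\omega$ through a Bogovskii-type right inverse of the divergence on the fixed Lipschitz domain $\Omega=\Omega_{0}$, pulled back by the geometry.

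\textbf{Construction.} Set $g:=Jb^{2,2}-\langle Jb^{2,2}\rangle_{\Omega}$, which has zero mean. We use the Piola identity $J\dive_{\mathcal{A}}\omega=\dive(J\mathcal{A}^{T}\omega)$, valid because $\partial_{k}(J\mathcal{A}_{ik})=0$. We then take
\[
\omega:=(J\mathcal{A}^{T})^{-1}\mathcal{B}g ,
\]
where $\mathcal{B}$ is the Bogovskii operator on $\Omega_{0}$. It maps mean-zero $L^{r}$ functions into $W^{1,r}_{0}$ for every $1<r<\infty$ and commutes with $\partial_{t}$. Since $(J\mathcal{A}^{T})^{-1}$ is a bounded multiplier on $W^{1,r}_{0}$ whenever $\mathcal{E}\le\delta$ (it is controlled by $\|\bar{\xi}\|_{W^{2,\cdot}}$, as in Section 4), property (1) holds and $\omega(\cdot,t)\in H^{1}_{0}$. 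Because $\omega$ vanishes on $\partial\Omega$, all the boundary terms of the weak formulation disappear when $\omega$ is used as a test function.

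\textbf{Estimates (2).} Property (2) reduces to bounding $\|b^{2,2}\|_{L^{r}}$ and $\|\partial_{t}(Jb^{2,2})\|_{L^{r}}$ for $r=\frac{4}{3-2\epsilon_{+}}$ and $r=\frac{2}{1-\epsilon_{-}}$. Recall that $b^{2,2}$ is built from $\dive_{\partial_{t}^{2}\mathcal{A}}u$ and $\dive_{\partial_{t}\mathcal{A}}\partial_{t}u$. We estimate it by H\"older's inequality and Sobolev embedding, using three ingredients:
\begin{itemize}
\item $\partial_{t}\mathcal{A}$, $\partial_{t}^{2}\mathcal{A}$ are controlled by $\partial_{t}\bar{\xi}$, $\partial_{t}^{2}\bar{\xi}$;
\item the Poisson extension gains half a derivative;
\item $\partial_{t}^{2}\xi\in H^{1}$ and $\partial_{t}\xi\in H^{3/2+(\epsilon_{-}-\alpha)/2}$ from $\mathcal{E}$.
\end{itemize}
This gives the $\mathcal{E}$ bound. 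For the $\sqrt{\mathcal{D}}$ bound, one factor in each product is taken from the dissipation, namely $\|\partial_{t}u\|_{W^{2,q_{-}}}$, $\|\partial_{t}^{3}\xi\|_{H^{1/2-\alpha}}$, or $|\mathfrak{n}^{(i)}|$ handled via Theorem \ref{thm:gam}, and the other factor from $\sqrt{\mathcal{E}}$.

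\textbf{Interaction estimates (3).} The term $\int\partial_{t}^{2}uJ\omega$ follows by Cauchy--Schwarz from $\|\partial_{t}^{2}u\|_{L^{2}}\le\sqrt{\mathcal{E}}$ together with (2). The terms $\int\partial_{t}^{2}u\,\partial_{t}(J\omega)$ and $((\partial_{t}^{2}u,\omega))$ are paired with $\|\partial_{t}^{2}u\|_{H^{1}}\le\sqrt{\mathcal{D}}$ against the $\sqrt{\mathcal{E}\mathcal{D}}$ bound. For $(b^{1,2},\omega)_{0}$ we reuse the term-by-term bounds of Step 1 in the first-order energy estimate, lifted one time derivative. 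The expected main obstacle is the terms in $b^{1,2}$ containing $\partial_{t}^{3}\bar{\xi}$ and $\mathfrak{n}'''$. For these I would integrate by parts in space so that only $\|\partial_{t}^{3}\xi\|_{H^{1/2-\alpha}}$ appears, and put the spare derivative on $\omega\in W^{1,\frac{2}{1-\epsilon_{-}}}$. This is exactly why the higher-integrability bound for $\omega$ is needed, and it yields the $(\sqrt{\mathcal{E}}+\mathcal{E})\mathcal{D}$ bound.
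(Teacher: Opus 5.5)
Your route is the paper's route. The paper does not construct $\omega$ itself; it invokes Proposition~10.1 of \cite{Guo}. Its only new check is the extra term $\partial_t^2(\mathfrak{n}'\partial_xu)=\mathfrak{n}'''\partial_xu+2\mathfrak{n}''\partial_x\partial_tu+\mathfrak{n}'\partial_x\partial_t^2u$ in $b^{1,2}$, which is bounded directly by Theorem~\ref{thm:gam} and the bounds in (2), with no integration by parts. Note also that $b^{2,2}$ contains no $\mathfrak{n}$, so the $|\mathfrak{n}^{(i)}|$ factors you list under (2) play no role there.

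You also misplace the delicate term in $b^{1,2}$. There $\partial_t^3\bar\xi$ occurs only undifferentiated, and H\"older with $\partial_t^3\bar\xi\in H^{1-\alpha}(\Omega)\subset L^{2/\alpha}$ handles it. The term that really needs a derivative moved onto $\omega$ is $\operatorname{div}_{\mathcal{A}}\mathbb{D}_{\partial_t^2\mathcal{A}}u$, which contains $\nabla^2\partial_t^2\bar\xi\in H^{-\alpha}$.

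The genuine gap is the bound on $\partial_t\omega$ in (2). You reduce it to an $L^r$ bound on $\partial_t(Jb^{2,2})$ via $\partial_t\mathcal{B}g=\mathcal{B}\partial_tg$ and $\mathcal{B}:L^r\to W^{1,r}_0$. But $\partial_t(Jb^{2,2})$ contains $J\,\partial_t^3\mathcal{A}_{ij}\partial_ju_i$, and $\partial_t^3\mathcal{A}$ contains $\nabla\partial_t^3\bar\xi$. Since $\mathcal{D}$ only controls $\|\partial_t^3\xi\|_{H^{1/2-\alpha}}$, $\nabla\partial_t^3\bar\xi$ lies only in $H^{-\alpha}(\Omega)$. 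So $\partial_t(Jb^{2,2})$ is not an $L^r$ function for any $r$, and no H\"older--Sobolev bookkeeping can give the bound. This is consistent with (2) asserting only $\partial_t\omega\in L^r$, not $W^{1,r}$.

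Two ingredients are missing.
\begin{enumerate}
\item The negative-order Bogovskii bound $\mathcal{B}:\{f\in(W^{1,r'}(\Omega))^*:\langle f,1\rangle=0\}\to L^r$. This gives $\|\partial_t\omega\|_{L^r}\lesssim\|\partial_tM\|_{L^\infty}\|g\|_{L^r}+\|\partial_tg\|_{(W^{1,r'})^*}$.
\item A divergence structure that lets you bound $\partial_tg$ in that dual norm with $\partial_t^3\bar\xi$ undifferentiated. In two dimensions $J\mathcal{A}=\operatorname{cof}\nabla\Phi$ has rows $\pm\nabla^\perp\Phi_k$, with $\partial_t^3\Phi=W\partial_t^3\bar\xi\,\hat e_r$. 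Moreover $\nabla^\perp\partial_t^3\Phi_k\cdot\nabla u_i=-\operatorname{div}(\partial_t^3\Phi_k\nabla^\perp u_i)$. After pairing with $\varphi\in W^{1,r'}$ and integrating by parts, you get $\|\partial_t^3\bar\xi\|_{L^{2/\alpha}}\|\nabla u\|_{L^{2/(1-\epsilon_+)}}\|\nabla\varphi\|_{L^{r'}}$, plus boundary terms involving the trace of $\partial_t^3\bar\xi$, which lies in $H^{1/2-\alpha}\subset L^{1/\alpha}$ on $\partial\Omega$.
\end{enumerate}
For $r=\frac{2}{1-\epsilon_-}$ this H\"older step needs $\alpha\le\epsilon_+-\epsilon_-$, which \eqref{equ:alpha} guarantees. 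Without this argument your proposal does not establish the $\partial_t\omega$ estimate, and the bound for $\int_\Omega\partial_t^2u\,\partial_t(J\omega)$ in \eqref{equ:B_1} rests on it.
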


   \begin{proof}
   Here we use the same construction and proof as in Proposition 10.1 in \cite{Guo}. The only new contribution we need to consider is the term $\vert\int_{\Omega} Jb^{1,2}\omega\vert$ in equation \eqref{equ:B_1}. Compared to $F^{1,2}$ in \cite{Guo}, the expression for $b^{1,2}$ includes an additional term $\partial_{t}^{2}(\mathfrak{n}'(t)\partial_{x}u)$. We compute

   \begin{align}
       \partial_{t}^{2}(\mathfrak{n}'(t)\partial_{x}u)=\mathfrak{n}'''(t)\partial_{x}u+2\mathfrak{n}''(t)\partial_{x}\partial_{t}u+\mathfrak{n}'(t)\partial_{x}\partial_{t}^{2}u
   \end{align}

   \noindent which implies that

   \begin{align}
       \vert \int_{\Omega}Jb^{1,2}\cdot \omega\vert\lesssim \vert \mathfrak{n}'''(t)\vert\vert \vert u\vert \vert_{H^{1}}\vert \vert \omega\vert \vert_{L^{2}}+\vert \partial_{t}^{2}\mathfrak{n}(t)\vert\vert \vert \partial_{t}u\vert \vert_{H^{1}}\vert \vert \omega\vert\vert_{L^{2}}+\vert \mathfrak{n}'(t)\vert\vert \partial_{t}^{2}u\vert \vert_{H^{1}} \vert\vert \omega\vert \vert_{H^{1}}\lesssim \mathcal{E}\mathcal{D}
   \end{align}

   \noindent This finishes the proof.
   \end{proof}

   Now we have the new constructed function $\omega$. We then use this function to establish the following Theorem establishing the second-order energy estimate
   
   \begin{theorem}{\label{thm:2-order}}
        Suppose that $(\p_{t}^{2}u,\p_{t}^{2}p,\p_{t}^{2}\xi)$ is the solution to system \eqref{equ:1} such that $\mathcal{E}(u,p,\xi)\leq \delta\ll 1$ when $t\in (0,T)$ for some positive constant $T$. It satisfies the following energy-dissipation relation. 

          \begin{align}
              \mathcal{E}_{||,2}(t)+\int_{0}^{t}\mathcal{D}_{||,2}\lesssim \mathcal{E}_{||,2}(0)+ \int_{0}^{t}\sqrt{\mathcal{E}}\mathcal{D}
          \end{align}

          \noindent for any $0<t\leq T$
   \end{theorem}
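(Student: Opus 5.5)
The plan is to follow the zero- and first-order arguments (Theorem \ref{thm:0-order} and its first-order analogue), now applied to the second-order system \eqref{equ:2} with $v=\p_t^2u$, $q=\p_t^2p$, $\zeta=\p_t^2\xi$. The new difficulty is the pressure. Because $b^{2,2}$ contains $\p_t^2\mathcal{A}$, it is not controlled well enough to test directly against $q$, and $\p_t^2 p$ is not in the energy. The remedy, as in \cite{Guo}, is to test the weak form \eqref{equ:4.1.24} with $\omega$ replaced by $\p_t^2u-\omega$, where $\omega$ is the corrector of Lemma \ref{thm:B}. Then
\begin{align*}
J\operatorname{div}_{\mathcal{A}}(\p_t^2u-\omega)=\langle Jb^{2,2}\rangle,
\end{align*}
so the pressure contribution reduces to $\langle Jb^{2,2}\rangle\int_\Omega J\p_t^2p$. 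I expect to handle this term through the mass/pressure normalization exactly as in \cite{Guo}, bounding it by $\sqrt{\mathcal{E}}\mathcal{D}$.

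The key steps, in order, are as follows.

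\textbf{Step (i).} Test with $J\p_t^2u$ and integrate by parts as in \eqref{equ:5.1.93}--\eqref{equ:5.1.100}. This produces
\begin{align*}
\tfrac12\tfrac{d}{dt}\Big(\|\sqrt J\p_t^2u\|_{L^2}^2+(\p_t^2\xi,\p_t^2\xi)_{1,\Sigma}\Big)+\tfrac\mu2\int_\Omega J|\mathbb{D}_{\mathcal{A}}\p_t^2u|^2+\int_{\Sigma_s}J\beta(\p_t^2u\cdot\tau)^2+\kappa[\tfrac1{\rho_0}\p_t^2u\cdot\mathcal{N}]_\theta^2,
\end{align*}
with the kinematic condition used to convert the $\Sigma$ boundary term.

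\textbf{Step (ii).} The $\omega$-terms $\int\p_t^2 u J\omega$, $((\p_t^2u,\omega))$ and $(b^{1,2},\omega)$ are moved across and absorbed using Lemma \ref{thm:B}(3). The term $\frac{d}{dt}\int \p_t^2uJ\omega$ is integrated in time and bounded by $\mathcal{E}^{3/2}$ at the endpoints.

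\textbf{Step (iii).} The pole term $(\p_t^2\xi,\mathfrak{n}'''(t)\xi_s)_{1,\Sigma}$ is reduced, using Theorem \ref{thm:pos} and the decomposition $\p_t^2\xi=(\p_t^2\xi-a_2\rho_0)+a_2\rho_0$, to $a_2\mathfrak{n}'''(\rho_0,\xi_s)_{1,\Sigma}$. This is bounded with Theorem \ref{thm:gam} by
\begin{align*}
(\|\p_t\xi\|_{L^2}^2+\|\p_t^2\xi\|_{L^2}\|\xi\|_{L^2})\big(\|\p_t^2u\|_{H^1}+\mathcal{E}_-\big)\lesssim\sqrt{\mathcal{E}}\mathcal{D}.
\end{align*}

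\textbf{Step (iv).} The nonlinear terms $b^{1,2},b^{3,2},b^{4,2},b^{5,2},b^{6,2},b^{7,2}$ are estimated term by term with Hölder, trace and the Sobolev embeddings used above, each giving $\sqrt{\mathcal{E}}\mathcal{D}$. The only genuinely new ingredient is $\p_t^2(\mathfrak{n}'\p_xu)$, already handled in the proof of Lemma \ref{thm:B}.

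\textbf{Step (v).} Coercivity follows as in Step 10 of the first-order proof. By \eqref{equ:p_3},
\begin{align*}
(\p_t^2\xi,\p_t^2\xi)_{1,\Sigma}\gtrsim\|\p_t^2\xi\|_{H^1}^2-\mathcal{E}^{3/2},
\end{align*}
and Korn's inequality (with $J\approx1$, $\mathcal{A}\approx I$) gives $\|\p_t^2u\|_{H^1}^2$ from the dissipation. Integrating in time and using smallness of $\delta$ then closes the estimate.

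The main obstacle will be Step (iv), specifically the highest-order boundary terms. These are $(b^{6,2},\p_t^2\xi)_{1,\Sigma}$, which involves $\p_t^2\mathcal{N}\sim\p_t^2\xi'$ multiplied against $\p_t^2\xi'$, and the $b^{3,2}$ term containing $\p_t^2\xi'$. Both sit at the edge of what $\mathcal{D}$ controls: only $\|\p_t^2\xi\|_{H^{3/2-\alpha}}$ and $\|\p_t^3\xi\|_{H^{1/2-\alpha}}$ are available. My first attempt is to integrate by parts in $\theta$ to move a derivative onto the lower-order factor ($u$ or $\xi$, controlled in $W^{2,q_+}$, $W^{3-1/q_+,q_+}$). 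I would then pair $H^{1/2-\alpha}$ against $H^{1/2+\alpha}$ using \eqref{est:D_j}, as in the corresponding step of \cite{Guo}.
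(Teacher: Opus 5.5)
Your skeleton (testing with $\p_t^2u-\omega$, Lemma \ref{thm:B} for the corrector terms, the splitting $\p_t^2\xi=(\p_t^2\xi-a_2\rho_0)+a_2\rho_0$ for the pole term, and coercivity from \eqref{equ:p_3}) is the paper's. However, two steps fail as written, and the second is exactly where you located the difficulty.

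\textbf{Pressure term.} What is left is $\langle Jb^{2,2}\rangle_\Omega\int_\Omega\p_t^2p$. There is no extra $J$, because the $J$ already sits inside $J\operatorname{div}_{\mathcal A}$. Nothing normalizes this term away. The pressure is fixed by the stress condition on $\Sigma$. Moreover, by the Piola identity $\int_\Omega Jb^{2,2}$ equals the flux $\int_0^\pi\p_t^2u\cdot\mathcal N\,d\theta$, which mass conservation makes quadratically small but not zero. Since $\p_t^2p$ lies in neither $\mathcal E$ nor $\mathcal D$, no pointwise-in-time bound by $\sqrt{\mathcal E}\mathcal D$ is available. The paper instead integrates by parts in time,
\[
\int_\Omega\p_t^2p\,\langle Jb^{2,2}\rangle_\Omega=\frac{d}{dt}\Big(\langle Jb^{2,2}\rangle_\Omega\int_\Omega\p_tp\Big)-\p_t\langle Jb^{2,2}\rangle_\Omega\int_\Omega\p_tp,
\]
and then uses Lemma \ref{lem:F_2}, namely $|\int_\Omega Jb^{2,2}|\lesssim\mathcal E$ and $|\int_\Omega\p_t(Jb^{2,2})|\lesssim\sqrt{\mathcal E\mathcal D}$. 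The first piece yields endpoint terms of size $\mathcal E^{3/2}$; the second is $\lesssim\sqrt{\mathcal E}\mathcal D$.

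\textbf{Top-order boundary terms.} This is the real gap. The terms you single out are in fact harmless. These are $\frac{\xi}{\rho\rho_0}u\cdot\p_t^2\mathcal N$ and $\mathfrak n'\p_t^2(\rho'/\rho)$, paired with $\p_t^2\xi$ in $(\cdot,\cdot)_{1,\Sigma}$. Integration by parts in $\theta$ handles them, since the boundary terms $[\p_t^2\xi']_\theta^2$ lie in $\mathcal D$, and this is what the paper does.

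The genuinely critical terms contain $\p_t^2u\cdot\mathcal N$. One is the piece $-\frac{\xi}{\rho\rho_0}\p_t^2u\cdot\mathcal N$ of $b^{6,2}$ inside $(\p_t^2\xi,b^{6,2})_{1,\Sigma}$. The other is the part $\p_b\mathcal R\,\p_t^2\xi'$ of $b^{3,2}$ inside $\int_0^\pi b^{3,2}\p_\theta(\frac{1}{\rho_0}\p_t^2u\cdot\mathcal N)$. In both you must pair $\p_t^2\xi'$, which $\mathcal D$ controls only in $H^{1/2-\alpha}$, against $\p_\theta(\p_t^2u\cdot\mathcal N)$, which is only in $H^{-1/2}$ since $\mathcal D$ gives $\p_t^2u\in H^1(\Omega)$. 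The count is short by $\alpha$.

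Integrating by parts in $\theta$ only moves the deficit from one factor to the other, because the product rule always leaves a term with both derivatives on the two top-order factors. There is also no top-order factor in $H^{1/2+\alpha}$ to pair against via \eqref{est:D_j}.

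The missing idea is to trade the velocity for the surface through the kinematic condition:
\[
\tfrac1\rho\,\p_t^2u\cdot\mathcal N=\p_t^3\xi+\mathfrak n'''\xi_s+\tilde b^{8,2}-\tfrac1\rho\,u\cdot\p_t^2\mathcal N,\qquad \|\tilde b^{8,2}\|_{W^{1,\frac{1}{1-\alpha}}}\lesssim\sqrt{\mathcal E\mathcal D}.
\]
After this substitution the top-order products take the form
\[
c(\xi)\,\p_t^2\xi'\,\p_t^3\xi'=\tfrac12\p_t\big(c(\xi)(\p_t^2\xi')^2\big)-\tfrac12\p_tc(\xi)\,(\p_t^2\xi')^2,
\]
with $c(\xi)$ small. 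This is an exact time derivative of a quantity of size $\mathcal E^{3/2}$, plus a term bounded by $\sqrt{\mathcal E}\mathcal D$.

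This time integration by parts (Steps 7 and 11 of the paper), together with the one for the pressure, is why the closing inequality carries $\mathcal E^{3/2}(t)$ and $\mathcal E^{3/2}(0)$, which smallness then absorbs. It also shows that your claim in Step (iv), that every term is bounded by $\sqrt{\mathcal E}\mathcal D$, cannot hold as stated.
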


   \begin{proof}

        Testing the second order equation system by function $(\p_{t}^{2}u-\omega)$ and using integration by part as in Theorem \ref{thm:0-order}, we obtain the following equation:

       \begin{align}
           &\frac{d}{dt}(\vert \vert \p_{t}^{2}u\vert \vert^{2}_{L^{2}}+\vert \vert \partial_{tt}\xi\vert \vert^{2}_{1,\Sigma}-(\partial_{t}^{2}u,\omega)_{0})+\int_{\Sigma}J\beta (\p_{t}^{2}u\cdot \tau)^{2}+\frac{\mu}{2}\int_{\Omega} \mathbb{D}_{\mathcal{A}}\partial_{t}^{2}u:\mathbb{D}_{\mathcal{A}}\partial_{t}^{2}uJ+\kappa[\frac{1}{\rho_{0}}\p^{2}_{t}u\cdot \mathcal{N}]^{2}_{\theta}\notag\\
           &=\int_{\Omega} b^{1,2}\cdot \p_{t}^{2}uJ+\frac{1}{2}\int_{\Omega}\partial_{t}J\vert \p^{2}_{t} u\vert^{2}+\kappa[b^{7,2}+b^{6,2},\frac{1}{\rho_{0}}v\cdot \mathcal{N}]_{\theta}+(\p_{t}^{2}\xi,b^{6,2})_{1,\Sigma}+a_{2}\mathfrak{n}'''(t)(\rho_{0},\xi_s)_{1,\Sigma}\notag\\
           &\quad-\int_{0}^{\pi}(b^{3,2}\partial_{\theta}(\p_{t}^{2}u\cdot \mathcal{N}))-\int_{0}^{\pi}b^{4,2}\cdot \p^{2}_{t}ud\theta-\int_{\Sigma_{s}}b^{5,2}\p_{t}^{2}u
           + \int_{\Omega}\partial_{t}^{2}u\partial_{t}(J\omega)+ (b^{1,2},\omega)_{0}+ ((\partial_{t}^{2}u,\omega))\notag\\
           &\quad-\int_{\Omega}\p_{t}^{2}p\langle Jb^{2,2}\rangle_{\Omega}
 \end{align}

We estimate the terms on the right-hand side individually. We begin by discussing the first term involving $b^{1,2}$.
As shown in the Appendix, the expression for $b^{1,2}$ is quite complicated and contains many components.
We divide the estimation into Steps 1–3. Our goal is to establish the following bound:

 \begin{align}
        \int_{\Omega}b^{1,2}v\lesssim \mathcal{E}^{\frac{1}{2}}\mathcal{D}^{\frac{1}{2}}\|v\|_{H^{1}}
    \end{align}
    \noindent for any $v\in H^{1}$.
 
    \textbf{Step 1} $\int_{\Omega} \partial_{tt}b^{1}v$
    
    \textbf{Term $\partial_{tt}(\mathfrak{n}'(t)\partial_{x}u)$} We have the following estimate by Sobolev embedding and H\"older's inequality
    
    \begin{align}
        \int_{\Omega}\partial_{tt}(\mathfrak{n}'(t)\partial_{x}u)\cdot vJ&=\int_{\Omega} (2\mathfrak{n}''(t)\partial_{x}\partial_{t}u+\mathfrak{n}'(t)\partial_{x}\p_{t}^{2}u+\mathfrak{n}'''(t) \partial_{x}u)\cdot vJ \notag\\
       & \lesssim \mathcal{E}^{\frac{1}{2}}(\vert \vert \partial_{t}u\vert \vert_{H^{1}}\vert \vert v\vert \vert_{L^{2}}+\vert \vert v\vert \vert_{L^{2}}\vert \vert \p_{t}^{2}u\vert \vert_{H^{1}})+ \vert \vert u\vert \vert_{W^{1,\frac{2}{1-\epsilon_{+}}}}\vert \vert v\vert \vert _{L^{\frac{4}{\epsilon_{+}}}}\mathcal{D}^{\frac{1}{2}}\notag\\
        &\lesssim \mathcal{E}^{\frac{1}{2}}(\vert \vert \partial_{t}u\vert \vert_{H^{1}} \vert \vert v\vert \vert_{L^{2}}+\vert \vert v\vert \vert_{L^{2}}\vert \vert \p_{t}^{2}u\vert \vert_{H^{1}})+\mathcal{D}^{\frac{1}{2}}\vert \vert v\vert \vert_{H^{1}}\vert \vert u\vert \vert_{W^{2,q_{+}}}\lesssim \mathcal{E}^{\frac{1}{2}}\mathcal{D}^{\frac{1}{2}}\|v\|_{H^{1}} \label{equ:5.1.171}
    \end{align}

    \textbf{Term $\partial_{t}^{2}((\cos\theta W\partial_{t}\bar{\eta},\sin\theta W\partial_{t}\bar{\eta})\mathcal{A}(\partial_{x}u,\partial_{y}u)^{T})$}. 
    
    We have the following estimate using H\"older's inequality and Sobolev embedding
    
    \begin{align}
        &\int_{\Omega} \partial_{t}^{2}((\cos\theta W\partial_{t}\bar{\xi},\sin\theta W\partial_{t}\bar{\xi})\tilde{K}(\partial_{x}u,\partial_{y}u)^{T})\cdot vJ \notag\\
        &\lesssim \int_{\Omega} \vert \partial_{t}^{3}\bar{\xi}\vert \vert \nabla u\vert\vert v\vert+\int_{\Omega} \vert \partial_{t}\bar{\xi}\vert \vert \partial_{t}^{2}\tilde{K}\vert \vert \nabla u\vert\vert v\vert+\int_{\Omega} \vert \partial_{t}\bar{\xi}\vert \vert \nabla \p_{t}^{2}u\vert\vert v\vert\notag\\
        &\quad+\int_{\Omega} \vert \partial_{t}^{2}\bar{\xi}\vert \vert \partial_{t}\tilde{K}\vert \vert \nabla u\vert\vert v\vert+\int_{\Omega} \vert \partial_{t}^{2}\bar{\xi}\vert \vert \partial_{t}\nabla u\vert\vert v\vert+\int_{\Omega} \vert \partial_{t}\bar{\xi}\vert \vert \partial_{t}\tilde{K}\vert \vert\partial_{t} \nabla u\vert\vert v\vert \notag\\
        &\quad+\int_{\Omega} \vert \partial_{t}^{2}\bar{\xi}\vert \vert \partial_{t}W\vert \vert \nabla u\vert\vert v\vert+\int_{\Omega} \vert \partial_{t}\bar{\xi}\vert |\p_{t}^{2}W| \vert \nabla u\vert\vert v\vert+\int_{\Omega} \vert \partial_{t}\bar{\xi}\vert \vert \partial_{t}\tilde{K}\vert \vert\partial_{t} \nabla u\vert\vert v\vert+\int_{\Omega} \vert \partial_{t}\bar{\xi}\vert \vert \partial_{t}\tilde{K}\vert \vert \nabla u\vert|\p_{t}W|\vert v\vert \notag\\
        &\lesssim \vert \vert \partial_{t}^{3}\bar{\xi}\vert \vert_{L^{\frac{2}{\alpha}}}\vert \vert  u\vert \vert_{H^{1}} \vert \vert v\vert \vert_{L^{\frac{2}{1-\alpha}}}+\vert \vert \partial_{t}\bar{\xi}\vert \vert_{L^{\infty}}\vert \vert \partial_{t}^{2}\bar{\xi}\vert \vert_{W^{1,4}}\vert \vert u\vert \vert_{H^{1}}\vert \vert v \vert \vert_{L^{4}}+\vert \vert \partial_{t}\bar{\xi}\vert \vert_{L^{+\infty}} \vert \vert \p_{t}^{2}u\vert\vert_{H^{1}}\vert \vert v\vert \vert_{L^{2}}\notag\\
        &\quad+\vert \vert \partial_{t}^{2}\bar{\xi}\vert\vert_{L^{4}}\vert \vert \partial_{t}\bar{\xi}\vert \vert_{W^{1,+\infty}}\vert \vert u\vert\vert_{H^{1}}\vert \vert v\vert \vert_{L^{4}}+\vert \vert \partial_{t}^{2}\bar{\xi}\vert \vert_{L^{4}}\vert \vert \partial_{t}u\vert \vert_{H^{1}}\vert \vert v\vert \vert_{L^{2}}\notag\\
        &\quad+\vert \vert \partial_{t}\bar{\xi}\vert \vert_{L^{\infty}}\vert \vert \partial_{t}\bar{\xi}\vert \vert_{W^{1,+\infty}}\vert \vert \partial_{t}u\vert \vert_{H^{1}}\vert \vert v\vert \vert_{L^{2}}\notag\\
        &\lesssim \vert \vert \partial_{t}^{3}\bar{\xi}\vert \vert_{H^{1-\alpha}}\mathcal{E}^{\frac{1}{2}}\|v\|_{H^{1}}+\vert \vert \partial_{t}\bar{\xi}\vert \vert_{H^{\frac{3}{2}}}\vert \vert \partial_{t}^{2}\bar{\xi}\vert \vert_{H^{2-\alpha}}\mathcal{E}^{\frac{1}{2}}\|v\|_{H^{1}}+\mathcal{E}\|v\|_{H^{1}}+\vert \vert \partial_{t}^{2}\bar{\xi}\vert \vert_{H^{1}}\vert \vert \partial_{t}\bar{\xi}\vert \vert_{H^{\frac{3}{2}}} \mathcal{E}^{\frac{1}{2}}\|v\|_{H^{1}}\notag\\
        &\quad+\vert \vert \partial_{t}^{2}\bar{\xi}\vert \vert_{H^{1}}\vert \vert \partial_{t}u\vert \vert_{H^{1}}\vert \vert v\vert \vert_{L^{2}}+\vert \vert \partial_{t}\bar{\xi}\vert \vert^{2}_{H^{\frac{3}{2}+\frac{\epsilon_{-}-\alpha}{2}}}\mathcal{E}^{\frac{1}{2}}\|v\|_{L^{2}}\lesssim \mathcal{E}^{\frac{1}{2}}\mathcal{D}^{\frac{1}{2}}\|v\|_{H^{1}}\label{equ:5.1.172}
    \end{align}

    \noindent where we have used the fact that $\alpha<\frac{1}{2}$. 

    \textbf{Term $\partial^{2}_{t}(u\cdot \nabla_{\mathcal{A}}u)$} 
    
    We have the following computation by H\"o

    \begin{align}
        \int_{\Omega} \partial_{t}^{2}(u\cdot \nabla_{\mathcal{A}}u)\cdot vJ\lesssim& \int_{\Omega} (\p_{t}^{2}u\cdot \nabla_{\mathcal{A}}u)\cdot vJ+\int_{\Omega}(u\cdot \nabla_{\mathcal{A}}\p_{t}^{2}u)\cdot v+(\partial_{t}u\cdot \nabla_{\mathcal{A}}\partial_{t}u)\cdot v \notag\\
        &+\int_{\Omega}(u\cdot \nabla_{\partial_{t}^{2}\mathcal{A}}u)\cdot v+\int_{\Omega}(u\cdot \nabla_{\partial_{t}\mathcal{A}}\partial_{t}u)\cdot v+\int_{\Omega} (\partial_{t}u\cdot \nabla_{\partial_{t}\mathcal{A}}u)\cdot v\notag\\
        \lesssim& \|\p_{t}^{2}u\|_{L^{4}}\vert \vert v\vert \vert_{L^{4}}\vert \vert u\vert \vert_{W^{1,2}}
        +\vert \vert u\vert \vert_{L^{\infty}}\vert \vert \p_{t}^{2}u\vert \vert_{H^{1}}\vert \vert v\vert \vert_{L^{2}}+\vert \vert \partial_{t}u\vert \vert_{L^{\infty}}\vert \vert \partial_{t}u\vert \vert_{H^{1}}\vert \vert v\vert \vert_{L^{2}}\notag\\
        &+\vert \vert u\vert \vert_{L^{\infty}} \vert \vert \partial_{t}u\vert \vert_{H^{1}}\vert \vert \partial_{t}\bar{\eta}\vert \vert_{W^{1,4}}\vert \vert v\vert \vert_{L^{4}}+\vert \vert \partial_{t}\bar{\eta}\vert \vert_{W^{1,+\infty}}\vert \vert \partial_{t}u\vert \vert_{L^{\infty}}\vert \vert u\vert \vert_{H^{1}} \vert \vert v\vert \vert_{L^{2}}\notag\\
        &+\vert \vert u\vert \vert_{L^{\infty}}\vert \vert \partial_{t}^{2}\bar{\xi}\vert \vert_{W^{1,4}}\vert \vert u\vert \vert_{W^{1,2}}\vert \vert v\vert \vert_{L^{4}}\\
        \lesssim& \mathcal{E}^{\frac{1}{2}}\mathcal{D}^{\frac{1}{2}}\|v\|_{H^{1}}\label{equ:5.1.173},
    \end{align}

    \noindent where we used the Sobolev embedding $W^{1,4}\hookrightarrow H^{2-\alpha}$ since $\alpha<\frac{1}{2}$. Combining all of the estimates in this step, we obtain the following estimate

    \begin{align}
        \int_{\Omega} \partial_{tt}b^{1}\cdot vJ\lesssim \mathcal{E}^{\frac{1}{2}}\mathcal{D}^{\frac{1}{2}}\|v\|_{H^{1}} \label{equ:5.1.174}
    \end{align}

    \textbf{Step 2} Other terms of $\partial_{t}b^{1,1}$:

    \textbf{Term $\partial_{t}(\operatorname{div}_{\partial_{t}\mathcal{\mathcal{A}}}S_{\mathcal{A}}(p,u))$} We have the following estimate using H\"older's inequality

    \begin{align}
        \int_{\Omega} \partial_{t}(\operatorname{div}_{\partial_{t}\mathcal{\mathcal{A}}}S_{\mathcal{A}}(p,u))\cdot vJ=&\int_{\Omega}\operatorname{div}_{\partial_{t}^{2}\mathcal{A}}S_{\mathcal{A}}(p,u)\cdot vJ+\int_{\Omega} \operatorname{div}_{\partial_{t}\mathcal{A}}\mathbb{D}_{\partial_{t}\mathcal{A}}u\cdot vJ\notag\\
        &\quad+\int_{\Omega} \operatorname{div}_{\partial_{t}\mathcal{A}}S_{\mathcal{A}}(\partial_{t}p,\p_{t}u)\cdot vJ=I_{1}+I_{2}+I_{3} \label{equ:5.1.183}
    \end{align}

    \noindent We then estimate $I_{1}-I_{3}$ individually. First, $I_{1}$ can be estimated as follows

    \begin{align}
        I_{1}&\lesssim \int_{\Omega}\vert \partial_{t}^{2}\nabla \bar{\xi}\vert\vert \nabla p\vert\vert v\vert+\int_{\Omega} \vert \partial_{t}^{2}\nabla \bar{\xi}\vert\vert \nabla^{2} u\vert\vert v\vert+\int_{\Omega} \vert \partial_{t}^{2}\nabla \bar{\xi}\vert\vert \nabla^{2}\bar{\xi}\vert\vert  p\vert\vert v\vert+\int_{\Omega} \vert \partial_{t}^{2}\nabla \bar{\xi}\vert\vert \nabla^{2}\bar{\xi}\vert\vert  \nabla u\vert\vert v\vert \notag\\
       & \lesssim \vert \vert \partial_{t}^{2}\bar{\xi}\vert \vert_{W^{1,\frac{2}{\alpha}}}\vert \vert p\vert \vert_{W^{1,q_{+}}}\vert \vert v\vert \vert_{L^{\frac{2}{\epsilon_{+}-\alpha}}}+\vert \vert \partial_{t}^{2}\bar{\xi}\vert \vert_{W^{1,\frac{2}{\alpha}}}\vert \vert u\vert \vert_{W^{2,q_{+}}}\vert \vert v\vert \vert_{L^{\frac{2}{\epsilon_{+}-\alpha}}}\notag\\
        &\quad+\vert \vert \partial_{t}^{2}\bar{\xi}\vert \vert_{W^{1,\frac{2}{\alpha}}}\vert \vert p\vert \vert_{L^{\frac{2}{1-\epsilon_{+}}}}\vert \vert v\vert \vert_{L^{\frac{2}{2\epsilon_{+}-\alpha}}}\vert \vert \bar{\xi}\vert \vert_{W^{2,\frac{2}{1-\epsilon_{+}}}}+\vert \vert \partial_{t}^{2}\bar{\xi}\vert \vert_{W^{1,\frac{2}{\alpha}}}\vert \vert u\vert \vert_{W^{1,\frac{2}{1-\epsilon_{+}}}}\vert \vert v\vert \vert_{L^{\frac{2}{2\epsilon_{+}-\alpha}}}\vert \vert \bar{\xi}\vert \vert_{W^{2,\frac{2}{1-\epsilon_{+}}}}\notag\\
        &\lesssim   \vert \vert \partial_{t}^{2}\bar{\xi}\vert \vert_{H^{2-\alpha}}\vert \vert p\vert \vert_{W^{1,q_{+}}}\vert \vert v\vert \vert_{H^{1}}+\vert \vert \partial_{t}^{2}\bar{\xi}\vert \vert_{H^{2-\alpha}}\vert \vert u\vert \vert_{W^{2,q_{+}}}\vert \vert v\vert \vert_{H^{1}}\notag\\
        &\quad+\vert \vert \partial_{t}^{2}\bar{\xi}\vert \vert_{H^{2-\alpha}}\vert \vert p\vert \vert_{W^{1,q_{+}}}\vert \vert v\vert \vert_{H^{1}}\vert \vert \bar{\xi}\vert \vert_{W^{3,q_{+}}}+\vert \vert \partial_{t}^{2}\bar{\eta}\vert \vert_{H^{2-\alpha}}\vert \vert u\vert \vert_{W^{2,q_{+}}}\vert \vert v\vert \vert_{H^{1}}\vert \vert \bar{\xi}\vert \vert_{W^{3,q_{+}}}\lesssim \mathcal{E}^{\frac{1}{2}}\mathcal{D}^{\frac{1}{2}}\|v\|_{H^{1}}\label{equ:5.1.184}
    \end{align}

    \noindent Similar to the estimate of $I_1$, term $I_{2}$ is estimated as follows
    
    \begin{align}
        I_{2}&\lesssim \int_{\Omega} \vert \partial_{t}\nabla \bar{\xi}\vert^{2}(\vert \nabla p\vert+\vert \nabla^{2} u\vert) \vert v\vert+\int_{\Omega} \vert \partial_{t}\nabla \bar{\xi}\vert\vert \partial_{t}\nabla^{2}\bar{\xi}\vert(\vert  p\vert+\vert \nabla u\vert) \vert v\vert \notag\\
        &\lesssim \vert \vert \partial_{t}\bar{\xi}\vert \vert^{2}_{W^{1,+\infty}}(\vert \vert p\vert \vert_{W^{1,q_{+}}}+\vert \vert u\vert \vert_{W^{2,q_{+}}})\vert \vert v\vert \vert_{L^{\frac{2}{\epsilon_{+}}}}\notag\\
        &\quad+\vert \vert \partial_{t}\bar{\xi}\vert \vert_{W^{1,+\infty}}\vert \vert \partial_{t}\bar{\xi}\vert \vert_{W^{2,\frac{2}{1-\epsilon_{-}}}}(\vert \vert p\vert \vert_{L^{\frac{2}{1-\epsilon_{+}}}}+\vert \vert u\vert \vert_{W^{1,\frac{2}{1-\epsilon_{+}}}})\vert \vert v\vert \vert_{L^{\frac{2}{\epsilon_{-}+\epsilon_{+}}}}\notag\\
        &\lesssim \vert \vert \partial_{t}\bar{\xi}\vert \vert^{2}_{H^{2+\frac{\epsilon_{-}-\alpha}{2}}}(\vert \vert p\vert \vert_{W^{1,q_{+}}}+\vert \vert u\vert \vert_{W^{2,q_{+}}})\vert \vert v\vert \vert_{H^{1}}\notag\\
        &\quad+\vert \vert \partial_{t}\bar{\xi}\vert \vert_{H^{2+\frac{\epsilon_{-}-\alpha}{2}}}\vert \vert \partial_{t}\bar{\xi}\vert \vert_{W^{3,q_{-}}}(\vert \vert p\vert \vert_{W^{1,q_{+}}}+\vert \vert u\vert \vert_{W^{2,q_{+}}})\vert \vert v\vert \vert_{H^{1}} \lesssim \mathcal{E}\mathcal{D}^{\frac{1}{2}}\|v\|_{H^{1}}\label{equ:5.1.185}
    \end{align}

    \noindent Similarly, for $I_{3}$, we have the following estimate
    \begin{align}
        I_{3}\lesssim& \|\p_{t}\bar{\xi}\|_{W^{1,\infty}}(\|\p_{t}u\|_{W^{2,q_{-}}}+\|\p_{t}p\|_{W^{1,q_{-}}})\|v\|_{L^{\frac{2}{\epsilon_{-}}}}+\|\p_{t}\bar{\xi}\|_{W^{1,+\infty}}\|\bar{\xi}\|_{W^{2,\frac{2}{1-\epsilon_{+}}}}(\|\p_{t}u\|_{W^{1,\frac{1}{1-\epsilon_{-}}}}+\|\p_{t}p\|_{L^{\frac{2}{1-\epsilon_{-}}}})   \|v\|_{L^{\frac{2}{\epsilon_{-}+\epsilon_{+}}}}\notag\\
        \lesssim& \|\p_{t}\xi\|_{H^{\frac{3}{2}+\frac{\epsilon_{-}-\alpha}{2}}}\|v\|_{H^{1}}(\|\p_{t}u\|_{W^{2,q_{-}}}+\|\p_{t}p\|_{W^{1,q_{-}}})+\|\p_{t}\xi\|_{H^{\frac{3}{2}+\frac{\epsilon_{-}-\alpha}{2}}}\|v\|_{H^{1}}(\|\p_{t}u\|_{W^{2,q_{-}}}+\|\p_{t}p\|_{W^{1,q_{-}}})\|\xi\|_{W^{3-\frac{1}{q_{+}},q_{+}}}\notag\\
        \lesssim& \mathcal{E}^{\frac{1}{2}}\mathcal{D}^{\frac{1}{2}}\|v\|_{H^{1}}\label{equ:5.1.186}
    \end{align}

    \noindent Hence, combining  \eqref{equ:5.1.184},\eqref{equ:5.1.185} and \eqref{equ:5.1.186}, we have

    \begin{align}
        \int_{\Omega} \partial_{t}(\operatorname{div}_{\partial_{t}\mathcal{\mathcal{A}}}S_{\mathcal{A}}(p,u))\cdot vJ\lesssim\mathcal{E}^{\frac{1}{2}}\mathcal{D}^{\frac{1}{2}}\|v\|_{H^{1}}\label{equ:5.1.187}
    \end{align}

     \textbf{Term $\partial_{t}(\operatorname{div}_{\mathcal{A}}\mathbb{D}_{\partial_{t}\mathcal{A}}u)$}

      Using the similar computation as for the last term, we obtain
     
    \begin{align}
        \int_{\Omega}\partial_{t}(\operatorname{div}_{\mathcal{A}}\mathbb{D}_{\partial_{t}\mathcal{A}}u)\cdot vJ\lesssim& \vert \vert \partial_{t}\nabla \bar{\xi}\vert \vert^{2}_{L^{\infty}}\vert \vert u\vert \vert_{W^{2,q_{+}}}\vert \vert v\vert \vert_{L^{\frac{2}{\epsilon_{+}}}}\notag\\
        &\quad+\vert \vert \partial_{t}\nabla \bar{\xi}\vert \vert_{L^{+\infty}}\vert \vert u\vert \vert_{W^{1,\frac{2}{1-\epsilon_{+}}}}\vert \vert \partial_{t}\bar{\xi}\vert \vert_{W^{2,\frac{2}{1-\epsilon_{-}}}}\vert \vert v\vert \vert_{L^{\frac{2}{\epsilon_{+}+\epsilon_{-}}}}\notag\\
        &\quad+\vert \vert v\vert \vert_{L^{\frac{2}{\epsilon_{+}-\alpha}}}(\vert \vert \partial_{t}\bar{\xi}\vert \vert_{W^{1,\frac{2}{\alpha}}}+\vert \vert \partial_{t}^{2}\bar{\xi}\vert\vert_{W^{1,\frac{2}{\alpha}}} )\vert \vert \nabla^{2}u\vert \vert_{L^{\frac{2}{2-\epsilon_{+}}}}\notag\\
        &\quad+\vert \vert v\vert \vert_{L^{\frac{2}{\epsilon_{+}-\alpha}}}(\vert \vert \partial_{t}\bar{\xi}\vert \vert_{W^{1,\frac{2}{\alpha+1}}}+\vert \vert \partial_{t}^{2}\bar{\xi}\vert\vert_{W^{2,\frac{2}{\alpha+1}}} )\vert \vert \nabla u\vert \vert_{L^{\frac{2}{1-\epsilon_{+}}}}\notag\\
        &\quad+\vert \vert \partial_{t}\bar{\xi}\vert \vert_{W^{1,+\infty}}\vert \vert \p_{t}u\vert \vert_{W^{2,q_{-}}}\vert \vert v\vert \vert_{\frac{2}{\epsilon_{-}}}+\vert \vert \partial_{t}\bar{\xi}\vert \vert_{W^{2,,\frac{2}{1-\epsilon_{-}}}}\vert \vert \p_{t}u\vert \vert_{W^{1,\frac{2}{1-\epsilon_{-}}}}\vert \vert v\vert \vert_{L^{\frac{2}{2\epsilon_{-}}}}\notag\\
        &\lesssim \mathcal{E}^{\frac{1}{2}}\mathcal{D}^{\frac{1}{2}}\|v\|_{H^{1}}\label{equ:5.1.188}
    \end{align}

   \textbf{Step 3} Remaining  terms included in $b^{1,2}$. 

   \textbf{Term $\operatorname{div}_{\partial_{t}\mathcal{A}}S_{\mathcal{A}}(\partial_{t}p,\partial_{t}u)$} 

   We have the following estimate using H\"older's inequality and trace theorem 

   \begin{align}
       \int_{\Omega} \operatorname{div}_{\partial_{t}\mathcal{A}}S_{\mathcal{A}}(\partial_{t}p,\partial_{t}u)\cdot vJ\lesssim \vert \vert \partial_{t}\bar{\xi}\vert \vert_{W^{1,+\infty}}\vert\vert \nabla^{2}\bar{\xi}\vert \vert_{L^{\frac{2}{1-\epsilon_{+}}}}(\vert \vert \partial_{t}p\vert \vert_{L^{\frac{2}{1-\epsilon_{-}}}}+\vert\vert \partial_{t}u\vert \vert_{W^{1,\frac{2}{1+\epsilon_{+}}}})\vert \vert v\vert \vert_{L^{\frac{1}{1-\epsilon_{+}}}}\notag\\
       +\vert \vert \partial_{t}\bar{\xi}\vert \vert_{W^{1,+\infty}}(\vert \vert \partial_{t}p\vert \vert_{W^{1,q_{-}}}+\vert\vert \partial_{t}u\vert \vert_{W^{2,q_{-}}})\vert \vert v\vert \vert_{L^{\frac{2}{\epsilon_{-}}}}\notag\\
       \lesssim \vert \vert \partial_{t}\bar{\xi}\vert \vert_{W^{1,+\infty}}\vert\vert \bar{\xi}\vert \vert_{W^{3,q_{+}}}(\vert \vert \partial_{t}p\vert \vert_{W^{1,q_{-}}}+\vert\vert \partial_{t}u\vert \vert_{W^{2,q_{+}}})\vert \vert v\vert \vert_{H^{1}}\notag\\
       +\vert \vert \partial_{t}\bar{\xi}\vert \vert_{W^{1,+\infty}}(\vert \vert \partial_{t}p\vert \vert_{W^{1,q_{-}}}+\vert\vert \partial_{t}u\vert \vert_{W^{2,q_{+}}})\vert \vert v\vert \vert_{H^{1}}\lesssim \mathcal{E}^{\frac{1}{2}}\mathcal{D} \label{equ:5.1.193}
   \end{align}

   \textbf{Term $\operatorname{div}_{\mathcal{A}}S_{\partial_{t}\mathcal{A}}(\partial_{t}p,\partial_{t}u)$} 

   Similarly, we have the following estimate using H\"older's inequality

   \begin{align}
       \int_{\Omega} \operatorname{div}_{\mathcal{A}}S_{\partial_{t}\mathcal{A}}(\partial_{t}p,\partial_{t}u)\lesssim\mathcal{E}^{\frac{1}{2}}\mathcal{D} \label{equ:5.1.194}
   \end{align}

   \textbf{Step 4} The estimate for $\int_{\Omega}\partial_{t}J\vert \p_{t}^{2}u\vert^{2}$

   We have the following estimate from the definition of $J=\det(\mathcal{A})$
   \begin{align}
       \int_{\Omega} \partial_{t}J\vert \p_{t}^{2}u\vert^{2}\lesssim \vert \vert \partial_{t}\bar{\eta}\vert \vert_{W^{1,+\infty}}\vert \vert \p_{t}u\vert \vert^{2}_{L^{2}}\lesssim \mathcal{E}^{\frac{3}{2}} \label{equ:5.1.195}
   \end{align}

   \textbf{Step 5} The estimate for $\kappa [b^{7,2},\frac{1}{\rho_{0}}v\cdot \mathcal{N}]_{\theta}$. 
   
   Using the definition of $b^{7,2}$ in Appendix, and Theorem \ref{thm:gam}, we have the following computation

   \begin{align}
       [b^{7,2},\frac{1}{\rho_{0}}v\cdot \mathcal{N}]_{\theta}\lesssim& [v\cdot \mathcal{N}]_{\theta}(([\partial_{t}\xi]_{\theta}+[\mathfrak{n}'(t)]_{\theta})([\partial_{t}^{3}\xi]_{\theta}+[\mathfrak{n}'''(t)]_{\theta})+([\partial_{t}^{2}\xi]^{2}_{\theta}+[\mathfrak{n}''(t)]^{2}_{\theta}))\notag\\
       \lesssim& [v\cdot \mathcal{N}]_{\theta}((\vert \vert \partial_{t}\xi\vert \vert_{W^{1,1}}+\vert \vert u\vert \vert_{H^{1}}+\mathcal{E})([\p_{t}^{3}\xi]_{\theta}+\vert \vert \p_{t}^{2}u\vert \vert_{H^{1}}+\mathcal{E})+\vert \vert \p_{t}u\vert \vert^{2}_{H^{1}}+\vert \vert \p_{t}^{2}\xi\vert \vert^{2}_{H^{1}}+\mathcal{E}^{2})\notag\\
       \lesssim& \mathcal{E}^{\frac{1}{2}}\mathcal{D}^{\frac{1}{2}}[v\cdot \mathcal{N}]_{\theta}\label{equ:5.1.196}
   \end{align}

   \textbf{Step 6} In this step, we now estimate the term $\kappa [b^{6,2}\,\frac{1}{\rho_{0}}v\cdot \mathcal{N}]_{\theta}$. 
   We have the following argument using the definition of $b^{6,2}$, trace theorem and Sobolev embedding

   \begin{align}
       \kappa [b^{6,2},\frac{1}{\rho_{0}}v\cdot \mathcal{N}]_{\theta}&\lesssim [v\cdot \mathcal{N}]_{\theta}[\p_{t}u]_{\theta}[\p_{t}\p_{\theta}\xi]_{\theta}+[v\cdot \mathcal{N}]_{\theta}[u]_{\theta}[\p_{t}^{2}\p_{\theta}\xi]_{\theta}+\mathcal{E}[v\cdot \mathcal{N}]_{\theta}(\sum_{i=0}^{2}[\p_{\theta}\p_{t}^{i}\xi]_{\theta})\notag\\
       &\quad+(\|\p_{t}\xi\|_{H^{\frac{3}{2}+\frac{\epsilon_{-}-\alpha}{2}}}+\|\p_{t}^{2}\xi\|_{H^{1}})(\|u\|_{W^{2,q_{+}}}+\|\p_{t}u\|_{H^{1+\frac{\epsilon_{-}}{2}}}+\|u\|_{W^{2,q_{+}}}\|\p_{t}\xi\|_{H^{\frac{3}{2}+\frac{\epsilon_{-}-\alpha}{2}}})[v\cdot \mathcal{N}]_{\theta}\notag\\
       &\quad+\|\xi\|_{W^{3-\frac{1}{q_{-}},q_{-}}}([\p_{t}^{2}u\cdot \mathcal{N}]_{\theta}+\|\p_{t}u\|_{H^{1+\frac{\epsilon_{-}}{2}}}\|\p_{t}\xi\|_{H^{\frac{3}{2}+\frac{\epsilon_{-}-\alpha}{2}}}+\|u\|_{W^{2,q_{+}}}[\p_{t}^{2}\p_{\theta}\xi]_{\theta})[v\cdot \mathcal{N}]_{\theta}\notag\\
       &\lesssim \mathcal{E}^{\frac{1}{2}}\mathcal{D}^{\frac{1}{2}}[v\cdot \mathcal{N}]_{\theta}
   \end{align}
   
   \textbf{Step 7} In this step, we estimate the term $(\p_{t}^{2}\xi,b^{6,2})_{1,\Sigma}$:

   We aim to show the following result

   \begin{align}
       |(\p_{t}^{2}\xi,b^{6,2})_{1,\Sigma}-H|\lesssim \mathcal{E}^{\frac{1}{2}}\mathcal{D}{\label{equ:H}},
   \end{align}
   \noindent where $H$ denotes a collection of terms involving $(\p_{t}^{i}u,\p_{t}^{i}p,\p_{t}^{i}\xi)$ with $i=0,1,2$, which satisfies the following estimate
   \begin{align}
       |\int_{0}^{t}H|\lesssim \mathcal{E}^{\frac{3}{2}}(t)+\mathcal{E}^{\frac{3}{2}}(0)
   \end{align}
   
   We now estimate $(\p_{t}^{2}\xi,b^{6,2})_{1,\Sigma}$ as follows.

   \begin{align}
        (\partial_{tt}\xi,b^{6,2})_{1,\Sigma}=& (\partial_{tt}\xi,(\frac{\xi}{\rho\rho_{0}}\p_{t}^{2}u\cdot \mathcal{N}-\frac{(\partial_{t}\xi)\xi}{\rho^{2}\rho_{0}}\p_{t}u\cdot \mathcal{N}+\frac{(\partial_{t}\xi)}{\rho\rho_{0}}\p_{t}u\cdot \mathcal{N}))_{1,\Sigma}\notag\\
        &+(\partial_{tt}\xi,(\frac{\xi}{\rho\rho_{0}}u\cdot \p_{t}^{2}\mathcal{N}-\frac{(\partial_{t}\xi)\xi}{\rho^{2}\rho_{0}}u\cdot \p_{t}\mathcal{N}+\frac{(\partial_{t}\xi)}{\rho\rho_{0}}u\cdot \p_{t}\mathcal{N}))_{1,\Sigma}\notag\\
        &+(\partial_{tt}\xi,(\frac{\partial_{t}^{2}\xi}{\rho\rho_{0}}-\frac{(\partial_{t}\xi)^{2}}{\rho^{2}\rho_{0}}-\frac{\partial_{t}^{2}\xi\xi}{\rho^{2}\rho_{0}}+2\frac{\xi(\partial_{t}\xi)^{2}}{\rho^{3}\rho_{0}})u\cdot \mathcal{N})_{1,\Sigma}\notag\\
        &+(\partial_{tt}\xi,\mathfrak{n}'(t)\partial_{tt}(\frac{\rho'}{\rho})\cos\theta+\mathfrak{n}''(t)\partial_{t}(\frac{\rho'}{\rho})\cos\theta+\mathfrak{n}'''(t)(\frac{\rho'}{\rho}-\frac{\rho_{0}'}{\rho_{0}})\cos\theta)\notag\\
        &+(\partial_{t}^{2}\xi,\frac{2}{\rho_{0}}\partial_{t}u\cdot\partial_{t}\mathcal{N}+\frac{1}{\rho_{0}}u\cdot \partial_{tt}\mathcal{N})_{1,\Sigma}\label{equ:5.1.197}
   \end{align}

   \noindent Then we estimate each term in \eqref{equ:5.1.197}, individually. \\

   \textbf{Term $\frac{\xi}{\rho\rho_{0}}\p_{t}^{2}u\cdot \mathcal{N}$}

   We first estimate the following term
   \begin{align}
       (\partial_{tt}\xi,\frac{\xi}{\rho_{0}\rho}\p_{t}^{2}u\cdot \mathcal{N})_{1,\Sigma}\label{equ:5.1.198}.
   \end{align}
   \noindent The kinematic boundary condition yields

   \begin{align}
   \frac{1}{\rho}\p_{t}^{2}u\cdot \mathcal{N}=\partial_{ttt}\xi+\mathfrak{n}'''(t)\xi_s+\tilde{b}^{8,2} -\frac{1}{\rho
       }u\cdot \p_{t}^{2}\mathcal{N}\label{equ:5.1.199},
   \end{align}

   \noindent where

   \begin{align}
       \tilde{b}^{8,2}=-\partial_{tt}(\frac{1}{\rho})u\cdot \mathcal{N}-2\partial_{t}(\frac{1}{\rho})\p_{t}u\cdot \mathcal{N}-2\p_{t}(\frac{1}{\rho})u\cdot \p_{t}\mathcal{N} \label{equ:5.1.200}.
   \end{align}

   \noindent We show the following boundedness for $\tilde{b}^{8,2}$ from its definition

   \begin{align}
       \vert \vert \tilde{b}^{8,2}\vert \vert_{W^{1,\frac{1}{1-\alpha}}}&\lesssim \vert \vert \partial_{t}^{2}\xi\vert \vert_{W^{1,\frac{1}{1-\alpha}}}\vert \vert u\vert \vert_{W^{2,q_{+}}}+\vert \vert \partial_{t}^{2}\xi\vert \vert_{L^{\infty}}\vert \vert u\vert \vert_{W^{1,\frac{1}{1-\epsilon_{+}}}(\Sigma)}+\vert \vert \partial_{t}\xi\vert \vert_{W^{1,\frac{1}{1-\alpha}}}\vert \vert \p_{t}u\vert \vert_{W^{2,q_{-}}}\notag\\
      &\quad+\vert \vert \partial_{t} \xi \vert \vert_{L^{\infty}}\vert \vert \p_{t}u\vert \vert_{W^{1,\frac{1}{1-\epsilon_{-}}}(\Sigma)}+\|\p_{t}\xi\|_{W^{1,\frac{1}{1-\alpha}}}\|u\|_{L^{\infty}(\Sigma)}\|\p_{t}\xi\|_{W^{1,\infty}}+\|\p_{t}\xi\|_{L^{\infty}}\|u\|_{W^{1,\frac{1}{1-\epsilon_{+}}}(\Sigma)}\|\p_{t}\xi\|_{W^{1,\infty}}\notag\\
      &\quad+\|\p_{t}\xi\|_{L^{\infty}}\|u\|_{L^{\infty}(\Sigma)}\|\p_{t}\xi\|_{W^{2,\frac{1}{1-\epsilon_{-}}}}\notag\\
      &\lesssim \vert \vert \partial_{t}^{2}\xi\vert \vert_{H^{\frac{3}{2}-\alpha}}\vert \vert u\vert \vert_{W^{2,q_{+}}}+\vert \vert \partial_{t}^{2}\xi\vert \vert_{H^{\frac{3}{2}-\alpha}}\vert \vert u\vert \vert_{W^{2,q_{+}}}+\vert \vert \partial_{t}\xi\vert \vert_{H^{\frac{3}{2}-\alpha}}\vert \vert \p_{t}u\vert \vert_{W^{2,q_{-}}}\notag\\
      &\quad+\vert \vert \partial_{t} \xi \vert \vert_{H^{\frac{3}{2}-\alpha}}\vert \vert \p_{t}u\vert \vert_{W^{2,q_{-}}}+\|\p_{t}\xi\|_{H^{\frac{3}{2}-\alpha}}\|u\|_{W^{2,q_{+}}}\|\p_{t}\xi\|_{H^{\frac{3}{2}+\frac{\epsilon_{-}-\alpha}{2}}}+\|\p_{t}\xi\|_{H^{\frac{3}{2}-\alpha}}\|u\|_{W^{2,q_{+}}}\|\p_{t}\xi\|_{H^{\frac{3}{2}+\frac{\epsilon_{-}-\alpha}{2}}}\notag\\
      &\quad+\|\p_{t}\xi\|_{H^\frac{3}{2}-\alpha }\|u\|_{W^{2,q_{+}}}\|\p_{t}\xi\|_{W^{3-\frac{1}{q_{-}},q_{-}}}\notag\\
      &\lesssim \mathcal{D}^{\frac{1}{2}}\mathcal{E}^{\frac{1}{2}}\label{equ:5.1.201}
   \end{align}
   
   \noindent Substituting equation \eqref{equ:5.1.199} into equation \eqref{equ:5.1.198}, we obtain
   
   \begin{align}
       (\partial_{tt}\xi,\frac{\xi}{\rho_{0}\rho}\p_{t}^{2}u\cdot \mathcal{N})_{1,\Sigma}=&(\partial_{tt}\xi,\frac{\xi}{\rho_{0}}\partial_{ttt}\xi)_{1,\Sigma}+(\partial_{tt}\xi,\frac{\xi}{\rho_{0}}\gamma'''(t)u\xi_s)_{1,\Sigma}+(\partial_{tt}\xi,\frac{\xi}{\rho_{0}}\tilde{b}^{8,2})_{1,\Sigma}-(\p_{t}^{2}\xi,\frac{\xi}{\rho_{0}\rho}u\cdot \p_{t}^{2}\mathcal{N})_{1,\Sigma}\notag\\
       &-(\p_{t}^{2}\xi,\frac{\xi}{\rho_{0}\rho}u\cdot \p_{t}^{2}\mathcal{N})_{1,\Sigma}\notag\\
       =&\frac{1}{2}\partial_{t}(\partial_{t}^{2}\xi,\frac{\xi}{\rho_{0}}\partial_{tt}\xi)_{1,\Sigma}-(\partial_{t}^{2}\xi,\frac{\partial_{t}\xi}{\rho_{0}}\partial_{t}^{2}\xi)_{1,\Sigma}+(\partial_{tt}\xi,\frac{\xi}{\rho_{0}}\mathfrak{n}'''(t)u\xi_2)_{1,\Sigma}+(\partial_{tt}\xi,\frac{\xi}{\rho_{0}}\tilde{b}^{8,2})_{1,\Sigma} \notag\\
        &-(\p_{t}^{2}\xi,\frac{\xi}{\rho_{0}\rho}u\cdot \p_{t}^{2}\mathcal{N})_{1,\Sigma}\notag\\
       \lesssim& \frac{1}{2}\partial_{t}(\partial_{t}^{2}\xi,\frac{\xi}{\rho_{0}}\partial_{tt}\xi)_{1,\Sigma}+\vert \vert \partial_{t}^{2}\xi\vert \vert^{2}_{H^{1}}\vert \vert \partial_{t}\xi\vert \vert_{W^{1,+\infty}}+\vert \vert \partial_{t}^{2}\xi\vert \vert_{H^{1}}\vert \vert \xi
       \vert \vert_{W^{1,+\infty}}(\mathcal{E}+\|\p_{t}^{2}u\|_{H^{1}})\notag\\&+\vert \vert \partial_{t}^{2}\xi\vert \vert_{W^{1,\frac{1}{\alpha}}}\vert \vert\xi \vert \vert_{W^{1,\infty}}\vert \vert \tilde{b}^{8,2}\vert \vert_{W^{1,\frac{1}{1-\alpha}}}-(\p_{t}^{2}\xi,\frac{\xi}{\rho_{0}\rho}u\cdot \p_{t}^{2}\mathcal{N})_{1,\Sigma}\label{equ:5.1.202}
   \end{align}

   \noindent Using \eqref{equ:5.1.201} in \eqref{equ:5.1.202}, we have

   \begin{align}
        (\partial_{tt}\xi,\frac{\xi}{\rho_{0}\rho}\p_{t}^{2}u\cdot \mathcal{N})_{1,\Sigma}-\frac{1}{2}\partial_{t}(\partial_{t}^{2}\xi,\frac{\xi}{\rho_{0}}\partial_{t}^{2}\xi)_{1,\Sigma}-(\p_{t}^{2}\xi,\frac{\xi}{\rho_{0}\rho}u\cdot \p_{t}^{2}\mathcal{N})_{1,\Sigma}\lesssim \mathcal{E}^{\frac{1}{2}}\mathcal{D} \label{equ:5.1.203}
   \end{align}
   \noindent It then remains to deal with the second and the third terms on the left hand side of \eqref{equ:5.1.203}. For the second term, we have the following estimate:
   \begin{align}
       (\partial_{t}^{2}\xi,\frac{\xi}{\rho_{0}}\partial_{t}^{2}\xi)_{1,\Sigma}\lesssim\|\p_{t}^{2}\xi\|_{H^{1}}\|\xi\|_{W^{3-\frac{1}{q_{+}},q_{+}}}\lesssim\mathcal{E}^{\frac{3}{2}}
   \end{align}
   \noindent Using integration by part, we have the following estimate for the third term on the left hand side of \eqref{equ:5.1.203}
   \begin{align}
       (\p_{t}^{2}\xi,\frac{\xi}{\rho_{0}\rho}u\cdot \p_{t}^{2}\mathcal{N})_{1,\Sigma}=&-\frac{1}{2}\int_{0}^{\pi}(\p_{t}^{2}\xi')^{2}\p_{\theta}(u_{\theta}\frac{\xi}{\rho_{0}\rho})+\int_{0}^{\pi}(\p_{t}^{2}\xi')^{2}\frac{\xi}{\rho_{0}\rho}+\int_{0}^{\pi}(\p_{t}^{2}\xi')\p_{\theta}(\frac{\xi}{\rho_{0}\rho}u)\cdot \p_{t}^{2}\mathcal{N}\notag\\
       &+[\p_{t}^{2}\xi]_{\theta}^{2}[\xi]_{\theta}\|u\|_{W^{2,q_{+}}}\notag\\
       \lesssim& \|\p_{t}^{2}\xi\|_{H^{\frac{3}{2}-\alpha}}\|u\|_{W^{1,\frac{1}{1-\epsilon_{+}}}(\Sigma)}\|\xi\|_{W^{1,\infty}}+\mathcal{E}^{2}\lesssim \|\p_{t}^{2}\xi\|_{H^{\frac{3}{2}-\alpha}}\|u\|_{W^{2,q_{+}}}\|\xi\|_{W^{3-\frac{1}{q_{+}},q_{+}}}+\mathcal{E}^{2}\lesssim \mathcal{E}\mathcal{D}\label{equ:int_p}
   \end{align}
   \noindent Therefore, we obtain the following estimate
   \begin{align}
       |(\p_{t}^{2}\xi,\frac{\xi}{\rho_{0}\rho}\p_{t}^{2}u\cdot \mathcal{N})-(\partial_{t}^{2}\xi,\frac{\xi}{\rho_{0}}\partial_{t}^{2}\xi)_{1,\Sigma}|\lesssim \mathcal{E}^{\frac{1}{2}}\mathcal{D},
   \end{align}
   \noindent where
   \begin{align}
       (\partial_{t}^{2}\xi,\frac{\xi}{\rho_{0}}\partial_{t}^{2}\xi)_{1,\Sigma}\lesssim \mathcal{E}^{\frac{3}{2}}.
   \end{align}
   
   \textbf{Term $\frac{\partial_{t}\xi\xi}{\rho^{2}\rho_{0}}\p_{t}u\cdot \mathcal{N}$}. 
   We have the following estimate by Sobolev embedding and H\"older's inequality

   \begin{align}
       (\partial_t^{2}\xi,\frac{\partial_{t}{\xi}\xi}{\rho^{2}\rho_{0}}\p_{t}u\cdot \mathcal{N})_{1,\Sigma}\lesssim \vert \vert \partial_{t}^{2}\xi\vert \vert_{W^{1,\frac{1}{\alpha}}}\vert \vert \partial_{t}\xi\vert \vert_{W^{1,+\infty}}\vert \vert \xi\vert \vert_{W^{1,+\infty}}\vert \vert \p_{t}u\vert \vert_{W^{1,\frac{1}{1-\epsilon_{-}}}(\Sigma)}\notag\\
       \lesssim \vert \vert \partial_{t}^{2}\xi\vert \vert_{H^{\frac{3}{2}-\alpha}}\vert \vert \partial_{t}\xi\vert \vert_{H^{\frac{3}{2}+\frac{\epsilon_{-}-\alpha}{2}}}\vert \vert \xi\vert \vert_{W^{3-\frac{1}{q_{+}},q_{+}}}\vert \vert \p_{t}u\vert \vert_{W^{2,q_{-}}}\lesssim \mathcal{E}\mathcal{D} \label{equ:5.1.204}
   \end{align}

   \textbf{Term $\frac{\partial_{t}\xi}{\rho_{0}\rho^{2}}\p_{t}u\cdot \mathcal{N}$} 

   We have the following estimate for this term by Sobolev embedding and H\"older's inequality

   \begin{align}
       \int_{\Omega} (\frac{\partial_{t}\xi}{\rho_{0}\rho^{2}}\p_{t}u\cdot \mathcal{N},\partial_{t}^{2}\xi)_{1,\Sigma}&\lesssim \|\p_{t}\xi\|_{H^{1}}\|\p_{t}^{2}\xi\|_{H^{1}}\|\p_{t}u\|_{L^{\infty}(\Sigma)}+\|\p_{t}\xi\|_{L^{\infty}}\|\p_{t}^{2}\xi\|_{W^{1,\frac{1}{\alpha}}}\|\p_{t}u\|_{W^{1,\frac{1}{\epsilon_{-}}}(\Sigma)}\notag\\
       &\lesssim \|\p_{t}\xi\|_{H^{\frac{3}{2}+\frac{\epsilon_{-}-\alpha}{2}}}\|\p_{t}^{2}\xi\|_{H^{\frac{3}{2}-\alpha}}\|\p_{t}u\|_{W^{2,q_{-}}} \lesssim \mathcal{E}^{\frac{1}{2}}\mathcal{D} \label{equ:5.1.205}
   \end{align}

   \textbf{Term $\frac{\partial_{t}^{2}\xi}{\rho\rho_{0}}u\cdot \mathcal{N}$} 

   We have the following computation:

   \begin{align}
       (\partial_{t}^{2}\xi,\frac{\partial_{t}^{2}\xi}{\rho\rho_{0}}u\cdot \mathcal{N})_{1,\Sigma}&\lesssim \vert \vert \partial_{t}^{2}\xi\vert \vert_{H^{1}}\vert \vert u\vert \vert_{L^{+\infty}(\Sigma)}\vert \vert \partial_{t}^{2}\xi\vert \vert_{H^{1}}+\vert \vert \partial_{t}^{2}\xi\vert \vert_{W^{1,\frac{1}{\alpha}}}\vert \vert \partial_{t}^{2}\xi\vert \vert_{L^{\infty}}\vert \vert u\vert \vert_{W^{1,\frac{1}{1-\epsilon_{+}}}} \notag\\
      &\lesssim \|\p_{t}^{2}\xi\|_{H^{\frac{3}{2}-\alpha}}^{2}\|u\|_{W^{2,q_{+}}} \lesssim \mathcal{E}^{\frac{1}{2}}\mathcal{D} \label{equ:5.1.206}
   \end{align}

   \textbf{Term $\frac{(\partial_{t}\xi)^{2}}{\rho^{2}\rho_{0}}u\cdot \mathcal{N}$} 
   
   We have the following estimate for this term

   \begin{align}
       (\partial_{t}^{2}\xi,\frac{(\partial_{t}\xi)^{2}}{\rho^{2}\rho_{0}}u\cdot \mathcal{N})_{1,\Sigma}\lesssim& \vert \vert \partial_{t}^{2}\xi\vert \vert_{H^{1}}\vert \vert \partial_{t}\xi\vert \vert_{W^{1,+\infty}}^{2}\vert \vert u\vert \vert_{L^{\infty}(\Sigma)}+\vert \vert \partial_{t}^{2}\xi\vert \vert_{W^{1,\frac{1}{\alpha}}}\vert \vert \partial_{t}\xi\vert \vert^{2}_{L^{\infty}}\vert\vert u\vert\vert_{W^{1,\frac{1}{1-\epsilon_{+}}}(\Sigma)}\notag\\ 
      &\lesssim \|\p_{t}^{2}\xi\|_{H^{\frac{3}{2}-\alpha}}\|u\|_{W^{2,q_{+}}}\|\p_{t}\xi\|^{2}_{H^{\frac{3}{2}+\frac{\epsilon_{-}-\alpha}{2}}} \lesssim \mathcal{D}\mathcal{E}^{\frac{1}{2}}\label{equ:5.1.207}
   \end{align}

   \textbf{Term $(\frac{\partial_{t}^{2}\xi\xi }{\rho^{2}\rho_{0}})u\cdot \mathcal{N}$ and $\frac{\eta(\partial_{t}\xi)^{2}}{\rho^{3}\rho}u\cdot \mathcal{N}$} 
   
   The estimate for these two terms follow from \eqref{equ:5.1.206} and \eqref{equ:5.1.207}. We omit the details here.

   \textbf{Term $\frac{\xi}{\rho\rho_{0}}u\cdot \p_{t}\mathcal{N}^{2}$}

  This estimate is given in \eqref{equ:int_p}.

  \textbf{Term $\frac{(\p_{t}\xi)\xi}{\rho^{2}\rho_{0}}u\cdot \p_{t}\mathcal{N}$}

  We have the estimate:

  \begin{align}
      (\p_{t}^{2}\xi,\frac{(\p_{t}\xi)\xi}{\rho^{2}\rho_{0}}u\cdot \p_{t}\mathcal{N})_{1,\Sigma}&\lesssim \|\p_{t}^{2}\xi\|_{H^{1}}\|\p_{t}\xi\|^{2}_{W^{1,+\infty}}\|\xi\|_{W^{1,+\infty}}\|u\|_{L^{\infty}(\Sigma)}+\|\p_{t}^{2}\xi\|_{H^{\frac{3}{2}-\alpha}}\|\p_{t}\xi\|^{2}_{W^{1,\infty}}\|\xi\|_{W^{1,\infty}}\|u\|_{W^{1,\frac{1}{1-\epsilon_{+}}}}\notag\\
      &\quad+\|\p_{t}^{2}\xi\|_{H^{\frac{3}{2}-\alpha}}\|\p_{t}\xi\|_{W^{1,\infty}}\|\xi\|_{W^{1,\infty}}\|u\|_{L^{\infty}}\|\p_{t}\xi\|_{W^{2,\frac{1}{1-\epsilon_{+}}}}\notag\\
      &\lesssim\|\p_{t}^{2}\xi\|_{H^{1}}\|\p_{t}\xi\|_{H^{\frac{3}{2}+\frac{\epsilon_{-}-\alpha}{2}}}^{2}\|\xi\|_{W^{3-\frac{1}{q_{+}},q_{+}}}\|u\|_{W^{2,q_{+}}}+\|\p_{t}^{2}\xi\|_{H^{\frac{3}{2}-\alpha}}\|\p_{t}\xi\|_{H^{\frac{3}{2}+\frac{\epsilon_{-}-\alpha}{2}}}\|\xi\|_{W^{3-\frac{1}{q_{+}},q_{+}}}\|u\|_{W^{2,q_{+}}}\notag\\
      &\lesssim\mathcal{E}^{\frac{1}{2}}\mathcal{D}
  \end{align}

  \textbf{Term $\frac{\p_{t}\xi}{\rho^{2}\rho_{0}}\p_{t}u\cdot \mathcal{N}$}

  We have the computation similar to the estimate for term above

  \begin{align}
      (\p_{t}^{2}\xi,\frac{\p_{t}\xi}{\rho^{2}\rho_{0}}\p_{t}u\cdot \mathcal{N})_{1,\Sigma}\lesssim \|\p_{t}^{2}\xi\|_{H^{1}}\|\p_{t}\xi\|_{H^{1}}\|\p_{t}u\|_{W^{2,q_{+}}}+\|\p_{t}^{2}\xi\|_{H^{\frac{3}{2}-\alpha}}\|\p_{t}\xi\|_{H^{\frac{3}{2}+\frac{\epsilon_{-}-\alpha}{2}}}\|\p_{t}u\|_{W^{2,q_{-}}}\lesssim \mathcal{E}^{\frac{1}{2}}\mathcal{D}
  \end{align}
  
   \textbf{Term $(\partial_{t}^{2}\xi,\mathfrak{n}'(t)\partial_{t}^{2}(\frac{\rho'}{\rho}))_{1,\Sigma}$}

   We have the following computation :

   \begin{align}
       (\partial_{t}^{2}\xi,\mathfrak{n}'(t)\partial_{t}^{2}(\frac{\rho'}{\rho}))_{1,\Sigma}=&\mathfrak{n}'(t)(\partial_{t}^{2}\xi,\partial_{t}(\frac{\rho\partial_{t}\xi'-\rho'\partial_{t}\xi}{\rho^{2}}))_{1,\Sigma}=\mathfrak{n}'(t)(\partial_{t}^{2}\xi,\frac{\rho\partial_{t}^{2}\xi'-\rho'\partial_{t}^{2}\xi}{\rho^{2}}-2\frac{(\rho\partial_{t}\xi'-\rho'\partial_{t}\xi)\partial_{t}\xi}{\rho^{3}})_{1,\Sigma}\notag\\
       &=\mathfrak{n}'(t)(\partial_{t}^{2}\xi,\frac{1}{\rho}\partial_{t}^{2}\xi')_{1,\Sigma}+\mathfrak{n}'(t)(\partial_{t}^{2}\xi,-\frac{\rho'\partial_{t}^{2}\xi}{\rho^{2}}-2\frac{(\rho\partial_{t}\xi'-\rho'\partial_{t}\xi)\partial_{t}\xi}{\rho^{3}})_{1,\Sigma}=I_{1}+I_{2}
   \end{align}

   \noindent For the term $I_{2}$, we have the following estimate using Theorem \ref{thm:gam}

   \begin{align}
       I_{2}=&\mathfrak{n}'(t)(\partial_{t}^{2}\xi,-\frac{\rho'\partial_{t}^{2}\xi}{\rho^{2}}-2\frac{(\rho\partial_{t}\xi'-\rho'\partial_{t}\xi)\partial_{t}\xi}{\rho^{3}})_{1,\Sigma}\notag\\
       &\lesssim \vert \vert u\vert \vert_{H^{1}}\vert \vert \partial_{t}^{2}\xi\vert \vert_{H^{1}}^{2}
       +\vert \vert u\vert \vert_{H^{1}}\vert \vert \partial_{t}\xi\vert \vert_{W^{1,+\infty}}\vert \vert \partial_{t}\xi\vert \vert_{W^{2,\frac{1}{1-\epsilon_{-}}}}\vert \vert \partial_{t}^{2}\xi\vert \vert_{W^{1,\frac{1}{\epsilon_{-}}}}\lesssim \mathcal{E}^{\frac{1}{2}}\mathcal{D}
   \end{align}

   \noindent Then for the term $I_{1}$, using spatial integration by part, we have

   \begin{align}
       I_{1}=\mathfrak{n}'(t)(\partial_{t}^{2}\xi,\frac{1}{\rho}\partial_{t}^{2}\xi')_{1,\Sigma}\lesssim \vert \vert u\vert \vert_{H^{1}}\vert \vert \partial_{t}^{2}\xi\vert \vert_{H^{1}}^{2}+\vert \vert u\vert \vert_{H^{1}}\vert [\partial_{t}^{2}\xi']_{\theta}\vert^{2}\lesssim \mathcal{E}^{\frac{1}{2}}\mathcal{D}
   \end{align}
   
   \textbf{Term $\mathfrak{n}''(t)\partial_{t}(\frac{\rho'}{\rho})\cos\theta$}

   We have the computation:

   \begin{align}
       (\partial_{t}^{2}\xi,\mathfrak{n}''(t)\partial_{t}(\frac{\rho'}{\rho})\cos\theta)_{1,\Sigma}\lesssim \mathcal{E}^{\frac{1}{2}}\vert \vert \partial_{t}^{2}\xi\vert \vert_{W^{1,\frac{1}{\epsilon_{-}}}}\vert \vert \partial_{t}\xi\vert\vert_{W^{2,\frac{1}{1-\epsilon_{-}}}}\lesssim \mathcal{E}^{\frac{1}{2}}\mathcal{D}
   \end{align}

   \textbf{Term $\mathfrak{n}'''(t)(\frac{\rho'}{\rho}-\frac{\rho_{0}'}{\rho_{0}})$}. We have the following computation by Theorem \ref{thm:gam}

   \begin{align}
      (\partial_{t}^{2}\xi,\mathfrak{n}'''(t)(\frac{\rho'}{\rho}-\frac{\rho_{0}'}{\rho_{0}}))_{1,\Sigma}\lesssim \mathcal{D}^{\frac{1}{2}}\vert \vert \partial_{t}^{2}\xi\vert \vert_{W^{1,\frac{1}{\epsilon_{+}}}}\vert \vert \xi\vert \vert_{W^{2,\frac{1}{1-\epsilon_{+}}}}\lesssim \mathcal{E}^{\frac{1}{2}}\mathcal{D}
   \end{align}

   Combining all of the estimates above we obtain equation\eqref{equ:H} by setting $H=\frac{1}{2}\p_{t}(\p_{t}^{2}\xi,\frac{\xi}{\rho_{0}}\p_{t}^{2}\xi)_{1,\Sigma}$
   
   \textbf{Step 8} We estimate the term $a_{2}\mathfrak{n}'''(t)(\rho_{0},\xi_s)_{1,\Sigma}$ in this step. We have the following computation using Theorem \ref{thm:gam}.

   \begin{align}
       a_{2}\mathfrak{n}'''(t)(\rho_{0},\xi_2)\lesssim a_{2}\mathcal{D}^{\frac{1}{2}} \label{equ:5.1.208}
   \end{align}

   \noindent By definition of $a_{2}$in \eqref{equ:a} and Theorem \ref{thm:pos}, we obtain the following estimate

   \begin{align}
       a_{2}=\frac{\int_{0}^{\pi}\partial_{tt}\xi\rho_{0}d\theta}{\int_{0}^{\pi}\rho_{0}^{2}d\theta}=\frac{-\int_{0}^{\pi}\partial_{tt}\xi\xi d\theta-\int_{0}^{\pi}(\partial_{t}\xi)^{2}d\theta}{\int_{0}^{\pi}\rho_{0}^{2}d\theta}\lesssim \vert \vert \partial_{t}^{2}\xi\vert \vert_{L^{2}}\vert \vert \xi\vert \vert_{L^{2}}+\vert \vert \partial_{t}\xi\vert \vert^{2}_{L^{2}}\lesssim \mathcal{E} \label{equ:5.1.210}
   \end{align}

   \noindent Then applying equation \eqref{equ:5.1.210} to equation \eqref{equ:5.1.208}, we obtain

   \begin{equation}{\label{equ:5.1.211}}
       a_{2}\mathfrak{n}'''(t)(\rho_{0},\xi_2)\lesssim \mathcal{E}^{\frac{1}{2}}\mathcal{D}
   \end{equation}

   \textbf{Step 9} In this step, we estimate the following term:

   \begin{align}
       \int_{0}^{\pi}b^{4,2}(v\cdot \mathcal{N})
   \end{align}

   \noindent Our goal is to show the following result
   \begin{align}
        \int_{0}^{\pi}b^{4,2}(v\cdot \mathcal{N})\lesssim \mathcal{E}^{\frac{1}{2}}\mathcal{D}^{\frac{1}{2}}\|v\|_{H^{1}}
   \end{align}
   
   \noindent We write down the definition of $b^{4,2}$ and each term included in $b^{4,2}$ individually.

   \textbf{Term $\partial_{tt}\mathcal{R}_{2}$}:

   We have 
   \begin{align}\
       \int_{0}^{\pi}(\partial_{tt}\mathcal{R}_{2})(v\cdot \mathcal{N})\lesssim \int_{0}^{\pi} ((\vert \partial_{\theta}\xi\vert+|\xi|)^{2}(\vert \partial_{\theta}\partial_{t}^{2}\xi\vert+|\p_{t}^{2}\xi|^{2})+(\vert \partial_{\theta}\partial_{t}\xi\vert+|\p_{t}\xi|)^{2} )\vert v\vert\notag\\
       \lesssim \vert \vert v\vert \vert_{H^{1}}\vert\vert  \xi\vert\vert_{W^{1,+\infty}}\vert \vert \partial_{t}^{2}\xi\vert \vert_{H^{1}}+\vert \vert v\vert \vert_{H^{1}}\vert \vert \partial_{t}\xi
       \vert\vert^{2}_{W^{1,4}}\lesssim \mathcal{E}^{\frac{1}{2}}\mathcal{D}^{\frac{1}{2}}\|v\|_{H^{1}} \label{equ:5.1.212}
   \end{align}

   \textbf{Term $\partial_{t}(\mathbb{D}_{\partial_{t}\mathcal{A}}u\mathcal{N})$}

   We have the following computation:

   \begin{align}
       \int_{0}^{\pi} \partial_{t}(\mathbb{D}_{\partial_{t}\mathcal{A}}u\mathcal{N}) \cdot v=&\int_{0}^{\pi}\mathbb{D}_{\partial_{t}^2{\mathcal{A}}}u\mathcal{N}\cdot v+\int_{0}^{\pi} \mathbb{D}_{\partial_{t}\mathcal{A}}\partial_{t}u\mathcal{N}\cdot v+\int_{0}^{\pi} \mathbb{D}_{\partial_{t}\mathcal{A}}u\partial_{t}\mathcal{N}\cdot v\notag\\
       &\lesssim \vert \vert \partial_{t}^{2}\xi\vert \vert_{W^{1,\frac{1}{\alpha}}}\vert \vert u\vert \vert_{W^{1,\frac{1}{1-\epsilon_{+}}}}\vert \vert v\vert \vert_{L^{\frac{1}{\epsilon_{+}-\alpha}}}+\vert \vert \partial_{t}\xi\vert\vert_{W^{1,+\infty}}\vert \vert \partial_{t}u\vert \vert_{W^{1,\frac{1}{1-\epsilon_{-}}}}\vert \vert v\vert \vert_{L^{\frac{1}{{\epsilon_{+}}}}}\notag\\
       &+\vert \vert \partial_{t}\xi\vert \vert^{2}_{W^{1,+\infty}}\vert \vert u\vert \vert_{W^{1,\frac{1}{1-\epsilon_{+}}}}\vert \vert v\vert \vert_{\frac{1}{\epsilon_{+}}}\notag\\
       &\lesssim \|\p_{t}^{2}\xi\|_{H^{\frac{3}{2}-\alpha}}(\|u\|_{W^{2,q_{+}}}+\|\p_{t}u\|_{H^{1+\frac{\epsilon_{-}}{2}}})\|v\|_{H^{1}}+\|\p_{t}\xi\|_{H^{\frac{3}{2}+\frac{\epsilon_{-}-\alpha}{2}}}^{2}\|u\|_{W^{2,q_{+}}}\|v\|_{H^{1}}\lesssim \mathcal{E}^{\frac{1}{2}}\mathcal{D}^{\frac{1}{2}}\|v\|_{H^{1}} \label{equ:5.1.213}
   \end{align}

   \textbf{Term $\partial_{t}((\mathcal{K}(\xi)-\partial_{\theta}b^{3})\partial_{t}\mathcal{N})$} 

   We have the computation using the definition of $b^{3}$
   \begin{align}
       \int_{0}^{\pi} \partial_{t}((\mathcal{K}(\xi)-\partial_{\theta}b^{3})\partial_{t}\mathcal{N})\cdot v\lesssim& \vert \vert v\vert \vert_{L^{\frac{2}{1+\epsilon_{-}}}}\vert \vert \partial_{t}\xi\vert \vert_{W^{2,\frac{2}{1-\epsilon_{-}}}}\vert \vert \partial_{t}\xi\vert \vert_{W^{1,+\infty}}+\vert \vert \partial_{t}^{2}\xi\vert \vert_{W^{1,\frac{1}{\alpha}}}\vert \vert \xi\vert \vert_{W^{2,\frac{2}{1-\epsilon_{+}}}}\vert \vert v\vert \vert_{L^{\frac{2}{1+\epsilon_{+}-\alpha}}}\vert \vert \xi\vert \vert_{W^{1,\infty}}\notag\\
       \lesssim&\|v\|_{H^{1}}\|\p_{t}\xi\|_{H^{\frac{3}{2}+\frac{\epsilon_{-}-\alpha}{2}}}\|\p_{t}\xi\|_{W^{3-\frac{1}{q_{-}},q_{-}}}+\|\p_{t}^{2}\xi\|_{H^{\frac{3}{2}-\alpha}}\|\xi\|^{2}_{W^{3-\frac{1}{q_{+}},q_{+}}}\|v\|_{H^{1}}\lesssim \mathcal{E}^{\frac{1}{2}}\mathcal{D}^{\frac{1}{2}}\|v\|_{H^{1}} \label{equ:5.1.214}
   \end{align}

   \textbf{Term $\partial_{t}(S_{\mathcal{A}}(p,u)\partial_{t}\mathcal{N})$} 
   
   We have the following estimate

   \begin{align}
       \int_{0}^{\pi}\partial_{t}(S_{\mathcal{A}}(p,u)\partial_{t}\mathcal{N})\cdot v&\lesssim \vert \vert \partial_{t}\xi\vert \vert^{2}_{W^{1,+\infty}}(\vert \vert p\vert \vert_{L^{\frac{1}{1-\epsilon_{+}}}(\Sigma)}+\vert \vert u\vert \vert_{W^{1,\frac{1}{1-\epsilon_{+}}}(\Sigma)})\vert \vert v\vert\vert_{L^{\frac{1}{1+\epsilon_{+}}}(\Sigma)}\notag\\
       &\quad+(\vert \vert\partial_{t}p \vert \vert_{L^{\frac{2}{1-\epsilon_{-}}}(\Sigma)}+\vert \vert \partial_{t}u\vert \vert_{W^{1,\frac{2}{1-\epsilon_{-}}}(\Sigma)})\vert \vert \partial_{t}{\xi}\vert\vert_{W^{1,+\infty}}\vert \vert v\vert \vert_{L^{\frac{2}{1+\epsilon_{-}}}(\Sigma)}\notag\\
       &\quad+(\vert \vert p\vert \vert_{L^{\frac{2}{1-\epsilon_{+}}}(\Sigma)}+\vert \vert u\vert \vert_{W^{1,\frac{1}{1-\epsilon_{+}}}(\Sigma)})\vert \vert \partial_{t}^{2}\xi\vert \vert_{W^{1,\frac{1}{\alpha}}}\vert \vert v\vert \vert_{L^{\frac{2}{1-\epsilon_{+}-2\alpha}}(\Sigma)}\notag\\
       &\lesssim\vert \vert \partial_{t}\xi\vert \vert^{2}_{H^{\frac{3}{2}+\frac{\epsilon_{-}-\alpha}{2}}}(\vert \vert p\vert \vert_{W^{1,q_{+}}(\Sigma)}+\vert \vert u\vert \vert_{W^{2,q_{+}}(\Sigma)})\vert \vert v\vert\vert_{H^{1}}\notag\\
       &+(\vert \vert\partial_{t}p \vert \vert_{W^{1,q_{-}}(\Sigma)}+\vert \vert \partial_{t}u\vert \vert_{W^{2,q_{-}}})\vert \vert \partial_{t}{\xi}\vert\vert_{H^{\frac{3}{2}+\frac{\epsilon_{-}-\alpha}{2}}}\vert \vert v\vert \vert_{H^{1}}\notag\\
       &+(\vert \vert p\vert \vert_{W^{1,q_{+}}}+\vert \vert u\vert \vert_{W^{2,q_{-}}})\vert \vert \partial_{t}^{2}\eta\vert \vert_{H^{\frac{3}{2}-\alpha}}\vert \vert v\vert \vert_{H^{1}}\notag\\
       &\lesssim \mathcal{E}^{\frac{1}{2}}\mathcal{D}^{\frac{1}{2}}\|v\|_{H^{1}}\label{equ:5.1.215}
   \end{align}

   \textbf{Term $(\mathcal{K}(\partial_{t}\xi)-\partial_{\theta}b^{3,1})\partial_{t}\mathcal{N}$}.
   
   We have the following computation for this term
   
   \begin{align}
       \int_{0}^{\pi}(\mathcal{K}(\partial_{t}\xi)-\partial_{\theta}b^{3,1})\partial_{t}\mathcal{N}\cdot v=\int_{0}^{\pi} \mathcal{K}(\partial_{t}\xi)\partial_{t}\mathcal{N}\cdot v-\partial_{\theta}b^{3,1}\partial_{t}\mathcal{N}\cdot v=I_{1}+I_{2} \label{equ:5.1.216}
   \end{align}

   \noindent We then estimate $I_{1}$ and $I_{2}$ individually. We have:

   \begin{align}
       I_{1}\lesssim \vert \vert \partial_{t}\xi\vert \vert_{W^{1,+\infty}}\vert \vert \partial_{t}\xi\vert \vert_{W^{2,\frac{1}{1-\epsilon_{-}}}}\vert \vert v\vert \vert_{L^{\frac{1}{\epsilon_{-}}}(\Sigma)}\lesssim \vert \vert \partial_{t}\xi\vert \vert_{W^{1,+\infty}}\vert \vert \partial_{t}\xi\vert \vert_{W^{3-\frac{1}{q_{-}},q_{-}}}\vert \vert v\vert \vert_{H^{1}}\lesssim \mathcal{E}^{\frac{1}{2}}\mathcal{D} ^{\frac{1}{2}}\|v\|_{H^{1}}\label{equ:5.1.217},
   \end{align}

   \noindent and

   \begin{align}
       I_{2}=&\int_{0}^{\pi} \partial_{\theta}(\partial_{b}\mathcal{R}\partial_{t}\partial_{\theta}\xi+\partial_{c}\mathcal{R}\partial_{t}\xi)\partial_{t}\mathcal{N}\cdot vd\theta\notag\\
       =& \int_{0}^{\pi} (\partial_{ab}\mathcal{R}\partial_{t}\partial_{\theta}\xi+\partial_{bb}\mathcal{R}\partial_{t}\partial_{\theta}\xi\partial_{\theta}^{2}\xi+\partial_{bc}\mathcal{R}\partial_{\theta}\xi\partial_{t\theta}\xi+\partial_{b}\mathcal{R}\partial_{t}\partial^{2}_{\theta}\xi)\partial_{t}\mathcal{N}\cdot vd\theta\notag\\
       &+\int_{0}^{\pi}(\partial_{ac}\mathcal{R}\partial_{t}\xi+\partial_{bc}\mathcal{R}\partial_{t}\xi\partial_{\theta}^{2}\xi+\partial_{cc}\mathcal{R}\partial_{t}\partial_{\theta}\xi+\partial_{c}\mathcal{R}\partial_{t}\partial_{\theta}\xi) \partial_{t}\mathcal{N}\cdot vd\theta \notag\\
       \lesssim& \vert \vert \xi\vert \vert_{W^{1,+\infty}}\vert \vert \partial_{t}\xi\vert \vert^{2}_{W^{1,+\infty}}\vert \vert v\vert \vert_{L^{2}}+\vert \vert \xi\vert \vert_{W^{2,\frac{1}{1-\epsilon_{+}}}}\vert \vert \partial_{t}\xi\vert \vert^{2}_{W^{1,+\infty}}\vert \vert v\vert  \vert_{L^{\frac{1}{\epsilon_{+}}}}\notag\\
       &+\vert \vert \xi\vert \vert_{W^{1,\infty}}\vert \vert \partial_{t}\xi\vert \vert_{W^{1,\infty}}\vert \vert\partial_{t}\xi \vert \vert_{W^{2,\frac{1}{1-\epsilon_{-}}}}\vert \vert v\vert \vert _{L^{\frac{1}{\epsilon_{-}}}}\lesssim \mathcal{E}\mathcal{D}^{\frac{1}{2}}\|v\|_{H^{1}} \label{equ:5.1.218}.
   \end{align}

   \noindent Combining equation \eqref{equ:5.1.127} and \eqref{equ:5.1.128}, we obtain the following estimate:

   \begin{align}
        \int_{0}^{\pi}(\mathcal{K}(\partial_{t}\xi)-\partial_{\theta}b^{3,1})\partial_{t}\mathcal{N}\cdot v\lesssim \mathcal{E}^{\frac{1}{2}}\mathcal{D}^{\frac{1}{2}}\|v\|_{H^{1}}
   \end{align}
   
   \textbf{Term $S_{\mathcal{A}}(\partial_{t}p,\partial_{t}u)\partial_{t}\mathcal{N}$}

   We have the following estimate
   \begin{align}
       \int_{0}^{\pi}S_{\mathcal{A}}(\partial_{t}p,\partial_{t}u)\partial_{t}\mathcal{N}\cdot v&\lesssim \vert \vert \partial_{t}\eta\vert \vert_{W^{1,+\infty}} (\vert \vert \partial_{t}p\vert \vert_{L^{\frac{1}{1-\epsilon_{-}}}}+\vert \vert \partial_{t}u\vert \vert_{W^{1,\frac{1}{1-\epsilon_{-}}}(\Sigma)}) \vert \vert v\vert \vert_{L^{\frac{1}{\epsilon_{-}}}(\Sigma)}\notag\\
       &\lesssim\vert \vert \partial_{t}\xi\vert \vert_{H^{\frac{3}{2}+\frac{\epsilon_{-}-\alpha}{2}}} (\vert \vert \partial_{t}p\vert \vert_{W^{1,q_{-}}}+\vert \vert \partial_{t}u\vert \vert_{W^{2,q_{-}}}) \vert \vert v\vert \vert_{H^{1}} \lesssim \mathcal{E}^{\frac{1}{2}}\mathcal{D}^{\frac{1}{2}}\|v\|_{H^{1}} \label{equ:5.1.219}
   \end{align}

   \textbf{Term $\mu \mathbb{D}_{\partial_{t}\mathcal{A}}\partial_{t}u\mathcal{N}$}

   Using the similar computation as above, we have:
   
   \begin{align}
       \int_{0}^{\pi} \mu \mathbb{D}_{\partial_{t}\mathcal{A}}\partial_{t}u\mathcal{N} \cdot v\lesssim \vert \vert \partial_{t}\xi\vert \vert_{W^{1,+\infty}}\vert \vert \partial_{t}u\vert \vert_{W^{1,\frac{1}{1-\epsilon_{-}}}(\Sigma)}\vert \vert v\vert \vert_{L^{\frac{1}{\epsilon_{-}}}(\Sigma)}\lesssim \mathcal{E}^{\frac{1}{2}}\mathcal{D}^{\frac{1}{2}}\|v\|_{H^{1}} \label{equ:5.1.220}
   \end{align}

   \textbf{Step 10} We estimate terms included in $b^{5,2}$ on the boundary.

   Using the definition of $b^{5,2}$, we obtain
   \begin{align}
       \int_{\Sigma_{s}}b^{5,2}v=&2\int_{\Omega} \mu\mathbb{D}_{\partial_t\mathcal{A}}\partial_{t}u\nu\cdot \tau v+\int_{\Omega} \mu \mathbb{D}_{\partial_{t}^{2}\mathcal{A}}u\nu\cdot \tau\notag\\
       \lesssim& \vert \vert \partial_{t}\xi\vert \vert_{W^{1,\infty}}\vert \vert \partial_{t}u\vert \vert_{W^{1,\frac{1}{1-\epsilon_{-}}}(\Sigma_{s})}\vert \vert v\vert \vert_{L^{\frac{1}{\epsilon_{-}}}}+ \vert \vert \partial_{t}^{2}\xi\vert \vert_{W^{1,\frac{1}{\alpha}}}\vert \vert u\vert \vert_{W^{1,\frac{1}{1-\epsilon_{+}}}(\Sigma_{s})}\vert \vert v\vert \vert_{L^{\frac{1}{\epsilon_{+}-\alpha}}}\notag\\
       \lesssim&\|\p_{t}\xi\|_{H^{\frac{3}{2}+\frac{\epsilon_{-}-\alpha}{2}}}\|\p_{t}u\|_{W^{2,q_{-}}}\|v\|_{H^{1}}+\|\p_{t}^{2}\xi\|_{H^{\frac{3}{2}-\alpha}}\|u\|_{W^{2,q_{+}}}\|v\|_{H^{1}}\lesssim\mathcal{E}^{\frac{1}{2}}\mathcal{D}^{\frac{1}{2}}\|v\|_{H^{1}} \label{equ:5.1.221}
   \end{align}

   \textbf{Step 11} We estimate the term including $b^{3,2}$. 

   We aim to estimate the following integral
   
   \begin{align}
       \int_{0}^{\pi}b^{3,2}\partial_{\theta}(\p_{t}^{2}u\cdot \mathcal{N})d\theta \label{equ:5.1.226}
   \end{align}

   \noindent Using the definition of $b^{3,2}$, we have:

   \begin{align}
       b^{3,2}=\partial_{tt}\mathcal{R}_{2}=\partial_{bb}\mathcal{R} (\partial_{t}\partial_{\theta}\xi)^{2}+\partial_{b}\mathcal{R}(\partial_{t}^{2}\partial_{\theta}\xi)+\partial_{cc}\mathcal{R}(\partial_{t}\eta)^{2}+\partial_{c}\mathcal{R}(\partial_{t}^{2}\xi) \label{equ:5.1.227}
   \end{align}

   \noindent We notice that for the last two terms in equation \eqref{equ:5.1.227} has better regularity than the first two terms. Hence, it suffices to analyze the first two terms in this step. Applying the kinematic boundary condition

   \begin{align}
       \p_{t}^{2}u\cdot \mathcal{N}=\rho\partial_{ttt}\xi+\rho\mathfrak{n}'''(t)\xi_{s}+\rho\tilde{b}_{8,2}- u\cdot \p_{t}^{2}\mathcal{N}
   \end{align}

   \noindent we have

   \begin{align}
        \int_{0}^{\pi}b^{3,2}\partial_{\theta}(\frac{1}{\rho_{0}}\p_{t}^{2}u\cdot \mathcal{N})d\theta=&\int_{0}^{\pi} \partial_{bb}\mathcal{R} (\partial_{t}\partial_{\theta}\xi)^{2}\partial_{\theta}(\rho\partial_{ttt}\xi+\rho\mathfrak{n}'''(t)\xi_{s}+\rho\tilde{b}_{8,2}-\frac{1}{\rho_{0}}u\cdot \p_{t}^{2}\mathcal{N})\notag\\
        &+\int_{0}^{\pi}\partial_{b}\mathcal{R}(\partial_{t}^{2}\partial_{\theta}\xi)\partial_{\theta}(\rho\partial_{ttt}\xi+\rho\mathfrak{n}'''(t)\xi_{s}+\rho\tilde{b}_{8,2}-\frac{1}{\rho_{0}}u\cdot \p_{t}^{2}\mathcal{N}) \notag
        =\Sigma_{i=1}^{8}I_{i} \label{equ:5.1.228}.
   \end{align}

   \noindent We estimate each term in \eqref{equ:5.1.228} individually.

   \textbf{Term $I_{1}$}:

   We have the following estimate for $I_{1}$
   \begin{align}
      I_{1}=&\partial_{t}(\int_{0}^{\pi}\partial_{bb}\mathcal{R}(\partial_{t}\partial_{\theta}\xi)^{2}\rho\partial_{tt\theta}\xi)-\int_{0}^{\pi}\partial_{t}\partial_{bb}\mathcal{R}(\partial_{t}\partial_{\theta}\xi)^{2}\rho\partial_{tt\theta}\xi)\notag\\
      &-2\int_{0}^{\pi}\partial_{bb}\mathcal{R}\partial_{t}^{2}\partial_{\theta}\xi\partial_{t}\partial_{\theta}\xi \rho\partial_{tt\theta}\xi-\int_{0}^{\pi}\partial_{bb}\mathcal{R}(\partial_{t}\partial_{\theta}\xi)^{2}\partial_{t}\xi\partial_{tt\theta}\xi+\int_{0}^{\pi} \partial_{bb}\mathcal{R}(\partial_{t}\partial_{\theta}\xi)^{2}\partial_{\theta}\rho\partial_{t}^{3}\xi,
   \end{align}

   \noindent which implies that:

   \begin{align}
       &\vert I_{1}-\partial_{t}(\int_{0}^{\pi}\partial_{bb}\mathcal{R}(\partial_{t}\partial_{\theta}\xi)^{2}\rho\partial_{tt\theta}\xi)\vert\notag\\
       &\lesssim \int_{0}^{\pi}\vert\partial_{t}\partial_{bb}\mathcal{R}(\partial_{t}\partial_{\theta}\xi)^{2}\rho\partial_{tt\theta}\xi\vert+2\int_{0}^{\pi}\vert\partial_{bb}\mathcal{R}\partial_{t}^{2}\partial_{\theta}\xi\partial_{t}\partial_{\theta}\xi \rho\partial_{tt\theta}\xi\vert\notag\\
      &\quad+\int_{0}^{\pi}\vert\partial_{bb}\mathcal{R}(\partial_{t}\partial_{\theta}\eta)^{2}\partial_{t}\xi\partial_{tt\theta}\xi\vert+\int_{0}^{\pi} \vert\partial_{bb}\mathcal{R}(\partial_{t}\partial_{\theta}\xi)^{2}\partial_{\theta}\rho\partial_{t}^{3}\xi\vert\\
      &\lesssim \vert \vert \partial_{t}\xi\vert \vert_{W^{1,+\infty}}\vert \vert \partial_{t}^{2}\xi\vert \vert_{W^{1,\frac{1}{\alpha}}}\vert \vert \partial_{t}^{2}\xi\vert \vert_{H^{1}}+\vert \vert \partial_{t}\xi\vert \vert_{W^{1,+\infty}}\vert \vert \partial_{t}^{2}\xi\vert\vert^{2}_{H^{1}}\notag\\
      &\quad+\vert \vert \partial_{t}\xi\vert \vert_{W^{1,+\infty}}^{3}\vert \vert \partial_{t}^{2}\xi\vert \vert_{H^{1}}+\vert \vert \partial_{t}\xi\vert \vert_{W^{1,+\infty}}^{2}\vert\vert \partial_{t}^{3}\xi\vert \vert_{L^{2}}\lesssim \mathcal{E}^{\frac{3}{2}}+\mathcal{E}\mathcal{D}^{\frac{1}{2}}\label{equ:5.1.229}
   \end{align}

   \textbf{Term $I_{2}$}
   
    For the term $I_{2}$, we have:
   
   \begin{align}
       I_{2}=&\int_{0}^{\pi}\partial_{bb}\mathcal{R}(\partial_{t}\partial_{\theta}\xi)^{2}\partial_{\theta}\rho \mathfrak{n}'''(t)\xi_{s}d\theta+\int_{0}^{\pi}\partial_{bb}\mathcal{R}(\partial_{t}\partial_{\theta}\xi)^{2} \rho\partial_{\theta}\xi_{s}\mathfrak{n}'''(t)\notag\\
       \lesssim& \vert \vert \partial_{t}\xi\vert \vert^{2}_{W^{1,\infty}}(\mathcal{E}+\|\p_{t}^{2}u\|_{H^{1}})\lesssim \mathcal{E}\vert \vert \p_{t}^{2}u\vert \vert_{H^{1}}+\mathcal{E}^{2}\lesssim \mathcal{E}\mathcal{D}^{\frac{1}{2}}\label{equ:5.1.230}
   \end{align}

   \textbf{Term $I_{3}$}
   
    For the term $I_{3}$, we have

   \begin{align}
       I_{3}=\int_{0}^{\pi} \partial_{bb}\mathcal{R}(\partial_{t}\partial_{\theta}\xi)^{2}\partial_{\theta} \rho \tilde{b}_{8,2}+\int_{0}^{\pi} \partial_{bb}\mathcal{R}(\partial_{t}\partial_{\theta}\xi)^{2} \rho \partial_{\theta}\tilde{b}_{8,2}\label{equ:5.1.231}
   \end{align}

   \noindent To complete the estimate, we need to bound the term $\vert \vert \tilde{b}^{8,2}\vert \vert_{H^{1}}$. Using equation \eqref{equ:5.1.200} together with its corresponding estimate, we obtain

   \begin{align}
       \vert \vert \tilde{b}^{8,2}\vert \vert_{W^{1,\frac{1}{1-\alpha}}}\lesssim \mathcal{D}^{\frac{1}{2}}\mathcal{E}^{\frac{1}{2}} \label{equ:5.1.232}
   \end{align}

   \noindent Substituting equation \eqref{equ:5.1.232} into \eqref{equ:5.1.231}, we deduce that:
   
   \begin{align}
       I_{3}\lesssim \vert \vert \partial_{t}\xi \vert \vert^{2}_{W^{1,+\infty}}\mathcal{E}\mathcal{D}^{\frac{1}{2}}\lesssim \mathcal{E}^{\frac{3}{2}}\mathcal{D}^{\frac{1}{2}}
   \end{align}

   \textbf{Term $I_{4}$ and Term $I_{8}$}
    For the terms $I_{4}$ and $I_{8}$,  we apply integration by parts in the same manner as in \eqref{equ:int_p} to obtain

   \begin{align}
       I_{4}+I_{8}\lesssim \mathcal{E}^{\frac{1}{2}}\mathcal{D}
   \end{align}

   \textbf{Term $I_{5}$}
   
   For the term $I_{5}$, we have the estimate as follows
   
   \begin{align}
       I_{5}&=\int_{0}^{\pi} \partial_{b}\mathcal{R}(\partial_{t}^{2}\partial_{\theta}\xi)(\rho\partial_{\theta}\partial_{t}^{3}\xi)\rho d\theta+\int_{0}^{\pi}\partial_{b}\mathcal{R}(\partial_{t}^{2}\partial_{\theta}\xi)(\p_{t}^{3}\xi)\partial_{\theta}\rho d\theta\notag\\
       &=\frac{1}{2}\partial_{t}(\int_{0}^{\pi}\partial_{b}\mathcal{R}(\partial_{t}^{2}\partial_{\theta}\xi)(\partial_{t}^{2}\partial_{\theta}\xi)\rho d\theta)-\frac{1}{2}\int_{0}^{\pi}\partial_{t}\partial_{b}\mathcal{R}(\partial_{t}^{2}\partial_{\theta}\xi)(\partial_{t}^{2}\partial_{\theta}\xi)\rho d\theta\notag\\
       &\quad-\frac{1}{2}\int_{0}^{\pi}\partial_{b}\mathcal{R}(\partial_{t}^{2}\partial_{\theta}\xi)(\partial_{t}^{2}\partial_{\theta}\xi)\p_{t}\rho d\theta+\int_{0}^{\pi}\partial_{b}\mathcal{R}(\partial_{t}^{2}\partial_{\theta}\xi)\p_{t}^{3}\xi\partial_{\theta}\rho d\theta\label{equ:5.1.233}
   \end{align}

   \noindent which implies that:

   \begin{align}
    &\vert I_{5}-\frac{1}{2}\partial_{t}(\int_{0}^{\pi}\partial_{b}\mathcal{R}(\partial_{t}^{2}\partial_{\theta}\xi)(\partial_{t}^{2}\partial_{\theta}\xi)\rho d\theta)\vert\notag\\
    &\lesssim  \int_{0}^{\pi}|\partial_{t}\partial_{b}\mathcal{R}(\partial_{t}^{2}\partial_{\theta}\xi)||(\partial_{t}^{2}\partial_{\theta}\xi)||\p_{t}\xi| d\theta+\int_{0}^{\pi}|\partial_{b}\mathcal{R}||(\partial_{t}^{2}\partial_{\theta}\xi)||\p_{t}^{3}\xi||\partial_{\theta}\rho| d\theta+\int_{0}^{\pi}\partial_{b}\mathcal{R}(\partial_{t}^{2}\partial_{\theta}\xi)(\partial_{t}^{2}\partial_{\theta}\xi)\p_{t}\xi d\theta\notag\\
   & \lesssim \vert \vert \partial_{t}\xi\vert \vert_{W^{1,+\infty}}\vert \vert \partial_{t}^{2}\xi\vert \vert^{2}_{H^{1}}+\vert \vert \xi\vert \vert_{W^{1,+\infty}}\vert \vert \partial_{t}^{3}\xi\vert \vert_{L^{\frac{1}{\alpha}}}\vert \vert \partial_{t}^{2}\xi\vert \vert^{2}_{H^{1}}\vert \vert \xi\vert \vert_{W^{1,+\infty}}+\vert \vert \xi\vert \vert_{W^{1,+\infty}}\vert \vert \partial_{t}\xi\vert\vert_{L^{\infty}}\vert \vert\partial_{t}^{2}\xi \vert \vert^{2}_{H^{1}}\notag\\
   &\lesssim\vert \vert \partial_{t}\xi\vert \vert_{W^{3-\frac{1}{q_{-}},q_{-}}}\vert \vert \partial_{t}^{2}\xi\vert \vert^{2}_{H^{1}}+\vert \vert \xi\vert \vert_{W^{3-\frac{1}{q_{-}},q_{-}}}\vert \vert \partial_{t}^{3}\xi\vert \vert_{H^{\frac{1}{2}-\alpha}}\vert \vert \partial_{t}^{2}\xi\vert \vert^{2}_{H^{1}}\vert \vert \xi\vert \vert_{W^{3-\frac{1}{q_{+}},q_{+}}}+\vert \vert \xi\vert \vert_{W^{3-\frac{1}{q_{+}},q_{+}}}\vert \vert \partial_{t}\xi\vert\vert_{H^{\frac{3}{2}-\alpha}}\vert \vert\partial_{t}^{2}\xi \vert \vert^{2}_{H^{1}}\notag\\
   &\lesssim \mathcal{E}^{\frac{1}{2}}\mathcal{D}\label{equ:5.1.234}
   \end{align}

   \textbf{Term $I_{6}$}
   
     For the term $I_{6}$, we have the following computation using Theorem \ref{thm:gam}:
   
   \begin{align}
       I_{6}=&\int_{0}^{\pi} \partial_{b}\mathcal{R} \partial_{t}^{2}\partial_{\theta}\xi\partial_{\theta}\rho\mathfrak{n}'''(t)\xi_{s}d\theta+\int_{0}^{\pi}\partial_{b}\mathcal{R} \partial_{t}^{2}\partial_{\theta}\xi\rho\mathfrak{n}'''(t)\partial_{\theta}\xi_{s}d\theta \notag\\
    &\lesssim \vert \vert \xi\vert \vert_{W^{1,+\infty}}\vert \vert \partial_{t}^{2}\xi\vert \vert_{H^{1}}\mathcal{D}^{\frac{1}{2}}+\vert \vert \xi\vert \vert_{W^{1,+\infty}}\vert \vert \partial_{t}^{2}\xi\vert \vert_{H^{1}}\mathcal{D}^{\frac{1}{2}}\notag\\
    &\lesssim \|\xi\|_{W^{3-\frac{1}{q_{-}},q_{-}}}\|\p_{t}^{2}\xi\|_{H^{1}}\mathcal{D}^{\frac{1}{2}}+\|\xi\|_{W^{3-\frac{1}{q_{+}},q_{+}}}\|\p_{t}^{2}\xi\|_{H^{1}}\mathcal{D}^{\frac{1}{2}} \notag\\
    &\lesssim \mathcal{E}^{\frac{1}{2}}\mathcal{D} \label{5.1.235}
   \end{align}

   \textbf{Term $I_{7}$}
   
   Finally, for the term $I_{7}$, we have

   \begin{align}
       I_{7}=\int_{0}^{\pi} \partial_{b}\mathcal{R}(\partial_{t}^{2}\partial_{\theta}\xi)^{2}\partial_{\theta}\rho\tilde{b}_{8,2}+\int_{0}^{\pi} \partial_{b}\mathcal{R}(\partial_{t}^{2}\partial_{\theta}\xi)\textbf{}^{2}\rho\partial_{\theta}\tilde{b}_{8,2}\lesssim \mathcal{E}^{\frac{1}{2}}\vert \vert \partial_{t}^{2}\xi\vert \vert_{H^{\frac{3}{2}-\alpha}}\vert \vert \tilde{b}_{8,2}\vert \vert_{W^{1,\frac{1}{1-\alpha}}} \label{equ:5.1.236}
   \end{align}

   \noindent We use the similar computation as in \eqref{equ:5.1.232} to obtain the boundedness of $\vert \vert \tilde{b}_{8,2}\vert \vert_{H^{1}}\lesssim \mathcal{E}^{\frac{1}{2}}\mathcal{D}^{\frac{1}{2}}$. Applying this result to equation \eqref{equ:5.1.236}, we then obtain

   \begin{align}
       I_{7}\lesssim \mathcal{E}\mathcal{D}
   \end{align}

  Now combining the estimates for $I_{1}$ to $I_{8}$, we have

   \begin{align}
       \vert (\int_{0}^{\pi}b^{3,2}\partial_{\theta}(v\cdot \mathcal{N}))d\theta-\partial_{t}(\int_{0}^{\pi}\partial_{bb}\mathcal{R}(\partial_{t}\partial_{\theta}\xi)^{2}\rho\partial_{tt\theta}\xi)-\frac{1}{2}\partial_{t}(\int_{0}^{\pi}\partial_{b}\mathcal{R}(\partial_{t}^{2}\partial_{\theta}\xi)(\partial_{t}^{2}\partial_{\theta}\xi)\rho d\theta)\vert\notag\\
       \lesssim \mathcal{E}^{\frac{1}{2}}\mathcal{D} \label{equ:5.1.237}.
   \end{align}

   \noindent For the temporal derivative terms appearing in \eqref{equ:5.1.237}, we further note that

   \begin{align}
       (\int_{0}^{\pi}\partial_{bb}\mathcal{R}(\partial_{t}\partial_{\theta}\xi)^{2}\rho\partial_{tt\theta}\xi)\lesssim \vert \vert \partial_{t}\xi\vert \vert^{2}_{W^{1,\infty}}\vert \vert \partial_{t}^{2}\xi\vert \vert_{H^{1}}\lesssim \mathcal{E}^{\frac{3}{2}} \label{equ:5.1.238},
   \end{align}

   \noindent and

   \begin{align}
       \frac{1}{2}(\int_{0}^{\pi}\partial_{b}\mathcal{R}(\partial_{t}^{2}\partial_{\theta}\xi)(\partial_{t}^{2}\partial_{\theta}\xi)\rho d\theta)\vert\lesssim \vert \vert \partial_{t}\xi\vert \vert_{W^{1,+\infty}}\vert \vert \partial_{t}^{2}\xi\vert \vert^{2}_{H^{1}}\lesssim \mathcal{E}^{\frac{3}{2}}.
   \end{align}

   \textbf{Step 12} In this step, we estimate terms involving $\omega$. We have the following computation using Lemma \ref{thm:B} and H\"older's inequality

   \begin{align}
       |\int_{\Omega}\p_{t}^{2}u\omega J|\lesssim\int_{\Omega} |\p_{t}^{2}u||\omega|\lesssim \|\p_{t}^{2}u\|_{L^{2}}\|\omega\|_{W^{1,\frac{4}{3-2\epsilon_{+}}}}\lesssim \mathcal{E}^{\frac{3}{2}}.
    \end{align}
    \noindent Similarly,
    \begin{align}
        |\int_{\Omega}\p_{t}^{2}u\p_{t}(J\omega)|\lesssim \int_{\Omega}|\p_{t}^{2}u|(|\nabla \p_{t}\bar{\xi}||\omega|+|\p_{t}\omega|)\lesssim \|\p_{t}^{2}u\|_{L^{2}}(\|\p_{t}\bar{\xi}\|_{W^{1,+\infty}}\|\omega\|_{L^{2}}+\|\p_{t}\omega\|_{L^{2}})\lesssim (\sqrt{\mathcal{E}}+\mathcal{E})\mathcal{D}.
    \end{align}
    \noindent Using the result in Step 1, we further obtain
    \begin{align}
        (b^{1,2},\omega)_{0}\lesssim \mathcal{E}^{\frac{1}{2}}\mathcal{D}^{\frac{1}{2}}\|\omega\|_{H^{1}}\lesssim \mathcal{E}^{\frac{1}{2}}\mathcal{D}. 
    \end{align}
    \noindent Finally, for the term $((\p_{t}^{2}u,\omega))$, we have
    \begin{align}
        |((\p_{t}^{2}u,\omega))|\lesssim \|\p_{t}^{2}u\|_{H^{1}}\|\omega\|_{H^{1}}\lesssim \mathcal{E}^{\frac{1}{2}}\mathcal{D}
    \end{align}

    \textbf{Step 13} In this step, we estimate terms involving pressure. Using Lemma \ref{lem:F_2} which will be proved after the main proof of this theorem, we have:

    \begin{align}
        \int_{\Omega}\p_{t}^{2}p\langle Jb^{2,2}\rangle_{\Omega}=\frac{d}{dt}(\langle Jb^{2,2}\rangle_{\Omega}\int_{\Omega}\p_{t}p)-\p_{t}\langle Jb^{2,2}\rangle_{\Omega}\int_{\Omega}\p_{t}p=\frac{d}{dt}I_{1}-I_{2}
    \end{align}

    \noindent We then estimate $I_{1}$ and $I_{2}$ individually. We have

    \begin{align}
        |I_{1}|\lesssim \mathcal{E}\|\p_{t}p\|_{L^{2}}\lesssim \mathcal{E}^{\frac{3}{2}},
    \end{align}

    \noindent and similarly

    \begin{align}
        |I_{2}|\lesssim \mathcal{E}^{\frac{1}{2}}\mathcal{D}.
    \end{align}

    \textbf{Step 14} In this step, we estimate the term $(\p_{t}^{2}\xi,\p_{t}^{2}\xi)_{1,\Sigma}$.

    Using Theorem \ref{thm:pos}, we have:

    \begin{align}
        (\p_{t}^{2}\xi,\p_{t}^{2}\xi)_{1,\Sigma}&=(\p_{t}^{2}\xi-a_{2}(t)\rho_{0},\p_{t}^{2}\xi-a_{2}(t)\rho_{0})_{1,\Sigma}-2(\p_{t}^{2}\xi,a_{2}(t)\rho_{0})_{1,\Sigma}+(a_{2}(t)\rho_{0},a_{2}(t)\rho_{0})_{1,\Sigma}\notag\\
        &\gtrsim \|\p_{t}^{2}\xi\|^{2}_{H^{1}}-\mathcal{E}^{\frac{3}{2}}
    \end{align}

    \noindent This finishes the estimate
    
    \textbf{Conclusion}

    Combining all of steps in this proof, we obtain
      \begin{align}
       \mathcal{E}_{||,2}(t)-\mathcal{E}^{\frac{3}{2}}(t)+\int_{0}^{t}\mathcal{D}_{||,2}\lesssim \mathcal{E}_{||,2}(0)+\mathcal{E}^{\frac{3}{2}}(0)+\int_{0}^{t}\mathcal{E}^{\frac{1}{2}}\mathcal{D} 
   \end{align}
   \end{proof}

   Now it remains to establish the boundedness for $Jb^{2,2}$ via the following lemma

   \begin{lemma}{\label{lem:F_2}}
   
       Let $b^{2,2}$ be defined as in \eqref{equ:5.1.92}, we have the following estimates:

       \begin{align}{\label{equ:F_2_1}}
           \int_{\Omega}Jb^{2,2}\lesssim\mathcal{E}
       \end{align}
       \noindent and:
       \begin{align}{\label{equ:F_2_2}}
           \int_{\Omega}\p_{t}(Jb^{2,2})\lesssim \mathcal{E}^{\frac{1}{2}}\mathcal{D}^{\frac{1}{2}}
       \end{align}
       
   \end{lemma}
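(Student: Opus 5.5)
The plan is to exploit the structure of $b^{2,2}$ as a time derivative of the divergence constraint, following the treatment of $F^{2,2}$ in Guo--Tice. Differentiating $\dive_{\mathcal{A}}u=0$ twice in time gives $\dive_{\mathcal{A}}\p_t^2 u = b^{2,2}$ with
\begin{align*}
b^{2,2}=-\dive_{\p_t^2\mathcal{A}}u-2\dive_{\p_t\mathcal{A}}\p_t u .
\end{align*}
Hence $Jb^{2,2}$ is a sum of terms of the form $J(\p_t^2\mathcal{A})_{ij}\p_j u_i$ and $J(\p_t\mathcal{A})_{ij}\p_j\p_t u_i$. The key identity to use is the Piola identity $\p_j(J\mathcal{A}_{ij})=0$, together with its time derivatives $\p_j\p_t^k(J\mathcal{A}_{ij})=0$. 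With it, $J\dive_{\mathcal{A}}v=\p_j(J\mathcal{A}_{ij}v_i)$, and the $\p_t$ derivatives commute through.

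\textbf{Estimate \eqref{equ:F_2_1}.} First I would write $Jb^{2,2}=\p_t^2(J\dive_{\mathcal{A}}u)-(\text{terms with }\p_t^k J)\dive_{\mathcal{A}}(\cdot)$. Since $\dive_{\mathcal{A}}\p_t^k u$ is itself lower order, this reduces $\int_\Omega Jb^{2,2}$ to a boundary integral by the divergence theorem:
\begin{align*}
\int_\Omega \p_j\bigl(\p_t^2(J\mathcal{A}_{ij})u_i+2\p_t(J\mathcal{A}_{ij})\p_t u_i\bigr)
=\int_{\partial\Omega}\bigl(\p_t^2(J\mathcal{A})u+2\p_t(J\mathcal{A})\p_t u\bigr)\cdot\nu .
\end{align*}
On $\Sigma_s$, $J\mathcal{A}\nu$ is parallel to $\nu$ because the map $\Phi$ preserves the walls, so these contributions vanish by $u\cdot\nu=0$. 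On $\Sigma$, $J\mathcal{A}\nu$ is $\mathcal{N}$ up to the fixed factor $|\mathcal{N}_0|$, so we obtain
\begin{align*}
\int_0^\pi \bigl(u\cdot\p_t^2\mathcal{N}+2\p_t u\cdot\p_t\mathcal{N}\bigr)\,d\theta .
\end{align*}
Since $\p_t^k\mathcal{N}$ is linear in $\p_t^k\xi,\p_t^k\xi'$, the trace theorem and Cauchy--Schwarz bound this by
\begin{align*}
\|u\|_{H^1}\|\p_t^2\xi\|_{H^1}+\|\p_t u\|_{H^1}\|\p_t\xi\|_{H^1}\lesssim\mathcal{E}.
\end{align*}
The pieces $\p_t^kJ\cdot\dive_{\mathcal{A}}$ are bounded directly by $\|\p_t\bar\xi\|_{W^{1,\infty}}\|\p_t u\|_{H^1}$ and similar quantities. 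If the Piola route is unclean in polar variables, I would instead bound everything in $\Omega$ directly. That alternative uses $\|\p_t^2\bar\xi\|_{H^1}\|u\|_{W^{1,\infty}}$, with $\|u\|_{W^{1,\infty}}$ coming from $W^{2,q_+}$, and $\|\p_t\bar\xi\|_{W^{1,\infty}}\|\p_t u\|_{H^1}$, all of which are $\lesssim\mathcal{E}$.

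\textbf{Estimate \eqref{equ:F_2_2}.} Next I would differentiate once more. The boundary representation gives terms
\begin{align*}
u\cdot\p_t^3\mathcal{N},\qquad 3\p_t u\cdot\p_t^2\mathcal{N},\qquad 3\p_t^2u\cdot\p_t\mathcal{N}.
\end{align*}
The last two are bounded by $\|\p_t u\|_{H^1}\|\p_t^2\xi\|_{H^1}$ and $\|\p_t^2u\|_{H^1}\|\p_t\xi\|_{H^1}$. In each product one factor is controlled by $\mathcal{E}^{1/2}$ and the other by $\mathcal{D}^{1/2}$.

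\textbf{Main obstacle.} The difficult term is $\int_0^\pi u\cdot\p_t^3\mathcal{N}$, which involves $\p_t^3\xi'$. The dissipation only controls $\p_t^3\xi\in H^{1/2-\alpha}$, so a derivative on $\p_t^3\xi$ is not available directly. I would integrate by parts in $\theta$ to move the derivative onto $u$, which yields
\begin{align*}
\int_0^\pi \p_\theta(u\cdot e)\,\p_t^3\xi\,d\theta+\bigl[u\cdot e\,\p_t^3\xi\bigr]_\theta .
\end{align*}
The interior integral is bounded by $\|u\|_{W^{1,q_+}(\Sigma)}\|\p_t^3\xi\|_{L^2}\lesssim\mathcal{E}^{1/2}\mathcal{D}^{1/2}$, using $\p_t^3\xi\in H^{1/2-\alpha}\subset L^2$. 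For the boundary term, $\|u\|_{W^{2,q_+}}$ gives $L^\infty$ traces, and $[\p_t^3\xi]_\theta$ belongs to $\mathcal{D}$ through $\sum_k[\p_t^{k+1}\xi]_\theta^2$. This again gives $\mathcal{E}^{1/2}\mathcal{D}^{1/2}$, since $\|u\|_{W^{2,q_+}}^2$ is part of $\mathcal{E}$. Collecting all the pieces yields \eqref{equ:F_2_2}.
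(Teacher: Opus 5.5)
Your proposal is correct in substance, but it takes a different route from the paper. The paper never passes to the boundary. It bounds $\int_\Omega|Jb^{2,2}|$ and $\int_\Omega|\p_t(Jb^{2,2})|$ directly, term by term, with H\"older and Sobolev: $\|\p_t^2\bar\xi\|_{W^{1,4}}\|u\|_{W^{1,\frac{2}{1-\epsilon_+}}}$ and $\|\p_t\bar\xi\|_{W^{1,\infty}}\|\p_tu\|_{W^{1,\frac{2}{1-\epsilon_-}}}$ for the first estimate. For the second it estimates $\p_tJ\,b^{2,2}$ plus the three pieces $\dive_{\p_t^3\mathcal{A}}u$, $\dive_{\p_t^2\mathcal{A}}\p_tu$, $\dive_{\p_t\mathcal{A}}\p_t^2u$ in $L^1$. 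This controls the full $L^1$ norm, but the top-order piece needs $\nabla\p_t^3\bar\xi$ in a Lebesgue space. The dissipation only gives $\p_t^3\xi\in H^{1/2-\alpha}$, so the Poisson extension puts $\nabla\p_t^3\bar\xi$ merely in $H^{-\alpha}(\Omega)$. The paper's bound $\|\p_t^3\bar\xi\|_{W^{1,\frac{1}{1+2\alpha}}}\lesssim\|\p_t^3\xi\|_{H^{1/2-\alpha}}$ is not a genuine $L^p$ estimate, since the exponent is below $1$. That term would have to be read as an $H^{-\alpha}$--$H^{\alpha}$ duality pairing against $\nabla u\in H^{\epsilon_+}$. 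The paper's first estimate also invokes $\|\p_tu\|_{W^{2,q_-}}$, which lies in $\mathcal{D}$, not $\mathcal{E}$.

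Your route exploits the divergence structure $Jb^{2,2}=J\dive_{\mathcal{A}}\p_t^2u=\p_j(J\mathcal{A}_{ij}\p_t^2u_i)$ to move everything onto $\Sigma$. There only $\p_t^k\xi$ and $\p_\theta\p_t^k\xi$ appear, and you trade the missing interior regularity for the contact-point term $[\p_t^3\xi]_\theta$, which $\mathcal{D}$ does contain. You control only the signed integral. That is all Step 13 uses, through $\langle Jb^{2,2}\rangle_\Omega$ and its time derivative. Your first estimate also closes with energy quantities alone.

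A few points to tidy up.

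There are no leftover $\p_t^kJ\,\dive_{\mathcal{A}}(\cdot)$ terms. Exactly, $Jb^{2,2}=-\p_j\bigl(\p_t^2(J\mathcal{A}_{ij})u_i+2\p_t(J\mathcal{A}_{ij})\p_tu_i\bigr)$, so your displayed divergence identity has the wrong overall sign (irrelevant for the bounds). More simply, $\int_\Omega Jb^{2,2}=\int_0^\pi\p_t^2u\cdot\mathcal{N}\,d\theta$, and you can combine this with $\int_0^\pi u\cdot\mathcal{N}\,d\theta\equiv0$, which is just $\int_\Omega J\dive_{\mathcal{A}}u=0$.

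Differentiating once more gives $-\int_0^\pi\bigl(u\cdot\p_t^3\mathcal{N}+3\p_tu\cdot\p_t^2\mathcal{N}+2\p_t^2u\cdot\p_t\mathcal{N}\bigr)\,d\theta$. So the last coefficient is $2$, not $3$; this does not affect the bound.

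After integrating by parts, the interior bound $\|u\|_{W^{1,q_+}(\Sigma)}\|\p_t^3\xi\|_{L^2}$ does not follow from H\"older, because $q_+<2$. Instead use the trace $u|_\Sigma\in W^{1+\epsilon_+/2,q_+}\subset W^{1,\frac{1}{1-\epsilon_+}}$. Pair it with $\p_t^3\xi\in H^{1/2-\alpha}\subset L^{1/\alpha}\subset L^{1/\epsilon_+}$, which is valid since $\alpha<\epsilon_+$.
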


   \begin{proof}
   
       We begin by estimating \eqref{equ:F_2_1}. We have the following estimate using the regularity results for $\xi$ and $u$

       \begin{align}
           \int_{\Omega}|Jb^{2,2}|&\lesssim \|\operatorname{div}_{\p_{t}^{2}\mathcal{A}}u\|_{L^{1}}+\|\operatorname{div}_{\p_{t}\mathcal{A}}\p_{t}u\|_{L^{1}}\notag\\
           &\lesssim \|\p_{t}^{2}\bar{\xi}\|_{W^{1,4}}\|u\|_{W^{1,\frac{2}{1-\epsilon_{+}}}}+\|\p_{t}\bar{\xi}\|_{W^{1,\infty}}\|\p_{t}u\|_{W^{1,\frac{2}{1-\epsilon_{-}}}}\notag\\
           &\lesssim \|\p_{t}^{2}\xi\|_{H^{1}}\|u\|_{W^{2,q_{+}}}+\|\p_{t}\xi\|_{H^{\frac{3}{2}+\frac{\epsilon_{-}-\alpha}{2}}}\|\p_{t}u\|_{W^{2,q_{-}}}\lesssim \mathcal{E}
       \end{align}

       \noindent We then establish the estimate for \eqref{equ:F_2_2}. We have:

       \begin{align}
           |\int_{\Omega}\p_{t}(Jb^{2,2})|&\lesssim \int_{\Omega}|\p_{t}Jb^{2,2}|+\int_{\Omega}J\p_{t}b^{2,2}\notag\\
           &\lesssim\|\p_{t}\xi\|_{W^{3-\frac{1}{q_{-}},q_{-}}}\int_{\Omega}|b^{2,2}|+\int_{\Omega}|\p_{t}b^{2,2}|
       \end{align}
       \noindent Using the estimate for \eqref{equ:F_2_1}, we deduce $\int_{\Omega}|b^{2,2}|\lesssim\mathcal{E}$. Hence, applying this result to the equation above, we obtain

       \begin{align}
           |\int_{\Omega}\p_{t}(Jb^{2,2})|&\lesssim \mathcal{E}\mathcal{D}^{\frac{1}{2}}+\|\dive_{\p_{t}^{3}\mathcal{A}}u\|_{L^{1}}+\|\dive_{\p_{t}^{2}\mathcal{A}}\p_{t}u\|_{L^{1}}+\|\dive_{\p_{t}\mathcal{A}}\p_{t}^{2}u\|_{L^{1}}\notag\\
           &\lesssim \mathcal{E}\mathcal{D}^{\frac{1}{2}}+\|\p_{t}^{3}\bar{\xi}\|_{W^{1,\frac{1}{1+2\alpha}}}\|u\|_{W^{1,\frac{1}{1-\epsilon_{+}}}}+\|\p_{t}^{2}\bar{\xi}\|_{L^{\frac{2}{\alpha}}}\|\p_{t}u\|_{L^{\frac{4}{2-\epsilon_{-}}}}+\|\p_{t}\bar{\xi}\|_{W^{1,+\infty}}\|\p_{t}^{2}u\|_{H^{1}}\notag\\
           &\lesssim \mathcal{E}\mathcal{D}^{\frac{1}{2}}+\|\p_{t}^{3}\xi\|_{H^{\frac{1}{2}-\alpha}}\|u\|_{W^{2,q_{+}}}+\|\p_{t}^{2}\xi\|_{H^{\frac{3}{2}-\alpha}}\|\p_{t}u\|_{H^{1+\frac{\epsilon_{-}}{2}}}+\|\p_{t}\xi\|_{H^{\frac{3}{2}+\frac{\epsilon_{-}-\alpha}{2}}}\|\p_{t}^{2}u\|_{H^{1}}\notag\\
           &\lesssim \mathcal{E}^{\frac{1}{2}}\mathcal{D}^{\frac{1}{2}}
       \end{align}
   \end{proof}
   
   Combining Theorem 7.2, Theorem 7.3 and Theorem 7.4, we obtain the following result

   \begin{align}
       \mathcal{E}_{||}(t)-\mathcal{E}^{\frac{3}{2}}(t)+\int_{0}^{t}\mathcal{D}_{||}\lesssim \mathcal{E}_{||}(0)+\mathcal{E}^{\frac{3}{2}}(0)+\int_{0}^{t}\mathcal{E}^{\frac{1}{2}}\mathcal{D} \label{equ:5.1.239}
   \end{align}

   To complete the apriori estimate, we need to replace $\mathcal{E}_{||}$ and $\mathcal{D}_{||}$ by $\mathcal{E}(t)$ and $\mathcal{D}(t)$ in the estimate above. In the next section, we establish the necessary elliptic estimates, which will allow us to control the remaining terms in the energy and dissipation.

   \subsection{Elliptic Estimate}

  In this subsection, we rely on Theorem 4.7 from the work of Guo–Tice \cite{Guo} to apply the elliptic theory. For our purposes here, we focus only on the necessary estimates and computations.For the full details of the elliptic theory, we refer the reader to \cite{Guo}. We write down the esquation as follows
   
   \begin{equation}{\label{equ:5.1.240}}
       \begin{cases}
           \operatorname{div}_{\mathcal{A}}S_{\mathcal{A}}(p,u)=G^{1}~~~~&\operatorname{in} \Omega\\
           J\operatorname{div}_{\mathcal{A}}u=G^{2}~~&\operatorname{in} \Omega\\
           \frac{u\cdot \mathcal{N}}{\vert \mathcal{N}_{0}\vert}=G_{+}^{3}~~&\operatorname{on}\Sigma\\
           S_{\mathcal{A}}(p,u)\mathcal{N}=[\mathcal{K}(\xi)]\mathcal{N}+G_{+}^{4}\mathcal{T}+G^{5}\mathcal{N}~~&\operatorname{on} \Sigma\\
           u\cdot J\nu=G_{-}^{3}~~~&\operatorname{on} \Sigma_{s}\\
           \mu\mathbb{D}_{\mathcal{A}}u\nu\cdot \tau+\beta u\cdot \tau=G_{-}^{4}~~&\operatorname{on} \Sigma_{s}\\
           \mp \sigma(\frac{\rho_{0}^{2}\partial_{\theta}\xi}{(\rho_{0}^{2}+\rho_{0}'^{2})^{\frac{3}{2}}}-\frac{\rho_{0}\rho_{0}'\xi}{(\rho_{0}^{2}+\rho_{0}'^{2})^{\frac{3}{2}}})(\frac{\pi}{2}\pm\frac{\pi}{2})=G_{\pm}^{7}
       \end{cases}
   \end{equation}

  \noindent where expressions for  $G^{1}$ to $G^{5}$ are as given in the Appendix.

    \begin{theorem}
       Suppose that $(u, p, \xi) \in W^{2,q_{+}} \times W^{1,q_{+}} \times W^{3-\frac{1}{q_{+}},,q_{+}}$ is a strong solution to the system \eqref{equ:5.1.240}. Then the following elliptic estimate holds:

       \begin{align}
           &\vert \vert u\vert \vert_{W^{2,q_{+}}}+\vert \vert u\vert \vert_{H^{1+\epsilon_{+}}}+\vert \vert p\vert \vert_{W^{1,q_{\epsilon_{+}}}}+\vert \vert p\vert \vert_{H^{\epsilon_{+}}}+\vert \vert \xi\vert\vert_{W^{3-\frac{1}{q_{+}},q_{+}}} \notag\\
           &\lesssim \sqrt{\mathcal{E}_{||}}+\sqrt{\mathcal{E}}\vert \vert \partial_{t}\xi\vert \vert_{H^{1}}+\vert \vert \partial_{t}\xi\vert \vert_{H^{\frac{3}{2}-\alpha}}+\sqrt{\mathcal{E}}\vert \vert \xi\vert \vert_{W^{2-\frac{1}{q_{+}},q_{+}}} \notag\\
           &\quad+\sqrt{\mathcal{E}}\vert \vert \xi\vert \vert_{W^{3-\frac{1}{q_{+}},q_{+}}}+\mathcal{E}_{||}+\vert \vert u\vert \vert_{H^{1}}\label{equ:5.1.254}
       \end{align}
       
    \end{theorem}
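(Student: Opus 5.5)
The plan is to apply the elliptic theory of Guo--Tice (Theorem 4.7 in \cite{Guo}) to system \eqref{equ:5.1.240}, treating it as a Stokes problem with Navier-slip conditions on $\Sigma_s$, a dynamic boundary condition on $\Sigma$, and the contact-point conditions $G^7_\pm$ at $\theta=0,\pi$. We freeze the geometry at the equilibrium $\Omega_0$. Since $\mathcal{E}\le\delta\ll1$, the coefficients $\mathcal{A}$, $J$, $\mathcal{N}$ differ from their equilibrium values by $O(\sqrt{\mathcal{E}})$ in $W^{1,\infty}$-type norms. Hence the estimate for the frozen operator transfers to the variable-coefficient problem, with an extra error of size $\sqrt{\mathcal{E}}$ times the unknown's norm, absorbable on the left.

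The equilibrium contact angle $\omega_{eq}$ enters through $\varepsilon_{\max}$: the corner exponents allow $W^{2,q_+}\times W^{1,q_+}$ regularity precisely because $q_+=2/(2-\varepsilon_+)$ and $\varepsilon_+\le\varepsilon_{\max}$. The output of \cite{Guo} has the form
\begin{align*}
\|u\|_{W^{2,q_+}}+\|p\|_{W^{1,q_+}}+\|\xi\|_{W^{3-1/q_+,q_+}}\lesssim{}&\|G^1\|_{L^{q_+}}+\|G^2\|_{W^{1,q_+}}+\|G^3_\pm\|_{W^{2-1/q_+,q_+}}\\
&+\|G^4_\pm,G^5\|_{W^{1-1/q_+,q_+}}+[G^7]_\theta+\|u\|_{H^1}.
\end{align*}
The lower-order $H^{1+\varepsilon_+}$ and $H^{\varepsilon_+}$ norms follow from this by Sobolev embedding in 2D.

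The key steps, in order, are as follows.

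\textbf{Step 1.} Reduce the capillary equation for $\xi$ to an elliptic problem for $\mathcal{K}$ with data $G^5$ and boundary data $G^7_\pm$. To do this, split $\xi=(\xi-a_0\rho_0)+a_0\rho_0$ and use Theorem \ref{thm:pos} together with the isomorphism property of $\mathcal{K}$ on $\tilde{B}$ from Section 6. The components along $\rho_0$ and $\xi_s$ are controlled respectively by $a_0\lesssim\|\xi\|_{L^2}^2$ and by the orthogonality $\int\xi\xi_s=0$, so both only contribute $\sqrt{\mathcal{E}_{||}}$.

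\textbf{Step 2.} Identify the forcing terms. By the kinematic equation, $G^3_+=\rho_0(\partial_t\xi+\mathfrak{n}'\xi_s-G)/|\mathcal{N}_0|$. Its $W^{2-1/q_+,q_+}$ norm on the one-dimensional $\Sigma$ is bounded by $\|\partial_t\xi\|_{H^{3/2-\alpha}}$, using the embedding $H^{3/2-\alpha}\hookrightarrow W^{2-1/q_+,q_+}$, which holds because $\alpha<(1-\varepsilon_+)/2$. To this we add $|\mathfrak{n}'|\lesssim\|u\|_{H^1}$ from Theorem \ref{thm:gam} and the quadratic term $G$. Similarly, $G^1$ contains $\partial_t u$, giving $\sqrt{\mathcal{E}_{||}}$, and $G^7$ contains $\kappa\partial_t\xi(0,\pi)+\kappa\mathfrak{n}'$, bounded by $\|\partial_t\xi\|_{H^1}$ and $\|u\|_{H^1}$.

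\textbf{Step 3.} Bound the genuinely nonlinear parts of $G^i$ (the terms $\mathcal{R}_1,\mathcal{R}_2$, the difference $\mathcal{A}-I$, and $u\cdot\nabla_{\mathcal{A}}u$) by products such as $\sqrt{\mathcal{E}}\|\xi\|_{W^{3-1/q_+,q_+}}$, $\sqrt{\mathcal{E}}\|\xi\|_{W^{2-1/q_+,q_+}}$ and $\sqrt{\mathcal{E}}\|\partial_t\xi\|_{H^1}$. These follow from Hölder's inequality and the same embeddings already used in Section 7.1, in the style of the $b^{i}$ estimates there. Together, Steps 2 and 3 give exactly the right-hand side of \eqref{equ:5.1.254}.

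The main obstacle is Step 1 combined with the corner analysis. The theorem of \cite{Guo} is stated in Cartesian graph coordinates for a vessel, whereas here the free surface is a polar graph meeting a flat bottom, and $\mathcal{K}$ has the extra kernel $\xi_s$. Two things must be checked. First, that the polar change of variables is smooth near the contact points, which holds since $\rho_0>0$ there and the corner angle is $\omega_{eq}$. Second, that the Fredholm obstruction coming from $\xi_s$ is removed by the orthogonality $\int\xi\xi_s=0$ built into the moving pole. I would first try localizing with a partition of unity near each corner: near the corners, apply the Guo--Tice corner estimate verbatim; in the interior, use standard Stokes regularity; and handle the global coupling through the inverse $\mathcal{K}^{-1}$ constructed in Section 6, whose bound \eqref{equ:5.1.64} is applied after projecting off $\rho_0$ and $\xi_s$.
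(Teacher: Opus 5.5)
Your proposal is essentially the paper's proof. Theorem~4.7 of \cite{Guo} is applied as a black box, and the forcing terms are bounded along the lines you describe: $\partial_t u$ in $G^1$ by $\sqrt{\mathcal{E}_{||}}$; $G^3_+$ via the kinematic condition, $|\mathfrak{n}'|\lesssim\|u\|_{H^1}$, and the embedding $H^{3/2-\alpha}\hookrightarrow W^{2-1/q_+,q_+}$ from $2\alpha+\varepsilon_+<1$; and the $W\partial_t\bar{\xi}$, $\mathcal{R}_1$, $\mathcal{R}_2$, $\hat{\mathcal{W}}$ terms by products carrying a factor $\sqrt{\mathcal{E}}$.

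The paper does not carry out your Step~1 or the corner localization, and they are not needed for this estimate: any finite-dimensional kernel component of the coupled problem, such as $(u,p,\xi)=(0,0,\xi_s)$, is already controlled by $\|\xi\|_{H^1}\le\sqrt{\mathcal{E}_{||}}$ on the right-hand side. Also, \eqref{equ:5.1.64} is an $L^2$-based bound with $O(1)$ additive constants, so it would not give the $W^{3-1/q_+,q_+}$ norm; one-dimensional $W^{s,q}$ ODE regularity for $\mathcal{K}$ would.
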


    \begin{proof}
    
     \textbf{Step 1} In this step, we estimate each term included $G^{1}$ individually

        We have the following estimates

       \begin{align}
           \vert \vert \partial_{t}u\vert \vert_{L^{q_{+}}}\lesssim \vert \vert \partial_{t}u\vert \vert_{L^{2}}\lesssim \sqrt{\mathcal{E}_{||}} \label{equ:5.1.241}
       \end{align}

       \begin{align}
           \vert \vert u\cdot \nabla_{\mathcal{A}}u\vert \vert_{L^{q_{+}}}\lesssim \vert \vert \nabla u\vert \vert_{L^{\frac{2}{1-\epsilon_{+}}}}\vert \vert u\vert\vert_{L^{2}}\lesssim \|u\|_{W^{2,q_{+}}}\|u\|_{L^{2}}\lesssim \sqrt{\mathcal{E}}\sqrt{\mathcal{E}_{||}} \label{equ:5.1.242}
       \end{align}

       \begin{align}
           \vert \vert \mathfrak{n}'(t)\partial_{x}u\vert \vert_{L^{q_{+}}}\lesssim \vert \vert \partial_{x}u\vert \vert_{L^{q_{+}}}\mathcal{E}^{\frac{1}{2}}\lesssim \sqrt{\mathcal{E}}\vert \vert u\vert \vert_{H^{1}}\lesssim \mathcal{E}\label{equ:5.1.243}
       \end{align}
      
       \begin{align}
           \vert \vert (\cos\theta W\partial_{t}\bar{\xi},\sin\theta W\partial_{t}\bar{\xi})\mathcal{A}(\partial_{x}u,\partial_{y}u)^{T}\vert \vert_{L^{q_{+}}}\lesssim \vert \vert \partial_{t}\bar{\xi}\vert \vert_{L^{2}}\vert \vert \nabla u\vert\vert_{L^{\frac{2}{1-\epsilon_{+}}}}\lesssim \sqrt{\mathcal{E}}\vert \vert \partial_{t}\xi\vert \vert_{H^{1}} \lesssim \mathcal{E}\label{equ:5.1.244}
       \end{align}

       \textbf{Step 2} 
       
       In this step, we show the estimate for $G_{+}^{3}$:

       \textbf{Term $ \vert \vert \frac{\rho\partial_{t}\xi}{\vert \mathcal{N}_{0}\vert}\vert \vert_{W^{2-\frac{1}{q_{+}},q_{+}}}$}

       \begin{align}
           \vert \vert \frac{\rho\partial_{t}\xi}{\vert \mathcal{N}_{0}\vert}\vert \vert_{W^{2-\frac{1}{q_{+}},q_{+}}}\lesssim \vert \vert \partial_{t}\xi\vert \vert_{W^{2-\frac{1}{q_{+}},q_{+}}}
           \label{equ:5.1.245}
       \end{align}

      \noindent Using the assumption that $2\alpha+\epsilon_{+}<1$, we have:

      \begin{align}
          2-\frac{1}{q_{+}}=2-\frac{2-\epsilon_{+}}{2}=1+\frac{\epsilon_{+}}{2}\leq  \frac{3}{2}-\alpha,
      \end{align}

       \noindent which implies that

       \begin{align}
           \frac{1}{q_{+}}=\frac{2-\epsilon_{+}}{2}\geq \frac{2\alpha+\epsilon_{+}}{2}=\frac{1}{2}-\frac{1}{1}(\frac{3}{2}-\alpha-1-\frac{\epsilon_{+}}{2}).
       \end{align}

       \noindent Therefore,  using Sobolev embedding and the index relation derived above, we obtain the following result

       \begin{align}
           H^{\frac{3}{2}-\alpha}((0,\pi))\hookrightarrow W^{2-\frac{1}{q_{+}},\frac{2}{2\alpha+\epsilon_{+}}}((0,\pi))\hookrightarrow W^{2-\frac{1}{q_{+}},q_{+}}((0,\pi))
       \end{align}

       \noindent Applying the embedding result above to inequality \eqref{equ:5.1.245}, we obtain

       \begin{align}
           \vert \vert \frac{\rho\partial_{t}\xi}{\vert \mathcal{N}_{0}\vert}\vert \vert_{W^{2-\frac{1}{q_{+}},q_{+}}}\lesssim \vert \vert \partial_{t}\xi\vert \vert_{H^{\frac{3}{2}-\alpha}} \label{equ:5.1.246}
       \end{align}

       \textbf{Term $ \vert \vert \frac{\mathfrak{n}'(t)\rho\xi_{s}}{\vert \mathcal{N}_{0}\vert}\vert \vert_{W^{2-\frac{1}{q_{+}},q_{+}}}$}
       
        We have the following estimate for this term

      \begin{align}
           \vert \vert \frac{\mathfrak{n}'(t)\rho\xi_{s}}{\vert \mathcal{N}_{0}\vert}\vert \vert_{W^{2-\frac{1}{q_{+}},q_{+}}}\lesssim |\mathfrak{n}'(t)|\lesssim \vert \vert u\vert \vert_{H^{1}}
      \end{align}

      \textbf{Term $ | \mathfrak{n}'(t)|\vert \vert \frac{(\rho'-\frac{\rho\rho_{0}'}{\rho_{0}})\cos\theta}{\vert \mathcal{N}_{0}\vert}\vert \vert_{W^{2-\frac{1}{q_{+}},q_{+}}}$}
      
       We have the following estimate

      \begin{align}
          | \mathfrak{n}'(t)|\vert \vert \frac{(\rho'-\frac{\rho\rho_{0}'}{\rho_{0}})\cos\theta}{\vert \mathcal{N}_{0}\vert}\vert \vert_{W^{2-\frac{1}{q_{+}},q_{+}}}\lesssim\vert \vert u\vert \vert_{H^{1}}\vert \vert \xi\vert \vert_{W^{3-\frac{1}{q_{+}},q_{+}}}\lesssim \mathcal{E}
      \end{align}

      Therefore, combining the estimates for all of terms above, we obtain

      \begin{align}
          \vert \vert G_{+}^{3}\vert \vert_{W^{2-\frac{1}{q_{+}},q_{+}}}\lesssim \vert \vert \partial_{t}\xi\vert \vert_{H^{\frac{3}{2}-\alpha}}+\vert \vert u\vert \vert_{H^{1}}+\mathcal{E}
      \end{align}
      
       \textbf{Step 3} The estimate for $G^{5}$

       We aim to estimate the following norms
       
       \begin{align}
           \vert \vert G^{5}\cdot\frac{\mathcal{N}}{\vert \mathcal{N}\vert^{2}}\vert \vert_{W^{1-\frac{1}{q_{+}},q_{+}}}+\vert \vert G^{5}\cdot\frac{\mathcal{T}}{\vert \mathcal{T}\vert^{2}}\vert \vert_{W^{1-\frac{1}{q_{+}},q_{+}}} \label{equ:5.1.247}
       \end{align}

       \noindent By the definition of $G^{5}$ we have:

       \begin{align}
             \vert \vert \mathcal{R}_{2}\cdot\frac{\mathcal{N}}{\vert \mathcal{N}\vert^{2}}\vert \vert_{W^{1-\frac{1}{q_{+}},q_{+}}}+\vert \vert \mathcal{R}_{2}\cdot\frac{\mathcal{T}}{\vert \mathcal{T}\vert^{2}}\vert \vert_{W^{1-\frac{1}{q_{+}},q_{+}}}&\lesssim\vert \vert \mathcal{R}_{2}\vert \vert_{W^{1-\frac{1}{q_{+}},q_{+}}}(\|\frac{\mathcal{N}}{|\mathcal{N}|^{2}}\|_{W^{1,q_{+}}}+\|\frac{T}{|\mathcal{T}|^{2}}\|_{W^{1,q_{+}}})\notag\\&\lesssim \vert \vert (\partial_{\theta}\xi)^{2}\vert \vert_{W^{1-\frac{1}{q_{+}},q_{+}}}+\vert \vert (\xi)^{2}\vert \vert_{W^{1-\frac{1}{q_{+}},q_{+}}}+\vert \vert (\partial_{\theta}\xi)\xi\vert \vert_{W^{1-\frac{1}{q_{+}},q_{+}}}
             \notag\\
             &\lesssim \vert \vert\xi\vert \vert_{W^{2-\frac{1}{q_{+}},q_{+}}}\vert \vert \xi\vert \vert_{W^{2,q_{+}}}
             \lesssim\|\xi\|_{W^{3-\frac{1}{q_{+}},q_{+}}}^{2}\lesssim \mathcal{E}\label{equ:5.1.248}
       \end{align}
       \noindent where we used Theorem B.2 in \cite{Guo}, and the fact that $1>\max(1-\frac{1}{q_{+}},\frac{1}{q_{+}})$ and $3-\frac{1}{q_{+}}>2$.
     
     Similarly, we have the following estimate for $\p_{\theta}\mathcal{R}_{1}$ in $G^{5}$ 
     
       \begin{align}
            \vert \vert \partial_{\theta}(\mathcal{R}_{1})\vert \vert_{W^{1-\frac{1}{q_{+}},q_{+}}}\lesssim \|\mathcal{R}_{1}\|_{W^{2-\frac{1}{q_{+}},q_{+}}}\lesssim\vert \vert \xi\vert \vert_{W^{2,q_{+}}}\vert \vert\xi\vert \vert_{W^{3-\frac{1}{q_{+}},q_{_{+}}}}\lesssim \sqrt{\mathcal{E}}\vert \vert \xi\vert \vert_{W^{3-\frac{1}{q_{+}}q_{+}}}\lesssim \mathcal{E}\label{equ:5.1.251}
       \end{align}

       \noindent

       \textbf{Step 5} In this step, we then estimate $G^{7}$. We have the following estimate using the definition of $G^{7}$ in the Appendix

       \begin{align}
           [\mathcal{R}_{1}]_{\theta}\lesssim \vert \vert \mathcal{R}_{1}\vert \vert_{W^{1,q_{+}}}\lesssim \vert \vert \xi\vert \vert_{W^{1,+\infty}}\vert \vert\xi \vert \vert_{W^{2,q_{+}]}}\lesssim \sqrt{\mathcal{E}}\vert \vert \xi\vert \vert_{W^{2,q_{+}}}\lesssim\mathcal{E}\label{equ:5.1.252},
       \end{align}

       \noindent and

       \begin{align}
           [\alpha(\partial_{t}\xi+\mathfrak{n}'(t))]_{\theta}\lesssim [\partial_{t}\xi]_{\theta}+[\mathfrak{n}'(t)]_{\theta}\lesssim \vert \vert \partial_{t}\xi\vert \vert_{H^{1}}+\vert \mathfrak{n}'(t)\vert\lesssim \mathcal{E}_{||}+\vert \vert u\vert \vert_{H^{1}} \label{equ:5.1.253},
       \end{align}

       \noindent and

       \begin{align}
           [\hat{W}(\mathfrak{n}'(t)+\p_{t}\xi)]_{\theta}\lesssim |\mathfrak{n}'(t)|^{2}+|[\p_{t}\xi]_{\theta}|^{2}\lesssim \|\p_{t}\xi\|_{H^{1}}^{2}+\|u\|^{2}_{H^{1}}.
       \end{align}

       \textbf{Final result}
       
      Therefore, combining all of the steps in this proof, and applying Theorem 4.7 in \cite{Guo}, we obtain

       \begin{align}
           &\vert \vert u\vert \vert_{W^{2,q_{+}}}+\vert \vert u\vert \vert_{H^{1+\frac{\epsilon_{+}}{2}}}+\vert \vert p\vert \vert_{W^{1,q_{+}}}+\vert \vert p\vert \vert_{H^{\frac{\epsilon_{+}}{2}}}+\vert \vert \xi\vert\vert_{W^{3-\frac{1}{q_{+}},q_{+}}} \notag\\
           &\lesssim \vert \vert G^{1}\vert \vert_{L^{q_{+}}}+\vert \vert G^{2}\vert \vert_{W^{1,q_{+}}}+\vert \vert G_{+}^{3}\vert \vert_{W^{2-\frac{1}{q_{+}},q_{+}}}+\vert \vert G_{-}^{3}]\vert \vert_{W^{2-\frac{1}{q_{+}},q_{+}}}\notag\\
           &\quad+\vert \vert G_{+}^{4}\vert \vert_{W^{1-\frac{1}{q_{+}},q_{+}}}+\vert \vert G_{-}^{4}\vert \vert_{W^{1-\frac{1}{q_{+}},q_{+}}}+\vert \vert G^{5}\vert \vert_{W^{1-\frac{1}{q_{+}},q_{+}}}\notag\\
           &\quad+\vert \vert \partial_{\theta}G^{6}\vert \vert_{W^{1-\frac{1}{q_{+}},q_{+}}}+[G^{7}]_{\theta} \notag\\
           &\lesssim \sqrt{\mathcal{E}_{||}}+\sqrt{\mathcal{E}}\vert \vert \partial_{t}\xi\vert \vert_{H^{1}}+\vert \vert \partial_{t}\xi\vert \vert_{H^{\frac{3}{2}-\alpha}}+\sqrt{\mathcal{E}}\vert \vert \xi\vert \vert_{W^{2-\frac{1}{q_{+}},q_{+}}} \notag\\
           &\quad+\sqrt{\mathcal{E}}\vert \vert \xi\vert \vert_{W^{3-\frac{1}{q_{+}},q_{+}}}+\mathcal{E}_{||}+\vert \vert u\vert \vert_{H^{1}}
       \end{align}
    \end{proof}

    We now establish the elliptic estimate for the first order equation system. The first order equation system can be expressed as follows

     \begin{equation}{\label{equ:ellip_1}}
       \begin{cases}
           \operatorname{div}_{\mathcal{A}}S_{\mathcal{A}}(\p_{t}p,\p_{t}u)=G^{1,1}~~~~&\operatorname{in} \Omega\\
           J\operatorname{div}_{\mathcal{A}}\p_{t}u=G^{2,1}~~&\operatorname{in} \Omega\\
           \frac{\p_{t}u\cdot \mathcal{N}}{\vert \mathcal{N}_{0}\vert}=G_{+}^{3,1}~~&\operatorname{on}\Sigma\\
           S_{\mathcal{A}}(\p_{t}p,\p_{t}u)\mathcal{N}=[\mathcal{K}(\p_{t}\xi)]\mathcal{N}+G_{+}^{4,1}\mathcal{T}+G^{5,1}\mathcal{N}~~&\operatorname{on} \Sigma\\
           \p_{t}u\cdot J\nu=G_{-}^{3,1}~~~&\operatorname{on} \Sigma_{s}\\
           \mu\mathbb{D}_{\mathcal{A}}\p_{t}u\nu\cdot \tau+\beta \p_{t}u\cdot \tau=G_{-}^{4,1}~~&\operatorname{on} \Sigma_{s}\\
           \mp \sigma(\frac{\rho_{0}^{2}\partial_{\theta}\p_{t}\xi}{(\rho_{0}^{2}+\rho_{0}'^{2})^{\frac{3}{2}}}-\frac{\rho_{0}\rho_{0}'\p_{t}\xi}{(\rho_{0}^{2}+\rho_{0}'^{2})^{\frac{3}{2}}})(\frac{\pi}{2}\pm\frac{\pi}{2})=G_{\pm}^{7,1}
       \end{cases}
   \end{equation}
   \noindent where the expressions for $G^{1,1}$ to $G^{7,1}$ are given in the Appendix.

    \begin{theorem}
       Suppose that $(\p_{t}u, \p_{t}p, \p_{t}\xi) \in W^{2,q_{-}} \times W^{1,q_{-}} \times W^{3-\frac{1}{q_{-}},,q_{-}}$ is a strong solution to the system \eqref{equ:5.1.240}. Then the following elliptic estimate holds:

       \begin{align}
            & \vert \vert \partial_{t}u\vert \vert_{W^{2,q_{-}}}+\vert \vert \partial_{t}u\vert \vert_{H^{1+\epsilon_{-}}}+\vert \vert \partial_{t}p\vert \vert_{W^{1,q_{-}}}+\vert \vert \partial_{t}p\vert \vert_{H^{\epsilon_{-}}}+\vert \vert \partial_{t}\xi\vert\vert_{W^{3-\frac{1}{q_{-}},q_{-}}}\notag\\
           &\lesssim \mathcal{E}_{||}+\sqrt{\mathcal{E}}\sqrt{\mathcal{D}}+\vert \vert \partial_{t}^{2}\xi\vert \vert_{H^{\frac{3}{2}-\alpha}}+\vert \vert \partial_{t}\xi\vert \vert_{H^{\frac{3}{2}-\alpha}}^{2}+\vert \vert \partial_{t}u\vert \vert_{H^{1}}+\vert \vert u\vert \vert_{H^{1}}\sqrt{\mathcal{D}}+\vert \vert \partial_{t}\xi\vert \vert_{H^{1}}+\mathcal{E}
       \end{align}
    \end{theorem}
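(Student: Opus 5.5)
The plan is to mirror the zeroth-order elliptic estimate: apply Theorem 4.7 of \cite{Guo} to system \eqref{equ:ellip_1}, this time with exponent $q_{-}$ in place of $q_{+}$. This gives
\begin{align*}
&\|\partial_{t}u\|_{W^{2,q_{-}}}+\|\partial_{t}p\|_{W^{1,q_{-}}}+\|\partial_{t}\xi\|_{W^{3-\frac{1}{q_{-}},q_{-}}}\\
&\lesssim \|G^{1,1}\|_{L^{q_{-}}}+\|G^{2,1}\|_{W^{1,q_{-}}}+\|G^{3,1}_{\pm}\|_{W^{2-\frac{1}{q_{-}},q_{-}}}+\|G^{4,1}_{\pm}\|_{W^{1-\frac{1}{q_{-}},q_{-}}}+\|G^{5,1}\|_{W^{1-\frac{1}{q_{-}},q_{-}}}+[G^{7,1}]_{\theta}.
\end{align*}
The $H^{1+\epsilon_{-}}$ and $H^{\epsilon_{-}}$ norms then follow by Sobolev embedding in two dimensions, since $W^{2,q_{-}}\hookrightarrow H^{1+\epsilon_{-}}$. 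The rest of the work is to bound each forcing term by the right-hand side of the statement. I would go through them in the order $G^{1,1}$, $G^{2,1}$, $G^{3,1}_{+}$, $G^{4,1},G^{5,1}$, $G^{7,1}$, exactly as in Steps 1–5 of the previous proof.

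\textbf{Forcing $G^{1,1}$.} It contains $\partial_{t}^{2}u$, which is bounded in $L^{q_{-}}$ by $\|\partial_t^2u\|_{L^2}\lesssim\sqrt{\mathcal{E}_{||}}\lesssim \mathcal{E}_{||}+1$-type terms. It also contains the time derivatives of the transport and geometric terms. The $\mathfrak{n}$-terms $\mathfrak{n}''\partial_x u$ and $\mathfrak{n}'\partial_x\partial_t u$ are handled with Theorem \ref{thm:gam}: $|\mathfrak{n}''|\lesssim\|\partial_t u\|_{H^1}+\mathcal{E}$ and $|\mathfrak{n}'|\lesssim\|u\|_{H^1}$, which give the products $\|u\|_{H^1}\sqrt{\mathcal{D}}$ and $\sqrt{\mathcal{E}}\sqrt{\mathcal{D}}$. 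The remaining terms are $\partial_t(u\cdot\nabla_{\mathcal{A}}u)$, $\operatorname{div}_{\partial_t\mathcal{A}}S_{\mathcal{A}}(p,u)$, $\operatorname{div}_{\mathcal{A}}\mathbb{D}_{\partial_t\mathcal{A}}u$ and $\partial_t$ of the $W\partial_t\bar\xi$ term. These are bounded by H\"older with exponent pairs of the type already used in Step 1 of Theorem \ref{thm:0-order}, such as $L^{2/(1-\epsilon_{+})}\times L^{2}$. In the $\partial_t^2\bar\xi$ factor I would use $\|\partial_t^2\bar\xi\|_{W^{1,4}}\lesssim \|\partial_t^2\xi\|_{H^{\frac32-\alpha}}$; every one of these terms is either $\sqrt{\mathcal{E}}\sqrt{\mathcal{D}}$ or $\mathcal{E}$.

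\textbf{Forcing $G^{2,1}$ and $G^{3,1}_{+}$.} $G^{2,1}=-J\operatorname{div}_{\partial_t\mathcal{A}}u$ is quadratic, bounded by $\|\partial_t\xi\|_{H^{\frac32+\frac{\epsilon_{-}-\alpha}{2}}}\|u\|_{W^{2,q_{+}}}\lesssim\sqrt{\mathcal{E}}\sqrt{\mathcal{D}}$. For $G^{3,1}_{+}$ I would use the time-differentiated kinematic condition. This yields the principal term $\rho\partial_t^2\xi/|\mathcal{N}_0|$, the term $\mathfrak{n}''\rho\xi_s$, the term $-u\cdot\partial_t\mathcal{N}$, and quadratic pieces. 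The principal term is controlled, as in Step 2 of the previous theorem, by the embedding $H^{\frac32-\alpha}\hookrightarrow W^{2-\frac1{q_{-}},q_{-}}$. This embedding needs $1+\frac{\epsilon_-}{2}\le\frac32-\alpha$, which follows from \eqref{equ:alpha} because $\epsilon_-<\epsilon_+$ and $2\alpha+\epsilon_{+}<1$. The $\mathfrak{n}''$ term gives $\|\partial_t u\|_{H^1}+\mathcal{E}$. The term $u\cdot\partial_t\mathcal{N}$ and the products of $\partial_t\xi$ with $\partial_t\xi$ are bounded by $\|\partial_t\xi\|^2_{H^{3/2-\alpha}}$ and $\sqrt{\mathcal{E}}\sqrt{\mathcal{D}}$.

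\textbf{Forcing $G^{4,1}$, $G^{5,1}$, $G^{7,1}$.} These contain $\partial_t\mathcal{R}_1$, $\partial_t\mathcal{R}_2$, $\mathbb{D}_{\partial_t\mathcal{A}}u\mathcal{N}$, $S_{\mathcal{A}}(p,u)\partial_t\mathcal{N}$ and $(\mathcal{K}\xi-\partial_\theta b^3)\partial_t\mathcal{N}$. Each is linear in $\partial_t\xi$ (or in $\partial_t\partial_\theta\xi$) multiplied by a factor that is small in $\mathcal{E}$, so the product estimate of Theorem B.2 in \cite{Guo} gives $\sqrt{\mathcal{E}}\sqrt{\mathcal{D}}$. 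The contact-point term $G^{7,1}$ contains $\kappa(\partial_t^2\xi+\mathfrak{n}'')$ and $\hat{\mathcal{W}}'(\cdot)(\partial_t^2\xi+\mathfrak{n}'')$. It is bounded by $[\partial_t^2\xi]_\theta+|\mathfrak{n}''|\lesssim \|\partial_t^2\xi\|_{H^1}+\|\partial_t u\|_{H^1}+\mathcal{E}$, which in turn is $\lesssim\mathcal{E}_{||}+\|\partial_t u\|_{H^1}+\mathcal{E}$.

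The main obstacle is the top-order part of $G^{5,1}$. The term $\partial_\theta\partial_t\mathcal{R}_1$ contains $\partial_b\mathcal{R}\,\partial_t\partial_\theta^2\xi$, which has the same order as the left-hand side. I would place it in $W^{1-\frac1{q_-},q_-}$ with the small coefficient $\|\xi\|_{W^{1,\infty}}\lesssim\sqrt{\mathcal{E}}$. That gives a bound $\sqrt{\mathcal{E}}\|\partial_t\xi\|_{W^{3-\frac1{q_-},q_-}}$, which is $\lesssim\sqrt{\mathcal{E}}\sqrt{\mathcal{D}}$ and can in any case be absorbed into the left side because $\delta$ is small. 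Handling the fractional product here needs care with the $q_-$ exponent and with the trace of $\partial_t\xi$ at the corners.
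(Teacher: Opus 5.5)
Your plan follows the paper's proof. You apply Theorem 4.7 of \cite{Guo} at exponent $q_-$ to \eqref{equ:ellip_1} and bound $G^{1,1},\dots,G^{7,1}$ term by term with the same H\"older, trace and product estimates. You also handle the top-order piece $\partial_b\mathcal{R}_1\,\partial_t\partial_\theta^2\xi$ of $\partial_\theta\partial_t\mathcal{R}_1$ through its small coefficient, as the paper does in its treatment of $G^{5,1}$.

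The one step that fails as written is the $\partial_t^2u$ term in $G^{1,1}$. The bound $\|\partial_t^2u\|_{L^{q_-}}\lesssim\|\partial_t^2u\|_{L^2}\le\sqrt{\mathcal{E}_{||}}$ is linear in the solution, so it cannot be absorbed into the quadratic $\mathcal{E}_{||}$. Replacing it by ``$\mathcal{E}_{||}+1$'' inserts an $O(1)$ constant, which makes the estimate useless for the small-energy argument.

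The paper's own Step 1 also produces only $\mathcal{E}_{||}^{1/2}$ here. When the estimate is later used in \eqref{equ:5.1.303}, the term $\|\partial_t^2u\|_{L^{q_-}}$ is kept linearly. So the statement should be read, and proved, with $\sqrt{\mathcal{E}_{||}}$ (or simply $\|\partial_t^2u\|_{L^2}$) in place of $\mathcal{E}_{||}$; do not try to force the literal version.

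The same care is needed in $G^{7,1}$. There $\|\partial_t^2\xi\|_{H^1}$ is not $\lesssim\mathcal{E}_{||}$. It is, however, bounded by $\|\partial_t^2\xi\|_{H^{\frac32-\alpha}}$, which already appears on the right-hand side, so that term is fine once justified this way.
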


    \begin{proof}
        
    \textbf{Step 1} We estimate each term in $G^{1,1}$ individually.

    \textbf{Term $\partial_{t}^{2}u$}. We have the following computation:

    \begin{align}
        \vert \vert \partial_{t}^{2}u\vert \vert_{L^{q_{-}}}\lesssim \vert \vert \partial_{t}^{2}u\vert \vert_{L^{2}}\lesssim \mathcal{E}_{||}^{\frac{1}{2}} \label{equ:5.1.255} 
    \end{align}

    \textbf{Term $\partial_{t}u\cdot \nabla_{\mathcal{A}}u$}. We have the following computation:

    \begin{align}
        \vert \vert \partial_{t}u\cdot \nabla_{\mathcal{A}}u\vert \vert_{L^{q_{-}}}\lesssim \vert \vert \partial_{t}u\vert\vert_{L^{+\infty}}\vert \vert u\vert \vert_{W^{1,q_{-}}}\lesssim \mathcal{E}\label{equ:5.1.256} 
    \end{align}

   \textbf{Term $u\cdot \nabla_{\partial_{t}\mathcal{A}}u$}. We have the following computation:

   \begin{align}
       \vert \vert u\cdot \nabla_{\partial_{t}\mathcal{A}}u\vert \vert_{L^{q_{-}}}\lesssim \vert \vert \partial_{t}\bar{\xi}\vert \vert_{W^{1+\infty}}\vert \vert u\vert \vert_{L^{\infty}}\vert \vert u\vert \vert_{W^{1,q_{-}}}\lesssim \vert \vert \partial_{t}{\xi}\vert \vert_{W^{3-\frac{1}{q_{-}},q_{-}}}\vert \vert u\vert \vert_{W^{2,q_{+}}}\lesssim \mathcal{E}^{\frac{3}{2}}\label{equ:5.1.257} 
   \end{align}

   \textbf{Term $u\cdot \nabla_{\mathcal{A}}\partial_{t}u$}

   We have the following estimate by Sobolev embedding and H\"older's inequality
   \begin{align}
       \vert \vert u\cdot \nabla_{\mathcal{A}}\partial_{t}u\vert \vert_{L^{q_{-}}}\lesssim \vert \vert u\vert \vert_{L^{\infty}}\vert \vert \partial_{t}u\vert \vert_{W^{1,q_{-}}}\lesssim \|u\|_{W^{2,q_{+}}}\|\p_{t}u\|_{H^{1}}\lesssim\mathcal{E}\label{equ:5.1.258} 
   \end{align}

      \textbf{Term $\mathfrak{n}''(t)\partial_{x}u$}:

      We have
      
      \begin{align}
          \vert \vert \mathfrak{n}''(t)\partial_{x}u\vert \vert_{L^{q_{-}}}\lesssim |\mathfrak{n}^{\prime\prime}(t)|\vert \vert u\vert \vert_{W^{1,q_{-}}}\lesssim {\mathcal{E}}\label{equ:5.1.259} 
      \end{align}

      \textbf{Term $\mathfrak{n}'(t)\partial_{x}\partial_{t}u$}:

      We have
      \begin{align}
            \vert \vert \mathfrak{n}'(t)\partial_{x}\partial_{t}u\vert \vert_{L^{q_{-}}}\lesssim \vert \vert u\vert \vert_{H^{1}}\vert \vert \partial_{t}u\vert \vert_{W^{1,q_{-}}}\lesssim \|u\|_{H^{1}}\|\p_{t}u\|_{H^{1}}\lesssim\sqrt{\mathcal{E}}\sqrt{\mathcal{D}}\label{equ:5.1.260} 
      \end{align}

       \textbf{Term $\partial_{t}((\cos\theta W\partial_{t}\bar{\xi},\sin\theta W\partial_{t}\bar{\xi})\tilde{K}(\partial_{x}u,\partial_{y}u)^{T})$}. We have the following computation by Sobolev embedding and H\"older's inequality

       \begin{align}
           &\vert \vert \partial_{t}((\cos\theta W\partial_{t}\bar{\xi},\sin\theta W\partial_{t}\bar{\xi})\tilde{K}(\partial_{x}u,\partial_{y}u)^{T})\vert \vert_{L^{q_{-}}}\notag\\
           \lesssim& \vert \vert \partial_{t}^{2}\bar{\xi}\vert \vert_{L^{\infty}}\vert \vert u\vert \vert_{W^{1,q_{-}}}
           +\vert \vert \partial_{t}\bar{\xi}\vert \vert_{L^{\infty}}\vert \vert \partial_{t}\bar{\xi}\vert \vert_{W^{1,\infty}}\vert \vert u\vert \vert_{W^{1,q_{-}}}+\vert \vert \partial_{t}\bar{\xi}\vert \vert_{L^{\infty}}\vert \vert u\vert \vert_{W^{1,q_{-}}}\notag\\
           \lesssim&\|\p_{t}^{2}{\xi}\|_{H^{\frac{3}{2}-\alpha}}\|u\|_{H^{1}}+\|\p_{t}\xi\|_{H^{\frac{3}{2}+\frac{\epsilon_{-}-\alpha}{2}}}^{2}\|u\|_{W^{2,q_{-}}}+\|\p_{t}\xi\|_{H^{\frac{3}{2}+\frac{\epsilon_{-}-\alpha}{2}}}\|u\|_{H^{1}}\lesssim \sqrt{\mathcal{E}}\sqrt{\mathcal{D}}\label{equ:5.1.261} 
       \end{align}

       \textbf{Term $\operatorname{div}_{\partial_{t}\mathcal{A}}S_{\mathcal{A}}(\p_{t}p,\p_{t}u)$}. 
       
       We have the following estimate

       \begin{align}
           \vert \vert \operatorname{div}_{\partial_{t}\mathcal{A}}S_{\mathcal{A}}(p,u)\vert \vert_{L^{q_{-}}}\lesssim& \vert \vert \partial_{t}\bar{\xi}\vert \vert_{W^{1,+\infty}}(\vert \vert u\vert \vert_{W^{2,q_{-}}}+\vert \vert p\vert \vert_{W^{1,q_{-}}})\notag\\
           &+\vert \vert \partial_{t}\bar{\xi}\vert \vert_{W^{1,+\infty}}(\vert \vert u\vert \vert_{W^{1,\frac{2}{1-\epsilon_{+}}}}+\vert \vert p\vert \vert_{L^{\frac{2}{1-\epsilon_{+}}}})\|\bar{\xi}\|_{W^{2,\frac{2}{1-\epsilon_{+}}}}\notag\\
           \lesssim&\|\p_{t}\xi\|_{W^{3-\frac{1}{q_{-}},q_{-}}}(\|u\|_{W^{2,q_{-}}}+\|p\|_{W^{1,q_{-}}})\|\xi\|_{W^{3-\frac{1}{q_{-}},q_{-}}}\lesssim \mathcal{E}\label{equ:5.1.262} 
       \end{align}

        \textbf{Term $\operatorname{div}_{\mathcal{A}}S_{\partial_{t}\mathcal{A}}(p,u)$}

        We have the following estimate by Sobolev embedding and H\"older's inequality
       \begin{align}
           \vert \vert \operatorname{div}_{\mathcal{A}}S_{\partial_{t}\mathcal{A}}(p,u)\vert \vert_{L^{q_{-}}}&\lesssim \vert \vert \vert \nabla \partial_{t}\mathcal{A}\vert\vert \nabla u\vert\vert \vert_{L^{q_{-}}}+\vert \vert \vert \partial_{t}\mathcal{A}\vert \vert \nabla^{2}u\vert\vert \vert_{L^{q_{-}}}\notag\\
           &\lesssim (\vert \vert \partial_{t}\bar{\xi}\vert \vert_{W^{2,\frac{2}{1-\epsilon_{-}}}})\vert \vert \nabla u\vert \vert_{L^{\frac{2}{1-\epsilon_{+}}}}+\vert \vert \partial_{t}\bar{\xi}\vert \vert_{W^{1,\infty}}\vert \vert u\vert \vert_{W^{2,q_{+}}}\notag\\
           &\lesssim \|\p_{t}\bar{\xi}\|_{W^{3,q_{-}}}\|u\|_{W^{2,q_{+}}}+\|\p_{t}\bar{\xi}\|_{W^{3,q_{-}}}\|u\|_{W^{2,q_{+}}} \lesssim \sqrt{\mathcal{E}}\sqrt{\mathcal{D}}\label{equ:5.1.263} 
       \end{align}

        \textbf{Step 2} In this step, we show the estimate for $G^{2,1}$. We have:

        \begin{align}
            \vert \vert J\operatorname{div}_{\partial_{t}\mathcal{A}}u\vert \vert_{W^{1,q_{-}}}\lesssim \vert \vert \partial_{t}\bar{\xi}\vert \vert_{W^{2,\frac{2}{1-\epsilon_{+}}}}\vert \vert u\vert\vert_{W^{1,\frac{2}{1-\epsilon_{-}}}}+\vert \vert \partial_{t}\bar{\xi}\vert \vert_{W^{1,+\infty}}\vert \vert u\vert \vert_{W^{2,q_{-}}}\lesssim \|\p_{t}{\xi}\|_{W^{3-\frac{1}{q_{-}},q_{-}}}\|u\|_{W^{2,q_{+}}}\lesssim\sqrt{\mathcal{E}}\sqrt{\mathcal{D}} \label{equ:5.1.264}
        \end{align}

        \textbf{Step 3} In this step, we estimate each term in $G_{+}^{3}$ individually. We have:

        \textbf{Term $\frac{-u\cdot \partial_{t}\mathcal{N}}{\vert \mathcal{N}_{0}\vert}$} 
        
        Using the fact that $W^{2-\frac{1}{q_{-}},q_{-}}$ is an algebra in one dimension, we have
        
        \begin{align}
            \vert \vert \frac{u\cdot \partial_{t}\mathcal{N}}{\vert \mathcal{N}_{0}\vert}\vert \vert_{W^{2-\frac{1}{q_{-}},q_{-}}}\lesssim \vert \vert u\vert \vert_{W^{2-\frac{1}{q_{-}},q_{-}}(\Sigma)}\vert \vert \partial_{\theta}\partial_{t}\xi\vert \vert_{W^{2-\frac{1}{q_{-}},q_{-}}}\lesssim \vert \vert u\vert \vert_{W^{2,q_{-}}}\vert \vert\partial_{t}\xi \vert \vert_{W^{3-\frac{1}{q_{-}},q_{-}}}\lesssim \sqrt{\mathcal{E}}\sqrt{\mathcal{D}}\label{equ:5.1.265}
        \end{align}

        \textbf{Term $\partial_{t}(\frac{\rho\partial_{t}\xi}{\vert \mathcal{N}_{0}\vert})$} 
        
        We have the following computation:

        \begin{align}
           \vert \vert \partial_{t}(\frac{\rho\partial_{t}\xi}{\vert \mathcal{N}_{0}\vert})\vert \vert_{W^{2-\frac{1}{q_{-}},q_{-}}} \lesssim&  \vert \vert (\partial_{t}\xi)^{2}\vert \vert_{W^{2-\frac{1}{q_{-}},q_{-}}}+\vert \vert \partial_{t}^{2}\xi\vert \vert_{W^{2-\frac{1}{q_{-}},q_{-}}}\notag\\
           \lesssim& \vert \vert \partial_{t}\xi\vert \vert_{W^{2-\frac{1}{q_{-}},q_{-}}}^{2}+\vert \vert \partial_{t}^{2}\xi\vert \vert_{H^{\frac{3}{2}-\alpha}}\lesssim \vert \vert \partial_{t}\xi\vert \vert_{H^{\frac{3}{2}-\alpha}}^{2}+\vert \vert \partial_{t}^{2}\xi\vert \vert_{H^{\frac{3}{2}-\alpha}}\lesssim \mathcal{E}+\|\p_{t}^{2}\xi\|_{H^{\frac{3}{2}-\alpha}}\label{equ:5.1.266}
        \end{align}

        \textbf{Term $\partial_{t}(\mathfrak{n}'(t)\xi_s\rho\frac{1}{\vert \mathcal{N}_{0}\vert})$} 
        
        We have the following computation by Sobolev embedding 

        \begin{align}
            &\vert \vert \partial_{t}(\mathfrak{n}'(t)\xi_s\rho\frac{1}{\vert \mathcal{N}_{0}\vert})\vert \vert_{W^{2-\frac{1}{q_{-}},q_{-}}}\lesssim \|\mathfrak{n}''(t)\xi_{s}\rho\frac{1}{|\mathcal{N}_{0}|}\|_{W^{2-\frac{1}{q_{-}},q_{-}}}+ \|\mathfrak{n}'(t)\xi_{s}\p_{t}\xi\frac{1}{|\mathcal{N}_{0}|}\|_{W^{2-\frac{1}{q_{-}},q_{-}}}\notag\\
            &\lesssim (\vert \vert \partial_{t}u\vert \vert_{H^{1}}+\mathcal{E})+\vert \vert u\vert \vert_{H^{1}}\vert \vert \partial_{t}\xi\vert \vert_{W^{2-\frac{1}{q_{-}},q_{-}}}\lesssim {\mathcal{E}}+\vert \vert\partial_{t}u \vert \vert_{H^{1}} \label{equ:5.1.267}
        \end{align}

        \textbf{Term $\partial_{t}(\mathfrak{n}'(t)\frac{(\rho'-\frac{\rho\rho_{0}'}{\rho_{0}})\cos\theta}{\vert \mathcal{N}_{0}\vert})$}

        We have the following computation by Sobolev embedding 

        \begin{align}
            \vert \vert \partial_{t}(\mathfrak{n}'(t)\frac{(\rho'-\frac{\rho\rho_{0}'}{\rho_{0}})\cos\theta}{\vert \mathcal{N}_{0}\vert})\vert \vert_{W^{2-\frac{1}{q_{-}},q_{-}}}\lesssim& \|\mathfrak{n}''(t)\xi\|_{W^{2-\frac{1}{q_{-}},q_{-}}}+\|\mathfrak{n}''(t)\xi'\|_{W^{2-\frac{1}{q_{-}},q_{-}}}+\|\mathfrak{n}'(t)\p_{t}\xi'\|_{W^{2-\frac{1}{q_{-}},q_{-}}}\notag\\
            &+\|\mathfrak{n}'(t)\p_{t}\xi\|_{W^{2-\frac{1}{q_{-}},q_{-}}}\notag\\
            \lesssim& (\vert \vert \p_{t}u\vert \vert_{H^{1}}+\mathcal{E})\vert \vert \xi\vert \vert_{W^{3-\frac{1}{q_{-}},q_{-}}}+\vert \vert u\vert \vert_{H^{1}}\vert \vert \partial_{t}\xi\vert \vert_{W^{3-\frac{1}{q_{-}},q_{-}}}\lesssim \sqrt{\mathcal{E}\mathcal{D}}
        \end{align}

        Combining the computation for all of the terms above, we obtain the estimate for $G_{+}^3$

       \begin{align}
           \vert \vert G_{+}^{3}\vert \vert_{W^{2-\frac{1}{q_{-}},q_{-}}}\lesssim \vert \vert \partial_{t}\xi\vert \vert_{H^{\frac{3}{2}-\alpha}}^{2}+\vert \vert \partial_{t}^{2}\xi\vert \vert_{H^{\frac{3}{2}-\alpha}}+\vert \vert \partial_{t}u\vert \vert_{H^{1}}+\sqrt{\mathcal{ED}}
       \end{align}
       
       \textbf{Step 4} In this step, we estimate each term in $G^{4,1}_{+}$ individually. Our goal is to bound

       \begin{align}
           \vert \vert \mathbb{D}_{\partial_{t}\mathcal{A}}u\mathcal{N}\cdot \frac{\mathcal{T}}{\vert \mathcal{T}\vert^{2}}\vert \vert_{W^{1-\frac{1}{q_{-}},q_{-}}}\label{equ:5.1.277}
       \end{align}

       \noindent where
       $\mathcal{N}=\rho(\theta)\hat{e}_{r}+\rho'(\theta)\hat{e}_{\theta}$ and $\mathcal{T}=\rho'(\theta)\hat{e}_{r}-\rho(\theta)\hat{e}_{\theta}$. From the definition of $\mathcal{N}$ and $\mathcal{T}$, we have

       \begin{align}
           \vert \vert \frac{\mathcal{T}}{\vert \mathcal{T}\vert^{2}}\vert \vert_{W^{1,q_{-}}}+\vert \vert \frac{\mathcal{N}}{\vert \mathcal{N}\vert}\vert \vert_{W^{1,q_{-}}}+\vert \vert \mathcal{N}\vert \vert_{W^{1,q_{-}}}\lesssim 1+\vert \vert \xi\vert \vert_{W^{2,q_{-}}}\lesssim 1 \label{equ:5.1.278}
       \end{align}

       \noindent  Using \eqref{equ:5.1.279} and Theorem 4.7 in \eqref{equ:5.1.277}, we obtain

       \begin{align}
           \vert \vert \mathbb{D}_{\partial_{t}\mathcal{A}}u\mathcal{N}\cdot \frac{\mathcal{T}}{\vert \mathcal{T}\vert^{2}}\vert \vert_{W^{1-\frac{1}{q_{-}},q_{-}}}&\lesssim \vert \vert \mathbb{D}_{\partial_{t}\mathcal{A}}u\vert \vert_{W^{1-\frac{1}{q_{-}},q_{-}}}\vert \vert \mathcal{N}\vert \vert_{W^{1,q_{-}}}\vert \vert \frac{\mathcal{T}}{\vert \mathcal{T}\vert^{2}}\vert \vert_{W^{1-\frac{1}{q_{-}},q_{-}}}\notag\\
           &\lesssim \vert \vert \nabla u\vert \vert_{W^{1,q_{-}}}\vert \vert \partial_{t}\xi\vert \vert_{W^{2-\frac{1}{q_{-}},q_{-}}}\lesssim \sqrt{\mathcal{E}}\sqrt{\mathcal{D}} \label{equ:5.1.279}
       \end{align}

       \textbf{Term $[\mathcal{K}(\xi)-S_{\mathcal{A}}(p,u)]\partial_{t}\mathcal{N}\cdot \frac{\mathcal{T}}{\vert \mathcal{T}\vert^{2}}$}

      By a similar computation used for the preceding term, we have 
       \begin{align}
           &\vert \vert [\mathcal{K}(\xi)-S_{\mathcal{A}}(p,u)]\partial_{t}\mathcal{N}\cdot \frac{\mathcal{T}}{\vert \mathcal{T}\vert^{2}}\vert \vert_{W^{1-\frac{1}{q_{-}},q_{-}}}\notag\\
           &\lesssim \vert \vert \partial_{\theta}\xi\partial_{\theta}\partial_{t}\xi\vert\vert_{W^{1-\frac{1}{q_{-}},q_{-}}}+\vert \vert \partial_{\theta}^{2}\xi\partial_{\theta}\partial_{t}\xi\vert \vert_{W^{1-\frac{1}{q_{-}},q_{-}}}
           +\vert \vert p\partial_{t}\partial_{\theta}\xi\vert \vert_{W^{1-\frac{1}{q_{-}},q_{-}}(\Sigma)}+\vert \vert \nabla u \partial_{t}\partial_{\theta}\xi\vert \vert_{W^{1-\frac{1}{q_{-}}q_{-}}(\Sigma)}
           \notag\\
           &\lesssim \vert\vert \xi\vert \vert_{W^{3-\frac{1}{q_{+}},q_{+}}}\vert \vert \partial_{t}\xi\vert \vert_{W^{2,q_{-}}}+\vert \vert p\vert \vert_{W^{1,q_{+}}}\vert \vert \partial_{t}\xi\vert \vert_{W^{2-\frac{1}{q_{-}},q_{-}}}+\vert \vert u\vert \vert_{W^{2,q_{+}}}\vert \vert \partial_{t}\xi\vert \vert_{W^{2,q_{-}}}\lesssim \mathcal{\sqrt{\mathcal{E}}\sqrt{\mathcal{D}}} \label{equ:5.1.280}
       \end{align}

       \textbf{Term $\mathcal{R}_{2}\partial_{t}\mathcal{N}\cdot \frac{\mathcal{T}}{\vert \mathcal{T}\vert^{2}}$}

       By an argument analogous to the one for the preceding term, we obtain
       \begin{align}
           \vert \vert \mathcal{R}_{2}\partial_{t}\mathcal{N}\cdot \frac{\mathcal{T}}{\vert \mathcal{T}\vert^{2}}\vert \vert_{W^{1-\frac{1}{q_{-}},q_{-}}}\lesssim& \vert \vert (\partial_{\theta}\xi)^{2}\partial_{t}\partial_{\theta}\xi\vert \vert_{W^{1-\frac{1}{q_{-}},q_{-}}}+\vert\vert \xi^{2}\partial_{t}\partial_{\theta}\xi\vert \vert_{W^{1-\frac{1}{q_{-}},q_{-}}}\notag\\
           \lesssim& \vert \vert \partial_{\theta}\xi\vert \vert^{2}_{W^{1-\frac{1}{q_{-}},q_{-}}}\vert \vert \partial_{t}\xi\vert \vert_{W^{2,q_{-}}}+\vert \vert \xi\vert \vert^{2}_{W^{1-\frac{1}{q_{-}},q_{-}}}\vert \vert \partial_{t}\xi\vert \vert_{W^{2,q_{-}}}\lesssim \vert \vert \xi\vert \vert_{W^{2,q_{+}}}^{2}\vert \vert \partial_{t}\xi\vert \vert_{W^{2,q_{-}}}\lesssim \mathcal{E}\sqrt{\mathcal{D}}\label{equ:5.1.281}
       \end{align}

        \textbf{Term $\p_{\theta}\mathcal{R}_{1}\p_{t}\mathcal{N}\cdot \frac{\mathcal{T}}{|\mathcal{T}|^{2}}$}

       We have the following computation:
       \begin{align}
           \|\p_{\theta}\mathcal{R}_{1}\p_{t}\mathcal{N}\cdot \frac{\mathcal{T}}{|\mathcal{T}|^{2}}\|_{W^{1-\frac{1}{q_{-}},q_{-}}}\lesssim&  \|\p_{\theta}^{2}\xi\p_{\theta}\xi\p_{t}\p_{\theta}\xi\|_{W^{1-\frac{1}{q_{-}},q_{-}}}+ \|\p_{\theta}^{2}\xi\xi\p_{t}\p_{\theta}\xi\|_{W^{1-\frac{1}{q_{-}},q_{-}}}\notag\\
           &+ \|(\p_{\theta}\xi)^{2}\p_{t}\p_{\theta}\xi\|_{W^{1-\frac{1}{q_{-}},q_{-}}}+ \|\p_{\theta}\xi\xi\p_{t}\p_{\theta}\xi\|_{W^{1-\frac{1}{q_{-}},q_{-}}}\notag\\
           \lesssim& \|\xi\|_{W^{3-\frac{1}{q_{-}},q_{-}}}\|\p_{t}\xi\|_{W^{2,q_{-}}}\|\xi\|_{W^{2,q_{-}}}\lesssim \mathcal{E}^{\frac{1}{2}}\mathcal{D}
       \end{align}
       
       \textbf{Step 5} In this step, we establish the estimate for the term $G_{-}^{4,1}$, which can be expressed as $D_{\partial_{t}\mathcal{A}}u\nu\cdot \tau$. We have

       \begin{align}
           \vert \vert D_{\partial_{t}\mathcal{A}}u\nu\cdot \tau\vert \vert_{W^{1-\frac{1}{q_{-}},q_{-}}(\Sigma_{s})}\lesssim  \vert \vert u\vert \vert_{W^{2,q_{-}}} \vert \vert \partial_{t}\eta\vert \vert_{W^{2-\frac{1}{q_{-}},q_{-}}}\lesssim \sqrt{\mathcal{E}}\sqrt{\mathcal{D}}\label{equ:5.1.282}
       \end{align}
       
       \textbf{Step 6} In this step, we establish estimate for $G^{5,1}$. The boundedness can be shown by the same computations used in Step 4, with the only modification of replacing $\frac{\mathcal{T}}{|\mathcal{T}|^{2}}$ by $\frac{\mathcal{N}}{|\mathcal{N}|^{2}}$. The only exceptions are the terms $\partial_{t}\mathcal{R}_{2}$ and $\p_{t}\p_{\theta}\mathcal{R}_{1}$, for which additional care is required. We bound these two terms as follows

       \begin{align}
           \vert\vert \partial_{t}\mathcal{R}_{2}\vert \vert_{W^{1-\frac{1}{q_{-}},q_{-}}}\lesssim \vert \vert \partial_{t}\xi\vert \vert_{W^{2-\frac{1}{q_{-}},q_{-}}}\vert \vert \xi\vert \vert_{W^{2,q_{-}}}\lesssim \sqrt{\mathcal{E}}\sqrt{\mathcal{D}}\label{equ:5.1.283},
       \end{align}

       \noindent and

       \begin{align}
           \vert \vert \partial_{\theta}\partial_{t}\mathcal{R}_1\vert \vert_{W^{1-\frac{1}{q_{-}},q_{-}}}=&\vert \vert \partial_{\theta}(\partial_{b}\mathcal{R}_1\partial_{t}\partial_{\theta}\xi)\vert \vert_{W^{1-\frac{1}{q_{-}},q_{-}}}+\vert \vert \partial_{\theta}(\partial_{c}\mathcal{R}_1\partial_{t}\xi)\vert \vert_{W^{1-\frac{1}{q_{-}},q_{-}}}\notag\\
           &\lesssim \vert \vert \xi\vert \vert_{W^{2,q_{-}}}\vert \vert \partial_{t}\xi\vert\vert_{W^{3-\frac{1}{q_{-}},q_{-}}}+\vert \vert \partial_{t}\xi\vert \vert_{W^{2,q_{-}}}\vert \vert \xi\vert \vert_{W^{3-\frac{1}{q_{+}},q_{+}}}\lesssim \sqrt{\mathcal{E}}\sqrt{\mathcal{D}}\label{equ:5.1.284}.
       \end{align}

       \textbf{Step 8} In this step, we establish the estimate for $G^{7,1}$. We have the following boundedness by using the definition of $G^{7}$ and Theorem \ref{thm:gam}:

       \begin{align}
           [\partial_{t}G^{7}]_{\theta}&\lesssim \vert \mathfrak{n}''(t)\vert+\vert \partial_{t}^{2}\xi\vert+\vert \partial_{t}\mathcal{R}_1\vert+(\vert \mathfrak{n}''(t)\vert+\vert \partial_{t}^{2}\xi\vert)[\hat{W}']_{\theta}\notag\\
           &\lesssim \vert \vert \partial_{t}u\vert \vert_{H^{1}}+\vert \vert \partial_{t}^{2}\xi\vert \vert_{H^{1}}+\vert \vert \partial_{t}\xi\vert \vert_{W^{2,q_{-}}}\vert \vert \xi\vert \vert_{W^{2,q_{-}}}+(\vert \mathfrak{n}''(t)\vert+[\partial_{t}^{2}\xi]_{\theta})[\hat{W}']_{\theta}+\mathcal{E}\notag\\
           &\lesssim \vert \vert \partial_{t}u\vert \vert_{H^{1}}+\vert \vert \partial_{t}^{2}\xi\vert \vert_{H^{1}}+\sqrt{\mathcal{E}}\sqrt{\mathcal{D}}+(\vert \vert u\vert\vert_{H^{1}}+\vert \vert \partial_{t}\xi\vert \vert_{W^{2,q_{+}}})(\vert \vert \partial_{t}u\vert \vert_{H^{1}}+\vert \vert \partial_{t}^{2}\xi\vert \vert_{H^{\frac{3}{2}-\alpha}}+\mathcal{E})\notag\\
           &\lesssim \vert \vert \partial_{t}u\vert \vert_{H^{1}}+\vert \vert \partial_{t}^{2}\xi\vert \vert_{H^{1}} +\sqrt{\mathcal{E}}\sqrt{\mathcal{D}}+\vert \vert u\vert\vert_{H^{1}}\sqrt{\mathcal{D}}\label{equ:5.1.285}
       \end{align}

       Combining all of the estimate above, and using elliptic estimate derived by Theorem 4.7 in \cite{Guo}, we obtain

       \begin{align}
          & \vert \vert \partial_{t}u\vert \vert_{W^{2,q_{-}}}+\vert \vert \partial_{t}u\vert \vert_{H^{1+\epsilon_{-}}}+\vert \vert \partial_{t}p\vert \vert_{W^{1,q_{-}}}+\vert \vert \partial_{t}p\vert \vert_{H^{\epsilon_{-}}}+\vert \vert \partial_{t}\xi\vert\vert_{W^{3-\frac{1}{q_{-}},q_{-}}}\notag\\
           &\lesssim \mathcal{E}_{||}+\sqrt{\mathcal{E}}\sqrt{\mathcal{D}}+\vert \vert \partial_{t}^{2}\xi\vert \vert_{H^{\frac{3}{2}-\alpha}}+\vert \vert \partial_{t}\xi\vert \vert_{H^{\frac{3}{2}-\alpha}}^{2}+\vert \vert \partial_{t}u\vert \vert_{H^{1}}+\vert \vert u\vert \vert_{H^{1}}\sqrt{\mathcal{D}}+\vert \vert \partial_{t}\xi\vert \vert_{H^{1}}+\mathcal{E}\label{equ:5.1.286}
       \end{align}
       \end{proof}

       After establishing the elliptic estimate, we now derive the enhanced estimate for $\xi$, $\partial_{t}\xi$ and $\partial_{t}^{2}\xi$. 

       \begin{theorem}
           Suppose that $\xi$ is the perturbation function. The it satisfies enhanced $\frac{3}{2}-$ estimate.

           \begin{align}
              \int_{0}^{T}\vert \vert \xi\vert \vert^{2}_{H^{\frac{3}{2}-\alpha}}\lesssim \mathcal{E}_{||}(0)+\mathcal{E}_{||}(t)+\int_{0}^{T}(\mathcal{D}_{||}+(\mathcal{E}_{||})^{\frac{1}{2}}\mathcal{D}+(\mathcal{E}_{||})\mathcal{D})
          \end{align}
       \end{theorem}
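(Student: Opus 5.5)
This follows the Guo--Tice $\frac{3}{2}-\alpha$ argument. The idea is to test the zero-order weak formulation \eqref{equ:4.1.24} against a velocity field built from $\xi$ through the capillary functional calculus of Section 6. This turns the $(1,\Sigma)$ pairing into $\|\xi\|_{H^{\frac{3}{2}-\alpha}}^{2}$.

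\textbf{Step 1: construct the test field.} Decompose $\xi=(\xi-a_{0}\rho_{0})+a_{0}\rho_{0}$. Since $\xi-a_{0}\rho_{0}\in\tilde B$ (by \eqref{equ:vanish} and \eqref{equ:4.1.8}), set $\psi=D_{j}^{s}(\xi-a_{0}\rho_{0})$ with $s=\frac{1}{2}-\alpha$, a truncated fractional power of $\mathcal{K}$, and later let $j\to\infty$. By \eqref{est:D_j}, $\psi\in H^{\frac{1}{2}-\alpha}\cap\tilde B$, and
\begin{align*}
(\xi-a_{0}\rho_{0},\psi)_{1,\Sigma}\gtrsim\|\xi-a_{0}\rho_{0}\|^{2}_{\mathcal{H}_{\mathcal{K}}^{\frac{3}{2}-\alpha}}\simeq\|\xi-a_{0}\rho_{0}\|^{2}_{H^{\frac{3}{2}-\alpha}}.
\end{align*}
The equivalence of norms uses the identification $\mathcal{H}^{s}_{\mathcal{K}}\cap\tilde B=H^{s}\cap\tilde B$ for $s<\frac{3}{2}$. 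Next extend $\psi$ into $\Omega$: choose $\omega$ with $\frac{1}{\rho_{0}}\omega\cdot\mathcal{N}=\psi$ on $\Sigma$, $\omega\cdot\nu=0$ on $\Sigma_{s}$, $J\dive_{\mathcal{A}}\omega=0$, and $\|\omega\|_{H^{1-\alpha}}\lesssim\|\psi\|_{H^{\frac{1}{2}-\alpha}}$. This follows Guo--Tice's extension (Poisson extension plus a Bogovskii correction). The orthogonality $\int\psi\rho_{0}=0$ is exactly the compatibility condition $\int_{\Sigma}\omega\cdot\mathcal{N}=0$ for the divergence correction, so the new constraint set $\tilde B$ causes no trouble here.

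\textbf{Step 2: test the weak form.} Plug $\omega$ into \eqref{equ:4.1.24} and integrate from $0$ to $T$. The terms are handled as follows.
\begin{itemize}
\item The time-derivative term: write $(\partial_{t}u,\omega)=\frac{d}{dt}(u,\omega)-(u,\partial_{t}\omega)$. The boundary values are $\lesssim\mathcal{E}_{||}(0)+\mathcal{E}_{||}(T)$. Here $\partial_{t}\omega$ is controlled through $\partial_{t}\xi$, using the kinematic condition and Theorem \ref{thm:gam} for $\mathfrak{n}'$.
\item The viscous term: $((u,\omega))\lesssim\|u\|_{H^{1}}\|\omega\|_{H^{1-\alpha}}$ requires $\omega\in H^{1}$, which fails at regularity $H^{1-\alpha}$. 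Instead, integrate by parts onto $u$, or use the duality $H^{\alpha}$--$H^{-\alpha}$ as in Guo--Tice, giving $\lesssim\sqrt{\mathcal{D}_{||}}\,\|\xi\|_{H^{\frac{3}{2}-\alpha}}$.
\item The contact-point term $[\cdot,\cdot]_{\theta}$: bound it by $[u\cdot\mathcal{N}]_{\theta}[\psi]_{\theta}$. The traces of $\psi\in H^{\frac{1}{2}-\alpha}$ are not defined, so this term needs the regularized $D_{j}^{s}$ and is absorbed using the boundary part of the $(1,\Sigma)$ structure, as in Guo--Tice.
\end{itemize}

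\textbf{Step 3: remaining terms.} The $a_{0}\rho_{0}$ piece and the $\mathfrak{n}'\xi_{s}$ piece are quadratic. Since $|a_{0}|\lesssim\|\xi\|_{L^{2}}^{2}$ and $(\cdot,\xi_{s})_{1,\Sigma}$ vanishes on $\tilde B$ by Theorem \ref{thm:pos}, they contribute $\lesssim\sqrt{\mathcal{E}_{||}}\mathcal{D}$. The nonlinear terms $b^{i}$ are treated with the bounds from Theorem \ref{thm:0-order}, giving $\sqrt{\mathcal{E}_{||}}\mathcal{D}+\mathcal{E}_{||}\mathcal{D}$. Absorb with Cauchy--Schwarz and let $j\to\infty$.

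The main obstacle is Step 1 together with the boundary term in Step 2. The paper's functional calculus lives only on $\tilde B$, and $\xi_{s}$ forms part of the kernel structure. The extension $\omega$ must therefore be compatible with both the mass constraint and the contact-point traces at low regularity. I would first try to reproduce Guo--Tice's Proposition on the $\frac{3}{2}-\alpha$ estimate verbatim, treating the constraint $\int\psi\xi_{s}=0$ as harmless because it is not needed for the extension.
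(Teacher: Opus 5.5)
Your strategy is the paper's Guo--Tice argument. You test the zero-order weak form with an $\mathcal A$-divergence-free field whose normal trace on $\Sigma$ is $D_j^s(\xi-a_0\rho_0)$. The $a_0\rho_0$ correction supplies the compatibility condition $\int_0^\pi\rho_0 D_j^s(\xi-a_0\rho_0)\,d\theta=0$. The paper builds the field as $\phi=M\nabla\psi$, with $\psi$ harmonic and this Neumann datum, which does the job of your Poisson-plus-Bogovskii extension.

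However, Step 1 miscounts the regularity of $\psi$, and Step 2 is built on that miscount. If $(\xi-a_0\rho_0,\psi)_{1,\Sigma}=\|\xi-a_0\rho_0\|^2_{\mathcal H^{3/2-\alpha}_{\mathcal K}}$, then in the eigenbasis $\hat\psi(k)=\lambda_k^{1/2-\alpha}\widehat{(\xi-a_0\rho_0)}(k)$ for $k\le j$. Hence $\|\psi\|_{\mathcal H^{1/2+\alpha}_{\mathcal K}}\le\|\xi-a_0\rho_0\|_{\mathcal H^{3/2-\alpha}_{\mathcal K}}$ uniformly in $j$. By $\mathcal H^r_{\mathcal K}\cap\tilde B=H^r\cap\tilde B$ for $r<\frac32$, the datum lies in $H^{1/2+\alpha}$, not $H^{1/2-\alpha}$. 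In the paper's normalization $s=1-2\alpha$, the coercive term is $\|\xi\|^2_{H^{1+s/2}}$ and the datum lies in $H^{1-s/2}$.

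Starting from $H^{1/2-\alpha}$, you create two obstacles that do not exist, and your patches for them fail.
\begin{itemize}
\item For $((u,\omega))$ with $\omega\in H^{1-\alpha}$, an $H^{\alpha}$--$H^{-\alpha}$ duality needs $\|u\|_{H^{1+\alpha}}$. That is bounded by $\|u\|_{W^{2,q_+}}\lesssim\sqrt{\mathcal D}$ but not by $\mathcal D_{||}$. The resulting $\sqrt{\mathcal D}\,\|\xi\|_{H^{3/2-\alpha}}$ leaves $C\int_0^T\mathcal D$ with no small factor, which is not of the form allowed on the right-hand side of the statement.
\item For the contact term, the truncation $D_j^s$ only gives trace bounds that degenerate as $j\to\infty$. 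Moreover, $(\cdot,\cdot)_{1,\Sigma}$ is a pure interior integral, so it has no ``boundary part'' to absorb anything into.
\end{itemize}

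The repair is simply the correct count, and it is what the paper does. Since $\frac12+\alpha>\frac12$, the point values satisfy $[\psi]_\theta\lesssim\|\psi\|_{H^{1/2+\alpha}}\lesssim\|\xi-a_0\rho_0\|_{H^{3/2-\alpha}}$ uniformly in $j$. The extension satisfies $\|\omega\|_{H^1}\lesssim\|\xi-a_0\rho_0\|_{H^{3/2-\alpha}}$; the paper records this as $\|\psi\|_{H^2}\lesssim\|D_j^s\xi\|_{\mathcal H^{1/2}_{\mathcal K}}$. Hence $((u,\omega))\lesssim\|u\|_{H^1}(\|\xi\|_{H^{3/2-\alpha}}+\mathcal E_{||})$.

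For the contact term, use the kinematic condition at $\theta=0,\pi$ to replace $\partial_t\xi\pm\mathfrak n'$ by $\frac1{\rho_0}u\cdot\mathcal N$ plus quadratic terms. The contact term is then $\lesssim\epsilon^{-1}([u\cdot\mathcal N]_\theta^2+\cdots)+\epsilon\|\xi\|^2_{H^{3/2-\alpha}}$. Both terms are absorbed by Cauchy--Schwarz, leaving $\mathcal D_{||}$ on the right.

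With this correction, the rest of your plan matches the paper's proof. This covers the $\frac{d}{dt}(u,\omega)$ term, with $\partial_t\omega$ controlled by $\|\partial_t\xi\|_{H^{1/2-2\alpha}}\lesssim\|u\|_{H^1}+\mathcal E$. It also covers the quadratic $a_0$ terms and the nonlinear $b^i$ terms.
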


       \begin{proof}
           Let test function $\psi$ satisfying the following conditions:

           \begin{align}
                -\Delta \psi=0~in~\Omega~~\partial_{\nu}\psi=\rho_{0}\frac{D_{j}^{s}(\xi-a_{0}\rho_{0})}{\vert \mathcal{N}_{0}\vert}~on~\Sigma, ~~\partial_{\nu}\psi=0~~on~\Sigma_{s}\label{equ:5.1.287}
           \end{align}

           \noindent where $a_{0}=a_{0}(t)$ is a function defined by \eqref{equ:a}. We recall its expression:

           \begin{align}
               a_0(t)=\frac{-\frac{1}{2}\int_{0}^{\pi}\xi^{2}}{\int_{0}^{\pi}\rho_{0}^{2}}\lesssim \mathcal{E}_{||},
           \end{align}

           \noindent which implies that:

           \begin{align}
               \int_{0}^{\pi}\rho_{0}(\xi-a_{0}(t)\rho_{0})=0.
           \end{align}

           \noindent For the test function $\psi$, it satisfies the following three estimates

           \begin{align}
               \vert \vert \psi\vert \vert_{H^{1}}\lesssim \vert \vert \xi\vert \vert_{H^{s-\frac{1}{2}}}+a_{0}(t)\vert \vert \rho_{0}\vert \vert_{H^{s-\frac{1}{2}}}\lesssim \vert \vert \xi\vert \vert_{H^{s-\frac{1}{2}}}+\mathcal{E}_{||},
           \end{align}

           \noindent and

           \begin{align}
               \|\psi\|_{H^{2}}\lesssim \|D_{j}^{s}\xi\|_{\mathcal{H}_{\mathcal{K}}^{\frac{1}{2}}},
           \end{align}

           \noindent and:

           \begin{align}
               \|\p_{t}\psi\|_{H^{1}}\lesssim \|\p_{t}^{k+1}\xi\|_{H^{s-\frac{1}{2}}}.
           \end{align}
           
           \noindent Then applying test function $\phi=M\nabla\psi$ to the equation system \eqref{equ:0}, we have

           \begin{align}
               &(\partial_{t}u,J\phi)+((u\cdot \nabla_{\mathcal{A}}u+\mathfrak{n}'(t)\partial_{x}u+(\cos\theta W\partial_{t}\bar{\xi},\sin\theta W\partial_{t}\bar{\xi})\tilde{K}(\partial_{x}u,\partial_{y}u)^{T}),J\phi)_{0}\notag\\
               &+((u,\phi))+(\xi,\frac{1}{\rho_{0}}\phi\cdot \mathcal{N})_{1,\Sigma}+\kappa [\mathfrak{n}'(t)+\partial_{t}\xi,\frac{1}{\rho_{0}}\phi\cdot \mathcal{N}]\notag\\
               &=\kappa[b^{7},\frac{1}{\rho_{0}}\phi\cdot \mathcal{N}]-\int_{0}^{\pi}b^{3}\partial_{\theta}(\phi\cdot \mathcal{N})-\int_{0}^{\pi} b^{4}\cdot \phi d\theta\label{equ:5.1.288}
           \end{align}

           \noindent We then estimate each term in \eqref{equ:5.1.288} individually

           \textbf{Term $\int_{\Omega}\partial_{t}uJ\phi$}. 
           
           We have the following estimate for this term
           
           \begin{align}
               \int_{\Omega}\partial_{t}uJ\phi=\int_{\Omega}\partial_{t}u\cdot \nabla \Phi\nabla \psi=\frac{d}{dt}\int_{\Omega} u\cdot \nabla \Phi\nabla \psi-\int_{\Omega}u \cdot \nabla \Phi\nabla \partial_{t}\psi-\int_{\Omega}u\cdot \partial_{t}(\nabla \Phi)\nabla \psi
           \end{align}

           For the first term on the right hand side of equation above, we apply the same estimate as in Theorem 9.2 of Guo–Tice paper \cite{Guo} to obtain

           \begin{align}
               \vert \int_{\Omega}\partial_{t}uJ\phi-\frac{d}{dt}\int_{\Omega} u\cdot \nabla \Phi\nabla \psi\vert\lesssim  \vert \vert u\vert \vert_{L^{2}}(\vert \vert \xi\vert \vert_{H^{1-\frac{s}{2}}}+\vert \vert \partial_{t}\xi\vert \vert_{H^{1-\frac{s}{2}}}),
           \end{align}

           \noindent and

           \begin{align}
               \int_{\Omega} u\cdot \nabla \Phi\nabla \psi\lesssim \mathcal{E}_{||}.
           \end{align}

           \textbf{Term $((u\cdot \nabla_{\mathcal{A}}u+\mathfrak{n}'(t)\partial_{x}u+(\cos\theta W\partial_{t}\bar{\xi},\sin\theta W\partial_{t}\bar{\xi})\tilde{K}(\partial_{x}u,\partial_{y}u)^{T}),J\phi)_{0}$}
           
           We have:

           \begin{align}
               &((u\cdot \nabla_{\mathcal{A}}u+\mathfrak{n}'(t)\partial_{x}u+(\cos\theta W\partial_{t}\bar{\xi},\sin\theta W\partial_{t}\bar{\xi})\mathcal{A}(\partial_{x}u,\partial_{y}u)^{T}),J\phi)_{0}\notag\\
              & \lesssim \vert \vert u\vert \vert_{L^{\infty}}\vert \vert u\vert \vert_{H^{1}}\vert \vert \phi\vert \vert_{L^{2}}+\vert \vert u\vert \vert_{H^{1}}\vert \vert u\vert \vert_{H^{1}}\vert \vert \phi\vert \vert_{L^{2}}+\vert \vert \partial_{t}\bar{\xi}\vert \vert_{L^{+\infty}}\vert \vert u\vert \vert_{H^{1}}\vert \vert \phi\vert \vert_{L^{2}}\notag\\
               &\lesssim \mathcal{E}\|\xi\|_{H^{1}}\lesssim\mathcal{E}^{\frac{3}{2}}
           \end{align}

           \textbf{Term $((u,\phi))$} 
           
           We have the following estimate:

           \begin{align}
               ((u,\phi))\lesssim \vert\vert u\vert \vert_{H^{1}}\vert \vert \psi\vert \vert_{H^{2}}\lesssim \vert \vert u\vert \vert_{H^{1}}\vert \vert D_{j}^{s}\xi\vert \vert_{H_{\mathcal{K}}^{\frac{1}{2}}}\lesssim\|u\|_{H^{1}}\|\xi\|_{H^{\frac{3}{2}-\alpha}}
           \end{align}

           \textbf{Term $(\xi,\frac{1}{\rho_{0}}\phi\cdot \mathcal{N})_{1,\Sigma}$} 
           
           By definition of $\phi$, we have

           \begin{align}
               \frac{1}{\rho_{0}}\phi\cdot \mathcal{N}=\frac{1}{\rho_{0}}M\nabla \psi\cdot\mathcal{N}=\frac{1}{\rho_{0}}\partial_{\nu} \psi \vert  \mathcal{N}_{0}\vert =D_{j}^{s}(\xi-a_{0}(t)\rho_{0})
           \end{align}

           \noindent Therefore, we obtain
           
           \begin{align}
               (\xi,\frac{1}{\rho_{0}}\phi\cdot \mathcal{N})_{1,\Sigma}=(\xi,D_{j}^{s}(\xi-a_{0}(t)\rho_{0}))_{1,\Sigma}=(D_{j}^{\frac{s}{2}}(\xi-a_{0}(t)\rho_{0}),D_{j}^{\frac{s}{2}}(\xi-a_{0}(t)\rho_{0}))_{1,\Sigma}\notag\\
             \gtrsim \vert \vert D_{j}^{\frac{s}{2}}\xi\vert \vert^{2}_{H^{1}}\gtrsim \vert \vert \xi\vert \vert^{2}_{H^{1+\frac{s}{2}}}-a_{0}(t)^{2}\vert \vert \rho_{0}\vert \vert^{2}_{H^{1+\frac{s}{2}}}\gtrsim \vert \vert \xi\vert \vert^{2}_{H^{1+\frac{s}{2}}}-\mathcal{E}^{2}_{||}
           \end{align}

           \noindent  For the estimate above, to show that $\vert \vert D_{j}^{\frac{s}{2}}\xi\vert \vert^{2}_{H^{1}}\gtrsim \vert \vert \xi\vert \vert^{2}_{H^{1+\frac{s}{2}}}-a_{0}^{2}(t)\vert \vert \rho_{0}\vert \vert^{2}_{H^{1+\frac{s}{2}}}$, we need to use the fact that $\xi \in H^{1}$ and we use $\tilde{\xi}$ to denote its projection on the function space ${B}$, which is $\xi-a_{0}\rho_{0}$. ($B$ is the function spaces defined in Section 4). Hence, by this idea, we have:

           \begin{align}
               \vert \vert D_{j}^{\frac{s}{2}}\xi\vert \vert^{2}_{H^{1}}=\vert \vert D^{\frac{s}{2}}\tilde{\xi}\vert \vert_{H^{1}}^{2}\gtrsim \vert \vert \tilde{\xi}\vert \vert_{H^{1+\frac{s}{2}}}=\vert \vert \xi-a_{0}(t)\rho_{0}\vert \vert_{H^{1+\frac{s}{2}}}\gtrsim \vert \vert \xi\vert \vert^{2}_{H^{1+\frac{s}{2}}}-a_{0}^{2}(t)\vert \vert \rho_{0}\vert \vert^{2}_{H^{1+\frac{s}{2}}}
           \end{align}
           
           \textbf{Term $\kappa [\mathfrak{n}'(t)+\partial_{t}\xi,\phi\cdot \mathcal{N}]_{\theta}$} 
           
           We have the following estimate:

           \begin{align}
               \kappa [\mathfrak{n}'(t)+\partial_{t}\xi,\phi\cdot \mathcal{N}]_{\theta}&\lesssim  \frac{1}{\epsilon}[\partial_{t}\xi+\mathfrak{n}'(t)\xi_{s}]^{2}_{\theta}+\epsilon\vert \vert \xi\vert \vert_{H^{1+\frac{s}{2}}}^{2}\notag\\
               &\lesssim \vert \vert\partial_{t}\xi \vert \vert^{2}_{H^{1}}+\vert \vert u\vert \vert^{2}_{H^{1}}+\epsilon\vert \vert \eta\vert \vert_{H^{1+\frac{s}{2}}}^2{}\lesssim \mathcal{E}_{||}+\|u\|_{H^{1}}^{2}+\epsilon\vert \vert \xi\vert \vert_{H^{1+\frac{s}{2}}}^{2}\label{equ:5.1.289}
           \end{align}

           \textbf{Term $[b^{7},\frac{1}{\rho_{0}}\phi\cdot \mathcal{N}]_{\theta}$} Applying the similar estimate as in the equation \eqref{equ:5.1.289}, we have the following computation:

           \begin{align}
               [b^{7},\frac{1}{\rho_{0}}\phi\cdot \mathcal{N}]_{\theta}\lesssim \frac{1}{\epsilon}([\mathfrak{n}'(t)]_{\theta }^{2}+[\partial_{t}\xi]^{2}_{\theta})^{2}+\epsilon \vert \vert \xi\vert \vert_{H^{1+\frac{s}{2}}}^{2}\lesssim \|u\|_{H^{1}}^{2}+\mathcal{E}_{||}+\epsilon \vert \vert \xi\vert \vert^{2}_{H^{1+\frac{s}{2}}}
           \end{align}

           \textbf{Term $\int_{0}^{\pi}b^{3}\partial_{\theta}(\phi\cdot \mathcal{N})$} 
           
           We have the following by the definition of $b^{3}$

           \begin{align}
               \int_{0}^{\pi}b^{3}\partial_{\theta}(\phi\cdot \mathcal{N})&\lesssim \vert \vert b^{3}\vert \vert_{H^{\frac{s}{2}}}\vert \vert \xi\vert \vert_{\mathcal{H}_{\mathcal{K}}^{1+\frac{s}{2}}}\lesssim \vert \vert \xi\vert \vert_{H^{1+\frac{s}{2}}}\vert \vert \mathcal{R}_1\vert \vert_{H^{\frac{s}{2}}}\lesssim \vert \vert \xi\vert \vert_{H^{1+\frac{s}{2}}}\vert \vert \xi\vert \vert_{H^{1+\frac{s}{2}}}\vert \vert \partial_{\theta}\xi\vert \vert_{H^{\frac{s}{2}}}\notag\\
               &\lesssim \vert \vert \xi\vert \vert^{2}_{H^{1+\frac{s}{2}}}\vert \vert \xi\vert \vert_{W^{2,q_{+}}}\label{equ:5.1.290}
           \end{align}

           \textbf{Term $\int_{0}^{\pi}b^{4}\cdot \phi$} 
           
           We have the following computation by the definition of $b^{4}$

           \begin{align}
               \int_{0}^{\pi}b^{4}\cdot \phi d\theta\lesssim \mathcal{E}\|\phi\|_{H^{1}}\lesssim \mathcal{E}\vert \vert \xi\vert \vert_{H^{s+\frac{1}{2}}}\lesssim\mathcal{E}\|\xi\|_{H^{\frac{3}{2}-\alpha}}
           \end{align}

          \noindent Combining all of the computation above and choose a small $\epsilon$, we obtain the following result

          \begin{align}
              \int_{0}^{T}\vert \vert \xi\vert \vert^{2}_{H^{\frac{3}{2}-\alpha}}\lesssim \mathcal{E}_{||}(0)+\mathcal{E}_{||}(t)+\int_{0}^{T}(\vert \vert u\vert \vert^{2}_{H^{1}}+[u\cdot \mathcal{N}]^{2}_{\theta}+\mathcal{D}_{||}+(\mathcal{E}_{||})^{\frac{3}{2}}+(\mathcal{E}_{||})^{2}+\vert \vert\partial_{t}\xi \vert \vert_{H^{s-\frac{1}{2}}}^{2})\notag\\
              \lesssim \mathcal{E}_{||}(0)+\mathcal{E}_{||}(t)+\int_{0}^{T}(\mathcal{D}_{||}+(\mathcal{E}_{||})^{\frac{1}{2}}\mathcal{D}+(\mathcal{E}_{||})\mathcal{D}+\vert \vert\partial_{t}\xi \vert \vert_{H^{s-\frac{1}{2}}}^{2})\label{equ:5.1.291}
          \end{align}

           Using the fact that $\partial_{t}\xi=\frac{1}{\rho}u\cdot \mathcal{N}-\mathfrak{n}'(t)\xi_{2}$, we have:

          \begin{align}
              \vert \vert \partial_{t}\xi\vert \vert_{H^{s-\frac{1}{2}}}\lesssim \vert \vert u\vert \vert_{H^{s-\frac{1}{2}}(\Sigma)}+\vert \vert u\vert \vert_{H^{1}}+\mathcal{E}\lesssim \vert \vert u\vert \vert_{H^{1}}+\mathcal{E}\label{equ:5.1.292}
          \end{align}

          \noindent Then substituting \eqref{equ:5.1.292} into the equation \eqref{equ:5.1.291}, we obtain 

          \begin{align}
              \int_{0}^{T}\vert \vert \xi\vert \vert^{2}_{H^{\frac{3}{2}-\alpha}}\lesssim \mathcal{E}_{||}(0)+\mathcal{E}_{||}(t)+\int_{0}^{T}(\mathcal{D}_{||}+(\mathcal{E}_{||})^{\frac{1}{2}}\mathcal{D}+(\mathcal{E}_{||})\mathcal{D})\label{equ:5.1.293}
          \end{align}

       \end{proof}

       \begin{theorem}
          $\vert \vert \partial_{t}\xi\vert \vert_{H^{\frac{3}{2}-\alpha}}$ and $\|\partial_{t}^{2}\xi\|_{H^{\frac{3}{2}-\alpha}}$ satisfy the following enhanced estimate:

          \begin{align}
               \int_{0}^{T}\vert \vert \partial_{t}^{2}\xi\vert \vert^{2}_{H^{\frac{3}{2}-\alpha}}+\int_{0}^{T}\|\p_{t}\xi\|^{2}_{H^{\frac{3}{2}-\alpha}}\lesssim (\mathcal{E}_{||}(T))+(\mathcal{E}_{||}(0))+\int_{0}^{T}(\mathcal{D}_{||}+\sqrt{\mathcal{E}}\mathcal{D})
           \end{align}
       \end{theorem}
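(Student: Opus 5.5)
The plan is to repeat the argument of the preceding theorem (the $\frac{3}{2}-\alpha$ estimate for $\xi$), now applied to the first- and second-order systems \eqref{equ:1} and \eqref{equ:2}. For $k=1,2$ I will take the harmonic test function $\psi_k$ solving
\begin{align*}
-\Delta\psi_k=0~\operatorname{in}~\Omega,\quad \partial_\nu\psi_k=\rho_0\frac{D_j^s(\partial_t^k\xi-a_k(t)\rho_0)}{|\mathcal{N}_0|}~\operatorname{on}~\Sigma,\quad \partial_\nu\psi_k=0~\operatorname{on}~\Sigma_s,
\end{align*}
with $a_k$ as in \eqref{equ:a}. This choice gives $\int_0^\pi\rho_0(\partial_t^k\xi-a_k\rho_0)=0$, so the compatibility condition for the Neumann problem holds. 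I then insert $\phi_k=M\nabla\psi_k$ into the weak form \eqref{equ:4.1.24} of the $k$-th order system.

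The coercive term is $(\partial_t^k\xi,D_j^s(\partial_t^k\xi-a_k\rho_0))_{1,\Sigma}$. By Theorem \ref{thm:pos} and the functional calculus of Section 6, it controls $\|\partial_t^k\xi\|^2_{H^{1+s/2}}-a_k^2\|\rho_0\|^2$. Taking $s=1-2\alpha$ and letting $j\to\infty$ gives $\|\partial_t^k\xi\|_{H^{3/2-\alpha}}^2$. The lower-order correction $a_k^2$ is bounded by $\mathcal{E}^2$, using the bounds on $a_1,a_2$ from the proof of Theorem \ref{thm:pos}.

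The remaining terms will be handled as follows.
\begin{enumerate}
\item The time derivative term $(\partial_t^{k+1}u,J\phi_k)$ is written as $\frac{d}{dt}\int\partial_t^ku\cdot\nabla\Phi\nabla\psi_k$ plus errors. These errors are bounded by $\|\partial_t^ku\|_{L^2}\|\partial_t^{k+1}\xi\|_{H^{s-1/2}}$, exactly as in Guo--Tice Theorem 9.2.
\item The boundary time derivative $\partial_t^{k+1}\xi$ in $H^{s-1/2}$ is controlled through the kinematic condition. This gives $\|\partial_t^ku\|_{H^1}+|\mathfrak{n}^{(k+1)}|+\sqrt{\mathcal{E}\mathcal{D}}$, and Theorem \ref{thm:gam} bounds $\mathfrak{n}^{(k+1)}$ by $\|\partial_t^ku\|_{H^1}+\mathcal{E}$. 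For $k=2$ I intend to use instead the dissipation term $\|\partial_t^3\xi\|_{H^{1/2-\alpha}}$ paired against $\|\psi_2\|_{H^1}$ via duality, so that no third time derivative of $u$ is needed.
\item The viscous term $((\partial_t^ku,\phi_k))$ is bounded by $\|\partial_t^ku\|_{H^1}\|\partial_t^k\xi\|_{H^{3/2-\alpha}}$ and absorbed with Cauchy's inequality.
\item The contact-point brackets $\kappa[\partial_t^{k+1}\xi+\mathfrak{n}^{(k+1)},\cdot]_\theta$ are absorbed by $\epsilon\|\partial_t^k\xi\|^2_{H^{1+s/2}}$ plus $[\partial_t^ku\cdot\mathcal{N}]^2_\theta\le\mathcal{D}_{||}$.
\item The nonlinear forcing $b^{1,k},b^{3,k},b^{4,k},b^{5,k},b^{7,k}$ is estimated against $\|\phi_k\|_{H^1}$ using the remark after the first-order energy estimate and Steps 1, 9 and 11 of Theorem \ref{thm:2-order}. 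This yields bounds of the form $\sqrt{\mathcal{E}}\sqrt{\mathcal{D}}\,\|\partial_t^k\xi\|_{H^{3/2-\alpha}}$, hence $\sqrt{\mathcal{E}}\mathcal{D}$.
\end{enumerate}
Integrating in time and summing over $k=1,2$ then gives the stated inequality.

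The main obstacle I expect is the $k=2$ case. There the $b^{3,2}$ term pairs $\partial_t^2\partial_\theta\xi$ against $\partial_\theta(\phi\cdot\mathcal{N})$ at top regularity, and the $\partial_t^3$ terms only live in weak norms. My first approach is to reproduce the structure of Step 11 of Theorem \ref{thm:2-order}: substitute the kinematic condition and integrate by parts in time, so that the offending pieces become exact time derivatives bounded by $\mathcal{E}^{3/2}$. Failing that, I would use the duality \eqref{est:D_j} between $H^{1/2-\alpha}$ and $H^{\alpha-1/2}$ to place $\partial_t^3\xi$ against $D_j^s\partial_t^2\xi$.
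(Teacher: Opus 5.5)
Your overall scheme is the paper's: a harmonic $\psi$ with Neumann data $\rho_0D_j^s(\partial_t^k\xi-a_k\rho_0)/|\mathcal{N}_0|$, $s=1-2\alpha$, $\phi=M\nabla\psi$ inserted in \eqref{equ:4.1.24}, coercivity from Theorem~\ref{thm:pos}, and the Guo--Tice treatment of the $\partial_t^{k+1}u$ term. The gap is in the term you single out as the obstacle, which none of your proposed routes handles.

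For $k=2$, $\int_0^\pi b^{3,2}\partial_\theta(\phi\cdot\mathcal{N})\,d\theta$ cannot go into item 5. A bound through $\|\phi\|_{H^1}$ only puts $\phi\cdot\mathcal{N}$ in $H^{1/2}$, so $\partial_\theta(\phi\cdot\mathcal{N})$ is only in $H^{-1/2}$. The top-order part of $\partial_t^2\mathcal{R}_1$ (a coefficient times $\partial_t^2\partial_\theta\xi$) would then have to lie in $H^{1/2}$, i.e.\ $\partial_t^2\xi\in H^{3/2}$, which is $\alpha$ derivatives more than you control. Your two fallbacks have nothing to act on. In Step 11 of Theorem~\ref{thm:2-order} the kinematic condition replaced $\partial_t^2u\cdot\mathcal{N}$ because the test function there was $\partial_t^2u$; here the trace contains no velocity, and no $\partial_t^3\xi$ appears.

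The missing step, which is what the paper does, is to use the exact trace $\rho_0^{-1}\phi\cdot\mathcal{N}=D_j^s(\partial_t^2\xi-a_2\rho_0)\in H^{1-s/2}$. Then $\partial_\theta(\phi\cdot\mathcal{N})\in H^{-s/2}=(H^{s/2})^*$; this needs $s/2<1/2$, i.e.\ $\alpha>0$. Pair it with $\partial_t^2\mathcal{R}_1\in H^{s/2}$. Since $\mathcal{R}_1$ is a quadratic Taylor remainder, the coefficient of $\partial_t^2\partial_\theta\xi$ is $O(|\xi|+|\partial_\theta\xi|)$, and the other terms are lower order or quadratic in $\partial_t\xi$. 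Hence $\|\partial_t^2\mathcal{R}_1\|_{H^{s/2}}\lesssim\sqrt{\mathcal{E}}\sqrt{\mathcal{D}}$, and the term is $\lesssim\sqrt{\mathcal{E}}\,\mathcal{D}$. The paper takes this remainder bound from Guo--Tice, Theorem 9.3.

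The switch you announce in item 2 for $k=2$ also does not close. The error $(\partial_t^2u,\nabla\Phi\nabla\partial_t\psi_2)$ is of size $\|\partial_t^2u\|_{L^2}\|\partial_t^3\xi\|_{H^{s-1/2}}$. If $\partial_t^3\xi$ is bounded by the dissipation quantity $\|\partial_t^3\xi\|_{H^{1/2-\alpha}}$, you are left with $\sqrt{\mathcal{D}_{||}}\sqrt{\mathcal{D}}$ (or $\sqrt{\mathcal{E}}\sqrt{\mathcal{D}}$). That is an $O(1)$ multiple of a quantity not contained in $\mathcal{D}_{||}$, so it is not of the form $\mathcal{D}_{||}+\sqrt{\mathcal{E}}\mathcal{D}$.

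The worry behind the switch is unfounded: the second-order kinematic condition $\partial_t^3\xi=\rho_0^{-1}\partial_t^2u\cdot\mathcal{N}-\mathfrak{n}'''\xi_s+b^{6,2}$ contains no $\partial_t^3u$. Use it exactly as you do for $k=1$. With the trace theorem and Theorem~\ref{thm:gam} it gives $\|\partial_t\psi_2\|_{H^1}\lesssim\|\partial_t^2u\|_{H^1}+\sqrt{\mathcal{E}}\sqrt{\mathcal{D}}$, so the error is $\lesssim\mathcal{D}_{||}+\sqrt{\mathcal{E}}\mathcal{D}$. The paper's one-line bound for this term is also written through $\|\partial_t^3\xi\|_{H^{s-1/2}}$, and it too needs this substitution to reach the stated right-hand side.
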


       \begin{proof}
           For simplicity, we only establish the boundedness of $\|\p_{t}^{2}\xi\|_{H^{\frac{3}{2}-\alpha}}$. We define the test function as follows:

           \begin{align}
                -\Delta \psi=0~in~\Omega~~\partial_{\nu}\psi=\rho_{0}\frac{D_{j}^{s}(\partial_{t}^{2}\xi-a_{2}\rho_{0})}{\vert \mathcal{N}_{0}\vert}~on~\Sigma, ~~\partial_{\nu}\psi=0~~on~\Sigma_{s}
           \end{align}

           \noindent where:

           \begin{align}
               a_{2}(t)=-\frac{\int_{0}^{\pi}(\partial_{t}\xi)^{2}d\theta+\int_{0}^{\pi}\xi\partial_{t}^{2}\xi d\theta}{\int_{0}^{\pi}\rho_{0}^{2}d\theta }\lesssim \|\p_{t}^{2}\xi\|_{L^{2}}^{2}+\|\xi\|_{L^{2}}\|\p_{t}^{2}\xi\|_{L^{2}}\lesssim  \mathcal{E}_{||}
           \end{align}
           \noindent Using standard elliptic estimate for $\psi$, we have

           \begin{align}
               \vert \vert \psi\vert \vert_{H^{1}}\lesssim \vert \vert \partial_{t}^{2}\xi\vert \vert_{H^{s-\frac{1}{2}}}+\mathcal{E}_{||},~~\vert \vert \psi\vert \vert_{H^{2}}\lesssim\vert \vert \partial_{t}^{2}\xi\vert \vert_{H^{s+\frac{1}{2}}}+\mathcal{E}_{||},~\operatorname{and}~~\vert \vert \partial_{t}\psi\vert \vert_{H^{1}}\lesssim \vert \vert \partial_{t}^{3}\xi\vert \vert_{H^{s-\frac{1}{2}}}+\mathcal{E}_{||}
           \end{align}

           \noindent Let $s=1-2\alpha$ and let the test $\phi$ be $\phi=M\nabla \psi$. We have the following equation from the weak-form equation \eqref{equ:4.1.24}.

           \begin{align}
                (\partial_{t}^{3}u,J\phi)
               +((\partial_{t}^{2}u,\phi))+(\partial_{t}^{2}\xi,\frac{1}{\rho_{0}}\phi\cdot \mathcal{N})_{1,\Sigma}+\kappa [\frac{1}{\rho_{0}}\p_{t}^{2}u\cdot \mathcal{N},\frac{1}{\rho_{0}}\phi\cdot \mathcal{N}]\notag\\
               =\int_{\Omega}b^{1,2}\cdot \phi J-\int_{\Sigma_{s}}J(\omega\cdot \tau)b^{5,2}+\kappa[b^{7,2}+b^{8,2},\frac{1}{\rho_{0}}\phi\cdot \mathcal{N}]-\int_{0}^{\pi}b^{3,2}\partial_{\theta}(\phi\cdot \mathcal{N})-\int_{0}^{\pi} b^{4,2}\cdot \phi d\theta\label{equ:3/2}
           \end{align}

           \noindent We estimate each term on in equation \eqref{equ:3/2} individually.

           \textbf{Term $ (\partial_{t}^{3}u,J\phi)$}  We have the following computation:

           \begin{align}
               (\partial_{t}^{3}u,\nabla \Phi\nabla \psi)= \partial_{t}(\partial_{t}^{2}u,\nabla \Phi\nabla \psi)-(\partial_{t}^{2}u,\partial_{t}\nabla \Phi\nabla\psi)-(\partial_{t}^{2}u,\nabla \Phi\nabla \partial_{t}\psi)\label{equ:32}
           \end{align}

           \noindent For the second and third term on the right hand side of the equation above, we have the following estimate

           \begin{align}
               (\partial_{t}^{2}u,\partial_{t}\nabla \Phi\nabla\psi)+(\partial_{t}^{2}u,\nabla \Phi\nabla \partial_{t}\psi)&\lesssim \vert \vert \partial_{t}^{2}u\vert \vert_{L^{2}}\vert \vert \partial_{t}\xi\vert \vert_{W^{1,+\infty}}\vert \vert \psi\vert \vert_{H^{1}}+\vert \vert \partial_{t}^{2}u\vert \vert_{L^{2}}\vert \vert \partial_{t}\psi\vert \vert_{H^{1}}+\mathcal{E}_{||}\vert \vert \partial_{t}^{2}u\vert \vert_{L^{2}}\notag\\
               &\lesssim \sqrt{\mathcal{E}_{||}}(\vert \vert \partial_{t}^{2}\xi\vert \vert_{H^{s-\frac{1}{2}}}+\vert \vert \partial_{t}^{3}\xi\vert \vert_{H^{s-\frac{1}{2}}})+(\mathcal{E}_{||})^{\frac{3}{2}}
           \end{align}

           \noindent For the first term in \eqref{equ:32}, we have

           \begin{align}
               (\partial_{t}^{2}u,\nabla \Phi\nabla \psi)\lesssim \vert \vert \partial_{t}^{2}u\vert \vert_{L^{2}}\vert \vert \partial_{t}^{2}\xi\vert \vert_{H^{s-\frac{1}{2}}}\lesssim \vert \vert \partial_{t}^{2}u\vert \vert_{L^{2}}\vert \vert \partial_{t}^{2}\xi\vert \vert_{H^{s-\frac{1}{2}}}\lesssim {\mathcal{E}_{||}}
           \end{align}

           \textbf{Term $((\partial_{t}^{2}u,\phi))$} 
           
           We have the following computation:

           \begin{align}
               ((\partial_{t}^{2}u,\phi))\lesssim \vert \vert \partial_{t}^{2}u\vert \vert_{H^{1}}\vert \vert \phi\vert \vert_{H^{1}}\lesssim \sqrt{\mathcal{D}_{||}}\vert \vert \partial_{t}^{2}\xi\vert \vert_{H^{1+\frac{s}{2}}}
           \end{align}

           \textbf{Term $(\partial_{t}^{2}\xi,\frac{1}{\rho_{0}}\phi\cdot \mathcal{N})_{1,\Sigma}$} 
           
           We have the following computation:

           \begin{align}
               (\partial_{t}^{2}\xi,\frac{1}{\rho_{0}}\phi\cdot \mathcal{N})_{1,\Sigma}=(\partial_{t}^{2}\xi,D_{j}^{s}(\partial_{t}^{2}\xi-a_{2}(t)\rho_{0}))_{1,\Sigma}\gtrsim \vert \vert \partial_{t}^{2}\xi\vert \vert^{2}_{H^{1+\frac{s}{2}}}-\mathcal{E}^{2}_{||} 
           \end{align}

           \noindent where where the final inequality follows from the same argument used in the proof of Theorem 7.8 to estimate $(\partial_{t}^{2}\xi,D_{j}^{s}(\partial_{t}^{2}\xi-a_{2}(t)\rho_{0}))_{1,\Sigma}$.
           
           \textbf{Term $\kappa[\frac{1}{\rho_{0}}\p_{t}^{2}u\cdot \mathcal{N},D_{j}^{s}\partial_{t}^{2}\xi]_{\theta}$}. 
           
           We have the following computation

           \begin{align}
               \kappa[\frac{1}{\rho_{0}}\p_{t}^{2}u\cdot \mathcal{N},D_{j}^{s}\partial_{t}^{2}\xi]_{\theta}\lesssim \frac{1}{\epsilon}[\p_{t}^{2}u\cdot \mathcal{N}]^{2}_{\theta}+\epsilon\vert \vert D_{j}^{s}\partial_{t}^{2}\xi\vert \vert_{H^{1-\frac{s}{2}}}\lesssim \frac{1}{\epsilon}\mathcal{D}_{||}+\epsilon\vert \vert \partial_{t}^{2}\xi\vert \vert_{H^{1+\frac{s}{2}}}
           \end{align}

           \textbf{Term $\int_{\Omega}b^{1,2}\cdot \phi J$} Using the result derived by step 1 in Theorem 7.4, we have
           \begin{align}
               \int_{\Omega} b^{1,2}\cdot (M\nabla \psi)J\lesssim \sqrt{\mathcal{E}}\sqrt{\mathcal{D}}\vert \vert \nabla\psi\vert \vert_{H^{1}}\lesssim \sqrt{\mathcal{E}}\sqrt{\mathcal{D}}\vert \vert \psi\vert \vert_{H^{2}}\lesssim \sqrt{\mathcal{E}}\sqrt{\mathcal{D}}\vert \vert \partial_{t}^{2}\xi\vert \vert_{H^{s+\frac{1}{2}}} 
           \end{align}

           \textbf{Term $[b^{7,2}+b^{8,2},\frac{1}{\rho_{0}}\phi\cdot \mathcal{N}]_{\theta}$} 
           
           Using Step 5 and Step 6 in the proof of Theorem 7.4, we have

           \begin{align}
               [b^{7,2}+b^{8,2},\frac{1}{\rho_{0}}\phi\cdot \mathcal{N}]_{\theta}\lesssim& \frac{1}{\epsilon}\sqrt{\mathcal{E}}\sqrt{\mathcal{D}}[\frac{1}{\rho_{0}}\phi\cdot \mathcal{N}]_{\theta}\lesssim \mathcal{E}\mathcal{D}+\epsilon[\frac{1}{\rho_{0}}\phi\cdot \mathcal{N}]^{2}_{\theta}\notag\\
               \lesssim& \frac{1}{\epsilon}\mathcal{E}\mathcal{D}+\epsilon\vert \vert \partial_{t}^{2}\xi\vert \vert^{2}_{H^{1+\frac{s}{2}}} 
           \end{align}

           \textbf{Term $\int_{\Sigma_{s}} J(\phi\cdot \tau)b^{5,2} $}
           
           Using the result derived by Step 5 of Theorem 7.4, we have

           \begin{align}
              \int_{\Sigma_{s}} J(\phi\cdot \tau)b^{5,2}\lesssim\mathcal{E}^{\frac{1}{2}}\mathcal{D}^{\frac{1}{2}} \vert \vert \phi\vert \vert_{H^{1}}\lesssim \mathcal{E}^{\frac{1}{2}}\mathcal{D}^{\frac{1}{2}}\vert \vert \phi\vert \vert_{H^{1}}\lesssim \mathcal{E}^{\frac{1}{2}}\mathcal{D}^{\frac{1}{2}}\vert \vert \partial_{t}^{2}\xi\vert \vert_{H^{s-\frac{1}{2}}}
           \end{align}

           \textbf{Term $\int_{0}^{\pi} b^{4,2}(\phi\cdot \mathcal{N})$}: 
           
           Using the result derived in energy estimate, we have

           \begin{align}
               \int_{0}^{\pi} b^{4,2}(\phi\cdot \mathcal{N}) d\theta\lesssim \sqrt{\mathcal{E}}\sqrt{\mathcal{D}} \vert \vert \phi\vert \vert_{H^{1}}\lesssim \sqrt{\mathcal{E}}\sqrt{\mathcal{D}} \vert \vert \partial_{t}^{2}\xi\vert \vert_{H^{\frac{1}{2}-s}}
           \end{align}

           \textbf{Term $\int_{0}^{\pi}b^{3,2}\partial_{\theta}(\phi\cdot \mathcal{N})$}. 
           
           We have the following estimate by the definition of $b^{3,2}$:

           \begin{align}
               \int_{0}^{\pi} b^{3,2}\partial_{\theta}(\phi\cdot \mathcal{N})d\theta\lesssim \vert \vert D_{j}^{s}\partial_{t}^{2}\xi\vert \vert_{H^{1-\frac{s}{2}}}(\vert \vert \partial_{t}^{2}\mathcal{R}_1\vert \vert_{H^{s}})
           \end{align}

           \noindent For the boundedness of $\vert \vert \partial_{t}^{2}\mathcal{R}_1\vert \vert_{H^{s}}$, applying the same argument as in Theorem 9.3 of Guo-Tice's paper \cite{Guo}, we obtain the following boundedness result

           \begin{align}
               \vert \vert \partial_{t}^{2}\mathcal{R}_1\vert \vert_{H^{s}}\lesssim \vert \vert\partial_{t}^{2}\xi \vert \vert_{H^{1+\frac{s}{2}}}\sqrt{\mathcal{E}}+\mathcal{E}\sqrt{\mathcal{D}}
           \end{align}

           Combining all of the computation above, and choosing a small $\epsilon$, we have

           \begin{align}
               \int_{0}^{T}\vert \vert \partial_{t}^{2}\xi\vert \vert^{2}_{H^{1+\frac{s}{2}}}\lesssim (\mathcal{E}_{||}(T))+(\mathcal{E}_{||}(0))+\int_{0}^{T}\mathcal{D}_{||}+\sqrt{\mathcal{E}}\mathcal{D}
           \end{align}
       \end{proof}

       We then use the energy terms to bound $\|\partial_{t}p\|_{L^{2}}$, we have the following theorem:

       \begin{theorem}
           $\p_{t}p$ satisfies the following boundedness result
          \begin{align}
              \|\p_{t}p\|_{L^{2}}^{2}\lesssim \mathcal{E}_{||}+\mathcal{E}^{\frac{3}{2}}
          \end{align}
       \end{theorem}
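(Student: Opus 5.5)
The bound on $\|\p_{t}p\|_{L^{2}}^{2}$ will come from the pressure dual (Bogovskii-type) argument used in Guo--Tice \cite{Guo} for the pressure, applied to the first-order system \eqref{equ:1}. Pressure is recovered from the velocity through the divergence operator, with $\p_{t}p$ tested against vector fields $w\in H^{1}$ whose divergence is prescribed. Since the boundary condition on $\Sigma$ fixes $p$ up to the capillary term, no mean-value normalization is lost. We therefore take $w$ vanishing on $\Sigma_{s}$ in the normal direction and use the $\Sigma$ stress condition to pick up the boundary contribution.

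\textbf{Step 1: choice of test field.} Given $f=\p_{t}p\in L^{2}(\Omega)$, we construct $w$ with $J\dive_{\mathcal{A}}w=f$ in $\Omega$, $w\cdot\nu=0$ on $\Sigma_{s}$, and $\|w\|_{H^{1}}\lesssim\|f\|_{L^{2}}$. We do this exactly as in the construction behind Lemma \ref{thm:B}: first build the flat-domain field $\dive w_{0}=f$ with $w_{0}$ free on $\Sigma$, then correct by $\mathcal{A}$, using smallness of $\mathcal{E}$ so that the perturbation $\dive_{\mathcal{A}}-\dive$ is absorbed.

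\textbf{Step 2: pair with the weak form.} We test the weak form \eqref{equ:4.1.24} for the first-order system with $\omega=w$. This gives
\begin{align*}
\|\p_{t}p\|_{L^{2}}^{2}={}&(\p_{t}^{2}u,Jw)+\tfrac{\mu}{2}(\mathbb{D}_{\mathcal{A}}\p_{t}u,\mathbb{D}_{\mathcal{A}}w)_{J}+(\p_{t}\xi,\tfrac{1}{\rho_{0}}w\cdot\mathcal{N})_{1,\Sigma}+\kappa[\tfrac{1}{\rho_{0}}\p_{t}u\cdot\mathcal{N},\tfrac{1}{\rho_{0}}w\cdot\mathcal{N}]_{\theta}\\
&+\int_{\Sigma_{s}}J\beta(\p_{t}u\cdot\tau)(w\cdot\tau)-(\text{forcing terms } b^{i,1}).
\end{align*}
Each linear term is bounded by $\|w\|_{H^{1}}$ times $\|\p_{t}^{2}u\|_{L^{2}}$, $\|\p_{t}u\|_{H^{1}}$, $\|\p_{t}\xi\|_{H^{1}}$ or $[\p_{t}u\cdot\mathcal{N}]_{\theta}$. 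The trace of $w$ on $\Sigma$ lies in $H^{1/2}$, which handles the $(1,\Sigma)$ pairing after one integration by parts in $\theta$; the boundary values of $\p_{t}\xi'$ are then controlled by the contact-point condition in \eqref{equ:1}. The remark after the first-order energy theorem already gives $\int b^{1,1}\cdot wJ+\int b^{4,1}\cdot w-\int b^{5,1}(w\cdot\tau)\lesssim\mathcal{E}\|w\|_{H^{1}}$, and the boundary terms $b^{6,1},b^{7,1}$ contribute $\lesssim\mathcal{E}$ by \eqref{equ:b_6} and \eqref{equ:5.1.137}. Dividing by $\|w\|_{H^{1}}\lesssim\|\p_{t}p\|_{L^{2}}$ and squaring yields
\begin{align*}
\|\p_{t}p\|_{L^{2}}^{2}\lesssim \|\p_{t}^{2}u\|_{L^{2}}^{2}+\|\p_{t}u\|_{H^{1}}^{2}+\|\p_{t}\xi\|_{H^{1}}^{2}+\mathcal{E}^{2}.
\end{align*}
The middle term $\|\p_{t}u\|_{H^{1}}^{2}$ is an energy term, since $\|\p_{t}u\|_{H^{1+\epsilon_{-}/2}}\subset\mathcal{E}$.

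\textbf{Main obstacle.} The target is $\mathcal{E}_{||}+\mathcal{E}^{3/2}$, and $\|\p_{t}u\|_{H^{1}}$ is not in $\mathcal{E}_{||}$. I would try to remove it using the first-order elliptic estimate (the theorem after \eqref{equ:ellip_1}), or else accept the bound with the $\p_{t}u$ term placed in $\mathcal{E}$. The essential point is that no $\mathcal{D}$-only quantity appears. The trace pairing with $\p_{t}\xi$ must also use only $H^{1}$ of $\p_{t}\xi$, which requires care with the boundary term $[\p_{t}\xi',w\cdot\mathcal{N}]_{\theta}$; this is why I route that term through the contact-line condition rather than through regularity of $\p_{t}\xi$.
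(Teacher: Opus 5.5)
The strategy is the paper's. You test the first-order weak form with a field whose $\mathcal{A}$-divergence is $\partial_t p$, whose normal trace is free on $\Sigma$ and zero on $\Sigma_s$. The paper builds it as $\phi=M\nabla\psi$ with $-\Delta\psi=\partial_t p$ in $\Omega$, $\psi=0$ on $\Sigma$, $\partial_\nu\psi=0$ on $\Sigma_s$. Then $\operatorname{div}_{\mathcal{A}}\phi=K\Delta\psi$ holds exactly, with no smallness absorption. Lemma \ref{thm:B} is not the right model for your Step 1, since it produces $H^1_0$ fields for mean-zero data.

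\textbf{The gap: the capillary and contact-line terms.} You only have $\|w\|_{H^1(\Omega)}\lesssim\|\partial_t p\|_{L^2}$, so the trace $w\cdot\mathcal{N}$ on $\Sigma$ lies only in $H^{1/2}(0,\pi)$.
\begin{itemize}
\item It has no $L^2$ derivative, so $(\partial_t\xi,\tfrac{1}{\rho_0}w\cdot\mathcal{N})_{1,\Sigma}$ is not controlled.
\item It has no point values, so none of the corner pairings $[\,\cdot\,,\tfrac{1}{\rho_0}w\cdot\mathcal{N}]_\theta$ are controlled. This includes those carrying $\partial_t u\cdot\mathcal{N}$, $b^{6,1}$ and $b^{7,1}$.
\item Integrating by parts in $\theta$ does not help. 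It pairs $\mathcal{K}(\partial_t\xi)$, which is second order in $\partial_t\xi$, with an $H^{1/2}$ function, and that needs $\partial_t\xi\in H^{3/2+}$.
\item It also leaves endpoint terms containing $w\cdot\mathcal{N}$ at $\theta=0,\pi$. The contact-line condition rewrites only the $\partial_t\xi'$ factor there, not the test function.
\end{itemize}
So your bound in terms of $\|\partial_t\xi\|_{H^1}$ does not follow from your argument. The paper never uses the weak $(1,\Sigma)$ form or the contact-line condition in this proof. It uses the stress condition in strong form, which produces $\int_0^\pi(\mathcal{K}(\partial_t\xi)-\partial_\theta b^{3,1})\phi\cdot\mathcal{N}$ in \eqref{equ:5.1.300}. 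It then bounds this by $(1+\sqrt{\mathcal{E}})\|\partial_t\xi\|_{H^{\frac{3}{2}+\frac{\epsilon_{-}-\alpha}{2}}}\|\phi\|_{H^1}$, spending the energy-level regularity of $\partial_t\xi$.

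\textbf{The target bound.} However you handle the capillary term, the viscous term leaves $\|\partial_t u\|_{H^1}\|\partial_t p\|_{L^2}$. The first-order elliptic estimate cannot remove this at energy level: its right-hand side contains $\|\partial_t u\|_{H^1}$ itself, together with $\|\partial_t^2\xi\|_{H^{\frac{3}{2}-\alpha}}$ and $\sqrt{\mathcal{E}\mathcal{D}}$.

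The paper's proof has the same feature. It bounds $((\partial_t u,J\phi))$ by $\sqrt{\mathcal{E}}\|\partial_t p\|_{L^2}$, and the capillary term linearly by $\|\partial_t\xi\|_{H^{\frac{3}{2}+\frac{\epsilon_{-}-\alpha}{2}}}$. What it actually yields is
$\|\partial_t p\|_{L^2}^2\lesssim\mathcal{E}_{||}+\|\partial_t u\|_{H^1}^2+\|\partial_t\xi\|^2_{H^{\frac{3}{2}+\frac{\epsilon_{-}-\alpha}{2}}}+\mathcal{E}^2\lesssim\mathcal{E}$.
This is the form used later in \eqref{equ:5.1.314}, not the literal $\mathcal{E}_{||}+\mathcal{E}^{3/2}$. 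Your fallback of counting $\|\partial_t u\|_{H^1}$ in $\mathcal{E}$ is therefore the right reading, provided you count the capillary contribution there too.

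\textbf{A possible repair.} If you want the capillary cost to stay at the $\mathcal{E}_{||}$ level, split $\partial_t p$ into its mean and a mean-zero part.
\begin{itemize}
\item Test the mean-zero part with an $H^1_0$ field; this is where the construction of Lemma \ref{thm:B} fits, and it creates no boundary terms.
\item Recover the mean by testing with one fixed smooth field that vanishes near $\Sigma_s$ and whose normal trace on $\Sigma$ is compactly supported away from the contact points.
\end{itemize}
Then $\mathcal{K}$ can be integrated by parts twice onto that fixed function with no endpoint terms. This costs only $\|\partial_t\xi\|_{L^2}$ plus $O(\mathcal{E})$ nonlinear corrections, though it still leaves $\|\partial_t u\|_{H^1}$.
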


       \begin{proof}
           Let $\psi\in H^{2}(\Omega)$ solve the following equation system:

          \begin{equation}
              \begin{cases}
                  -\Delta \psi=\partial_{t}p ~~&\operatorname{in}~ \Omega\\
                  \psi=0~~&\operatorname{on}~\Sigma\\
                  \partial_{\nu} \psi=0~~&\operatorname{on}~\Sigma_{s}
              \end{cases}
          \end{equation}

          \noindent which exists and enjoys $H^{2}$ regularity as follows:

          \begin{align}
              \vert \vert \psi\vert \vert_{H^{2}}\lesssim\vert \vert \partial_{t}p\vert \vert_{L^{2}}
          \end{align}

          \noindent We choose $\phi=M\nabla \psi$ as the test function in weak form equation \eqref{equ:4.1.24}. It satisfies the following key property

          \begin{align}
              \operatorname{div}_{\mathcal{A}}\phi=\operatorname{div}_{\mathcal{A}}M\nabla \psi=K\Delta \psi=K\partial_{t}p
          \end{align}

         Applying $\phi$ as the test function for the first order equation, we obtain the following equation

          \begin{align}
              (\partial_{t}^{2}u,J\phi)+((\partial_{t}u,\phi))-(\partial_{t}p,\operatorname{div}_{\mathcal{A}}\phi)=\int_{\Omega} b^{1,1}\cdot \phi J-\int_{\Sigma_{s}}J(\phi\cdot \tau) b^{5,2}\notag\\
              -\int_{0}^{\pi}g(\mathcal{K}(\partial_{t}\xi)-\partial_{\theta}b^{3,1})\phi\cdot \mathcal{N}+b^{4,1}\cdot \omega \label{equ:5.1.300}
          \end{align}

          For the terms on the left hand side of the equation \eqref{equ:5.1.300}, we estimate each term individually as follows

          \begin{align}
              (\partial_{t}^{2}u,J\phi)\lesssim \vert \vert \partial_{t}^{2}u\vert \vert_{L^{2}}\vert \vert J\phi\vert \vert_{L^{2}}\lesssim \mathcal{E}_{||}\vert \vert \phi\vert \vert_{L^{2}}\lesssim \mathcal{E}_{||}\vert \vert \psi\vert \vert_{H^{1}}\lesssim \mathcal{E}_{||}\vert \vert \partial_{t}p\vert \vert_{L^{2}}
          \end{align}

          \begin{align}
              ((\partial_{t}u,J\phi))\lesssim \vert \vert \partial_{t}u\vert \vert_{H^{1}}\vert \vert J\phi\vert \vert_{H^{1}}\lesssim \sqrt{\mathcal{E}}\vert \vert \partial_{t}p\vert \vert_{L^{2}}
          \end{align}

          \begin{align}
              (\partial_{t}p,\operatorname{div}_{\mathcal{A}}\phi)_{L^{2}}=\int_{\Omega}J\partial_{t}p\operatorname{div}_{\mathcal{A}}\phi=\vert \vert \partial_{t}p\vert \vert^{2}_{L^{2}}
          \end{align}

           For the terms on the right-hand side of the equation \eqref{equ:5.1.300}, using the remark following Theorem 7.3, we have

          \begin{align}
             \vert  \int_{\Omega} b^{1,1}\cdot \phi J-\int_{\Sigma_{s}}J(\phi\cdot \tau) b^{5,2}-\int_{0}^{\pi}b^{4,1}\cdot \phi \vert \lesssim \mathcal{E}\vert \vert \phi\vert \vert_{H^{1}}\lesssim\mathcal{E}\vert\vert \partial_{t}p\vert \vert_{L^{2}}
          \end{align}

          \noindent Moreover, we have following estimate by setting $s=1-(\epsilon_{-}-\alpha)$:
          \begin{align}
             & \vert \int_{0}^{\pi} (g\mathcal{K}(\partial_{t}\xi)-\partial_{\theta}b^{3,1})\phi\cdot \mathcal{N}\vert \notag\\
              &\lesssim \vert \vert \partial_{t}\xi\vert \vert_{L^{2}}\vert \vert \phi\vert\vert+(\vert \vert \partial_{\theta}\partial_{t}\xi\vert \vert_{H^{1-\frac{s}{2}}}+\vert \vert b^{3}\vert \vert_{H^{1-\frac{s}{2}}} )\vert \vert \phi\cdot \mathcal{N}\vert \vert_{H^{\frac{s}{2}}}\notag\\
            &\lesssim (\vert \vert \partial_{t}\xi \vert \vert_{H^{\frac{3}{2}+\frac{\epsilon_{-}-\alpha}{2}}}+\vert \vert \partial_{t}\xi\vert \vert_{H^{\frac{3}{2}+\frac{\epsilon_{-}-\alpha}{2}}}\vert \vert \xi\vert \vert_{H^{\frac{3}{2}+\frac{\epsilon_{-}-\alpha}{2}}})\vert \vert \phi\vert \vert_{H^{\frac{1}{2}}(\Sigma)}\notag\\
              &\lesssim (1+\sqrt{\mathcal{E}})\vert \vert \phi\vert \vert_{H^{1}(\Omega)}\vert \vert \partial_{t}\xi\vert \vert_{H^{\frac{3}{2}+\frac{\epsilon_{-}-\alpha}{2}}}
              \lesssim \mathcal{E}
          \end{align}

          Combining all of the estimate above we obtain the following result we want

          \begin{align}
              \|\p_{t}p\|_{L^{2}}^{2}\lesssim \mathcal{E}_{||}+\mathcal{E}^{\frac{3}{2}}
            \end{align}
       \end{proof}

       We  now establish apriori estimate by the following theorem.

      \subsection{The Proof of Theorem \ref{thm:base_0}}

       \begin{proof}
            From the energy estimate \eqref{equ:5.1.239} we have:

            \begin{align}
                 \mathcal{E}_{||}(t)-\mathcal{E}^{\frac{3}{2}}(t)+\int_{0}^{t}\mathcal{D}_{||}\lesssim \mathcal{E}_{||}(0)+\mathcal{E}^{\frac{3}{2}}(0)+\int_{0}^{t}\mathcal{E}^{\frac{1}{2}}\mathcal{D} \label{equ:5.1.301}
            \end{align}

            \noindent Applying Theorem 7.8 and Theorem 7.9 to equation \eqref{equ:5.1.301}, we obtain

            \begin{align}
                 \mathcal{E}_{||}(t)-\mathcal{E}^{\frac{3}{2}}(t)+\int_{0}^{t}(\mathcal{D}_{||}+\Sigma_{k=0}^{2}\vert \vert \partial_{t}^{k}\xi\vert \vert_{H^{\frac{3}{2}-\alpha}})\notag\\
                 \lesssim \mathcal{E}_{||}(0)+\mathcal{E}^{\frac{3}{2}}(0)+\int_{0}^{t}\mathcal{E}^{\frac{1}{2}}\mathcal{D} \label{equ:5.1.302}
            \end{align}

            \textbf{Step 2} Use the elliptic estimate of the zero-order system and the first-order system, we have

            \begin{align}
               & \vert \vert \partial_{t}u\vert \vert_{W^{2,q_{-}}}+\vert \vert \partial_{t}p\vert \vert_{W^{1,q_{-}}}+\vert \vert \partial_{t}\xi\vert \vert_{W^{3-\frac{1}{q_{-}},q_{-}}}\notag\\
               & \lesssim \vert \vert \partial_{t}^{2}u\vert \vert_{L^{q_{-}}}+\vert \vert \partial_{t}^{2}\xi\vert \vert_{H^{\frac{3}{2}-\alpha}}+\vert \vert \partial_{t}\xi\vert \vert_{W^{1-\frac{1}{q_{-}},q_{-}}}+\sqrt{\mathcal{E}}\sqrt{\mathcal{D}}\notag\\
                &\lesssim \vert \vert \partial_{t}^{2}u\vert \vert_{H^{1}}+\vert\vert \partial_{t}^{2}\xi\vert \vert_{H^{\frac{3}{2}-\alpha}}+\vert \vert \partial_{t}\xi\vert \vert_{H^{1}}+\sqrt{\mathcal{E}}\sqrt{\mathcal{D}}\notag\\
                &\lesssim \vert\vert \partial_{t}^{2}\xi\vert \vert_{H^{\frac{3}{2}-\alpha}}+\sqrt{\mathcal{D}_{||}}+\sqrt{\mathcal{E}}\sqrt{\mathcal{D}} \label{equ:5.1.303},
            \end{align}

            \noindent and

            \begin{align}
                &\vert \vert u\vert \vert_{W^{2,q_{+}}}+\vert \vert p\vert\vert_{W^{1,q_{+}}}+\vert \vert \xi\vert \vert_{W^{3-\frac{1}{q_{+}},q_{+}}}\lesssim \vert \vert \partial_{t}u\vert \vert_{L^{q_{+}}}+\vert \vert \partial_{t}\xi\vert \vert_{H^{\frac{3}{2}-\alpha}}+\sqrt{\mathcal{E}}\sqrt{\mathcal{D}}\notag\\
                &\lesssim \vert \vert \xi\vert \vert_{H^{1}}+\vert \vert \partial_{t}u\vert \vert_{L^{2}}+\vert \vert \partial_{t}\xi\vert \vert_{H^{\frac{3}{2}-\alpha}}+\sqrt{\mathcal{E}}\sqrt{\mathcal{D}}\lesssim \sqrt{\mathcal{D}_{||}}+\vert \vert \partial_{t}\xi\vert \vert_{H^{\frac{3}{2}-\alpha}}+\sqrt{\mathcal{E}}\sqrt{\mathcal{D}}\label{equ:5.1.304}.
            \end{align}

            \noindent Applying equations \eqref{equ:5.1.303} and \eqref{equ:5.1.304} to equation \eqref{equ:5.1.302}, we have

            \begin{align}
                 \mathcal{E}_{||}(t)-\mathcal{E}^{\frac{3}{2}}(t)+\int_{0}^{t}(\mathcal{D}_{||}+\Sigma_{k=0}^{2}\vert \vert \partial_{t}^{k}\xi\vert \vert_{H^{\frac{3}{2}-\alpha}})+\int_{0}^{t}( \vert \vert u\vert \vert^{2}_{W^{2,q_{+}}}+\vert \vert p\vert\vert^{2}_{W^{1,q_{+}}}+\vert \vert \xi\vert \vert^{2}_{W^{3-\frac{1}{q_{+}},q_{+}}})\notag\\
                 +\int_{0}^{t}(\vert \vert \partial_{t}u\vert \vert^{2}_{W^{2,q_{-}}}+\vert \vert \partial_{t}p\vert \vert^{2}_{W^{1,q_{-}}}+\vert \vert \partial_{t}\xi\vert \vert^{2}_{W^{3-\frac{1}{q_{-}},q_{-}}})\lesssim \mathcal{E}_{||}(0)+\mathcal{E}^{\frac{3}{2}}(0)+\int_{0}^{t}\mathcal{E}^{\frac{1}{2}}\mathcal{D} \label{equ:5.1.305}
            \end{align}

            \noindent Then we bound the term $[\partial_{t}^{k+1}\xi]_{\theta}$ and $[\partial_{t}^{k}\partial_{\theta}\xi]_{\theta}$. For $k=0,1$, we use standard trace theorem to show the boundedness. It suffices to prove the case when $k=2$. Using the kinematic boundary condition we have:

            \begin{align}
                [\partial_{t}^{3}\xi]^{2}_{\theta}&\lesssim [\p_{t}^{2}u\cdot \mathcal{N}]_{\theta}^{2}+\mathfrak{n}'''(t)^{2}+ \vert \partial_{tt}(\mathfrak{n}'(t)(\frac{\rho'}{\rho}-\frac{\rho_{0}'}{\rho_{0}}))\vert^{2}+[\p_{t}u\cdot \p_{t}\mathcal{N}]_{\theta}^{2}+[u\cdot \p_{t}^{2}\mathcal{N}]^{2}_{\theta}\notag\\
                &\lesssim [\p_{t}^{2}u\cdot \mathcal{N}]^{2}_{\theta}+ \Sigma_{i=0}^{2}\vert\vert \p_{t}^{k}u\vert\vert_{H^{1}}+\vert \vert u\vert \vert^{2}_{H^{1}}([\partial_{t}^{2}\partial_{\theta}\xi]_{\theta}+[\partial_{t}^{2}\eta]_{\theta})^{2}\notag\\
                &+[\p_{t}^{2}\p_{\theta}\xi]_{\theta}^{2}\|u\|^{2}_{W^{2,q_{+}}}+\|\p_{t}u\|^{2}_{H^{1+\frac{\epsilon_{-}}{2}}}\|\p_{t}\xi\|^{2}_{H^{\frac{3}{2}+\frac{\epsilon_{-}-\alpha}{2}}}+\mathcal{E}^{2}\notag\\
               &\lesssim \mathcal{D}_{||}+\vert \vert u\vert \vert^{2}_{W^{2,q_{+}}}([\partial_{t}^{2}\partial_{\theta}\xi]_{\theta})^{2}+\mathcal{E}\mathcal{D} \label{equ:5.1.306}
            \end{align}

            \noindent The contact point condition yields

           \begin{align}
               \mp\sigma \frac{\rho_{0}^{2}\partial_{\theta}(\partial_{t}^{2}\xi)}{(\rho_{0}^{2}+\rho_{0}'^{2})^{\frac{3}{2}}}(\frac{\pi}{2}\pm \frac{\pi}{2})\pm\sigma \frac{\rho_{0}'\rho_{0}(\partial_{t}^{2}\xi)}{(\rho_{0}^{2}+\rho_{0}'^2)^{\frac{3}{2}}}=\kappa(\partial_{t}^{3}\xi\pm \mathfrak{n}'''(t))-b^{7,2}(\pm l)\label{equ:5.1.400}
           \end{align}

           \noindent from which we obtain the following result

           \begin{align}
              [\partial_{t}^{2}\partial_{\theta}\xi]_{\theta}\lesssim [\partial_{t}^{3}\xi]_{\theta}+\vert \vert \p_{t}^{2}u\vert \vert_{H^{1}}+\vert \vert \partial_{t}^{2}\xi\vert \vert_{H^{\frac{3}{2}-\alpha}}+\sqrt{\mathcal{E}\mathcal{D}} \label{equ:5.1.401}
        \end{align}

           \noindent Using equation\eqref{equ:5.1.401} in equation \eqref{equ:5.1.306} and the smallness of the energy $\mathcal{E}(u,p,\xi)$, we have

           \begin{align}
                [\partial_{t}^{3}\xi]^{2}_{\theta}\lesssim \mathcal{D}_{||}+\mathcal{E}\mathcal{D} \label{equ:5.1.402}
           \end{align}

           \noindent Using equation \eqref{equ:5.1.402} in \eqref{equ:5.1.401}, we obtain the following estimate

           \begin{align}
              [\p_{t}^{2}\p_{\theta}\xi]^{2}_{\theta}\lesssim D_{||}+\vert \vert \partial_{t}^{2}\xi\vert \vert_{H^{\frac{3}{2}-\alpha}}+\sqrt{\mathcal{E}\mathcal{D}} \label{equ:5.1.403}
           \end{align}
           
             \noindent The kinematic boundary condition implies that

            \begin{align}
                \vert \vert \partial_{t}^{3}\xi\vert \vert^{2}_{H^{\frac{1}{2}-\alpha }}&\lesssim \vert \vert \p_{t}^{2}u\cdot \mathcal{N}\vert \vert_{H^{\frac{1}{2}}(\Sigma)}^{2}+\mathfrak{n}'''(t)^{2}+\vert \vert\partial_{t}^{2}\xi \vert \vert^{2}_{H^{\frac{3}{2}-\alpha}}+\mathcal{E}\mathcal{D}\notag\\
                &\lesssim\|u\|^{2}_{H^{1}}+\vert \vert\partial_{t}^{2}\xi \vert \vert^{2}_{H^{\frac{3}{2}-\alpha}}+\mathcal{E}\mathcal{D}\lesssim \mathcal{D}_{||}+\|\p_{t}^{2}\xi\|_{H^{\frac{3}{2}-\alpha}}^{2}+\mathcal{E}\mathcal{D}\label{equ:5.1.307}
            \end{align}

            \noindent Combining \eqref{equ:5.1.402},\eqref{equ:5.1.403}, \eqref{equ:5.1.307} and theorem \ref{thm:gam}, we obtain

            \begin{align}
                 \mathcal{E}_{||}(t)-\mathcal{E}^{\frac{3}{2}}(t)+\int_{0}^{t}(\mathcal{D})\lesssim \mathcal{E}_{||}(0)+\mathcal{E}^{\frac{3}{2}}(0)+\int_{0}^{t}\mathcal{E}^{\frac{1}{2}}\mathcal{D}\label{equ:5.1.308}
            \end{align}

            \textbf{Step 3} In this step, we enhance the energy term. We have:

            \begin{align}
                \vert \vert \partial_{t}\xi(t)\vert \vert_{H^{\frac{3}{2}+\frac{\epsilon_{-}-\alpha}{2}}}^{2}&\lesssim \vert \vert \partial_{t}\xi(s)\vert \vert_{H^{\frac{3}{2}+\frac{\epsilon_{-}-\alpha}{2}}}^{2}+\int_{0}^{t}\vert \vert \partial_{t}\xi\vert \vert_{H^{\frac{3}{2}+\epsilon_{-}}}^{2}+\vert \vert \partial_{t}^{2}\xi\vert \vert_{H^{\frac{3}{2}-\alpha}}\notag\\
                &\lesssim \vert \vert\partial_{t}\xi(s) \vert \vert_{H^{\frac{3}{2}+\frac{\epsilon_{-}-\alpha}{2}}}+\int_{0}^{t}\mathcal{D}\label{equ:5.1.309}
            \end{align}

            \noindent and:

            \begin{align}
                \vert \vert \partial_{t}u(t)\vert \vert^{2}_{H^{1+\frac{\epsilon_{-}}{2}}}\lesssim \vert \vert \partial_{t}u(s)\vert \vert^{2}_{H^{1+\frac{\epsilon_{-}}{2}}}+\int_{0}^{t}\vert \vert \partial_{t}u\vert\vert^{2}_{H^{1+\epsilon_{-}}}+\vert \vert \partial_{t}^{2}u\vert \vert_{H^{1}}^{2}\lesssim \vert \vert \partial_{t}u(s)\vert \vert_{H^{\frac{1}{2}+\frac{\epsilon_{-}}{2}}}+\int_{0}^{t}\mathcal{D}\label{equ:5.1.310}
            \end{align}

            \noindent Therefore,

            \begin{align}
                \tilde{\mathcal{E}}(t)-(\mathcal{E}(t))^{\frac{3}{2}}+\int_{0}^{t}\mathcal{D}\lesssim \tilde{\mathcal{E}}(0)+(\mathcal{E}(0))^{\frac{3}{2}}\label{equ:5.1.311},
            \end{align}

            \noindent where 

            \begin{align}
                \tilde{\mathcal{E}}:=\mathcal{E}_{||}+\vert \vert \partial_{t}u\vert \vert_{H^{1+\frac{\epsilon_{-}}{2}}}+\vert \vert \partial_{t}\xi\vert \vert^{2}_{H^{\frac{3}{2}+\frac{\epsilon_{-}-\alpha}{2}}} \label{equ:5.1.312}
            \end{align}

             The elliptic estimate implies

            \begin{align}
               \vert \vert u\vert \vert_{W^{2,q_{+}}}+\vert \vert p\vert \vert_{W^{1,q_{+}}}+\vert \vert \xi\vert \vert_{W^{3-\frac{1}{q_{+}},q_{+}}}\lesssim \sqrt{\tilde{\mathcal{E}}}+\mathcal{E}\label{equ:5.1.313}
            \end{align}

            \noindent and

            \begin{align}
                \vert \vert \partial_{t}p\vert\vert_{L^{2}}\lesssim \vert \vert \partial_{t}^{2}u\vert \vert_{H^{1}}+\vert \vert \partial_{t}u\vert \vert_{H^{1}}+\vert \vert \partial_{t}\xi\vert \vert_{H^{\frac{3}{2}+\frac{\epsilon_{-}-\alpha}{2}}}+\mathcal{E}+(\mathcal{E})^{\frac{3}{2}}\lesssim \sqrt{\tilde{\mathcal{E}}}+\mathcal{E}+\mathcal{E}^{\frac{3}{2}} \label{equ:5.1.314}.
            \end{align}

            \noindent Then using \eqref{equ:5.1.313} and \eqref{equ:5.1.314} in equation \eqref{equ:5.1.312}m we have

            \begin{align}
                \mathcal{E}(t)-(\mathcal{E}(t))^{\frac{3}{2}}+\int_{0}^{t}\mathcal{D}\lesssim \mathcal{E}(0)+(\mathcal{E}(0))^{\frac{3}{2}}
            \end{align}

            \noindent which implies the following relation given that energy $\mathcal{E}$ is small:

            \begin{align}
                 \mathcal{E}(t)+\int_{0}^{t}\mathcal{D}\lesssim \mathcal{E}(0) \label{equ:5.1.315}
            \end{align}

            Note that $\mathcal{E}\lesssim\mathcal{D}$. We obtain the following result from equation \eqref{equ:5.1.315}

            \begin{align}
                \mathcal{E}(t)\lesssim e^{-\lambda t}\mathcal{E}(0)
            \end{align}

            \noindent Hence, we have the theorem proved.
       \end{proof}    

    \begin{center}
        {\large A}PPENDIX
    \end{center}

    \textbf{The representation of non-linear terms}
    
    The zero order non-linear terms are expressed as follows

      \begin{align}{\label{equ:5.1.90}}
      \begin{aligned}
      b^{1}=&\mathfrak{n}'(t)\partial_{x}u+(\cos\theta W\partial_{t}\bar{\xi},\sin\theta W\partial_{t}\bar{\xi})\mathcal{A}(\partial_{x}u,\partial_{y}u)^{T}-u\cdot \nabla_{\mathcal{A}}u\\
      b^{2}=&0\\
      b^{3}=&\sigma \mathcal{R}_1(\rho_{0},\partial_{\theta}\xi,\xi)\\
      b^{4}=&\sigma \mathcal{R}_{2}(\rho_{0},\partial_{\theta}\xi,\xi)\\
      b^{5}=&0\\
      b^{7}(\frac{\pi}{2}&\pm\frac{\pi}{2})=\kappa \hat{\mathcal{W}}(\mp\mathfrak{n}'(t)+\partial_{t}\xi)\\
      b^{6}=&(\frac{1}{\rho}-\frac{1}{\rho_{0}})u\cdot \mathcal{N}+\mathfrak{n}'(t)\sin\theta (\frac{\rho'}{\rho}-\frac{\rho_{0}'}{\rho_{0}})
      \end{aligned}
      \end{align}

      \noindent where $\mathcal{R}_1$ and $\mathcal{R}_2$ are two-second order reminder of Taylor expansion defined in Section 5.2. They both follow some good boundedness conditions. 

      The first order non-linear terms are expressed as follows

      \begin{align}{\label{equ:5.1.91}}
      \begin{aligned}
          b^{1,1}&=\partial_{t}b^{1}-\mu\operatorname{div}_{\partial_{t}\mathcal{A}}S_{\mathcal{A}}(p,u)-\mu\operatorname{div}_{\mathcal{A}}\mathbb{D}_{\partial_{t}\mathcal{A}}u\\
          b^{2,1}&=\operatorname{div}_{\partial_{t}\mathcal{A}}u\\
          b^{3,1}&=\sigma \partial_{t}\mathcal{R}_1\\b^{4,1}&=\sigma\partial_{t}\mathcal{R}_{2}+\mu\mathbb{D}_{\partial_{t}\mathcal{A}}u\mathcal{N}+(\mathcal{K}(\xi)-\partial_{\theta}b^{3})\partial_{t}\mathcal{N}-S_{\mathcal{A}}(p,u)\partial_{t}\mathcal{N} \\
          b^{5,1}&=\mu\mathbb{D}_{\partial_{t}\mathcal{A}}u\nu\cdot \tau\\
          b^{7,1}&(\frac{\pi}{2}\pm \frac{\pi}{2})=\kappa\hat{\mathcal{W}}'(\partial_{t}\xi\mp\mathfrak{n}'(t))(\partial_{tt}\xi\mp \mathfrak{n}''(t))\\
          b^{6,1}&=\partial_{t}b^{6}+\frac{1}{\rho_{0}}u\cdot \partial_{t}\mathcal{N}
          \end{aligned}
      \end{align}

      The second order non-linear terms are expressed as follows
      \begin{align}{\label{equ:5.1.92}}
          b^{1,2}&=\partial_{t}b^{1,1}-\operatorname{div}_{\partial_{t}\mathcal{\mathcal{A}}}S_{\mathcal{A}}(\partial_{t}p,\partial_{t}u)-\operatorname{div}_{\mathcal{A}}S_{\partial_{t}\mathcal{A}}(\partial_{t}p,\partial_{t}u)\\
          b^{2,2}&=\partial_{t}b^{2,1}-\operatorname{div}_{\partial_{t}\mathcal{A}}\partial_{t}u\\
          b^{3,2}&=\sigma \partial_{t}^{2}\mathcal{R}_1 \\
          b^{4,2}&=\partial_{t}b^{4,1}+(\mathcal{K}(\partial_{t}\xi)+\partial_{t}b^{3})\partial_{t}\mathcal{N}-S_{\mathcal{A}}(\partial_{t}p,\partial_{t}u)\partial_{t}\mathcal{N}+\mu\mathbb{D}_{\partial_{t}\mathcal{A}}\partial_{t}u\mathcal{N}\\
          b^{5,2}&=\partial_{t}b^{5,1}+\mu\mathbb{D}_{\partial_{t}\mathcal{A}}\partial_{t}u\nu\cdot \tau\\
          b^{7,2}&(\frac{\pi}{2}\pm\frac{\pi}{2})=\kappa \hat{\mathcal{W}}'(\p_{t}\xi\mp\mathfrak{n}'(t))(\partial_{t}^{3}\xi\pm \mathfrak{n}'''(t))+\kappa\hat{\mathcal{W}}''(\p_{t}\xi\mp\mathfrak{n}'(t))(\partial_{t}^{2}\xi\pm\mathfrak{n}''(t))^{2}\\
          b^{6,2}&=\partial_{t}b^{6,1}+\frac{1}{\rho_{0}}\partial_{t} u\cdot \partial_{t}\mathcal{N}
      \end{align}
      
      \textbf{The expressions for forcing terms in elliptic estimate}

   \begin{align}
       G^{1}&=-\partial_{t}u-u\cdot \nabla_{\mathcal{A}}u-\mathfrak{n}'(t)\partial_{x}u+(\cos\theta W\partial_{t}\bar{\xi},\sin\theta W\partial_{t}\bar{\xi})\tilde{K}(\partial_{x}u,\partial_{y}u)^{T} \\
       G^{2}&=0\\
       G_{+}^{3}&=\frac{\rho\partial_{t}\xi}{\vert\mathcal{N}_{0}\vert}+\mathfrak{n}'(t)\xi_s\rho\frac{1}{\vert \mathcal{N}_{0}\vert}+\mathfrak{n}'(t)\frac{(\rho'-\frac{\rho\rho_{0}'}{\rho_{0}})\cos\theta}{\vert \mathcal{N}_{0}\vert}\\
       G_{+}^{4}&=0\\
       G_{-}^{3}&=0\\
       G_{-}^{4}&=0\\
       G^{5}&=\mathcal{R}_{2}+\p_{\theta}\mathcal{R}_{1}\\
       G_{\pm}^{7}&=\alpha(\gamma'(t)\mp\partial_{t}\xi)\pm \mathcal{R}_1+\kappa\hat{W}(\gamma'(t)+\partial_{t}\xi)
    \end{align}

    \begin{align}
        G^{1,1}&=\partial_{t}G^{1}-\operatorname{div}_{\partial_{t}\mathcal{A}}S_{\mathcal{A}}(p,u)-\operatorname{div}_{\mathcal{A}}S_{\partial_{t}\mathcal{A}}(p,u)\notag\\
        G^{2,1}&=J\operatorname{div}_{\partial_{t}\mathcal{A}}u\notag\\
        G_{+}^{3,1}&=-\frac{\p_{t}u\cdot \partial_{t}\mathcal{N}}{\vert \mathcal{N}_{0}\vert}+\partial_{t}G_{+}^{3} \notag\\
        G_{-}^{3,1}&=0\notag\\
        G_{+}^{4,1}&=\mu\mathbb{D}_{\partial_{t}\mathcal{A}}u\mathcal{N}\cdot \frac{\mathcal{T}}{\vert \mathcal{T}\vert^{2}}+[\mathcal{K}(\eta)-S_{\mathcal{A}}(p,u)]\partial_{t}\mathcal{N}\cdot \frac{\mathcal{T}}{\vert \mathcal{T}\vert^{2}}+\mathcal{R}_{2}\partial_{t}\mathcal{N}\cdot \frac{\mathcal{T}}{\vert \mathcal{T}\vert^{2}}+\p_{\theta}\mathcal{R}_{1}\p_{t}\mathcal{N}\cdot \frac{\mathcal{T}}{|\mathcal{T}|^{2}}\notag\\
        G_{-}^{4,1}&=-\mathcal{D}_{\partial_{t}\mathcal{A}}u\nu\cdot\tau \notag\\
         G^{5,1}&=\mu\mathbb{D}_{\partial_{t}\mathcal{A}}u\mathcal{N}\cdot \frac{\mathcal{N}}{\vert \mathcal{N}\vert^{2}}+[\mathcal{K}(\eta)-S_{\mathcal{A}}(p,u)]\partial_{t}\mathcal{N}\cdot \frac{\mathcal{N}}{\vert \mathcal{N}\vert^{2}}+\partial_{t}\mathcal{R}_{2}+\mathcal{R}_{2}\partial_{t}\mathcal{N}\cdot \frac{\mathcal{N}}{\vert \mathcal{N}\vert^{2}}+\p_{\theta}\mathcal{R}_{1}\p_{t}\mathcal{N}\cdot \frac{\mathcal{T}}{|\mathcal{T}|^{2}}+\p_{t}\p_{\theta}\mathcal{R}_{1}\notag\\
         G^{7,1}&=\partial_{t}G^{7}
    \end{align}

    \begin{center}
        {\large A}CKNOWLEDGEMENTS
    \end{center}

    The author thanks his advisor Yan Guo for numerous comments. His mentorship and constructive feedback contribute significantly to the development of this work.

    This work is supported in part by NSF Grant DMS-2405051. 
    
        \bibliographystyle{plain}
\bibliography{sample}
\end{document}